\documentclass{article}
\usepackage[english]{babel}
\usepackage[letterpaper,top=2cm,bottom=2cm,left=3cm,right=3cm,marginparwidth=1.75cm]{geometry}

\usepackage{amsmath, amssymb, array, booktabs, longtable, enumitem}
\usepackage{multicol}
\usepackage{tikz}
\usetikzlibrary{calc, arrows.meta, positioning, decorations.markings, shapes.misc}
\usepackage{authblk}
\usepackage{array}
\usepackage{appendix}
\usepackage{booktabs}
\usepackage{multirow}
\usepackage{mathrsfs}
\usepackage{etoolbox}
\usepackage{dynkin-diagrams}
\usepackage{subcaption}
\usepackage{float}
\usepackage{amsmath}
\usepackage{float}
\usepackage{graphicx}
\usepackage[colorlinks=true, allcolors=blue]{hyperref}
\usepackage{tikz}
\usetikzlibrary{intersections}
\usepackage{amssymb}
\usepackage{amsmath}
\usepackage{amsthm}
\usepackage{tikz-cd}
\usepackage{geometry}
\newtheorem{Thm}{Theorem}[section]
\newtheorem{Lem}[Thm]{Lemma}
\newtheorem{Prop}[Thm]{Proposition}
\newtheorem{Cor}[Thm]{Corollary}
\newtheorem{Con}[Thm]{Conjecture}

\theoremstyle{definition}
\newtheorem{Def}[Thm]{Definition}
\newtheorem{Rem}[Thm]{Remark}

\newtheorem{Ex}[Thm]{Example}
\newtheorem{Problem}[Thm]{Problem}

\newtheoremstyle{named}{}{}{\itshape}{}{\bfseries}{.}{.5em}{#1 #3}
\theoremstyle{named}

\def\R{\mathbb{R}}
\def\Q{\mathbb{Q}}
\def\F{\mathbb{F}}
\def\N{\mathbb{N}}

\def\Z{\mathbb{Z}}

\def\bbS{\mathbb{S}}

\def\cA{\mathcal{A}}

\def\cK{\mathcal{K}}
\def\cL{\mathcal{L}}

\def\cP{\mathcal{P}}

\def\cR{\mathcal{R}}
\def\cS{\mathcal{S}}
\def\cT{\mathcal{T}}

\def\cX{\mathcal{X}}

\def\a{\alpha}

\def\c{\gamma}
\def\G{\Gamma}

\def\n{\eta}

\def\bo{\mathbf{o}}

\def\bx{\mathbf{x}}

\def\0{\mathbf{0}}

\def\=>{\Longrightarrow}

\def\to{\rightarrow}

\def\o+{\oplus}
\def\bo+{\bigoplus}
\def\x{\times}
\def\<{\langle}
\def\>{\rangle}

\def\^{\wedge}
\def\+{\dagger}

\def\dis{\displaystyle}
\def\dd[#1,#2]{\frac{d#1}{d#2}}
\def\del[#1,#2]{\frac{\partial #1}{\partial #2}}

\def\And{\quad\mbox{and}\quad}

\def\tab{\;\;\;\;\;\;}

\newcommand{\case}[2][lllllllllllllllllllllllllllllllllllll]{\left\{\begin{array}{#1}#2 \\ \end{array}\right.}
\newcommand{\Eq}[1]{\begin{align}#1\end{align}}
\newcommand{\Eqn}[1]{\begin{align*}#1\end{align*}}

\tikzset{>=latex}
\tikzstyle{vthick}=[line width=1.8pt]

\newcommand\drawpath[2]{%
  \foreach \too [count=\c from 1] in {#1}
  {
  \ifthenelse{\c=1}
  {\xdef\from{\too}}
  {\path (\from) edge [->, #2] (\too);
    \xdef\from{\too}}
  };
}

\title{On the expansion formulas of cluster varieties from surfaces and their combinatorial properties}
\author{Vu Tung Lam Dinh*, \quad Ivan Chi-Ho Ip**}

\begin{document}
\maketitle

\begin{abstract}
This paper explores the cluster algebra structure of the moduli space $\mathscr{A}_{\mathrm{SL}_{n+1},\bbS}$ of twisted $\mathrm{SL}_{n+1}$-local systems on a surface. We derive general recurrence relations for cluster variables arising from flips of a triangulation, corresponding to specific sequences of mutations. Our approach is grounded in a detailed combinatorial analysis over the standard $n$-triangulated $m$-gon (with explicit calculations for $n=1,2$). As a generalization, the non-simply-laced $G_2$ type is also considered. We prove the \emph{well-triangulated} property for cluster mutations under flips, which provides a combinatorial framework for understanding the stability and transformation rules of these cluster algebra structures, and compute the monomial counts for the cluster expansion formula.
\end{abstract}

\tableofcontents

\vspace{2mm} 
\hrule
\vspace{1mm}
\begin{flushleft}
\small
Department of Mathematics, Hong Kong University of Science and Technology \\
*Email: vtldinh@connect.ust.hk\\
**Email: ivan.ip@ust.hk 
\end{flushleft}

\section{Introduction}
Cluster algebras, introduced by Fomin and Zelevinsky \cite{FZ}, have become a fundamental structure in modern mathematics. These commutative rings, equipped with a combinatorial seed structure governed by mutation, have found significant applications in diverse areas such as representation theory \cite{GS24, Ip, Ke08, SS}, Poisson geometry \cite{GSV}, and integrable systems \cite{GK, IIKKN}. A cornerstone result, the Laurent phenomenon \cite{FZ}, guarantees that every cluster variable can be expressed as a Laurent polynomial in the initial cluster variables. The positivity conjecture, asserting the non-negativity of coefficients in these polynomials, was resolved in full generality \cite{GHKK, LS}. Simpler, combinatorial proofs exist in special cases, such as for cluster algebras from surfaces using perfect matchings and snake graphs \cite{MSW08, MSW09}. 

In this paper, we investigate the expansion formulas associated with the cluster algebra structure on the moduli space $\mathscr{A}_{G, \bbS}$ of twisted $G$-local systems on a decorated Riemann surface $\bbS$ as introduced by Fock, Goncharov, and Shen \cite{FG, GS24} for a split semi-simple simply-connected algebraic group $G$. The case $G=\text{SL}_2$ recovers the well-studied cluster algebra from surfaces, which is also connected to Teichm\"uller theory \cite{FST, Pen}. In this paper, our primary focus is on the case $G = \text{SL}_{n+1}$.  We aim to derive general formulas by analyzing configurations corresponding to the standard $n$-triangulated $m$-gon. Subsequently, we also examine mutation sequences for quadrilateral flips in the case of quivers associated with type $G_2$, leading to some elementary polynomial identities.

The comprehensive framework developed by Goncharov and Shen \cite{GS24} provides the geometric foundation for our investigation. The work of Shapiro--Schrader \cite{SS} provides the crucial quantum group connection that guides our approach. Their explicit embedding of the quantum group $U_q(\mathfrak{sl}_{n+1})$ into quantum cluster algebras associated to moduli spaces of framed $\mathrm{SL}_{n+1}$-local systems reveals the geometry underlying the representation-theoretic aspects of these cluster structures. Notably, the $R$-matrix action is realized as a sequence of quiver mutations that produces the geometric action of a half-Dehn twist in a twice-punctured disk. The cluster realization of quantum group embeddings and $R$-matrices was later generalized to all Lie types in \cite{Ip}; in particular, one of the explicit realizations motivates the computation for the case of type $G_2$ in Section~\ref{sec:g2-cluster-realization}.

The $G=\text{SL}_2$ case associates the cluster algebra from surfaces with a fundamental combinatorial model via perfect matchings on snake graphs \cite{MSW08, MSW09}, which yield explicit expansion formulas for the cluster variables. For unpunctured surfaces with an initial triangulation $\cT$, the cluster variable $x_\gamma$ associated to an arc $\gamma\not\in\cT$ admits the  expansion formula \cite{MSW08, Sch07}:
\Eq{
x_\gamma = \frac{1}{\text{cross}(\gamma)} \sum_{P \in \text{Match}(\widetilde{G}_\gamma)} x(P) y(P),
}
where  $\widetilde{G}_\gamma$ is the snake graph associated to the arc $\gamma$, $\text{cross}(\gamma)$ is a product of cluster variables corresponding to arcs crossed by $\gamma$,
the sum is over all perfect matchings $P$ of $\widetilde{G}_\gamma$, and $x(P)$ and $y(P)$ are monomials in the initial cluster variables and coefficients respectively. This formula establishes the Laurent positivity property for cluster variables in unpunctured surfaces. 

Alternatively, a combinatorial model using $T$-paths also yields an equivalent expansion \cite{CS}. For a diagonal $M_{a,b}$ in a polygon (considered as a disk with $m$ marked points):
\Eq{
x_M = \sum_{\alpha \in \mathbb{W}_T(a,b)} x(\alpha),
}
where each $T$-path $\alpha$ contributes a monomial $x(\alpha)$ with coefficients either $0$ or $1$. This illustrates the fundamental connection between cluster variables and combinatorial paths in triangulated polygons.

In this work, we attempt to generalize the combinatorial model to higher rank, and derive explicit expansion formulas for the cluster algebra structure of $\mathscr{A}_{G, \bbS}$ associated to a sequence of cluster mutations corresponding to flips of diagonal in \emph{higher triangulations}, with particular emphasis on $G = \text{SL}_{n+1}$. Below, we summarize the key innovations and main theorems as follows.

\begin{enumerate}
    \item[(i)] \emph{Well-triangulated property}: We introduce and study the \textit{well-triangulated} property for triangles in $n$-triangulated polygons (Definition~\ref{def:well_triangulated}). This combinatorial condition is characterized by a specific monomial pattern $x^{bc}y^{ca}z^{ab}$ for the vertex values, and the main result of the paper shows that the property is preserved under arbitrary sequences of flips.

    \item[(ii)] \emph{Framework for $n$-triangulated $m$-gon}: We develop a comprehensive framework for $n$-triangulated $m$-gons (Definitions~\ref{Def:trilad}) with explicit labeling and mutation sequences, generalizing the type $A_1$ case to higher rank.

    \item[(iii)] \emph{Stair path combinatorics}: We introduce \textit{$n$-stair paths} and \textit{$n$-reversed stair paths} (Definitions~\ref{def:n_stairs}, \ref{def:n_rev_stairs}) as fundamental combinatorial objects encoding the structure of cluster variable expansions, visualized as alternating sequences of left/up or right/down segments. The recurrence relation established resembles the discrete 4D Hirota--Miwa equation (Proposition \ref{4DHM}).

    \item[(iv)] \emph{Mutation sequences for general flips}: We establish explicit mutation sequences for flips within $n$-triangulated quadrilaterals (Definition~\ref{subsec:flip}), revealing the intricate layered structure of these transformations.

    \item[(v)] \emph{Reflected polygon construction}: Definition~\ref{Def:ref_poly} introduces \textit{reflected polygons} $\overline{\mathcal{P}}(p_1,p_2,\ldots,p_k)$, providing a symmetry reduction tool for calculating the expansion formulas.
\end{enumerate}
The Main Theorems in this paper are as follows.
\begin{Thm}[Well-triangulated preservation (Theorem~\ref{thm:well_triangulated_preservation})]
For a triangulation $\cT$ of a well-triangulated polygon $\mathcal{P}$ with the cluster realization of type $A_n$, after any sequence of flips, the resulting triangulation is also well-triangulated.
\end{Thm}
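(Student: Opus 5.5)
We reduce the statement to a local claim about a single flip, since any sequence of flips is a composition of single flips. By induction on the number of flips, it suffices to show: if every triangle of a triangulation $\cT$ of $\mathcal{P}$ is well-triangulated, i.e. the values at the interior vertices of each $n$-triangulated triangle with corners $x,y,z$ (the values attached to the three sides/corners of the triangle in the cluster realization of type $A_n$) have the monomial form $x^{bc}y^{ca}z^{ab}$ in the barycentric-type coordinates $(a,b,c)$, $a+b+c=n+1$, of Definition~\ref{def:well_triangulated}, then after flipping one diagonal the two new triangles are again well-triangulated. All triangles not touching the flipped diagonal are untouched by the mutation sequence, so only the quadrilateral $ABCD$ with diagonal $AC$ replaced by $BD$ needs analysis.

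For the local step, we take the explicit mutation sequence for a flip in an $n$-triangulated quadrilateral from Subsection~\ref{subsec:flip}. This sequence proceeds layer by layer: mutations at the interior vertices of the quadrilateral arranged in successive diagonal layers, $n$ layers in total. We track the value at every vertex after each layer. Our ansatz is that after layer $k$, every vertex value in the quadrilateral is a product of a monomial in the four corner-type values and certain "intermediate diagonal" quantities. We would prove this ansatz by induction on $k$ using the exchange relation
\[
v\,v' = \prod(\text{in-neighbours}) + \prod(\text{out-neighbours}).
\]
The key point is that, under the well-triangulated hypothesis, the two monomials on the right-hand side factor with a common monomial factor. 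What remains is a binomial in a single ratio, and this binomial is exactly the one defining the new diagonal value $w$ attached to $BD$ (analogous to the Ptolemy relation $x_{AC}x_{BD}=x_{AB}x_{CD}+x_{AD}x_{BC}$ in rank one).

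Once the mutation sequence is complete, we read off the final values in the triangles $ABD$ and $BCD$. We then verify that each vertex at coordinates $(a,b,c)$ carries $x^{bc}y^{ca}z^{ab}$, where $x,y,z$ are now the values associated to the sides $AB$, $AD$, $BD$ (respectively $BC$, $CD$, $BD$). This is an identity of exponents: quadratic polynomials in $(a,b,c)$ must match.

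The main obstacle is the inductive invariant across layers: showing that at every intermediate mutation the binomial factors cleanly, so that no genuinely new polynomial beyond $w$ appears. We would attempt this first for $n=1,2$, using the explicit computations in the paper, to guess the exact intermediate exponents. Those exponents should be quadratic in the layer index and the position. We would then verify the general exchange relation by a direct exponent comparison, which reduces to checking that a certain discrete second difference of the quadratic exponent function vanishes. This vanishing is precisely what the form $bc, ca, ab$ guarantees.
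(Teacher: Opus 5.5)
Your architecture is the right one: reduce to a single flip, follow the quadrilateral through the mutation sequence keeping every value a Laurent monomial in $a,b$ (sides of $ABC$), $c,d$ (sides of $ACD$), $x$ (old diagonal) and $l=(ac+bd)/x$, then read off the two new triangles. Your key claim is also true: every exchange in the flip has the form $v\,v'=M\,(ac+bd)$ with $M$ a monomial. But the proposal stops exactly where the proof begins. You never state the inductive invariant (you plan to guess it from $n=1,2$), and you never say which values enter each exchange relation. So the assertion that the two products share a common factor leaving $ac+bd$ is the whole content of the theorem, not a step of its proof. Two ingredients are missing.

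First, the shape of the exchanges. At the moment it is mutated, each vertex in the flip sequence of Definition~\ref{subsec:flip} has exactly its four grid neighbours as neighbours. The relation reads (new value)$\cdot$(previous value) $=$ (left)(right) $+$ (up)(down), with current values. This is what Proposition~\ref{prop:flip_properties} and the recurrence \eqref{eq:crfs} encode, and you need it before you can write down the two monomials at all.

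Second, an explicit closed form for the value $H^{[k]}_{[i,j]}$ at $[i,j]$ (grid labeling) after its $k$-th mutation. The paper supplies this: for $i+j\le n$,
\[
H^{[k]}_{[i,j]}=a^{k(j-k)}b^{k(i-k)}c^{(i-k)(n+k+1-i-j)}d^{(j-k)(n+k+1-i-j)}x^{(i-k)(j-k)}l^{k(n+k+1-i-j)},
\]
on the old diagonal $H^{[k]}_{[i,j]}=(ad)^{k(j-k)}(bc)^{k(i-k)}x^{(i-k)(j-k)}l^{k^2}$, and a mirrored formula for $i+j\ge n+2$. These exponents are only piecewise quadratic. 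Your second-difference check therefore has to be done in each regime, and again for exchanges whose neighbours straddle the old diagonal or are frozen boundary vertices.

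With those ingredients your route closes, and it is genuinely more local than the paper's. Off the old diagonal the paper never checks individual exchanges. Instead it evaluates the whole stair-path expansions \eqref{eq:flip_formula1} and \eqref{eq:flip_formula2} on the envelope of each vertex. Exponent identities show that every path specializes to a common monomial times $(ac)^p(bd)^{s-p}x^{-s}$, and the binomial theorem collapses the sum to a power of $l$; deeper layers repeat this with the updated values. Your version needs only one two-term identity per mutation. For $n=3$ at the centre, for instance, $bc^2l^2\cdot a^2dl^2+ad^2l^2\cdot b^2cl^2=abcd\,l^4(ac+bd)$, and dividing by the previous value $abcdxl$ gives $l^4$. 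The price is that you must justify the four-neighbour form of every exchange, which the stair-path formulas package for the paper.
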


Next, we obtain a closed-form expression for the exponents of the expansion formula of cluster variables along the diagonal of a triangulated square, which serves as a building block.
\begin{Thm}[Exponent Formula (Theorem~\ref{thm:exponents})]
The exponents in the expansion formula at unity satisfy $a_{i,j} = j(i-j+1)$ for all $i\geq j\geq 1$, leading to the explicit evaluation:
\Eq{
\cK_n = (2^n, 2^{2n-2}, \ldots, 2^{t(n-t+1)}, \ldots, 2^n).
}
\end{Thm}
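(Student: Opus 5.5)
We need to interpret the statement. For the flip of the diagonal of an $n$-triangulated square, the mutation sequence has $\binom{n+1}{2}$ steps arranged in layers: layer $i$ ($1\le i\le n$) contains the mutations indexed by $j=1,\dots,i$. Each mutated variable $x_{i,j}$ is a Laurent polynomial in the initial cluster. Evaluating all initial variables at $1$ gives a positive integer, and the claim is that this integer equals $2^{a_{i,j}}$ with $a_{i,j}=j(i-j+1)$. The final layer $i=n$ gives the new diagonal variables $x_{n,1},\dots,x_{n,n}$, whose values are $2^{t(n-t+1)}$; these form $\cK_n$. The entries are symmetric under $t\mapsto n+1-t$, as the formula requires.

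\textbf{Step 1: recurrence at unity.} Under the specialization, every exchange relation $x_k x_k' = \prod x^{+} + \prod x^{-}$ becomes $x_k' = (M^+ + M^-)/x_k$, with both monomials evaluated numerically. By the well-triangulated property (Theorem~\ref{thm:well_triangulated_preservation}) and the stair-path description, the relevant exchange relations have a four-term Hirota--Miwa form (Proposition~\ref{4DHM}). I would show the two monomials are equal at unity. This holds inductively because the seed values are all powers of $2$ arranged so that the two sides of each relation balance. The mutation then doubles a ratio:
\[
v_{i,j}=\frac{2\,v_{i-1,j-1}\,v_{i-1,j}}{v_{i-2,j-1}},
\]
where $v$ denotes the value at unity, out-of-range indices have value $1$, and previously unmutated vertices also have value $1$.

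\textbf{Step 2: solve the recurrence.} Taking $\log_2$ turns the recurrence into the linear recursion
\[
a_{i,j}=1+a_{i-1,j-1}+a_{i-1,j}-a_{i-2,j-1},
\]
with boundary values $a_{i,0}=a_{i,i+1}=0$. Substituting $j(i-j+1)$ verifies it:
\[
(j-1)(i-j+1)+j(i-j)-(j-1)(i-j)+1 = j(i-j+1).
\]
The boundary values also match, so induction on $i$ finishes the proof. Setting $i=n$ gives $\cK_n$.

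\textbf{Main obstacle.} The hard step is Step 1: the two exchange monomials must evaluate equally, and the vertices entering each relation must be exactly those indexed above. I would settle this with an induction on layers carried out alongside the mutation sequence of Definition~\ref{subsec:flip}. The inductive claim is that after layer $i$, the value at each frozen/diagonal position is $2^{a}$, with exponents given by the well-triangulated pattern $x^{bc}y^{ca}z^{ab}$ specialized appropriately. I would check the base cases $n=1$ (giving $(2)$) and $n=2$ (giving $(4,4)$) against the explicit computations in the paper before running the general induction.
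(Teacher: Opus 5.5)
Your Step 2 is exactly the paper's argument. The recurrence you write is the one in Proposition~\ref{prop:unity_value}, with $i$ the rank and $j$ the coordinate. Your convention $a_{i,0}=a_{i,i+1}=0$ reproduces the paper's boundary values $a_{n,1}=a_{n,n}=n$, and checking $j(i-j+1)$ by direct substitution is equivalent to the paper's induction on $j$. The paper does not reprove your Step 1 at this point; it takes the recurrence from Proposition~\ref{prop:unity_value}, and you should simply cite that.

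Step 1 as you have written it would not stand on its own, for two reasons.

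\emph{The model of the flip is wrong.} The flip is not a chain of $\binom{n+1}{2}$ mutations producing one variable $x_{i,j}$ per index. Step $k$ of Definition~\ref{subsec:flip} mutates $k(n+1-k)$ vertices. Your index $(i,j)$ really means ``the $j$-th coordinate of the flipped diagonal of a rank-$i$ sub-square'', i.e.\ $\psi_{j,i}$ in \eqref{eq:crfs}, and such a value occurs at many different positions. Writing a single value $v_{i,j}$ already presupposes that all these occurrences agree at unity. That is precisely the point that needs proof.

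\emph{The balancing argument proves nothing.} Your stated reason that the two terms balance is that the seed values are all powers of $2$ arranged to balance. Knowing the values are powers of $2$ says nothing about the two products having the same exponent. The actual reason is the symmetry in Proposition~\ref{prop:flip_properties}. There $X_{1,l}$ and $X_{2,l}$ are the same rank-$(n-1)$ flip formula in two different sets of variables. By induction on $n$ they therefore take the same value $2^{a_{n-1,l}}$ when every variable is $1$; equivalently, $\psi_{t,k}(x_{i,j})$ at unity is independent of $(i,j)$. Hence $X_{1,1+t}X_{2,2+t}=X_{1,2+t}X_{2,1+t}$ at unity, which gives the factor $2$.

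The four-term shape of the relations also comes from Proposition~\ref{prop:flip_properties} and \eqref{eq:crfs}, not from Theorem~\ref{thm:well_triangulated_preservation}. If you do want to use the well-triangulated theorem, it gives the result outright. With all parameters equal to $1$, every initial value is $1$ and $l=2$, and the new diagonal vertices $[k,k]$ take the value $l^{k(n+1-k)}=2^{k(n+1-k)}$. That is, however, a much heavier route than solving the recurrence.
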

Finally, we further generalize our calculations to arbitrary $m$-gons:
\begin{Thm}[Number of monomials (Theorem~\ref{thm:monomial_count})]
Consider the cluster realization of $\mathscr{A}_{\text{SL}_{n+1},\bbS}$ for a triangulation $\cT$ of a surface $\bbS$ with boundary and without punctures, and fix any non $\cT$-diagonal $\gamma$. If the number of monomials (counting multiplicities) in the Laurent polynomial of the expansion formula of $\gamma$ for the case $n=1$ is $K$, then the number of monomials (counting multiplicities) in the Laurent polynomial in each coordinate of the expansion formula of $\gamma$ for general $n$ is of the form $(K^n, K^{2n-2}, \ldots, K^{t(n+1-t)}, \ldots, K^{2n-2}, K^n)$.
\end{Thm}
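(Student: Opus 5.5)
We prove the statement by reducing it, through the well-triangulated property (Theorem~\ref{thm:well_triangulated_preservation}), to a bookkeeping of exponents along any flip sequence. The count of monomials with multiplicity is obtained by evaluating the Laurent polynomial with all initial cluster variables set to $1$. So it suffices to show that, at unity, the coordinate of the expansion of $\gamma$ at level $t$ on the diagonal equals $K^{t(n+1-t)}$, where $K = x_\gamma|_{\mathbf{1}}$ in the rank-one case.

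The first step is to fix a sequence of flips $\cT = \cT_0 \to \cT_1 \to \cdots \to \cT_r$ in which the last flip produces $\gamma$. We follow the values at unity of all vertices of the $n$-triangulated polygon along this sequence. By Theorem~\ref{thm:well_triangulated_preservation}, every triangle of every $\cT_k$ stays well-triangulated. Its interior vertex values are then determined by its three edges through the pattern $x^{bc}y^{ca}z^{ab}$, where $x,y,z$ are the rank-one values at unity attached to the three sides. In particular, the vertex at position $t$ on a side with rank-one value $K_e$ carries the value $K_e^{t(n+1-t)}$.

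The second step is to check that the exponent $t(n+1-t)$ on the diagonal coordinates propagates correctly through a single flip. This is exactly Theorem~\ref{thm:exponents}, applied to the square cut by the flip. Setting $a_{i,j}=j(i-j+1)$ at the last layer gives the new diagonal values. These values are the rank-one value of the new diagonal, namely the $n=1$ Ptolemy value $(ac+bd)/e$ evaluated at the current edge values, raised to the power $t(n+1-t)$. This is meant as a local computation: the mutation sequence of Section~\ref{subsec:flip} applied to well-triangulated inputs with edge values $a,b,c,d,e$, where each edge carries $a^{t(n+1-t)}$ and so on. One shows by induction on the layers that every intermediate vertex is a monomial in these rank-one values with the prescribed exponents. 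The Hirota--Miwa type recurrence (Proposition~\ref{4DHM}) collapses at unity, because the binomial exchange $X\cdot X' = A+B$ turns into an identity of the form $p^{\alpha}q^{\beta}$ in which both $A$ and $B$ are equal to the same monomial in $(ac+bd)$ and $e$.

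The third step is induction on the length of the flip sequence. After each flip, every diagonal $e$ of the current triangulation has coordinate $t$ equal to $K_e^{t(n+1-t)}$, where $K_e$ is its rank-one count. This gives the claimed vector $(K^n,K^{2n-2},\dots,K^{t(n+1-t)},\dots,K^n)$ for $\gamma$. Surfaces with boundary and no punctures reduce to the polygon case by passing to the universal cover: the expansion of $\gamma$ involves only the triangles that $\gamma$ crosses, which unfold to a polygon. The reflected-polygon construction (Definition~\ref{Def:ref_poly}) can handle repeated triangles.

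The main obstacle is the second step. I must make sure that at unity every exchange relation of the flip mutation sequence has two terms that are equal monomials in the edge values, rather than merely monomials of the same degree. If this fails at intermediate layers, I would instead strengthen the induction hypothesis to cover all intermediate vertex values, using the layered structure of the flip sequence. Concretely, I would posit closed forms $a^{\cdot}b^{\cdot}c^{\cdot}d^{\cdot}e^{\cdot}(ac+bd)^{\cdot}$ for them, in analogy with the computation giving $a_{i,j}$.
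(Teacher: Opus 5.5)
Your overall route is the paper's. You evaluate at $1$, so every initial triangle is well-triangulated with all parameters $1$. You then follow the parameters along a flip sequence using Theorem~\ref{thm:well_triangulated_preservation} and read off $K^{t(n+1-t)}$ on $\gamma$. The gap is in your step~2, which carries the whole content, and neither justification you give there works.

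First, Theorem~\ref{thm:exponents} concerns a single flip with \emph{all} inputs equal to $1$. After the first flip the side values are no longer $1$, so it says nothing about later flips.

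Second, the claimed collapse is false: on well-triangulated inputs the two terms of an exchange are not equal monomials. Use the notation of the proof of Theorem~\ref{thm:well_triangulated_preservation}: side parameters $a,b,c,d$ and diagonal parameter $x$ (your $e$). The first mutation at a vertex $[i,j]$ of the old diagonal, $i+j=n+1$, has numerator terms $a^{j-1}b^{i}c^{i-1}d^{j}x^{2ij-i-j}$ and $a^{j}b^{i-1}c^{i}d^{j-1}x^{2ij-i-j}$. These differ by the factor $bd/(ac)$ and add up to $(ad)^{j-1}(bc)^{i-1}x^{2ij-i-j}(ac+bd)$. The same factorization, not an equality, occurs at the later layers. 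If the two terms were equal, each exchange would produce a factor $2$ rather than $ac+bd$. So your mechanism cannot yield $\bigl((ac+bd)/x\bigr)^{t(n+1-t)}$ unless $a=b=c=d=x=1$, i.e.\ only for the very first flip.

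What you need is already in the paper, in the \emph{proof} of Theorem~\ref{thm:well_triangulated_preservation}. Its statement only asserts that some parameters exist and does not identify the new one. The proof computes every intermediate value $H^{[k]}_{[i,j]}$ layer by layer as a monomial in $a,b,c,d,x$ and $l=(ac+bd)/x$. It concludes that the two new triangles are well-triangulated with parameters $(a,d,l)$ and $(b,c,l)$.

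Your fallback, positing closed forms $a^{\cdot}b^{\cdot}c^{\cdot}d^{\cdot}e^{\cdot}(ac+bd)^{\cdot}$ and inducting over the layers, is exactly that computation. Cite it instead of the two arguments above. Since $l=(ac+bd)/x$ is the rank-one exchange relation, the parameters evolve as the rank-one values at unity along the flip sequence. Your step~3 then gives the theorem, and this is precisely the paper's argument.

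Two smaller points:
\begin{itemize}
\item The universal-cover paragraph is unnecessary: the argument is local to each flipped quadrilateral and runs along a flip sequence on $\bbS$ itself. A reduction to crossed triangles in rank $n\ge 2$ would in any case need its own proof.
\item The reflected polygon of Definition~\ref{Def:ref_poly} only reverses the type sequence $(p_1,\dots,p_K)$. It has nothing to do with repeated triangles.
\end{itemize}
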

After verifying the main theorems, we provide ways to label all vertices for convenience in setting up recurrences, hence we need to separate into $12$ different cases (depending on parity modulo $3$ and modulo $2$) where the order of diagonals in a polygon changes, allowing us to deduce the following formulas more concisely.
\begin{Thm}[Recurrence Formulas for General Polygons (Theorems~\ref{thm:1tri}, \ref{thm:2tri})]
We establish complete recurrence relations for expansion formulas in the general $1$-triangulated (Theorem~\ref{thm:1tri}) and $2$-triangulated (Theorem~\ref{thm:2tri}) $m$-gons, providing computational algorithms for cluster variables in arbitrary $n$-triangulated polygons.
\end{Thm}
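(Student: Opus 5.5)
This statement summarizes results proved later in the paper (Theorems~\ref{thm:1tri} and \ref{thm:2tri}). The plan is therefore to derive the recurrences directly from the explicit flip mutation sequences of Definition~\ref{subsec:flip}, specialized to $n=1$ and $n=2$.

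\textbf{Setup.} We fix the standard $n$-triangulated $m$-gon of Definition~\ref{Def:trilad}, with its fan triangulation and explicit vertex labeling. Any diagonal $\gamma$ is reached by a sequence of flips. Each flip takes place inside an $n$-triangulated quadrilateral and is realized by the layered mutation sequence described there.

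\textbf{Case $n=1$.} A flip is a single Ptolemy exchange $x_{\gamma'}x_\gamma = x_ax_c + x_bx_d$. Following the $T$-path model, we order the flips so that $\gamma$ crosses the diagonals one at a time. This gives a three-term recurrence: the variable of the diagonal from vertex $1$ to vertex $k+1$ is expressed through those ending at $k$ and $k-1$. The boundary data are initial variables and frozen edges.

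\textbf{Case $n=2$.} Each quadrilateral flip is a sequence of four mutations on the interior and edge vertices. We write down the exchange relations for each, using Proposition~\ref{4DHM} (the Hirota--Miwa type recurrence) to express the new edge and face variables through the old ones. By Theorem~\ref{thm:well_triangulated_preservation}, every intermediate triangulation stays well-triangulated. So in every triangle the face variable has the monomial shape $x^{bc}y^{ca}z^{ab}$ relative to the edge data. This lets us parametrize each triangle by a small set of quantities, and the recurrence closes on that data.

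\textbf{Main obstacle: labeling.} As the flip sequence moves around the polygon, the position of the newly created diagonal relative to the fixed labeling cycles. This cycling depends on $m$ modulo $3$, since a triangle's vertex orientation returns after three steps. It also depends on $m$ modulo $2$, since the fan alternates sides. That yields the $12$ cases. I would first fix a reflection convention via the reflected polygons $\overline{\mathcal{P}}(p_1,\dots,p_k)$ of Definition~\ref{Def:ref_poly} to identify symmetric cases, which I expect roughly halves the work. I would then verify the remaining cases by induction on $m$. The base cases $m=4,5$ come from the direct quadrilateral computations. Finally, I would check consistency against Theorem~\ref{thm:monomial_count}: evaluating at unity should give the counts $(K^n,\dots)$.
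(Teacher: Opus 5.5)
Your $n=2$ step has a genuine gap: it misuses Theorem~\ref{thm:well_triangulated_preservation}. That theorem is about a \emph{specialization} of the cluster variables. If the initial variables are assigned the values $x^{bc}y^{ca}z^{ab}$ in each triangle, then the values after any flips again have this form. It says nothing about the Laurent expansions in generic initial variables $x^{(r)}_{(a,b,c)}$, which is what Theorems~\ref{thm:1tri} and~\ref{thm:2tri} describe.

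For generic data no triangle is well-triangulated: already the interior initial variable $x^{(r)}_{(1,1,1)}$ is independent of the edge variables. After the flips, the inner vertex of $\triangle AVB$ is an independent Laurent polynomial $I^{[m_1,n_1,\ldots,m_k,n_k]}$, not a monomial in the diagonal data. So parametrizing each triangle by ``a small set of quantities'' only reproduces specialized values. That is exactly how the paper obtains Theorem~\ref{thm:monomial_count} and Corollary~\ref{cor:inner-terms}, not the recurrences.

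The missing idea is that for $n=2$ the recurrence does not close on the diagonal alone. You must carry the inner-vertex functions as extra unknowns and compute each local quadrilateral flip in generic variables. This is the shape of Theorem~\ref{thm:2tri}:
\begin{itemize}
\item $I^{[\ldots,m_k,n_k]}$ is expressed through $I^{[\ldots,m_k]}$ and $D_2^{[\ldots,m_k]}$;
\item $D_2^{[\ldots,m_k,n_k]}$ involves $D_2^{[\ldots,n_{k-1}]}$, $D_2^{[\ldots,m_k]}$ and all intermediate $I^{[\ldots,m_k,t]}$ with $t<n_k$;
\item $D_1$ is then obtained from $D_2$ of the reflected polygon of Definition~\ref{Def:ref_poly}. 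That is the role of reflection here, rather than halving the case list.
\end{itemize}

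Your setup is also too narrow for the statement. The recurrences are for the non-$\cT$-diagonal $AB$ with respect to an arbitrary zig-zag triangulation of type $\mathcal{P}(m_1,n_1,\ldots,m_k,n_k)$. This diagonal is reached by the standard sequence of flips through diagonals issuing from $A$, block by block. A fan gives only type I, $\mathcal{P}(m_1)$. Moreover, the recurrences are indexed by blocks, not by single fan vertices: they are continuant-type relations expressing $D^{[\ldots,n_k]}$ through $D^{[\ldots,n_{k-1}]}$ and $D^{[\ldots,m_k]}$.

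For the same reason your explanation of the twelve cases does not apply. They are not governed by $m$ modulo $3$ and $2$. They are governed by which colour, equivalently which permutation $\omega=\upsilon^p\tau^q$ of $\Gamma$-coordinates, the last block carries. This is fixed by the signed count $\sigma$ (resp.\ $\sigma'$) of block lengths modulo $3$ and the parity of $M_k+N_{k-1}$ (resp.\ $M_{k-1}+N_{k-1}$), separately for $n_k>0$ and $n_k=0$. None of this structure arises for a fan.
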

From the number theory perspective, we further find the connections between the number of terms in the expansion formulas and \emph{continued fractions}.
\begin{Cor}[Term Count Formulas (Corollaries~\ref{cor:diagonal-terms}, \ref{cor:inner-terms})]
For any $n$-triangulated polygon $\mathcal{P}(p_1,p_2,\ldots,p_N)$:
\begin{enumerate}
    \item[(i)] The diagonal formula $D^{[p_1,p_2,\ldots,p_N]}_l$ has precisely $a(p_1,p_2,\ldots,p_N)^{l(n+1-l)}$ terms,
    \item[(ii)] The inner vertex formula $I^{[p_1,p_2,\ldots,p_N]}_{(i,j)}$ has precisely $a(p_1,p_2,\ldots,p_N)^{i(n+1-i-j)}a(p_1,p_2,\ldots,p_{N-1})^{j(n+1-i-j)}$ terms,
\end{enumerate}
where $a(p_1,p_2,\ldots,p_N)$ is the numerator of the continued fraction $[1;p_1,p_2,\ldots,p_N]$.
\end{Cor}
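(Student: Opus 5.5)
Since $a(p_1,\ldots,p_N)$ is defined as a continuum-type numerator, my plan is to reduce both term counts to the multiplicative structure already established in the Exponent Formula and the Number of monomials theorem, then identify the base $n=1$ count $K$ with a continued-fraction numerator.

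First, I would take the $n=1$ case. For a $1$-triangulated polygon $\mathcal{P}(p_1,\ldots,p_N)$, the diagonal formula $D^{[p_1,\ldots,p_N]}_1$ is an ordinary type $A$ cluster variable of the polygon. Its number of terms (counting multiplicity, all evaluated at unity) satisfies a three-term recurrence coming from the recurrence formulas of Theorem~\ref{thm:1tri}. In that recurrence, appending a fan of $p_N$ triangles updates the count as $K_N = p_N K_{N-1} + K_{N-2}$. This is exactly the recurrence for the numerators $h_N$ of the convergents of $[1;p_1,\ldots,p_N]$. I would check the initial values: the empty polygon gives $a()=1$, and a single fan gives $a(p_1)=p_1+1$, i.e.\ $p_1+1$ T-paths or matchings. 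The fan-shaped $p_i$ pieces correspond to zig-zag snake graph segments, and the count of perfect matchings of a snake graph is classically a continued fraction numerator. Together these give $K=a(p_1,\ldots,p_N)$.

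Second, for (i), I would invoke the Number of monomials theorem directly. The $l$-th coordinate of the diagonal expansion has $K^{l(n+1-l)}$ terms, which gives $a(p_1,\ldots,p_N)^{l(n+1-l)}$. The diagonal labeling $D_l$ must match the coordinate ordering $t=l$, but this is a matter of convention from the reflected-polygon labeling.

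Third, for (ii), I would argue through the recurrence of Theorem~\ref{thm:2tri} and its $n$-generalization, specialized at unity. An inner vertex $(i,j)$ of the final triangle is produced by the mutation sequence through the layered stair structure. Specialized at all variables equal to $1$, the relations become multiplicative in the style of the Hirota–Miwa recurrence of Proposition~\ref{4DHM}, where each exchange relation at unity doubles or combines counts. I would posit the ansatz $I_{(i,j)}=A^{\alpha}B^{\beta}$ with $A=a(p_1,\ldots,p_N)$ and $B=a(p_1,\ldots,p_{N-1})$, since the inner vertex sees both the last diagonal and the previous one. I would then verify that the exponents $\alpha=i(n+1-i-j)$ and $\beta=j(n+1-i-j)$ satisfy the exchange recurrence at unity by induction on $N$ and on the layer $n+1-i-j$. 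As consistency checks, on the boundary $j=0$ the ansatz reduces to the edge count $A^{i(n+1-i)}$, and at $i=0$ it gives $B^{j(n+1-j)}$, the previous diagonal.

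The main obstacle will be the inner-vertex exponent verification. The exchange relation is a sum of two monomials, so the counts add rather than multiply. One must show that both summands are equal to $A^{\alpha}B^{\beta}$ times factors that sum correctly, using $A=p_NB+a(p_1,\ldots,p_{N-2})$. I would first try to verify this for $n=2$ against Theorem~\ref{thm:2tri}, then generalize by the well-triangulated monomial pattern $x^{bc}y^{ca}z^{ab}$, which is exactly of the shape $i\cdot k$, $j\cdot k$ with $k=n+1-i-j$.
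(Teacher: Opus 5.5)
Your argument for (i) is the paper's: at unity the recurrences of Theorem~\ref{thm:1tri} reduce to $K_{N}=p_{N}K_{N-1}+K_{N-2}$, where $K_N:=a(p_1,\ldots,p_N)$, $K_0=1$ and $K_1=p_1+1$. This is the numerator recurrence of Proposition~\ref{prop:fun_con} for $[1;p_1,\ldots,p_N]$, and Theorem~\ref{thm:monomial_count} then supplies the exponent $l(n+1-l)$. That exponent is symmetric under $l\mapsto n+1-l$, so the orientation of the $D_l$ does not matter.

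For (ii) there is a gap: you state the right answer, but the step you call the main obstacle is left open, and the tool you propose would not settle it. The final values at the inner vertices of $\triangle AVB$ come out of the mutation sequence of the last flip. At unity, the exchange relations there involve the five edge parameters (four sides and old diagonal) of the last flipped quadrilateral, not just $K_N$ and $K_{N-1}$. What makes the two summands combine is the single-flip identity $l=(ac+bd)/x$ among those parameters. By contrast, $K_N=p_NK_{N-1}+K_{N-2}$ aggregates $p_N$ separate flips and appears in no individual exchange relation. Carried out correctly, your induction would simply redo the layer-by-layer computation in the proof of Theorem~\ref{thm:well_triangulated_preservation}. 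Invoking that theorem directly is the missing idea; you mention the pattern $x^{bc}y^{ca}z^{ab}$ only as a heuristic for passing from $n=2$ to general $n$.

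Here is how the theorem finishes (ii). With all initial variables equal to $1$, every triangle of $\cT$ is well-triangulated with all parameters $1$. So after the standard sequence of flips, $\triangle AVB$ is well-triangulated, and its parameters are forced by its edge values: $K_N$ on $AB$ by (i), $K_{N-1}$ on $AV$ because $AV$ carries $D^{[p_1,\ldots,p_{N-1}]}$, and $1$ on the boundary edge $VB$. In the pattern $x^{bc}y^{ca}z^{ab}$, each edge parameter is raised to the product of the $\Gamma$-coordinates at its two endpoints. For $I_{(i,j)}$ as labeled in Figure~\ref{fig:inner3}, these coordinates are $n+1-i-j$ at $A$, $j$ at $V$ and $i$ at $B$. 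This gives $K_N^{\,i(n+1-i-j)}K_{N-1}^{\,j(n+1-i-j)}\cdot 1^{ij}$ at once, which is exactly how the paper obtains (ii).
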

We provide a comprehensive combinatorial interpretation of cluster expansions through explicit formulas for $\psi_{t,k}(x_{i,j})$ in terms of stair path sums (Propositions~\ref{prop:stairs_combinatorial}, \ref{prop:reversed_stairs}), and the recurrence relations connecting different levels of the triangulation hierarchy, mediated via enumerating functions $\cS(x_{i,j},x_{k,l})$ and $\cR(x_{i,j},x_{k,l})$ counting stair paths and reversed-stair paths (Section~\ref{subsec:combinatorial_interpretation}). These results offer a complete combinatorial description of cluster variable expansions in type $A_n$ local system from surfaces, with explicit algorithms for computing the Laurent expansions in arbitrary $n$-triangulated polygons. This also lays the groundwork for further exploration in $G_2$ and other finite type cluster algebras.

By thoroughly examining the standard $n$-triangulated $m$-gon, this work makes progress toward a universal understanding of expansion formulas within the $\mathscr{A}_{G,\bbS}$ framework. The specific cases examined have revealed the deep links between the algebraic and the graphical aspects, providing a foundation for future research and tools for theoretical and computational applications in cluster algebra theory. In particular, the well-triangulated property plays a key role in understanding the stability and transformation rules within these cluster algebraic structures.

Future research will aim to derive complete expansion formulas for single flips across all finite irreducible Dynkin types, seeking to elucidate both universal and type-dependent behaviors. This will involve developing refined combinatorial strategies to solve recurrence relations, especially for other Lie types, thereby advancing the understanding of the cluster structure on $\mathscr{A}_{G, \bbS}$ in general.\\

\textbf{Organization of the paper.} The paper is organized as follows. Section \ref{sec:prelim} reviews the basic definitions and properties of cluster algebras and the cluster structure on moduli spaces of twisted $G$-local systems detailing the type $A_n$ case. Section \ref{sec:type_an_m_gon} provides the combinatorial interpretation of the expansion formula associated to general flip of diagonal over a quadrilateral via the $n$-stair path model and its associated recurrence relations. Section \ref{sec:ntrimgon} presents a comprehensive analysis of the cluster structure of $\mathscr{A}_{G,\bbS}$ over the standard $n$-triangulated $m$-gon, including the well-triangulated property, monomial counts, recurrence relations of the expansion formulas, and its connections to continued fractions. Section \ref{sec:g2-cluster-realization} details the study of cluster structure over mutation sequences for quadrilateral flips in type $G_2$ quivers, revealing new polynomial identities. Finally, the Appendices provide further observations and verification related to the recurrences discussed in Proposition  \ref{prop:g_func}.\\

\textbf{Acknowledgment.} This study was conducted under the Undergraduate Research Opportunities Program (UROP) at The Hong Kong University of Science and Technology. The second author is supported by the Hong Kong RGC General Research Funds [GRF \#16305122].

\section{Preliminaries}\label{sec:prelim}
\subsection{Cluster algebra} In this section we summarize the definitions and notation on cluster algebra \cite{FZ} needed in this paper.
\begin{Def}
A \emph{quiver} is a tuple $Q = (Q_{0}, Q_{1}, s, t)$, where $Q_{0}$ and $Q_{1}$ are finite sets, and $s, t:Q_{1}\to Q_{0}$ are set maps. $Q_0$ is the set of \emph{vertices}, $Q_1$ is the set of \emph{arrows}, $s$ is the \emph{source} map, and $t$ is the \emph{target} map. We assume $Q$ has no 1-cycles (or simply say $Q$ has no loops), i.e. $s(\a) \neq t(\a)$ for any $\a \in Q_{1}$, and $Q$ has no 2-cycles, i.e. for any $\a_{1}, \a_{2} \in Q_{1}$ such that $t(\a_{1}) = s(\a_{2})$, we require $t(\a_{2}) \neq s(\a_{1})$. We will present an arrow $\a\in Q_1$ with $s(\a)=i$ and $t(\a)=j$ by $i\to j$.

A quiver $Q$ is called an \emph{ice quiver} if we further partition the vertex set $Q_0=M\sqcup P$ into two sets (possibly empty), called \emph{mutable} and \emph{frozen} vertices respectively, such that no arrows connect two frozen vertices.
\end{Def}

In this section, let us denote $n:=|Q_0|$.
\begin{Def}
The \emph{signed adjacency matrix} of $Q$ is the $n\x n$ integer matrix $B = B(Q) = (b_{ij})$ indexed by $Q_{0}$, such that 
\Eq{
b_{ij} := \#\{\mbox{arrows from } i \to j\} - \#\{\mbox{arrows from } j \to i\}.
}
By definition $B$ is a skew-symmetric matrix, i.e. $B = -B^{T}$. In particular, all diagonal entries satisfy $b_{ii}=0$ for all $i \in Q_0$.
\end{Def}

\begin{Def}\label{mutQ}
Given an ice quiver $Q$ and a mutable vertex $k\in Q_0$, a quiver mutation $\mu_{k}$ is an operation transforming $Q$ into a new quiver $\mu_{k}(Q)$ by the following three steps:

\begin{itemize}
\item[(1)] For two arrows $i \to k \to j$ where not both $i, j$ are frozen, add a new arrow $i \to j$, counting with multiplicities.
\item[(2)] Reverse the direction of all arrows incident to $k$.
\item[(3)] Remove all oriented 2-cycles.
\end{itemize}
If $Q$ can be transformed into $Q'$ by a finite sequence of mutations up to permutation of the vertices, we call those 2 quivers \emph{mutation equivalent} and write $Q \sim Q'$. 
\end{Def}
The set of all quivers that are mutation equivalent to $Q$ is the \emph{mutation equivalence class} $[Q]$.

\begin{Lem}
The signed adjacency matrix $B' = (b'_{ij})$ of the mutated quiver $Q':=\mu_{k}(Q)$ satisfies:
\Eq{
b'_{ij}= 
\begin{cases}
    -b_{ij} & \text{if } k \in \{i, j\},\\
    b_{ij} + b_{ik}.b_{kj} & \text{if } b_{ik}>0, b_{kj}>0,\\
    b_{ij} - b_{ik}.b_{kj} & \text{if } b_{ik}<0, b_{kj}<0,\\
    b_{ij} & \text{otherwise.}
\end{cases}
}
Equivalently, we can rewrite this as
\Eq{
b'_{ij}= 
\begin{cases}
    -b_{ij} & \text{if } k \in \{i, j\},\\
    b_{ij} + \frac{|b_{ik}|b_{kj}+b_{ik}|b_{kj}|}{2} & \text{otherwise.}
\end{cases}
}
\end{Lem}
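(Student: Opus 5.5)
The plan is to count arrows directly from Definition~\ref{mutQ}, fixing an ordered pair $(i,j)$ of vertices and computing the net number of arrows $i\to j$ in $Q'=\mu_k(Q)$. Since quivers here have no loops and no 2-cycles, $b_{ij}$ is determined by arrows between $i$ and $j$ alone. Moreover, at most one of the two counts (arrows $i\to j$, arrows $j\to i$) is nonzero. The argument therefore reduces to tracking how each of the three steps changes this signed count.

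First, the case $k\in\{i,j\}$. Step (1) only creates arrows between two vertices both distinct from $k$, since it joins the endpoints of a path $i\to k\to j$. Step (3) removes only 2-cycles created by step (1), which again avoid $k$. Hence the only step affecting arrows incident to $k$ is step (2), which reverses each one and gives $b'_{ij}=-b_{ij}$.

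Second, suppose $k\notin\{i,j\}$ (and $i\neq j$; the case $i=j$ is trivial). Step (2) leaves arrows between $i$ and $j$ unchanged.
\begin{itemize}
\item[(a)] If $b_{ik}>0$ and $b_{kj}>0$, there are $b_{ik}$ arrows $i\to k$ and $b_{kj}$ arrows $k\to j$. Step (1) then adds $b_{ik}b_{kj}$ arrows $i\to j$, counting paths with multiplicity. No arrows $j\to i$ are added, because there are no arrows $j\to k$ or $k\to i$; this uses the absence of 2-cycles at $k$.
\item[(b)] If $b_{ik}<0$ and $b_{kj}<0$, symmetrically $|b_{ik}||b_{kj}|=b_{ik}b_{kj}$ arrows $j\to i$ are added.
\item[(c)] Otherwise, there is no path of length two through $k$ between $i$ and $j$ in either direction, and nothing is added.
\end{itemize}
Step (3) cancels oppositely oriented pairs, which does not change the signed difference. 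This yields the first formula.

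There is one point to check: the frozen-vertex clause in step (1). If both $i$ and $j$ are frozen, no arrow is added, so the lemma as stated should be read for $i,j$ not both frozen. Equivalently, $b_{ij}$ is only recorded on the mutable part, the usual convention for the extended matrix. I would state this convention explicitly rather than treat it as an obstacle.

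Finally, the equivalent form follows by a direct sign check of $\frac{|b_{ik}|b_{kj}+b_{ik}|b_{kj}|}{2}$:
\begin{itemize}
\item if both factors are positive, it equals $b_{ik}b_{kj}$;
\item if both are negative, it equals $-b_{ik}b_{kj}$;
\item if they have opposite signs or one is zero, the two terms cancel and it equals $0$.
\end{itemize}
No step is genuinely hard. The only care needed is in justifying that step (1) adds arrows in only one direction, which rests on the no-2-cycle hypothesis at $k$.
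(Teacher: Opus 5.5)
Your proof is correct and complete. Tracking the signed count of arrows between $i$ and $j$ through the three steps of Definition~\ref{mutQ} is exactly the verification the lemma needs, and you use the no-2-cycle hypothesis where it matters: to identify $b_{ik}$ with the number of arrows $i\to k$, and to rule out paths through $k$ in the opposite direction. The paper gives no proof of its own; it quotes the lemma as the standard Fomin--Zelevinsky matrix mutation rule, so there is no alternative route to compare against.

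Your frozen-vertex caveat is a genuine correction, not a formality. Step (1) adds nothing between two frozen vertices, so a configuration $i\to k\to j$ with $i,j$ both frozen gives $b'_{ij}=0$, while the displayed formula gives $1$. The lemma therefore holds only for entries with at least one mutable index, which is the usual extended-exchange-matrix convention, and stating that convention explicitly is the right fix.
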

Let $\F:= \Q(u_{1}, u_{2}, ..., u_{n})$ be the ambient field of rational functions in $n$ independent variables. 

\begin{Def} 
A \emph{cluster} is an $n$-tuple $\bx = (x_{1}, x_{2}, ..., x_{n}) \in \F^{n}$ of algebraically independent variables. The elements in a cluster $\bx \in \F^{n}$ are called \emph{cluster variables}. A \emph{seed} is a pair $(\bx, Q)$ where $\bx \in \F^{n}$ is a cluster and $Q$ is an ice quiver with vertices $Q_{0} = \{1, 2, ..., n\}$ (without loops and 2-cycles).

A cluster variable $x_i$ is \emph{mutable} (or \emph{frozen}) if the vertex $i\in Q_0$ is mutable (or frozen) respectively.
\end{Def}

\begin{Def}
Let $k\in Q_0$ be mutable. The \emph{mutation of seed} $(\bx, Q)$ at $k$ is a seed $\mu_{k}(\bx, Q) := (\mu_{k}(\bx), \mu_{k}(Q))$ where $\mu_{k}(Q)$ is the mutation of $Q$ defined in Definition \ref{mutQ}, $\mu_{k}(\bx) := (x'_{1}, x'_{2}, ..., x'_{n}) \in \F^{n}$ is the cluster defined by:
\Eq{
x'_{l}:= 
\begin{cases}
    x_{l} & \text{if } l \neq k,\\
    \dis\frac{1}{x_{k}} \left(\prod_{k\to i} x_i + \prod_{j \to k} x_j\right) & \text{if } l = k
\end{cases}
}
where the product is taken over all arrows in $Q_1$ that start or end in vertex $k$, respectively (counted with multiplicity); the product is understood to be $1$ if there are no such arrows.
\end{Def}

Equivalently, if $B := B(Q)$ is the signed-adjacency matrix of $Q$, the equation can be rewritten as:
\Eq{
x_{k}x'_{k}= \prod_{\substack{i \in \{1, 2, ..., n\} \\ b_{ik} > 0}} x_i^{b_{ik}} + \prod_{\substack{i \in \{1, 2, ..., n\} \\ b_{ik} < 0}} x_i^{-b_{ik}}.
}
We shall call this equation the \emph{exchange relation} of the cluster variables.
\begin{Def} Let $(\bx, Q)$ be a seed. The $\Z$-subalgebra $\cA(Q)\subset \F$ generated by all the cluster variables obtained by all possible finite sequences of cluster mutations from $(\bx, Q)$ is called the \emph{cluster algebra} associated to the initial seed $(\bx, Q)$. The algebra is independent of the mutation equivalence class of $Q$.
\end{Def}

Let $(\bx, Q)$ be an initial seed and $\cA(Q)$ the corresponding cluster algebra. Let $\{1,...,n\}$ be mutable and $\{n+1,...,m\}$ be frozen. The Laurent phenomenon \cite{FZ} states that
\Eq{
\cA(Q)\subset \Z[x_1^{\pm1}, x_2^{\pm1}, ..., x_n^{\pm1}, x_{n+1}, x_{n+2}, . . . , x_m].
}
One of the main recent breakthroughs is the \emph{Positivity Theorem}, which was proven in full generality in \cite{GHKK}.
\begin{Thm}[Positivity Theorem \cite{GHKK}] 
\label{thm:pos_thm}
For any cluster variables written as Laurent polynomial of the initial seed as above, all the coefficients lie in $\Z_{>0}$.
\end{Thm}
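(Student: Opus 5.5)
Since this is the general Positivity Theorem, I would not attempt a combinatorial argument in the style of snake graphs or $T$-paths. Those work only for surface-type quivers such as the $\mathrm{SL}_2$ case recalled in the introduction. Instead I would follow the scattering-diagram strategy of \cite{GHKK}. The plan has three stages:
\begin{enumerate}
\item realize every cluster monomial, and in particular every cluster variable, as a \emph{theta function} $\vartheta_m$ attached to a lattice point $m$ in the $g$-vector lattice;
\item express $\vartheta_m$, in the chart of any seed, as a sum over \emph{broken lines};
\item check that each broken line contributes a monomial with a nonnegative coefficient.
\end{enumerate}
Positivity of the Laurent expansion then follows at once. The nonzero coefficients are positive integers because they are finite sums of products of positive integers.

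\textbf{Stage one: the scattering diagram.} From the exchange matrix $B$ of $(\bx,Q)$, adjoined with the frozen rows, I would build the cluster scattering diagram $\mathfrak{D}$ in $M_\R=\R^n$. The initial walls are the hyperplanes $e_i^\perp$ for mutable $i$, with wall-crossing functions $1+z^{v_i}$, where $v_i$ is the $i$-th column of $B$ read as a monomial in the initial $x$'s and frozen variables. By the Kontsevich--Soibelman lemma, this diagram completes uniquely, up to equivalence, to a consistent scattering diagram. The crucial positivity input is that every wall function of the consistent completion has the form $\prod (1+z^{v})^{c}$ with $c\in\Z_{>0}$, or more generally is a power series with nonnegative integer coefficients. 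I would prove this by the GHKK reduction. First, perturb the initial walls to general position. Then show that every wall produced in the inductive completion comes from a \emph{rank-two} consistency problem between two functions of the form $(1+z^{a})^{p}$ and $(1+z^{b})^{q}$. Finally, establish positivity in rank two, either via the tropical vertex theorem of Gross--Pandharipande--Siebert (relative Gromov--Witten invariants of toric surfaces) or by a direct factorization argument. I expect this step to be the main obstacle. Everything else is combinatorial bookkeeping, whereas rank-two positivity genuinely needs geometric input or a delicate explicit analysis.

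\textbf{Stage two: theta functions.} Given consistency, I would define for a generic base point $Q$ in the positive chamber
\[
\vartheta_{Q,m}=\sum_{\gamma} \mathrm{Mono}(\gamma),
\]
where $\gamma$ ranges over broken lines with initial exponent $m$ ending at $Q$. Each bend of a broken line picks a term of the current monomial times a wall function, and by stage one every such term has a nonnegative coefficient. Consistency makes $\vartheta_{Q,m}$ independent of $Q$ within a chamber. Crossing a wall changes it by the corresponding cluster transformation. The chambers of $\mathfrak{D}$ are exactly the $g$-vector cones of the seeds, and the cluster complex embeds as a subfan.

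\textbf{Stage three: identification.} For $m$ in the chamber $\sigma$ of a seed $s'$, the only broken line ending in $\sigma$ is the straight one. So $\vartheta_m$ equals the cluster monomial $z^m$ in the chart of $s'$. Transporting it to the initial chamber by wall-crossings shows that $\vartheta_m$ is that cluster monomial. Reading off its broken-line expansion at a base point in the initial chamber then exhibits it as a Laurent polynomial in $x_1,\dots,x_n$, polynomial in the frozen variables, with positive coefficients. That is exactly the statement of Theorem~\ref{thm:pos_thm}.
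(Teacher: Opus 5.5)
Your outline follows the scattering-diagram argument of \cite{GHKK}, and the paper relies on nothing more: it quotes the theorem without proof. As written, however, two steps do not go through.

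\textbf{Stage one is not well defined in the generality of the theorem.} The wall-crossing automorphisms, their infinite path-ordered products and the broken-line sums all live in a completion of the monoid algebra of the monoid $P$ generated by the $v_i$. This requires $P$ to have no nonzero units, i.e.\ the $v_i$ must lie in a strictly convex cone. The paper allows ice quivers with no frozen vertices, and for degenerate $B$ this fails. For the oriented $3$-cycle the columns satisfy $v_1+v_2+v_3=0$, so $v_1$ has the inverse $v_2+v_3$ in $P$, and there is no monomial maximal ideal in which the Kontsevich--Soibelman completion can converge. This is why \cite{GHKK} impose an injectivity assumption and otherwise pass to principal coefficients, replacing $v_i$ by $(v_i,e_i)$. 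You then need an extra step to return to arbitrary frozen variables. For example, use the Fomin--Zelevinsky separation formula $x=x^{\mathbf g}F(\hat y)/F|_{\mathbb P}(y)$: its denominator is a monomial in the frozen variables, so positivity of the $F$-polynomials transfers.

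\textbf{The positivity of the wall functions is not actually supplied.} You rightly call this the heart of the matter. The tropical vertex theorem expresses $\log f_{\mathfrak d}$, not $f_{\mathfrak d}$, through relative Gromov--Witten invariants. These are rational numbers of both signs; already for $f_{\mathfrak d}=1+z^{m}$ the logarithm has alternating coefficients. So that formula gives no control on the signs of the coefficients of $f_{\mathfrak d}$.

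The obvious factorization argument fails for the same reason. Writing $t=u_1+\dots+u_k$ with $u_j^2=0$ gives
\[
1+tz^{m}=\prod_{J\neq\emptyset}\bigl(1+(-1)^{|J|+1}(|J|-1)!\,u_J\,z^{|J|m}\bigr),
\]
so the perturbed incoming walls already carry negative coefficients. Each collision of two square-zero walls $1+a\,u_Iz^{m}$ and $1+b\,u_Jz^{m'}$ does produce a single new wall, but its coefficient $ab\,c\,u_Iu_J$ inherits the sign of $ab$.

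Positivity of the consistent cluster scattering diagram is a separate theorem in \cite{GHKK}. Your stages two and three are only as strong as that input, so you must either quote it or give an argument that genuinely yields nonnegative coefficients. With these two inputs in place, the broken-line and theta-function part of your plan is the standard argument.

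A smaller point: you only need that each $g$-vector cone is a chamber and that crossing between adjacent cluster chambers is the corresponding mutation. The assertion that these cones are exactly the chambers of $\mathfrak D$ is more than you need.
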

The special case of the theorem which motivates the combinatorial understanding of the current work comes from cluster algebras arising from surfaces, which was proved in \cite{MSW08, MSW09} by describing the coefficients as counting the number of perfect matchings associated to certain graphs called \emph{snake diagrams}. It turns out that they can also be described in terms of continued fractions \cite{CS}. We recall some notations that are needed in this paper.

\begin{Def}
The \emph{standard continued fraction} representation of $x\in\R$ is defined as:
\Eq{
x = b_0 + \cfrac{a_1}{b_1 + \cfrac{a_2}{b_2 + \cfrac{a_3}{b_3 + \cdots}}}
}
understood as the limit of the sequence of \emph{convergents}
\Eq{
x_n := b_0+\cfrac{a_1}{b_1+\cfrac{a_2}{b_2+\cfrac{a_3}{\ddots + \cfrac{a_n}{b_n}}}}=\frac{A_n}{B_n}.
}
A specific example with numerical values for $\pi$ is:
\Eq{
\pi = 3 + \cfrac{1^2}{6 + \cfrac{3^2}{6 + \cfrac{5^2}{6 + \cfrac{7^2}{6 + \cdots}}}}.
}
When all $a_i = 1$, we will write the continued fraction as 
\Eq{
x=[b_0; b_1, b_2, \dots].
}
\end{Def}

\begin{Prop}[Fundamental theorem of continued fractions]
\label{prop:fun_con} The convergents $\dfrac{A_n}{B_n}$ satisfy the initial conditions:
\begin{align*}
A_{-1} = 1,\quad A_0 = b_0, \quad B_{-1} = 0,\quad B_0 = 1
\end{align*}
and for $n \geq 1$, the recurrence relations:
\Eq{
A_n &= b_n A_{n-1} + a_n A_{n-2}, \nonumber\\
B_n &= b_n B_{n-1} + a_n B_{n-2}.
}
\end{Prop}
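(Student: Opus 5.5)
We prove the fundamental theorem of continued fractions by induction on $n$, using the standard trick of treating the last partial denominator as a formal variable. Write $b_0+\cfrac{a_1}{b_1+\cdots+\cfrac{a_n}{b_n}}$ as a rational function $F_n(b_0,\dots,b_n)$ of the $b_i$, with the $a_i$ fixed. We claim that, for all values of the variables with nonvanishing denominators,
\Eq{
F_n(b_0,\dots,b_{n-1},t)=\frac{tA_{n-1}+a_nA_{n-2}}{tB_{n-1}+a_nB_{n-2}},
}
where $A_k,B_k$ are defined by the stated initial conditions and recurrences. Taking $t=b_n$ then gives $F_n=A_n/B_n$ with $A_n,B_n$ satisfying the claimed recurrences. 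The base cases are direct checks. For $n=0$ we have $A_0/B_0=b_0/1$. For $n=1$,
\Eqn{
b_0+\frac{a_1}{b_1}=\frac{b_1b_0+a_1\cdot 1}{b_1\cdot 1+a_1\cdot 0}=\frac{b_1A_0+a_1A_{-1}}{b_1B_0+a_1B_{-1}},
}
which shows that the conventions $A_{-1}=1$, $B_{-1}=0$ are exactly what make the recurrence hold at $n=1$.

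For the inductive step, observe that the $(n+1)$-st convergent is obtained from the $n$-th by the substitution $b_n\mapsto b_n+\dfrac{a_{n+1}}{b_{n+1}}$:
\Eqn{
F_{n+1}(b_0,\dots,b_{n+1})=F_n\!\left(b_0,\dots,b_{n-1},\,b_n+\frac{a_{n+1}}{b_{n+1}}\right).
}
Apply the inductive hypothesis with $t=b_n+a_{n+1}/b_{n+1}$. The key point is that $A_{n-1},A_{n-2},B_{n-1},B_{n-2}$ depend only on $b_0,\dots,b_{n-1}$ and $a_1,\dots,a_{n-1}$, so they are unaffected by this substitution. This gives
\Eqn{
F_{n+1}=\frac{\bigl(b_n+\tfrac{a_{n+1}}{b_{n+1}}\bigr)A_{n-1}+a_nA_{n-2}}{\bigl(b_n+\tfrac{a_{n+1}}{b_{n+1}}\bigr)B_{n-1}+a_nB_{n-2}}
=\frac{A_n+\tfrac{a_{n+1}}{b_{n+1}}A_{n-1}}{B_n+\tfrac{a_{n+1}}{b_{n+1}}B_{n-1}}.
}
Multiplying numerator and denominator by $b_{n+1}$ yields
\Eqn{
F_{n+1}=\frac{b_{n+1}A_n+a_{n+1}A_{n-1}}{b_{n+1}B_n+a_{n+1}B_{n-1}},
}
which is the claim at level $n+1$ with $t=b_{n+1}$.

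The only real obstacle is keeping the induction honest. The hypothesis must be stated for a free last entry $t$, not merely for the specific value $b_n$. We must also check that the lower-index $A_k,B_k$ are independent of $b_n$, which is clear from the recurrences. Finally, we treat the identity as one of rational functions, or equivalently assume all intermediate denominators are nonzero, so that multiplying through by $b_{n+1}$ is legitimate. No earlier result from the paper is needed; the statement is classical and self-contained.
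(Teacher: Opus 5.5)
The paper gives no proof of this proposition; it quotes it as the classical recurrence for convergents and uses it without argument, so there is no route to compare against. Your proof is correct and is the standard one: you induct on the identity with the last partial denominator left as a free variable, then substitute $t=b_n+a_{n+1}/b_{n+1}$. Working in the field of rational functions in the $b_i$ makes both the substitution and the multiplication by $b_{n+1}$ legitimate, because $b_n+a_{n+1}/b_{n+1}$ is transcendental over the field generated by $b_0,\dots,b_{n-1}$ and the $a_i$.

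One remark concerns how the paper actually uses the result. In Section~\ref{subsec:general-number-terms} it concludes that $a(p_1,\dots,p_N)$ is the numerator of $[1;p_1,\dots,p_N]$ \emph{in lowest terms}. Neither the statement nor your proof addresses reducedness. That needs the companion identity $A_nB_{n-1}-A_{n-1}B_n=(-1)^{n-1}a_1\cdots a_n$. It follows from the same recurrences by a one-line induction, since the left side equals $-a_n$ times its predecessor and equals $-1$ at $n=0$. When all $a_i=1$ and the $b_i$ are integers, it gives $\gcd(A_n,B_n)=1$.
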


\subsection{Moduli spaces of $G$-local systems from surfaces}
We recall some definitions and terminology related to the moduli spaces of $G$-local systems over a bordered surface with marked points.
\label{subsec:modspsurface}
\begin{Def}
A pair $(\bbS, M)$ is called a \emph{bordered surface with marked points} if it satisfies:
\begin{enumerate}
\item[(i)] $\bbS$ is a connected, orientable 2-dimensional surface but possibly with boundary $\partial \bbS$;
\item[(ii)] $M \subseteq \partial \bbS$ is a non-empty finite set of marked points;
\item[(iii)] Each connected boundary component contains at least one marked point.
\end{enumerate}
A disk with $n$ marked points is called an \emph{$n$-gon}.
\end{Def}
In this article, we will assume that $\bbS$ has no punctures. See Section \ref{subsec:punctured_case} for further discussions on the punctured surface cases.

\begin{Def}
An \emph{arc} in $(\bbS, M)$ is an isotopy class of curves $\gamma$ satisfying:
\begin{enumerate}
\item[(i)] The endpoints of $\gamma$ lie in $M$;
\item[(ii)] $\gamma$ does not self-intersect except at endpoints;
\item[(iii)] $\gamma$ is disjoint from $M$ and $\partial \bbS$ except at endpoints.
\end{enumerate}
Two arcs are called \emph{compatible} if they do not intersect in the interior of $\bbS$. The set of all arcs is denoted by $A^{0}(\bbS, M)$. A maximal collection of distinct pairwise compatible arcs is called an \emph{ideal triangulation}. The arcs of an ideal triangulation partition $\bbS$ into \emph{ideal triangles} $T\in\cT$.
\end{Def}
Since we only consider surfaces without punctures, there will be no self-folded triangles.
\begin{Prop}
Any ideal triangulation $\cT$ of $\bbS$ contains exactly
\Eq{
n = 6g + 3b + c - 6
}
arcs, where $g$ denotes the \emph{genus} of $\bbS$, $b$ the number of \emph{boundary components}, and $c$ the number of \emph{marked points} on $\partial \bbS$.
\end{Prop}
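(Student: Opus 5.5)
We prove $n = 6g+3b+c-6$ by an Euler characteristic count on the cell decomposition of $\bbS$ given by $\cT$. Since $\bbS$ has no punctures, every vertex of the decomposition is a marked point on $\partial\bbS$. The edges are of two kinds: the $n$ arcs of $\cT$, and the boundary segments. Each boundary component contains at least one marked point, so the marked points cut it into as many segments as it carries marked points; hence there are exactly $c$ boundary segments in total. The $2$-cells are the ideal triangles. There are no self-folded triangles in the unpunctured case, so each triangle has three distinct sides.

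\textbf{Step 1 (counting triangles).} Count incidences between triangles and edges. Each arc of $\cT$ is a side of exactly two triangle-sides, either in two distinct triangles or twice in one triangle. Each boundary segment is a side of exactly one triangle. Writing $t$ for the number of triangles, this gives
\Eq{
3t = 2n + c .
}

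\textbf{Step 2 (Euler characteristic).} For a compact orientable surface of genus $g$ with $b$ boundary components,
\Eq{
\chi(\bbS) = 2 - 2g - b .
}
The triangulation is a genuine CW decomposition: its vertices are the $c$ marked points, its edges are the $n + c$ arcs and boundary segments, and its faces are the $t$ open triangles. This relies on the fact that each complementary region of a maximal compatible collection of arcs is an open disk bounded by three edges, which is exactly what "ideal triangle" means. So
\Eq{
c - (n+c) + t = 2 - 2g - b, \quad\text{that is,}\quad t = n + 2 - 2g - b .
}

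\textbf{Step 3 (solving).} Substitute $t$ into $3t = 2n + c$:
\Eq{
3n + 6 - 6g - 3b = 2n + c ,
}
which gives $n = 6g + 3b + c - 6$.

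The main point needing care is Step 2: justifying that the complementary regions of a maximal set of pairwise compatible arcs are all triangles, so that the decomposition is a valid CW structure. The standard argument uses maximality. If some region were not a disk, or were a disk with more than three sides, one could draw a further non-isotopic arc inside it compatible with all the others, contradicting maximality. The absence of punctures rules out once-punctured monogons and self-folded triangles. We take this as the standard meaning of an ideal triangulation, which the paper implicitly assumes when it says the arcs "partition $\bbS$ into ideal triangles."
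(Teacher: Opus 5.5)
The paper gives no proof of this proposition; it is quoted as the standard count from Fomin--Shapiro--Thurston (the case $p=0$ of $n=6g+3b+3p+c-6$). Your Euler characteristic argument is the standard proof of it and is correct. Side incidences give $3t=2n+c$, the CW decomposition gives $t-n=\chi(\bbS)=2-2g-b$, and eliminating $t$ yields the formula.

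The one place to tighten is the maximality sketch in Step 2. Besides regions that are not disks or that have more than three sides, you must also rule out complementary regions with one or two sides. That needs three facts:
\begin{itemize}
\item arcs are not contractible, so no region is a monogon cut off by an arc;
\item arcs are not isotopic to boundary segments, so no region is a bigon between an arc and a boundary segment;
\item $\bbS$ itself is not an unpunctured monogon or digon, where the formula would give $n<0$.
\end{itemize}
Distinct arcs of $\cT$ are non-isotopic, which rules out a bigon between two arcs.

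The paper's definition of an arc, conditions (i)--(iii), omits the first two exclusions, and its definition of a bordered surface omits the third. Under that literal definition, boundary-parallel curves would count as arcs and the formula would fail. So these conventions should be stated explicitly. Your decision to treat the $c$ boundary segments as edges distinct from the $n$ arcs already presupposes them.
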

\begin{Def}
Let $\cT$ be a triangulation containing a diagonal $\gamma$. Then there exists a unique quadrilateral in $\cT$ having $\gamma$ as a diagonal. The operation of replacing $\gamma$ with the other diagonal $\gamma'$ of this quadrilateral to obtain a new triangulation $\cT'$ is called a \emph{flip} (see Figure \ref{def:flip}).
\end{Def}

\begin{figure}[H]
\centering
\begin{tikzpicture}[node distance=2cm]
  \coordinate (A1) at (0,0);
  \coordinate (B1) at (3,0);
  \coordinate (C1) at (4,1.5);
  \coordinate (D1) at (1,1.5);
  \draw (A1) -- (B1) -- (C1) -- (D1) -- cycle;
  \draw[thick] (A1) -- (C1) node[midway, below left] {$\gamma$};
  \node[below left] at (A1) {$A$};
  \node[below right] at (B1) {$B$};
  \node[above right] at (C1) {$C$};
  \node[above left] at (D1) {$D$};

  \coordinate (A2) at (6,0);
  \coordinate (B2) at (9,0);
  \coordinate (C2) at (10,1.5);
  \coordinate (D2) at (7,1.5);
  \draw (A2) -- (B2) -- (C2) -- (D2) -- cycle;
  \draw[thick] (B2) -- (D2);
  \node[below left] at (A2) {$A$};
  \node[below right] at (B2) {$B$};
  \node[above right] at (C2) {$C$};
  \node[above left] at (D2) {$D$};
  
  \node at (8,1) {$\gamma'$};

  \draw[->, thick] (4.5,0.75) -- (5.5,0.75) node[midway, above] {flip};
\end{tikzpicture}
\caption{The flip of a quadrilateral}
\label{def:flip}
\end{figure}
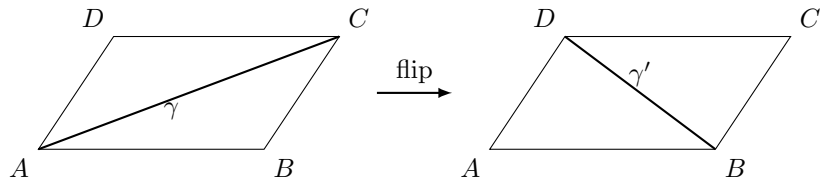

Let $G$ be a split semisimple simply-connected group over $\Q$ and $\bbS$ a bordered surface with marked points as above. The \emph{moduli space of $G$-local systems} $\mathscr{A}_{G,\bbS}$ and its variants are first considered by Fock--Goncharov \cite{FG} and further refined in Goncharov--Shen \cite{GS24}. Typically a $G$-local system consists of a principal $G$-bundle $\cL$ on $\bbS$ with flat connection and certain extra data provided by decorated flags in $\cA\in G/U$, where $U$ is the unipotent radical of a fixed Borel subgroup $B\subset G$.

Together with another decorated moduli space $\mathscr{P}_{G^{ad},\bbS}$ where $G^{ad}$ is the adjoint group, this carries a cluster Poisson structure, i.e. corresponds to a cluster $\cX$-variety, and the pair $(\mathscr{P}_{G^{ad},\bbS},\mathscr{A}_{G,\bbS})$ forms a so-called cluster ensemble. There is an atlas for each choice of ideal triangulation for $\bbS$, along with a choice of special vertex and also a choice of reduced word of the longest element $w_0$ of the Weyl group of $G$ for each triangle of the ideal triangulation which encodes the seed of the cluster structure. Each of these atlases is related by a sequence of mutations.

Consequently, in the simply-laced case, the cluster structure of $\mathscr{A}_{G,\bbS}$ above can be encoded by a generalized quiver $Q$, which is the result of amalgamation of the \emph{basic quivers}, first constructed in \cite{FG} for type $A_n$ and \cite{Ip, Le, GS24} for other Lie types, associated to each triangle $T\in \cT$ of the triangulation of $\bbS$. They satisfy certain features:
\begin{itemize}
\item[(1)] For each triangle $T\in\cT$ the \emph{basic quiver} $Q_T$ depends on the choice of reduced word of $w_0$ and the special vertex, and different choices are related by a sequence of quiver mutations.
\item[(2)] The basic quiver consists of $n$ frozen vertices on each boundary edge of $T$, where $n$ is the rank of $G$. Furthermore, we allow ``half-weighted arrows'' between frozen vertices, so that the signed-adjacency matrix $B$ has $\frac12\Z$ coefficients.
\item[(3)] The quiver $Q$ encoding the cluster structure of $\mathscr{A}_{G,\bbS}$ is obtained by \emph{amalgamation}, i.e. gluing the basic quivers, according to the triangulations, so that the frozen vertices on the glued boundary become mutable, and the weights of the arrows add accordingly.
\end{itemize}
The half arrows between frozen vertices facilitate the amalgamation between basic quivers, but otherwise they do not play a role in the cluster structure of the frozen variables of $\mathscr{A}_{G,\bbS}$.
\begin{Rem}The non-simply-laced case is also available, which is described using quivers with multipliers.\end{Rem}

In particular, when $G=\text{SL}_2$, the cluster structure is the same as the standard \emph{cluster algebra from surfaces} \cite{FST}, where the basic quiver assigned to each triangle $T$ is just a counterclockwise quiver with vertex on the midpoint of each boundary edge of $T$. The flipping of a diagonal corresponds to a single cluster mutation, and the combinatorics of cluster variables and their expansion formula under these sequences of mutations are completely understood via the combinatorics of snake diagrams and perfect matching \cite{MSW08,MSW09}.

In this paper, we will be interested in understanding the combinatorics of the cluster variables and their expansion formula for higher rank algebraic group $G$ under sequences of cluster mutations arising from flipping the diagonal of triangulations on the surface of $m$-gons, which is considerably much more complicated.

\subsection{Cluster realization of $\mathscr{A}_{G,\bbS}$ in type $A_n$}
\label{subsec:cluster_realization}
Recall the moduli space $\mathscr{A}_{G,\bbS}$ of twisted $G$-local systems, whose cluster $\cA$-variety structure is encoded by a certain quiver $Q$ associated to the triangulation of the surface. In this subsection, we explain how the quivers are constructed for $G=\text{SL}_{n+1}$ of type $A_n$, and also recall the mutation sequences associated with flips.

We first consider a single triangle, i.e. a disk with 3 marked points on the boundary.
\begin{Def}
\label{Def:trilad} An \emph{$n$-triangulated quiver} of a triangle $T$ is constructed through the following procedure:
\begin{enumerate}[label=(\arabic*)]
\item Subdivide each edge of $T$ by placing $n$ equally spaced vertices, creating $n+1$ equal segments per edge. These vertices are frozen.

\item Construct a triangulation by drawing lines parallel to the edges between corresponding vertices. Their intersections form the mutable vertices.

\item Classify triangles into two types:
\begin{enumerate}
    \item The three corner triangles (containing vertices of the original triangle) are designated as \emph{unshaded}.
    \item The remaining triangles alternate between \emph{shaded} and \emph{unshaded}, with adjacent triangles having opposite types.
\end{enumerate}

\item Construct the corresponding quiver where:
\begin{enumerate}
    \item Shaded triangles contribute 3 arrows forming counterclockwise cycles.
    \item Unshaded triangles contribute 3 arrows forming clockwise cycles.
    \item Red dotted arrows represent half-arrows in the quiver along the boundaries.
\end{enumerate}
\end{enumerate}
\end{Def}

We can now construct the quiver on a general marked surface by amalgamating (gluing) the corresponding quivers associated to the triangles of a given triangulation.
\begin{Def}
An \emph{$n$-triangulated quiver of a quadrilateral} is formed by gluing two $n$-triangulated triangles along a common edge. An \emph{$n$-triangulated quiver of a surface} is obtained by assembling $n$-triangulated triangles according to a surface triangulation.
\end{Def}

\begin{Def}
\label{subsec:flip}
A \emph{flip} of an $n$-triangulated quiver of a quadrilateral is a sequence of quiver mutations performed in $n$ steps. For each step $k$ with $1 \leq k \leq n$:
\begin{enumerate}[label=(\arabic*)]
\item Inscribe a $k \times (n+1-k)$ rectangle such that its vertices coincide with boundary vertices of the $n$-triangulated quadrilateral, while the side of length $n+1-k$ lies along the diagonal to be flipped.
\item Divide this rectangle into $k(n+1-k)$ unit squares.
\item Perform quiver mutations at the center of each square.
\end{enumerate}
\end{Def}
The order of mutations within each step is irrelevant, since no arrows exist between the mutated vertices after the previous steps, hence the mutations commute with each other. Figure~\ref{fig:78} shows the vertices (in blue) to mutate at each step in case $n=7$.
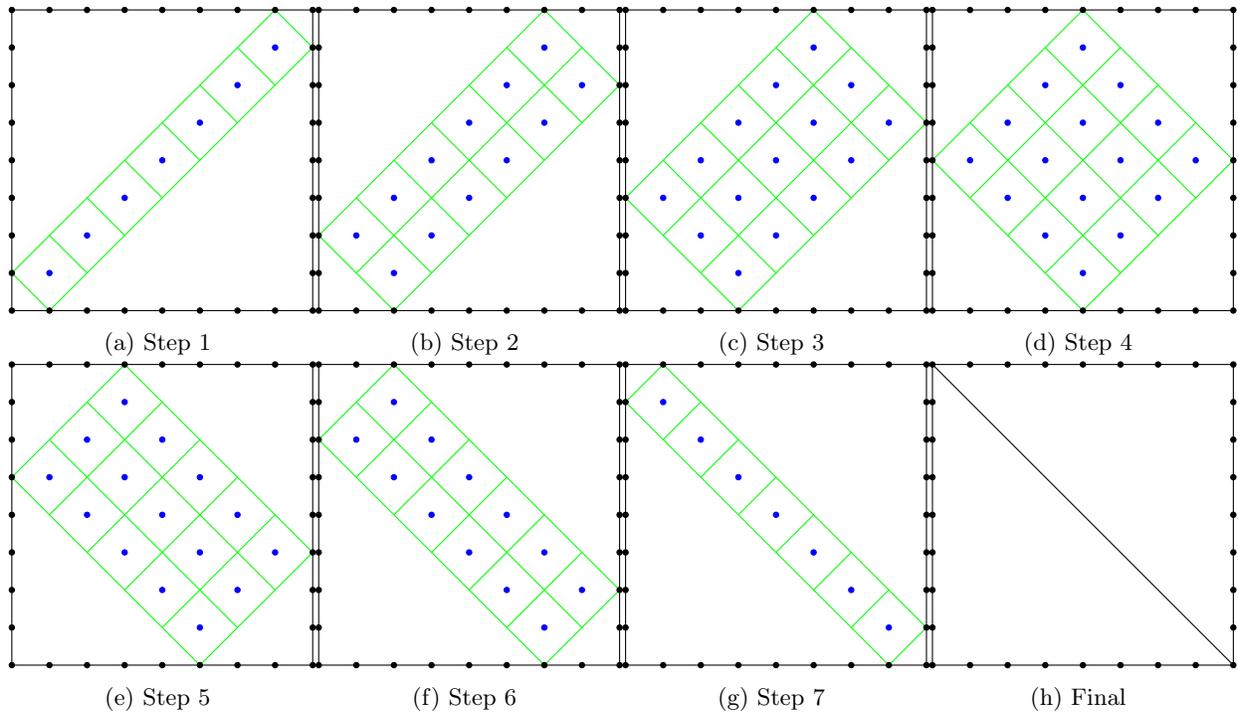
\begin{figure}[H]
\centering
\newcommand{\drawboundary}{
    \foreach \x in {0,...,8} {
        \foreach \y in {0,8} {
            \filldraw (\x,\y) circle (2pt);
        }
    }
    \foreach \y in {1,...,7} {
        \foreach \x in {0,8} {
            \filldraw (\x,\y) circle (2pt);
        }
    }
}

\begin{subfigure}[b]{0.24\textwidth}
\centering
\begin{tikzpicture}[scale=0.5]
\draw (0,0) rectangle (8,8);
\draw[thin,green] (1,0)--(0,1)--(7,8)--(8,7)--cycle;
\foreach \x in {1,...,6} {
	\draw[thin,green] (\x,\x+1)--(\x+1,\x);
}
\drawboundary
\foreach \i in {1,...,7} {
    \filldraw[blue] (\i,\i) circle (2pt);
}
\end{tikzpicture}
\caption{Step 1}
\end{subfigure}
\begin{subfigure}[b]{0.24\textwidth}
\centering
\begin{tikzpicture}[scale=0.5]
\draw (0,0) rectangle (8,8);
\draw[thin,green] (2,0)--(0,2)--(6,8)--(8,6)--cycle;
\draw[thin,green] (1,1)--(7,7);
\foreach \x in {1,...,5} {
	\draw[thin,green] (\x,\x+2)--(\x+2,\x);
}
\drawboundary
\foreach \i in {1,...,6} {
    \filldraw[blue] (\i+1,\i) circle (2pt);
    \filldraw[blue] (\i,\i+1) circle (2pt);
}
\end{tikzpicture}
\caption{Step 2}
\end{subfigure}
\begin{subfigure}[b]{0.24\textwidth}
\centering
\begin{tikzpicture}[scale=0.5]
\draw (0,0) rectangle (8,8);
\draw[thin,green] (3,0)--(0,3)--(5,8)--(8,5)--cycle;
\draw[thin,green] (1,2)--(6,7);
\draw[thin,green] (2,1)--(7,6);
\foreach \x in {1,...,4} {
	\draw[thin,green] (\x,\x+3)--(\x+3,\x);
}
\drawboundary
\foreach \i in {1,...,5} {
    \filldraw[blue] (\i+2,\i) circle (2pt);
    \filldraw[blue] (\i+1,\i+1) circle (2pt);
    \filldraw[blue] (\i,\i+2) circle (2pt);
}
\end{tikzpicture}
\caption{Step 3}
\end{subfigure}
\begin{subfigure}[b]{0.24\textwidth}
\centering
\begin{tikzpicture}[scale=0.5]
\draw (0,0) rectangle (8,8);
\draw[thin,green] (4,0)--(0,4)--(4,8)--(8,4)--cycle;
\draw[thin,green] (1,3)--(5,7);
\draw[thin,green] (2,2)--(6,6);
\draw[thin,green] (3,1)--(7,5);
\foreach \x in {1,...,3} {
	\draw[thin,green] (\x,\x+4)--(\x+4,\x);
}
\drawboundary
\foreach \i in {1,...,4} {
    \filldraw[blue] (\i+3,\i) circle (2pt);
    \filldraw[blue] (\i+2,\i+1) circle (2pt);
    \filldraw[blue] (\i+1,\i+2) circle (2pt);
    \filldraw[blue] (\i,\i+3) circle (2pt);
}
\end{tikzpicture}
\caption{Step 4}
\end{subfigure}

\begin{subfigure}[b]{0.24\textwidth}
\centering
\begin{tikzpicture}[scale=0.5]
\draw (0,0) rectangle (8,8);
\draw[thin,green] (5,0)--(0,5)--(3,8)--(8,3)--cycle;
\draw[thin,green] (1,4)--(4,7);
\draw[thin,green] (2,3)--(5,6);
\draw[thin,green] (3,2)--(6,5);
\draw[thin,green] (4,1)--(7,4);
\foreach \x in {1,...,2} {
	\draw[thin,green] (\x,\x+5)--(\x+5,\x);
}
\drawboundary
\foreach \i in {1,...,3} {
    \filldraw[blue] (\i+4,\i) circle (2pt);
    \filldraw[blue] (\i+3,\i+1) circle (2pt);
    \filldraw[blue] (\i+2,\i+2) circle (2pt);
    \filldraw[blue] (\i+1,\i+3) circle (2pt);
    \filldraw[blue] (\i,\i+4) circle (2pt);
}
\end{tikzpicture}
\caption{Step 5}
\end{subfigure}
\begin{subfigure}[b]{0.24\textwidth}
\centering
\begin{tikzpicture}[scale=0.5]
\draw (0,0) rectangle (8,8);
\draw[thin,green] (6,0)--(0,6)--(2,8)--(8,2)--cycle;
\draw[thin,green] (1,5)--(3,7);
\draw[thin,green] (2,4)--(4,6);
\draw[thin,green] (3,3)--(5,5);
\draw[thin,green] (4,2)--(6,4);
\draw[thin,green] (5,1)--(7,3);
\foreach \x in {1,...,1} {
	\draw[thin,green] (\x,\x+6)--(\x+6,\x);
}
\drawboundary
\foreach \i in {1,...,2} {
    \filldraw[blue] (\i+5,\i) circle (2pt);
    \filldraw[blue] (\i+4,\i+1) circle (2pt);
    \filldraw[blue] (\i+3,\i+2) circle (2pt);
    \filldraw[blue] (\i+2,\i+3) circle (2pt);
    \filldraw[blue] (\i+1,\i+4) circle (2pt);
    \filldraw[blue] (\i,\i+5) circle (2pt);
}
\end{tikzpicture}
\caption{Step 6}
\end{subfigure}
\begin{subfigure}[b]{0.24\textwidth}
\centering
\begin{tikzpicture}[scale=0.5]
\draw (0,0) rectangle (8,8);
\draw[thin,green] (7,0)--(0,7)--(1,8)--(8,1)--cycle;
\draw[thin,green] (1,6)--(2,7);
\draw[thin,green] (2,5)--(3,6);
\draw[thin,green] (3,4)--(4,5);
\draw[thin,green] (4,3)--(5,4);
\draw[thin,green] (5,2)--(6,3);
\draw[thin,green] (6,1)--(7,2);
\drawboundary
\foreach \i in {1,...,1} {
    \filldraw[blue] (\i+6,\i) circle (2pt);
    \filldraw[blue] (\i+5,\i+1) circle (2pt);
    \filldraw[blue] (\i+4,\i+2) circle (2pt);
    \filldraw[blue] (\i+3,\i+3) circle (2pt);
    \filldraw[blue] (\i+2,\i+4) circle (2pt);
    \filldraw[blue] (\i+1,\i+5) circle (2pt);
    \filldraw[blue] (\i,\i+6) circle (2pt);
}
\end{tikzpicture}
\caption{Step 7}
\end{subfigure}
\begin{subfigure}[b]{0.24\textwidth}
\centering
\begin{tikzpicture}[scale=0.5]
\draw (0,0) rectangle (8,8);
\drawboundary
\draw (0,8) -- (8,0);
\end{tikzpicture}
\caption{Final}
\end{subfigure}
\caption{Step-by-step construction of a flip in a $7$-triangulated quadrilateral}
\label{fig:78}
\end{figure}

\begin{Ex}
For $n=2$, each edge of a triangle is divided into 3 equal parts using 2 vertices. We get the $2$-triangulated triangle with its corresponding quiver (see Figure~\ref{fig:2_tri_quiv}) and the resulting quadrilateral after flip (see Figure~\ref{fig:3}).
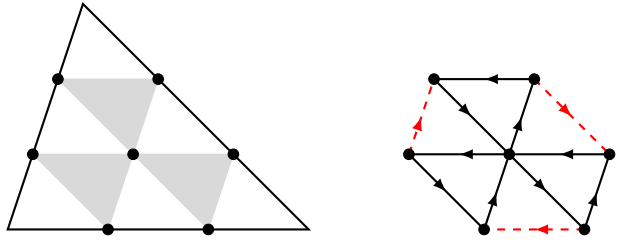
\begin{figure}[H]
\centering
\begin{tikzpicture}[scale=1,
    mid arrow/.style={
        postaction={decorate},
        decoration={
            markings,
            mark=at position 0.5 with {\arrow{>}}
        }
    }
]
   
  \draw[thick] (5,0) -- (6,3) -- (9,0) -- cycle;
  
  \filldraw[gray!30] (5.667,2) -- (7,2) -- (6.667,1) -- cycle;
  \filldraw[gray!30] (6.667,1) -- (6.333,0) -- (5.333,1) -- cycle;
  \filldraw[gray!30] (6.667,1) -- (7.667,0) -- (8,1) -- cycle;
  
  \filldraw[black] (5.333,1) circle (2pt);
  \filldraw[black] (5.667,2) circle (2pt);
  \filldraw[black] (6.333,0) circle (2pt);
  \filldraw[black] (7.667,0) circle (2pt);
  \filldraw[black] (7,2) circle (2pt);
  \filldraw[black] (8,1) circle (2pt);
  \filldraw[black] (6.667,1) circle (2pt);
  
  
  \draw[mid arrow, red, thick, dashed] (10.333,1) -- (10.667,2);
  \draw[mid arrow, red, thick, dashed] (12,2) -- (13,1);
  \draw[mid arrow, red, thick, dashed] (12.667,0) -- (11.333,0);
  
  \draw[mid arrow, thick] (12,2) -- (10.667,2);
  \draw[mid arrow, thick] (13,1) -- (11.667,1);
  \draw[mid arrow, thick] (11.667,1) -- (10.333,1);
  \draw[mid arrow, thick] (10.333,1) -- (11.333,0);
  \draw[mid arrow, thick] (10.667,2) -- (11.667,1);
  \draw[mid arrow, thick] (11.667,1) -- (12.667,0);
  \draw[mid arrow, thick] (11.333,0) -- (11.667,1);
  \draw[mid arrow, thick] (11.667,1) -- (12,2);
  \draw[mid arrow, thick] (12.667,0) -- (13,1);
  
  \filldraw[black] (10.333,1) circle (2pt);
  \filldraw[black] (10.667,2) circle (2pt);
  \filldraw[black] (11.333,0) circle (2pt);
  \filldraw[black] (12.667,0) circle (2pt);
  \filldraw[black] (12,2) circle (2pt);
  \filldraw[black] (13,1) circle (2pt);
  \filldraw[black] (11.667,1) circle (2pt);
\end{tikzpicture}
\caption{A $2$-triangulated triangle and the corresponding quiver}
\label{fig:2_tri_quiv}
\end{figure}

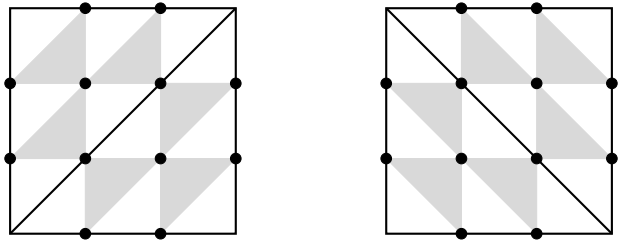
\begin{figure}[H]
\centering
\begin{tikzpicture}[scale=1]
  \draw[thick] (0,0) -- (0,3) -- (3,3) -- (3,0) -- cycle;
  \draw[thick] (5,0) -- (5,3) -- (8,3) -- (8,0) -- cycle;
  \draw[thick] (0,0) -- (3,3);
  \draw[thick] (5,3) -- (8,0);
  
  \filldraw[gray!30] (0,1) -- (1,2) -- (1,1) -- cycle;
  \filldraw[gray!30] (0,2) -- (1,3) -- (1,2) -- cycle;
  \filldraw[gray!30] (1,2) -- (2,3) -- (2,2) -- cycle;
  \filldraw[gray!30] (1,1) -- (2,1) -- (1,0) -- cycle;
  \filldraw[gray!30] (2,1) -- (3,1) -- (2,0) -- cycle;
  \filldraw[gray!30] (2,2) -- (3,2) -- (2,1) -- cycle;
  
  \filldraw[gray!30] (5,1) -- (6,1) -- (6,0) -- cycle;
  \filldraw[gray!30] (5,2) -- (6,2) -- (6,1) -- cycle;
  \filldraw[gray!30] (6,1) -- (7,1) -- (7,0) -- cycle;
  \filldraw[gray!30] (6,3) -- (6,2) -- (7,2) -- cycle;
  \filldraw[gray!30] (7,3) -- (7,2) -- (8,2) -- cycle;
  \filldraw[gray!30] (7,2) -- (7,1) -- (8,1) -- cycle;   
  
  \filldraw[black] (0,1) circle (2pt);
  \filldraw[black] (0,2) circle (2pt);
  \filldraw[black] (1,0) circle (2pt);
  \filldraw[black] (2,0) circle (2pt);
  \filldraw[black] (3,1) circle (2pt);
  \filldraw[black] (3,2) circle (2pt);
  \filldraw[black] (1,3) circle (2pt);
  \filldraw[black] (2,3) circle (2pt); 
  \filldraw[black] (1,1) circle (2pt);
  \filldraw[black] (1,2) circle (2pt);
  \filldraw[black] (2,1) circle (2pt);
  \filldraw[black] (2,2) circle (2pt);
  
  \filldraw[black] (5,1) circle (2pt);
  \filldraw[black] (5,2) circle (2pt);
  \filldraw[black] (6,0) circle (2pt);
  \filldraw[black] (7,0) circle (2pt);
  \filldraw[black] (8,1) circle (2pt);
  \filldraw[black] (8,2) circle (2pt);
  \filldraw[black] (6,3) circle (2pt);
  \filldraw[black] (7,3) circle (2pt); 
  \filldraw[black] (6,1) circle (2pt);
  \filldraw[black] (6,2) circle (2pt);
  \filldraw[black] (7,1) circle (2pt);
  \filldraw[black] (7,2) circle (2pt);   
\end{tikzpicture}
\caption{Original quadrilateral (left) and after flip (right)}
\label{fig:3}
\end{figure}
Note that no arrows exist between vertices lying on the same main diagonal. The flip is achieved through sequences of mutations in two steps (refer to the labeling in Figure \ref{quadquiv}):
\begin{enumerate}[label=Step \arabic*:]
\item Mutate at $x_1$ and $x_2$;
\item Mutate at $x_3$ and $x_4$.
\end{enumerate}
Since no edges connect vertices within the same step, the mutation order does not affect the result. Without loss of generality, we can let the sequence be $\mu = \{x_2 \rightarrow x_1 \rightarrow x_4 \rightarrow x_3 \}$.
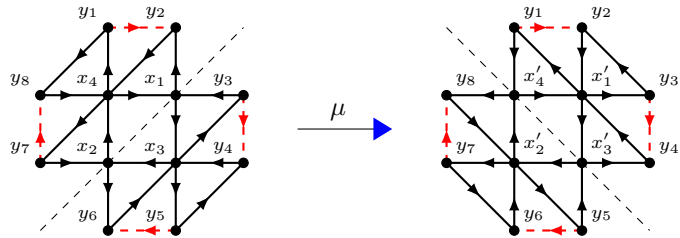
\begin{figure}[H]
\centering
\begin{tikzpicture}[scale=0.9,
    mid arrow/.style={
        postaction={decorate},
        decoration={
            markings,
            mark=at position 0.5 with {\arrow{>}}
        }
    }
]
  \draw[thin,dashed] (0,0) -- (3,3);
  \draw[thin,dashed] (5+1,3) -- (8+1,0);
  \draw[-{Triangle[blue,scale=2,line width=1.5pt]}] (3.8,1.5) -- (5.2,1.5);
  \node[above] at (4.4,1.5) {$\mu$};

\draw[mid arrow, thick] (1,2) -- (0,1);
\draw[mid arrow, thick] (1,1) -- (1,2);
\draw[mid arrow, thick] (0,1) -- (1,1);

\draw[mid arrow, thick] (1,3) -- (0,2);
\draw[mid arrow, thick] (1,2) -- (1,3);
\draw[mid arrow, thick] (0,2) -- (1,2);

\draw[mid arrow, thick] (2,3) -- (1,2);
\draw[mid arrow, thick] (2,2) -- (2,3);
\draw[mid arrow, thick] (1,2) -- (2,2);

\draw[mid arrow, thick] (2,1) -- (1,1);
\draw[mid arrow, thick] (1,0) -- (2,1);
\draw[mid arrow, thick] (1,1) -- (1,0);

\draw[mid arrow, thick] (3,1) -- (2,1);
\draw[mid arrow, thick] (2,0) -- (3,1);
\draw[mid arrow, thick] (2,1) -- (2,0);

\draw[mid arrow, thick] (3,2) -- (2,2);
\draw[mid arrow, thick] (2,1) -- (3,2);
\draw[mid arrow, thick] (2,2) -- (2,1);

\draw[mid arrow, thick] (6+1,1) -- (5+1,1);
\draw[mid arrow, thick] (6+1,0) -- (6+1,1);
\draw[mid arrow, thick] (5+1,1) -- (6+1,0);

\draw[mid arrow, thick] (6+1,2) -- (5+1,2);
\draw[mid arrow, thick] (6+1,1) -- (6+1,2);
\draw[mid arrow, thick] (5+1,2) -- (6+1,1);

\draw[mid arrow, thick] (7+1,1) -- (6+1,1);
\draw[mid arrow, thick] (7+1,0) -- (7+1,1);
\draw[mid arrow, thick] (6+1,1) -- (7+1,0);

\draw[mid arrow, thick] (6+1,3) -- (6+1,2);
\draw[mid arrow, thick] (6+1,2) -- (7+1,2);
\draw[mid arrow, thick] (7+1,2) -- (6+1,3);
  
\draw[mid arrow, thick] (7+1,3) -- (7+1,2);
\draw[mid arrow, thick] (7+1,2) -- (8+1,2);
\draw[mid arrow, thick] (8+1,2) -- (7+1,3);
  
\draw[mid arrow, thick] (7+1,2) -- (7+1,1);
\draw[mid arrow, thick] (7+1,1) -- (8+1,1); 
\draw[mid arrow, thick] (8+1,1) -- (7+1,2);
  
  \draw[mid arrow, red,thick,dashed] (0,1) -- (0,2);
  \draw[mid arrow, red,thick,dashed] (1,3) -- (2,3);
  \draw[mid arrow, red,thick,dashed] (3,2) -- (3,1); 
  \draw[mid arrow, red,thick,dashed] (2,0) -- (1,0);
  
  \draw[mid arrow, red,thick,dashed] (6+1,3) -- (7+1,3);
  \draw[mid arrow, red,thick,dashed] (8+1,2) -- (8+1,1);
  \draw[mid arrow, red,thick,dashed] (7+1,0) -- (6+1,0); 
  \draw[mid arrow, red,thick,dashed] (5+1,1) -- (5+1,2); 
  
  \filldraw[black] (0,1) circle (2pt);
  \node[above left, font=\scriptsize] at (0,1) {$y_{7}$};
  
  \filldraw[black] (0,2) circle (2pt);
  \node[above left, font=\scriptsize] at (0,2) {$y_{8}$};
  
  \filldraw[black] (1,0) circle (2pt);
  \node[above left, font=\scriptsize] at (1,0) {$y_{6}$};
  
  \filldraw[black] (2,0) circle (2pt);
  \node[above left, font=\scriptsize] at (2,0) {$y_{5}$};
  
  \filldraw[black] (3,1) circle (2pt);
  \node[above left, font=\scriptsize] at (3,1) {$y_{4}$};
  
  \filldraw[black] (3,2) circle (2pt);
  \node[above left, font=\scriptsize] at (3,2) {$y_{3}$};
  
  \filldraw[black] (1,3) circle (2pt);
  \node[above left, font=\scriptsize] at (1,3) {$y_{1}$};
  
  \filldraw[black] (2,3) circle (2pt); 
  \node[above left, font=\scriptsize] at (2,3) {$y_{2}$};
  
  \filldraw[black] (1,1) circle (2pt);
  \node[above left, font=\scriptsize] at (1,1) {$x_{2}$};
  
  \filldraw[black] (1,2) circle (2pt);
  \node[above left, font=\scriptsize] at (1,2) {$x_{4}$};
  
  \filldraw[black] (2,1) circle (2pt);
  \node[above left, font=\scriptsize] at (2,1) {$x_{3}$};
  
  \filldraw[black] (2,2) circle (2pt);
  \node[above left, font=\scriptsize] at (2,2) {$x_{1}$};

  \filldraw[black] (5+1,1) circle (2pt);
  \node[above right, font=\scriptsize] at (5+1,1) {$y_{7}$};
  
  \filldraw[black] (5+1,2) circle (2pt);
  \node[above right, font=\scriptsize] at (5+1,2) {$y_{8}$};
  
  \filldraw[black] (6+1,0) circle (2pt);
  \node[above right, font=\scriptsize] at (6+1,0) {$y_{6}$};
  
  \filldraw[black] (7+1,0) circle (2pt);
  \node[above right, font=\scriptsize] at (7+1,0) {$y_{5}$};
  
  \filldraw[black] (8+1,1) circle (2pt);
  \node[above right, font=\scriptsize] at (8+1,1) {$y_{4}$};
  
  \filldraw[black] (8+1,2) circle (2pt);
  \node[above right, font=\scriptsize] at (8+1,2) {$y_{3}$};
  
  \filldraw[black] (6+1,3) circle (2pt);
  \node[above right, font=\scriptsize] at (6+1,3) {$y_{1}$};
  
  \filldraw[black] (7+1,3) circle (2pt);
  \node[above right, font=\scriptsize] at (7+1,3) {$y_{2}$};
  
  \filldraw[black] (6+1,1) circle (2pt);
  \node[above right, font=\scriptsize] at (6+1,1) {$x'_{2}$};
  
  \filldraw[black] (6+1,2) circle (2pt);
  \node[above right, font=\scriptsize] at (6+1,2) {$x'_{4}$};
  
  \filldraw[black] (7+1,1) circle (2pt);
  \node[above right, font=\scriptsize] at (7+1,1) {$x'_{3}$};
  
  \filldraw[black] (7+1,2) circle (2pt);
  \node[above right, font=\scriptsize] at (7+1,2) {$x'_{1}$};   
\end{tikzpicture}
\caption{Quivers corresponding to the quadrilaterals in Figure~\ref{fig:3}}\label{quadquiv}
\end{figure}

\begin{figure}[H]
\centering
\begin{tikzpicture}[scale=0.72,
    mid arrow/.style={
        postaction={decorate},
        decoration={
            markings,
            mark=at position 0.5 with {\arrow{>}}
        }
    }
]
\foreach \i in {0,1,2,3,4} {
  
  \draw[mid arrow, red,thick,dashed] (5*\i+0,1) -- (5*\i+0,2);
  \draw[mid arrow, red,thick,dashed] (5*\i+1,3) -- (5*\i+2,3);
  \draw[mid arrow, red,thick,dashed] (5*\i+3,2) -- (5*\i+3,1); 
  \draw[mid arrow, red,thick,dashed] (5*\i+2,0) -- (5*\i+1,0);
  
  \filldraw[black] (5*\i+0,1) circle (2pt);
  \node[left, font=\tiny] at (5*\i+0,1) {$y_{7}$};
  
  \filldraw[black] (5*\i+0,2) circle (2pt);
  \node[left, font=\tiny] at (5*\i+0,2) {$y_{8}$};
  
  \filldraw[black] (5*\i+1,0) circle (2pt);
  \node[below, font=\tiny] at (5*\i+1,0) {$y_{6}$};
  
  \filldraw[black] (5*\i+2,0) circle (2pt);
  \node[below, font=\tiny] at (5*\i+2,0) {$y_{5}$};
  
  \filldraw[black] (5*\i+3,1) circle (2pt);
  \node[right, font=\tiny] at (5*\i+3,1) {$y_{4}$};
  
  \filldraw[black] (5*\i+3,2) circle (2pt);
  \node[right, font=\tiny] at (5*\i+3,2) {$y_{3}$};
  
  \filldraw[black] (5*\i+1,3) circle (2pt);
  \node[above, font=\tiny] at (5*\i+1,3) {$y_{1}$};
  
  \filldraw[black] (5*\i+2,3) circle (2pt); 
  \node[above, font=\tiny] at (5*\i+2,3) {$y_{2}$};
  
  \filldraw[black] (5*\i+1,1) circle (2pt); 
  \filldraw[black] (5*\i+2,2) circle (2pt);
  \filldraw[black] (5*\i+1,2) circle (2pt);
  \filldraw[black] (5*\i+2,1) circle (2pt);
}

\foreach \i in {0,1,2,3} {
  \draw[-{Triangle[blue,scale=2,line width=1.5pt]}] (5*\i+3.6,1.5) -- (5*\i+4.4,1.5); 
}
  
\draw[mid arrow, thick] (1,2) -- (0,1);
\draw[mid arrow, thick] (1,1) -- (1,2);
\draw[mid arrow, thick] (0,1) -- (1,1);

\draw[mid arrow, thick] (1,3) -- (0,2);
\draw[mid arrow, thick] (1,2) -- (1,3);
\draw[mid arrow, thick] (0,2) -- (1,2);

\draw[mid arrow, thick] (2,3) -- (1,2);
\draw[mid arrow, thick] (2,2) -- (2,3);
\draw[mid arrow, thick] (1,2) -- (2,2);

\draw[mid arrow, thick] (2,1) -- (1,1);
\draw[mid arrow, thick] (1,0) -- (2,1);
\draw[mid arrow, thick] (1,1) -- (1,0);

\draw[mid arrow, thick] (3,1) -- (2,1);
\draw[mid arrow, thick] (2,0) -- (3,1);
\draw[mid arrow, thick] (2,1) -- (2,0);

\draw[mid arrow, thick] (3,2) -- (2,2);
\draw[mid arrow, thick] (2,1) -- (3,2);
\draw[mid arrow, thick] (2,2) -- (2,1);

\draw[mid arrow, thick] (5*1+2,1) -- (5*1+1,2);
\draw[mid arrow, thick] (5*1+1,2) -- (5*1+1,1);
\draw[mid arrow, thick] (5*1+1,1) -- (5*1+0,1);

\draw[mid arrow, thick] (5*1+1,3) -- (5*1+0,2);
\draw[mid arrow, thick] (5*1+1,2) -- (5*1+1,3);
\draw[mid arrow, thick] (5*1+0,2) -- (5*1+1,2);

\draw[mid arrow, thick] (5*1+2,3) -- (5*1+1,2);
\draw[mid arrow, thick] (5*1+2,2) -- (5*1+2,3);
\draw[mid arrow, thick] (5*1+1,2) -- (5*1+2,2);

\draw[mid arrow, thick] (5*1+1,1) -- (5*1+2,1);
\draw[mid arrow, thick] (5*1+0,1) -- (5*1+1,0);
\draw[mid arrow, thick] (5*1+1,0) -- (5*1+1,1);

\draw[mid arrow, thick] (5*1+3,1) -- (5*1+2,1);
\draw[mid arrow, thick] (5*1+2,0) -- (5*1+3,1);
\draw[mid arrow, thick] (5*1+2,1) -- (5*1+2,0);

\draw[mid arrow, thick] (5*1+3,2) -- (5*1+2,2);
\draw[mid arrow, thick] (5*1+2,1) -- (5*1+3,2);
\draw[mid arrow, thick] (5*1+2,2) -- (5*1+2,1);

\draw[mid arrow, thick] (5*2+1,2) -- (5*2+1,1);
\draw[mid arrow, thick] (5*2+1,1) -- (5*2+0,1);

\draw[mid arrow, thick] (5*2+1,3) -- (5*2+0,2);
\draw[mid arrow, thick] (5*2+1,2) -- (5*2+1,3);
\draw[mid arrow, thick] (5*2+0,2) -- (5*2+1,2);

\draw[mid arrow, thick] (5*2+3,2) -- (5*2+2,3);
\draw[mid arrow, thick] (5*2+2,3) -- (5*2+2,2);
\draw[mid arrow, thick] (5*2+2,2) -- (5*2+1,2);

\draw[mid arrow, thick] (5*2+1,1) -- (5*2+2,1);
\draw[mid arrow, thick] (5*2+0,1) -- (5*2+1,0);
\draw[mid arrow, thick] (5*2+1,0) -- (5*2+1,1);

\draw[mid arrow, thick] (5*2+3,1) -- (5*2+2,1);
\draw[mid arrow, thick] (5*2+2,0) -- (5*2+3,1);
\draw[mid arrow, thick] (5*2+2,1) -- (5*2+2,0);

\draw[mid arrow, thick] (5*2+2,2) -- (5*2+3,2);
\draw[mid arrow, thick] (5*2+2,1) -- (5*2+2,2);

\draw[mid arrow, thick] (5*3+2,2) -- (5*3+1,3);
\draw[mid arrow, thick] (5*3+1,1) -- (5*3+1,2);
\draw[mid arrow, thick] (5*3+1,1) -- (5*3+0,1);

\draw[mid arrow, thick] (5*3+0,2) -- (5*3+1,1);
\draw[mid arrow, thick] (5*3+1,3) -- (5*3+1,2);
\draw[mid arrow, thick] (5*3+1,2) -- (5*3+0,2);

\draw[mid arrow, thick] (5*3+3,2) -- (5*3+2,3);
\draw[mid arrow, thick] (5*3+2,3) -- (5*3+2,2);
\draw[mid arrow, thick] (5*3+1,2) -- (5*3+2,2);

\draw[mid arrow, thick] (5*3+1,1) -- (5*3+2,1);
\draw[mid arrow,thick] (5*3+0,1) -- (5*3+1,0);
\draw[mid arrow, thick] (5*3+1,0) -- (5*3+1,1);

\draw[mid arrow, thick] (5*3+3,1) -- (5*3+2,1);
\draw[mid arrow, thick] (5*3+2,0) -- (5*3+3,1);
\draw[mid arrow, thick] (5*3+2,1) -- (5*3+2,0);

\draw[mid arrow, thick] (5*3+2,2) -- (5*3+3,2);
\draw[mid arrow,thick] (5*3+2,1) -- (5*3+2,2);
\draw[mid arrow, thick] (5*3+2,2) -- (5*3+1,1);

\draw[mid arrow, thick] (5*4+2,2) -- (5*4+1,3);
\draw[mid arrow, thick] (5*4+1,1) -- (5*4+1,2);
\draw[mid arrow, thick] (5*4+1,1) -- (5*4+0,1);

\draw[mid arrow, thick] (5*4+0,2) -- (5*4+1,1);
\draw[mid arrow, thick] (5*4+1,3) -- (5*4+1,2);
\draw[mid arrow, thick] (5*4+1,2) -- (5*4+0,2);

\draw[mid arrow, thick] (5*4+3,2) -- (5*4+2,3);
\draw[mid arrow, thick] (5*4+2,3) -- (5*4+2,2);
\draw[mid arrow, thick] (5*4+1,2) -- (5*4+2,2);

\draw[mid arrow, thick] (5*4+2,1) -- (5*4+1,1);
\draw[mid arrow, thick] (5*4+0,1) -- (5*4+1,0);
\draw[mid arrow, thick] (5*4+1,0) -- (5*4+1,1);

\draw[mid arrow, thick] (5*4+2,1) -- (5*4+3,1);
\draw[mid arrow, thick] (5*4+1,1) -- (5*4+2,0);
\draw[mid arrow, thick] (5*4+2,0) -- (5*4+2,1);

\draw[mid arrow, thick] (5*4+2,2) -- (5*4+3,2);
\draw[mid arrow, thick] (5*4+2,2) -- (5*4+2,1);
\draw[mid arrow, thick] (5*4+3,1) -- (5*4+2,2);

\filldraw[blue] (5*1+1,1) circle (2pt);
\node[blue,below right, font=\tiny] at (5*1+1,1) {$x'_{2}$};
\filldraw[blue] (5*2+2,2) circle (2pt);
\node[blue,above right, font=\tiny] at (5*2+2,2) {$x'_{1}$};
\filldraw[blue] (5*3+1,2) circle (2pt);
\node[blue,above left, font=\tiny] at (5*3+1,2) {$x'_{4}$};
\filldraw[blue] (5*4+2,1) circle (2pt);
\node[blue,below right, font=\tiny] at (5*4+2,1) {$x'_{3}$};

\node[below left, font=\tiny] at (1,1) {$x_{2}$};
\node[above left, font=\tiny] at (11,1) {$x'_{2}$};
\node[below right, font=\tiny] at (16,1) {$x'_{2}$};
\node[above right, font=\tiny] at (21,1) {$x'_{2}$};

\node[below right, font=\tiny] at (1,2) {$x_{4}$};
\node[below left, font=\tiny] at (6,2) {$x_{4}$};
\node[above right, font=\tiny] at (11,2) {$x_{4}$};
\node[above left, font=\tiny] at (21,2) {$x'_{4}$};

\node[below right, font=\tiny] at (2,1) {$x_{3}$};
\node[below right, font=\tiny] at (7,1) {$x_{3}$};
\node[above right, font=\tiny] at (12,1) {$x_{3}$};
\node[above right, font=\tiny] at (17,1) {$x_{3}$};

\node[above right, font=\tiny] at (2,2) {$x_{1}$};
\node[above right, font=\tiny] at (7,2) {$x_{1}$};
\node[above right, font=\tiny] at (17,2) {$x'_{1}$};
\node[above right, font=\tiny] at (22,2) {$x'_{1}$};

\end{tikzpicture}
\caption{Mutation sequence for the flip.}
\end{figure}
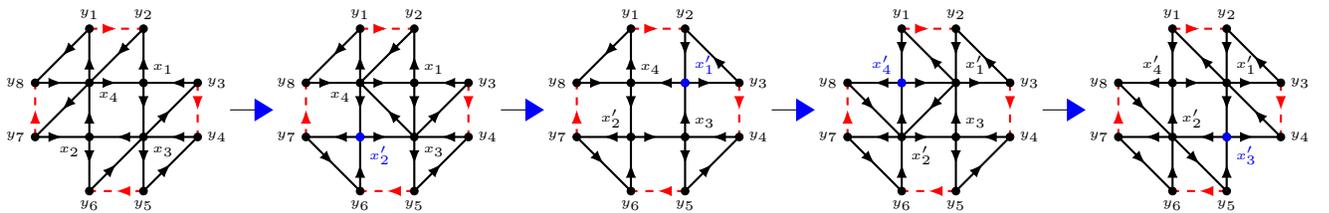
Starting with the initial cluster variables $\bx=(y_1,...,y_{8},x_1,...,x_4)$, the resulting cluster variables after the flip can be expressed in terms of the initial cluster variables in Laurent polynomials as:
\Eqn{
x'_2 &= \frac{y_7 x_3 + y_6 x_4}{x_2};\\
x'_1 &= \frac{y_3 x_4 + y_2 x_3}{x_1};\\
x'_4 &= \frac{y_8 x'_1 + y_1 x'_2}{x_4}=\frac{y_3 y_8 x_2 x_4  + y_2 y_8 x_2 x_3 + y_1 y_7 x_1 x_3 + y_1 y_6 x_1 x_4}{x_1 x_2 x_4};\\
x'_3 &= \frac{y_4 x'_2 + y_5 x'_1}{x_3}=\frac{y_4 y_7 x_1 x_3 + y_4 y_6 x_1 x_4 + y_3 y_5 x_2 x_4 + y_2 y_5 x_2 x_3}{x_1 x_2 x_3}.
}
\end{Ex}

\begin{Ex}
Consider a pentagon with a given triangulation. Each triangle is assigned the $2$-triangulated quiver, yielding the initial quiver:
\begin{figure}[H]
\centering
\begin{tikzpicture}[scale=3,
    mid arrow/.style={
        postaction={decorate},
        decoration={
            markings,
            mark=at position 0.5 with {\arrow{>}}
        }
    }
]
  \coordinate (A') at (0, 1.539);
  \coordinate (B') at (0.809, 0.951);
  \coordinate (C') at (0.5, 0);
  \coordinate (D') at (-0.5, 0);
  \coordinate (E') at (-0.809, 0.951);
  \node[above, font=\scriptsize] at (A') {$A$};
  \node[right, font=\scriptsize] at (B') {$B$};
  \node[right, font=\scriptsize] at (C') {$C$};
  \node[left, font=\scriptsize] at (D') {$D$};
  \node[left, font=\scriptsize] at (E') {$E$};
  
  \draw[thick] (A') -- (B') -- (C') -- (D') -- (E') -- cycle;
  \draw[thick] (C') -- (A') -- (D');
  
  \coordinate (A) at (2+0, 1.539);
  \coordinate (B) at (2+0.809, 0.951);
  \coordinate (C) at (2+0.5, 0);
  \coordinate (D) at (2-0.5, 0);
  \coordinate (E) at (2-0.809, 0.951);
  
  \coordinate (X1) at ($(A)!0.333!(B)$);
  \coordinate (X2) at ($(A)!0.667!(B)$);
  \coordinate (X3) at ($(B)!0.333!(C)$);
  \coordinate (X4) at ($(B)!0.667!(C)$);
  \coordinate (X5) at ($(C)!0.333!(D)$);
  \coordinate (X6) at ($(C)!0.667!(D)$);
  \coordinate (X7) at ($(D)!0.333!(E)$);
  \coordinate (X8) at ($(D)!0.667!(E)$);
  \coordinate (X9) at ($(E)!0.333!(A)$);
  \coordinate (X10) at ($(E)!0.667!(A)$);
  \coordinate (Y1) at ($(A)!0.333!(D)$);
  \coordinate (Y2) at ($(A)!0.333!(C)$);
  \coordinate (Y3) at ($(X8)!0.5!(Y1)$);
  \coordinate (Y4) at ($(X3)!0.5!(Y2)$);
  \coordinate (Y5) at ($(X5)!0.5!(Y1)$);
  \coordinate (Y6) at ($(A)!0.667!(D)$);
  \coordinate (Y7) at ($(A)!0.667!(C)$);
  
  \draw[mid arrow, thick] (X2) -- (Y4);
  \draw[mid arrow, thick] (Y4) -- (X3);
  \draw[mid arrow, thick] (X3) -- (X2);
  
  \draw[mid arrow, thick] (X1) -- (Y2);
  \draw[mid arrow, thick] (Y2) -- (Y4);
  \draw[mid arrow, thick] (Y4) -- (X1);
  
  \draw[mid arrow, thick] (Y4) -- (Y7);
  \draw[mid arrow, thick] (Y7) -- (X4);
  \draw[mid arrow, thick] (X4) -- (Y4);
  
  \draw[mid arrow, thick] (Y1) -- (Y5);
  \draw[mid arrow, thick] (Y5) -- (Y2);
  \draw[mid arrow, thick] (Y2) -- (Y1);
  
  \draw[mid arrow, thick] (Y6) -- (X6);
  \draw[mid arrow, thick] (X6) -- (Y5);
  \draw[mid arrow, thick] (Y5) -- (Y6);
  
  \draw[mid arrow, thick] (Y5) -- (X5);
  \draw[mid arrow, thick] (X5) -- (Y7);
  \draw[mid arrow, thick] (Y7) -- (Y5);

  \draw[mid arrow, thick] (X8) -- (Y3);
  \draw[mid arrow, thick] (Y3) -- (X9);
  \draw[mid arrow, thick] (X9) -- (X8);
  
  \draw[mid arrow, thick] (X7) -- (Y6);
  \draw[mid arrow, thick] (Y6) -- (Y3);
  \draw[mid arrow, thick] (Y3) -- (X7);
  
  \draw[mid arrow, thick] (Y3) -- (Y1);
  \draw[mid arrow, thick] (Y1) -- (X10);
  \draw[mid arrow, thick] (X10) -- (Y3);
  
  \draw[mid arrow, red,thick,dashed] (X1) -- (X2);
  \draw[mid arrow, red,thick,dashed] (X3) -- (X4);
  \draw[mid arrow, red,thick,dashed] (X5) -- (X6); 
  \draw[mid arrow, red,thick,dashed] (X7) -- (X8);
  \draw[mid arrow, red,thick,dashed] (X9) -- (X10);
  
  \draw[thin,dashed] (A) -- (C);
  \draw[thin,dashed] (A) -- (D);
  	\foreach \i in {5,6,7,8,9,10} { 
  \filldraw[black] (X\i) circle (1pt);
  \node[above left, font=\scriptsize] at (X\i) {$y_{\i}$};
}

  	\foreach \i in {1,2,3,5,6,7} { 
  \filldraw[black] (Y\i) circle (1pt);
  \node[above left, font=\scriptsize] at (Y\i) {$x_{\i}$};
}

	\foreach \i in {1,2,3,4} { 
  \filldraw[black] (X\i) circle (1pt);
  \node[above right, font=\scriptsize] at (X\i) {$y_{\i}$};
}

	\foreach \i in {4,...,4} { 
  \filldraw[black] (Y\i) circle (1pt);
  \node[above right, font=\scriptsize] at (Y\i) {$x_{\i}$};
}

\end{tikzpicture}
\caption{Pentagon with triangulation (left) and the initial quiver for the pentagon (right)}
\end{figure}
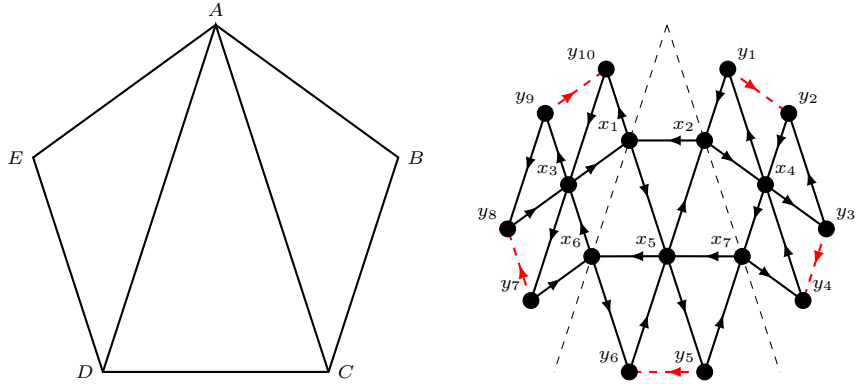
To obtain the diagonal $BE$, we consider two flip sequences:
\begin{figure}[H]
\centering
\begin{tikzpicture}[scale=1.7,
    mid arrow/.style={
        postaction={decorate},
        decoration={
            markings,
            mark=at position 0.5 with {\arrow{>}}
        }
    }
]
  \node[left, font=\scriptsize] at (-1.2, 0.7695) {$\mu_1$:};
  \node[left, font=\scriptsize] at (-1.2, 0.7695-2) {$\mu_2$:};
  
  \foreach \i in {0,1,2} {
  
  \coordinate (A\i) at (3.5*\i+0, 1.539);
  \coordinate (B\i) at (3.5*\i+0.809, 0.951);
  \coordinate (C\i) at (3.5*\i+0.5, 0);
  \coordinate (D\i) at (3.5*\i-0.5, 0);
  \coordinate (E\i) at (3.5*\i-0.809, 0.951);
  \coordinate (A'\i) at (3.5*\i+0, 1.539-2);
  \coordinate (B'\i) at (3.5*\i+0.809, 0.951-2);
  \coordinate (C'\i) at (3.5*\i+0.5, 0-2);
  \coordinate (D'\i) at (3.5*\i-0.5, 0-2);
  \coordinate (E'\i) at (3.5*\i-0.809, 0.951-2);
  
  \node[above, font=\scriptsize] at (A\i) {$A$};
  \node[right, font=\scriptsize] at (B\i) {$B$};
  \node[right, font=\scriptsize] at (C\i) {$C$};
  \node[left, font=\scriptsize] at (D\i) {$D$};
  \node[left, font=\scriptsize] at (E\i) {$E$};
  \node[above, font=\scriptsize] at (A'\i) {$A$};
  \node[right, font=\scriptsize] at (B'\i) {$B$};
  \node[right, font=\scriptsize] at (C'\i) {$C$};
  \node[left, font=\scriptsize] at (D'\i) {$D$};
  \node[left, font=\scriptsize] at (E'\i) {$E$};
}
   \foreach \i in {0,1} {
  \draw[-{Triangle[blue,scale=2,line width=1.5pt]}] (3.5*\i+1.3,0.77) -- (3.5*\i+2.3,0.77);
  \draw[-{Triangle[blue,scale=2,line width=1.5pt]}] (3.5*\i+1.3,0.77-2) -- (3.5*\i+2.3,0.77-2);
}
  \filldraw[cyan] (A0) -- (B0) -- (C0) -- (D0) -- cycle;
  \filldraw[cyan] (A1) -- (B1) -- (D1) -- (E1) -- cycle;
  \filldraw[cyan] (A'0) -- (C'0) -- (D'0) -- (E'0) -- cycle;
  \filldraw[cyan] (A'1) -- (B'1) -- (C'1) -- (E'1) -- cycle; 
  
  \draw[thick] (A0) -- (C0);
  \draw[thick] (A0) -- (D0);
  \draw[thick] (A1) -- (D1);
  \draw[thick] (B1) -- (D1);
  \draw[red,thick] (B2) -- (E2);
  \draw[thick] (B2) -- (D2);
  
  \draw[thick] (A'0) -- (C'0);
  \draw[thick] (A'0) -- (D'0);
  \draw[thick] (A'1) -- (C'1);
  \draw[thick] (C'1) -- (E'1);
  \draw[red,thick] (B'2) -- (E'2);
  \draw[thick] (C'2) -- (E'2);
  
  \foreach \i in {0,1,2} {
  
  \draw[thick] (A\i) -- (B\i) -- (C\i) -- (D\i) -- (E\i) -- cycle;
  \draw[thick] (A'\i) -- (B'\i) -- (C'\i) -- (D'\i) -- (E'\i) -- cycle;
}
\end{tikzpicture}
\caption{Both possible flip sequences}
\label{fig:both_flip}
\end{figure}
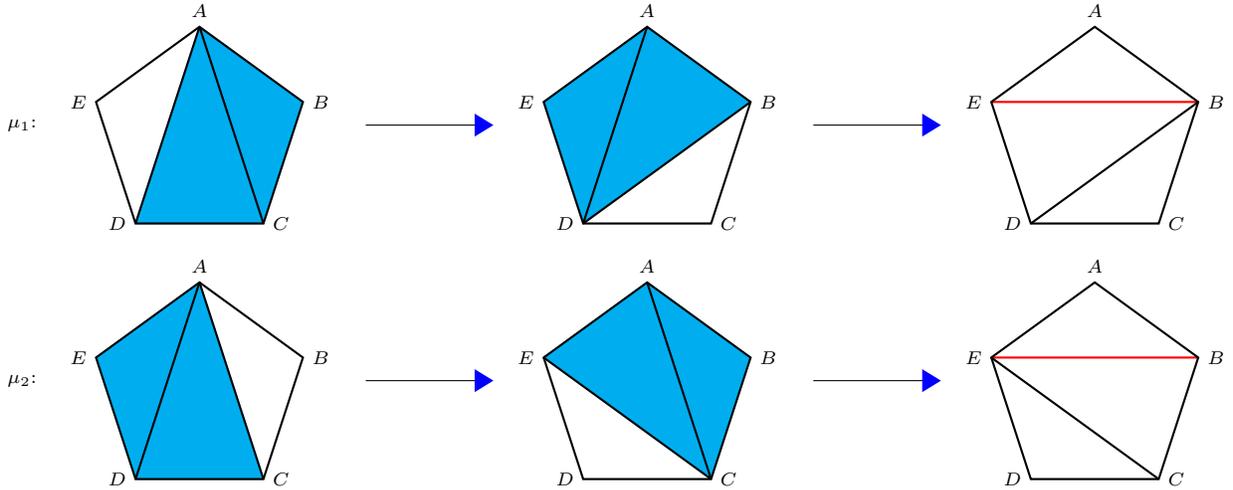
Note that in the $2$-triangulated construction, we do not assume that the main diagonal is equally divided into 3 parts. The mutation sequences for both flip paths are as follow (the order of each flip sequence is not necessary unique):
\begin{gather*}
	\mu_1 = \{x_2 \rightarrow x_7 \rightarrow x_4 \rightarrow x_5 \rightarrow x_1 \rightarrow x_6 \rightarrow x_3 \rightarrow x_2 \}; 
	\\
	\mu_2 = \{x_1 \rightarrow x_6 \rightarrow x_3 \rightarrow x_5 \rightarrow x_2 \rightarrow x_7 \rightarrow x_4 \rightarrow x_1 \}.
\end{gather*}
Below we will verify that the cluster expansion of the resulting cluster variables corresponding to both mutation sequences coincides up to reindexing of the variables.

For the first flip sequence $\mu_1$ (top of Figure~\ref{fig:both_flip}), we calculate:
{\small\begin{align*}
    x'_{2} = \frac{x_1x_4+y_1x_5}{x_2};\quad&  x'_{7} = \frac{y_4x_5+y_5x_4}{x_7};\\
    x'_{4} = \frac{y_3x'_2+y_2x'_7}{x_4} =&\ \frac{y_3x_1x_4x_7+y_3y_1x_5x_7+y_2y_4x_2x_5+y_2y_5x_2x_4}{x_2x_4x_7};\\
    x'_{5} = \frac{x_6x'_7+y_6x'_2}{x_5} =&\ \frac{y_4x_2x_5x_6+y_5x_2x_4x_6+y_6x_1x_4x_7+y_1y_6x_5x_7}{x_2x_5x_7};\\
    x'_{1} = \frac{y_1x_3+y_{10}x'_2}{x_1} = &\ \frac{y_1x_2x_3+y_{10}x_1x_4+y_1y_{10}x_5}{x_1x_2};\\
    x'_{6} = \frac{x_3x'_5+y_{7}x'_2}{x_6} = &\ \frac{y_4x_2x_3x_5x_6+y_{5}x_2x_3x_4x_6+y_6x_1x_3x_4x_7+y_1y_6x_3x_5x_7+y_7x_1x_4x_5x_7+y_1y_7x^2_5x_7}{x_2x_5x_6x_7};\\
    x'_{3} = \frac{y_8x'_1+y_{9}x'_6}{x_3} =&\ \frac{y_1y_8x_2x_3x_5x_6x_7 + y_8y_{10}x_1x_4x_5x_6x_7 + y_1y_8y_{10}x^2_5x_6x_7 + y_4y_9x_1x_2x_3x_5x_6}{x_1x_2x_3x_5x_6x_7} \\
            &+ \frac{y_5y_9x_1x_2x_3x_4x_6 + y_6y_{9}x^2_1x_3x_4x_7 + y_1y_6y_9x_1x_3x_5x_7 + y_7y_9x^2_1x_4x_5x_7 + y_1y_7y_9x_1x^2_5x_7}{x_1x_2x_3x_5x_6x_7};\\
    x''_{2} = \frac{y_2x'_6+x'_{4}x'_1}{x'_2} =&\ \frac{y_2y_4x^2_2x_3x_5x_6 + y_2y_5x^2_2x_3x_4x_6 + y_2y_6x_1x_2x_3x_4x_7 + y_2y_7x_1x_2x_4x_5x_7 + y_3y_{10}x_1x_4x_5x_6x_7}{x_1x_2x_4x_5x_6x_7} \\
            &+ \frac{y_1y_3y_{10}x^2_5x_6x_7 + y_2y_4y_{10}x_2x^2_5x_6 + y_2y_5y_{10}x_2x_4x_5x_6 + y_1y_3x_2x_3x_5x_6x_7}{x_1x_2x_4x_5x_6x_7}.
\end{align*}
}
For the second flip sequence $\mu_2$ (bottom of Figure~\ref{fig:both_flip}), we calculate:
{\small\begin{align*}
    x'_{1} = \frac{x_2x_3+y_{10}x_5}{x_1};\quad&  x'_{6} = \frac{y_7x_5+y_6x_3}{x_6};\\
    x'_{3} = \frac{y_8x'_1+y_9x'_6}{x_3} =&\ \frac{y_8x_2x_3x_6+y_8y_{10}x_5x_6+y_7y_9x_1x_5+y_6y_9x_1x_3}{x_1x_3x_6};\\
    x'_{5} = \frac{x_7x'_6+y_5x'_1}{x_5} =&\ \frac{y_7x_1x_5x_7 + y_6x_1x_3x_7 + y_5x_2x_3x_6 + y_5y_{10}x_5x_6}{x_1x_5x_6};\\
    x'_{2} = \frac{y_{10}x_4+y_{1}x'_1}{x_2} =&\ \frac{y_{10}x_1x_4 + y_1x_2x_3 + y_1y_{10}x_5}{x_1x_2};\\          
    x'_{7} = \frac{x_4x'_5+y_{4}x'_1}{x_7} =&\ \frac{y_7x_1x_4x_5x_7 + y_6x_1x_3x_4x_7 + y_5x_2x_3x_4x_6 + y_5y_{10}x_4x_5x_6 + y_4x_2x_3x_5x_6 + y_4y_{10}x^2_5x_6}{x_1x_5x_6x_7};\\         
    x'_{4} = \frac{y_3x'_2+y_{2}x'_7}{x_4} =&\ \frac{y_3y_{10}x_1x_4x_5x_6x_7 + y_1y_3x_2x_3x_5x_6x_7 + y_1y_3y_{10}x^2_5x_6x_7 + y_2y_7x_1x_2x_4x_5x_7}{x_1x_2x_4x_5x_6x_7} \\
            &+ \frac{y_2y_6x_1x_2x_3x_4x_7 + y_2y_5x^2_2x_3x_4x_6 + y_2y_5y_{10}x_2x_4x_5x_6 + y_2y_4x^2_2x_3x_5x_6 + y_2y_4y_{10}x_2x^2_5x_6}{x_1x_2x_4x_5x_6x_7};\\            
    x''_{1} = \frac{y_9x'_7+x'_{3}x'_2}{x'_1} =&\ \frac{y_5y_9x_1x_2x_3x_4x_6 + y_7y_9x^2_1x_4x_5x_7 + y_6y_9x^2_1x_3x_4x_7 + y_4y_9x_1x_2x_3x_5x_6 + y_8y_{10}x_1x_4x_5x_6x_7}{x_1x_2x_3x_5x_6x_7} \\
            &+ \frac{y_1y_8x_2x_3x_5x_6x_7 + y_1y_8y_{10}x^2_5x_6x_7 + y_1y_7y_9x_1x^2_5x_7 + y_1y_6y_9x_1x_3x_5x_7}{x_1x_2x_3x_5x_6x_7}.
\end{align*}}

For the first sequence, the dotted diagonal corresponds to expansion variables $x''_2$ and $x'_3$; for the second sequence, it corresponds to $x''_1$ and $x'_4$. Our calculations show that $x'_3 = x''_1$ and $x''_2 = x'_4$, confirming that both sequences yield the same result up to vertex relabeling.
\end{Ex}

\begin{Ex}
For $n=3$, each edge of a triangle is divided into 4 equal parts using 3 vertices. The construction follows similarly:
\begin{figure}[H]
\centering
\begin{tikzpicture}[scale=1,
    mid arrow/.style={
        postaction={decorate},
        decoration={
            markings,
            mark=at position 0.5 with {\arrow{>}}
        }
    }
]  
  \draw[thick] (6,0) -- (7,4) -- (11,0) -- cycle;
  
  \filldraw[gray!30] (6.75,3) -- (8,3) -- (7.75,2) -- cycle;
  \filldraw[gray!30] (6.5,2) -- (7.75,2) -- (7.5,1) -- cycle;
  \filldraw[gray!30] (7.75,2) -- (9,2) -- (8.75,1) -- cycle;
  \filldraw[gray!30] (6.25,1) -- (7.5,1) -- (7.25,0) -- cycle;
  \filldraw[gray!30] (7.5,1) -- (8.75,1) -- (8.5,0) -- cycle;
  \filldraw[gray!30] (8.75,1) -- (10,1) -- (9.75,0) -- cycle;
  
  \filldraw[black] (6.25,1) circle (2pt);
  \filldraw[black] (6.5,2) circle (2pt);
  \filldraw[black] (6.75,3) circle (2pt);

  \filldraw[black] (8,3) circle (2pt);
  \filldraw[black] (9,2) circle (2pt);
  \filldraw[black] (10,1) circle (2pt);
  
  \filldraw[black] (7.25,0) circle (2pt);
  \filldraw[black] (8.5,0) circle (2pt);
  \filldraw[black] (9.75,0) circle (2pt);
  
  \filldraw[black] (7.75,2) circle (2pt);
  \filldraw[black] (7.5,1) circle (2pt);
  \filldraw[black] (8.75,1) circle (2pt);
  
  
  \draw[mid arrow, red, thick, dashed] (12.25,1) -- (12.5,2);
  \draw[mid arrow, red, thick, dashed] (12.5,2) -- (12.75,3);
  \draw[mid arrow, red, thick, dashed] (14,3) -- (15,2);
  \draw[mid arrow, red, thick, dashed] (15,2) -- (16,1);
  \draw[mid arrow, red, thick, dashed] (15.75,0) -- (14.5,0);
  \draw[mid arrow, red, thick, dashed] (14.5,0) -- (13.25,0);
  
  \draw[mid arrow, thick] (14,3) -- (12.75,3);
  \draw[mid arrow, thick] (15,2) -- (13.75,2);
  \draw[mid arrow, thick] (13.75,2) -- (12.5,2);
  \draw[mid arrow, thick] (16,1) -- (14.75,1);
  \draw[mid arrow, thick] (14.75,1) -- (13.5,1);
  \draw[mid arrow, thick] (13.5,1) -- (12.25,1);
  
  \draw[mid arrow, thick] (12.25,1) -- (13.25,0);
  \draw[mid arrow, thick] (12.5,2) -- (13.5,1);
  \draw[mid arrow, thick] (13.5,1) -- (14.5,0);
  \draw[mid arrow, thick] (12.75,3) -- (13.75,2);
  \draw[mid arrow, thick] (13.75,2) -- (14.75,1);
  \draw[mid arrow, thick] (14.75,1) -- (15.75,0);
  
  \draw[mid arrow, thick] (13.25,0) -- (13.5,1);
  \draw[mid arrow, thick] (13.5,1) -- (13.75,2);
  \draw[mid arrow, thick] (13.75,2) -- (14,3);
  \draw[mid arrow, thick] (14.5,0) -- (14.75,1);
  \draw[mid arrow, thick] (14.75,1) -- (15,2);
  \draw[mid arrow, thick] (15.75,0) -- (16,1);
  
  \filldraw[black] (12.25,1) circle (2pt);
  \filldraw[black] (12.5,2) circle (2pt);
  \filldraw[black] (12.75,3) circle (2pt);

  \filldraw[black] (14,3) circle (2pt);
  \filldraw[black] (15,2) circle (2pt);
  \filldraw[black] (16,1) circle (2pt);
  
  \filldraw[black] (13.25,0) circle (2pt);
  \filldraw[black] (14.5,0) circle (2pt);
  \filldraw[black] (15.75,0) circle (2pt);
  
  \filldraw[black] (13.75,2) circle (2pt);
  \filldraw[black] (13.5,1) circle (2pt);
  \filldraw[black] (14.75,1) circle (2pt);  
\end{tikzpicture}
\caption{A triangle and its corresponding $3$-triangulated triangle}
\end{figure}
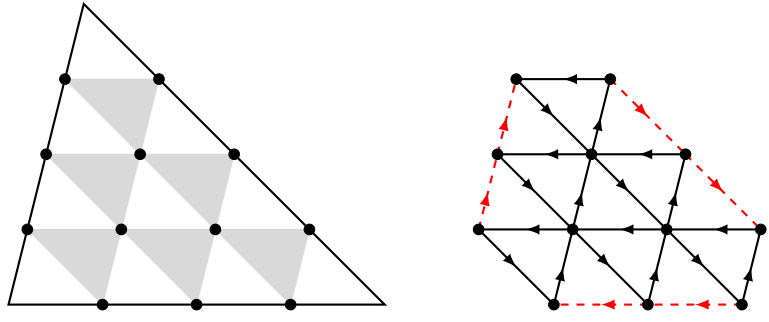

Consider the $3$-triangulated quiver of a quadrilateral, labeled as in the left of Figure \ref{3tq}.
\begin{figure}[H]
\centering
\begin{tikzpicture}[scale=1,
    mid arrow/.style={
        postaction={decorate},
        decoration={
            markings,
            mark=at position 0.5 with {\arrow{>}}
        }
    }
]
  \draw[thin,dashed] (0,0) -- (4,4);
  \draw[thin,dashed] (5+2,4) -- (9+2,0);
  \draw[-{Triangle[blue,scale=2,line width=1.5pt]}] (4.8,2) -- (6.2,2);
  \node[above] at (5.4,2) {$\mu$};
  
  \draw[mid arrow, red,thick,dashed] (0,1) -- (0,2);
  \draw[mid arrow, red,thick,dashed] (0,2) -- (0,3);
  \draw[mid arrow, red,thick,dashed] (1,4) -- (2,4); 
  \draw[mid arrow, red,thick,dashed] (2,4) -- (3,4);
  \draw[mid arrow, red,thick,dashed] (4,3) -- (4,2);
  \draw[mid arrow, red,thick,dashed] (4,2) -- (4,1);
  \draw[mid arrow, red,thick,dashed] (3,0) -- (2,0); 
  \draw[mid arrow, red,thick,dashed] (2,0) -- (1,0); 
 
  \draw[mid arrow, red,thick,dashed] (5+2,1) -- (5+2,2);
  \draw[mid arrow, red,thick,dashed] (5+2,2) -- (5+2,3);
  \draw[mid arrow, red,thick,dashed] (6+2,4) -- (7+2,4); 
  \draw[mid arrow, red,thick,dashed] (7+2,4) -- (8+2,4);
  \draw[mid arrow, red,thick,dashed] (9+2,3) -- (9+2,2);
  \draw[mid arrow, red,thick,dashed] (9+2,2) -- (9+2,1);
  \draw[mid arrow, red,thick,dashed] (8+2,0) -- (7+2,0); 
  \draw[mid arrow, red,thick,dashed] (7+2,0) -- (6+2,0);
  
  \foreach \i in {1,2,3} {
    \foreach \j in {0,...,\the\numexpr\i-1\relax} {
      \draw[mid arrow, thick] (\j,\i) -- (\j+1,\i);
    }
  }
 
  \foreach \i in {1,2,3} {
    \foreach \j in {\i,...,3} {
      \draw[mid arrow, thick] (\j+1,\i) -- (\j,\i);
    }
  }  
  
  \draw[mid arrow, thick] (1,4) -- (0,3);
  \draw[mid arrow, thick] (2,4) -- (1,3);
  \draw[mid arrow, thick] (1,3) -- (0,2);
  \draw[mid arrow, thick] (3,4) -- (2,3);
  \draw[mid arrow, thick] (2,3) -- (1,2);
  \draw[mid arrow, thick] (1,2) -- (0,1);

  \foreach \i in {1,2,3} {
    \foreach \j in {\i,...,3} {
      \draw[mid arrow, thick] (\j-\i+6+2,\i) -- (\j-\i+5+2,\i);
    }
  }
 
  \foreach \i in {1,2,3} {
    \foreach \j in {0,...,\the\numexpr\i-1\relax} {
      \draw[mid arrow, thick] (\j-\i+9+2,\i) -- (\j-\i+10+2,\i);
    }
  }
  
  \foreach \i in {1,2,3} {
    \foreach \j in {\i,...,3} {
      \draw[mid arrow, thick] (\i,\j) -- (\i,\j+1);
    }
  }
  
  \foreach \i in {1,2,3} {
    \foreach \j in {0,...,\the\numexpr\i-1\relax} {
      \draw[mid arrow, thick] (\i,\j+1) -- (\i,\j);
    }
  }
  
  \foreach \i in {1,2,3} {
    \foreach \j in {\i,...,3} {
      \draw[mid arrow, thick] (5+\i+2,3-\j) -- (5+\i+2,4-\j);
    }
  }
  
  \foreach \i in {1,2,3} {
    \foreach \j in {0,...,\the\numexpr\i-1\relax} {
      \draw[mid arrow, thick] (5+\i+2,4-\j) -- (5+\i+2,3-\j);
    }
  }
  
  \foreach \i in {1,2,3} {
    \foreach \j in {\i,...,3} {
      \draw[mid arrow, thick] (\j,\i-1) -- (\j+1,\i);
    }
  }  

  \foreach \i in {1,2,3} {
    \foreach \j in {\i,...,3} {
      \draw[mid arrow, thick] (\j-\i+5+2,\i) -- (\j-\i+6+2,\i-1);
    }
  }
  
  \foreach \i in {1,2,3} {
    \foreach \j in {0,...,\the\numexpr\i-1\relax} {
      \draw[mid arrow, thick] (9-\j+2,\i) -- (8-\j+2,\i+1);
    }
  }
        
  \filldraw[black] (0,1) circle (2pt);
  \node[above left, font=\scriptsize] at (0,1) {$y_{10}$};
  
  \filldraw[black] (0,2) circle (2pt);
  \node[above left, font=\scriptsize] at (0,2) {$y_{11}$};
  
  \filldraw[black] (0,3) circle (2pt);
  \node[above left, font=\scriptsize] at (0,3) {$y_{12}$};
  
  \filldraw[black] (1,0) circle (2pt);
  \node[above left, font=\scriptsize] at (1,0) {$y_9$};
  
  \filldraw[black] (2,0) circle (2pt);
  \node[above left, font=\scriptsize] at (2,0) {$y_8$};
  
  \filldraw[black] (3,0) circle (2pt);
  \node[above left, font=\scriptsize] at (3,0) {$y_7$};
  
  \filldraw[black] (4,1) circle (2pt);
  \node[above left, font=\scriptsize] at (4,1) {$y_6$};
  
  \filldraw[black] (4,2) circle (2pt); 
  \node[above left, font=\scriptsize] at (4,2) {$y_5$};
  
  \filldraw[black] (4,3) circle (2pt);
  \node[above left, font=\scriptsize] at (4,3) {$y_4$};
  
  \filldraw[black] (1,4) circle (2pt);
  \node[above left, font=\scriptsize] at (1,4) {$y_1$};
  
  \filldraw[black] (2,4) circle (2pt);
  \node[above left, font=\scriptsize] at (2,4) {$y_2$};
  
  \filldraw[black] (3,4) circle (2pt);
  \node[above left, font=\scriptsize] at (3,4) {$y_3$};
  
  \filldraw[black] (1,1) circle (2pt);
  \node[above left, font=\scriptsize] at (1,1) {$x_3$};
  
  \filldraw[black] (2,1) circle (2pt);
  \node[above left, font=\scriptsize] at (2,1) {$x_8$};
  
  \filldraw[black] (3,1) circle (2pt);
  \node[above left, font=\scriptsize] at (3,1) {$x_4$};
  
  \filldraw[black] (1,2) circle (2pt);
  \node[above left, font=\scriptsize] at (1,2) {$x_9$};
  
  \filldraw[black] (2,2) circle (2pt); 
  \node[above left, font=\scriptsize] at (2,2) {$x_2$};
  
  \filldraw[black] (3,2) circle (2pt);
  \node[above left, font=\scriptsize] at (3,2) {$x_7$};
  
  \filldraw[black] (1,3) circle (2pt);
  \node[above left, font=\scriptsize] at (1,3) {$x_5$};
  
  \filldraw[black] (2,3) circle (2pt);
  \node[above left, font=\scriptsize] at (2,3) {$x_6$};
  
  \filldraw[black] (3,3) circle (2pt);
  \node[above left, font=\scriptsize] at (3,3) {$x_1$};

  \filldraw[black] (5+2,1) circle (2pt);
  \node[above right, font=\scriptsize] at (5+2,1) {$y_{10}$};
  
  \filldraw[black] (5+2,2) circle (2pt);
  \node[above right, font=\scriptsize] at (5+2,2) {$y_{11}$};
  
  \filldraw[black] (5+2,3) circle (2pt);
  \node[above right, font=\scriptsize] at (5+2,3) {$y_{12}$};
  
  \filldraw[black] (6+2,0) circle (2pt);
  \node[above right, font=\scriptsize] at (6+2,0) {$y_9$};
  
  \filldraw[black] (7+2,0) circle (2pt);
  \node[above right, font=\scriptsize] at (7+2,0) {$y_8$};
  
  \filldraw[black] (8+2,0) circle (2pt);
  \node[above right, font=\scriptsize] at (8+2,0) {$y_7$};
  
  \filldraw[black] (9+2,1) circle (2pt);
  \node[above right, font=\scriptsize] at (9+2,1) {$y_6$};
  
  \filldraw[black] (9+2,2) circle (2pt); 
  \node[above right, font=\scriptsize] at (9+2,2) {$y_5$};
  
  \filldraw[black] (9+2,3) circle (2pt);
  \node[above right, font=\scriptsize] at (9+2,3) {$y_4$};
  
  \filldraw[black] (6+2,4) circle (2pt);
  \node[above right, font=\scriptsize] at (6+2,4) {$y_1$};
  
  \filldraw[black] (7+2,4) circle (2pt);
  \node[above right, font=\scriptsize] at (7+2,4) {$y_2$};
  
  \filldraw[black] (8+2,4) circle (2pt);
  \node[above right, font=\scriptsize] at (8+2,4) {$y_3$};
  
  \filldraw[black] (6+2,1) circle (2pt);
  \node[above right, font=\scriptsize] at (6+2,1) {$x_3$};
  
  \filldraw[black] (7+2,1) circle (2pt);
  \node[above right, font=\scriptsize] at (7+2,1) {$x_8$};
  
  \filldraw[black] (8+2,1) circle (2pt);
  \node[above right, font=\scriptsize] at (8+2,1) {$x_4$};
  
  \filldraw[black] (6+2,2) circle (2pt);
  \node[above right, font=\scriptsize] at (6+2,2) {$x_9$};
  
  \filldraw[black] (7+2,2) circle (2pt); 
  \node[above right, font=\scriptsize] at (7+2,2) {$x_2$};
  
  \filldraw[black] (8+2,2) circle (2pt);
  \node[above right, font=\scriptsize] at (8+2,2) {$x_7$};
  
  \filldraw[black] (6+2,3) circle (2pt);
  \node[above right, font=\scriptsize] at (6+2,3) {$x_5$};
  
  \filldraw[black] (7+2,3) circle (2pt);
  \node[above right, font=\scriptsize] at (7+2,3) {$x_6$};
  
  \filldraw[black] (8+2,3) circle (2pt);
  \node[above right, font=\scriptsize] at (8+2,3) {$x_1$};
\end{tikzpicture} 
\caption{Quivers before and after flipping a quadrilateral}
\label{3tq}
\end{figure}
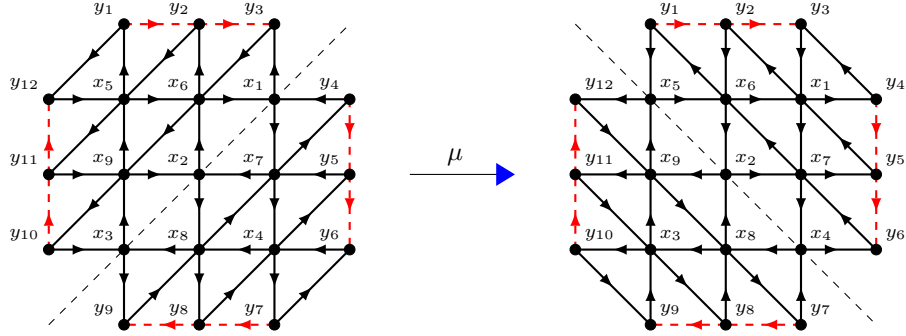
The flip of the quiver corresponding to the flip of diagonal of the quadrilateral can be performed in 3 mutation steps:
\begin{enumerate}[label=Step \arabic*:]
\item Mutate at $x_1$, $x_2$, $x_3$;
\item Mutate at $x_6$, $x_7$, $x_8$, $x_9$;  
\item Mutate at $x_2$, $x_4$, $x_5$.
\end{enumerate}
Without loss of generality, we can consider the following flip sequence:
\begin{gather*}
	\mu = \{x_3 \rightarrow x_2 \rightarrow x_1 \rightarrow x_9 \rightarrow x_8 \rightarrow x_6 \rightarrow x_7 \rightarrow x_5 \rightarrow x_4 \rightarrow x_2 \}.
\end{gather*}

The resulting cluster variables are then given by:
{\small
\begin{align*}
    x'_3 &= \frac{y_{10} x_8 + y_9 x_9}{x_3}; \quad 
    x'_2 = \frac{x_9 x_7 + x_6 x_8}{x_2}; \quad 
    x'_1 = \frac{x_6 y_4 + x_7 y_3}{x_1};\\
    x'_9 &= \frac{y_{11} x'_2 + x_5 x'_3}{x_9} = \frac{x_3 x_9 x_7 y_{11} + x_3 x_8 x_6 y_{11} + x_2 x_8 x_5 y_{10} + x_2 x_9 x_5 y_9}{x_3 x_2 x_9};\\
    x'_8 &= \frac{y_8 x'_2 + x_4 x'_3}{x_8} = \frac{x_3 x_9 x_7 y_8 + x_3 x_8 x_6 y_8 + x_2 x_8 x_4 y_{10} + x_2 x_9 x_4 y_9}{x_3 x_2 x_8};\\
    x'_6 &= \frac{x_5 x'_1 + y_2 x'_2}{x_6} = \frac{x_2 x_6 x_5 y_4 + x_2 x_7 x_5 y_3 + x_1 x_9 x_7 y_2 + x_1 x_8 x_6 y_2}{x_2 x_1 x_6};\\
    x'_7 &= \frac{x_4 x'_1 + y_5 x'_2}{x_7} = \frac{x_2 x_6 x_4 y_4 + x_2 x_7 x_4 y_3 + x_1 x_9 x_7 y_5 + x_1 x_8 x_6 y_5}{x_2 x_1 x_7};
\end{align*}

\begin{align*}
    x'_5 = \frac{y_{12} x'_6 + y_1 x'_9}{x_5} =&\ \frac{x_3 x_1 x_8 x_6^2 y_1 y_{11} + x_3 x_1 x_9^2 x_7 y_2 y_{12} + x_3 x_1 x_9 x_6 x_7 y_1 y_{11} + x_3 x_1 x_9 x_8 x_6 y_2 y_{12}}{x_3 x_2 x_1 x_9 x_6 x_5} \\
         &+ \frac{x_2 x_1 x_8 x_6 x_5 y_1 y_{10} + x_2 x_1 x_9 x_6 x_5 y_1 y_9 + x_3 x_2 x_9 x_6 x_5 y_4 y_{12} + x_3 x_2 x_9 x_7 x_5 y_3 y_{12}}{x_3 x_2 x_1 x_9 x_6 x_5};\\
    x'_4 = \frac{y_6 x'_8 + y_7 x'_7}{x_4} =&\ \frac{x_3 x_1 x_9 x_7^2 y_8 y_6 + x_3 x_1 x_8^2 x_6 y_7 y_5 + x_3 x_1 x_8 x_9 x_7 y_7 y_5 + x_3 x_1 x_8 x_7 x_6 y_8 y_6}{x_3 x_2 x_1 x_8 x_7 x_4} \\
         &+ \frac{x_2 x_1 x_8 x_7 x_4 y_{10} y_6 + x_2 x_1 x_9 x_7 x_4 y_9 y_6 + x_3 x_2 x_8 x_6 x_4 y_7 y_4 + x_3 x_2 x_8 x_7 x_4 y_7 y_3}{x_3 x_2 x_1 x_8 x_7 x_4};
\end{align*}

\begin{align*}
    x''_2 = \frac{x'_9 x'_7 + x'_6 x'_8}{x'_2} =&\ \frac{x_3 x_1 x_8 x_6 (x_9 x_7 + x_6 x_8) y_{11} y_5 + x_3 x_2 x_8 x_6^2 x_4 y_4 y_{11} + x_3 x_2 x_8 x_6 x_7 x_4 y_{11} y_3}{x_3 x_2 x_1 x_9 x_8 x_6 x_7} \\
          &+ \frac{x_2 x_1 x_8^2 x_6 x_5 y_{10} y_5 + x_2^2 x_8 x_6 x_5 x_4 y_{10} y_4 + x_2^2 x_8 x_7 x_5 x_4 y_{10} y_3 + x_2 x_1 x_9 x_8 x_6 x_5 y_5 y_9}{x_3 x_2 x_1 x_9 x_8 x_6 x_7} \\
          &+ \frac{x_2^2 x_9 x_6 x_5 x_4 y_4 y_9 + x_2^2 x_9 x_7 x_5 x_4 y_3 y_9 + x_3 x_2 x_9 x_6 x_7 x_5 y_4 y_8 + x_3 x_2 x_9 x_7^2 x_5 y_8 y_3}{x_3 x_2 x_1 x_9 x_8 x_6 x_7} \\
          &+ \frac{x_3 x_1 x_9 x_7 (x_9 x_7 + x_6 x_8) y_2 y_8 + x_2 x_1 x_9 x_8 x_7 x_4 y_{10} y_2 + x_2 x_1 x_9^2 x_7 x_4 y_2 y_9}{x_3 x_2 x_1 x_9 x_8 x_6 x_7}.
\end{align*}
}
\end{Ex}

\section{Cluster expansion formulas for general flips}
\label{sec:type_an_m_gon}
Recall the moduli spaces $\mathscr{A}_{\text{SL}_{n+1},\bbS}$ of $G$-local system of type $A_n$. In this section, we first derive the general expansion formula corresponding to the cluster mutation sequences associated with a flip of diagonal when $\bbS$ is a quadrilateral. Based on the formula, we will derive the recurrence relation in the case of general $n$-triangulated $m$-gon in the next section.

As established in Definition~\ref{subsec:flip}, the flip of a quadrilateral corresponds to a specific sequence of mutations in the associated quiver. We consider the moduli space $\mathscr{A}_{\text{SL}_{n+1},\bbS}$ where gluing maps are applied to pairs of triangles such that the diagonal of the resulting quadrilateral is formed by two corresponding colored boundary intervals with opposite orientations \cite{GS19, GS24}. We will consider the unpunctured case as assumed, and give some remarks on the punctured surface cases in Section \ref{subsec:punctured_case}.

\subsection{Unpunctured surface case}
\label{subsec:unpunctured_case}
 Let $\bbS$ be a quadrilateral with vertices $V_1$, $V_2$, $V_3$, $V_4$ labeled in clockwise order, with $V_1V_3$ as the initial diagonal. These vertices correspond to flags as defined in Section~\ref{subsec:modspsurface}. For convenience, we treat the quadrilateral $\bbS$ as a square. The initial cluster seed consists of an $(n^2+4n)$-tuple of cluster variables, indexed by the \emph{boundary variables} $y_i$ for $i = 1, 2, \dots, 4n$, and \emph{interior variables} $x_j$ for $j = 1, 2, \dots, n^2$ assigned to the vertices of the $n$-triangulation. The labeling $(x_1,...,x_{n^2},y_1,...,y_{4n})$ of the cluster seed follows the convention below:
\begin{enumerate}
    \item[(a)] \emph{Boundary edges:}
    \begin{itemize}
        \item Edge $V_4V_1$: $(y_1, y_2, \dots, y_n)$
        \item Edge $V_iV_{i+1}$ for $i = 1, 2, 3$: (ordered from $V_i$ to $V_{i+1}$) $(y_{in+1}, y_{in+2}, \dots, y_{(i+1)n})$ 
    \end{itemize}
    
    \item[(b)] \emph{Diagonal variables:}
    \begin{itemize}
        \item Initial diagonal $V_1V_3$: (ordered from $V_1$ to $V_3$) $(x_1, x_2, \dots, x_n)$
        \item Diagonal $V_2V_4$: (ordered from $V_2$ to $V_4$)
        		\Eq{
        			\begin{cases}
    (x_{n+1}, x_{n+2}, \dots, x_{2n}) & \text{if } n \text{ is even}, \\
    (x_{n+1}, \dots, x_{\frac{3n-1}{2}}, x_{\frac{n+1}{2}}, x_{\frac{3n-1}{2}}, \dots, x_{2n-1}) & \text{if } n \text{ is odd}.
\end{cases}
        		}
    \end{itemize}
    
    \item[(c)] \emph{Interior variables:} The remaining $x_j$ variables are labeled by:
    \begin{enumerate}[label=(\roman*)]
        \item Considering concentric layers of vertices inside the square (excluding the four edges);
        \item Starting from the outermost layer containing $n^2$ vertices;
        \item Labeling vertices in clockwise order, beginning from the edge parallel to $V_4V_1$ that is closest to $V_4V_1$;
        \item Proceeding inward to subsequent layers, and repeating the labeling procedure.
    \end{enumerate}
\end{enumerate}
\begin{figure}[H]
\centering
\begin{tikzpicture}[scale=1,
    mid arrow/.style={
        postaction={decorate},
        decoration={
            markings,
            mark=at position 0.5 with {\arrow{>}}
        }
    }
]
  \coordinate (A3) at (0,0);
  \coordinate (A4) at (0,4);
  \coordinate (A1) at (4,4);
  \coordinate (A2) at (4,0);
  \coordinate (B3) at (6,0);
  \coordinate (B4) at (6,5);
  \coordinate (B1) at (11,5);
  \coordinate (B2) at (11,0);
  \node[above, font=\scriptsize] at (A1) {$V_1$};
  \node[below, font=\scriptsize] at (A2) {$V_2$};
  \node[below, font=\scriptsize] at (A3) {$V_3$};
  \node[above, font=\scriptsize] at (A4) {$V_4$};
  \node[above, font=\scriptsize] at (B1) {$V_1$};
  \node[below, font=\scriptsize] at (B2) {$V_2$};
  \node[below, font=\scriptsize] at (B3) {$V_3$};
  \node[above, font=\scriptsize] at (B4) {$V_4$};
        
  \draw[thick] (A1) -- (A2) -- (A3) -- (A4) -- cycle;
  \draw[thick] (B1) -- (B2) -- (B3) -- (B4) -- cycle;
  \draw[thick] (A1) -- (A3);
  \draw[thin, dashed] (A2) -- (A4);
  \draw[thick] (B1) -- (B3);
  \draw[thin, dashed] (B2) -- (B4);
  \draw[green, thin, dashed] (5,0) -- (5,5);
  
  \filldraw[blue] (1,4) circle (2pt);
  \node[above, font=\scriptsize] at (1,4) {$y_{1}$};
  \filldraw[blue] (2,4) circle (2pt);
  \node[above, font=\scriptsize] at (2,4) {$y_{2}$}; 
  \filldraw[blue] (3,4) circle (2pt);
  \node[above, font=\scriptsize] at (3,4) {$y_{3}$};
  \filldraw[blue] (4,3) circle (2pt);
  \node[right, font=\scriptsize] at (4,3) {$y_{4}$};
  \filldraw[blue] (4,2) circle (2pt);
  \node[right, font=\scriptsize] at (4,2) {$y_{5}$}; 
  \filldraw[blue] (4,1) circle (2pt);
  \node[right, font=\scriptsize] at (4,1) {$y_{6}$};
  \filldraw[blue] (3,0) circle (2pt);
  \node[below, font=\scriptsize] at (3,0) {$y_{7}$};
  \filldraw[blue] (2,0) circle (2pt);
  \node[below, font=\scriptsize] at (2,0) {$y_{8}$}; 
  \filldraw[blue] (1,0) circle (2pt);
  \node[below, font=\scriptsize] at (1,0) {$y_{9}$};
  \filldraw[blue] (0,1) circle (2pt);
  \node[left, font=\scriptsize] at (0,1) {$y_{10}$};
  \filldraw[blue] (0,2) circle (2pt);
  \node[left, font=\scriptsize] at (0,2) {$y_{11}$}; 
  \filldraw[blue] (0,3) circle (2pt);
  \node[left, font=\scriptsize] at (0,3) {$y_{12}$};
  
  \filldraw[blue] (3,3) circle (2pt);
  \node[above, font=\scriptsize] at (3,3) {$x_{1}$};
  \filldraw[blue] (2,2) circle (2pt);
  \node[above, font=\scriptsize] at (2,2) {$x_{2}$}; 
  \filldraw[blue] (1,1) circle (2pt);
  \node[above, font=\scriptsize] at (1,1) {$x_{3}$};
  \filldraw[blue] (3,1) circle (2pt);
  \node[above, font=\scriptsize] at (3,1) {$x_{4}$};
  \filldraw[blue] (1,3) circle (2pt);
  \node[above, font=\scriptsize] at (1,3) {$x_{5}$}; 
  \filldraw[blue] (2,3) circle (2pt);
  \node[above, font=\scriptsize] at (2,3) {$x_{6}$};
  \filldraw[blue] (3,2) circle (2pt);
  \node[above, font=\scriptsize] at (3,2) {$x_{7}$};
  \filldraw[blue] (2,1) circle (2pt);
  \node[above, font=\scriptsize] at (2,1) {$x_{8}$}; 
  \filldraw[blue] (1,2) circle (2pt);
  \node[above, font=\scriptsize] at (1,2) {$x_{9}$};
  
  \filldraw[blue] (7,5) circle (2pt);
  \node[above, font=\scriptsize] at (7,5) {$y_{1}$};
  \filldraw[blue] (8,5) circle (2pt);
  \node[above, font=\scriptsize] at (8,5) {$y_{2}$}; 
  \filldraw[blue] (9,5) circle (2pt);
  \node[above, font=\scriptsize] at (9,5) {$y_{3}$};
  \filldraw[blue] (10,5) circle (2pt);
  \node[above, font=\scriptsize] at (10,5) {$y_{4}$};
  \filldraw[blue] (11,4) circle (2pt);
  \node[right, font=\scriptsize] at (11,4) {$y_{5}$};
  \filldraw[blue] (11,3) circle (2pt);
  \node[right, font=\scriptsize] at (11,3) {$y_{6}$}; 
  \filldraw[blue] (11,2) circle (2pt);
  \node[right, font=\scriptsize] at (11,2) {$y_{7}$};
  \filldraw[blue] (11,1) circle (2pt);
  \node[right, font=\scriptsize] at (11,1) {$y_{8}$};
  \filldraw[blue] (10,0) circle (2pt);
  \node[below, font=\scriptsize] at (10,0) {$y_{9}$};
  \filldraw[blue] (9,0) circle (2pt);
  \node[below, font=\scriptsize] at (9,0) {$y_{10}$}; 
  \filldraw[blue] (8,0) circle (2pt);
  \node[below, font=\scriptsize] at (8,0) {$y_{11}$};
  \filldraw[blue] (7,0) circle (2pt);
  \node[below, font=\scriptsize] at (7,0) {$y_{12}$};
  \filldraw[blue] (6,1) circle (2pt);
  \node[left, font=\scriptsize] at (6,1) {$y_{13}$};
  \filldraw[blue] (6,2) circle (2pt);
  \node[left, font=\scriptsize] at (6,2) {$y_{14}$}; 
  \filldraw[blue] (6,3) circle (2pt);
  \node[left, font=\scriptsize] at (6,3) {$y_{15}$};
  \filldraw[blue] (6,4) circle (2pt);
  \node[left, font=\scriptsize] at (6,4) {$y_{16}$};
  
  \filldraw[blue] (10,4) circle (2pt);
  \node[above, font=\scriptsize] at (10,4) {$x_{1}$};
  \filldraw[blue] (9,3) circle (2pt);
  \node[above, font=\scriptsize] at (9,3) {$x_{2}$}; 
  \filldraw[blue] (8,2) circle (2pt);
  \node[above, font=\scriptsize] at (8,2) {$x_{3}$};
  \filldraw[blue] (7,1) circle (2pt);
  \node[above, font=\scriptsize] at (7,1) {$x_{4}$};
  \filldraw[blue] (10,1) circle (2pt);
  \node[above, font=\scriptsize] at (10,1) {$x_{5}$};
  \filldraw[blue] (9,2) circle (2pt);
  \node[above, font=\scriptsize] at (9,2) {$x_{6}$}; 
  \filldraw[blue] (8,3) circle (2pt);
  \node[above, font=\scriptsize] at (8,3) {$x_{7}$};
  \filldraw[blue] (7,4) circle (2pt);
  \node[above, font=\scriptsize] at (7,4) {$x_{8}$};
  \filldraw[blue] (8,4) circle (2pt);
  \node[above, font=\scriptsize] at (8,4) {$x_{9}$};
  \filldraw[blue] (9,4) circle (2pt);
  \node[above, font=\scriptsize] at (9,4) {$x_{10}$}; 
  \filldraw[blue] (10,3) circle (2pt);
  \node[above, font=\scriptsize] at (10,3) {$x_{11}$};
  \filldraw[blue] (10,2) circle (2pt);
  \node[above, font=\scriptsize] at (10,2) {$x_{12}$};
  \filldraw[blue] (9,1) circle (2pt);
  \node[above, font=\scriptsize] at (9,1) {$x_{13}$};
  \filldraw[blue] (8,1) circle (2pt);
  \node[above, font=\scriptsize] at (8,1) {$x_{14}$}; 
  \filldraw[blue] (7,2) circle (2pt);
  \node[above, font=\scriptsize] at (7,2) {$x_{15}$};
  \filldraw[blue] (7,3) circle (2pt);
  \node[above, font=\scriptsize] at (7,3) {$x_{16}$};        
\end{tikzpicture}
\caption{Vertex labeling examples for $n = 3$ and $n = 4$}
\label{fig:vertex_labeling}
\end{figure}
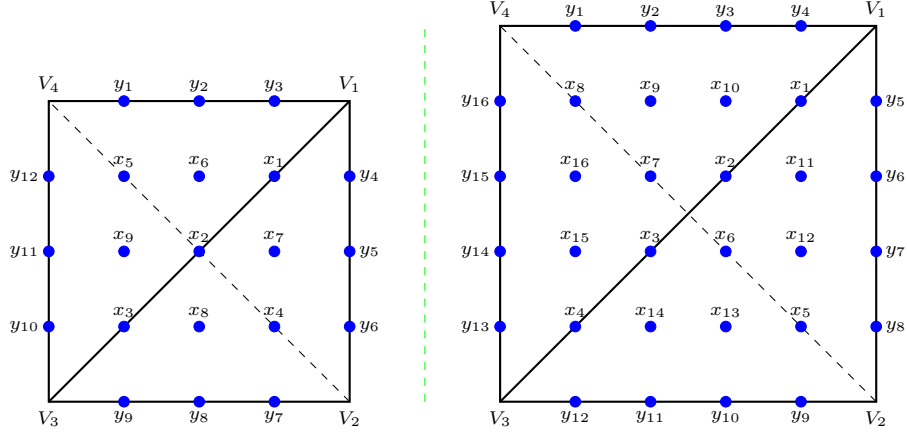
The vertices are organized in concentric \emph{layers}, where the \emph{$k$-th layer} (also called \emph{layer $k$} for convenience) contains $(n-2k)^2$ vertices. After the sequence of mutation corresponding to flipping the diagonal from $V_1V_3$ to $V_2V_4$, each vertex in the $k$-th layer undergoes exactly $k$ mutations, starting from the outermost layer ($k=1$) and proceeding inward.

Explicitly, consider the cluster realization of $\mathscr{A}_{\text{SL}_{n+1},\bbS}$ over the square $\Box V_1V_2V_3V_4$, i.e. with the associated $n$-triangulated quiver. Let $\Upsilon(V_1V_2V_3V_4)$ be the $n$-dimensional vector where each component is a Laurent polynomial in $\Z[x_1^\pm, \dots, x_{n^2}^\pm, y_1, \dots, y_{4n}]$ representing the expansion formula of the $n$ cluster variables labeled along $V_2V_4$ with respect to the initial variables after flipping the diagonal from $V_1V_3$ to $V_2V_4$. Then we have:
\Eq{\label{vertex}
    \Upsilon(V_1V_2V_3V_4)= 
\begin{cases}
    (x_{n+1}^{(1)}, \dots, x_{n+k}^{(k)}, x_{n+k+1}^{(k)}, \dots, x_{2n}^{(1)}) & \text{if } n = 2k \text{ is even}, \\
    (x_{n+1}^{(1)}, \dots, x_{n+k}^{(k)}, x_{k+1}^{(k+1)}, x_{n+k+1}^{(k)}, \dots, x_{2n-1}^{(1)}) & \text{if } n = 2k+1 \text{ is odd}
\end{cases}
}
where $x_{i}^{(j)}$ denotes that vertex $x_i$ is mutated exactly $j$ times in the flip mutation sequence. For convenience, we will also call each variable $x_{i}^{(j)}$ that appears as a coordinate in \eqref{vertex} to be a \emph{vertex} of $\Upsilon(V_1V_2V_3V_4)$. For notation convenience, we sometimes write $x'_j$ for $x_j^{(1)}$, $x''_j$ for $x_j^{(2)}$, $x'''_j$ for $x_j^{(3)}$, etc. 

The following proposition can be verified directly from the definition by considering the last two mutation steps:
\begin{Prop}
\label{prop:flip_properties}
Consider the cluster realization of $\mathscr{A}_{\text{SL}_{n+1},\bbS}$ over the square $\Box V_1V_2V_3V_4$ with vertices $x_i$ and $y_j$ as defined. Define the mutated variables of the sub-square of 1 smaller rank to be:
\Eq{
    \Upsilon(V_1y_{2n}x_{n}y_1) &= (X_{1,1}, X_{1,2}, \dots, X_{1,n-1}); \quad \Upsilon(x_1y_{2n+1}V_3y_{4n}) = (X_{2,1}, X_{2,2}, \dots, X_{2,n-1}).
}
Then the mutated variables along the diagonal $V_2V_4$ satisfy:
    \begin{enumerate}[label=(\alph*)]
    		\item For $n = 1$, we have only one mutable variable $x_1$ with 4 frozen variables $y_1,...,y_4$. Then
    			$$\Upsilon(V_1V_2V_3V_4) = x'_1 := \frac{y_1y_3+y_2y_4}{x_1}.$$
    			For $n = 2$, we have 4 mutable variables $x_1,...,x_4$ and 8 frozen variables $y_1,...,y_8$. Then
    	$$\Upsilon(V_1V_2V_3V_4) = (x'_3, x'_4)$$
    	where
    	$$x'_3 := \frac{y_2y_5}{x_1} + \frac{y_4y_7}{x_2} + \frac{y_3y_5x_4}{x_1x_3} + \frac{y_4y_6x_4}{x_2x_3}; \quad\mbox{and}\quad x'_4 := \frac{y_3y_8}{x_1} + \frac{y_1y_6}{x_2} + \frac{y_2y_8x_3}{x_1x_4} + \frac{y_1y_7x_3}{x_2x_4}.$$
    		
        \item If $n = 2k \geq 4$ is even, then $V_2V_4$ consists of vertices $x_{n+1}, x_{n+2}, \dots, x_{2n}$ with:
\begin{equation}
\begin{aligned}\nonumber
\Upsilon(x_{1}x_{n+1}x_{n}x_{2n}) &= (x^{(1)}_{n+2},\ldots,x^{(k-1)}_{n+k},x^{(k-1)}_{n+k+1},\ldots,x^{(1)}_{2n-1}); \\
\Upsilon(V_{1}V_{2}V_{3}V_{4}) &= (x^{(1)}_{n+1},\ldots,x^{(k)}_{n+k},x^{(k)}_{n+k+1},\ldots,x^{(1)}_{2n}); \\
x^{\prime}_{n+1}=x^{(1)}_{n+1} &=\frac{y_{2n}X_{2,1}+y_{2n+1}X_{1,1}}{x_{n+1}}; \\
 x^{\prime}_{2n}=x^{(1)}_{2n} &=\frac{y_{1}X_{2,n-1}+y_{4n}X_{1,n-1}}{x_{2n}};  
 \end{aligned}\nonumber
\end{equation}
   and for $t = 0,1,\dots,k-2$:
   \begin{equation}
\begin{aligned}
x^{(2+t)}_{n+2+t} &=\frac{X_{1,1+t}X_{2,2+t}+X_{1,2+t}X_{2,1+t}}{x^{(1+t)}_{n+2+t}}; \\
x^{(2+t)}_{2n-1-t} &=\frac{X_{1,n-2-t}X_{2,n-1-t}+X_{1,n-1-t}X_{2,n-2-t}}{x^{(1+t)}_{2n-1-t}}.
\end{aligned}
\label{eq:upsilon_even}
\end{equation}

        \item If $n = 2k+1 \geq 3$ is odd, then $V_2V_4$ consists of vertices $x_{n+1}, \dots, x_{n+k}, x_{k+1}, x_{n+k+1}, \dots, x_{2n-1}$ with:
\begin{equation}
\begin{aligned}
    \Upsilon(x_{1}x_{n+1}x_{n}x_{2n-1}) &=\case{(x_2^{(1)})&k=1\\ (x_{n+2}^{(1)}, \dots, x_{n+k}^{(k-1)}, x_{k+1}^{(k)}, x_{n+k+1}^{(k-1)}, \dots, x_{2n-2}^{(1)})&k>1}; \\
    \Upsilon(V_1V_2V_3V_4) &= (x_{n+1}^{(1)}; \dots, x_{n+k}^{(k)}, x_{k+1}^{(k+1)}, x_{n+k+1}^{(k)}, \dots, x_{2n-1}^{(1)}); \\
    x'_{n+1} = x_{n+1}^{(1)} &= \frac{y_{2n}X_{2,1} + y_{2n+1}X_{1,1}}{x_{n+1}}; \\
     x'_{2n-1} = x_{2n-1}^{(1)} &= \frac{y_{1}X_{2,n-1} + y_{4n}X_{1,n-1}}{x_{2n-1}}; \\
    x_{k+1}^{(k+1)} &= \frac{X_{1,k}X_{2,k+1} + X_{1,k+1}X_{2,k}}{x_{k+1}^{(k)}}; 
   \end{aligned}\nonumber
\end{equation}
   and if $k>1$, for $t = 0,1,\dots,k-2$:
   \begin{equation}
\begin{aligned}
    x_{n+2+t}^{(2+t)} &= \frac{X_{1,1+t}X_{2,2+t} + X_{1,2+t}X_{2,1+t}}{x_{n+2+t}^{(1+t)}}; \\
    x_{2n-2-t}^{(2+t)} &= \frac{X_{1,n-2-t}X_{2,n-1-t} + X_{1,n-1-t}X_{2,n-2-t}}{x_{2n-2-t}^{(1+t)}}.
\end{aligned}
\label{eq:upsilon_odd}
\end{equation}
\end{enumerate}
\end{Prop}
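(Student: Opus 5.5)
The plan is a direct computation from the definition of the flip mutation sequence in Definition~\ref{subsec:flip}, combined with a self-similarity observation. The key observation is that steps $1,\dots,n-1$ of the rank-$n$ flip restrict, on each of the two corner sub-squares $\Box V_1y_{2n}x_ny_1$ and $\Box x_1y_{2n+1}V_3y_{4n}$, to the full rank-$(n-1)$ flip of that sub-square. Concretely, the union of the $k\times(n+1-k)$ rectangles for the big square, with the vertices on the two outermost layers removed, decomposes as the rectangles of size $k\times(n-k)$ for the two sub-squares. Granting this, the variables sitting on the anti-diagonal of each sub-square after the intermediate steps are precisely $X_{1,\bullet}$ and $X_{2,\bullet}$.

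The first thing to check is the quiver. In each sub-square, the frozen vertices are the boundary $y$'s together with the $x$'s lying along the two edges interior to the big square. I will verify that, at the moment the relevant mutations are performed, the local quiver around each sub-square coincides with the $(n-1)$-triangulated quiver of a quadrilateral. The extra arrows coming from outside connect only to vertices that are not mutated until the last step. The case $n=1$ and the base computation for $n=2$ are read off directly from the earlier $n=2$ example, after reindexing to the labeling convention of this section.

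Next I identify the last mutation step. The vertices on the anti-diagonal $V_2V_4$ are mutated last, at step $n$ for the middle ones and step $1$ for the extreme ones. Tracking the arrows, each such vertex $x^{(j)}$ has, just before its final mutation, exactly four neighbours. For the two endpoints $x_{n+1}$ and $x_{2n}$, two of these neighbours are the boundary vertices $y_{2n},y_{2n+1}$ (respectively $y_1,y_{4n}$), and the other two are the corresponding entries of the $X_1$ and $X_2$ vectors. For an interior anti-diagonal vertex, the four neighbours are the two adjacent $X_1$-entries and the two adjacent $X_2$-entries, with orientation giving the pairing $X_{1,i}X_{2,i+1}$ versus $X_{1,i+1}X_{2,i}$. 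The exchange relation then yields the displayed formulas in both the even and the odd cases. In the odd case, the central vertex $x_{k+1}$ is handled the same way, and its previous value $x_{k+1}^{(k)}$ comes from the rank-$(n-1)$ flip.

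The main obstacle is the quiver bookkeeping: one must show that, after steps $1,\dots,n-1$, each anti-diagonal vertex has exactly these four incoming/outgoing neighbours with the stated orientations. I would prove this by induction on $n$, using the rectangle decomposition together with the explicit local pattern of arrows around a mutated square center (as in the $n=2,3$ figures). I would also check that no arrows of multiplicity greater than one arise, since the mutated vertices in each step are pairwise non-adjacent.
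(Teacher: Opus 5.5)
Your plan has the same shape as the paper's proof, which only says the result follows ``by considering the last two mutation steps''. Your last-step analysis is correct. Place $V_3=(0,0)$ and $V_1=(n+1,n+1)$. The vertex $(n+1-p,p)$ of $V_2V_4$ has grid neighbours $(n-p,p)$ and $(n+1-p,p-1)$, carrying $X_{2,p}$ and $X_{2,p-1}$, and $(n+2-p,p)$ and $(n+1-p,p+1)$, carrying $X_{1,p-1}$ and $X_{1,p}$; at the ends, boundary $y$'s replace the missing ones. This gives the displayed formulas. One small slip: the end vertices $x_{n+1},x_{2n}$ are mutated once, at step $n$, not at step $1$.

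The gap is the self-similarity step, which you state in a form that is false for $n\ge 4$. In these coordinates step $k$ mutates exactly the $(i,j)$ with $|i-j|\le k-1$, $i-j\equiv k-1 \pmod 2$ and $k+1\le i+j\le 2n+1-k$. The two corner sub-squares overlap in $2\le i,j\le n-1$, and for $k\le n-1$ big step $k$ is the \emph{union}, not a decomposition, of their step-$k$ sets. So steps $1,\dots,n-1$ restricted to $\Box V_1y_{2n}x_ny_1$ do more than that sub-square's flip. For $n=4$, step $3$ mutates $(2,2)$, the local $(1,1)$ of that sub-square, which its rank-$3$ flip mutates only at step $1$.

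Your claim that ``the extra arrows coming from outside connect only to vertices not mutated until the last step'' is also false. The frozen boundary vertices $(1,j)$ of that sub-square are mutable in the big quiver and are mutated at step $j$; for example $x_9=(1,2)$ is mutated at step $2$ when $n=3$. Hence the quiver around a sub-square is not the $(n-1)$-triangulated one throughout. Your proposed verification therefore cannot show that the values on $i+j=n+2$ and $i+j=n$ after step $n-1$ are $X_{1,\bullet}$ and $X_{2,\bullet}$.

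What is missing is a locality argument. Start from the fact that every mutation of the flip is $x_{i,j}x'_{i,j}=x_{i-1,j}x_{i+1,j}+x_{i,j-1}x_{i,j+1}$ in current values. You need this lemma anyway; prove it by induction on the step over the whole square, since the quiver near a sub-square is not that sub-square's quiver at intermediate times. Then show by induction on $k$ that each exchange of the sub-square flip reads the same neighbour values in the big sequence. This works for three reasons:
\begin{itemize}
\item that flip mutates $(i,j)$ only at steps $k\le i+j-3$;
\item a vertex $(i',j')$ of the sub-square receives at most one extra big-flip mutation, at step $i'+j'-1\ge i+j-2>k-1$ whenever it neighbours $(i,j)$;
\item the boundary vertices $(1,j')$ and $(j',1)$ are mutated at step $j'$, after the sub-flip has last read them (its last mutation of $(2,j')$ or $(j',2)$ occurs at a step $\le j'-1$).
\end{itemize}
The other corner sub-square is symmetric.

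The same argument, applied to the central square $\Box x_1x_{n+1}x_nx_{2n}$ (resp.\ $\Box x_1x_{n+1}x_nx_{2n-1}$), is needed for one further identification. You must show that the pre-final values $x^{(1+t)}_{n+2+t}$ and $x^{(k)}_{k+1}$ are the coordinates of that square's rank-$(n-2)$ flip. This is part of the statement, and your proposal does not address it.
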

By the Laurent phenomenon, every coordinate of $\Upsilon$ is a Laurent polynomial in the initial variables $x_i$ and $y_j$.

\begin{Ex}
\label{ex:SS}
For $n = 4$, we shall compute:
\begin{align*}
    \Upsilon(x_{1}x_{5}x_{4}x_{8}) &= (x'_{6}, x'_{7}); \\
    x'_{6} &= \frac{x_{10}x_{13}}{x_2} + \frac{x_{12}x_{15}}{x_3} + \frac{x_{7}x_{11}x_{13}}{x_2x_6} + \frac{x_{7}x_{12}x_{14}}{x_3x_{6}};\\
    x'_{7} &= \frac{x_{11}x_{16}}{x_2} + \frac{x_{9}x_{14}}{x_3} + \frac{x_{6}x_{10}x_{16}}{x_2x_7} + \frac{x_{6}x_{9}x_{15}}{x_3x_7}.
\end{align*}
Applying Proposition~\ref{prop:flip_properties} yields:
\begin{align*}
    \Upsilon(V_1y_{8}x_{4}y_1) &= (X_{1,1}, X_{1,2}, X_{1,3});
\end{align*}
{\small
\begin{align*}
    X_{1,1}=x'_{12}=&\frac{y_4x_5}{x_1}+\frac{y_7x_{15}}{x_3}+\frac{y_5x_5x_{10}}{x_1x_{11}}+\frac{y_7x_7x_{14}}{x_3x_6} +\frac{y_6x_5x_7}{x_2x_{12}}+\frac{y_7x_{10}x_{13}}{x_2x_{12}}+\frac{y_7x_7x_{11}x_{13}}{x_2x_6x_{12}}+\frac{y_6x_5x_6x_{10}}{x_2x_{11}x_{12}};\\
    X_{1,2}=x''_2=&\frac{y_3x_{13}}{x_2}+\frac{y_6x_{16}}{x_2}+\frac{y_4x_{12}x_{16}}{x_1x_7}+\frac{y_6x_{9}x_{14}}{x_3x_{11}}+\frac{y_5x_{9}x_{13}}{x_1x_6} +\frac{y_3x_{12}x_{15}}{x_3x_{10}}+\frac{y_6x_{6}x_{10}x_{16}}{x_2x_7x_{11}}+\frac{y_3x_7x_{11}x_{13}}{x_2x_6x_{10}} \\
    &+\frac{y_5x_{10}x_{12}x_{16}}{x_1x_7x_{11}} +\frac{y_6x_{6}x_{9}x_{15}}{x_3x_7x_{11}}+\frac{y_4x_{9}x_{11}x_{13}}{x_1x_6x_{10}}+\frac{y_3x_{7}x_{12}x_{14}}{x_3x_6x_{10}}+\frac{y_5x_{2}x_9x_{12}x_{15}}{x_1x_3x_7x_{11}}\\
    &+\frac{y_4x_{2}x_9x_{12}x_{15}}{x_1x_3x_7x_{10}}+\frac{y_4x_{2}x_9x_{12}x_{14}}{x_1x_3x_6x_{10}}+\frac{y_5x_{2}x_9x_{12}x_{14}}{x_1x_3x_6x_{11}};\\
    X_{1,3}=x'_9=&\frac{y_2x_{14}}{x_3}+\frac{y_5x_{8}}{x_1}+\frac{y_4x_{8}x_{11}}{x_1x_{10}}+\frac{y_2x_{6}x_{15}}{x_3x_7} +\frac{y_2x_{11}x_{16}}{x_2x_9}+\frac{y_3x_{6}x_{8}}{x_2x_9}+\frac{y_2x_{6}x_{10}x_{16}}{x_2x_7x_9}+\frac{y_3x_{7}x_8x_{11}}{x_2x_9x_{10}};
\end{align*}
}

\begin{align*}      
    \Upsilon(x_1y_{9}V_3y_{16})=(X_{2,1}, X_{2,2}, X_{2,3});
\end{align*}
{\small
\begin{align*}    
    X_{2,1}=x'_{13}=&\frac{y_{10}x_{10}}{x_2}+\frac{y_{13}x_{5}}{x_4}+\frac{y_{10}x_7x_{11}}{x_2x_6}+\frac{y_{12}x_5x_{15}}{x_4x_{14}} +\frac{y_{10}x_{12}x_{15}}{x_3x_{13}}+\frac{y_{11}x_5x_7}{x_3x_{13}}+\frac{y_{11}x_5x_6x_{15}}{x_3x_{13}x_{14}}+\frac{y_{10}x_7x_{12}x_{14}}{x_3x_{6}x_{13}};\\
    X_{2,2}=x''_3=&\frac{y_{11}x_{9}}{x_3}+\frac{y_{14}x_{12}}{x_3}+\frac{y_{14}x_{10}x_{13}}{x_2x_{15}}+\frac{y_{12}x_{12}x_{16}}{x_4x_{6}}+\frac{y_{11}x_{11}x_{16}}{x_2x_{14}} +\frac{y_{13}x_{9}x_{13}}{x_4x_{7}}+\frac{y_{14}x_{7}x_{12}x_{14}}{x_3x_6x_{15}}+\frac{y_{11}x_6x_{9}x_{15}}{x_3x_7x_{14}}\\
    &+\frac{y_{14}x_{7}x_{11}x_{13}}{x_2x_6x_{15}} + \frac{y_{13}x_{12}x_{14}x_{16}}{x_4x_6x_{15}}+\frac{y_{11}x_{6}x_{10}x_{16}}{x_2x_7x_{14}}+\frac{y_{12}x_{9}x_{13}x_{15}}{x_4x_7x_{14}}+\frac{y_{13}x_{3}x_{11}x_{13}x_{16}}{x_2x_4x_6x_{15}}\\
    &+\frac{y_{13}x_{3}x_{10}x_{13}x_{16}}{x_2x_4x_7x_{15}}+\frac{y_{12}x_{3}x_{10}x_{13}x_{16}}{x_2x_4x_7x_{14}}+\frac{y_{12}x_{3}x_{11}x_{13}x_{16}}{x_2x_4x_6x_{14}};\\
    X_{2,3}=x'_{16}=&\frac{y_{12}x_8}{x_4}+\frac{y_{15}x_{11}}{x_2}+\frac{y_{15}x_{6}x_{10}}{x_2x_7}+\frac{y_{13}x_{8}x_{14}}{x_4x_{15}} +\frac{y_{14}x_{6}x_8}{x_3x_{16}}+\frac{y_{15}x_{9}x_{14}}{x_3x_{16}}+\frac{y_{14}x_{7}x_8x_{14}}{x_3x_{15}x_{16}}+\frac{y_{15}x_{6}x_9x_{15}}{x_3x_{7}x_{16}}.
\end{align*}
}
Therefore, the expansion formula for $n=4$ of the full square $\Box V_1V_2V_3V_4$ is:
\begin{align*}
    \Upsilon(V_1V_2V_3V_4) &= (x'_5, x''_{6}, x''_{7}, x'_{8})
\end{align*}
where
{\small
\begin{align*}    
    x'_5 =& \frac{y_{8}X_{2,1}+y_{9}X_{1,1}}{x_{5}}\\
    =&\frac{y_8y_{13}}{x_4}+\frac{y_4y_9}{x_1}+\frac{y_5y_9x_{10}}{x_{1}x_{11}}+\frac{y_6y_9x_{7}}{x_{2}x_{12}}+\frac{y_8y_{11}x_{7}}{x_{3}x_{13}}+\frac{y_8y_{12}x_{15}}{x_{4}x_{14}}+\frac{y_7y_9x_{15}}{x_{3}x_{5}}+\frac{y_8y_{10}x_{10}}{x_{2}x_{5}}+\frac{y_6y_9x_6x_{10}}{x_{2}x_{11}x_{12}}\\
    &+\frac{y_8y_{11}x_6x_{15}}{x_{3}x_{13}x_{14}}+\frac{y_7y_9x_7x_{14}}{x_{3}x_{5}x_{6}}+\frac{y_7y_9x_{10}x_{13}}{x_{2}x_{5}x_{12}}+\frac{y_8y_{10}x_7x_{11}}{x_{2}x_{5}x_{6}}+\frac{y_8y_{10}x_{12}x_{15}}{x_{3}x_{5}x_{13}}+\frac{y_7y_9x_7x_{11}x_{13}}{x_{2}x_{5}x_{6}x_{12}}+\frac{y_8y_{10}x_7x_{12}x_{14}}{x_{3}x_{5}x_{6}x_{13}};
\end{align*}

\begin{align*}
    x'_8 =& \frac{y_{1}X_{2,3}+y_{16}X_{1,3}}{x_{8}}\\
    =&\frac{y_1y_{12}}{x_4}+\frac{y_6y_{15}}{x_1}+\frac{y_1y_{13}x_{14}}{x_{4}x_{15}}+\frac{y_1y_{14}x_{6}}{x_{3}x_{16}}+\frac{y_3y_{16}x_{7}}{x_{2}x_{9}}+\frac{y_4y_{16}x_{11}}{x_{1}x_{10}}+\frac{y_1y_{15}x_{11}}{x_{2}x_{8}}+\frac{y_2y_{16}x_{14}}{x_{3}x_{8}}+\frac{y_1y_{14}x_7x_{14}}{x_{3}x_{15}x_{16}}\\
    &+\frac{y_3y_{16}x_7x_{11}}{x_{2}x_{9}x_{10}}+\frac{y_1y_{15}x_6x_{10}}{x_{2}x_{7}x_{8}}+\frac{y_1y_{15}x_{9}x_{14}}{x_{3}x_{8}x_{16}}+\frac{y_2y_{16}x_6x_{15}}{x_{3}x_{7}x_{8}}+\frac{y_2y_{16}x_{11}x_{16}}{x_{2}x_{8}x_{9}}+\frac{y_{1}y_{15}x_6x_{9}x_{15}}{x_{3}x_{7}x_{8}x_{16}}+\frac{y_2y_{16}x_6x_{10}x_{16}}{x_{2}x_{7}x_{8}x_{9}};
\end{align*}

\begin{align*}    
    x''_6 =& \frac{X_{1,1}X_{2,2}+X_{1,2}X_{2,1}}{x'_6}\\
    =&\frac{y_{3} y_{10}}{x_{2}}+\frac{y_{7} y_{14}}{x_{3}}+\frac{y_{3} y_{13}x_{5}}{x_{4} x_{10}}+\frac{y_{4} y_{14}x_{5}}{x_{1} x_{15}}+\frac{y_{5} y_{10}x_{9}}{x_{1}x_{6}}+\frac{y_{6} y_{10}x_{16}}{x_{2} x_{13}}+\frac{y_{7} y_{11}x_{9}}{x_{3} x_{12}}+\frac{y_{7} y_{12}x_{16}}{x_{4}x_{6}}\\
    &+\frac{y_{3} y_{10}x_{7}x_{11}}{x_{2} x_{6}x_{10}}+\frac{y_{3} y_{10}x_{12} x_{15}}{x_{3} x_{10} x_{13}}+\frac{y_{3} y_{11}x_{5}x_{7}}{x_{3}x_{10}x_{13}}+\frac{y_{3} y_{12} x_{5}x_{15}}{x_{4}x_{10}x_{14}}+\frac{ y_{4} y_{10}x_{9}x_{11}}{x_{1}x_{6}x_{10}}+\frac{y_{4}y_{10}x_{12}x_{16}}{x_{1} x_{7} x_{13}}+\frac{y_{5}y_{14}x_{5}x_{10}}{x_{1} x_{11} x_{15}}\\
    &+\frac{y_{6} y_{10}x_{9} x_{14}}{x_{3} x_{11} x_{13}}+\frac{y_{6} y_{14}x_{5}x_{7}}{x_{2} x_{12} x_{15}}+\frac{y_{7} y_{11}x_{11} x_{16} }{x_{2} x_{12} x_{14}}+\frac{y_{7} y_{13} x_{9}x_{13}}{x_{4} x_{7}x_{12}}+\frac{y_{7} y_{13}x_{14} x_{16} }{x_{4}x_{6} x_{15}}+\frac{y_{7} y_{14}x_{7}x_{14}}{x_{3}x_{6}x_{15}}+\frac{ y_{7} y_{14}x_{10} x_{13}}{x_{2} x_{12} x_{15}}\\
    &+\frac{y_{3} y_{10}x_{7}x_{12} x_{14}}{ x_{3} x_{6}x_{10} x_{13}}+\frac{y_{3} y_{11}x_{5}x_{6} x_{15}}{x_{3} x_{10} x_{13} x_{14}}+\frac{y_{4} y_{11}x_{2} x_{5} x_{9}}{x_{1}x_{3}x_{10} x_{13}}+\frac{y_{4} y_{11}x_{5}x_{6}x_{16}}{x_{1} x_{7} x_{13}x_{14}}+\frac{y_{4} y_{12}x_{3} x_{5} x_{16}}{x_{1} x_{4}x_{7} x_{14}}+\frac{y_{4} y_{13}x_{2} x_{5} x_{9}}{x_{1} x_{4} x_{7} x_{10}}\\
    &+\frac{y_{4} y_{13}x_{3} x_{5} x_{16} }{x_{1}x_{4} x_{7}x_{15}}+\frac{y_{5} y_{10}x_{10}x_{12}x_{16}}{x_{1}x_{7}x_{11}x_{13}}+\frac{y_{5} y_{11}x_{2} x_{5} x_{9}}{x_{1} x_{3} x_{11} x_{13}}+\frac{y_{5}y_{13}x_{2} x_{5} x_{9}}{x_{1}x_{4} x_{7}x_{11}}+\frac{y_{6} y_{10}x_{6}  x_{9}x_{15}}{x_{3}x_{7} x_{11}x_{13}}+\frac{y_{6} y_{10}x_{6} x_{10} x_{16}}{x_{2}x_{7} x_{11} x_{13}}\\
    &+\frac{y_{6} y_{11}x_{5}x_{6}x_{9}}{x_{3} x_{11} x_{12} x_{13}}+\frac{y_{6} y_{11}x_{5}x_{6}x_{16}}{x_{2} x_{12} x_{13}x_{14}}+\frac{y_{6} y_{12}x_{3} x_{5} x_{16}}{x_{2}x_{4} x_{12}  x_{14}}+\frac{y_{6} y_{13}x_{3} x_{5} x_{16} }{x_{2}x_{4}x_{12} x_{15}}+\frac{y_{6} y_{13}x_{5}x_{6}x_{9}}{x_{4} x_{7}x_{11} x_{12}}+\frac{y_{6} y_{14}x_{5}x_{6} x_{10}}{x_{2} x_{11} x_{12} x_{15}}\\
    &+\frac{y_{7} y_{11}x_{6}x_{9}x_{15}}{x_{3}x_{7} x_{12} x_{14}}+\frac{y_{7} y_{11}x_{6} x_{10} x_{16}}{x_{2} x_{7}x_{12}x_{14}}+\frac{y_{7} y_{12}x_{9}x_{13} x_{15}}{x_{4} x_{7}x_{12}x_{14}}+\frac{y_{7} y_{14}x_{7}x_{11}x_{13}}{x_{2}x_{6}x_{12}x_{15}}+\frac{ y_{4} y_{10}x_{2}x_{9}x_{12}x_{14}}{ x_{1}x_{3}x_{6}x_{10} x_{13}}+\frac{y_{4} y_{10}x_{2}x_{9} x_{12} x_{15}}{x_{1}x_{3}x_{7} x_{10} x_{13}}\\
    &+\frac{y_{4} y_{12}x_{2}x_{5}x_{9}x_{15}}{x_{1} x_{4}  x_{7} x_{10}x_{14}}+\frac{y_{5} y_{10}x_{2} x_{9} x_{12} x_{14}}{x_{1}x_{3}x_{6}x_{11} x_{13}}+\frac{y_{5} y_{10}x_{2} x_{9}x_{12} x_{15}}{x_{1}x_{3}x_{7}x_{11}  x_{13}}+\frac{y_{5} y_{11}x_{5}x_{6}x_{10}x_{16}}{x_{1}  x_{7}x_{11}x_{13}x_{14}}+\frac{y_{5} y_{12}x_{2}x_{5}x_{9}x_{15}}{x_{1}  x_{4}x_{7}x_{11} x_{14}}\\
    &+\frac{y_{5} y_{12}x_{3}x_{5}x_{10}x_{16}}{x_{1}  x_{4}x_{7}x_{11} x_{14}}+\frac{y_{5} y_{13}x_{3}x_{5}x_{10}x_{16}}{x_{1}  x_{4} x_{7}x_{11} x_{15}}+\frac{y_{6} y_{12}x_{5}x_{6} x_{9} x_{15}}{x_{4}  x_{7}x_{11} x_{12}x_{14}}+\frac{y_{7} y_{12}x_{3} x_{10} x_{13} x_{16}}{x_{2}x_{4}x_{7} x_{12} x_{14}}+\frac{ y_{7} y_{12}x_{3} x_{11} x_{13} x_{16}}{x_{2}x_{4}x_{6} x_{12} x_{14}}\\
    &+\frac{y_{7} y_{13}x_{3} x_{10} x_{13} x_{16} }{x_{2} x_{4} x_{7}x_{12} x_{15}}+\frac{y_{7} y_{13}x_{3} x_{11} x_{13} x_{16}}{x_{2}x_{4}x_{6}x_{12} x_{15}}+\frac{y_{6} y_{11}x_{6}^{2} x_{5} x_{9} x_{15}}{x_{3}x_{7}x_{11} x_{12}x_{13} x_{14}}+\frac{y_{6} y_{11}x_{6}^{2} x_{5}x_{10}x_{16}}{x_{2}x_{7} x_{11} x_{12}x_{13}x_{14}}\\
    &+\frac{y_{4} y_{11}x_{2}  x_{5}x_{6}  x_{9}x_{15} }{x_{1}x_{3}x_{7}x_{10} x_{13}x_{14}}+\frac{y_{5} y_{11} x_{2}  x_{5}x_{6} x_{9}x_{15} }{x_{1}x_{3}x_{7}  x_{11} x_{13}x_{14}}+\frac{y_{6} y_{12} x_{3}  x_{5}x_{6} x_{10}x_{16} }{x_{2} x_{4}x_{7}x_{11} x_{12}  x_{14} }+\frac{ y_{6} y_{13} x_{3}  x_{5}x_{6}x_{10} x_{16}}{x_{2}x_{4} x_{7} x_{11} x_{12} x_{15}};
\end{align*}

\begin{align*}
    x''_7 =& \frac{X_{1,2}X_{2,3}+X_{1,3}X_{2,2}}{x'_7}\\
    =&\frac{y_{2} y_{11}}{x_{3}}+\frac{y_{6} y_{15}}{x_{2}}+\frac{ y_{2} y_{13}x_{13}}{x_{4}x_{7}}+\frac{y_{2} y_{14}x_{12} }{x_{3} x_{9}}+\frac{y_{3} y_{15}x_{13} }{x_{2} x_{16}}+\frac{ y_{4} y_{15}x_{12}}{x_{1}x_{7}}+\frac{ y_{5} y_{11}x_{8}}{x_{1} x_{14}}+\frac{y_{6} y_{12}x_{8} }{x_{4}x_{11}}
    \\
    &+\frac{y_{2} y_{11}x_{6} x_{15} }{x_{3}x_{7} x_{14}}+\frac{ y_{2} y_{11}x_{11} x_{16}}{x_{2} x_{9} x_{14}}+\frac{y_{2} y_{12}x_{12}x_{16}}{x_{4}x_{6}x_{9}}+\frac{y_{2} y_{12}x_{13}x_{15}}{x_{4}x_{7}x_{14}}+\frac{y_{2} y_{14}x_{10} x_{13}}{x_{2} x_{9}x_{15} }+\frac{y_{3} y_{11}x_{6} x_{8}}{x_{2} x_{9} x_{14}}+\frac{y_{3} y_{15} x_{12}x_{15} }{x_{3} x_{10} x_{16}}
    \\
    &+\frac{y_{4} y_{11}x_{8} x_{11}}{x_{1} x_{10} x_{14}}+\frac{y_{5} y_{15}x_{9} x_{13} }{x_{1} x_{6} x_{16}}+\frac{y_{5} y_{15}x_{10} x_{12} }{x_{1}x_{7}x_{11}}+\frac{y_{6} y_{13}x_{8}x_{14}}{x_{4}x_{11}x_{15}}+\frac{y_{6} y_{14}x_{6} x_{8}}{x_{3} x_{11} x_{16}}+\frac{ y_{6} y_{15}x_{6} x_{10}}{x_{2}x_{7}x_{11}}+\frac{y_{6} y_{15}x_{9} x_{14} }{x_{3} x_{11} x_{16}}
    \\
    &+\frac{y_{2} y_{11}x_{6} x_{10} x_{16} }{x_{2}x_{7}x_{9}x_{14}}+\frac{ y_{2} y_{13}  x_{12}x_{14}x_{16}}{x_{4}x_{6} x_{9} x_{15}}+\frac{ y_{2} y_{14}x_{7} x_{11} x_{13}}{x_{2} x_{6} x_{9}x_{15}}+\frac{y_{2} y_{14}x_{7}x_{12}x_{14}}{x_{3} x_{6}  x_{9}x_{15}}+\frac{y_{3} y_{11}x_{7}x_{8}x_{11}}{x_{2} x_{9} x_{10}x_{14}}+\frac{ y_{3} y_{12}x_{3} x_{8} x_{13}}{x_{2} x_{4} x_{9} x_{14}}
    \\
    &+\frac{y_{3} y_{12}x_{7} x_{8} x_{12} }{x_{4}x_{6}x_{9} x_{10}}+\frac{ y_{3} y_{13}x_{3} x_{8} x_{13}}{x_{2} x_{4} x_{9}x_{15}}+\frac{y_{3} y_{14}x_{7} x_{8} x_{12} }{x_{3}x_{9} x_{10} x_{16} }+\frac{y_{3} y_{14}x_{7} x_{8} x_{13}}{x_{2}x_{9} x_{15} x_{16} }+\frac{y_{3} y_{15}x_{7} x_{11} x_{13} }{x_{2} x_{6} x_{10} x_{16}}+\frac{y_{3} y_{15}x_{7} x_{12}x_{14}}{x_{3} x_{6} x_{10} x_{16}}
    \\
    &+\frac{y_{4} y_{12}x_{2} x_{8} x_{12} }{x_{1}x_{4} x_{6} x_{10}}+\frac{ y_{4} y_{14}x_{2} x_{8} x_{12}}{x_{1}x_{3}x_{10} x_{16}}+\frac{y_{4} y_{15} x_{9} x_{11}x_{13} }{x_{1} x_{6} x_{10} x_{16}}+\frac{y_{5} y_{12}x_{2} x_{8} x_{12} }{x_{1} x_{4}x_{6} x_{11} }+\frac{y_{5} y_{12}x_{3} x_{8} x_{13} }{x_{1}x_{4} x_{6} x_{14}}+\frac{ y_{5} y_{13}x_{3} x_{8} x_{13}}{x_{1}x_{4} x_{6} x_{15}}
    \\
    &+\frac{y_{5} y_{14}x_{2} x_{8} x_{12} }{x_{1}x_{3}  x_{11} x_{16}}+\frac{y_{5} y_{14}x_{7} x_{8} x_{13} }{x_{1} x_{6} x_{15} x_{16}}+\frac{y_{6} y_{14}x_{7}x_{8} x_{14}}{x_{3} x_{11} x_{15} x_{16}}+\frac{y_{6} y_{15}x_{6} x_{9} x_{15}}{x_{3}x_{7} x_{11} x_{16}}+\frac{y_{2} y_{12}x_{3} x_{10}  x_{13}x_{16} }{x_{2} x_{4}x_{7} x_{9} x_{14}}+\frac{y_{2} y_{12}x_{3} x_{11} x_{13}x_{16}}{x_{2}x_{4} x_{6}  x_{9} x_{14}}
    \\
    &+\frac{y_{2} y_{13}x_{3} x_{10} x_{13}x_{16} }{ x_{2} x_{4} x_{7} x_{9}x_{15}}+\frac{y_{2} y_{13}x_{3} x_{11}x_{13}  x_{16} }{x_{2}  x_{4}x_{6}x_{9} x_{15}}+\frac{y_{3} y_{13}x_{7}x_{8} x_{12}x_{14}}{x_{4}x_{6}x_{9} x_{10}  x_{15}}+\frac{y_{4} y_{12}x_{3} x_{8}x_{11}  x_{13}}{x_{1}x_{4} x_{6} x_{10}  x_{14}}+\frac{y_{4} y_{13}x_{2} x_{8}x_{12}x_{14}}{x_{1}x_{4} x_{6} x_{10}  x_{15}}
    \\
    &+\frac{y_{4} y_{13}x_{3}x_{8} x_{11}  x_{13} }{x_{1}x_{4} x_{6} x_{10}  x_{15}}+\frac{y_{4} y_{14}x_{7}  x_{8}x_{11} x_{13} }{x_{1} x_{6} x_{10} x_{15} x_{16}}+\frac{y_{4} y_{15}x_{2} x_{9} x_{12}x_{14}}{x_{1}x_{3}  x_{6} x_{10} x_{16}}+\frac{y_{4} y_{15}x_{2} x_{9} x_{12}x_{15}}{x_{1} x_{3} x_{7} x_{10} x_{16}}+\frac{y_{5} y_{13}x_{2}x_{8} x_{12} x_{14} }{x_{1}x_{4}x_{6} x_{11}x_{15}}
    \\
    &+\frac{y_{5} y_{15}x_{2} x_{9} x_{12}x_{14}}{x_{1}x_{3}x_{6}  x_{11}x_{16}}+\frac{y_{5} y_{15}x_{2} x_{9} x_{12}x_{15} }{x_{1} x_{3} x_{7} x_{11} x_{16}}+\frac{y_{3} y_{14}x_{7}^{2} x_{8} x_{11} x_{13} }{x_{2} x_{6}x_{9} x_{10} x_{15} x_{16} }+\frac{y_{3} y_{14}x_{7}^{2}x_{8} x_{12} x_{14}}{x_{3} x_{6}x_{9} x_{10} x_{15} x_{16}}
    \\
    &+\frac{y_{3} y_{12}x_{3}x_{7} x_{8} x_{11}  x_{13} }{x_{2}x_{4} x_{6} x_{9} x_{10}  x_{14}}+\frac{y_{3} y_{13}x_{3}x_{7}  x_{8} x_{11} x_{13}}{x_{2}x_{4} x_{6} x_{9}x_{10} x_{15}}
    +\frac{y_{4} y_{14}x_{2}x_{7} x_{8} x_{12} x_{14}}{x_{1}x_{3}  x_{6} x_{10} x_{15} x_{16}}
    +\frac{y_{5} y_{14}x_{2}x_{7} x_{8} x_{12} x_{14}}{x_{1}x_{3} x_{6} x_{11}  x_{15} x_{16}}.\\
\end{align*}
}
The explicit expressions for these variables are computed similarly to the $n=1$ case, confirming the general pattern.
\end{Ex}
As in the type $A_1$ case, i.e. cluster algebra on surfaces, the flip operation is an involution, since the reverse mutation sequence of a flip produces the original configuration. Moreover, flips from disjoint quadrilaterals commute.

The number of monomials (counting multiplicities) in the Laurent polynomial for each coordinate of $\Upsilon(V_1V_2V_3V_4)$ equals:
\Eq{
\cK_n:=\Upsilon(V_1V_2V_3V_4)|_{x_i=y_j=1}
} where $n$ stands for the rank. The following proposition follows directly from Proposition~\ref{prop:flip_properties}:
\begin{Prop}
\label{prop:unity_value}
For the general cluster realization of $\mathscr{A}_{\text{SL}_{n+1},\bbS}$ on a square, the value of the expansion formula at unity is:
\Eq{
    \cK_n = (2^{a_{n,1}}, 2^{a_{n,2}}, \dots, 2^{a_{n,n}})
}

where the positive integers $a_{i,j}$ ($i \geq j \geq 1$) satisfy the recurrence relations:
\Eq{
    a_{n,1} &= a_{n,n} = n;\nonumber\\
    a_{n+2,k} &= 1 + a_{n+1,k} + a_{n+1,k-1} - a_{n,k-1} \quad \mbox{for } n \geq 1, k \geq 2.
}
\end{Prop}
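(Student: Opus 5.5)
We evaluate the recursive description of Proposition~\ref{prop:flip_properties} at $x_i=y_j=1$. By the positivity theorem (Theorem~\ref{thm:pos_thm}) every coefficient in every Laurent polynomial is positive. Hence the specialization of a cluster variable at unity is exactly its number of monomials counted with multiplicity. Moreover, because the mutation formulas are subtraction-free, specialization commutes with them, and the value at unity of any mutated variable is obtained by plugging the unity values of its inputs into the exchange relation. Write $\cK_n=(k_{n,1},\dots,k_{n,n})$ for the values along $V_2V_4$. The two sub-squares $\Box V_1y_{2n}x_ny_1$ and $\Box x_1y_{2n+1}V_3y_{4n}$ are copies of the rank $n-1$ configuration, so both $X_{1,j}$ and $X_{2,j}$ specialize to $k_{n-1,j}$. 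Note that the frozen $y$'s there are replaced by initial cluster variables, which still specialize to $1$.

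First I would handle the boundary entries. By (a), $k_{1,1}=2$, and $\cK_2=(4,4)=(2^2,2^2)$. For $n\ge2$, (b)/(c) give
\[
k_{n,1}=\frac{1\cdot k_{n-1,1}+1\cdot k_{n-1,1}}{1}=2k_{n-1,1},
\]
since $x_{n+1}$ is an initial variable and specializes to $1$. Thus $k_{n,1}=2^n$, and by symmetry $k_{n,n}=2^n$, which gives $a_{n,1}=a_{n,n}=n$.

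Next come the interior entries. The variable being exchanged, $x^{(1+t)}_{n+2+t}$, is itself a coordinate of $\Upsilon$ of the rank $n-2$ square $x_1x_{n+1}x_nx_{2n}$ (respectively $x_1x_{n+1}x_nx_{2n-1}$ in the odd case), located in position $j-1$ when the new coordinate sits in position $j$. So its unity value is $k_{n-2,j-1}$. The same holds for the middle vertex $x_{k+1}^{(k)}$ in the odd case. Every interior formula then specializes to
\[
k_{n,j}=\frac{2\,k_{n-1,j-1}\,k_{n-1,j}}{k_{n-2,j-1}}.
\]
By induction on $n$, all values are powers of $2$, say $k_{n,j}=2^{a_{n,j}}$. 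Taking $\log_2$ and shifting $n\mapsto n+2$ gives exactly $a_{n+2,k}=1+a_{n+1,k}+a_{n+1,k-1}-a_{n,k-1}$.

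The main obstacle is the index bookkeeping: one must verify that the coordinate positions of the sub-squares and of the inner rank $n-2$ square align with $j$, $j-1$ as claimed, uniformly across the even, odd, and $k=1$ cases of Proposition~\ref{prop:flip_properties}. One must also check that the $a_{n,j}$ are positive integers, which is not automatic from the quotient. For the indices, I would read positions directly off the explicit lists in (b) and (c), and treat the middle vertex separately. For integrality and positivity, I would note that each $k_{n,j}$ is a count of monomials, hence a positive integer. Since it is a ratio of powers of $2$ times $2$, it equals $2^{a}$ with $a$ an integer, and positivity of $a$ follows because $k_{n,j}\ge2$: each exchange produces two nonzero positive terms.
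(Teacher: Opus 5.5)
Your argument is the paper's proof. You specialize Proposition~\ref{prop:flip_properties} at $x_i=y_j=1$, read off $k_{n,1}=k_{n,n}=2k_{n-1,1}$ and $k_{n,j}=2k_{n-1,j-1}k_{n-1,j}/k_{n-2,j-1}$, and take $\log_2$. Your index bookkeeping is right: the exchanged vertex sits in position $j-1$ of the rank $n-2$ square, including the middle vertex in the odd case.

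The one step that does not work as written is the positivity justification. Saying ``each exchange produces two nonzero positive terms'' only bounds the numerator. Since you then divide by $k_{n-2,j-1}$, it does not give $k_{n,j}\ge 2$; a two-term positive sum divided by a positive Laurent polynomial can in principle be a single monomial, e.g.\ $(m_1m+m_2m)/(m_1+m_2)=m$. You also do not need monomial counting for integrality, since the exponent $1+a_{n-1,j-1}+a_{n-1,j}-a_{n-2,j-1}$ is an integer by induction.

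Positivity is cleanest from the recurrence itself. It gives $a_{n,j}-a_{n-1,j}=1+(a_{n-1,j-1}-a_{n-2,j-1})$, so induction on $j$ yields $a_{n,j}-a_{n-1,j}=j$ and hence $a_{i,j}=j(i-j+1)>0$. This is exactly how Theorem~\ref{thm:exponents} proceeds; the paper's proof of this proposition does not argue positivity separately.
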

\begin{proof}From Proposition~\ref{prop:flip_properties}(a), we have $\cK_1 =(2)$ and $\cK_2 = (4,4) = (2^2, 2^2)$. 
	
	We shall prove the result by induction on $n$. For $n = 1$, then $\cK_1 = 2^{a_{1,1}}$ with $a_{1,1} = 1$. For $n = 2$, then $\cK_2 = (2^{a_{2,1}}, 2^{a_{2,2}})$ with $a_{2,1} = a_{2,2} = 2$, the claim holds. Suppose that the claim holds for $n-1 \geq 2$, then consider $\cK_n = (b_1, b_2, ..., b_n)$ in the general cluster realization of $\mathscr{A}_{SL_{n+1},\bbS}$.
	\begin{enumerate}[label=(\arabic*)]
		\item \textbf{If} $n = 2k$ \textbf{is even}, consider Proposition~\ref{prop:flip_properties}(b). Applying the inductive hypothesis, at all variables $x_i = y_j = 1$, we get:
		\begin{align*}
			X_{1,l} &= X_{2,l} = 2^{a_{n-1,l}} \text{ for } l = 1,2,...,(n-1);\\
			a_{n-1,1} &= a_{n-1,n-1} = n-1;\\
			x_{n+2+t}^{(1+t)} &= 2^{a_{n-2,t+1}}; \quad x_{2n-1-t}^{(1+t)} = 2^{a_{n-2,n-2-t}} \text{ for } t = 0,1,...,(k-2).
		\end{align*}
		Hence, we can compute the following:
		\begin{align*}
			b_1 &= x_{n+1}^{(1)} = X_{2,1} + X_{1,1} = 2 \cdot 2^{a_{n-1,1}} = 2 \cdot 2^{n-1} = 2^n;\\
			b_n &= x_{2n}^{(1)} = X_{2,n-1} + X_{1,n-1} = 2 \cdot 2^{a_{n-1,n-1}} = 2 \cdot 2^{n-1} = 2^n;\\
			b_{2+t} &= \frac{X_{1,1+t}X_{2,2+t}+X_{1,2+t}X_{2,1+t}}{x^{(1+t)}_{n+2+t}} = 2 \cdot 2^{a_{n-1,t+1}+a_{n-1,t+2}-a_{n-2,t+1}} = 2^{1+a_{n-1,t+1}+a_{n-1,t+2}-a_{n-2,t+1}};\\
			b_{n-1-t} &= \frac{X_{1,n-2-t}X_{2,n-1-t}+X_{1,n-1-t}X_{2,n-2-t}}{x^{(1+t)}_{2n-1-t}} = 2 \cdot 2^{a_{n-1,n-2-t}+a_{n-1,n-1-t}-a_{n-2,n-2-t}} \\
			&= 2^{1+a_{n-1,n-2-t}+a_{n-1,n-1-t}-a_{n-2,n-2-t}} \text{ for all } t = 0,1,...,(k-2)
		\end{align*}
which implies each $b_l = 2^{a_{n,l}}$, where $a_{n,1} = a_{n,n} = n$ and $a_{n,r} = 1 + a_{n-1,r} + a_{n-1,r-1} - a_{n-2,r-1}$ for all $r = 2,3,...,(n-1)$. This satisfies the claim.
		
		\item \textbf{If} $n = 2k+1$ \textbf{is odd}, consider Proposition~\ref{prop:flip_properties}(c). Applying the inductive hypothesis, at all variables $x_i = y_j = 1$, we get:
		\begin{align*}
			X_{1,l} &= X_{2,l} = 2^{a_{n-1,l}} \text{ for } l = 1,2,...,(n-1); \quad a_{n-1,1} = a_{n-1,n-1} = n-1;\\
			x_{k+1}^{(k)} &= 2^{a_{n-2,k}}; \quad x_{n+2+t}^{(1+t)} = 2^{a_{n-2,t+1}}; \quad x_{2n-2-t}^{(1+t)} = 2^{a_{n-2,n-2-t}} \text{ for } t = 0,1,...,(k-2).
		\end{align*}
		Hence, we can compute the following:
		\begin{align*}
			b_1 &= x_{n+1}^{(1)} = X_{2,1} + X_{1,1} = 2 \cdot 2^{a_{n-1,1}} = 2 \cdot 2^{n-1} = 2^n;\\
			b_n &= x_{2n-1}^{(1)} = X_{2,n-1} + X_{1,n-1} = 2 \cdot 2^{a_{n-1,n-1}} = 2 \cdot 2^{n-1} = 2^n;\\
			b_{k+1} &= \frac{X_{1,k}X_{2,k+1} + X_{1,k+1}X_{2,k}}{x_{k+1}^{(k)}} = 2 \cdot 2^{a_{n-1,k+1}+a_{n-1,k}-a_{n-2,k}} = 2^{1+a_{n-1,k+1}+a_{n-1,k}-a_{n-2,k}};\\
			b_{2+t} &= \frac{X_{1,1+t}X_{2,2+t}+X_{1,2+t}X_{2,1+t}}{x^{(1+t)}_{n+2+t}} = 2 \cdot 2^{a_{n-1,t+1}+a_{n-1,t+2}-a_{n-2,t+1}} = 2^{1+a_{n-1,t+1}+a_{n-1,t+2}-a_{n-2,t+1}};\\
			b_{n-1-t} &= \frac{X_{1,n-2-t}X_{2,n-1-t}+X_{1,n-1-t}X_{2,n-2-t}}{x^{(1+t)}_{2n-2-t}} = 2 \cdot 2^{a_{n-1,n-2-t}+a_{n-1,n-1-t}-a_{n-2,n-2-t}} \\
			&= 2^{1+a_{n-1,n-2-t}+a_{n-1,n-1-t}-a_{n-2,n-2-t}} \text{ for all } t = 0,1,...,(k-2)
		\end{align*}
which implies each $b_l = 2^{a_{n,l}}$, where $a_{n,1} = a_{n,n} = n$ and $a_{n,r} = 1 + a_{n-1,r} + a_{n-1,r-1} - a_{n-2,r-1}$ for all $r = 2,3,...,(n-1)$. This satisfies the claim.
	\end{enumerate}
Therefore, the inductive step works for all positive integer $n$. This concludes the proof.
\end{proof}
It turns out the recurrence relations can be solved explicitly, and as a consequence we obtain:
\begin{Thm}
\label{thm:exponents}
The exponents satisfy $a_{i,j} = j(i-j+1)$ for all $i \geq j \geq 1$. Therefore:
\Eq{
   \cK_n= (2^{n}, 2^{2n-2}, \dots, 2^{t(n-t+1)}, \dots, 2^n).}
\end{Thm}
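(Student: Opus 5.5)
The plan is to verify directly that the closed form $a_{i,j}=j(i-j+1)$ satisfies the boundary conditions and the recurrence of Proposition~\ref{prop:unity_value}. We then argue that these data determine the array $(a_{i,j})_{i\ge j\ge 1}$ uniquely, and read off $\cK_n$ from Proposition~\ref{prop:unity_value}.

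\textbf{Uniqueness.} Proposition~\ref{prop:unity_value} fixes the first two rows: $a_{1,1}=1$ and $a_{2,1}=a_{2,2}=2$. For $n\ge 3$, row $n$ is determined as follows. The endpoints are $a_{n,1}=a_{n,n}=n$. Each interior entry $a_{n,k}$ with $2\le k\le n-1$ is given by the recurrence (with $n$ shifted by $2$), and this uses only $a_{n-1,k}$, $a_{n-1,k-1}$ and $a_{n-2,k-1}$. The index bounds are consistent: $k\le n-1$ makes $a_{n-1,k}$ defined, and $k-1\le n-2$ makes $a_{n-2,k-1}$ defined. So a strong induction on $n$ shows that any array satisfying these conditions equals the array of exponents from Proposition~\ref{prop:unity_value}. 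It therefore suffices to show that $f(i,j):=j(i-j+1)$ satisfies the same conditions.

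\textbf{Boundary values.} We have $f(n,1)=1\cdot n=n$ and $f(n,n)=n\cdot 1=n$. The small cases also match: $f(1,1)=1$ and $f(2,1)=f(2,2)=2$.

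\textbf{Recurrence.} We must check that
\[
f(n+2,k)=1+f(n+1,k)+f(n+1,k-1)-f(n,k-1).
\]
The left side is $k(n-k+3)$. On the right:
\[
f(n+1,k)=k(n-k+2),
\]
\[
f(n+1,k-1)-f(n,k-1)=(k-1)(n-k+3)-(k-1)(n-k+2)=k-1.
\]
So the right side equals $1+k(n-k+2)+k-1=k(n-k+3)$, which is the left side.

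\textbf{Conclusion.} By uniqueness, $a_{i,j}=j(i-j+1)$ for all $i\ge j\ge 1$. Substituting into $\cK_n=(2^{a_{n,1}},\dots,2^{a_{n,n}})$ from Proposition~\ref{prop:unity_value} gives
\[
\cK_n=(2^{n},2^{2n-2},\dots,2^{t(n-t+1)},\dots,2^{n}),
\]
since $a_{n,t}=t(n-t+1)$.

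The only point needing care is the bookkeeping in the uniqueness step, namely that the recurrence together with the two endpoint values covers every entry of each row. The algebraic identity itself is a one-line computation.
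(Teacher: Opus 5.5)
Your proposal is correct and is essentially the paper's argument: both rest only on the boundary values and recurrence of Proposition~\ref{prop:unity_value}, and on the same cancellation $j(l-j)-j(l-j-1)=j$, which is your identity $f(n+1,k-1)-f(n,k-1)=k-1$. The only difference is how the induction is organized. The paper inducts on the column index $j$ and telescopes $a_{l,j+1}=a_{l-1,j+1}+j+1$ down the column from $a_{j+1,j+1}=j+1$, whereas you check the closed form against the recurrence and use row-by-row uniqueness, which is equally valid.
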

\begin{proof}Firstly, we shall prove that $a_{i,j} = j(i-j+1)$ for all $i \geq j \geq 1$ by induction on $j$. Indeed, for $j = 1$, then $a_{i,1} = i = 1 \cdot (i-1+1)$, satisfied. Suppose that we already have $a_{i,j} = j(i-j+1)$ for all $i \geq j$ for $j \geq 1$, then for $l \geq j+2$, we have:
	\begin{align*}
		a_{j+1,j+1} &= j+1 = (j+1) \cdot 1;\\
		a_{l,j+1} &= 1 + a_{l-1,j+1} + a_{l-1,j} - a_{l-2,j} = 1 + a_{l-1,j+1} + j(l-j) - j(l-j-1) = 1 + j + a_{l-1,j+1}.
	\end{align*}
Hence by induction on $l \geq j+1$, we conclude that $a_{l,j+1} = (j+1)(l-j)$, and hence the claim also holds for all $j$, whence $a_{i,j} = j(i-j+1)$ for all $i \geq j \geq 1$.

As a consequence, as $a_{n,t}=t(n-t+1)$, we conclude $\cK_n = (2^{n}, 2^{2n-2}, \dots, 2^{t(n-t+1)}, \dots, 2^n)$.
\end{proof}
The following Proposition illustrates the relations in case we set several variables to be equal.
\begin{Prop}
\label{prop:x_y_z}
	Given the cluster realization of $\mathscr{A}_{\text{SL}_{n+1},\bbS}$ for the quadrilateral $V_1V_2V_3V_4$ with diagonal $V_1V_3$ and the corresponding $n^2+4n$ variables $x_i,y_j$ defined as above, we shall let parameters $x,y,z$ and then set all $x_i = x$ and $y_j = y$ for $j = 1,2,...,2n$ while $y_j = z$ for $j = 2n+1,2n+2,...,4n$. Then we get the expansion formula for diagonal $V_2V_4$ of the form $(F_1, F_2, ..., F_n)$ with:
\Eq{
	F_k = 2^{k(n+1-k)}\frac{yz}{x}
}
for $k = 1, \dots, n$. As a consequence, in case we set all variables to $x_i = x$ and $y_j = y$ (or $y = z$), we have:
\Eq{
	F_k = 2^{k(n+1-k)}\frac{y^2}{x}
}
for $k = 1, \dots, n$.
\end{Prop}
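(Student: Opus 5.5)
Assign the specialized weights (all $x_i = x$, the $y_j$ on edges $V_4V_1$ and $V_1V_2$ equal to $y$, those on $V_2V_3$ and $V_3V_4$ equal to $z$) and argue by strong induction on $n$ using the recursive description of Proposition~\ref{prop:flip_properties}, exactly parallel to the proof of Proposition~\ref{prop:unity_value}. The induction has to carry more than the displayed claim about the outer square. For every sub-square $\Upsilon(\cdot)$ appearing in Proposition~\ref{prop:flip_properties}, and for every intermediate variable $x^{(j)}_i$, the specialized value should be a power of $2$ times a single monomial in $x,y,z$. The base cases $n=1,2$ come from part (a). For $n=1$ we get $(y_1y_3+y_2y_4)/x_1 \mapsto 2yz/x$, since $y_1,y_2\mapsto y$ and $y_3,y_4\mapsto z$. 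For $n=2$, each of the four terms of $x'_3$ and of $x'_4$ contributes $yz/x$, giving $4yz/x$.

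The inductive hypothesis is applied to the sub-squares $V_1y_{2n}x_ny_1$ and $x_1y_{2n+1}V_3y_{4n}$. In the first one the ``boundary'' variables are partly genuine $y$'s and partly interior $x_i$'s. Likewise, the rank-$(n-2)$ square (such as $x_1x_{n+1}x_nx_{2n}$) has boundary consisting entirely of $x$'s. So the statement must first be generalized: for a rank-$r$ square whose two boundary edges adjacent to one corner carry value $u$, the other two carry $w$, and all interior variables carry $x$, the flipped diagonal should specialize to $F_k=2^{k(r+1-k)}uw/x$. The delicate point is identifying $u,w$ for each sub-square. In $V_1y_{2n}x_ny_1$, one edge pair consists of original $y$-vertices and the other lies along the old diagonal/interior, giving $(u,w)=(y,x)$; similarly $(u,w)=(x,z)$ for the second. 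In that case $X_{1,l}=2^{l(n-l)}y$ and $X_{2,l}=2^{l(n-l)}z$. The inner square $x_1x_{n+1}x_nx_{2n}$ has all values $x$, so its mutated variables equal $2^{a}x$. I would verify this corner assignment against the labeling in Figure~\ref{fig:vertex_labeling} and the $n=4$ Example~\ref{ex:SS}, where $x'_6$ indeed specializes to $4x\cdot x/x=4x$.

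Next, plug these into the recursions \eqref{eq:upsilon_even} and \eqref{eq:upsilon_odd}. For the endpoints,
$$x'_{n+1}=\frac{y_{2n}X_{2,1}+y_{2n+1}X_{1,1}}{x}=\frac{y\cdot 2^{n-1}z+z\cdot 2^{n-1}y}{x}=2^n\frac{yz}{x}.$$
For interior positions,
$$\frac{X_{1,m}X_{2,m+1}+X_{1,m+1}X_{2,m}}{2^{a_{n-2,m}}x}=2^{1+a_{n-1,m}+a_{n-1,m+1}-a_{n-2,m}}\frac{yz}{x}.$$
The exponent is $a_{n,m+1}$ by Proposition~\ref{prop:unity_value}, and it equals $(m+1)(n-m)$ by Theorem~\ref{thm:exponents}. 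Since each monomial term is homogeneous in the same way, the monomial part is always $yz/x$. The coefficient counts were already established, so only this monomial bookkeeping is new. Setting $z=y$ gives the final statement.

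The main obstacle is the generalization step: the statement must be formulated so that it is closed under passing to the sub-squares in Proposition~\ref{prop:flip_properties}. That means checking which edges of each sub-square receive which specialized values, including the orientation and the index-reversal used in the odd labeling of $V_2V_4$. If the corner assignment turns out different, with opposite edges equal instead of adjacent ones, I would instead prove the stronger homogeneity statement directly. In that version each exchange relation is a sum of two monomials of equal degree, and it is balanced by a weight grading in which interior vertices have weight $x$ and boundary vertices on the two halves have weights $y,z$. This would show that every term of $F_k$ specializes to the same monomial, with the count supplied by Theorem~\ref{thm:exponents}.
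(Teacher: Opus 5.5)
Your proposal is correct and follows the paper's own argument: an induction on $n$ through Proposition~\ref{prop:flip_properties}, substituting the specialized values of $X_{1,l}$, $X_{2,l}$ and of the inner-square variables into the recursions \eqref{eq:upsilon_even} and \eqref{eq:upsilon_odd}. Your explicit generalized hypothesis is exactly what the paper uses tacitly when it writes $X_{1,l}=2^{l(n-l)}\frac{xy}{x}$ and $X_{2,l}=2^{l(n-l)}\frac{xz}{x}$; when you state it, require the $u$-corner to be an endpoint of the diagonal being flipped (as it is for both sub-squares), because for $n=2$ the choice of corner $V_2$ instead gives $x'_3 = 2uw/x+2u^2/x$.
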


\begin{proof}
	We prove the claim by induction on $n$. For $n=1$, then $F_1 = \frac{y_1y_3+y_2y_4}{x_1} = \frac{2yz}{x}$. For $n=2$, we have:
\begin{align*}
	F_1 &= x'_3 = \frac{y_2y_5}{x_1} + \frac{y_4y_7}{x_2} + \frac{y_3y_5x_4}{x_1x_3} + \frac{y_4y_6x_4}{x_2x_3} = \frac{4yz}{x};\\
	F_2 &= x'_4 = \frac{y_3y_8}{x_1} + \frac{y_1y_6}{x_2} + \frac{y_2y_8x_3}{x_1x_4} + \frac{y_1y_7x_3}{x_2x_4} = \frac{4yz}{x}.
\end{align*}	
Suppose that the claim holds for $n-1 \geq 2$, we shall consider the different cases.
	\begin{enumerate}[label=(\arabic*)]
		\item \textbf{If} $n = 2k$ \textbf{is even}, consider Proposition~\ref{prop:flip_properties}(b). Applying the inductive hypothesis, at all variables $x_i = y_j = 1$, we get:
			\begin{align*}
			X_{1,l} &= 2^{l(n-l)}\frac{xy}{x} = 2^{l(n-l)} \cdot y; \quad X_{2,l} = 2^{l(n-l)}\frac{xz}{x} = 2^{l(n-l)} \cdot z \tab\text{ for } l = 1,2,...,(n-1);\\
			x_{n+2+t}^{(1+t)} &= 2^{(t+1)(n-2-t)}\frac{x^2}{x} = 2^{(t+1)(n-2-t)} \cdot x;\\
			x_{2n-1-t}^{(1+t)} &= 2^{(n-2-t)(t+1)}\frac{x^2}{x} = 2^{(n-2-t)(t+1)} \cdot x \tab \text{ for } t = 0,1,...,(k-2).
		\end{align*}
		Hence, we can compute the following:
		\begin{align*}
			F_1 &= x_{n+1}^{(1)} = \frac{y_{2n}X_{2,1}+y_{2n+1}X_{1,1}}{x_{n+1}} = 2\cdot 2^{n-1}\frac{yz}{x} = 2^n\frac{yz}{x};\\
			F_n &= x_{2n}^{(1)} = \frac{y_{1}X_{2,n-1}+y_{4n}X_{1,n-1}}{x_{2n}} = 2\cdot 2^{n-1}\frac{yz}{x} = 2^n\frac{yz}{x};\\
			F_{2+t} &= \frac{X_{1,1+t}X_{2,2+t}+X_{1,2+t}X_{2,1+t}}{x^{(1+t)}_{n+2+t}} = 2\cdot 2^{(t+1)(n-1-t)+(t+2)(n-2-t)-(t+1)(n-2-t)}\frac{yz}{x} = 2^{(t+2)(n-1-t)}\frac{yz}{x};\\
			F_{n-1-t} &= \frac{X_{1,n-2-t}X_{2,n-1-t}+X_{1,n-1-t}X_{2,n-2-t}}{x^{(1+t)}_{2n-1-t}} = 2\cdot 2^{(t+1)(n-1-t)+(t+2)(n-2-t)-(t+1)(n-2-t)}\frac{yz}{x} \\
			&= 2^{(t+2)(n-1-t)}\frac{yz}{x}
		\end{align*}
for all $t = 0,1,...,(k-2)$, which satisfies the claim.
		\item \textbf{If} $n = 2k+1$ \textbf{is odd}, consider Proposition~\ref{prop:flip_properties}(c). Applying the inductive hypothesis, at all variables $x_i = y_j = 1$, we get:
		\begin{align*}
			X_{1,l} &= 2^{l(n-l)}\frac{xy}{x} = 2^{l(n-l)} \cdot y; \quad X_{2,l} = 2^{l(n-l)}\frac{xz}{x} = 2^{l(n-l)} \cdot z \tab\text{ for } l = 1,2,...,(n-1);\\
			x_{k+1}^{(k)} &= 2^{k^2}\frac{x^2}{x} = 2^{k^2} \cdot x; \quad x_{n+2+t}^{(1+t)} = 2^{(t+1)(n-2-t)}\frac{x^2}{x} = 2^{(t+1)(n-2-t)} \cdot x;\\
			x_{2n-2-t}^{(1+t)} &= 2^{(n-2-t)(t+1)}\frac{x^2}{x} = 2^{(n-2-t)(t+1)} \cdot x \tab\text{ for } t = 0,1,...,(k-2).
		\end{align*}
		Hence, we can compute the following:
		\begin{align*}
			F_1 &= x_{n+1}^{(1)} = \frac{y_{2n}X_{2,1}+y_{2n+1}X_{1,1}}{x_{n+1}} = 2\cdot 2^{n-1}\frac{yz}{x} = 2^n\frac{yz}{x};\\
			F_n &= x_{2n-1}^{(1)} = \frac{y_{1}X_{2,n-1}+y_{4n}X_{1,n-1}}{x_{2n-1}} = 2\cdot 2^{n-1}\frac{yz}{x} = 2^n\frac{yz}{x};\\
			F_{k+1} &= \frac{X_{1,k}X_{2,k+1} + X_{1,k+1}X_{2,k}}{x_{k+1}^{(k)}} = 2\cdot 2^{k(k+1)+k(k+1)-k^2}\frac{yz}{x} = 2^{(k+1)^2}\frac{yz}{x};\\
			F_{2+t} &= \frac{X_{1,1+t}X_{2,2+t}+X_{1,2+t}X_{2,1+t}}{x^{(1+t)}_{n+2+t}} = 2\cdot 2^{(t+1)(n-1-t)+(t+2)(n-2-t)-(t+1)(n-2-t)}\frac{yz}{x} = 2^{(t+2)(n-1-t)}\frac{yz}{x};\\
			F_{n-1-t} &= \frac{X_{1,n-2-t}X_{2,n-1-t}+X_{1,n-1-t}X_{2,n-2-t}}{x^{(1+t)}_{2n-2-t}} = 2\cdot 2^{(t+1)(n-1-t)+(t+2)(n-2-t)-(t+1)(n-2-t)}\frac{yz}{x} \\
			&= 2^{(t+2)(n-1-t)}\frac{yz}{x}
		\end{align*}
for all $t = 0,1,...,(k-2)$, which satisfies the claim.
	\end{enumerate}
Therefore, in both cases, the claim holds for $n$. Then the consequence trivially holds by letting $y = z$. This concludes the proof.
\end{proof}

\subsection{Combinatorial interpretation via stair paths}
\label{subsec:combinatorial_interpretation}
While the labels \(x_i\) and \(y_j\) provide a basis for analysis, they are inadequate for deriving a closed-form recursion for \(\Upsilon(ABCD)\) for general \(n\), due to inconsistencies in their indexing under variation of \(n\). To formulate a universal solution, we make a transition to an infinite integer grid. This setting permits a generalized indexing scheme, obtained by a family of Laurent polynomial maps \(\psi_{t,k}: \{x_{i,j}\} \rightarrow \mathbb{Z}[x_{i,j}^{\pm 1}]\). Afterwards, we shall introduce the combinatorial constructions \emph{$n$-stair path} and \emph{$n$-reversed-stair path} to help solve in the particular values, namely $\psi_{1,k}$ and $\psi_{k,k}$, then give further connections among these combinatorial objects. As a result, note that the new notations in this section are independent of the labels $x_i$ and $y_j$ from the previous sections.

Consider an infinite integer grid with variable $x_{i,j}$ assigned to each point $(i,j) \in \mathbb{Z}^2$. For each $t,k\in\Z_{\geq 0}$ with $0\leq t\leq k+1$, define maps $\psi_{t,k}: \{x_{i,j}\} \rightarrow \mathbb{Z}[x_{i,j}^{\pm 1}]$ by:
\begin{equation}
\begin{aligned}
\psi_{0,k}(x_{i,j})&=\psi_{k+1,k}(x_{i,j}):=x_{i,j};\\
\psi_{1,1}(x_{i,j})&:=\frac{x_{i-1,j}x_{i+1,j}+x_{i,j-1}x_{i,j+1}}{x_{i,j}};\\
\psi_{t+1,k+2}(x_{i,j})&:=\frac{\psi_{t+1,k+1}(x_{i-1,j})\psi_{t,k+1}(x_{i+1,j})+\psi_{t,k+1}(x_{i,j-1})\psi_{t+1,k+1}(x_{i,j+1})}{\psi_{t,k}(x_{i,j})}.
\end{aligned}
\label{eq:crfs}
\end{equation}
Using Proposition~\ref{prop:flip_properties}, $\psi_{t,k}(x_{i,j})$ equals the $t$-th coordinate of the index of $V_2V_4$ diagonal of the expansion after flipping the square $A_{i,j}^{t,k}B_{i,j}^{t,k}C_{i,j}^{t,k}D_{i,j}^{t,k}$, where we place the vertices on the infinite grid as
\begin{gather*}
    A_{i,j}^{t,k} = (i+t,j-t+k+1), \quad B_{i,j}^{t,k} = (i+t,j-t), \\
    C_{i,j}^{t,k} = (i+t-k-1,j-t), \quad D_{i,j}^{t,k} = (i+t-k-1,j-t+k+1).
\end{gather*}
In particular, by the Laurent phenomenon, each $\psi_{t,k}(x_{i,j})$ is a Laurent polynomial in the variables $x_{i,j}$, hence the maps are well-defined. When all variables $x_{i,j} = x$, applying Proposition~\ref{prop:x_y_z} yields:
\Eq{
	\psi_{t,k}(x) = 2^{t(k+1-t)}x.
}
\begin{Def}
\label{def:n_stairs}
Given an infinite integer grid with variables $x_{i,j}$ assigned at each point $(i,j)$, an \emph{$n$-stair path of} $(i,j)$ is an alternating sequence of left and up segments with total length $n$ satisfying:
\begin{enumerate}[label=(\arabic*)]
    \item The first segment contains $(i,j)$ as its second point;
    \item Consecutive segments connect via $\sqrt{2}$-length diagonals:
    \begin{enumerate}
        \item The head of a left segment is $\sqrt{2}$ northeast of the tail of the next up segment;
        \item The head of an up segment is $\sqrt{2}$ southwest of the tail of next left segment;
    \end{enumerate}
    \item The \emph{ending point} $E$ is one step away from the head of the last segment in the sequence. Equivalently, $E$ is the point reached by starting at $(i,j)$ and following the alternating left/up path according to the connection rules (2), until the sum of segment lengths equals $n$.
\end{enumerate}
For any integer point $(k,l) \neq (i,j)$ such that $k \leq i$ and $l \geq j$, by a \emph{stair path from $(i,j)$ to $(k,l)$}, we mean an $s$-stair path of $(i,j)$ (where $s = (i-k) + (l-j)$) such that $E = (k,l)$.  
\end{Def}
See Figure~\ref{fig:stairs} for examples. In both examples, the ending point $E = (i-7, j+7)$ is marked in green and each example is a stair path from $(i,j)$ to $E$. The left example has segment lengths $3$, $4$, $4$, $3$ (sum $14$); the right has lengths $3$, $3$, $2$, $1$, $2$, $3$ (sum $14$).
\begin{figure}[H]
\centering
\begin{tikzpicture}[scale=0.8,
    node/.style={circle, draw, fill=white, inner sep=1.5pt},
    arrow/.style={-Stealth, thick},
    gridline/.style={gray, thin},
    pathline/.style={thick, blue}
]
	\draw[gridline] (0,0) grid (9,8);
	\draw[gridline] (11,0) grid (20,8);
	\draw[green,thin,dashed] (10,0) -- (10,8);
	
	\node[node, fill=red!20] at (8,1) {};
    \node[above right] at (8,1) {$(i,j)$};
    \node[node, fill=red!20] at (19,1) {};
    \node[above right] at (19,1) {$(i,j)$};
    
    \draw[pathline, arrow] (9,1) -- (6,1);
    \draw[pathline, arrow] (5,0) -- (5,4);
    \draw[pathline, arrow] (6,5) -- (2,5);
    \draw[pathline, arrow] (1,4) -- (1,7);
    
    \node[node, fill=green!20] at (1,8) {};
    \node[above right] at (1,8) {$E$};
    
    \draw[pathline, arrow] (19,0) -- (19,3);
    \draw[pathline, arrow] (20,4) -- (17,4);
    \draw[pathline, arrow] (16,3) -- (16,5);
    \draw[pathline, arrow] (17,6) -- (16,6);
    \draw[pathline, arrow] (15,5) -- (15,7);
    \draw[pathline, arrow] (16,8) -- (13,8);
    
    \node[node, fill=green!20] at (12,8) {};
    \node[above right] at (12,8) {$E$};
    
\end{tikzpicture}
\caption{Examples of $14$-stair paths starting from $(i,j)$}
\label{fig:stairs}
\end{figure}
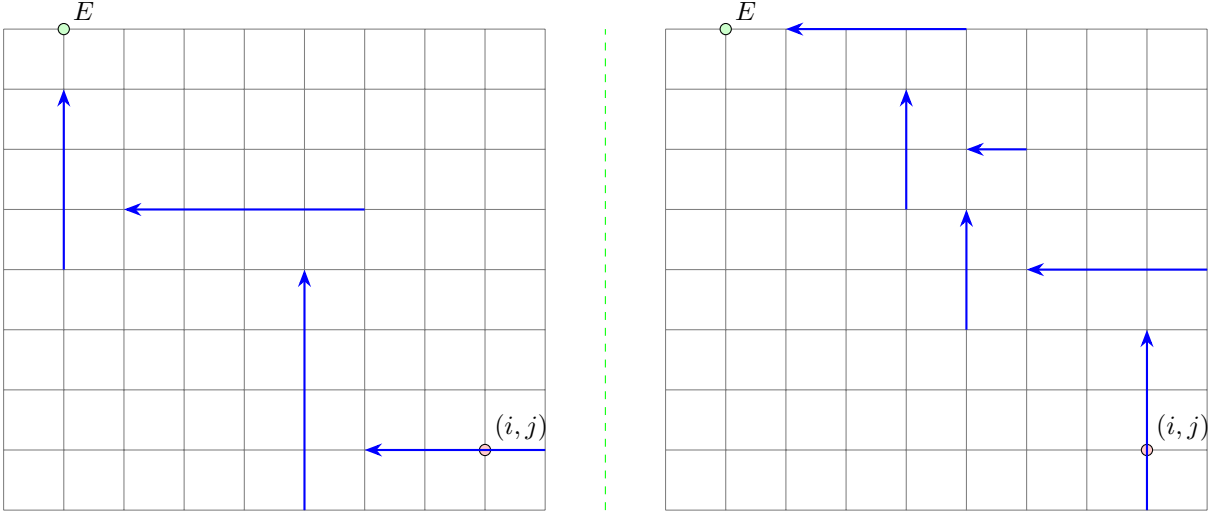
For two integer points $a,b$ with the same horizontal or vertical coordinate with the corresponding segment arrow $a \rightarrow b$, regardless of the direction (either left ($\leftarrow$), right ($\rightarrow$), up ($\uparrow$), or down ($\downarrow$)), assign a \emph{weight} $\frac{x_a}{x_b}$ of Laurent monomial to that segment. It is clear that for consecutive points sharing the same direction, the product of weights equals the weight of the single segment connecting all points. Let $f_n(x_{i,j}) := \psi_{1,n}(x_{i,j})$ for all $n \geq 0$, then from the recurrence (\ref{eq:crfs}), we have: 
\Eq{
    f_0(x_{i,j}) = x_{i,j}; \quad f_{n+1}(x_{i,j}) = \frac{f_n(x_{i-1,j})x_{i+1,j} + x_{i,j-1}f_n(x_{i,j+1})}{x_{i,j}}.
}
Define $c_{i,j} := \frac{x_{i+1,j}}{x_{i,j}}$ and $r_{i,j} := \frac{x_{i,j-1}}{x_{i,j}}$, then the recurrence (\ref{eq:crfs}) implies:
\Eq{
    f_0(x_{i,j}) = x_{i,j}; \quad f_{n+1}(x_{i,j}) = c_{i,j}f_n(x_{i-1,j}) + r_{i,j}f_n(x_{i,j+1}). \label{eq:recf}
}
This leads to a combinatorial interpretation:
\begin{Prop}
\label{prop:stairs_combinatorial}
The value $f_n(x_{i,j}) = \psi_{1,n}(x_{i,j})$ equals the sum of weights of all possible $n$-stairs paths starting from $(i,j)$, where the weight of each path is the product of its segment weights multiplied by the variable at its ending point $E$. More explicitly, the general formula can be written as:
\begin{equation}
\begin{split}
    f_n(x_{i,j}) &= \psi_{1,n}(x_{i,j}) \\
    &= \sum_{\substack{(l_1+1),l_i,u_i,(u_k+1),k \in \mathbb{N}: \\ \sum_{i=1}^{k}(l_i+u_i) = n}} \left(x_{i-\sum_{s=1}^{k}l_s,j+\sum_{t=1}^{k}u_t} \prod_{v=1}^{k} \frac{x_{i+1-\sum_{s=1}^{v-1}l_s,j+\sum_{t=1}^{v-1}u_t}}{x_{i+1-\sum_{s=1}^{v}l_s,j+\sum_{t=1}^{v-1}u_t}} \frac{x_{i-\sum_{s=1}^{v}l_s,j-1+\sum_{t=1}^{v-1}u_t}}{x_{i-\sum_{s=1}^{v}l_s,j-1+\sum_{t=1}^{v}u_t}}\right).
\end{split}
\label{eq:fn_psi}
\end{equation}
Here and thereafter, the notation for the summation index $(l_1+1)\in\N$ etc. is equivalent to adding up all nonnegative index $l_1\in\N\cup\{0\}$.
\end{Prop}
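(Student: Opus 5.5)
The proof is an induction on $n$ driven by the linear recurrence \eqref{eq:recf}, $f_{n+1}(x_{i,j}) = c_{i,j}f_n(x_{i-1,j}) + r_{i,j}f_n(x_{i,j+1})$. It has the shape of a transfer-matrix/path expansion, so the aim is to show that the set of $(n+1)$-stair paths from $(i,j)$ splits bijectively into two classes. One class is indexed by $n$-stair paths from $(i-1,j)$ and the other by $n$-stair paths from $(i,j+1)$. Under these bijections the weights multiply by $c_{i,j}$ and $r_{i,j}$ respectively.

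The base case $n=0$ is immediate. The only $0$-stair path is the empty path, its ending point is $E=(i,j)$, and its weight is $x_{i,j}=f_0(x_{i,j})$. For $n=1$ we sanity-check against $\psi_{1,1}$. There are two paths: a left step through $(i,j)$ ending at $(i-1,j)$, with weight $\frac{x_{i+1,j}}{x_{i,j}}x_{i-1,j}$; and a degenerate left segment of length $0$ followed by an up step through $(i,j)$ ending at $(i,j+1)$, with weight $\frac{x_{i,j-1}}{x_{i,j}}x_{i,j+1}$. This matches $\psi_{1,1}$ and fixes the convention that the first segment may have length $l_1=0$, as encoded by $(l_1+1)\in\N$ in \eqref{eq:fn_psi}.

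For the inductive step, classify an $(n+1)$-stair path of $(i,j)$ by its first unit move from $(i,j)$.

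\textbf{Case $l_1\ge1$ (first segment leftward).} Deleting the first unit step, from $(i+1,j)$ to $(i,j)$, leaves a path whose first segment starts at $(i,j)$ and has $(i-1,j)$ as its second point. This is an $n$-stair path of $(i-1,j)$ with the same ending point. The deleted step contributes exactly $\frac{x_{i+1,j}}{x_{i,j}}=c_{i,j}$, using the stated multiplicativity of weights along collinear points: the weight of the segment from $(i+1,j)$ to the head factors as $c_{i,j}$ times the weight of the segment from $(i,j)$ to the head. If $l_1=1$, the remaining path begins with a zero-length left segment at $(i,j)$ followed by the up segment. This is precisely the degenerate-first-segment convention applied at $(i-1,j)$, since the up segment's tail is $\sqrt2$ southwest of $(i,j)$, i.e.\ at $(i-1,j-1)$.

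\textbf{Case $l_1=0$ (path starts with an up segment through $(i,j)$).} Its tail is $(i,j-1)$. Removing the first unit step, from $(i,j-1)$ to $(i,j)$, contributes $\frac{x_{i,j-1}}{x_{i,j}}=r_{i,j}$ and leaves a path from $(i,j)$ upward through $(i,j+1)$. This should be reinterpreted as an $n$-stair path of $(i,j+1)$.

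This reinterpretation is the main obstacle, because Definition~\ref{def:n_stairs} insists that paths begin with a left segment. I would handle it by proving a slightly stronger statement by simultaneous induction, covering both left-starting and up-starting families, i.e.\ allowing the first segment to have length zero. The key check is the one where the up segment has length exactly one. The next left segment's tail is then $\sqrt2$ northeast of $(i,j+1)$, which is exactly the geometry of an $n$-stair path of $(i,j+1)$ whose first left segment passes through $(i,j+1)$ as its second point. So the zero-length and length-one boundary cases match up, and this must be verified carefully against rule (2).

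Summing the two cases gives \eqref{eq:recf}, which establishes the combinatorial statement. Finally, expanding each path's weight segment by segment, with $l_v,u_v$ the segment lengths, gives the product formula \eqref{eq:fn_psi}. Each left segment contributes a ratio of the $x$ at its tail to the $x$ at its head, each up segment contributes likewise, and the endpoint contributes $x_{i-\sum l_s,\,j+\sum u_t}$. The coordinate bookkeeping, including the offsets by $1$ that come from the $\sqrt2$ diagonal connectors, is routine.
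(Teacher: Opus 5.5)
Your proof is correct and is essentially the paper's argument: the paper also inducts on \eqref{eq:recf}, first proving that $f_n(x_{i,j})$ is a sum over words $\tau\in\{r,c\}^n$ by splitting on the first letter $\tau_1$ (your first unit step), and then reading each word as a stair path via $c_{i',j'}\leftrightarrow$ a left unit step and $r_{i',j'}\leftrightarrow$ an up unit step. Two small corrections: Definition~\ref{def:n_stairs} already allows paths that begin with an up segment (see Figure~\ref{fig:f3}), so no strengthened statement is needed; and in your $u_1=1$ case the next left segment's tail is $\sqrt2$ northeast of $(i,j)$, i.e.\ at $(i+1,j+1)$ directly east of $(i,j+1)$, not northeast of $(i,j+1)$ as you wrote (otherwise that segment would not pass through $(i,j+1)$).
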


\begin{proof}
We shall prove by induction on $n$ that:
\Eq{
    f_n(x_{i,j}) = \sum_{\tau_1, \tau_2, \dots, \tau_n \in \{r,c\}} (\tau_{1})_{i,j}(\tau_{2})_{\sigma_1(i,j)} \cdots (\tau_{n})_{(\sigma_{n-1} \circ \cdots \circ \sigma_1)(i,j)} x_{(\sigma_{n} \circ \cdots \circ \sigma_1)(i,j)}
    \label{eq:paramet}
}
where $\sigma_k(i',j') = (i'-1,j')$ if $\tau_k = c$, and $\sigma_k(i',j') = (i',j'+1)$ if $\tau_k = r$ for $1 \leq k \leq n$. 

For $n=1$, then $f_1(x_{i,j}) = r_{i,j}x_{i,j+1} + c_{i,j}x_{i-1,j}$, satisfied.

Suppose that the formula holds for $n \geq 1$, then consider the $(n+1)$ case, from recurrence (\ref{eq:recf}), we can write:
\begin{align*}
	f_{n+1}(x_{i,j}) &= c_{i,j}f_n(x_{i-1,j}) + r_{i,j}f_n(x_{i,j+1}) \\
	&= c_{i,j}.\sum_{\tau_1, \tau_2, \dots, \tau_n \in \{r,c\}} (\tau_{1})_{i-1,j}(\tau_{2})_{\sigma_1(i-1,j)} \cdots (\tau_{n})_{(\sigma_{n-1} \circ \cdots \circ \sigma_1)(i-1,j)} x_{(\sigma_{n} \circ \cdots \circ \sigma_1)(i-1,j)} \\
	&+ r_{i,j}\sum_{\tau_1, \tau_2, \dots, \tau_n \in \{r,c\}} (\tau_{1})_{i,j+1}(\tau_{2})_{\sigma_1(i,j+1)} \cdots (\tau_{n})_{(\sigma_{n-1} \circ \cdots \circ \sigma_1)(i,j+1)} x_{(\sigma_{n} \circ \cdots \circ \sigma_1)(i,j+1)} \\
	&= c_{i,j}.\sum_{\tau_2, \tau_3, \dots, \tau_{n+1} \in \{r,c\}} (\tau_{2})_{i-1,j}(\tau_{3})_{\sigma_2(i-1,j)} \cdots (\tau_{n+1})_{(\sigma_{n} \circ \cdots \circ \sigma_2)(i-1,j)} x_{(\sigma_{n+1} \circ \cdots \circ \sigma_2)(i-1,j)} \\
	&+ r_{i,j}\sum_{\tau_2, \tau_3, \dots, \tau_{n+1} \in \{r,c\}} (\tau_{2})_{i,j+1}(\tau_{3})_{\sigma_2(i,j+1)} \cdots (\tau_{n+1})_{(\sigma_{n} \circ \cdots \circ \sigma_2)(i,j+1)} x_{(\sigma_{n+1} \circ \cdots \circ \sigma_2)(i,j+1)} \\
	&= \sum_{\tau_1 = c; \tau_2, \dots, \tau_{n+1} \in \{r,c\}} (\tau_{1})_{i,j}(\tau_{2})_{\sigma_1(i,j)} \cdots (\tau_{n+1})_{(\sigma_{n} \circ \cdots \circ \sigma_1)(i,j)} x_{(\sigma_{n+1} \circ \cdots \circ \sigma_1)(i,j)} \\
	&+ \sum_{\tau_1 = r; \tau_2, \dots, \tau_{n+1} \in \{r,c\}} (\tau_{1})_{i,j}(\tau_{2})_{\sigma_1(i,j)} \cdots (\tau_{n+1})_{(\sigma_{n} \circ \cdots \circ \sigma_1)(i,j)} x_{(\sigma_{n+1} \circ \cdots \circ \sigma_1)(i,j)} \\
	&= \sum_{\tau_1, \tau_2, \dots, \tau_{n+1} \in \{r,c\}} (\tau_{1})_{i,j}(\tau_{2})_{\sigma_1(i,j)} \cdots (\tau_{n+1})_{(\sigma_{n} \circ \cdots \circ \sigma_1)(i,j)} x_{(\sigma_{n+1} \circ \cdots \circ \sigma_1)(i,j)}
\end{align*}
which completes the inductive proof.

Each $c_{i',j'}$ corresponds to the weight of the left unit segment $(i'+1,j') \rightarrow (i',j')$, while $r_{i',j'}$ corresponds to the weight of the up unit segment $(i',j'-1) \rightarrow (i',j')$, then for each choice of $(\tau_1,\tau_2,...,\tau_n)$, the term of the summation above represents an $n$-stairs path with the variable $x_{(\sigma_{n} \circ \cdots \circ \sigma_1)(i,j)}$ representing the label of the ending vertex $E$. This establishes the combinatorial interpretation. The other formula in the statement is the parametrization for equation (\ref{eq:paramet}), with each value $(k,l_i,u_i)$ corresponding to an $n$-stair path.
\end{proof}
For small values we compute:
\begin{align*}
    f_1(x_{i,j}) &= \psi_{1,1}(x_{i,j}) = \frac{x_{i-1,j}x_{i+1,j} + x_{i,j-1}x_{i,j+1}}{x_{i,j}}, \\
    f_2(x_{i,j}) &= \psi_{1,2}(x_{i,j}) = \frac{x_{i-2,j}x_{i+1,j}}{x_{i-1,j}} + \frac{x_{i,j-1}x_{i,j+2}}{x_{i,j+1}} + \frac{x_{i-1,j+1}x_{i,j}x_{i+1,j+1}}{x_{i,j}x_{i,j+1}} + \frac{x_{i-1,j-1}x_{i-1,j+1}x_{i+1,j}}{x_{i-1,j}x_{i,j}}.
\end{align*}
Figure~\ref{fig:f3} illustrates the calculation of $f_3(x_{i,j})$ using Proposition~\ref{prop:stairs_combinatorial}, yielding:
\begin{align*}
    \psi_{1,3}(x_{i,j}) &= \frac{x_{i,j-1}x_{i,j+3}}{x_{i,j+2}} + \frac{x_{i,j-1}x_{i+1,j+2}x_{i-1,j+2}}{x_{i,j+1}x_{i,j+2}} + \frac{x_{i,j-1}x_{i+1,j+1}x_{i-1,j}x_{i-1,j+2}}{x_{i,j}x_{i,j+1}x_{i-1,j+1}} + \frac{x_{i,j-1}x_{i+1,j+1}x_{i-2,j+1}}{x_{i,j}x_{i-1,j+1}} \\
    &\quad + \frac{x_{i+1,j}x_{i-3,j}}{x_{i-2,j}} + \frac{x_{i+1,j}x_{i-2,j-1}x_{i-2,j+1}}{x_{i-1,j}x_{i-2,j}} + \frac{x_{i+1,j}x_{i-1,j-1}x_{i,j+1}x_{i-2,j+1}}{x_{i,j}x_{i-1,j}x_{i-1,j+1}} + \frac{x_{i+1,j}x_{i-1,j-1}x_{i-1,j+2}}{x_{i,j}x_{i-1,j+1}}.
\end{align*}
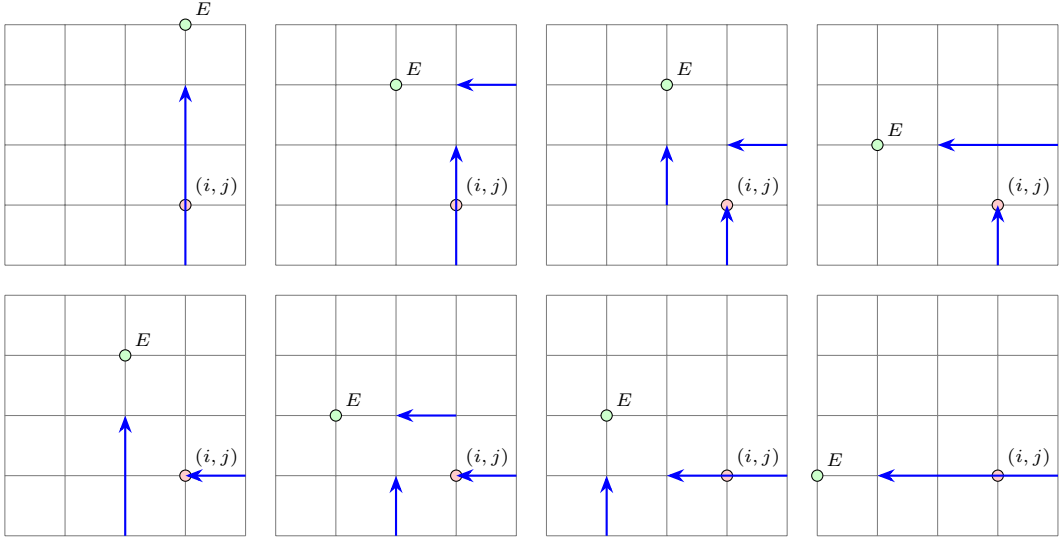
\begin{figure}[H]
\centering
\begin{tikzpicture}[scale=0.8,
    node/.style={circle, draw, fill=white, inner sep=1.5pt},
    arrow/.style={-Stealth, thick},
    gridline/.style={gray, thin},
    pathline/.style={thick, blue}
]

\foreach \row in {0,1} {
    \foreach \col in {0,1,2,3} {
        \begin{scope}[shift={(\col*4.5, -\row*4.5)}]
        
        \draw[gridline] (0,0) grid (4,4);
        
        \node[node, fill=red!20] at (3,1) {};
        \node[above right, font=\scriptsize] at (3,1) {$(i,j)$};
        
        \pgfmathtruncatemacro{\pathindex}{\row*4 + \col + 1}
        
        \ifnum\pathindex=1 
            \draw[pathline, arrow] (3,0) -- (3,3);
        		\node[node, fill=green!20] at (3,4) {};
        		\node[above right, font=\scriptsize] at (3,4) {$E$};
            
        \else\ifnum\pathindex=2 
            \draw[pathline, arrow] (3,0) -- (3,2);
            \draw[pathline, arrow] (4,3) -- (3,3);
        		\node[node, fill=green!20] at (2,3) {};
        		\node[above right, font=\scriptsize] at (2,3) {$E$};
            
        \else\ifnum\pathindex=3 
            \draw[pathline, arrow] (3,0) -- (3,1);
            \draw[pathline, arrow] (4,2) -- (3,2);
            \draw[pathline, arrow] (2,1) -- (2,2);
        		\node[node, fill=green!20] at (2,3) {};
        		\node[above right, font=\scriptsize] at (2,3) {$E$};
        		            
        \else\ifnum\pathindex=4 
            \draw[pathline, arrow] (3,0) -- (3,1);
            \draw[pathline, arrow] (4,2) -- (2,2);
        		\node[node, fill=green!20] at (1,2) {};
        		\node[above right, font=\scriptsize] at (1,2) {$E$};
            
        \else\ifnum\pathindex=5 
            \draw[pathline, arrow] (4,1) -- (3,1);
            \draw[pathline, arrow] (2,0) -- (2,2);
        		\node[node, fill=green!20] at (2,3) {};
        		\node[above right, font=\scriptsize] at (2,3) {$E$};
            
        \else\ifnum\pathindex=6 
            \draw[pathline, arrow] (4,1) -- (3,1);
            \draw[pathline, arrow] (2,0) -- (2,1);
            \draw[pathline, arrow] (3,2) -- (2,2);
        		\node[node, fill=green!20] at (1,2) {};
        		\node[above right, font=\scriptsize] at (1,2) {$E$};
        		            
        \else\ifnum\pathindex=7 
            \draw[pathline, arrow] (4,1) -- (2,1);
            \draw[pathline, arrow] (1,0) -- (1,1);
        		\node[node, fill=green!20] at (1,2) {};
        		\node[above right, font=\scriptsize] at (1,2) {$E$};
            
        \else\ifnum\pathindex=8 
            \draw[pathline, arrow] (4,1) -- (1,1);
        		\node[node, fill=green!20] at (0,1) {};
        		\node[above right, font=\scriptsize] at (0,1) {$E$};
        \fi\fi\fi\fi\fi\fi\fi\fi
        
        \end{scope}
    }
}
\end{tikzpicture}
\caption{All $2^3 = 8$ possible $3$-stair paths starting from $(i,j)$}
\label{fig:f3}
\end{figure}
Similarly, we can also define the reversed version:
\begin{Def}
\label{def:n_rev_stairs}
An \emph{$n$-reversed-stair path} is defined similarly by reflection: an alternating sequence of right and down segments with total length $n$, where:
\begin{enumerate}[label=(\arabic*)]
    \item The first segment contains $(i,j)$ as its second point;
    \item Consecutive segments connect via $\sqrt{2}$-length diagonals:
    \begin{enumerate}
        \item The head of a right segment ends $\sqrt{2}$ southwest of the tail of the next down segment;
        \item The head of a down segment ends $\sqrt{2}$ northeast of the tail of the next right segment;
    \end{enumerate}
    
    \item The \emph{ending point} $E$ is one step away from the head of the last segment in the sequence. Equivalently, $E$ is the point reached by starting at $(i,j)$ and following the alternating right/down path according to the connection rules (2), until the sum of segment lengths equals $n$.
\end{enumerate}
For any integer point $(k,l) \neq (i,j)$ such that $k \geq i$ and $l \leq j$, by a \emph{reversed-stair path from $(i,j)$ to $(k,l)$}, we mean a $r$-reversed-stair path of $(i,j)$ (where $r = (k-i) + (j-l)$) such that $E = (k,l)$.
\end{Def}
The notion allows us to compute the following case:
\begin{Prop}
\label{prop:reversed_stairs}
$\psi_{n,n}(x_{i,j})$ equals the sum of weights of all possible $n$-reversed-stair paths starting from $(i,j)$, where the weight of each path is the product of its segment weights multiplied by the variable at its ending point $E$. More explicitly, the general formula can be written as:
\begin{equation}
    \psi_{n,n}(x_{i,j}) = \sum_{\substack{(r_1+1),r_i,d_i,(d_k+1),k \in \mathbb{N}: \\ \sum_{i=1}^{k}(r_i+d_i) = n}} \left(x_{i+\sum_{s=1}^{k}r_s,j-\sum_{t=1}^{k}d_t} \prod_{v=1}^{k} \frac{x_{i-1+\sum_{s=1}^{v-1}r_s,j-\sum_{t=1}^{v-1}d_t}}{x_{i-1+\sum_{s=1}^{v}r_s,j-\sum_{t=1}^{v-1}d_t}} \frac{x_{i+\sum_{s=1}^{v}r_s,j+1-\sum_{t=1}^{v-1}d_t}}{x_{i+\sum_{s=1}^{v}r_s,j+1-\sum_{t=1}^{v}d_t}}\right).
\end{equation}
\end{Prop}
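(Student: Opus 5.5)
The plan is to mimic the proof of Proposition~\ref{prop:stairs_combinatorial}. First I extract from \eqref{eq:crfs} a first-order recurrence for $g_n:=\psi_{n,n}$. Then I show by induction that $g_n$ is a sum over words in two letters. Finally I group the letters into maximal runs and telescope, which gives the explicit formula.

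\emph{Step 1: the recurrence for $g_n$.} Put $t=k+1$ in the last line of \eqref{eq:crfs} and use $\psi_{k+2,k+1}=\psi_{k+1,k}=\mathrm{id}$ (the first line of \eqref{eq:crfs}). This gives, for all $k\ge 0$,
\begin{align*}
g_{k+2}(x_{i,j})=\frac{x_{i-1,j}\,g_{k+1}(x_{i+1,j})+x_{i,j+1}\,g_{k+1}(x_{i,j-1})}{x_{i,j}}.
\end{align*}
With $g_0(x_{i,j}):=x_{i,j}$, the same identity also reproduces $g_1=\psi_{1,1}$. So for all $n\ge 0$ we have $g_{n+1}(x_{i,j})=c'_{i,j}\,g_n(x_{i+1,j})+r'_{i,j}\,g_n(x_{i,j-1})$, where
\begin{align*}
c'_{i,j}:=\frac{x_{i-1,j}}{x_{i,j}},\qquad r'_{i,j}:=\frac{x_{i,j+1}}{x_{i,j}}.
\end{align*}
This is exactly \eqref{eq:recf} after the point reflection $(i,j)\mapsto(-i,-j)$ of the grid. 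One could therefore simply transport Proposition~\ref{prop:stairs_combinatorial} through that reflection, but I would redo the argument directly to keep the indices transparent.

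\emph{Step 2: the word expansion.} By induction on $n$, exactly as in \eqref{eq:paramet}, I show that $g_n(x_{i,j})$ is the sum over $\tau_1,\dots,\tau_n\in\{c',r'\}$ of
\begin{align*}
(\tau_1)_{i,j}(\tau_2)_{\sigma_1(i,j)}\cdots(\tau_n)_{(\sigma_{n-1}\circ\cdots\circ\sigma_1)(i,j)}\;x_{(\sigma_n\circ\cdots\circ\sigma_1)(i,j)},
\end{align*}
where $\sigma_k$ moves one step right if $\tau_k=c'$ and one step down if $\tau_k=r'$. The inductive step is the same splitting on $\tau_1$ as before.

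The factor $c'_{q}$ is the weight $x_{q-(1,0)}/x_q$ of the right unit segment $q-(1,0)\to q$, taken at the current point $q$. Likewise $r'_q$ is the weight of the down unit segment $q+(0,1)\to q$. Consecutive letters of the same type concatenate into one straight segment. When the direction changes, the head of the previous segment sits diagonally adjacent to the tail of the next one, in exactly the $\sqrt2$ configurations (2)(a),(b) of Definition~\ref{def:n_rev_stairs}. Hence words of length $n$ are in bijection with $n$-reversed-stair paths from $(i,j)$, and the ending point $E$ is $(\sigma_n\circ\cdots\circ\sigma_1)(i,j)$. This proves the combinatorial statement.

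\emph{Step 3: the explicit formula.} I encode a word by its maximal runs $c'^{\,r_1}r'^{\,d_1}\cdots c'^{\,r_k}r'^{\,d_k}$, allowing $r_1=0$ and $d_k=0$ and requiring all other $r_v,d_v\ge1$. Write $R_v=\sum_{s\le v}r_s$ and $D_v=\sum_{t\le v}d_t$.

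The run $c'^{\,r_v}$ starts at $(i+R_{v-1},\,j-D_{v-1})$. Its weights telescope to
\begin{align*}
\frac{x_{i-1+R_{v-1},\,j-D_{v-1}}}{x_{i-1+R_v,\,j-D_{v-1}}}.
\end{align*}
The run $r'^{\,d_v}$ starts at $(i+R_v,\,j-D_{v-1})$. Its weights telescope to
\begin{align*}
\frac{x_{i+R_v,\,j+1-D_{v-1}}}{x_{i+R_v,\,j+1-D_v}}.
\end{align*}
The endpoint is $(i+R_k,\,j-D_k)$. Multiplying these gives the displayed sum.

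The only real obstacle is this index bookkeeping: checking that each telescoped run starts and ends at the stated points, and that the conventions $r_1\ge0$, $d_k\ge0$ (all other lengths $\ge1$) give each word exactly once. Everything else is a verbatim copy of the proof of Proposition~\ref{prop:stairs_combinatorial}.
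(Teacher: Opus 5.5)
Your proposal is correct and follows the paper's proof. The paper likewise sets $t=k+1$ in \eqref{eq:crfs} to obtain the first-order recurrence $\psi_{k+2,k+2}(x_{i,j})=\bigl(x_{i-1,j}\psi_{k+1,k+1}(x_{i+1,j})+\psi_{k+1,k+1}(x_{i,j-1})x_{i,j+1}\bigr)/x_{i,j}$, and then appeals to the argument of Proposition~\ref{prop:stairs_combinatorial}. What you add is what the paper leaves implicit: the base case $g_1=\psi_{1,1}$, the check of the $\sqrt2$ connection rules of Definition~\ref{def:n_rev_stairs}, and the run-by-run telescoping that produces the displayed index formula, and all of these details are correct.
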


\begin{proof}
Setting $t = k+1$, we obtain the recurrence:
\Eq{
    \psi_{k+2,k+2}(x_{i,j}) = \frac{x_{i-1,j}\psi_{k+1,k+1}(x_{i+1,j}) + \psi_{k+1,k+1}(x_{i,j-1})x_{i,j+1}}{x_{i,j}},
}
which admits a similar combinatorial interpretation as Proposition~\ref{prop:stairs_combinatorial}.
\end{proof}
Figure~\ref{fig:f3_reversed} illustrates the calculation of $\psi_{3,3}(x_{i,j})$:
\begin{align*}
    \psi_{3,3}(x_{i,j}) &= \frac{x_{i,j+1}x_{i,j-3}}{x_{i,j-2}} + \frac{x_{i,j+1}x_{i-1,j-2}x_{i+1,j-2}}{x_{i,j-1}x_{i,j-2}} + \frac{x_{i,j+1}x_{i-1,j-1}x_{i+1,j}x_{i+1,j-2}}{x_{i,j}x_{i,j-1}x_{i+1,j-1}} + \frac{x_{i,j+1}x_{i-1,j-1}x_{i+2,j-1}}{x_{i,j}x_{i+1,j-1}} \\
    &\quad + \frac{x_{i-1,j}x_{i+3,j}}{x_{i+2,j}} + \frac{x_{i-1,j}x_{i+2,j+1}x_{i+2,j-1}}{x_{i+1,j}x_{i+2,j}} + \frac{x_{i-1,j}x_{i+1,j+1}x_{i,j-1}x_{i+2,j-1}}{x_{i,j}x_{i+1,j}x_{i+1,j-1}} + \frac{x_{i-1,j}x_{i+1,j+1}x_{i+1,j-2}}{x_{i,j}x_{i+1,j-1}}.
\end{align*}

\begin{figure}[H]
\centering
\begin{tikzpicture}[scale=0.8,
    node/.style={circle, draw, fill=white, inner sep=1.5pt},
    arrow/.style={-Stealth, thick},
    gridline/.style={gray, thin},
    pathline/.style={thick, blue}
]

\foreach \row in {0,1} {
    \foreach \col in {0,1,2,3} {
        \begin{scope}[shift={(\col*4.5, -\row*4.5)}]
        
        \draw[gridline] (0,0) grid (4,4);
        
        \node[node, fill=red!20] at (1,3) {};
        \node[above left, font=\scriptsize] at (1,3) {$(i,j)$};
        
        \pgfmathtruncatemacro{\pathindex}{\row*4 + \col + 1}
        
        \ifnum\pathindex=1 
            \draw[pathline, arrow] (0,3) -- (3,3);
        		\node[node, fill=green!20] at (4,3) {};
        		\node[above left, font=\scriptsize] at (4,3) {$E$};
            
        \else\ifnum\pathindex=2 
            \draw[pathline, arrow] (0,3) -- (2,3);
            \draw[pathline, arrow] (3,4) -- (3,3);
        		\node[node, fill=green!20] at (3,2) {};
        		\node[above left, font=\scriptsize] at (3,2) {$E$};
            
        \else\ifnum\pathindex=3 
            \draw[pathline, arrow] (0,3) -- (1,3);
            \draw[pathline, arrow] (2,4) -- (2,3);
            \draw[pathline, arrow] (1,2) -- (2,2);
        		\node[node, fill=green!20] at (3,2) {};
        		\node[above left, font=\scriptsize] at (3,2) {$E$};
        		            
        \else\ifnum\pathindex=4 
            \draw[pathline, arrow] (0,3) -- (1,3);
            \draw[pathline, arrow] (2,4) -- (2,2);
        		\node[node, fill=green!20] at (2,1) {};
        		\node[above left, font=\scriptsize] at (2,1) {$E$};
            
        \else\ifnum\pathindex=5 
            \draw[pathline, arrow] (1,4) -- (1,3);
            \draw[pathline, arrow] (0,2) -- (2,2);
        		\node[node, fill=green!20] at (3,2) {};
        		\node[above left, font=\scriptsize] at (3,2) {$E$};
            
        \else\ifnum\pathindex=6 
            \draw[pathline, arrow] (1,4) -- (1,3);
            \draw[pathline, arrow] (0,2) -- (1,2);
            \draw[pathline, arrow] (2,3) -- (2,2);
        		\node[node, fill=green!20] at (2,1) {};
        		\node[above left, font=\scriptsize] at (2,1) {$E$};
        		            
        \else\ifnum\pathindex=7 
            \draw[pathline, arrow] (1,4) -- (1,2);
            \draw[pathline, arrow] (0,1) -- (1,1);
        		\node[node, fill=green!20] at (2,1) {};
        		\node[above left, font=\scriptsize] at (2,1) {$E$};
            
        \else\ifnum\pathindex=8 
            \draw[pathline, arrow] (1,4) -- (1,1);
        		\node[node, fill=green!20] at (1,0) {};
        		\node[above left, font=\scriptsize] at (1,0) {$E$};
        \fi\fi\fi\fi\fi\fi\fi\fi
        
        \end{scope}
    }
}
\end{tikzpicture}
\caption{All $2^3 = 8$ possible $3$-reversed-stair paths starting from $(i,j)$.}
\label{fig:f3_reversed}
\end{figure}
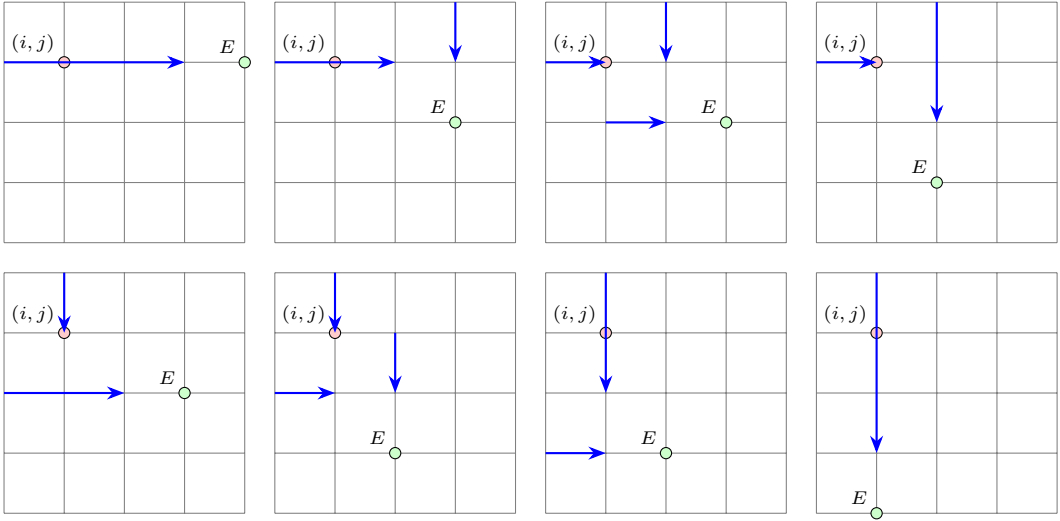
Following the idea of the proof of Proposition~\ref{prop:stairs_combinatorial}, we can write the formula in the stair path structure:
\begin{Prop}
\label{prop:stair_path_structure_time}
The general recurrence:
\Eq{
    \psi_{t+1,k+2}(x_{i,j}) = \frac{\psi_{t+1,k+1}(x_{i-1,j})\psi_{t,k+1}(x_{i+1,j}) + \psi_{t,k+1}(x_{i,j-1})\psi_{t+1,k+1}(x_{i,j+1})}{\psi_{t,k}(x_{i,j})}
}
leads to the formula:
\begin{equation}
\begin{aligned}
    \psi_{t,k}(x_{i,j}) &= \sum_{\substack{(l_1+1),l_i,u_i,(u_g+1),g \in \mathbb{N}: \\ \sum_{i=1}^{g}(l_i+u_i) = k-t+1}} \left(x_{i-\sum_{s=1}^{g}l_s,j+\sum_{h=1}^{g}u_h} \prod_{v=1}^{g} \frac{\psi_{t-1,k-1-\sum_{s=1}^{v-1}l_s-\sum_{h=1}^{v-1}u_h}(x_{i+1-\sum_{s=1}^{v-1}l_s,j+\sum_{h=1}^{v-1}u_h})}{\psi_{t-1,k-1-\sum_{s=1}^{v}l_s-\sum_{h=1}^{v-1}u_h}(x_{i+1-\sum_{s=1}^{v}l_s,j+\sum_{h=1}^{v-1}u_h})} \right. \\
    &\quad \left. \cdot \frac{\psi_{t-1,k-1-\sum_{s=1}^{v}l_s-\sum_{h=1}^{v-1}u_h}(x_{i-\sum_{s=1}^{v}l_s,j-1+\sum_{h=1}^{v-1}u_h})}{\psi_{t-1,k-1-\sum_{s=1}^{v}l_s-\sum_{h=1}^{v}u_h}(x_{i-\sum_{s=1}^{v}l_s,j-1+\sum_{h=1}^{v}u_h})} \right).
\end{aligned}
\label{eq:general_stair_str}
\end{equation}
\end{Prop}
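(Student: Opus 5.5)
The plan is to mimic the proof of Proposition~\ref{prop:stairs_combinatorial}: rewrite the recurrence as a two-term linear recurrence in one index, unroll it as a sum over binary words, and read the words as stair paths.

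\textbf{Step 1: first-order form.} In the recurrence for $\psi_{t+1,k+2}$, treat the lower-level functions $\psi_{t,\cdot}$ as known coefficients. Set
\[
C^{(t,k)}_{i,j}:=\frac{\psi_{t,k+1}(x_{i+1,j})}{\psi_{t,k}(x_{i,j})},\qquad R^{(t,k)}_{i,j}:=\frac{\psi_{t,k+1}(x_{i,j-1})}{\psi_{t,k}(x_{i,j})}.
\]
Then
\[
\psi_{t+1,k+2}(x_{i,j})=C^{(t,k)}_{i,j}\,\psi_{t+1,k+1}(x_{i-1,j})+R^{(t,k)}_{i,j}\,\psi_{t+1,k+1}(x_{i,j+1}),
\]
which has exactly the shape of \eqref{eq:recf}. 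The difference is that the coefficients now depend on the level $k$, which drops by one at each step, and the recursion is in the second index with $t+1$ fixed.

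\textbf{Step 2: unroll.} Re-index the target as $\psi_{t,k}$ and iterate $k-t+1$ times. Each iteration lowers the second index by one, and the recursion stops at $\psi_{t,t-1}=x$ by the convention $\psi_{k+1,k}=\mathrm{id}$. By induction on $k-t$, exactly as in \eqref{eq:paramet}, $\psi_{t,k}(x_{i,j})$ equals the sum over words $\tau_1\cdots\tau_{k-t+1}\in\{c,r\}^{k-t+1}$ of the product of the coefficients $C$ and $R$ evaluated along the lattice walk, times $x$ at the endpoint. The $m$-th letter is evaluated at the current point with level parameter $k-1-(m-1)$ in the $\psi_{t-1,\cdot}$ factors. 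The inductive step is the same computation as in the proof of Proposition~\ref{prop:stairs_combinatorial}: split off $\tau_1$ and shift the starting point.

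\textbf{Step 3: telescope within segments.} Group each word into maximal runs $l_1,u_1,\dots,l_g,u_g$ of $c$'s and $r$'s, allowing $l_1=0$ and $u_g=0$. These runs are exactly the left and up segments of a stair path of total length $k-t+1$. Along a run of $c$'s the point moves left and the level drops by one at each step, so the product of the $C$ factors telescopes. For a run starting at $(i',j')$ at level $\kappa$, $C_{i',j'}C_{i'-1,j'}\cdots$ collapses to
\[
\frac{\psi_{t-1,\kappa}(x_{i'+1,j'})}{\psi_{t-1,\kappa-l}(x_{i'+1-l,j'})},
\]
because the denominator of each factor cancels the numerator of the next: the shift $i+1\mapsto i$ matches the level shift $k+1\mapsto k$. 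The up runs telescope in the same way, with numerator at $(i'',j''-1)$ and denominator shifted by $u$. Substituting the cumulative offsets $\sum_{s\le v}l_s$ and $\sum_{h\le v}u_h$ gives the indices in \eqref{eq:general_stair_str}.

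\textbf{Main obstacle.} The hard part is the bookkeeping in Step 3. I need to check that the level subscripts and the lattice shifts line up so that the telescoping is exact. In particular, at a corner between a left run and an up run, the level must be the same in the denominator of one run and the numerator of the next, namely $k-1-\sum_{s\le v}l_s-\sum_{h<v}u_h$, and the boundary conventions $\psi_{t-1,t-2}=\mathrm{id}$ must handle the final step. I would first verify the case $t=1$, which reduces to Proposition~\ref{prop:stairs_combinatorial} since $\psi_{0,\cdot}=\mathrm{id}$, and then check that the general indices specialize correctly before writing the induction.
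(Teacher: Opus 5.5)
Your proposal is correct and follows the route the paper indicates. The paper gives no separate proof and only says to mimic the proof of Proposition~\ref{prop:stairs_combinatorial}. Your version carries this out: you rewrite the recurrence as a two-term linear recursion in $k$ with level-dependent coefficients, unroll it over words in $\{c,r\}$, and telescope each maximal run. Your check that the one-step level drop matches the unit lattice shift is what makes the telescoping exact. The corner and boundary bookkeeping also works out as you describe, including the extension $\psi_{0,-1}=\mathrm{id}$, consistent with the given $\psi_{1,1}$, which the formula needs when $t=1$.
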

Define two combinatorial functions:
\begin{equation}
\begin{aligned}
    \cS(x_{i,j},x_{k,l}) &:= \text{sum of weights of all stair paths from } (i,j) \text{ to } (k,l), \\
    \cR(x_{i,j},x_{k,l}) &:= \text{sum of weights of all reversed-stair paths from } (i,j) \text{ to } (k,l).
\end{aligned}
\end{equation}
We can add the notations $\cS(x_{i,j},x_{i,j}) := x_{i,j}$ and $\cS(x_{i,j},x_{k,l}) := 0$ if $k > i$ or $l < j$, and $\cR(x_{i,j},x_{i,j}) := x_{i,j}$ and $\cR(x_{i,j},x_{k,l}) := 0$ if $k < i$ or $l > j$. The number of stair paths from $(i,j)$ to $(k,l)$ is $\binom{(i-k)+(l-j)}{i-k}$ when $k \leq i$ and $l \geq j$, and the number of reversed-stair paths is $\binom{(k-i)+(j-l)}{k-i}$ when $k \geq i$ and $l \leq j$. Similar to the way of parametrizing in the formulas of Proposition~\ref{prop:stairs_combinatorial} and~\ref{prop:reversed_stairs}, we can rewrite both functions as:
\begin{Prop}
\label{prop:cS-cR}
The two combinatorial functions defined above are of the form:
\begin{enumerate}[label=(\arabic*)]
    \item For any integer points $(i,j), (i',j')$ such that $i' \leq i$ and $j' \geq j$, we have:
    \begin{equation}
        \cS(x_{i,j},x_{i',j'}) = x_{i',j'} \cdot \sum_{\substack{(l_1+1),l_i,u_i,(u_k+1),k \in \mathbb{N}: \\ \sum_{s=1}^{k}l_s = i-i';\\ \sum_{t=1}^{k}u_t = j'-j}}  \prod_{v=1}^{k} \frac{x_{i+1-\sum_{s=1}^{v-1}l_s,j+\sum_{t=1}^{v-1}u_t}}{x_{i+1-\sum_{s=1}^{v}l_s,j+\sum_{t=1}^{v-1}u_t}} \frac{x_{i-\sum_{s=1}^{v}l_s,j-1+\sum_{t=1}^{v-1}u_t}}{x_{i-\sum_{s=1}^{v}l_s,j-1+\sum_{t=1}^{v}u_t}}.
    \label{eq:cS}
    \end{equation}
    
    \item For any integer points $(i,j), (i',j')$ such that $i' \geq i$ and $j' \leq j$, we have:
    \begin{equation}
        \cR(x_{i,j},x_{i',j'}) = x_{i',j'} \cdot \sum_{\substack{(r_1+1),r_i,d_i,(d_k+1),k \in \mathbb{N}: \\ \sum_{s=1}^{k}r_s = i'-i;\\ \sum_{t=1}^{k}d_t = j-j'}} \prod_{v=1}^{k} \frac{x_{i-1+\sum_{s=1}^{v-1}r_s,j-\sum_{t=1}^{v-1}d_t}}{x_{i-1+\sum_{s=1}^{v}r_s,j-\sum_{t=1}^{v-1}d_t}} \frac{x_{i+\sum_{s=1}^{v}r_s,j+1-\sum_{t=1}^{v-1}d_t}}{x_{i+\sum_{s=1}^{v}r_s,j+1-\sum_{t=1}^{v}d_t}}.
    \label{eq:cR}
    \end{equation}
\end{enumerate}
\end{Prop}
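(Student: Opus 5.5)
The statement is essentially a reparametrization of the definitions, so the plan is to make the bijection between stair paths and the index data $(k; l_1,u_1,\dots,l_k,u_k)$ explicit. Then the weight of a path can be read off segment by segment.

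\textbf{Step 1: encode a path.} Fix $(i,j)$ and $(i',j')$ with $i'\le i$ and $j'\ge j$. By Definition~\ref{def:n_stairs} and the proof of Proposition~\ref{prop:stairs_combinatorial}, a stair path from $(i,j)$ to $(i',j')$ is the same thing as a word $\tau_1\cdots\tau_s\in\{c,r\}^s$ with $s=(i-i')+(j'-j)$. This word contains exactly $i-i'$ letters $c$ and $j'-j$ letters $r$.

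Group the word into maximal blocks, written as $c^{l_1}r^{u_1}c^{l_2}r^{u_2}\cdots c^{l_k}r^{u_k}$. Here $l_1\ge 0$, $u_k\ge 0$, and all other $l_v,u_v\ge 1$, which is exactly the index convention $(l_1+1),l_i,u_i,(u_k+1),k\in\N$. The constraints become $\sum l_s=i-i'$ and $\sum u_t=j'-j$. This gives a bijection between stair paths and admissible tuples.

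\textbf{Step 2: compute the weight of a block.} Along the path, just before the $v$-th left block, the current point is $(i-\sum_{s<v}l_s,\ j+\sum_{t<v}u_t)$. The left block consists of the factors $c$ at consecutive points moving left, namely $\prod c_{a,b}=\prod x_{a+1,b}/x_{a,b}$. This product telescopes to
\[
\frac{x_{i+1-\sum_{s<v}l_s,\,j+\sum_{t<v}u_t}}{x_{i+1-\sum_{s\le v}l_s,\,j+\sum_{t<v}u_t}},
\]
which is the weight of a single left segment, as noted before Proposition~\ref{prop:stairs_combinatorial}.

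The following up block uses $r_{a,b}=x_{a,b-1}/x_{a,b}$ at points $(i-\sum_{s\le v}l_s,\ b)$ with $b$ increasing. It telescopes to
\[
\frac{x_{i-\sum_{s\le v}l_s,\,j-1+\sum_{t<v}u_t}}{x_{i-\sum_{s\le v}l_s,\,j-1+\sum_{t\le v}u_t}}.
\]
The endpoint variable is $x_{i',j'}$. Multiplying over $v$ and summing over the bijection of Step 1 gives \eqref{eq:cS}.

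\textbf{Step 3: the reversed case.} For $\cR$ the argument is symmetric. Use the recurrence of Proposition~\ref{prop:reversed_stairs}, with right steps weighted $x_{a-1,b}/x_{a,b}$ and down steps weighted $x_{a,b+1}/x_{a,b}$. The same telescoping produces \eqref{eq:cR}; alternatively, apply the reflection $(i,j)\mapsto(-i,-j)$, which exchanges the two settings.

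The only delicate point is bookkeeping. One must check that the boundary cases $l_1=0$ (path starts with an up step) and $u_k=0$ (path ends with a left step) give empty products equal to $1$, so that no spurious factors appear. One must also check that the telescoped endpoints match the index shifts $i+1$ and $j-1$ exactly. As a sanity check, compare the result against the listed $f_3$ and $\psi_{3,3}$ expansions.
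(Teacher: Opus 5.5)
Your proposal is correct and follows the same route as the paper. The paper gives no separate argument: it simply reuses the run-length parametrization $(k;l_v,u_v)$ of stair paths from the proof of Proposition~\ref{prop:stairs_combinatorial}, restricted to paths ending at $(i',j')$. Your explicit telescoping of the unit weights $c_{a,b}$, $r_{a,b}$ (and of the right/down weights in the reversed case) just spells out that bookkeeping.

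The one case you leave implicit is $(i',j')=(i,j)$, where the conventions set $\cS=\cR=x_{i,j}$. This agrees with the formulas, since the only admissible tuple is $k=1$, $l_1=u_1=0$ (resp.\ $r_1=d_1=0$), whose product equals $1$.
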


\begin{Prop}
\label{prop:sum_decomposition}
For all non-negative integers $k,l$ and $i,j \in \mathbb{Z}$:
\begin{equation}
\begin{aligned}
    \psi_{1,k+l}(x_{i,j}) &= \sum_{t=0}^{l} \cS(x_{i,j},x_{i-t,j+l-t}) \psi_{1,k}(x_{i-t,j+l-t}), \\
    \psi_{k+l,k+l}(x_{i,j}) &= \sum_{t=0}^{l} \cR(x_{i,j},x_{i+t,j-l+t}) \psi_{k,k}(x_{i+t,j-l+t}).
\end{aligned}
\end{equation}
\end{Prop}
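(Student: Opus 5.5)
We prove the first identity; the second is the mirror image and is handled the same way, using Proposition~\ref{prop:reversed_stairs} and reversed-stair paths.

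The idea is to cut each stair path after its first $l$ unit steps. By Proposition~\ref{prop:stairs_combinatorial}, specifically the parametrization \eqref{eq:paramet}, $\psi_{1,k+l}(x_{i,j})$ is a sum over words $(\tau_1,\dots,\tau_{k+l})\in\{r,c\}^{k+l}$. The summand is the product of the unit-step weights $(\tau_s)_{\text{current point}}$, times the variable at the final point.

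\textbf{Step 1: split the word.} Write each word as a prefix $(\tau_1,\dots,\tau_l)$ followed by a suffix $(\tau_{l+1},\dots,\tau_{k+l})$. Suppose the prefix contains $t$ letters $c$ and $l-t$ letters $r$. Then after $l$ steps the walk is at $P_t=(i-t,\,j+l-t)$, independently of the order of the letters. The suffix is an arbitrary word of length $k$ read from $P_t$. The summand factors as (prefix weight) $\times$ (suffix weight $\times$ ending variable).

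\textbf{Step 2: sum the suffixes.} Fix the prefix and sum over all suffixes. Applying \eqref{eq:paramet} again, now at the point $P_t$, this inner sum is exactly $\psi_{1,k}(x_{P_t})=f_k(x_{P_t})$.

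\textbf{Step 3: sum the prefixes.} It remains to show that, for fixed $t$, the sum of prefix weights over all words of length $l$ with $t$ letters $c$ equals $\cS(x_{i,j},x_{P_t})/x_{P_t}$. This is the definition of $\cS$, together with the identification in the proof of Proposition~\ref{prop:stairs_combinatorial}: $c_{i',j'}$ is the weight of the left unit segment $(i'+1,j')\to(i',j')$, and $r_{i',j'}$ is the weight of the up unit segment $(i',j'-1)\to(i',j')$. Consecutive unit steps in the same direction telescope into one segment weight, which is exactly how \eqref{eq:cS} is parametrized. So a word of length $l$ with $t$ letters $c$ corresponds bijectively to a stair path from $(i,j)$ to $P_t$, and it has the same weight up to the ending variable. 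Summing over $t=0,\dots,l$ gives the identity. The endpoint cases are consistent: $l=0$ is trivial, and $t$ ranges over exactly the points $P_t$ with $i-t\le i$ and $j+l-t\ge j$, where the convention $\cS(x_{i,j},x_{i,j})=x_{i,j}$ applies.

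\textbf{Main obstacle.} Step 3 is the only delicate point. We must check that the telescoped product of unit weights $c,r$ along a word agrees term by term with the alternating-segment product in \eqref{eq:cS}. In particular, the ``jog'' convention of Definition~\ref{def:n_stairs} (the $\sqrt2$ diagonal connectors) must produce exactly the factors $x_{i-\sum l_s,\,j-1+\dots}$ appearing there. A quick way to settle this is induction on $l$ using \eqref{eq:recf}. Applying the recurrence once more expands $f_{k+l}(x_{i,j})=c_{i,j}f_{k+l-1}(x_{i-1,j})+r_{i,j}f_{k+l-1}(x_{i,j+1})$. The induction hypothesis then gives the identity provided the stair-path sums satisfy
\[
\cS(x_{i,j},x_{P})=c_{i,j}\,\cS(x_{i-1,j},x_{P})+r_{i,j}\,\cS(x_{i,j+1},x_{P}),
\]
which is the first-step decomposition of a stair path. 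We would verify this directly from \eqref{eq:cS} by separating the cases $l_1=0$ and $l_1\ge1$.
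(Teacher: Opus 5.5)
Your cutting-after-$l$-steps decomposition is the same concatenation argument the paper uses, and Steps 1 and 2 and the computation in Step 3 are correct. The gap is the final sentence of Step 3.

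By your own computation, the prefixes with $t$ letters $c$ sum to $\cS(x_{i,j},x_{P_t})/x_{P_t}$. This is because $\cS$, as normalized in Proposition~\ref{prop:cS-cR} (with $\cS(x_{i,j},x_{i,j})=x_{i,j}$), already includes the endpoint variable $x_{P_t}$ as a factor. In a $(k+l)$-stair path passing through $P_t$, however, only the final endpoint variable appears. So what your argument actually proves is
\[
\psi_{1,k+l}(x_{i,j})=\sum_{t=0}^{l}\frac{\cS(x_{i,j},x_{i-t,j+l-t})}{x_{i-t,j+l-t}}\,\psi_{1,k}(x_{i-t,j+l-t}).
\]
This differs from the displayed identity by a factor $x_{i-t,j+l-t}$ in every term, so ``summing over $t$ gives the identity'' does not follow.

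In fact the displayed identity is false under the paper's normalization of $\cS$, and your own sanity check shows it. For $l=0$ it reads $\psi_{1,k}(x_{i,j})=x_{i,j}\,\psi_{1,k}(x_{i,j})$, so that case is not trivial; it fails. Setting every variable equal to $x$ gives $2^{k+l}x$ on the left and $\sum_{t}\binom{l}{t}x\cdot 2^{k}x=2^{k+l}x^{2}$ on the right. The $\cR$ identity for $\psi_{k+l,k+l}$ has exactly the same defect.

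The paper's own proof makes the same slip. It asserts that the product of a stair-path weight to $(i-t,j+l-t)$ and a $k$-stair-path weight from that point is a $(k+l)$-stair-path weight, which double-counts $x_{i-t,j+l-t}$.

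To turn your proposal into a proof, you must correct the statement. Either divide each $\cS$ (resp.\ $\cR$) term by its endpoint variable, or use the bare sum in \eqref{eq:cS} without the prefactor $x_{i',j'}$. With that change your argument goes through as written.

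Your induction route via \eqref{eq:recf} also works after the correction. The base case $l=0$ becomes genuinely trivial. The first-step relation $\cS(x_{i,j},x_P)=c_{i,j}\,\cS(x_{i-1,j},x_P)+r_{i,j}\,\cS(x_{i,j+1},x_P)$, or its normalized version, holds for every $P\neq(i,j)$ using the zero convention outside the cone. That is all you need for $l\geq 1$.
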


\begin{proof}
From the definition above on the function $\cS$ and Proposition~\ref{prop:stairs_combinatorial}, we have $\cS(x_{i,j},x_{i-t,j+l-t})$ is the sum of weights of all stair paths from $(i,j)$ to $(i-t,j+l-t)$ and $\psi_{1,k}(x_{i-t,j+l-t})$ is the sum of weights of all $k$-stair paths of $(i-t,j+l-t)$, then as the product of a weight of a stair path from $(i,j)$ to $(i-t,j+l-t)$ and a weight of an $k$-stair path of $(i-t,j+l-t)$ is precisely a weight of an $(k+l)$-stair path of $(i-t,j+l-t)$ that has the point $(i-t,j+l-t)$ as the tail of $(l+1)$-th arrow, then taking the sum, we have $\sum_{t=0}^{l} \cS(x_{i,j},x_{i-t,j+l-t}) \psi_{1,k}(x_{i-t,j+l-t})$ is precisely the sum of weights of all $(k+l)$-stair paths of $(i,j)$, which is precisely $\psi_{1,k+l}(x_{i,j})$.

By similar argument, using the definition above on the function $\cR$ and Proposition~\ref{prop:reversed_stairs} implies the remaining formula of the claim. This concludes the proof.
\end{proof}

\subsection{Punctured surface case}
\label{subsec:punctured_case}
In this section, we give some remarks on the punctured surface case, the fundamental difference being that self-folded triangles appear during flips. Consider the case of a punctured disk with 2 marked points on the boundary labeled as follows.
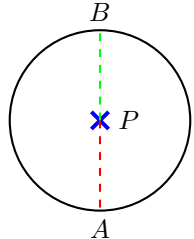
\begin{figure}[H]
\centering
\tikzset{
    circ/.style={thick},
}
\begin{tikzpicture}[scale=0.3,
    mid arrow/.style={
        postaction={decorate},
        decoration={
            markings,
            mark=at position 0.5 with {\arrow{>}}
        }
    }
]
	\draw[circ] (0,0) circle[radius=4];
	\node[cross out, draw=blue, thick, line width=1.5pt, inner sep=2pt, minimum size=5pt] at (0,0) {};
	\node[right] at (0.4,0) {$P$};
	\draw[thick,dashed,green] (0,0) -- (0,4);
	\draw[thick,dashed,red] (0,0) -- (0,-4);
    \node[below] at (0,-4) {$A$};
    \node[above] at (0,4) {$B$};   
\end{tikzpicture}
\caption{A punctured disk with 2 marked points.}
\label{fig:punctured}
\end{figure}
In an ideal triangulation of such disk, two triangles may share 0, 1, or 2 edges. When two triangles share 2 edges (indicating punctures), there are two possible flips, each producing two new triangles with one common edge, one being self-folded. 

The quiver differs from the unpunctured case: the vertex closest to puncture $P$ on edge $AP$ connects to only 2 vertices instead of 4. However, the mutation sequence matches the standard case when treating the quadrilateral as a square with same-colored edges representing identical edges in the original triangulation. For example, in the case $G=\text{SL}_4$, we label vertices with variables $x_i, y_j$ ($i = 1,2,\dots, 12$; $j = 1,2,\dots,6$) as shown in Figure~\ref{fig:punctured_vertex}. We can apply the usual mutation sequence for a flip of diagonal, given by:
\Eq{
    \mu = \{x_1 \rightarrow x_2 \rightarrow x_3 \rightarrow x_7 \rightarrow x_8 \rightarrow x_{10} \rightarrow x_{11} \rightarrow x_9 \rightarrow x_{12} \rightarrow x_2 \}
}
where the order within groups $\{x_1, x_2, x_3\}$, $\{x_7, x_8, x_{10}, x_{11}\}$, and $\{x_9, x_{12}, x_2\}$ can be permuted since the vertices are disconnected within the same group in the quiver during the mutation procedure.
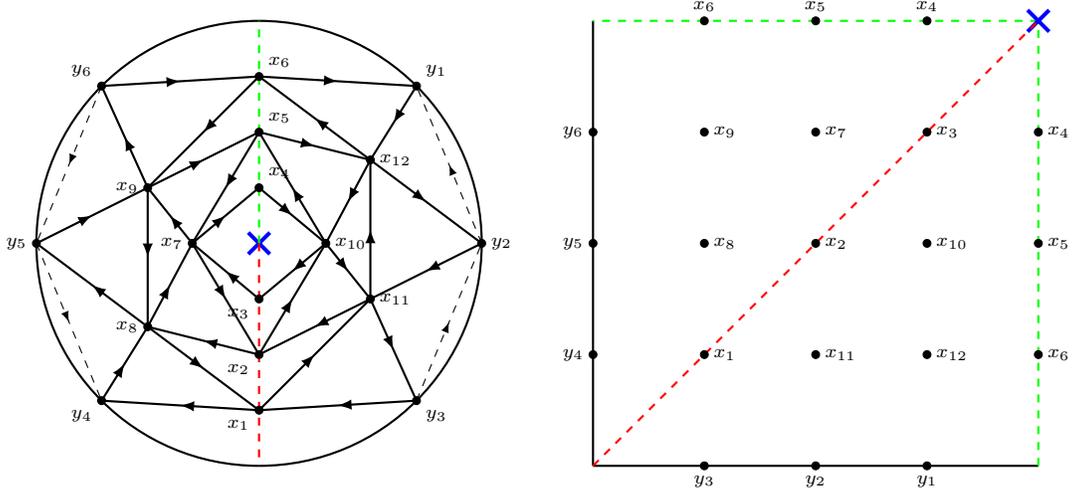
\begin{figure}[H]
\centering
\tikzset{
    circ/.style={thick},
}
\begin{tikzpicture}[scale=0.74,
    mid arrow/.style={
        postaction={decorate},
        decoration={
            markings,
            mark=at position 0.5 with {\arrow{>}}
        }
    }
]
	\draw[circ] (0,0) circle[radius=4];
	\node[cross out, draw=blue, thick, line width=1.5pt] at (0,0) {};
	\draw[thick,dashed,green] (0,0) -- (0,4);
	\draw[thick,dashed,red] (0,0) -- (0,-4);
	\filldraw[black] (0,-3) circle (2pt);
    \node[below left, font=\scriptsize] at (0,-3) {$x_1$};
    \filldraw[black] (0,-2) circle (2pt);
    \node[below left, font=\scriptsize] at (0,-2) {$x_2$};
    \filldraw[black] (0,-1) circle (2pt);
    \node[below left, font=\scriptsize] at (0,-1) {$x_3$};
    \filldraw[black] (0,1) circle (2pt);
    \node[above right, font=\scriptsize] at (0,1) {$x_4$};
    \filldraw[black] (0,2) circle (2pt);
    \node[above right, font=\scriptsize] at (0,2) {$x_5$};
    \filldraw[black] (0,3) circle (2pt);
    \node[above right, font=\scriptsize] at (0,3) {$x_6$};
    
    \coordinate (Y1) at ++(45:4);
    \filldraw[black] ++(45:4) circle (2pt);
    \node[above right, font=\scriptsize] at ++(45:4) {$y_1$};
    \coordinate (Y2) at ++(0:4);
    \filldraw[black] ++(0:4) circle (2pt);
    \node[right, font=\scriptsize] at ++(0:4) {$y_2$};
    \coordinate (Y3) at ++(315:4);
    \filldraw[black] ++(315:4) circle (2pt);
    \node[below right, font=\scriptsize] at ++(315:4) {$y_3$};
    \coordinate (Y6) at ++(135:4);
    \filldraw[black] ++(135:4) circle (2pt);
    \node[above left, font=\scriptsize] at ++(135:4) {$y_6$};
    \coordinate (Y5) at ++(180:4);
    \filldraw[black] ++(180:4) circle (2pt);
    \node[left, font=\scriptsize] at ++(180:4) {$y_5$};
    \coordinate (Y4) at ++(225:4);
    \filldraw[black] ++(225:4) circle (2pt);
    \node[below left, font=\scriptsize] at ++(225:4) {$y_4$};
    
    \filldraw[black] ($(0,2)!0.5!(Y5)$) circle (2pt);
    \node[left, font=\scriptsize] at ($(0,2)!0.5!(Y5)$) {$x_9$};
    \filldraw[black] ($(0,3)!0.5!(Y2)$) circle (2pt);
    \node[right, font=\scriptsize] at ($(0,3)!0.5!(Y2)$) {$x_{12}$};
    \filldraw[black] ($(0,-3)!0.5!(Y5)$) circle (2pt);
    \node[left, font=\scriptsize] at ($(0,-3)!0.5!(Y5)$) {$x_{8}$};
    \filldraw[black] ($(0,-2)!0.5!(Y2)$) circle (2pt);
    \node[right, font=\scriptsize] at ($(0,-2)!0.5!(Y2)$) {$x_{11}$};
    
    \coordinate (X10) at ++(0:1.2);
    \filldraw[black] ++(0:1.2) circle (2pt);
    \node[right, font=\scriptsize] at ++(0:1.2) {$x_{10}$};
    \coordinate (X7) at ++(180:1.2);
    \filldraw[black] ++(180:1.2) circle (2pt);
    \node[left, font=\scriptsize] at ++(180:1.2) {$x_{7}$};
    
    \draw[mid arrow, thick] (0,3) -- (Y1);
    \draw[mid arrow, thick] (Y1) -- ($(0,3)!0.5!(Y2)$);
    \draw[mid arrow, thick] ($(0,3)!0.5!(Y2)$) -- (Y2);
    \draw[mid arrow, thick] (Y2) -- ($(0,-2)!0.5!(Y2)$);
    \draw[mid arrow, thick] ($(0,-2)!0.5!(Y2)$) -- (Y3);
    \draw[mid arrow, thick] (Y3) -- (0,-3);
    
    \draw[mid arrow, thick] (0,-3) -- (Y4);
    \draw[mid arrow, thick] (Y4) -- ($(0,-3)!0.5!(Y5)$);
    \draw[mid arrow, thick] ($(0,-3)!0.5!(Y5)$) -- (Y5);
    \draw[mid arrow, thick] (Y5) -- ($(0,2)!0.5!(Y5)$);
    \draw[mid arrow, thick] ($(0,2)!0.5!(Y5)$) -- (Y6);
    \draw[mid arrow, thick] (Y6) -- (0,3);
    
    \draw[mid arrow, thick] ($(0,3)!0.5!(Y2)$) -- (0,3);
    \draw[mid arrow, thick] (0,3) -- ($(0,2)!0.5!(Y5)$);
    \draw[mid arrow, thick] ($(0,2)!0.5!(Y5)$) -- (0,2);
    \draw[mid arrow, thick] (0,2) -- ($(0,3)!0.5!(Y2)$);
    
    \draw[mid arrow, thick] (X10) -- (0,2);
    \draw[mid arrow, thick] (0,2) -- (X7);
    \draw[mid arrow, thick] (X7) -- (0,1);
    \draw[mid arrow, thick] (0,1) -- (X10);
    
    \draw[mid arrow, thick] (X10) -- (0,-1);
    \draw[mid arrow, thick] (0,-1) -- (X7);
    \draw[mid arrow, thick] (X7) -- (0,-2);
    \draw[mid arrow, thick] (0,-2) -- (X10);
    
    \draw[mid arrow, thick] (X7) -- ($(0,2)!0.5!(Y5)$);
    \draw[mid arrow, thick] ($(0,2)!0.5!(Y5)$) -- ($(0,-3)!0.5!(Y5)$);
    \draw[mid arrow, thick] ($(0,-3)!0.5!(Y5)$) -- (X7);
    
    \draw[mid arrow, thick] (X10) -- ($(0,-2)!0.5!(Y2)$);
    \draw[mid arrow, thick] ($(0,-2)!0.5!(Y2)$) -- ($(0,3)!0.5!(Y2)$);
    \draw[mid arrow, thick] ($(0,3)!0.5!(Y2)$) -- (X10);
    
    \draw[mid arrow, thick] ($(0,-2)!0.5!(Y2)$) -- (0,-2);
    \draw[mid arrow, thick] (0,-2) -- ($(0,-3)!0.5!(Y5)$);
    \draw[mid arrow, thick] ($(0,-3)!0.5!(Y5)$) -- (0,-3);
    \draw[mid arrow, thick] (0,-3) -- ($(0,-2)!0.5!(Y2)$);
    
    \draw[mid arrow,thin,dashed] (Y3) -- (Y2);
    \draw[mid arrow,thin,dashed] (Y2) -- (Y1);
    
    \draw[mid arrow,thin,dashed] (Y6) -- (Y5);
    \draw[mid arrow,thin,dashed] (Y5) -- (Y4);
    
    \draw[thick] (6,4) -- (6,-4);
    \draw[thick] (6,-4) -- (14,-4);
    \draw[thick,dashed,green] (14,-4) -- (14,4);
    \draw[thick,dashed,green] (14,4) -- (6,4);
    \node[cross out, draw=blue, thick, line width=1.5pt] at (14,4) {};
    \draw[thick,dashed,red] (6,-4) -- (14,4);
    \filldraw[black] (8,-2) circle (2pt);
    \node[right, font=\scriptsize] at (8,-2) {$x_1$};
    \filldraw[black] (10,0) circle (2pt);
    \node[right, font=\scriptsize] at (10,0) {$x_2$};
    \filldraw[black] (12,2) circle (2pt);
    \node[right, font=\scriptsize] at (12,2) {$x_3$};
    \filldraw[black] (8,-4) circle (2pt);
    \node[below, font=\scriptsize] at (8,-4) {$y_3$};
    \filldraw[black] (10,-4) circle (2pt);
    \node[below, font=\scriptsize] at (10,-4) {$y_2$};
    \filldraw[black] (12,-4) circle (2pt);
    \node[below, font=\scriptsize] at (12,-4) {$y_1$};
    \filldraw[black] (6,-2) circle (2pt);
    \node[left, font=\scriptsize] at (6,-2) {$y_4$};
    \filldraw[black] (10,-2) circle (2pt);
    \node[right, font=\scriptsize] at (10,-2) {$x_{11}$};
    \filldraw[black] (12,-2) circle (2pt);
    \node[right, font=\scriptsize] at (12,-2) {$x_{12}$};
    \filldraw[black] (14,-2) circle (2pt);
    \node[right, font=\scriptsize] at (14,-2) {$x_{6}$};
    \filldraw[black] (6,0) circle (2pt);
    \node[left, font=\scriptsize] at (6,0) {$y_5$};
    \filldraw[black] (8,0) circle (2pt);
    \node[right, font=\scriptsize] at (8,0) {$x_{8}$};
    \filldraw[black] (12,0) circle (2pt);
    \node[right, font=\scriptsize] at (12,0) {$x_{10}$};
    \filldraw[black] (14,0) circle (2pt);
    \node[right, font=\scriptsize] at (14,0) {$x_{5}$};
    \filldraw[black] (6,2) circle (2pt);
    \node[left, font=\scriptsize] at (6,2) {$y_6$};
    \filldraw[black] (8,2) circle (2pt);
    \node[right, font=\scriptsize] at (8,2) {$x_{9}$};
    \filldraw[black] (10,2) circle (2pt);
    \node[right, font=\scriptsize] at (10,2) {$x_{7}$};
    \filldraw[black] (14,2) circle (2pt);
    \node[right, font=\scriptsize] at (14,2) {$x_{4}$};
    \filldraw[black] (8,4) circle (2pt);
    \node[above, font=\scriptsize] at (8,4) {$x_{6}$};
    \filldraw[black] (10,4) circle (2pt);
    \node[above, font=\scriptsize] at (10,4) {$x_{5}$};
    \filldraw[black] (12,4) circle (2pt);
    \node[above, font=\scriptsize] at (12,4) {$x_{4}$};
    
\end{tikzpicture}
\caption{Vertex labeling for punctured case (left) and its corresponding square lattice (right).}
\label{fig:punctured_vertex}
\end{figure}
The new variables are computed to be:
{\small
\begin{align*}
    x'_{1} &= \frac{y_{3}x_{8} + y_{4}x_{11}}{x_1}; \quad x'_{2} = \frac{x_{7}x_{11} + x_{8}x_{10}}{x_2}; \quad x'_{3} = \frac{x_{7} + x_{10}}{x_3}; \\
    x'_{7} &= \frac{x'_{3}x_{4}x_{9} + x'_{2}x_{5}}{x_7} = \frac{x_{4} x_{9}}{x_{3}}+\frac{x_{4} x_{9} x_{10}}{x_{3} x_{7}}+\frac{x_{5} x_{11}}{x_{2}}+\frac{x_{5} x_{8} x_{10}}{x_{2} x_{7}}; \\
    x'_{8} &= \frac{y_{5}x'_{2} + x_{9}x'_{1}}{x_8} = \frac{x_{7} x_{11} y_{5}}{x_{2} x_{8}}+\frac{x_{10} y_{5}}{x_{2}}+\frac{x_{9} y_{3}}{x_{1}}+\frac{x_{9} x_{11} y_{4}}{x_{1} x_{8}};\\
    x'_{10} &= \frac{x'_{3}x_{4}x_{12} + x'_{2}x_{5}}{x_{10}} = \frac{x_{4} x_{12} x_{7}}{x_{3} x_{10}}+\frac{x_{4} x_{12}}{x_{3}}+\frac{x_{5} x_{7} x_{11}}{x_{2} x_{10}}+\frac{x_{5} x_{8}}{x_{2}};\\
    x'_{11} &= \frac{y_{2}x'_{2} + x_{12}x'_{1}}{x_{11}} = \frac{x_{7} y_{2}}{x_{2}}+\frac{x_{8} x_{10} y_{2}}{x_{2} x_{11}}+\frac{x_{8} x_{12} y_{3}}{x_{1} x_{11}}+\frac{x_{12} y_{4}}{x_{1}};\\    
    x'_{9} &= \frac{x'_{8}x_{6} + x'_{7}y_{6}}{x_9} \\
        &= \frac{x_{4} y_{6}}{x_{3}}+\frac{x_{4} x_{10} y_{6}}{x_{3} x_{7}}+\frac{x_{5} x_{11} y_{6}}{x_{2} x_{9}}+\frac{x_{8} x_{5} x_{10} y_{6}}{x_{2} x_{7} x_{9}}+\frac{x_{7} x_{6} x_{11} y_{5}}{x_{2} x_{8} x_{9}}+\frac{x_{6} x_{10} y_{5}}{x_{2} x_{9}}+\frac{x_{6} y_{3}}{x_{1}}+\frac{x_{6} x_{11} y_{4}}{x_{1} x_{8}}; \\
    x'_{12} &= \frac{x'_{11}x_{6} + x'_{10}y_{1}}{x_{12}} \\
        &= \frac{x_{4} x_{7} y_{1}}{x_{3} x_{10}}+\frac{x_{4} y_{1}}{x_{3}}+\frac{x_{11} x_{5} x_{7} y_{1}}{x_{2} x_{10} x_{12}}+\frac{x_{5} x_{8} y_{1}}{x_{2} x_{12}}+\frac{x_{6} x_{7} y_{2}}{x_{2} x_{12}}+\frac{x_{10} x_{6} x_{8} y_{2}}{x_{2} x_{11} x_{12}}+\frac{x_{6} x_{8} y_{3}}{x_{1} x_{11}}+\frac{x_{6} y_{4}}{x_{1}};\\       
    x''_{2} &= \frac{x'_{7}x'_{11} + x'_{8}x'_{10}}{x'_2} \\
        &= \frac{x_{7} x_{4} x_{12} y_{5}}{x_{3} x_{8} x_{10}}+\frac{x_{4} x_{9} y_{2}}{x_{3} x_{11}}+\frac{x_{4} x_{12} y_{5}}{x_{3} x_{8}}+\frac{x_{10} x_{4} x_{9} y_{2}}{x_{3} x_{7} x_{11}}+\frac{x_{7} x_{11} x_{5} y_{5}}{x_{2} x_{8} x_{10}}+\frac{x_{5} y_{2}}{x_{2}}+\frac{x_{5} y_{5}}{x_{2}}+\frac{x_{8} x_{10} x_{5} y_{2}}{x_{2} x_{7} x_{11}} \\
        &\quad +\frac{x_{2} x_{4} x_{9} x_{12} y_{3}}{x_{3} x_{1} x_{11} x_{10}}+\frac{x_{2} x_{4} x_{9} x_{12} y_{4}}{x_{3} x_{1} x_{8} x_{10}}+\frac{x_{2} x_{4} x_{9} x_{12} y_{3}}{x_{3} x_{7} x_{1} x_{11}}+\frac{x_{2} x_{4} x_{9} x_{12} y_{4}}{x_{3} x_{7} x_{1} x_{8}}+\frac{x_{5} x_{9} y_{3}}{x_{1} x_{10}} \\
        &\quad +\frac{x_{11} x_{5} x_{9} y_{4}}{x_{1} x_{8} x_{10}}+\frac{x_{8} x_{5} x_{12} y_{3}}{x_{7} x_{1} x_{11}}+\frac{x_{5} x_{12} y_{4}}{x_{7} x_{1}}.
\end{align*}
}
With the exception of $x_3$, which follows a modified square structure with $x_4$ replaced by $1$, all variables adhere to the same pattern as in the unpunctured case. Through analogous computations, we find that all variables maintain the unpunctured surface structure up to the flipped diagonal vertex nearest the puncture. Furthermore, for the flipped diagonal vertex nearest the puncture, the number of monomials (up to multiplicities) of the expansion formula of the variable of this vertex remains $2$ (in the case of Figure~\ref{fig:punctured_vertex} we have $x'_3$ having $2$ monomials). Therefore, for the remainder of this paper, where coefficient analysis is primary, it suffices to consider only the unpunctured surface case.

\subsection{Solving recurrences}\label{subsec:solverecur}
 In this section we attempt to solve (case-by-case) the recurrences (\ref{eq:crfs}) and connect them with famous combinatorial structures (\cite{Hen}, \cite{HKW})
\begin{Problem} Find the general formula of $\psi_{t,k}(x_{i_0,j_0})$ in terms of the variables $x_{i,j}$?
\end{Problem}
We present our current progress in deriving a general formula for the expansion structure, beginning with the above observations (Propositions~\ref{prop:stairs_combinatorial}, \ref{prop:reversed_stairs}, and (\ref{eq:general_stair_str})). We shall make connections of the recurrence by defining some auxiliary functions.

\begin{Prop}\label{4DHM} Denote the function $f: (\mathbb{Z}_{\geq 0}^2 \setminus \{(0,0)\}) \times \mathbb{Z}^2 \rightarrow \mathbb{Z}[x^{\pm}_{i,j}]$ such that $$ f(t,k+1-t,i,j):=\psi_{t,k}(x_{i,j}).$$

Then the function $f$ can be extended to $\mathbb{Z}_{\geq 0}^2 \times \mathbb{Z}^2$, and satisfy the \emph{discrete 4D Hirota--Miwa equation} from~\cite{HKW}, which is the 4-dimensional version of the octahedron recurrence~\cite{Hen}.
\end{Prop}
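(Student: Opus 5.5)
The plan is to turn the defining recurrence (\ref{eq:crfs}) into a four-index lattice equation by the substitution in the statement, and then identify that equation with the Hirota--Miwa form after a linear change of lattice coordinates. Put $a=t$ and $b=k+1-t$, so that $\psi_{t,k}(x_{i,j})=f(a,b,i,j)$. Then the five quantities in (\ref{eq:crfs}) correspond as follows:
\begin{itemize}[leftmargin=*]
\item $\psi_{t+1,k+2}(x_{i,j})\leftrightarrow f(a+1,b+1,i,j)$ and $\psi_{t,k}(x_{i,j})\leftrightarrow f(a,b,i,j)$;
\item $\psi_{t+1,k+1}(x_{i-1,j})\leftrightarrow f(a+1,b,i-1,j)$ and $\psi_{t,k+1}(x_{i+1,j})\leftrightarrow f(a,b+1,i+1,j)$;
\item $\psi_{t,k+1}(x_{i,j-1})\leftrightarrow f(a,b+1,i,j-1)$ and $\psi_{t+1,k+1}(x_{i,j+1})\leftrightarrow f(a+1,b,i,j+1)$.
\end{itemize}
The recurrence therefore becomes
\begin{equation*}
f(a+1,b+1,i,j)\,f(a,b,i,j)=f(a+1,b,i-1,j)\,f(a,b+1,i+1,j)+f(a,b+1,i,j-1)\,f(a+1,b,i,j+1).
\end{equation*}
The boundary data $\psi_{0,k}=\psi_{k+1,k}=\mathrm{id}$ become $f(0,b,i,j)=x_{i,j}$ for $b\ge1$ and $f(a,0,i,j)=x_{i,j}$ for $a\ge1$.

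First I extend $f$ to $\mathbb{Z}_{\ge0}^2\times\mathbb{Z}^2$ by setting $f(0,0,i,j):=x_{i,j}$. I then check that the displayed equation now holds for every $(a,b)\in\mathbb{Z}_{\ge0}^2$, which needs three checks.
\begin{itemize}[leftmargin=*]
\item For $a=b=0$ the equation reads $f(1,1,i,j)\,x_{i,j}=x_{i-1,j}x_{i+1,j}+x_{i,j-1}x_{i,j+1}$. This is exactly the definition of $\psi_{1,1}$.
\item For $a\ge1$, $b\ge1$ it is (\ref{eq:crfs}) verbatim.
\item For $a=0<b$ or $b=0<a$, all the $f$ involved are defined through $\psi$ with $0\le t\le k+1$, so again the equation is (\ref{eq:crfs}). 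The only point to check is that the extended value $f(0,0,\cdot)$ never enters these cases inconsistently.
\end{itemize}
With this, $f$ is well defined on the whole domain and satisfies a single bilinear three-term equation there. Laurentness follows from the cluster interpretation via Proposition~\ref{prop:flip_properties}.

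Next I match this with the discrete 4D Hirota--Miwa equation
\begin{equation*}
\tau(n+e_1+e_2)\tau(n+e_3+e_4)=\tau(n+e_1+e_3)\tau(n+e_2+e_4)+\tau(n+e_1+e_4)\tau(n+e_2+e_3).
\end{equation*}
In each of the three products of our equation the arguments sum to the same vector $(2a+1,2b+1,2i,2j)$, so each product is a pair of points symmetric about a common centre, exactly as in Hirota--Miwa. To find the directions, I solve
\begin{equation*}
v_1+v_2=(1,1,0,0),\quad v_3+v_4=0,\quad v_1+v_3=(1,0,-1,0),\quad v_2+v_4=(0,1,1,0),\quad v_1+v_4=(0,1,0,-1),\quad v_2+v_3=(1,0,0,1).
\end{equation*}
The solution is
\begin{equation*}
v_1=\tfrac12(1,1,-1,-1),\quad v_2=\tfrac12(1,1,1,1),\quad v_3=\tfrac12(1,-1,-1,1),\quad v_4=-v_3.
\end{equation*}
These four vectors are linearly independent. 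Defining $\tau(m_1,\dots,m_4):=f\bigl(\sum_r m_r v_r\bigr)$ on the sublattice where the right-hand side is integral, with the base point $n$ chosen accordingly, gives exactly the Hirota--Miwa equation. The fact that $v_4=-v_3$ causes no problem: it only reflects that our equation is written with one product centred at $n$ itself.

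The main obstacle is matching the precise normalization in~\cite{HKW}, where the equation may carry signs or lattice parameters, for example $\alpha\tau\tau+\beta\tau\tau+\gamma\tau\tau=0$ with $\alpha+\beta+\gamma=0$. I would absorb these by a gauge transformation $\tau\mapsto\epsilon(m)\tau$, taking $\epsilon(m)$ to be a product of quadratic sign or exponential factors in the $m_r$, so that all three terms acquire the required coefficients. I would also have to explain that restricting to the half-integral sublattice, equivalently to a fixed parity class of $(a+b+i+j)$-type combinations, is the usual way the octahedron recurrence of~\cite{Hen} sits inside the 4D lattice.
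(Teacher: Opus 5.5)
Your translation $a=t$, $b=k+1-t$ of \eqref{eq:crfs} and the extension $f(0,0,i,j):=x_{i,j}$ are exactly the paper's proof. So is the check that the $(a,b)=(0,0)$ instance of the bilinear equation is the definition of $\psi_{1,1}$, while every other instance is \eqref{eq:crfs} itself, so $f(0,0,\cdot)$ enters nowhere else. The paper stops there and simply declares the resulting equation to be the Hirota--Miwa equation.

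Your additional matching step, which the paper does not attempt, contains a false assertion. The vectors $v_1,\dots,v_4$ are not linearly independent, since you found $v_4=-v_3$; only $v_1,v_2,v_3$ are. In fact all five difference vectors $(1,1,0,0)$, $(1,0,-1,0)$, $(0,1,1,0)$, $(0,1,0,-1)$, $(1,0,0,1)$ are orthogonal to $(1,-1,1,-1)$. So $a-b+i-j$ is constant along every instance of the equation, and your $\tau$ satisfies $\tau(m+e_3+e_4)=\tau(m)$.

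What you actually obtain is a degenerate reduction of the four-shift form. Fix an offset $c\in\mathbb{Z}^4$ and set $\sigma(m_1,m_2,m_3):=f(c+m_1v_1+m_2v_2+m_3v_3)$ for $m_1+m_2+m_3$ even. Then $\sigma$ satisfies the ordinary octahedron recurrence
\[
\sigma(m+e_1+e_2)\,\sigma(m)=\sigma(m+e_1+e_3)\,\sigma(m+e_2-e_3)+\sigma(m+e_1-e_3)\,\sigma(m+e_2+e_3).
\]
To recover all of $f$ you need one offset $c$ per hyperplane $a-b+i-j=\mathrm{const}$, not a single base point.

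Your conclusion is unaffected. The Hirota--Miwa identity for $\tau$ at $n$ is literally your bilinear equation at $p=c+\sum_r n_r v_r$, whenever the first two coordinates of $p$ are nonnegative, and this uses no independence. Replace the independence sentence by this description; it also shows that the four-variable equation decouples into copies of the octahedron recurrence.

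Finally, no gauge transformation is needed for the all-plus form you wrote down. One is only required if you insist on a signed normalization such as $\tau\tau-\tau\tau+\tau\tau=0$.
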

\begin{proof}
The function $f$ satisfies the recurrence:
\Eqn{
	f(0,b,i,j) &= f(a,0,i,j) = x_{i,j}; \\
	 f(1,1,i,j) &= \frac{x_{i-1,j}x_{i+1,j}+x_{i,j-1}x_{i,j+1}}{x_{i,j}} \quad\text{ for all } a,b \in \mathbb{N}, i,j \in \mathbb{Z};\\
	f(a+1,b+1,i,j) &= \frac{f(a+1,b,i-1,j)f(a,b+1,i+1,j)+f(a,b+1,i,j-1)f(a+1,b,i,j+1)}{f(a,b,i,j)}
	\\ &\text{ for all } (a,b) \in \mathbb{Z}_{\geq 0}^2 \setminus \{(0,0)\}, i,j \in \mathbb{Z}.
}
From the recurrence, we can extend to another value $f(0,0,i,j) = x_{i,j}$, such that the recurrence becomes $f: \mathbb{Z}_{\geq 0}^2 \times \mathbb{Z}^2 \rightarrow \mathbb{Z}[x^{\pm}_{i,j}]$ where:
\Eqn{
	f(0,b,i,j) &= f(a,0,i,j) = x_{i,j} \quad\text{ for all } a,b \in \mathbb{Z}_{\geq 0}, i,j \in \mathbb{Z};\\
	f(a+1,b+1,i,j) &= \frac{f(a+1,b,i-1,j)f(a,b+1,i+1,j)+f(a,b+1,i,j-1)f(a+1,b,i,j+1)}{f(a,b,i,j)}
	\\ &\text{ for all } a,b \in \mathbb{Z}_{\geq 0}, i,j \in \mathbb{Z}.
}
This is exactly the \emph{discrete 4D Hirota--Miwa equation}.
\end{proof}
Now define the product of variables:
\begin{gather*}
X^{a,b}_{i,j} := 
\begin{cases}
	\prod_{\substack{p,q \in \mathbb{Z}; \\ 1-a \leq \min \{ p,q,p+q \} \leq \max \{ p,q,p+q \} \leq b-1}} x_{i-p,j+q} \quad \ \text{ if } (a,b) \neq (0,0), \\
	 x_{i,j} \quad \quad \quad \quad \quad \quad \quad \quad \quad \quad \quad \quad \quad \quad \quad \quad \quad \quad \quad \text{ if } (a,b) = (0,0)
\end{cases}
\quad \text{for all } a,b \in \mathbb{Z}_{\geq 0}, i,j \in \mathbb{Z}
\end{gather*}
where in case there are no such values of $(a,b)$ satisfying the condition, we can let the product be $1$. See Figure~\ref{fig:X_var_lattice_example_46} for an example.
\begin{figure}[H]
\centering
\begin{tikzpicture}[scale=0.75,
    mid arrow/.style={
        postaction={decorate},
        decoration={
            markings,
            mark=at position 0.5 with {\arrow{>}}
        }
    }
]
	\filldraw[blue!20] (0,-3)--(3,0)--(3,5)--(0,5)--(-5,0)--(-5,-3)--cycle;
	
	\foreach \i in {0,...,5} {
		\filldraw[black] (3,\i) circle (2pt);
		}
		\foreach \i in {0,...,6} {
		\filldraw[black] (2,\i-1) circle (2pt);
		}
	
		\foreach \i in {0,...,7} {
		\filldraw[black] (1,\i-2) circle (2pt);
		}
	
		\foreach \i in {0,...,8} {
		\filldraw[black] (0,\i-3) circle (2pt);
		}
	
		\foreach \i in {0,...,7} {
		\filldraw[black] (-1,\i-3) circle (2pt);
		}
	
		\foreach \i in {0,...,6} {
		\filldraw[black] (-2,\i-3) circle (2pt);
		}
	
		\foreach \i in {0,...,5} {
		\filldraw[black] (-3,\i-3) circle (2pt);
		}
	
		\foreach \i in {0,...,4} {
		\filldraw[black] (-4,\i-3) circle (2pt);
		}
	
		\foreach \i in {0,...,3} {
		\filldraw[black] (-5,\i-3) circle (2pt);
		}
	\filldraw[red] (0,0) circle (2pt);
	\node[above, font=\scriptsize\bfseries] at (0,0) {$x_{i,j}$};
\end{tikzpicture}
\caption{Visualization for all points appearing in the product $X_{i,j}^{4,6}$}
\label{fig:X_var_lattice_example_46}
\end{figure}
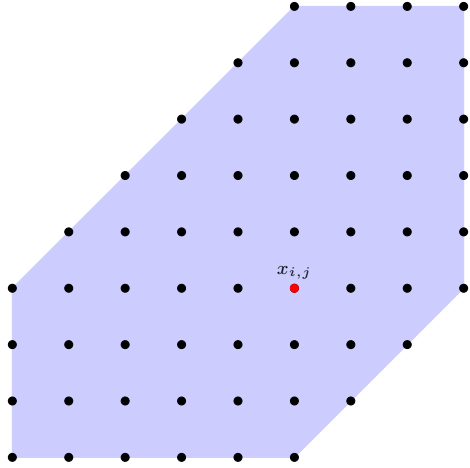
Then we also have another observation:
\begin{Prop}
\label{prop:g_func}
Consider another function $g: \mathbb{Z}_{\geq 0}^2 \times \mathbb{Z}^2 \rightarrow \mathbb{Z}[x^{\pm}_{i,j}]$ such that $g(a,b,i,j) := X^{a,b}_{i,j}\cdot f(a,b,i,j)$. We can compute the base cases:
\begin{gather*}
	g(0,0,i,j) = x_{i,j}^2; \quad g(0,b,i,j) = \prod_{(k,l) \in \mathcal{L}(0,b)} x_{i+k,j+l}; \quad g(a,0,i,j) = \prod_{(k,l) \in \mathcal{L}(a,0)} x_{i+k,j+l}
\end{gather*}
for all $a,b \geq 1$. Then $g$ satisfies the following equation:
\Eqn{
	&g(a+1,b+1,i,j)g(a,b,i,j) \\
  &= x_{i,j-a}x_{i,j+b}\cdot g(a+1,b,i-1,j)g(a,b+1,i+1,j)+x_{i+a,j}x_{i-b,j}\cdot g(a,b+1,i,j-1)g(a+1,b,i,j+1) 
}
for all $a,b \in \mathbb{Z}_{\geq 0}, i,j \in \mathbb{Z}$, with $g(a,b,i,j)$ a polynomial in finitely many variables $x_{i',j'}$ for any $a,b \in \mathbb{Z}_{\geq 0}$ and $i,j \in \mathbb{Z}$. Then we can have a minor observation:
\Eq{
    g(a,b,i,j) = 2^{ab}\cdot x^{1+ab+\frac{(a+b-1)(a+b-2)}{2}}
}
in case all variables $x_{i',j'} = x$. The function $g$ satisfies a similar equation to~\cite[Corollary 19]{BPW}.
\end{Prop}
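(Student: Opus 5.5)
The plan is to transport the discrete Hirota--Miwa recurrence of Proposition~\ref{4DHM} for $f$ to $g=X\cdot f$. This reduces the recurrence for $g$ to a purely combinatorial identity between the monomials $X^{a,b}_{i,j}$. Polynomiality and the specialization at $x_{i',j'}=x$ are then handled separately.

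\emph{Step 1: the recurrence.} Multiply the relation
\[
f(a+1,b+1,i,j)f(a,b,i,j)=f(a+1,b,i-1,j)f(a,b+1,i+1,j)+f(a,b+1,i,j-1)f(a+1,b,i,j+1)
\]
by $X^{a+1,b+1}_{i,j}X^{a,b}_{i,j}$. It then suffices to prove the two monomial identities
\[
X^{a+1,b+1}_{i,j}X^{a,b}_{i,j}=x_{i,j-a}x_{i,j+b}\,X^{a+1,b}_{i-1,j}X^{a,b+1}_{i+1,j}=x_{i+a,j}x_{i-b,j}\,X^{a,b+1}_{i,j-1}X^{a+1,b}_{i,j+1}.
\]
Each $X^{a,b}_{i,j}$ is the product of $x$ over the lattice points of a hexagon $H_{a,b}$ centred at $(i,j)$, cut out by $1-a\le p,q,p+q\le b-1$ in the coordinates $(i-p,j+q)$. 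Both identities therefore say that two multisets of lattice points coincide, and I would prove this by inclusion--exclusion of hexagons.

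Concretely, $H_{a+1,b+1}$ is $H_{a,b+1}$ shifted by one unit and enlarged by the missing boundary strips. I would compare row by row along a fixed $q$, where each hexagon contributes an interval in $p$, and compare the interval endpoints. The only rows where the two sides differ should be the extreme rows $q=-a$ and $q=b$. There the discrepancy is exactly the single points $x_{i,j-a}$ and $x_{i,j+b}$; the second identity is the same argument with the roles of $p$ and $q$ exchanged. The degenerate cases where $a$ or $b$ is $0$, and the convention $X^{0,0}_{i,j}=x_{i,j}$, would be checked separately; they are consistent with the stated base values, e.g.\ $g(0,0,i,j)=x_{i,j}^2$.

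\emph{Step 2: polynomiality.} By Proposition~\ref{4DHM} and the Laurent phenomenon, $g$ is a Laurent polynomial, so only the denominators must be controlled. I would induct on $a+b$, showing that every variable occurring in a denominator of $f(a,b,i,j)$ lies in $H_{a,b}$ centred at $(i,j)$, with exponent at most the multiplicity with which $X^{a,b}_{i,j}$ contains it (multiplicity one).

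For the first step the stair-path formulas of Propositions~\ref{prop:stairs_combinatorial} and~\ref{prop:reversed_stairs} give this directly. Each weight is a product of ratios whose denominators are distinct points inside the relevant hexagon, and distinctness follows because along a stair path the $p+q$ coordinate strictly increases. For general $(a,b)$ I would use the stair-structure formula (Proposition~\ref{prop:stair_path_structure_time}) together with induction, or alternatively the standard argument for the octahedron recurrence. That argument uses that $g(a,b,\cdot)$ and the neighbouring terms are coprime polynomials, by induction and unique factorization, so the division by $g(a,b,i,j)$ must be exact.

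I expect this coprimality/exact-division step to be the main obstacle, and I would first try the coprimality route, modelled on the corresponding statement in~\cite{BPW}.

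\emph{Step 3: the specialization.} With all $x_{i',j'}=x$, we have $f(a,b,i,j)=\psi_{a,a+b-1}(x)=2^{a(a+b-a)}x=2^{ab}x$ by Proposition~\ref{prop:x_y_z} and the consequence stated after~\eqref{eq:crfs}. Hence $g=2^{ab}x^{1+|H_{a,b}|}$, and it remains to count the lattice points. The set $H_{a,b}$ is the triangle $p,q\ge 1-a$, $p+q\le b-1$ with its three corners cut off by the constraints $p,q\le b-1$ and $p+q\ge 1-a$. Counting points in the triangle and subtracting the three corner triangles gives $ab+\binom{a+b-1}{2}$, which I would verify on small cases such as $(a,b)=(4,6)$ in Figure~\ref{fig:X_var_lattice_example_46}. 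This yields the exponent $1+ab+\frac{(a+b-1)(a+b-2)}{2}$.
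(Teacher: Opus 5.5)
Your Steps~1 and~3 are correct. Step~1 is a genuine derivation, whereas the paper supports the recurrence only through the Maple check of Appendix~B. Describe each $X$ by its hexagon and split each of the three defining intervals (for $p$, $q$, $p+q$) into interior and endpoints. Then the multiset union of the hexagons of $X^{a+1,b+1}_{i,j}$ and $X^{a,b}_{i,j}$ exceeds the first right-hand product by exactly the points $(i,j-a),(i,j+b)$, and the second by exactly $(i+a,j),(i-b,j)$. This holds for all $a,b\ge 0$. When $a$ or $b$ is $0$, the surplus comes from different corner terms but lands in the same places. The case $(0,0)$ is immediate from $X^{1,1}_{i,j}=X^{0,0}_{i,j}=x_{i,j}$ and $X^{1,0}=X^{0,1}=1$.

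The gap is Step~2. Polynomiality is part of the statement, and the paper offers nothing for it beyond computation, so you must supply an argument. Neither route you sketch does so.
\begin{itemize}
\item The stair-structure formula of Proposition~\ref{prop:stair_path_structure_time} divides by values $\psi_{t-1,\cdot}$. These are Laurent polynomials, not monomials, so induction on that formula gives no bound on monomial denominators.
\item The coprimality route targets the wrong property and fails at the bottom layer. Since $g(a+1,b+1,i,j)$ is already Laurent and equals a polynomial divided by $g(a,b,i,j)$, exactness only needs $g(a,b,i,j)$ to have no monomial factor. Indeed, each variable $x_v$ of a reduced monomial denominator is prime and does not divide the numerator, so it would have to divide $g(a,b,i,j)$.
\item But $g(0,b,\cdot)$ and $g(a,0,\cdot)$ \emph{are} monomials. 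So this gives nothing for $g(1,b+1,\cdot)$ or $g(a+1,1,\cdot)$, and an induction started at $a+b=0$ cannot get going.
\end{itemize}

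The missing idea is to anchor the induction at $\min(a,b)=1$, where your stair-path count already gives polynomiality. There you must also check that $g(1,b,i,j)$, and symmetrically $g(a,1,i,j)$, is divisible by no variable. Consider the term of a stair path in $g(1,b,i,j)$.
\begin{itemize}
\item The hexagon point the path occupies after $m$ steps cancels. It reappears only if steps $m+1$ and $m+2$ point the same way, so a path that turns there avoids it.
\item The outside numerator variables $x_{i+1,\cdot}$ are avoided by the all-up path, and $x_{\cdot,j-1}$ by the all-left path.
\item Endpoint variables are avoided by ending elsewhere.
\end{itemize}

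Then induct on $a+b$ with $a,b\ge 1$.
\begin{itemize}
\item Polynomiality of $g(a+1,b+1,i,j)$ follows from Laurentness and the fact that $g(a,b,i,j)$ has no monomial factor.
\item To propagate that fact, set $x_v=0$ and all other variables to $1$. By positivity (Theorem~\ref{thm:pos_thm}) all $g$'s have nonnegative coefficients. So by the induction hypothesis every $g$ on the right-hand side is strictly positive at that point.
\item For $a,b\ge 1$, the prefactor pairs $\{x_{i,j-a},x_{i,j+b}\}$ and $\{x_{i+a,j},x_{i-b,j}\}$ are disjoint, so at least one of the two terms survives.
\item Hence $g(a+1,b+1,i,j)\,g(a,b,i,j)\neq 0$ there, and so $x_v\nmid g(a+1,b+1,i,j)$.
\end{itemize}
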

Appendix \ref{good_lattice} illustrates further tools for attempting to derive the general formula for the function $g$. Additionally, to computationally verify and explore these recurrences, we implemented the functions in Maple. The Maple code in Appendix \ref{maple} defines the recurrence relation and allows for the computation of specific cases. This implementation uses memorization to improve efficiency and handles the base cases explicitly. The recurrence relation matches the discrete 4D Hirota-Miwa equation discussed above, providing a computational tool to verify theoretical results and explore new patterns in the structure of $\psi_{t,k}$.

\section{Cluster expansion formulas of $n$-triangulated $m$-gon}
\label{sec:ntrimgon}
In this section, we discuss the expansion formula in the general $n$-triangulated $m$-gon configuration, i.e., the cluster expansion associated with the moduli space $\mathcal{P}_{\text{SL}_{n+1},\bbS}$ where $\bbS$ is a disk with $m$-marked points with $m\geq 4$.

\subsection{Notation and labeling conventions}
\label{subsec:notation_labeling}
We introduce two labeling schemes for the vertices of an $n$-triangulated polygon. 
\begin{Def}[Grid labeling]\label{gridlabeling}
Consider an $n$-triangulated quadrilateral as a square with its vertices forming a lattice, assign all rows ($n$ rows) and columns ($n$ columns) consisting of all vertices, with rows enumerated from top to bottom and columns from left to right. We label the vertex at row $i$ and column $j$ by $[i,j]$. 
\end{Def}
We shall use the following auxiliary construction in the proof of the well-triangulated theorem below.
\begin{Def}
\label{def:mirror}For a vertex $[i,j]$, its \emph{envelope} is the smallest square containing $[i,j]$ (excluding boundaries) whose edges consist of vertices from the initial square and whose main diagonal is contained in the main diagonal of the initial square. 
\end{Def}
See Figure~\ref{fig:mirror_example} for the envelope corresponding to vertices $[2,2]$, $[5,3]$, and $[2,4]$ with $n=5$.
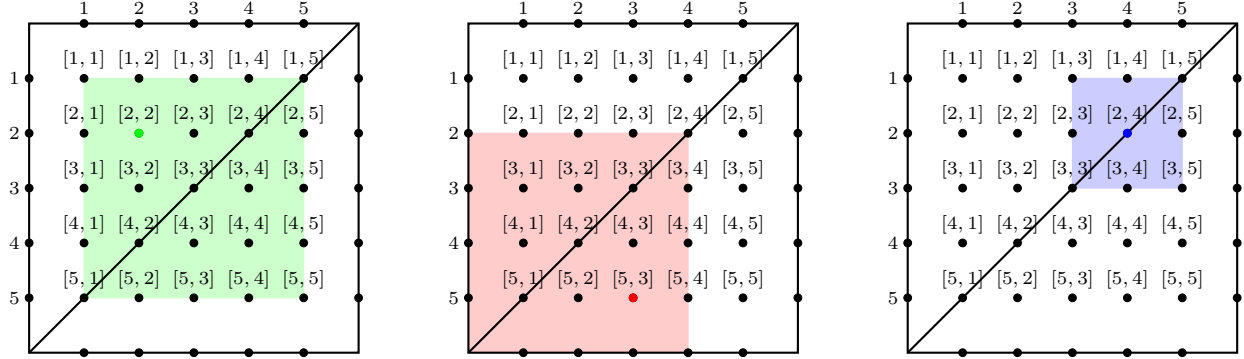
\begin{figure}[H]
\centering
\begin{tikzpicture}[scale=0.73,
    mid arrow/.style={
        postaction={decorate},
        decoration={
            markings,
            mark=at position 0.5 with {\arrow{>}}
        }
    }
]
	\filldraw[green!20] (1,1)--(5,1)--(5,5)--(1,5)--cycle;
	\filldraw[red!20] (8,0)--(12,0)--(12,4)--(8,4)--cycle;
	\filldraw[blue!20] (19,3)--(19,5)--(21,5)--(21,3)--cycle;
	\draw[thick] (0,0)--(6,0)--(6,6)--(0,6)--cycle;
	\draw[thick] (8,0)--(14,0)--(14,6)--(8,6)--cycle;
	\draw[thick] (16,0)--(22,0)--(22,6)--(16,6)--cycle;
	\draw[thick] (0,0)--(6,6);
	\draw[thick] (8,0)--(14,6);
	\draw[thick] (16,0)--(22,6);
	
	\foreach \i[evaluate=\i as \idif using {int(6-\i)}] in {1,...,5} {
		\node[left, font=\scriptsize\bfseries] at (0,\idif) {$\i$};
		\node[left, font=\scriptsize\bfseries] at (8,\idif) {$\i$};
		\node[left, font=\scriptsize\bfseries] at (16,\idif) {$\i$};
	}
	
	\foreach \j in {1,...,5} {
		\node[above, font=\scriptsize\bfseries] at (\j,6) {$\j$};
		\node[above, font=\scriptsize\bfseries] at (\j+8,6) {$\j$};
		\node[above, font=\scriptsize\bfseries] at (\j+16,6) {$\j$};
	}
	
	\foreach \i in {1,...,5} {
		\foreach \j[evaluate=\j as \jdif using {int(6-\j)}] in {1,...,5} {
			\filldraw[black] (\i,\j) circle (2pt);
			\node[above, font=\scriptsize\bfseries] at (\i,\j) {$[\jdif, \i]$};
			\filldraw[black] (8+\i,\j) circle (2pt);
			\node[above, font=\scriptsize\bfseries] at (8+\i,\j) {$[\jdif, \i]$};
			\filldraw[black] (16+\i,\j) circle (2pt);
			\node[above, font=\scriptsize\bfseries] at (16+\i,\j) {$[\jdif, \i]$};
		}
	}
	\filldraw[black] (0,1) circle (2pt);
	\filldraw[black] (0,2) circle (2pt);
	\filldraw[black] (0,3) circle (2pt);
	\filldraw[black] (0,4) circle (2pt);
	\filldraw[black] (0,5) circle (2pt);
	\filldraw[black] (6,1) circle (2pt);
	\filldraw[black] (6,2) circle (2pt);
	\filldraw[black] (6,3) circle (2pt);
	\filldraw[black] (6,4) circle (2pt);
	\filldraw[black] (6,5) circle (2pt);
	\filldraw[black] (8,1) circle (2pt);
	\filldraw[black] (8,2) circle (2pt);
	\filldraw[black] (8,3) circle (2pt);
	\filldraw[black] (8,4) circle (2pt);
	\filldraw[black] (8,5) circle (2pt);
	\filldraw[black] (14,1) circle (2pt);
	\filldraw[black] (14,2) circle (2pt);
	\filldraw[black] (14,3) circle (2pt);
	\filldraw[black] (14,4) circle (2pt);
	\filldraw[black] (14,5) circle (2pt);
	\filldraw[black] (16,1) circle (2pt);
	\filldraw[black] (16,2) circle (2pt);
	\filldraw[black] (16,3) circle (2pt);
	\filldraw[black] (16,4) circle (2pt);
	\filldraw[black] (16,5) circle (2pt);
	\filldraw[black] (22,1) circle (2pt);
	\filldraw[black] (22,2) circle (2pt);
	\filldraw[black] (22,3) circle (2pt);
	\filldraw[black] (22,4) circle (2pt);
	\filldraw[black] (22,5) circle (2pt);
	
	\filldraw[black] (1,0) circle (2pt);
	\filldraw[black] (2,0) circle (2pt);
	\filldraw[black] (3,0) circle (2pt);
	\filldraw[black] (4,0) circle (2pt);
	\filldraw[black] (5,0) circle (2pt);
	\filldraw[black] (1,6) circle (2pt);
	\filldraw[black] (2,6) circle (2pt);
	\filldraw[black] (3,6) circle (2pt);
	\filldraw[black] (4,6) circle (2pt);
	\filldraw[black] (5,6) circle (2pt);
	\filldraw[black] (9,0) circle (2pt);
	\filldraw[black] (10,0) circle (2pt);
	\filldraw[black] (11,0) circle (2pt);
	\filldraw[black] (12,0) circle (2pt);
	\filldraw[black] (13,0) circle (2pt);
	\filldraw[black] (9,6) circle (2pt);
	\filldraw[black] (10,6) circle (2pt);
	\filldraw[black] (11,6) circle (2pt);
	\filldraw[black] (12,6) circle (2pt);
	\filldraw[black] (13,6) circle (2pt);
	\filldraw[black] (17,0) circle (2pt);
	\filldraw[black] (18,0) circle (2pt);
	\filldraw[black] (19,0) circle (2pt);
	\filldraw[black] (20,0) circle (2pt);
	\filldraw[black] (21,0) circle (2pt);
	\filldraw[black] (17,6) circle (2pt);
	\filldraw[black] (18,6) circle (2pt);
	\filldraw[black] (19,6) circle (2pt);
	\filldraw[black] (20,6) circle (2pt);
	\filldraw[black] (21,6) circle (2pt);
	
	\filldraw[green] (2,4) circle (2pt);
	\filldraw[red] (11,1) circle (2pt);
	\filldraw[blue] (20,4) circle (2pt);	
\end{tikzpicture}
\caption{The mirror of $[2,2]$ (left), $[5,3]$ (middle), and $[2,4]$ (right) in case $n = 5$}
\label{fig:mirror_example}
\end{figure}
Next we introduce another labeling for the internal cluster variables. For any $m\geq 2$, define 
\Eq{\Gamma_m := \{(a,b,c)\in \Z_{\geq 0}^3 \mid a+b+c = m\}.}

\begin{Def}[$\G$-labeling]
For $m=4$, consider an $n$-triangulated quadrilateral $ABCD$ with diagonal $AC$. Let us assign variables $x_{(a,b,c)}^{(1)}$ to the vertices of $\triangle ABC$ and $x_{(a,b,c)}^{(2)}$ for $\triangle ACD$ where $(a,b,c) \in \Gamma_{n+1}$. The vertices on the diagonal $AC$ are assigned variables $x_{(t,0,n+1-t)}^{(12)}$ for $t=1,\ldots,n$ (Figure~\ref{fig:triangulated_quad} shows $n=3$). 
\end{Def}
\begin{Ex}
\label{ex:m4}
After flipping the diagonal, the new diagonal $BD$ contains vertices with variables
\begin{equation}
\label{eq:flipped_diagonal}
x_{(1,n-1,1)}^{(1)}, x_{(2,n-3,2)}^{(1)}, \dots, x_{(k,n+1-2k,k)}^{(1)}, x_{(k,n+1-2k,k)}^{(2)}, \dots, x_{(2,n-3,2)}^{(2)}, x_{(1,n-1,1)}^{(2)}
\end{equation}
where $k = \left\lfloor\frac{n+1}{2}\right\rfloor$. When $n$ is odd, the two middle vertices (index $k$) coincide, i.e. $x_{(k,n+1-2k,k)}^{(12)} = x_{(k,n+1-2k,k)}^{(1)} = x_{(k,n+1-2k,k)}^{(2)}$.
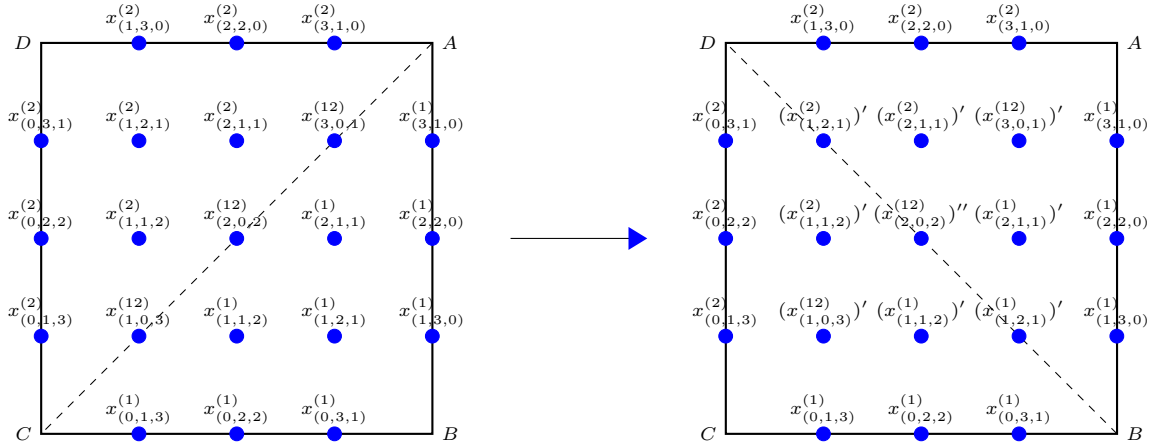
\begin{figure}[H]
\centering
\begin{tikzpicture}[scale=1.3]
  \coordinate (V1) at (4,4);
  \coordinate (V2) at (4,0);
  \coordinate (V3) at (0,0);
  \coordinate (V4) at (0,4);
  \coordinate (V'1) at (11,4);
  \coordinate (V'2) at (11,0);
  \coordinate (V'3) at (7,0);
  \coordinate (V'4) at (7,4); 
  
  \draw[thick] (V1) -- (V2) -- (V3) -- (V4) -- cycle;
  \draw[thick] (V'1) -- (V'2) -- (V'3) -- (V'4) -- cycle;
  
  \node[right, font=\scriptsize] at (V1) {$A$};
  \node[right, font=\scriptsize] at (V2) {$B$};
  \node[left, font=\scriptsize] at (V3) {$C$};
  \node[left, font=\scriptsize] at (V4) {$D$};
  \draw[-{Triangle[blue,scale=2,line width=1.5pt]}] (4.8,2) -- (6.2,2);
  \node[right, font=\scriptsize] at (V'1) {$A$};
  \node[right, font=\scriptsize] at (V'2) {$B$};
  \node[left, font=\scriptsize] at (V'3) {$C$};
  \node[left, font=\scriptsize] at (V'4) {$D$};
  
  \draw[thin, dashed] (V1) -- (V3);
  \draw[thin, dashed] (V'2) -- (V'4);  
  
  \foreach \x[evaluate=\x as \ystart using int(\x+1)] in {1,...,3} {
    \foreach \y[evaluate=\y as \ydiff using int(\y-\x),
                evaluate=\y as \ycomp using int(4-\y)] in {\ystart,...,4} {
      \filldraw[blue] (\x,\y) circle (2pt);
      \node[above, font=\scriptsize] at (\x,\y) {$x^{(2)}_{(\x,\ydiff,\ycomp)}$};
    }
  }
  
  \foreach \y[evaluate=\y as \ycomp using int(4-\y)] in {1,...,3} {
    \filldraw[blue] (0,\y) circle (2pt);
    \node[above, font=\scriptsize] at (0,\y) {$x^{(2)}_{(0,\y,\ycomp)}$};
    }
    
  \foreach \y[evaluate=\y as \ycomp using int(4-\y)] in {1,...,3} {
    \filldraw[blue] (\y,\y) circle (2pt);
    \node[above, font=\scriptsize] at (\y,\y) {$x^{(12)}_{(\y,0,\ycomp)}$};
    }
    
  \foreach \x[evaluate=\x as \ystart using int(\x+1)] in {1,...,3} {
    \foreach \y[evaluate=\y as \ydiff using int(\y-\x),
                evaluate=\y as \ycomp using int(4-\y)] in {\ystart,...,4} {
      \filldraw[blue] (\y,\x) circle (2pt);
      \node[above, font=\scriptsize] at (\y,\x) {$x^{(1)}_{(\x,\ydiff,\ycomp)}$};
    }
  } 
  
  \foreach \y[evaluate=\y as \ycomp using int(4-\y)] in {1,...,3} {
    \filldraw[blue] (\y,0) circle (2pt);
    \node[above, font=\scriptsize] at (\y,0) {$x^{(1)}_{(0,\y,\ycomp)}$};
    }
   
   \filldraw[blue] (7,1) circle (2pt);
   \node[above, font=\scriptsize] at (7,1) {$x^{(2)}_{(0,1,3)}$};
   \filldraw[blue] (7,2) circle (2pt);
   \node[above, font=\scriptsize] at (7,2) {$x^{(2)}_{(0,2,2)}$}; 
   \filldraw[blue] (7,3) circle (2pt);
   \node[above, font=\scriptsize] at (7,3) {$x^{(2)}_{(0,3,1)}$};
   \filldraw[blue] (8,0) circle (2pt);
   \node[above, font=\scriptsize] at (8,0) {$x^{(1)}_{(0,1,3)}$};  
   \filldraw[blue] (8,1) circle (2pt);
   \node[above, font=\scriptsize] at (8,1) {$(x^{(12)}_{(1,0,3)})'$};
   \filldraw[blue] (8,2) circle (2pt);
   \node[above, font=\scriptsize] at (8,2) {$(x^{(2)}_{(1,1,2)})'$}; 
   \filldraw[blue] (8,3) circle (2pt);
   \node[above, font=\scriptsize] at (8,3) {$(x^{(2)}_{(1,2,1)})'$};
   \filldraw[blue] (8,4) circle (2pt);
   \node[above, font=\scriptsize] at (8,4) {$x^{(2)}_{(1,3,0)}$};
   \filldraw[blue] (9,0) circle (2pt);
   \node[above, font=\scriptsize] at (9,0) {$x^{(1)}_{(0,2,2)}$};  
   \filldraw[blue] (9,1) circle (2pt);
   \node[above, font=\scriptsize] at (9,1) {$(x^{(1)}_{(1,1,2)})'$};
   \filldraw[blue] (9,2) circle (2pt);
   \node[above, font=\scriptsize] at (9,2) {$(x^{(12)}_{(2,0,2)})''$}; 
   \filldraw[blue] (9,3) circle (2pt);
   \node[above, font=\scriptsize] at (9,3) {$(x^{(2)}_{(2,1,1)})'$};
   \filldraw[blue] (9,4) circle (2pt);
   \node[above, font=\scriptsize] at (9,4) {$x^{(2)}_{(2,2,0)}$};
   \filldraw[blue] (10,0) circle (2pt);
   \node[above, font=\scriptsize] at (10,0) {$x^{(1)}_{(0,3,1)}$};  
   \filldraw[blue] (10,1) circle (2pt);
   \node[above, font=\scriptsize] at (10,1) {$(x^{(1)}_{(1,2,1)})'$};
   \filldraw[blue] (10,2) circle (2pt);
   \node[above, font=\scriptsize] at (10,2) {$(x^{(1)}_{(2,1,1)})'$}; 
   \filldraw[blue] (10,3) circle (2pt);
   \node[above, font=\scriptsize] at (10,3) {$(x^{(12)}_{(3,0,1)})'$};
   \filldraw[blue] (10,4) circle (2pt);
   \node[above, font=\scriptsize] at (10,4) {$x^{(2)}_{(3,1,0)}$};
   \filldraw[blue] (11,1) circle (2pt);
   \node[above, font=\scriptsize] at (11,1) {$x^{(1)}_{(1,3,0)}$};
   \filldraw[blue] (11,2) circle (2pt);
   \node[above, font=\scriptsize] at (11,2) {$x^{(1)}_{(2,2,0)}$}; 
   \filldraw[blue] (11,3) circle (2pt);
   \node[above, font=\scriptsize] at (11,3) {$x^{(1)}_{(3,1,0)}$};     
\end{tikzpicture}
\caption{Labeling all vertices of $3$-triangulated quadrilateral.}
\label{fig:triangulated_quad}
\end{figure}
Applying results from Propositions~\ref{prop:stairs_combinatorial} and~\ref{prop:reversed_stairs}, after a sequence of cluster mutations flipping the diagonal from $AC$ to $BD$, we calculate the expansion formula at vertices $x^{(r)}_{(1,n-1,1)}$ as follows:
\begin{align}
(x^{(1)}_{(1,n-1,1)})' 
&= \sum_{\substack{(l_1+1),l_i,u_i,(u_k+1), \\ k \in \mathbb{N}: \\ \sum_{i=1}^{k}(l_i+u_i) = n}} 
   \left( x^{(2)}_{(\sum_{t=1}^{k}u_t,1,\sum_{s=1}^{k}l_s)} \prod_{v=1}^{k} \frac{x^{(1)}_{(1+\sum_{t=1}^{v-1}u_t,n-\sum_{s=1}^{v-1}l_s-\sum_{t=1}^{v-1}u_t,\sum_{s=1}^{v-1}l_s)}}{x^{(1)}_{(1+\sum_{t=1}^{v-1}u_t,n-\sum_{s=1}^{v}l_s-\sum_{t=1}^{v-1}u_t,\sum_{s=1}^{v}l_s)}} \right. \nonumber \\
&\quad \left. \cdot \frac{x^{(1)}_{(\sum_{t=1}^{v-1}u_t,n-\sum_{s=1}^{v}l_s-\sum_{t=1}^{v-1}u_t,1+\sum_{s=1}^{v}l_s)}}{x^{(1)}_{(\sum_{t=1}^{v}u_t,n-\sum_{s=1}^{v}l_s-\sum_{t=1}^{v}u_t,1+\sum_{s=1}^{v}l_s)}} \right); \label{eq:flip_formula1}
\end{align}

\begin{align}
(x^{(2)}_{(1,n-1,1)})' 
&= \sum_{\substack{(r_1+1),r_i,d_i,(d_k+1), \\ k \in \mathbb{N}: \\ \sum_{i=1}^{k}(r_i+d_i) = n}} 
   \left( x^{(1)}_{(\sum_{t=1}^{k}r_t,1,\sum_{s=1}^{k}d_s)} \prod_{v=1}^{k} \frac{x^{(2)}_{(\sum_{t=1}^{v-1}r_t,n-\sum_{t=1}^{v-1}r_t-\sum_{s=1}^{v-1}d_s,1+\sum_{s=1}^{v-1}d_s)}}{x^{(2)}_{(\sum_{t=1}^{v}r_t,n-\sum_{t=1}^{v}r_t-\sum_{s=1}^{v-1}d_s,1+\sum_{s=1}^{v-1}d_s)}} \right. \nonumber \\
&\quad \left. \cdot \frac{x^{(2)}_{(1+\sum_{t=1}^{v}r_t,n-\sum_{t=1}^{v}r_t-\sum_{s=1}^{v-1}d_s,\sum_{s=1}^{v-1}d_s)}}{x^{(2)}_{(1+\sum_{t=1}^{v}r_t,n-\sum_{t=1}^{v}r_t-\sum_{s=1}^{v}d_s,\sum_{s=1}^{v}d_s)}} \right). \label{eq:flip_formula2}
\end{align}
\end{Ex}

The $\G$-labeling can be extended to general $m$-gon:
\begin{Ex}
\label{ex:m5}
Consider $m = 5$ and the $n$-triangulated pentagon $ABCDE$ with two main diagonals $AC$ and $AD$. Points (except the 5 vertices $A,B,C,D,E$) are assigned variables $x_{(a,b,c)}^{(1)}$ in $\triangle ABC$, $x_{(a,b,c)}^{(2)}$ in $\triangle ACD$, and $x_{(a,b,c)}^{(3)}$ in $\triangle ADE$ for all $(a,b,c) \in \Gamma_{n+1}$. 

Returning to the $2$-triangulated pentagon (see Figure~\ref{fig:triangulated_pent_SS}), we compute $BE = (f_1, f_3)$, where
{\footnotesize
\begin{align*}
&f_1 = \frac{x^{(1)}_{(1,2,0)}x^{(3)}_{(0,2,1)}}{x^{(23)}_{(1,2,0)}} 
      + \frac{x^{(1)}_{(0,2,1)}x^{(3)}_{(2,0,1)}}{x^{(12)}_{(2,0,1)}} 
      + \frac{x^{(1)}_{(1,2,0)}x^{(2)}_{(0,1,2)}x^{(3)}_{(2,0,1)}}{x^{(12)}_{(1,0,2)}x^{(23)}_{(2,1,0)}} 
      + \frac{x^{(1)}_{(0,2,1)}x^{(1)}_{(2,1,0)}x^{(3)}_{(1,1,1)}}{x^{(1)}_{(1,1,1)}x^{(23)}_{(2,1,0)}}+ \frac{x^{(1)}_{(1,2,0)}x^{(2)}_{(0,2,1)}x^{(3)}_{(1,1,1)}}{x^{(2)}_{(1,1,1)}x^{(23)}_{(1,2,0)}} \\
      &+ \frac{x^{(1)}_{(0,2,1)}x^{(1)}_{(2,1,0)}x^{(2)}_{(1,1,1)}x^{(3)}_{(2,0,1)}}{x^{(1)}_{(1,1,1)}x^{(12)}_{(2,0,1)}x^{(23)}_{(2,1,0)}} + \frac{x^{(1)}_{(1,2,0)}x^{(2)}_{(0,1,2)}x^{(12)}_{(2,0,1)}x^{(3)}_{(1,1,1)}}{x^{(12)}_{(1,0,2)}x^{(2)}_{(1,1,1)}x^{(23)}_{(2,1,0)}} + \frac{x^{(1)}_{(0,1,2)}x^{(1)}_{(1,2,0)}x^{(2)}_{(1,1,1)}x^{(3)}_{(2,0,1)}}{x^{(1)}_{(1,1,1)}x^{(12)}_{(1,0,2)}x^{(23)}_{(2,1,0)}}
      + \frac{x^{(1)}_{(0,1,2)}x^{(1)}_{(1,2,0)}x^{(12)}_{(2,0,1)}x^{(3)}_{(1,1,1)}}{x^{(1)}_{(1,1,1)}x^{(12)}_{(1,0,2)}x^{(23)}_{(2,1,0)}};\\
&f_3 = \frac{x^{(1)}_{(0,1,2)}x^{(3)}_{(1,0,2)}}{x^{(12)}_{(1,0,2)}} 
      + \frac{x^{(1)}_{(2,1,0)}x^{(3)}_{(0,1,2)}}{x^{(23)}_{(2,1,0)}}
      + \frac{x^{(1)}_{(2,1,0)}x^{(2)}_{(0,2,1)}x^{(3)}_{(1,0,2)}}{x^{(12)}_{(2,0,1)}x^{(23)}_{(1,2,0)}} 
      + \frac{x^{(1)}_{(1,1,1)}x^{(3)}_{(0,1,2)}x^{(3)}_{(2,0,1)}}{x^{(12)}_{(2,0,1)}x^{(3)}_{(1,1,1)}}+ \frac{x^{(1)}_{(1,1,1)}x^{(2)}_{(0,1,2)}x^{(3)}_{(1,0,2)}}{x^{(12)}_{(1,0,2)}x^{(2)}_{(1,1,1)}} \\
      &+ \frac{x^{(1)}_{(2,1,0)}x^{(2)}_{(1,1,1)}x^{(3)}_{(0,1,2)}x^{(3)}_{(2,0,1)}}{x^{(12)}_{(2,0,1)}x^{(23)}_{(2,1,0)}x^{(3)}_{(1,1,1)}} + \frac{x^{(1)}_{(1,1,1)}x^{(2)}_{(0,2,1)}x^{(23)}_{(2,1,0)}x^{(3)}_{(1,0,2)}}{x^{(12)}_{(2,0,1)}x^{(2)}_{(1,1,1)}x^{(23)}_{(1,2,0)}}+ \frac{x^{(1)}_{(2,1,0)}x^{(2)}_{(1,1,1)}x^{(3)}_{(0,2,1)}x^{(3)}_{(1,0,2)}}{x^{(12)}_{(2,0,1)}x^{(23)}_{(1,2,0)}x^{(3)}_{(1,1,1)}}
      + \frac{x^{(1)}_{(1,1,1)}x^{(23)}_{(2,1,0)}x^{(3)}_{(0,2,1)}x^{(3)}_{(1,0,2)}}{x^{(12)}_{(2,0,1)}x^{(23)}_{(1,2,0)}x^{(3)}_{(1,1,1)}}.
\end{align*}
}

\begin{figure}[H]
\centering
\begin{tikzpicture}[scale=1.5,
    mid arrow/.style={
        postaction={decorate},
        decoration={
            markings,
            mark=at position 0.5 with {\arrow{>}}
        }
    }
]
  \coordinate (A) at (0, 3.078);
  \coordinate (B) at (1.618, 1.902);
  \coordinate (C) at (1, 0);
  \coordinate (D) at (-1, 0);
  \coordinate (E) at (-1.618, 1.902);
  \coordinate (X1) at ($(A)!0.333!(B)$);
  \coordinate (X2) at ($(A)!0.667!(B)$);
  \coordinate (X3) at ($(B)!0.333!(C)$);
  \coordinate (X4) at ($(B)!0.667!(C)$);
  \coordinate (X5) at ($(C)!0.333!(D)$);
  \coordinate (X6) at ($(C)!0.667!(D)$);
  \coordinate (X7) at ($(D)!0.333!(E)$);
  \coordinate (X8) at ($(D)!0.667!(E)$);
  \coordinate (X9) at ($(E)!0.333!(A)$);
  \coordinate (X10) at ($(E)!0.667!(A)$);
  \coordinate (Y1) at ($(A)!0.333!(D)$);
  \coordinate (Y2) at ($(A)!0.333!(C)$);
  \coordinate (Y3) at ($(X8)!0.5!(Y1)$);
  \coordinate (Y4) at ($(X3)!0.5!(Y2)$);
  \coordinate (Y5) at ($(X5)!0.5!(Y1)$);
  \coordinate (Y6) at ($(A)!0.667!(D)$);
  \coordinate (Y7) at ($(A)!0.667!(C)$);
  
  \node[above, font=\scriptsize] at (A) {$A$};
  \node[right, font=\scriptsize] at (B) {$B$};
  \node[left, font=\scriptsize] at (E) {$E$};  
  \node[right, font=\scriptsize] at (C) {$C$};
  \node[left, font=\scriptsize] at (D) {$D$};
        
  \draw[thick] (A) -- (B) -- (C) -- (D) -- (E) -- cycle;
  \draw[thick] (A) -- (C);
  \draw[thick] (A) -- (D);
  
  \foreach \i in {1,...,10} {
  	\filldraw[blue] (X\i) circle (1pt);  
  }
  
  \foreach \i in {1,...,7} {
  	\filldraw[blue] (Y\i) circle (1pt);  
  }
  
  \node[above right] at (X1) {\scalebox{0.8}{$x^{(1)}_{(2,1,0)}$}};
  \node[above right] at (X2) {\scalebox{0.8}{$x^{(1)}_{(1,2,0)}$}};
  \node[right] at (X3) {\scalebox{0.8}{$x^{(1)}_{(0,2,1)}$}};
  \node[right] at (X4) {\scalebox{0.8}{$x^{(1)}_{(0,1,2)}$}};
  \node[below] at (X5) {\scalebox{0.8}{$x^{(2)}_{(0,1,2)}$}};
  \node[below] at (X6) {\scalebox{0.8}{$x^{(2)}_{(0,2,1)}$}};
  \node[left] at (X7) {\scalebox{0.8}{$x^{(3)}_{(0,2,1)}$}};
  \node[left] at (X8) {\scalebox{0.8}{$x^{(3)}_{(0,1,2)}$}};
  \node[above left] at (X9) {\scalebox{0.8}{$x^{(3)}_{(1,0,2)}$}};
  \node[above left] at (X10) {\scalebox{0.8}{$x^{(3)}_{(2,0,1)}$}};
  \node[above] at (Y1) {\scalebox{0.8}{$x^{(23)}_{(2,1,0)}$}};
  \node[above] at (Y2) {\scalebox{0.8}{$x^{(12)}_{(2,0,1)}$}};
  \node[above] at (Y3) {\scalebox{0.8}{$x^{(3)}_{(1,1,1)}$}};
  \node[above] at (Y4) {\scalebox{0.8}{$x^{(1)}_{(1,1,1)}$}};
  \node[above] at (Y5) {\scalebox{0.8}{$x^{(2)}_{(1,1,1)}$}};
  \node[above] at (Y6) {\scalebox{0.8}{$x^{(23)}_{(1,2,0)}$}};    
  \node[above] at (Y7) {\scalebox{0.8}{$x^{(12)}_{(1,0,2)}$}};            
\end{tikzpicture}
\caption{Labeling vertices in $2$-triangulated pentagon.}
\label{fig:triangulated_pent_SS}
\end{figure}
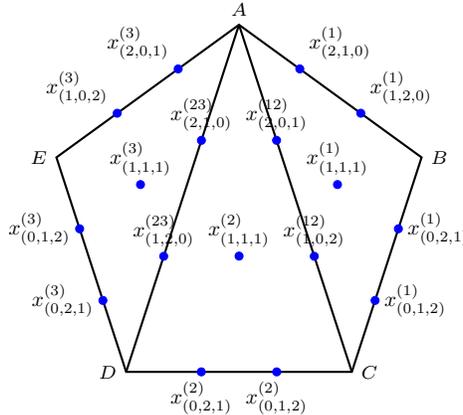
\end{Ex}
\subsection{Well-triangulated Preservation and Number of Monomials Theorem}
\label{subsec:proof_monomials}
In this section, we compute the cluster expansion formula for the variables lying on a \emph{non $\cT$-diagonal} $\c$ of an $m$-gon with respect to the initial triangulation $\cT$, meaning that $\c$ intersects each diagonal of $\cT$ exactly once. By the Laurent phenomenon and the Positivity Theorem (Theorem~\ref{thm:pos_thm}), their expansions with respect to the initial cluster are Laurent polynomials with all coefficients being positive integers. We study the number of monomials, up to multiplicity, in these Laurent polynomials. Here, by multiplicity, we mean counting the sum of all coefficients of those polynomials.

We introduce the \emph{well-triangulated} property, and prove the two main theorems on the combinatorics of the cluster expansion, namely the \emph{Well-triangulated Preservation Theorem} (Theorem~\ref{thm:well_triangulated_preservation}) and the \emph{Number of Monomials Theorem} (Theorem~\ref{thm:monomial_count}).

Recall the $\G$-labeling $x^{(k)}_{(a,b,c)}$ of the cluster variables in a triangle.
\begin{Def}
\label{def:well_triangulated}Consider an $n$-triangulated polygon with triangulation $\cT$. A triangle $T$ of $\cT$ together with an assignment of scalar parameters to its cluster variables is called \emph{well-triangulated} if there exist parameters $x,y,z$ such that for every $(a,b,c) \in \Gamma_{n+1}$, the value assigned to $x^{(k)}_{(a,b,c)}$ is $x^{bc}y^{ca}z^{ab}$ (see the left of Figure~\ref{fig:well_triangulated} for an example). The polygon with values assigned to all the cluster variables is called \emph{well-triangulated} if all triangles of $\cT$ are well-triangulated (see the right of Figure~\ref{fig:well_triangulated} for an example).
\end{Def}
\begin{figure}[H]
\centering
\begin{tikzpicture}[scale=1.5,
    mid arrow/.style={
        postaction={decorate},
        decoration={
            markings,
            mark=at position 0.5 with {\arrow{>}}
        }
    }
]
  \draw[thick] (0,0) -- (1,3) -- (4,0) -- cycle;
  
  \filldraw[blue] (0.333,1) circle (1pt);
  \node[left, font=\scriptsize] at (0.333,1) {$z^2$};
  \filldraw[blue] (0.667,2) circle (1pt);
  \node[left, font=\scriptsize] at (0.667,2) {$z^2$};  
  \filldraw[blue] (2.667,0) circle (1pt);
  \node[below, font=\scriptsize] at (2.667,0) {$y^2$};  
  \filldraw[blue] (2,2) circle (1pt);
  \node[above right, font=\scriptsize] at (2,2) {$x^2$};  
  \filldraw[blue] (3,1) circle (1pt);
  \node[above right, font=\scriptsize] at (3,1) {$x^2$};
  \filldraw[blue] (1.333,0) circle (1pt);
  \node[below, font=\scriptsize] at (1.333,0) {$y^2$};  
  \filldraw[blue] (1.667,1) circle (1pt); 
  \node[above, font=\scriptsize] at (1.667,1) {$xyz$};  
  
  \coordinate (A) at (0+6, 3.078);
  \coordinate (B) at (1.618+6, 1.902);
  \coordinate (C) at (1+6, 0);
  \coordinate (D) at (-1+6, 0);
  \coordinate (E) at (-1.618+6, 1.902);
  \coordinate (X1) at ($(A)!0.333!(B)$);
  \coordinate (X2) at ($(A)!0.667!(B)$);
  \coordinate (X3) at ($(B)!0.333!(C)$);
  \coordinate (X4) at ($(B)!0.667!(C)$);
  \coordinate (X5) at ($(C)!0.333!(D)$);
  \coordinate (X6) at ($(C)!0.667!(D)$);
  \coordinate (X7) at ($(D)!0.333!(E)$);
  \coordinate (X8) at ($(D)!0.667!(E)$);
  \coordinate (X9) at ($(E)!0.333!(A)$);
  \coordinate (X10) at ($(E)!0.667!(A)$);
  \coordinate (Y1) at ($(A)!0.333!(D)$);
  \coordinate (Y2) at ($(A)!0.333!(C)$);
  \coordinate (Y3) at ($(X8)!0.5!(Y1)$);
  \coordinate (Y4) at ($(X3)!0.5!(Y2)$);
  \coordinate (Y5) at ($(X5)!0.5!(Y1)$);
  \coordinate (Y6) at ($(A)!0.667!(D)$);
  \coordinate (Y7) at ($(A)!0.667!(C)$);
        
  \draw[thick] (A) -- (B) -- (C) -- (D) -- (E) -- cycle;
  \draw[thick] (A) -- (C);
  \draw[thick] (A) -- (D);
  
  \foreach \i in {1,...,10} {
  	\filldraw[blue] (X\i) circle (1pt);  
  }
  
  \foreach \i in {1,...,7} {
  	\filldraw[blue] (Y\i) circle (1pt);  
  }
  
  \node[above right] at (X1) {\scalebox{0.8}{$r_1^2$}};
  \node[above right] at (X2) {\scalebox{0.8}{$r_1^2$}};
  \node[right] at (X3) {\scalebox{0.8}{$r_2^2$}};
  \node[right] at (X4) {\scalebox{0.8}{$r_2^2$}};
  \node[below] at (X5) {\scalebox{0.8}{$r_3^2$}};
  \node[below] at (X6) {\scalebox{0.8}{$r_3^2$}};
  \node[left] at (X7) {\scalebox{0.8}{$r_4^2$}};
  \node[left] at (X8) {\scalebox{0.8}{$r_4^2$}};
  \node[above left] at (X9) {\scalebox{0.8}{$r_5^2$}};
  \node[above left] at (X10) {\scalebox{0.8}{$r_5^2$}};
  \node[left] at (Y1) {\scalebox{0.8}{$r_7^2$}};
  \node[right] at (Y2) {\scalebox{0.8}{$r_6^2$}};
  \node[above] at (Y3) {\scalebox{0.8}{$r_4r_5r_7$}};
  \node[above] at (Y4) {\scalebox{0.8}{$r_1r_2r_6$}};
  \node[above] at (Y5) {\scalebox{0.8}{$r_3r_6r_7$}};
  \node[left] at (Y6) {\scalebox{0.8}{$r_7^2$}};    
  \node[right] at (Y7) {\scalebox{0.8}{$r_6^2$}};  
\end{tikzpicture}
\caption{A well-triangulated $2$-triangulated triangle (left) and a well-triangulated $2$-triangulated pentagon where $r_1,r_2,...,r_7$ are parameters (right).}
\label{fig:well_triangulated}
\end{figure}
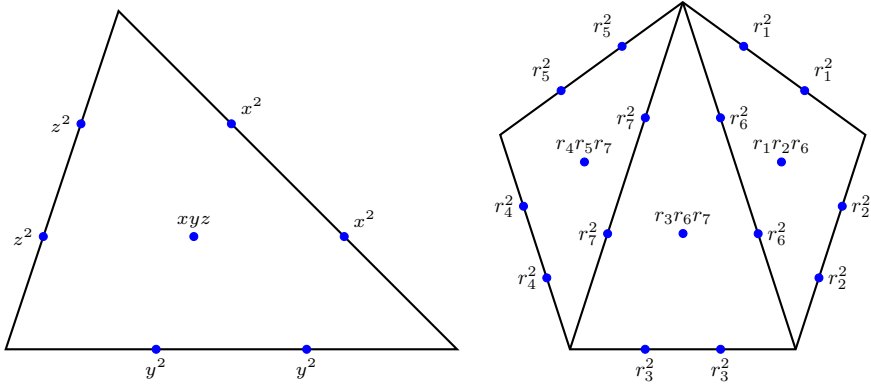
We shall verify the following theorem:
\begin{Thm}[Well-triangulated Preservation Theorem]
\label{thm:well_triangulated_preservation}
For a triangulation $\cT$ of a well-triangulated polygon $\mathcal{P}$ with the cluster realization of type $A_n$, after any sequence of flips, the resulting triangulation is also well-triangulated.
\end{Thm}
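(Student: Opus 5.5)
The plan is to reduce the theorem to a single flip in a quadrilateral, and to handle that flip by realizing well-triangulated data as the image of an $\mathrm{SL}_2$ configuration. The first step is a reformulation. In a well-triangulated triangle, a vertex $x^{(k)}_{(a,b,c)}$ on the side where $a=0$ gets the value $x^{bc}$ with $b+c=n+1$. So the frozen or diagonal vertex at position $t$ on that side is $x^{t(n+1-t)}$, and the parameter $x$ belongs to that side, not to the triangle. Two triangles sharing a diagonal must therefore use the same parameter for it, and the values on the shared edge agree automatically. Hence a well-triangulated polygon is the same as a choice of one positive parameter $\lambda_e$ for every side and every diagonal $e$ of $\cT$. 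The vertex $(a,b,c)$ of a triangle with sides $e_a,e_b,e_c$ (where $e_a$ is the side on which $a=0$, and so on) then takes the value $\lambda_{e_a}^{bc}\lambda_{e_b}^{ca}\lambda_{e_c}^{ab}$. Since a sequence of flips is a composition of single flips, and the flip mutation sequence of Definition~\ref{subsec:flip} only changes vertices inside one quadrilateral, it suffices to treat one $n$-triangulated quadrilateral $ABCD$. Its side parameters are $p,q,r,s$ (for $AB,BC,CD,DA$) and its diagonal parameter is $u$ (for $AC$). We must show that after the flip every vertex of $\triangle ABD$ and $\triangle BCD$ has the well-triangulated value for the same $p,q,r,s$ and the new diagonal parameter $w=(pr+qs)/u$. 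This value of $w$ is forced by the case $n=1$ in Proposition~\ref{prop:flip_properties}(a).

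For the single flip, the argument I would run is that these monomials are exactly the cluster $\cA$-coordinates of a point of $\mathscr{A}_{\mathrm{SL}_{n+1},\bbS}$ coming from $\mathrm{SL}_2$. Take a decorated $\mathrm{SL}_2$-configuration: vectors $v_A,v_B,v_C,v_D\in\R^2$ with $\det(v_X,v_Y)=\lambda_{XY}$. By Penner, any positive values on the five edges of one triangulation are realized this way, and such a configuration satisfies the Ptolemy relation $\lambda_{AC}\lambda_{BD}=\lambda_{AB}\lambda_{CD}+\lambda_{BC}\lambda_{DA}$. Push the configuration through the irreducible representation $\mathrm{Sym}^n:\mathrm{SL}_2\to\mathrm{SL}_{n+1}$, sending $v_X$ to the osculating decorated flag spanned by $v_X^{\,n},v_X^{\,n-1}w_X,\dots$ on the Veronese curve. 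For a triangle $XYZ$, the Fock--Goncharov coordinate at $(a,b,c)$ is the determinant of $\wedge^a$ of the flag at $X$, $\wedge^b$ of the flag at $Y$ and $\wedge^c$ of the flag at $Z$. Evaluated on Veronese flags it factors as a product of Vandermonde-type determinants, equal to $\lambda_{YZ}^{bc}\lambda_{ZX}^{ca}\lambda_{XY}^{ab}$ up to a universal positive constant. That constant must be normalized away, e.g. by rescaling the decorations, and we must check that the normalization is compatible with the half-arrow and twisting conventions.

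Granting this identification, the conclusion is formal. The flip mutation sequence computes the cluster coordinates of the same point of $\mathscr{A}_{\mathrm{SL}_{n+1},\bbS}$ in the new triangulation $\{ABD,BCD\}$. Applying the same Veronese computation to those triangles gives the well-triangulated monomials with $\lambda_{BD}=w$, and by Ptolemy $w=(pr+qs)/u$. As a sanity check, the Laurent expressions are identities of rational functions, so the values obtained by substitution agree with the geometric ones. This is consistent with Proposition~\ref{prop:x_y_z}: when all parameters equal $1$, the diagonal values are $2^{t(n+1-t)}=w^{t(n+1-t)}$ with $w=2$.

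The main obstacle is the identification step: matching the paper's combinatorial quiver and mutation conventions (clockwise/counterclockwise cycles, orientation of the glued edge, the twisting) with the minor formula, so that no stray signs or constants spoil the pure monomial. If that bookkeeping becomes unmanageable, the fallback is a direct inductive verification through the layers of Definition~\ref{subsec:flip}. Every intermediate quiver in the flip is an amalgamation of pieces whose vertices should again carry products of powers of $p,q,r,s,u,w$. I would guess the intermediate values from the Veronese picture, as minors with mixed flags from the partially flipped configuration. Then each exchange relation reduces, after cancelling the common monomial, to a multiple of the Ptolemy identity $uw=pr+qs$, in the same way that the computation in Proposition~\ref{prop:x_y_z} reduces each step to $1+1=2$.
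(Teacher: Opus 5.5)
Your approach is genuinely different from the paper's, and it is viable, but the normalization step, as you state it, is false and must be repaired before the argument closes.

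\textbf{Comparison.} You share the paper's reduction to a single flip in one quadrilateral. After that the two routes diverge.

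The paper stays combinatorial. It uses parameters $a,b$ on $\triangle ABC$, $c,d$ on $\triangle ACD$, and a shared $x$ on the diagonal. It applies the stair-path expansions \eqref{eq:flip_formula1} and \eqref{eq:flip_formula2} to the envelope of each first-layer vertex. Every such sum collapses to $x^{-n}\sum_p\binom{n}{p}(ac)^p(bd)^{n-p}=l^n$, with $l=(ac+bd)/x$. It then repeats this inward, layer by layer, and obtains closed forms for all intermediate values $H^{[k]}_{[i,j]}$.

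Your Veronese picture explains the result conceptually. Well-triangulated data is the principal-$\mathrm{SL}_2$ locus, flips preserve that locus, and $l$ is just the Ptolemy value. The price is that you must import the Fock--Goncharov theorem that the paper's quiver and flip sequence compute the change of triple-minor coordinates, with all orientation and twisting conventions matched. The paper's direct computation never needs this.

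\textbf{The gap.} The Veronese minor is not the monomial times a single universal constant. Take $\omega^X_k=\bigwedge_{j<k}v_X^{\,n-j}u_X^{\,j}$ with $\det(v_X,u_X)=1$. A direct computation gives, up to sign,
\[ \Delta(\omega^X_a\wedge\omega^Y_b\wedge\omega^Z_c)=\det\Bigl[\binom{a+b}{a+r-k}\Bigr]_{0\le k,r<c}\,\lambda_{YZ}^{bc}\lambda_{ZX}^{ca}\lambda_{XY}^{ab}. \]
The determinant is MacMahon's number $\mathrm{PP}(a,b,c)$ of plane partitions in an $a\times b\times c$ box. It equals $1$ at every edge vertex, but $2$ at the interior vertex for $n=2$ and $3$ at $(1,1,2)$ for $n=3$.

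So the Veronese point is not the well-triangulated point, and vertex-dependent constants cannot simply be dropped. Rescaling cluster variables by constants commutes with a mutation only if the two monomials of that exchange relation scale equally. Even then the constants are transported along the sequence, and you would have to show they land on the new triangulation's constants.

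\textbf{How to close it.} The remedy you gesture at does work, but it needs an argument. Twist every decoration by one fixed $h$ in the Cartan subgroup. This multiplies the $(a,b,c)$-coordinate by $\theta_a\theta_b\theta_c$, where $\theta_k=\chi_k(h)$ and $\theta_0=\theta_{n+1}=1$. It does so in every triangulation at once, since it acts on the underlying point, so it commutes with the flip.

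You then need $\mathrm{PP}(a,b,c)^{-1}=\theta_a\theta_b\theta_c$ on all of $\Gamma_{n+1}$. For large $n$ this is an overdetermined condition, so it is not automatic. It holds because of MacMahon's product formula: with $\mathrm{sf}(m)=\prod_{i<m}i!$ and $a+b+c=n+1$,
\[ \mathrm{PP}(a,b,c)=\mathrm{sf}(n+1)\prod_{k\in\{a,b,c\}}\frac{\mathrm{sf}(k)}{\mathrm{sf}(n+1-k)}. \]
So one may take
\[ \theta_k=\mathrm{sf}(n+1)^{(2k-n-1)/(n+1)}\,\frac{\mathrm{sf}(n+1-k)}{\mathrm{sf}(k)}. \]
With this computation, the convention matching, and sign bookkeeping, your argument closes. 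Without them it does not.

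Your fallback would be a self-contained proof: guess every intermediate value (as confluent-Vandermonde minors) and check each exchange relation against $uw=pr+qs$. As written, though, it is only a plan, and carrying it out is essentially the paper's layer-by-layer computation.
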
  
In other words, any sequence of flips preserves the well-triangulated property of all triangles of the new triangulation when all triangles of the initial triangulation are well-triangulated.

\begin{proof}
It suffices to check for a single flip, then the result follows by induction on the number of \emph{crossing points} of a non $\cT$-diagonal, where for a \emph{crossing point} of a non $\cT$-diagonal $\gamma$ we mean the intersection of $\gamma$ and a $\cT$-diagonal, note that by definition, $\gamma$ intersects every $\cT$-diagonal exactly once.

Consider the flip of a well-triangulated quadrilateral $ABCD$ with diagonal $AC$, where $x^{(1)}_{(i,j,k)}$ are assigned to vertices inside $\triangle ABC$ and $x^{(2)}_{(i,j,k)}$ are assigned to vertices inside $\triangle ACD$. This means there exists parameters $a,b,x$ for $\triangle ABC$ and two other parameters $c,d$ with $x$ for $\triangle ACD$ such that each vertex $x^{(1)}_{(i,j,k)}$ has value $a^{ij}b^{jk}x^{ik}$ and each vertex $x^{(2)}_{(i,j,k)}$ has value $d^{ij}c^{jk}x^{ik}$. Denote $l := \dfrac{ac+bd}{x}$. 

Consider $ABCD$ and the inner vertices with the grid labeling as shown in the Definition~\ref{def:mirror}. Let $H_{[i,j]}$ be the initial value of $x_{[i,j]}$ at $[i,j]$ and $H'_{[i,j]}$ as the value after flip. Then:
\begin{align*}
x_{[i,j]} = \begin{cases}
    x^{(2)}_{(j,n+1-i-j,i)} & \text{if } i+j \leq n, \\
    x^{(1)}_{(n+1-i,i+j-n-1,n+1-j)} & \text{if } i+j \geq n+2, \\
    x^{(12)}_{(j,0,i)} & \text{if } i+j = n+1.
\end{cases}
\end{align*}
Hence, we compute:
\begin{align*}
H_{[i,j]} = \begin{cases}
    c^{i(n+1-i-j)}d^{j(n+1-i-j)}x^{ij} & \text{if } i+j \leq n, \\
    a^{(n+1-i)(i+j-n-1)}b^{(n+1-j)(i+j-n-1)}x^{(n+1-i)(n+1-j)} & \text{if } i+j \geq n+2 ,\\
    x^{ij} & \text{if } i+j = n+1.
\end{cases}
\end{align*}
After the flip, each vertex $[i,j]$ is mutated exactly $k$ times if and only if $(i-k)(j-k)(i-n-1+k)(j-n-1+k) = 0$ and $k \leq i,j \leq n+1-k$. Then:
\begin{align*}
H'_{[1,n]} &= \frac{d^n H[2,n] + a^n H[1,n-1]}{x^n} = \frac{d^n bx^{n-1}a^{n-1} + a^n cx^{n-1}d^{n-1}}{x^n} = ld^{n-1}a^{n-1};\\
H'_{[n,1]} &= \frac{b^n H[n-1,1] + c^n H[n,2]}{x^n} = \frac{b^n c^{n-1}x^{n-1}d + c^n b^{n-1}x^{n-1}a}{x^n} = lc^{n-1}b^{n-1}.
\end{align*}
Note that for $(l_1+1),l_i,u_i,(u_k+1), k \in \mathbb{N}$ such that $\sum_{i=1}^{k}(l_i+u_i) = n$, we have the relations:
{\small
\begin{align*}
&(l_1 + l_2 + \dots + l_k)(u_1 + u_2 + \dots + u_k) - \sum_{v=1}^{k} \biggl( l_v \biggl( 1 + \sum_{t=1}^{v-1} u_t \biggr) + u_v \biggl( 1 + \sum_{t=1}^{v} l_s \biggr) \biggr) \\
&\quad = - (l_1 + l_2 + \dots + l_k) - (u_1 + u_2 + \dots + u_k) = -n;\\
&\sum_{v=1}^{k} \biggl( l_v^2 + l_v \biggl( \sum_{s=1}^{v-1} l_s \biggr) - l_v \biggl( n - \sum_{s=1}^{v-1} l_s - \sum_{t=1}^{v-1} u_t \biggr) + u_v \biggl( 1 + \sum_{t=1}^{v} l_s \biggr) \biggr) \\
&\quad = l_1^2 + l_2^2 + \dots + l_k^2 + 2 \sum_{1 \leq i < j \leq k} l_i l_j - n (l_1 + l_2 + \dots + l_k) \\
&\quad \quad + (l_1 + l_2 + \dots + l_k)(u_1 + u_2 + \dots + u_k) + (u_1 + u_2 + \dots + u_k) \\
&\quad = (l_1 + l_2 + \dots + l_k)^2 + (l_1 + l_2 + \dots + l_k)(u_1 + u_2 + \dots + u_k) - n (l_1 + l_2 + \dots + l_k) \\
&\quad \quad + (u_1 + u_2 + \dots + u_k) = (u_1 + u_2 + \dots + u_k);\\
&\sum_{v=1}^{k} \biggl( l_v \biggl( 1 + \sum_{t=1}^{v-1} u_t \biggr) + u_v^2 + u_v \biggl( \sum_{t=1}^{v-1} u_t \biggr) - u_v \biggl( n - \sum_{s=1}^{v} l_s - \sum_{t=1}^{v-1} u_t \biggr) \biggr) \\
&\quad = u_1^2 + u_2^2 + \dots + u_k^2 + 2 \sum_{1 \leq i < j \leq k} u_i u_j - n (u_1 + u_2 + \dots + u_k) \\
&\quad \quad + (l_1 + l_2 + \dots + l_k)(u_1 + u_2 + \dots + u_k) + (l_1 + l_2 + \dots + l_k) \\
&\quad = (u_1 + u_2 + \dots + u_k)^2 + (l_1 + l_2 + \dots + l_k)(u_1 + u_2 + \dots + u_k) - n (u_1 + u_2 + \dots + u_k) \\
&\quad \quad + (l_1 + l_2 + \dots + l_k) = (l_1 + l_2 + \dots + l_k).
\end{align*}
}
Similarly, for $(r_1+1),r_i,d_i,(d_k+1), k \in \mathbb{N}$ such that $\sum_{i=1}^{k}(r_i+d_i) = n$, we can also compute:
{\small
\begin{align*}
    &\sum_{v=1}^{k} \biggl( r_v \biggl( 1 + \sum_{s=1}^{v-1} d_s \biggr) + d_v^2 + d_v \biggl( \sum_{s=1}^{v} d_s \biggr) - d_v \biggl( n - \sum_{t=1}^{v} r_t - \sum_{s=1}^{v-1} d_s \biggr) \biggr) = r_1 + r_2 + \dots + r_k;\\
    &(r_1 + r_2 + \dots + r_k)(d_1 + d_2 + \dots + d_k) - \sum_{v=1}^{k} \biggl( l_v \biggl( 1 + \sum_{t=1}^{v-1} u_t \biggr) + u_v \biggl( 1 + \sum_{t=1}^{v} l_s \biggr) \biggr) = -n;\\
    &\sum_{v=1}^{k} \biggl( r_v^2 + r_v \biggl( \sum_{t=1}^{v-1} r_t \biggr) - r_v \biggl( n - \sum_{t=1}^{v-1} r_t - \sum_{s=1}^{v-1} d_s \biggr) + d_v \biggl( 1 + \sum_{t=1}^{v} r_t \biggr) \biggr) = d_1 + d_2 + \dots + d_k.
\end{align*}
}
Applying formulas \eqref{eq:flip_formula1} and \eqref{eq:flip_formula2} for the envelope of $[n,n]$ and $[1,1]$ yields: 
\begin{align*}
    H'_{[n,n]}
    &= \sum_{\substack{(l_1+1),l_i,u_i,(u_k+1), k \in \mathbb{N}: \\ \sum_{i=1}^{k}(l_i+u_i) = n}} 
       \left(c^{\sum_{s=1}^{k}l_s}x^{(\sum_{s=1}^{k}l_s)(\sum_{t=1}^{k}u_t)}d^{\sum_{t=1}^{k}u_t} \right. \\
       &\quad \left. \cdot \prod_{v=1}^{k} \frac{b^{(n-\sum_{s=1}^{v-1}l_s-\sum_{t=1}^{v-1}u_t)(\sum_{s=1}^{v-1}l_s)}x^{(\sum_{s=1}^{v-1}l_s)(1+\sum_{t=1}^{v-1}u_t)}a^{(1+\sum_{t=1}^{v-1}u_t)(n-\sum_{s=1}^{v-1}l_s-\sum_{t=1}^{v-1}u_t)}}{b^{(n-\sum_{s=1}^{v}l_s-\sum_{t=1}^{v-1}u_t)(\sum_{s=1}^{v}l_s)}x^{(\sum_{s=1}^{v}l_s)(1+\sum_{t=1}^{v-1}u_t)}a^{(1+\sum_{t=1}^{v-1}u_t)(n-\sum_{s=1}^{v}l_s-\sum_{t=1}^{v-1}u_t)}} \right. \\
    &\quad \left. \cdot \frac{b^{(n-\sum_{s=1}^{v}l_s-\sum_{t=1}^{v-1}u_t)(1+\sum_{s=1}^{v}l_s)}x^{(1+\sum_{s=1}^{v}l_s)(\sum_{t=1}^{v-1}u_t)}a^{(\sum_{t=1}^{v-1}u_t)(n-\sum_{s=1}^{v}l_s-\sum_{t=1}^{v-1}u_t)}}{b^{(n-\sum_{s=1}^{v}l_s-\sum_{t=1}^{v}u_t)(1+\sum_{s=1}^{v}l_s)}x^{(1+\sum_{s=1}^{v}l_s)(\sum_{t=1}^{v}u_t)}a^{(\sum_{t=1}^{v}u_t)(n-\sum_{s=1}^{v}l_s-\sum_{t=1}^{v}u_t)}} \right) \\
    &\quad = \sum_{\substack{(l_1+1),l_i,u_i,(u_k+1), k \in \mathbb{N}: \\ \sum_{i=1}^{k}(l_i+u_i) = n}} 
       \left(c^{\sum_{s=1}^{k}l_s}x^{(\sum_{s=1}^{k}l_s)(\sum_{t=1}^{k}u_t)}d^{\sum_{t=1}^{k}u_t} \right. \\
       &\quad \left. \cdot \prod_{v=1}^{k} b^{l_v^2 + l_v(\sum_{s=1}^{v-1}l_s) - l_v(n-\sum_{s=1}^{v-1}l_s-\sum_{t=1}^{v-1}u_t)}x^{-l_v(1+\sum_{t=1}^{v-1}u_t)}a^{l_v(1+\sum_{t=1}^{v-1}u_t)} \right. \\
    &\quad \left. \cdot b^{u_v(1+\sum_{t=1}^{v}l_s)}x^{-u_v(1+\sum_{t=1}^{v}l_s)}a^{u_v^2 + u_v(\sum_{t=1}^{v-1}u_t) - u_v(n-\sum_{s=1}^{v}l_s-\sum_{t=1}^{v-1}u_t)} \right) \\
    &\quad = \frac{1}{x^n} \sum_{p=0}^{n} \binom{n}{p} (ac)^p (bd)^{n-p} = l^n;
\end{align*}

\begin{align*}
    H'_{[1,1]}
    &= \sum_{\substack{(r_1+1),r_i,d_i,(d_k+1), k \in \mathbb{N}: \\ \sum_{i=1}^{k}(r_i+d_i) = n}} 
       \left(b^{\sum_{s=1}^{k}d_s}x^{(\sum_{s=1}^{k}d_s)(\sum_{t=1}^{k}r_t)}a^{\sum_{t=1}^{k}r_t}\right. \\
       &\quad \left. \cdot \prod_{v=1}^{k} \frac{c^{(n-\sum_{t=1}^{v-1}r_t-\sum_{s=1}^{v-1}d_s)(1+\sum_{s=1}^{v-1}d_s)}x^{(1+\sum_{s=1}^{v-1}d_s)(\sum_{t=1}^{v-1}r_t)}d^{(\sum_{t=1}^{v-1}r_t)(n-\sum_{t=1}^{v-1}r_t-\sum_{s=1}^{v-1}d_s)}}{c^{(n-\sum_{t=1}^{v}r_t-\sum_{s=1}^{v-1}d_s)(1+\sum_{s=1}^{v-1}d_s)}x^{(1+\sum_{s=1}^{v-1}d_s)(\sum_{t=1}^{v}r_t)}d^{(\sum_{t=1}^{v}r_t)(n-\sum_{t=1}^{v}r_t-\sum_{s=1}^{v-1}d_s)}} \right. \\
    &\quad \left. \cdot \frac{c^{(n-\sum_{t=1}^{v}r_t-\sum_{s=1}^{v-1}d_s)(\sum_{s=1}^{v-1}d_s)}x^{(\sum_{s=1}^{v-1}d_s)(1+\sum_{t=1}^{v}r_t)}d^{(1+\sum_{t=1}^{v}r_t)(n-\sum_{t=1}^{v}r_t-\sum_{s=1}^{v-1}d_s)}}{c^{(n-\sum_{t=1}^{v}r_t-\sum_{s=1}^{v}d_s)(\sum_{s=1}^{v}d_s)}x^{(\sum_{s=1}^{v}d_s)(1+\sum_{t=1}^{v}r_t)}d^{(1+\sum_{t=1}^{v}r_t)(n-\sum_{t=1}^{v}r_t-\sum_{s=1}^{v}d_s)}} \right) \\
    &\quad = \sum_{\substack{(l_1+1),l_i,u_i,(u_k+1), k \in \mathbb{N}: \\ \sum_{i=1}^{k}(l_i+u_i) = n}} 
       \left(b^{\sum_{s=1}^{k}d_s}x^{(\sum_{s=1}^{k}d_s)(\sum_{t=1}^{k}r_t)}a^{\sum_{t=1}^{k}r_t} \right. \\
       &\quad \left. \cdot \prod_{v=1}^{k} c^{r_v(1+\sum_{s=1}^{v-1}d_s)}x^{-r_v(1+\sum_{s=1}^{v-1}d_s)}d^{r_v^2+r_v(\sum_{t=1}^{v-1}r_t)-r_v(n-\sum_{t=1}^{v-1}r_t-\sum_{s=1}^{v-1}d_s)} \right. \\
    &\quad \left. \cdot c^{d_v^2+d_v(\sum_{s=1}^{v}d_s)-d_v(n-\sum_{t=1}^{v}r_t-\sum_{s=1}^{v-1}d_s)}x^{-d_v(1+\sum_{t=1}^{v}r_t)}d^{d_v(1+\sum_{t=1}^{v}r_t)} \right) \\
    &\quad = \frac{1}{x^n} \sum_{p=0}^{n} \binom{n}{p} (ac)^p (bd)^{n-p} = l^n.
\end{align*}
Now we compute all values $H'_{[g,h]}$ for $(g-1)(h-1)(g-n)(h-n) = 0$ and $(g,h) \notin \{(1,1), (1,n), (n,1), (n,n) \}$. Note that by symmetry from the center of the square, it suffices to compute $H'_{[g,h]}$ for $(g-n)(h-n) = 0$ by applying \eqref{eq:flip_formula1}, then the remaining cases follow similarly by applying \eqref{eq:flip_formula2}. 

For $g = n$, then $h \neq n$, computing $H'_{[g,h]} = H'_{[n,h]}$ becomes considering its envelope (a square of side length $h+1$, see Definition \ref{def:mirror}) and applying formula \eqref{eq:flip_formula1} for that envelope. Now assigning the coordinates $(p,q,r) \in \Gamma_{h+1}$ for that smaller square, we check that all vertices with coordinate $(p,q,r)$ inside that smaller square (only excluding the 4 corners) have coordinate $(p, q, r+n-h)$ from the initial square (big square), and the corresponding initial value of each vertex is either of the form $b^{q(r+n-h)}x^{p(r+n-h)}a^{pq}$ (lower-right part with respect to the main diagonal) or $c^{q(r+n-h)}x^{p(r+n-h)}d^{pq}$. Hence, we compute:
\begin{align*}
	H'_{[n,h]} 
	&= \sum_{\substack{(l_1+1),l_i,u_i,(u_k+1), \\ k \in \mathbb{N}: \\ \sum_{i=1}^{k}(l_i+u_i) = h}} 
       \left(c^{n-h+\sum_{s=1}^{k}l_s}x^{(n-h+\sum_{s=1}^{k}l_s)(\sum_{t=1}^{k}u_t)}d^{\sum_{t=1}^{k}u_t} \right. \\
       &\quad \left. \cdot \prod_{v=1}^{k} \frac{b^{(h-\sum_{s=1}^{v-1}l_s-\sum_{t=1}^{v-1}u_t)(n-h+\sum_{s=1}^{v-1}l_s)}x^{(n-h+\sum_{s=1}^{v-1}l_s)(1+\sum_{t=1}^{v-1}u_t)}a^{(1+\sum_{t=1}^{v-1}u_t)(h-\sum_{s=1}^{v-1}l_s-\sum_{t=1}^{v-1}u_t)}}{b^{(h-\sum_{s=1}^{v}l_s-\sum_{t=1}^{v-1}u_t)(n-h+\sum_{s=1}^{v}l_s)}x^{(n-h+\sum_{s=1}^{v}l_s)(1+\sum_{t=1}^{v-1}u_t)}a^{(1+\sum_{t=1}^{v-1}u_t)(h-\sum_{s=1}^{v}l_s-\sum_{t=1}^{v-1}u_t)}} \right. \\
    &\quad \left. \cdot \frac{b^{(h-\sum_{s=1}^{v}l_s-\sum_{t=1}^{v-1}u_t)(n-h+1+\sum_{s=1}^{v}l_s)}x^{(n-h+1+\sum_{s=1}^{v}l_s)(\sum_{t=1}^{v-1}u_t)}a^{(\sum_{t=1}^{v-1}u_t)(h-\sum_{s=1}^{v}l_s-\sum_{t=1}^{v-1}u_t)}}{b^{(h-\sum_{s=1}^{v}l_s-\sum_{t=1}^{v}u_t)(n-h+1+\sum_{s=1}^{v}l_s)}x^{(n-h+1+\sum_{s=1}^{v}l_s)(\sum_{t=1}^{v}u_t)}a^{(\sum_{t=1}^{v}u_t)(h-\sum_{s=1}^{v}l_s-\sum_{t=1}^{v}u_t)}} \right) \\
    &\quad = \sum_{\substack{(l_1+1),l_i,u_i,(u_k+1), \\ k \in \mathbb{N}: \\ \sum_{i=1}^{k}(l_i+u_i) = n}} 
       \left(c^{n-h}c^{\sum_{s=1}^{k}l_s}x^{-h}d^{\sum_{t=1}^{k}u_t}b^{h(n-h)}b^{\sum_{t=1}^{k}u_t}a^{\sum_{s=1}^{k}l_s} \right) \\
    &\quad = b^{h(n-h)}c^{n-h}l^h.
\end{align*}
Similarly, by reflection with respect to the resulting diagonal (after flip), we conclude all computations:
\Eq{
    H'_{[n,h]} &= b^{h(n-h)}c^{n-h}l^h,&H'_{[g,n]} &= a^{g(n-g)}d^{n-g}l^g, \\
    H'_{[1,h]} &= a^{h-1}d^{(h-1)(n+1-h)}l^{n+1-h}, &H'_{[g,1]} &= b^{g-1}c^{(g-1)(n+1-g)}l^{n+1-g}.
}
\label{eq:bound_1}
Recall the layers definition as mentioned in the beginning of Section~\ref{subsec:unpunctured_case}. Consider a point $[i,j]$ lying in the $k$-th layer, or equivalently, $(i-k)(j-k)(i-n-1+k)(j-n-1+k) = 0$ and $k \leq i,j \leq n+1-k$. For each $l = 1,2,...,k$, let $H^{[k]}_{[i,j]}$ be the value of the vertex $[i,j]$ after the $k$-th mutation in the mutations sequence order to get the flip. From \eqref{eq:bound_1}, we have already computed all values $H^{[1]}_{[i,j]}$ for all first layer vertices $[i,j]$. Now we consider 3 possible cases:

\emph{Case 1.1:} $i+j = n+1$. Rewrite $j = n+1-i$, then $H^{[1]}_{[i,j]}$ can be computed:
		\begin{align*}
			H^{[1]}_{[i,j]} &= \frac{H_{[i-1,j]}H_{[i+1,j]}+H_{[i,j-1]}H_{[i,j+1]}}{H_{[i,j]}} \\
			&= \frac{c^{(i-1)(n+2-i-j)}d^{j(n+2-i-j)}x^{(i-1)j} \cdot a^{(n-i)(i+j-n)}b^{(n+1-j)(i+j-n)}x^{(n-i)(n+1-j)}}{x^{ij}} \\
			&\quad + \frac{c^{i(n+2-i-j)}d^{(j-1)(n+2-i-j)}x^{i(j-1)} \cdot a^{(n+1-i)(i+j-n)}b^{(n-j)(i+j-n)}x^{(n+1-i)(n-j)}}{x^{ij}} \\
			&= (ad)^{j-1}(bc)^{i-1}x^{(i-1)(j-1)}l.
		\end{align*}

\emph{Case 1.2:} $i+j \geq n+2$. Consider the envelope of $[i,j]$, which is a square with side length $(i+j)-n+1$. If we have the coordinate $(p,q,r) \in \Gamma_{(i+j)-n+1}$ of the envelope (either in the upper or lower part of the main diagonal), then the coordinate for the initial square is $(p+n-i,q,r+n-j) \in \Gamma_{n+1}$. Apply \eqref{eq:flip_formula1}, $H^{[1]}_{[i,j]}$ can be computed to be

\begin{align*}
	H^{[1]}_{[i,j]} 
	&= \sum_{\substack{(l_1+1),l_w,u_w,(u_k+1), \\ k \in \mathbb{N}: \\ \sum_{w=1}^{k}(l_i+u_i) = i+j-n}} 
       \left(c^{n-j+\sum_{s=1}^{k}l_s}x^{(n-j+\sum_{s=1}^{k}l_s)(n-i+\sum_{t=1}^{k}u_t)}d^{n-i+\sum_{t=1}^{k}u_t} \right. \\
       &\quad \left. \cdot \prod_{v=1}^{k} \frac{b^{(i+j-n-\sum_{s=1}^{v-1}l_s-\sum_{t=1}^{v-1}u_t)(n-j+\sum_{s=1}^{v-1}l_s)}}{b^{(i+j-n-\sum_{s=1}^{v}l_s-\sum_{t=1}^{v-1}u_t)(n-j+\sum_{s=1}^{v}l_s)}}\frac{x^{(n-j+\sum_{s=1}^{v-1}l_s)(1+n-i+\sum_{t=1}^{v-1}u_t)}}{x^{(n-j+\sum_{s=1}^{v}l_s)(1+n-i+\sum_{t=1}^{v-1}u_t)}} \right. \\
    &\quad \left. \cdot \frac{a^{(1+n-i+\sum_{t=1}^{v-1}u_t)(i+j-n-\sum_{s=1}^{v-1}l_s-\sum_{t=1}^{v-1}u_t)}}{a^{(1+n-i+\sum_{t=1}^{v-1}u_t)(i+j-n-\sum_{s=1}^{v}l_s-\sum_{t=1}^{v-1}u_t)}}\frac{b^{(i+j-n-\sum_{s=1}^{v}l_s-\sum_{t=1}^{v-1}u_t)(n-j+1+\sum_{s=1}^{v}l_s)}}{b^{(i+j-n-\sum_{s=1}^{v}l_s-\sum_{t=1}^{v}u_t)(n-j+1+\sum_{s=1}^{v}l_s)}} \right. \\
    &\quad \left. \cdot \frac{x^{(n-j+1+\sum_{s=1}^{v}l_s)(n-i+\sum_{t=1}^{v-1}u_t)}}{x^{(n-j+1+\sum_{s=1}^{v}l_s)(n-i+\sum_{t=1}^{v}u_t)}}\frac{a^{(n-i+\sum_{t=1}^{v-1}u_t)(i+j-n-\sum_{s=1}^{v}l_s-\sum_{t=1}^{v-1}u_t)}}{a^{(n-i+\sum_{t=1}^{v}u_t)(i+j-n-\sum_{s=1}^{v}l_s-\sum_{t=1}^{v}u_t)}} \right) \\
    &\quad = a^{(n-i)(i+j-n)}b^{(n-j)(i+j-n)}c^{n-j}d^{n-i}x^{(n-i)(n-j)-(i+j-n)} \cdot \sum_{p=0}^{i+j-n} \binom{i+j-n}{p} (ac)^p (bd)^{i+j-n-p} \\
    &\quad = a^{(n-i)(i+j-n)}b^{(n-j)(i+j-n)}c^{n-j}d^{n-i}x^{(n-i)(n-j)}l^{i+j-n}.
\end{align*}

\emph{Case 1.3:} $i+j \leq n$. Consider the envelope of $[i,j]$, which is a square with side length $n+3-(i+j)$. If we have the coordinate $(p,q,r) \in \Gamma_{n+3-(i+j)}$ of the envelope (either in upper or lower part of the main diagonal), then the coordinate for the initial square is $(p+j-1,q,r+i-1) \in \Gamma_{n+1}$. Apply \eqref{eq:flip_formula2}, $H^{[1]}_{[i,j]}$ can be computed to be
\begin{align*}
	H^{[1]}_{[i,j]} 
	&= \sum_{\substack{(r_1+1),r_i,d_i,(d_k+1), \\ k \in \mathbb{N}: \\ \sum_{i=1}^{k}(r_i+d_i) = n+2-(i+j)}} 
       \left(b^{i-1+\sum_{s=1}^{k}d_s}x^{(i-1+\sum_{s=1}^{k}d_s)(j-1+\sum_{t=1}^{k}r_t)}a^{j-1+\sum_{t=1}^{k}r_t}\right. \\
       &\quad \left. \cdot \prod_{v=1}^{k} \frac{c^{(n+2-(i+j)-\sum_{t=1}^{v-1}r_t-\sum_{s=1}^{v-1}d_s)(1+i-1+\sum_{s=1}^{v-1}d_s)}}{c^{(n+2-(i+j)-\sum_{t=1}^{v}r_t-\sum_{s=1}^{v-1}d_s)(1+i-1+\sum_{s=1}^{v-1}d_s)}}\frac{x^{(1+i-1+\sum_{s=1}^{v-1}d_s)(j-1+\sum_{t=1}^{v-1}r_t)}}{x^{(1+i-1+\sum_{s=1}^{v-1}d_s)(j-1+\sum_{t=1}^{v}r_t)}} \right. \\
       &\quad \left. \cdot \frac{d^{(j-1+\sum_{t=1}^{v-1}r_t)(n+2-(i+j)-\sum_{t=1}^{v-1}r_t-\sum_{s=1}^{v-1}d_s)}}{d^{(j-1+\sum_{t=1}^{v}r_t)(n+2-(i+j)-\sum_{t=1}^{v}r_t-\sum_{s=1}^{v-1}d_s)}}\frac{c^{(n+2-(i+j)-\sum_{t=1}^{v}r_t-\sum_{s=1}^{v-1}d_s)(i-1+\sum_{s=1}^{v-1}d_s)}}{c^{(n+2-(i+j)-\sum_{t=1}^{v}r_t-\sum_{s=1}^{v}d_s)(i-1+\sum_{s=1}^{v}d_s)}} \right. \\
    &\quad \left. \cdot \frac{x^{(i-1+\sum_{s=1}^{v-1}d_s)(1+j-1+\sum_{t=1}^{v}r_t)}}{x^{(i-1+\sum_{s=1}^{v}d_s)(1+j-1+\sum_{t=1}^{v}r_t)}}\frac{d^{(1+j-1+\sum_{t=1}^{v}r_t)(n+2-(i+j)-\sum_{t=1}^{v}r_t-\sum_{s=1}^{v-1}d_s)}}{d^{(1+j-1+\sum_{t=1}^{v}r_t)(n+2-(i+j)-\sum_{t=1}^{v}r_t-\sum_{s=1}^{v}d_s)}} \right) \\
    &\quad = a^{j-1}b^{i-1}c^{(i-1)(n+2-i-j)}d^{(j-1)(n+2-i-j)}x^{(i-1)(j-1)-(n+2-i-j)}\\
    &\quad \cdot \sum_{p=0}^{n+2-i-j} \binom{n+2-i-j}{p} (ac)^p (bd)^{n+2-i-j-p} \\
    &\quad = a^{j-1}b^{i-1}c^{(i-1)(n+2-i-j)}d^{(j-1)(n+2-i-j)}x^{(i-1)(j-1)}l^{n+2-i-j}.
\end{align*}	
Checking the formulas, we see that (\ref{eq:bound_1}) is satisfied. Therefore, we conclude that:
\begin{align}
	H^{[1]}_{[i,j]} = \begin{cases}
		a^{j-1}b^{i-1}c^{(i-1)(n+2-i-j)}d^{(j-1)(n+2-i-j)}x^{(i-1)(j-1)}l^{n+2-i-j} & \text{if } i+j \leq n, \\
		a^{(n-i)(i+j-n)}b^{(n-j)(i+j-n)}c^{n-j}d^{n-i}x^{(n-i)(n-j)}l^{i+j-n} & \text{if } i+j \geq n+2, \\
		(ad)^{j-1}(bc)^{i-1}x^{(i-1)(j-1)}l & \text{if } i+j = n+1.
	\end{cases}
\end{align}
Now consider the square with $4$ edges precisely containing all the vertices of layer 1; it remains to consider only this square. Thanks to the stair path structure following from Proposition~\ref{prop:stair_path_structure_time} with formulas \eqref{eq:flip_formula1} and \eqref{eq:flip_formula2}, we can compute $H^{[2]}_{[i,j]}$ for $2 \leq i,j \leq n-1$ by using the resulting square, since the method satisfies the order of the mutation sequence of the flip. Consider the envelope of those vertices, then again, we consider 3 possible cases:

\emph{Case 2.1:} $i+j = n+1$. Rewrite $j = n+1-i$, then $H^{[1]}_{[i,j]}$ can be computed to be:
		\begin{align*}
			H^{[2]}_{[i,j]} = \frac{H^{[1]}_{[i-1,j]}H^{[1]}_{[i+1,j]}+H^{[1]}_{[i,j-1]}H^{[1]}_{[i,j+1]}}{H^{[1]}_{[i,j]}} = (ad)^{2(j-2)}(bc)^{2(i-2)}x^{(i-2)(j-2)}l^4.
		\end{align*}
		
\emph{Case 2.2:} $i+j \geq n+2$. Consider the envelope of $[i,j]$ which is a square with side length $(i+j)-n+1$, then $H^{[2]}_{[i,j]}$ can be computed to be:
\begin{align*}
	H^{[2]}_{[i,j]} 
	&= \sum_{\substack{(l_1+1),l_w,u_w,(u_k+1), \\ k \in \mathbb{N}: \\ \sum_{w=1}^{k}(l_i+u_i) = i+j-n}} 
       \left(H^{[1]}_{[i-\sum_{t=1}^{k}u_t,j-\sum_{s=1}^{k}l_s]} \cdot \prod_{v=1}^{k} \frac{H^{[1]}_{[i-\sum_{t=1}^{v-1}u_t,j+1-\sum_{s=1}^{v-1}l_s]}}{H^{[1]}_{[i-\sum_{t=1}^{v-1}u_t,j+1-\sum_{s=1}^{v}l_s]}} \cdot \frac{H^{[1]}_{[i+1-\sum_{t=1}^{v-1}u_t,j-\sum_{s=1}^{v}l_s]}}{H^{[1]}_{[i+1-\sum_{t=1}^{v}u_t,j-\sum_{s=1}^{v}l_s]}} \right) \\
    &\quad = a^{(n-2-i)(i+j-n)+(j-1)}b^{(n-2-j)(i+j-n)+(i-1)}c^{2(n-j-1)}d^{2(n-i-1)}x^{(i-1)(j-1)-(n-2)(i+j-n)}l^{2(i+j-n+1)} \\
    &\quad = a^{(n-1-i)(i+j-n+1)}b^{(n-1-j)(i+j-n+1)}c^{2(n-j-1)}d^{2(n-i-1)}x^{(n-1-i)(n-1-j)}l^{2(i+j-n+1)}.
\end{align*}

\emph{Case 2.3:} $i+j \leq n$. Consider the envelope of $[i,j]$ which is a square with side length $n+3-(i+j)$, then $H^{[2]}_{[i,j]}$ can be computed to be:
\begin{align*}
	H^{[2]}_{[i,j]} 
	&= \sum_{\substack{(r_1+1),r_w,d_w,(d_k+1), \\ k \in \mathbb{N}: \\ \sum_{w=1}^{k}(l_i+u_i) = n+2-i-j}} 
       \left(H^{[1]}_{[i+\sum_{t=1}^{k}d_t,j+\sum_{s=1}^{k}r_s]} \cdot \prod_{v=1}^{k} \frac{H^{[1]}_{[i+\sum_{t=1}^{v-1}d_t,j-1+\sum_{s=1}^{v-1}r_s]}}{H^{[1]}_{[i+\sum_{t=1}^{v-1}d_t,j-1+\sum_{s=1}^{v}r_s]}} \cdot \frac{H^{[1]}_{[i-1+\sum_{t=1}^{v-1}d_t,j+\sum_{s=1}^{v}r_s]}}{H^{[1]}_{[i-1+\sum_{t=1}^{v}d_t,j+\sum_{s=1}^{v}r_s]}} \right) \\
    &\quad = a^{2(j-2)}b^{2(i-2)}c^{(i-2)(n+3-i-j)}d^{(j-2)(n+3-i-j)}x^{(i-2)(j-2)}l^{2(n+3-i-j)}.
\end{align*}
Therefore, we conclude that:
\begin{align}
	H^{[2]}_{[i,j]} = \begin{cases}
		a^{2(j-2)}b^{2(i-2)}c^{(i-2)(n+3-i-j)}d^{(j-2)(n+3-i-j)}x^{(i-2)(j-2)}l^{2(n+3-i-j)} & \text{if } i+j \leq n, \\
		a^{(n-1-i)(i+j-n+1)}b^{(n-1-j)(i+j-n+1)}c^{2(n-j-1)}d^{2(n-i-1)}x^{(n-1-i)(n-1-j)}l^{2(i+j-n+1)} & \text{if } i+j \geq n+2, \\
		(ad)^{2(j-2)}(bc)^{2(i-2)}x^{(i-2)(j-2)}l^4 & \text{if } i+j = n+1.
	\end{cases}
\end{align}
Repeating the process, we continue with all remaining vertices inside the square formed by layer 2. Then inductively, we can compute for general $1 \leq k \leq \lfloor \frac{n+1}{2} \rfloor$:
\begin{align}
	H^{[k]}_{[i,j]} =\case{
		a^{k(j-k)}b^{k(i-k)}c^{(i-k)(n+k+1-i-j)}d^{(j-k)(n+k+1-i-j)}x^{(i-k)(j-k)}l^{k(n+k+1-i-j)} & \text{ if } i+j \leq n, \\\\
		a^{(n-k+1-i)(i+j-n+k-1)}b^{(n-k+1-j)(i+j-n+k-1)}c^{k(n-k+1-j)}d^{k(n-k+1-i)}\cdot& \text{ if } i+j \geq n+2,\\
		\quad\cdot x^{(n-k+1-i)(n-k+1-j)}l^{k(i+j-n+k-1)} \\\\
		(ad)^{k(j-k)}(bc)^{k(i-k)}x^{(i-k)(j-k)}l^{k^2} &\text{ if } i+j = n+1.
		}
\end{align}
Then for any vertex $[i,j]$ on layer $k$, or equivalently, $(i-k)(j-k)(i-n-1+k)(j-n-1+k) = 0$ and $k \leq i,j \leq n+1-k$, we get the desired formula:
\begin{align}
	H'_{[i,j]} = H^{[k]}_{[i,j]} =
	\begin{cases}
		H^{[k]}_{[k,j]} = a^{k(j-k)}d^{(j-k)(n+1-j)}l^{k(n+1-j)} & \text{ for } k \leq j \leq n+1-k, \\
		H^{[k]}_{[i,k]} = b^{k(i-k)}c^{(i-k)(n+1-i)}l^{k(n+1-i)} & \text{ for } k \leq i \leq n+1-k, \\
		H^{[k]}_{[n+1-k,j]} = b^{(n-k+1-j)j}c^{k(n-k+1-j)}l^{kj}  & \text{ for } k \leq j \leq n+1-k, \\
		H^{[k]}_{[i,n+1-k]} = a^{i(n-k+1-i)}d^{k(n-k+1-i)}l^{ki}  & \text{ for } k \leq i \leq n+1-k \\		
	\end{cases}
\end{align}
which verifies the well triangulated property of both triangles after the flip of the initial square. Therefore, from a given ideal triangulation such that all triangles are well-triangulated, a flip does not change the well-triangulated property for any triangles from the resulting triangulation. Hence, we can apply the property to any sequence of flips. This concludes the proof of the theorem.
\end{proof}
See Figures~\ref{fig:flip_values_Hat} and~\ref{fig:flip_values_GS19} for examples of values before and after flips. 
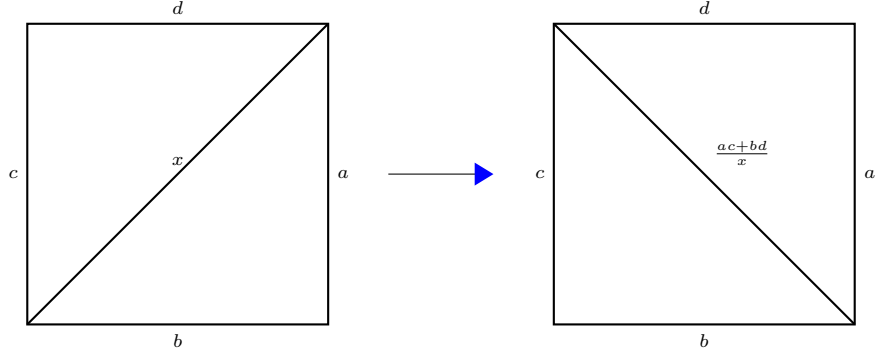
\begin{figure}[H]
\centering
\begin{tikzpicture}[scale=1,
    mid arrow/.style={
        postaction={decorate},
        decoration={
            markings,
            mark=at position 0.5 with {\arrow{>}}
        }
    }
]
	\draw[thick] (0,0) -- (4,0) -- (4,4) -- (0,4) -- cycle;
	\draw[thick] (7,0) -- (11,0) -- (11,4) -- (7,4) -- cycle;
	\draw[thick] (0,0) -- (4,4);
	\draw[thick] (11,0) -- (7,4);
	\draw[-{Triangle[blue,scale=2,line width=1.5pt]}] (4.8,2) -- (6.2,2);
	\node[above, font=\scriptsize] at (2,4) {$d$};
	\node[right, font=\scriptsize] at (4,2) {$a$};
	\node[below, font=\scriptsize] at (2,0) {$b$};
	\node[left, font=\scriptsize] at (0,2) {$c$};
	\node[above, font=\scriptsize] at (2,2) {$x$};
	\node[above, font=\scriptsize] at (9,4) {$d$};
	\node[right, font=\scriptsize] at (11,2) {$a$};
	\node[below, font=\scriptsize] at (9,0) {$b$};
	\node[left, font=\scriptsize] at (7,2) {$c$};
	\node[above right, font=\scriptsize] at (9,2) {$\frac{ac+bd}{x}$};
\end{tikzpicture}
\caption{Values of vertices before and after flip in case $n=1$.}
\label{fig:flip_values_Hat}
\end{figure}

\begin{figure}[H]
\centering
\begin{tikzpicture}[scale=1,
    mid arrow/.style={
        postaction={decorate},
        decoration={
            markings,
            mark=at position 0.5 with {\arrow{>}}
        }
    }
]
	\draw[thick] (0,0) -- (4,0) -- (4,4) -- (0,4) -- cycle;
	\draw[thick] (7,0) -- (11,0) -- (11,4) -- (7,4) -- cycle;
	\draw[thick] (0,0) -- (4,4);
	\draw[thick] (11,0) -- (7,4);
	\draw[-{Triangle[blue,scale=2,line width=1.5pt]}] (4.8,2) -- (6.2,2);
	\node[below] at (5.5,1.8) {$l = \frac{ac+bd}{x}$};
	
\foreach \i in {0,1} {	
	\filldraw[blue] (7*\i+1,4) circle (2pt); 
  	\node[above, font=\scriptsize] at (7*\i+1,4) {$d^3$};
	\filldraw[blue] (7*\i+2,4) circle (2pt); 
  	\node[above, font=\scriptsize] at (7*\i+2,4) {$d^4$};
  	\filldraw[blue] (7*\i+3,4) circle (2pt); 
  	\node[above, font=\scriptsize] at (7*\i+3,4) {$d^3$};
  	\filldraw[blue] (7*\i+4,3) circle (2pt); 
  	\node[right, font=\scriptsize] at (7*\i+4,3) {$a^3$};
	\filldraw[blue] (7*\i+4,2) circle (2pt); 
  	\node[right, font=\scriptsize] at (7*\i+4,2) {$a^4$};
  	\filldraw[blue] (7*\i+4,1) circle (2pt); 
  	\node[right, font=\scriptsize] at (7*\i+4,1) {$a^3$};
  	\filldraw[blue] (7*\i+3,0) circle (2pt); 
  	\node[below, font=\scriptsize] at (7*\i+3,0) {$b^3$};
	\filldraw[blue] (7*\i+2,0) circle (2pt); 
  	\node[below, font=\scriptsize] at (7*\i+2,0) {$b^4$};
  	\filldraw[blue] (7*\i+1,0) circle (2pt); 
  	\node[below, font=\scriptsize] at (7*\i+1,0) {$b^3$};
  	\filldraw[blue] (7*\i+0,1) circle (2pt); 
  	\node[left, font=\scriptsize] at (7*\i+0,1) {$c^3$};
	\filldraw[blue] (7*\i+0,2) circle (2pt); 
  	\node[left, font=\scriptsize] at (7*\i+0,2) {$c^4$};
  	\filldraw[blue] (7*\i+0,3) circle (2pt); 
  	\node[left, font=\scriptsize] at (7*\i+0,3) {$c^3$};
}
  	\filldraw[blue] (1,1) circle (2pt); 
  	\node[above, font=\scriptsize] at (1,1) {$x^3$};
  	\filldraw[blue] (2,2) circle (2pt); 
  	\node[above, font=\scriptsize] at (2,2) {$x^4$};
  	\filldraw[blue] (3,3) circle (2pt); 
  	\node[above, font=\scriptsize] at (3,3) {$x^3$};
  	\filldraw[blue] (2,1) circle (2pt); 
  	\node[above, font=\scriptsize] at (2,1) {$ab^2x^2$};
  	\filldraw[blue] (3,1) circle (2pt); 
  	\node[above, font=\scriptsize] at (3,1) {$a^2b^2x$};
  	\filldraw[blue] (1,2) circle (2pt); 
  	\node[above, font=\scriptsize] at (1,2) {$c^2dx^2$};
  	\filldraw[blue] (3,2) circle (2pt); 
  	\node[above, font=\scriptsize] at (3,2) {$a^2bx^2$};
  	\filldraw[blue] (1,3) circle (2pt); 
  	\node[above, font=\scriptsize] at (1,3) {$c^2d^2x$};
  	\filldraw[blue] (2,3) circle (2pt); 
  	\node[above, font=\scriptsize] at (2,3) {$cd^2x^2$};
  	
  	\filldraw[blue] (8,1) circle (2pt); 
  	\node[above, font=\scriptsize] at (8,1) {$b^2c^2l$};
  	\filldraw[blue] (9,2) circle (2pt); 
  	\node[above, font=\scriptsize] at (9,2) {$l^4$};
  	\filldraw[blue] (10,3) circle (2pt); 
  	\node[above, font=\scriptsize] at (10,3) {$a^2d^2l$};
  	\filldraw[blue] (9,1) circle (2pt); 
  	\node[above, font=\scriptsize] at (9,1) {$b^2cl^2$};
  	\filldraw[blue] (10,1) circle (2pt); 
  	\node[above, font=\scriptsize] at (10,1) {$l^3$};
  	\filldraw[blue] (8,2) circle (2pt); 
  	\node[above, font=\scriptsize] at (8,2) {$bc^2l^2$};
  	\filldraw[blue] (10,2) circle (2pt); 
  	\node[above, font=\scriptsize] at (10,2) {$a^2dl^2$};
  	\filldraw[blue] (8,3) circle (2pt); 
  	\node[above, font=\scriptsize] at (8,3) {$l^3$};
  	\filldraw[blue] (9,3) circle (2pt); 
  	\node[above, font=\scriptsize] at (9,3) {$ad^2l^2$};
\end{tikzpicture}
\caption{Values of vertices before and after flip in case $n=3$.}
\label{fig:flip_values_GS19}
\end{figure}
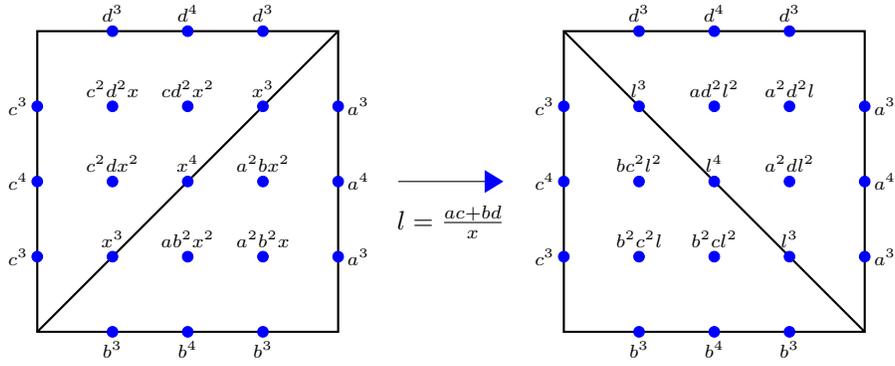

Now consider the labels of all vertices $x^{(k)}_{(a,b,c)}$ as described in Section~\ref{subsec:notation_labeling}. Instead of assigning variables or Laurent polynomials from each vertex after a sequence of flips, we set all initial variables to $1$, whence each vertex is assigned an integral value. Following from the sequence of flips from the initial triangulation of a surface for the base case, i.e. each triangle is $1$-triangulated, we can apply the theorem above to compute the number of monomials (counting multiplicities) in the case we use the same sequence of flips for the general case $n$-triangulated. Therefore, we conclude with another result:
\begin{Thm}[Number of Monomials Theorem]
\label{thm:monomial_count} Consider the cluster realization of $\mathscr{A}_{\text{SL}_{n+1},\bbS}$ for a triangulation $\cT$ of a surface $\bbS$ with boundary and without punctures, and fix any non $\cT$-diagonal $\gamma$. If the number of monomials (counting multiplicities) in the Laurent polynomial of the expansion formula of $\gamma$ for the case $n=1$ is $K$, then the number of monomials (counting multiplicities) in the Laurent polynomial in each coordinate of the expansion formula of $\gamma$ for general $n$ is of the form 
\Eq{(K^n, K^{2n-2}, \ldots, K^{t(n+1-t)}, \ldots, K^{2n-2}, K^n).}
\end{Thm}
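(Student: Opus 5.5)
The plan is to reduce the count to an evaluation at unity and then feed the Well-triangulated Preservation Theorem (Theorem~\ref{thm:well_triangulated_preservation}) with a specific specialization. Since every coordinate of the expansion of $\gamma$ is a Laurent polynomial with positive integer coefficients (Laurent phenomenon and Theorem~\ref{thm:pos_thm}), the number of monomials counted with multiplicity equals its value when all initial variables are set to $1$. So it suffices to compute the values produced on the vertices of $\gamma$ when the flip sequence from $\cT$ to a triangulation containing $\gamma$ is run on the all-ones assignment. The same sequence of diagonal flips realizes $\gamma$ for every $n$. For $n=1$, each flip is a single mutation and the final value on $\gamma$ is $K$ by definition.

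The first step is to observe that the all-ones assignment is well-triangulated: take $x=y=z=1$ in every triangle of $\cT$, with all parameters of the triangles equal to $1$. The proof of Theorem~\ref{thm:well_triangulated_preservation} gives more than preservation. It tracks the parameters explicitly: if a quadrilateral has side parameters $a,b,c,d$ and diagonal parameter $x$, then after the flip the two new triangles are well-triangulated with the same side parameters and new diagonal parameter $l=(ac+bd)/x$. I will record this, reading it off the closing formulas of that proof, as a lemma.

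The second step compares this parameter update with the $n=1$ exchange relation. The update $x\mapsto(ac+bd)/x$ is exactly the $\mathrm{SL}_2$ exchange relation in Figure~\ref{fig:flip_values_Hat}. Consequently the parameter attached to each arc, after any sequence of flips, equals the value of the corresponding $n=1$ cluster variable at unity. Inductively on the number of flips, the parameter attached to $\gamma$ at the end is $K$.

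The third step extracts the coordinates. In the final triangulation, a triangle containing $\gamma$ as an edge is well-triangulated, so the vertex $x^{(k)}_{(t,0,n+1-t)}$ on that edge carries the value $\ell^{\,t(n+1-t)}$, where $\ell$ is the parameter of $\gamma$. Here I need to match the $\Gamma$-coordinates so that the edge $\gamma$ corresponds to $b=0$ with exponent $ca$, and this must be checked against the conventions in the preservation proof. For example, the diagonal values $H'$ are $l^{k^2}$ at the center and $l^{n}$ at the ends, consistent with $t(n+1-t)$. Substituting $\ell=K$ gives the vector $(K^n,K^{2n-2},\ldots,K^{t(n+1-t)},\ldots,K^n)$.

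The main obstacle is the lemma in the first step. The preservation theorem as stated only asserts the existence of parameters, so I must verify that the edge parameters are literally unchanged under a flip. This requires that the boundary vertices, which are not mutated, keep values compatible with $a,b,c,d$, and that the parameter on the new diagonal is $l$ and not, say, some power of it. I would check this from the explicit $H'_{[i,j]}$ formulas at layer $k$ by reading off the three triangle parameters of each new triangle. A secondary issue is that the surface may have positive genus or several boundary components, so $\gamma$ may cross $\cT$-arcs more than once. Because the argument is local to each flip, it applies unchanged to any flip sequence, and the hypothesis that $\gamma$ is a non-$\cT$-diagonal is used only to guarantee that such a sequence exists.
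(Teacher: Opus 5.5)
Your proposal is correct and takes the same approach as the paper. The paper specializes all initial variables to $1$, which gives a well-triangulated assignment, and then applies Theorem~\ref{thm:well_triangulated_preservation} along the same flip sequence used for $n=1$. You make explicit what the paper's one-paragraph argument leaves implicit: its proof keeps the side parameters fixed and assigns the new diagonal the parameter $l=(ac+bd)/x$, which is exactly the $n=1$ exchange relation, so the parameter on $\gamma$ is $K$ and the vertices of $\gamma$ carry the values $K^{t(n+1-t)}$.
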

\subsection{General recursive formulas for particular values of \( n \)}
Now turning to the setting of general polygon. Let \( \mathcal{P} \) be an $m$-gon with $m\geq 3$ and fix a diagonal $AB$. Finally let $m_1,n_1,m_2,n_2,...,m_k,n_k$ be positive integers (except $m_1,n_k$ which can be zero) satisfying $$m=3+m_1+\cdots+m_k+n_1+\cdots+n_k.$$
\begin{Def}(Vertex labeling)
We recursively label the boundary vertices of $\cP$ as follows:
\begin{enumerate}[label=(\arabic*)]
    \item Start with vertices \( A \) and \( B \);
    \item At level 1, append \( m_1 \) vertices \( V_1, \ldots, V_{m_1} \) to the left of $AB$, followed by \( n_1 \) vertices \( V_{m_1,1}, \ldots, V_{m_1,n_1} \) to the right of $AB$;
    \item At level 2, append \( m_2 \) vertices \( V_{m_1,n_1,1}, \ldots, V_{m_1,n_1,m_2} \) to the left, followed by \( n_2 \) vertices \( V_{m_1,n_1,m_2,1}, \ldots,\) \( V_{m_1,n_1,m_2,n_2} \) to the right;
    \item Continue recursively: at level \( i \), append \( m_i \) vertices labeled:
        \[
        V_{m_1,n_1,\ldots,m_{i-1},n_{i-1},1}, \ldots, V_{m_1,n_1,\ldots,m_{i-1},n_{i-1},m_i},
        \]
        to the left, followed by \( n_i \) vertices labeled:
        \[
        V_{m_1,n_1,\ldots,m_i,1}, \ldots, V_{m_1,n_1,\ldots,m_i,n_i}
        \]
        to the right.
        
    \item For the final vertex $V$, we can split into 2 different cases:
    		\begin{enumerate}
    			\item[(i)] If $n_k > 0$, then the vertex \( V:=V_{m_1,n_1,\ldots,m_k,n_k} \) is next to $B$, then end with the remaining vertex \( V:=V_{m_1,n_1,\ldots,m_k,n_k,1} \) next to $B$.
    			
    			\item[(ii)] If $n_k = 0$, then the vertex \( V:=V_{m_1,n_1,\ldots,m_k} \) is next to $B$, then end with the remaining vertex \( V:=V_{m_1,n_1,\ldots,m_k,1} \) next to $B$.
    		\end{enumerate}
In both cases, $V$ is always on the right of $AB$.  
\end{enumerate}
\end{Def}
\begin{Def}\label{def:set-up-poly} A triangulation $\cT$ of $\cP$ of type $\cP(m_1,n_1,m_2,n_2,...,m_k,n_k)$ is produced by joining vertices of $\cP$ in the following way:
\begin{itemize}
\item For $i=1,...,k$, join all vertices  $V_{m_1,n_1,m_2,n_2,...n_{i-1},x}$ with $V_{m_1,n_1,m_2,n_2,...,m_i,1}$,$(x=1,...,m_i)$;
\item For $i=1,...,k$, join all vertices  $V_{m_1,n_1,m_2,n_2,...,m_i,y}$ with $V_{m_1,n_1,m_2,n_2,...,m_i,n_i,1}$, $(y=1,...,n_i)$.
\end{itemize}
By construction the diagonal $AB$ is a \emph{non $\cT$-diagonal}, meaning that $AB$ intersects every triangle of $T$.
\end{Def}
See Figure \ref{fig:coord} for an example of the setup.

\begin{Def}
    For a polygon \( \mathcal{P}(p_1, p_2, \ldots, p_K) \) with \( p_i > 0 \), its \emph{reflected polygon} is
    \[
    \overline{\mathcal{P}}(p_1, p_2, \ldots, p_K) := \mathcal{P}(p_K, p_{K-1}, \ldots, p_1),
    \]
    with the same fixed diagonal $AB$.
\label{Def:ref_poly}
\end{Def}
By reflecting the sequence, we may assume \( m_1 > 0 \). For notation convenience we may omit trailing zeros: e.g., \( \mathcal{P}(3,5,12,0) = \mathcal{P}(3,5,12) \).

For the rest of the section, we shall fix the triangulation $\cT$ of an $m$-gon of type $\mathcal{P}(m_1,n_1,\ldots,m_k,n_k)$ with a designated non $\cT$-diagonal $\c=AB$, where we assume $ m_1, n_1, m_2, n_2, \ldots, m_k, (n_k+1) \in \mathbb{N}$ such that $(m_k, n_k) \notin \{(m_k,1), (1,0)\}$ (unless $k=1$, then $(m_k,n_k)=(1,0)$ is allowed).

\begin{Def}(Standard sequence of flips)
\label{def:standard seq_flips}
The diagonal \( \gamma = AB \) can be obtained from $\cT$ through a sequence of flips, producing diagonals \( d_1, d_2, \ldots, d_{M_k + N_k} \). Let \( r_u \in \{1, \ldots, m_u - 1\} \), \( s_u \in \{1, \ldots, n_u - 1\} \). The diagonals are constructed as follows:
\begin{align*}
    d_1 &= AV_2,\ d_2 = AV_3,\ \ldots,\ d_{m_1 - 1} = AV_{m_1},\ d_{m_1} = AV_{m_1,n_1,1}; \\
    d_{m_1 + 1} &= AV_{m_1,2},\ \ldots,\ d_{m_1 + n_1 - 1} = AV_{m_1,n_1},\ d_{m_1 + n_1} = AV_{m_1,n_1,m_2,1}; \\
    &\vdots \\
    d_{M_{u-1} + N_{u-1}} &= AV_{m_1,n_1,\ldots,m_{u-1},n_{u-1},m_u,1}, \\
    d_{M_{u-1} + N_{u-1} + r_u} &= AV_{m_1,n_1,\ldots,m_{u-1},n_{u-1},r_u+1}; \\
    d_{M_u + N_{u-1}} &= AV_{m_1,n_1,\ldots,m_{u-1},n_{u-1},m_u,n_u,1}, \\
   d_{M_u + N_{u-1} + s_u} &= AV_{m_1,n_1,\ldots,m_{u-1},n_{u-1},m_u,s_u+1}
\end{align*}
and for \( n_k = 0 \):
\begin{align*}
    d_{M_{k-1} + N_{k-1}} &= AV, \\
    d_{M_{k-1} + N_{k-1} + r_k} &= AV_{m_1,n_1,\ldots,m_{k-1},n_{k-1},r_k+1}; \\
    d_{M_k + N_k} &= AB = \gamma
\end{align*}
while for \( n_k > 0 \):
\begin{align*}
    d_{M_k + N_{k-1}} &= AV, \\
    d_{M_k + N_{k-1} + s_k} &= AV_{m_1,n_1,\ldots,m_{k-1},n_{k-1},m_k,s_k+1}; \\
    d_{M_k + N_k} &= AB = \gamma.
\end{align*}
We shall call the procedure above as the \emph{standard sequence of flips of} $\gamma$.
\end{Def}

\begin{Def}(Diagonal and inner vertices notations)
	Consider an $n$-triangulated $m$-gon with triangulation $\cT$ of type $\mathcal{P}(m_1, n_1, m_2, n_2, \ldots, m_k, n_k)$ as in Definition \ref{def:set-up-poly}. Denote the vertices of the diagonal \( \gamma = AB \) by the vector:
\Eq{
(D^{[m_1,n_1,\ldots,m_k,n_k]}_1, \ldots, D^{[m_1,n_1,\ldots,m_k,n_k]}_n),
}
and let the inner vertices of \( \triangle AVB \) be denoted by:
\Eq{
I^{[m_1,n_1,\ldots,m_k,n_k]}_{(i,j)}, \quad i, j \in \mathbb{N},\ i + j \leq n.
}
For \( n = 1 \), define \( D^{[m_1,n_1,\ldots,m_k,n_k]} := D^{[m_1,n_1,\ldots,m_k,n_k]}_1 \); for \( n = 2 \), define \( I^{[m_1,n_1,\ldots,m_k,n_k]} := I^{[m_1,n_1,\ldots,m_k,n_k]}_{(1,1)} \). See Figure~\ref{fig:inner3} for the case \( n = 3 \), where, in case $n_k > 0$, we denote $[X] := [m_1,n_1,m_2,n_2,...,m_k]$ and $[X,**] := [m_1,n_1,m_2,n_2,...,m_k,n_k]$, and in case $n_k = 0$, we denote $[X] := [m_1,n_1,m_2,n_2,...,n_{k-1}]$ and $[X,**] := [m_1,n_1,m_2,n_2,...,m_k]$.
\end{Def}
\begin{figure}[H]
\centering
\begin{tikzpicture}[scale=0.9,
    mid arrow/.style={
        postaction={decorate},
        decoration={
            markings,
            mark=at position 0.5 with {\arrow{>}}
        }
    }
]
  \draw[thick] (0,0) -- (1,4) -- (5,0) -- cycle;
  \filldraw[black] (0,0) circle (2pt);
  \node[left, font=\scriptsize] at (0,0) {$B$};
  \filldraw[black] (1,4) circle (2pt);
  \node[above, font=\scriptsize] at (1,4) {$V$};
  \filldraw[black] (5,0) circle (2pt);
  \node[right, font=\scriptsize] at (5,0) {$A$};
  
  \filldraw[blue] (0.25,1) circle (2pt);
  \filldraw[blue] (0.5,2) circle (2pt);
  \filldraw[blue] (0.75,3) circle (2pt);
  
  \filldraw[red] (2,3) circle (2pt);
  \node[above right, font=\scriptsize] at (2,3) {$D^{[X]}_3$};
  \filldraw[red] (3,2) circle (2pt);
  \node[above right, font=\scriptsize] at (3,2) {$D^{[X]}_2$};
  \filldraw[red] (4,1) circle (2pt);
  \node[above right, font=\scriptsize] at (4,1) {$D^{[X]}_1$};
  
  \filldraw[red] (1.25,0) circle (2pt);
  \node[below, font=\scriptsize] at (1.25,0) {$D^{[X,**]}_3$};
  \filldraw[red] (2.5,0) circle (2pt);
  \node[below, font=\scriptsize] at (2.5,0) {$D^{[X,**]}_2$};
  \filldraw[red] (3.75,0) circle (2pt);
  \node[below, font=\scriptsize] at (3.75,0) {$D^{[X,**]}_1$};
  
  \filldraw[red] (1.75,2) circle (2pt);
  \node[right, font=\scriptsize] at (1.75,2) {$I^{[X,**]}_{(1,2)}$};
  \filldraw[red] (1.5,1) circle (2pt);
  \node[right, font=\scriptsize] at (1.5,1) {$I^{[X,**]}_{(2,1)}$};
  \filldraw[red] (2.75,1) circle (2pt);
  \node[right, font=\scriptsize] at (2.75,1) {$I^{[X,**]}_{(1,1)}$};
\end{tikzpicture}
\caption{The resulting triangle after the sequence of flips for \( n = 3 \)}
\label{fig:inner3}
\end{figure}
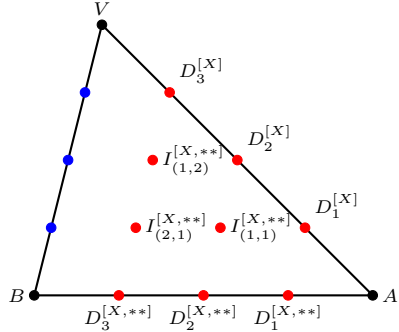
\begin{Def}
	Edges in the triangulation are colored (first = red, second = green, third = blue) to assign coordinates \( (a, b, c) \in \Gamma_{n+1} \). Starting from \( \triangle AV_{m_1,1}V_1 \) as an \emph{odd triangle}, all triangles are classified as either \emph{odd} or \emph{even} depending on the clockwise or counter-clockwise direction of the colors order {red, green, blue} on the edges ($v_1$ represents red, $v_2$ represents green, $v_3$ represents blue), with adjacent triangles having opposite parity (see Figure~\ref{fig:oddeven}).
\end{Def}
\begin{figure}[H]
\centering
\begin{tikzpicture}[scale=0.9,
    mid arrow/.style={
        postaction={decorate},
        decoration={
            markings,
            mark=at position 0.5 with {\arrow{>}}
        }
    }
]
  \draw[-{Latex[length=5mm]}, red] (0,0) -- (1,3);
  \draw[-{Latex[length=5mm]}, green] (1,3) -- (4,0);
  \draw[-{Latex[length=5mm]}, blue] (4,0) -- (0,0);
  
  \draw[-{Latex[length=5mm]}, red] (6,3) -- (5,0);
  \draw[-{Latex[length=5mm]}, green] (5,0) -- (9,0);
  \draw[-{Latex[length=5mm]}, blue] (9,0) -- (6,3);
  
  \node[left, font=\scriptsize] at (0.5,1.5) {$v_1$};
  \node[right, font=\scriptsize] at (2.5,1.5) {$v_2$};
  \node[below, font=\scriptsize] at (2,0) {$v_3$};    

  \node[left, font=\scriptsize] at (5.5,1.5) {$v_1$};
  \node[right, font=\scriptsize] at (7.5,1.5) {$v_3$};
  \node[below, font=\scriptsize] at (7,0) {$v_2$}; 
  
  \filldraw[black] (0,0) circle (2pt);
  \filldraw[black] (1,3) circle (2pt);
  \filldraw[black] (4,0) circle (2pt);
  \filldraw[black] (5,0) circle (2pt);
  \filldraw[black] (6,3) circle (2pt);
  \filldraw[black] (9,0) circle (2pt);
\end{tikzpicture}
\caption{Odd triangle (left) and even triangle (right)}
\label{fig:oddeven}
\end{figure}
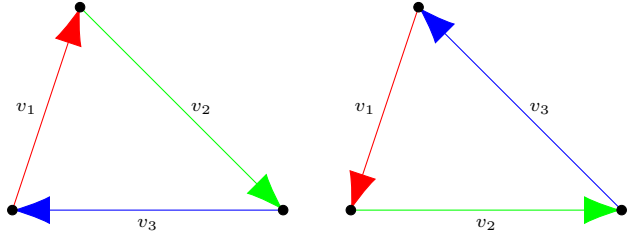
This colouring and parity assignment allow us to identify all coordinates in the general \( n \)-triangulated \( m \)-gon, as illustrated in Figure~\ref{fig:coord} for \( n = 2 \).
\begin{figure}[H]
    \centering
    \begin{tikzpicture}[scale=0.95]
        \def\n{15}
        \def\side{1.5}
        \def\extAngle{24} 
        
        \coordinate (V0) at (0,0);
        
        \foreach \i in {1,...,\n} {
            \coordinate (V\i) at ($(V\the\numexpr\i-1) + (\the\numexpr(\i-1)*\extAngle:\side)$);
        }
        
        \node[right, font=\scriptsize] at (V5) {$A$};
        \node[left, font=\scriptsize] at (V11) {$B$};
        \node[right, font=\scriptsize] at (V4) {$V_1$};
        \node[right, font=\scriptsize] at (V3) {$V_2$};
        \node[right, font=\scriptsize] at (V6) {$V_{2,1}$};
        \node[right, font=\scriptsize] at (V7) {$V_{2,2}$};
        \node[above, font=\scriptsize] at (V8) {$V_{2,3}$};
        \node[right, font=\scriptsize] at (V2) {$V_{2,3,1}$};
        \node[below, font=\scriptsize] at (V1) {$V_{2,3,2}$};
        \node[below, font=\scriptsize] at (V0) {$V_{2,3,3}$};
        \node[left, font=\scriptsize] at (V9) {$V_{2,3,3,1}$};
        \node[left, font=\scriptsize] at (V14) {$V_{2,3,3,1,1}$};
        \node[left, font=\scriptsize] at (V13) {$V_{2,3,3,1,2}$};
        \node[left, font=\scriptsize] at (V12) {$V_{2,3,3,1,3}$};
        \node[left, font=\scriptsize] at (V10) {$V$};
        
        \draw[thin,dashed] (V5) -- (V11);
        
        \draw[thick,red] (V0) -- (V1);
        \draw[thick,red] (V1) -- (V2);
        \draw[thick,blue] (V2) -- (V3);
        \draw[thick,blue] (V3) -- (V4);
        \draw[thick,blue] (V4) -- (V5);
        \draw[thick,green] (V5) -- (V6);
        \draw[thick,green] (V6) -- (V7);
        \draw[thick,green] (V7) -- (V8);
        \draw[thick,green] (V8) -- (V9);
        \draw[thick,blue] (V9) -- (V10);
        \draw[thick,blue] (V10) -- (V11);
        \draw[thick,green] (V11) -- (V12);
        \draw[thick,green] (V12) -- (V13);
        \draw[thick,green] (V13) -- (V14);
        \draw[thick,red] (V14) -- (V0);
        
        \draw[thick,red] (V6) -- (V4);
        \filldraw[blue] ($(V6)!0.333!(V4)$) circle (2pt);
        \filldraw[blue] ($(V6)!0.667!(V4)$) circle (2pt);
        \draw[thick,green] (V6) -- (V3);
        \filldraw[blue] ($(V6)!0.333!(V3)$) circle (2pt);
        \filldraw[blue] ($(V6)!0.667!(V3)$) circle (2pt);
        \draw[thick,red] (V6) -- (V2);
        \filldraw[blue] ($(V6)!0.333!(V2)$) circle (2pt);
        \filldraw[blue] ($(V6)!0.667!(V2)$) circle (2pt);
        \draw[thick,blue] (V2) -- (V7);
        \filldraw[blue] ($(V2)!0.333!(V7)$) circle (2pt);
        \filldraw[blue] ($(V2)!0.667!(V7)$) circle (2pt);
        \draw[thick,red] (V2) -- (V8);
        \filldraw[blue] ($(V2)!0.333!(V8)$) circle (2pt);
        \filldraw[blue] ($(V2)!0.667!(V8)$) circle (2pt);
        \draw[thick,blue] (V2) -- (V9);
        \filldraw[blue] ($(V2)!0.333!(V9)$) circle (2pt);
        \filldraw[blue] ($(V2)!0.667!(V9)$) circle (2pt);
        \draw[thick,green] (V9) -- (V1);
        \filldraw[blue] ($(V9)!0.333!(V1)$) circle (2pt);
        \filldraw[blue] ($(V9)!0.667!(V1)$) circle (2pt);
        \draw[thick,blue] (V9) -- (V0);
        \filldraw[blue] ($(V9)!0.333!(V0)$) circle (2pt);
        \filldraw[blue] ($(V9)!0.667!(V0)$) circle (2pt);
        \draw[thick,green] (V9) -- (V14);  
        \filldraw[blue] ($(V9)!0.333!(V14)$) circle (2pt);
        \filldraw[blue] ($(V9)!0.667!(V14)$) circle (2pt);     
        \draw[thick,red] (V10) -- (V14);
        \filldraw[blue] ($(V10)!0.333!(V14)$) circle (2pt);
        \filldraw[blue] ($(V10)!0.667!(V14)$) circle (2pt);
        \draw[thick,blue] (V10) -- (V13);
        \filldraw[blue] ($(V10)!0.333!(V13)$) circle (2pt);
        \filldraw[blue] ($(V10)!0.667!(V13)$) circle (2pt);
        \draw[thick,red] (V10) -- (V12); 
        \filldraw[blue] ($(V10)!0.333!(V12)$) circle (2pt);
        \filldraw[blue] ($(V10)!0.667!(V12)$) circle (2pt);      
         
        \foreach \i in {0,...,\n} {
            \filldraw[black] (V\i) circle (2pt);
        }
        
        \foreach \i[evaluate=\i as \isum using int(1+\i)] in {0,...,14} {
            \filldraw[blue] ($(V\i)!0.333!(V\isum)$) circle (2pt);
            \filldraw[blue] ($(V\i)!0.667!(V\isum)$) circle (2pt);
        }
        
        \coordinate (C1) at (barycentric cs:V0=1,V1=1,V9=1);
        \filldraw[blue] (C1) circle (2pt);
        \coordinate (C2) at (barycentric cs:V1=1,V2=1,V9=1);
        \filldraw[blue] (C2) circle (2pt);
        \coordinate (C3) at (barycentric cs:V14=1,V0=1,V9=1);
        \filldraw[blue] (C3) circle (2pt);
        \coordinate (C4) at (barycentric cs:V2=1,V8=1,V9=1);
        \filldraw[blue] (C4) circle (2pt);
        \coordinate (C5) at (barycentric cs:V2=1,V7=1,V8=1);
        \filldraw[blue] (C5) circle (2pt);
        \coordinate (C6) at (barycentric cs:V2=1,V6=1,V7=1);
        \filldraw[blue] (C6) circle (2pt);
        \coordinate (C7) at (barycentric cs:V2=1,V3=1,V6=1);
        \filldraw[blue] (C7) circle (2pt);
        \coordinate (C8) at (barycentric cs:V3=1,V4=1,V6=1);
        \filldraw[blue] (C8) circle (2pt);
        \coordinate (C9) at (barycentric cs:V4=1,V5=1,V6=1);
        \filldraw[blue] (C9) circle (2pt);
        \coordinate (C10) at (barycentric cs:V9=1,V10=1,V14=1);
        \filldraw[blue] (C10) circle (2pt);
        \coordinate (C11) at (barycentric cs:V10=1,V13=1,V14=1);
        \filldraw[blue] (C11) circle (2pt);
        \coordinate (C12) at (barycentric cs:V10=1,V12=1,V13=1);
        \filldraw[blue] (C12) circle (2pt);
        \coordinate (C13) at (barycentric cs:V10=1,V11=1,V12=1);
        \filldraw[blue] (C13) circle (2pt);
    \end{tikzpicture}
    \caption{Coordinates in the $2$-triangulated $15$-gon \( \mathcal{P}(2,3,3,1,3,0) \)}
    \label{fig:coord}
\end{figure}
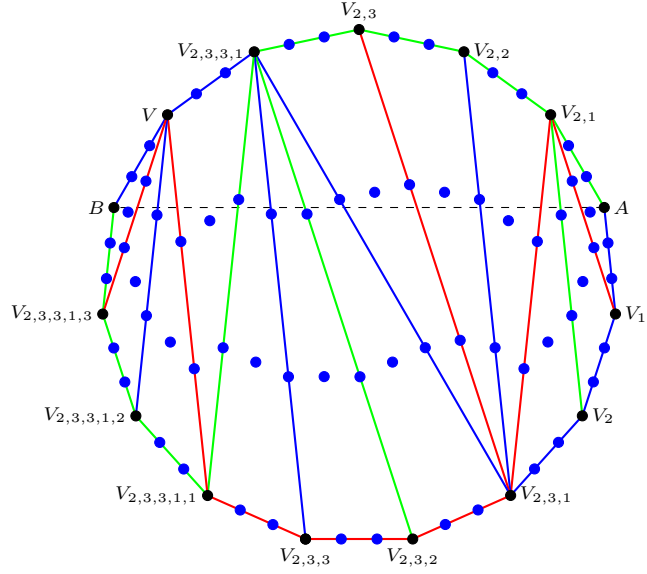

We are now ready to set up recurrence relations to calculate the general formulas of the functions $D^{[m_1,n_1,\ldots,m_k,n_k]}_l$ (with $l = 1,2,...,n$) and $I^{[m_1,n_1,\ldots,m_k,n_k]}_{(i,j)}$ (with $i,j \in \mathbb{N}$ such that $i+j \leq n$) with respect to $\gamma = AB$ based on the standard sequence of flips mentioned in the Definition~\ref{def:standard seq_flips}.

We first introduce some short hand notations concerning the parties of $m_i$ and $n_j$. Define the partial sums \Eq{M_i := m_1 + m_2 + \cdots + m_i\And N_j := n_1 + n_2 + \cdots + n_j.}
\begin{Def}
Consider the permutation actions $\upsilon, \tau$ on $\Gamma_m$ defined by $$\upsilon \cdot (w_1,w_2,w_3) = (w_3,w_1,w_2)\And\tau \cdot (w_1,w_2,w_3) = (w_1,w_3,w_2),\tab (w_1,w_2,w_3)\in\G_m.$$ Clearly, $\upsilon^3=\tau^2=e$ and $\upsilon \tau \upsilon = \tau$. We define the following sums:
\begin{align}
	\sigma &:= (-1)^{m_1} + (-1)^{n_1} + (-1)^{m_2} + (-1)^{n_2} + \cdots + (-1)^{m_{k-1}} + (-1)^{n_{k-1}} + (-1)^{m_k};\\
	\sigma' &:= (-1)^{m_1} + (-1)^{n_1} + (-1)^{m_1} + (-1)^{n_2} + \cdots + (-1)^{m_{k-1}} + (-1)^{n_{k-1}}. 
\end{align}
Finally we let $p \in \{0,1,2\}, q \in \{0,1\}$ be such that $\sigma \equiv p \text{ (mod 3)}$ and $M_k+N_{k-1} \equiv q \text{ (mod 2)}$, and denote $\omega := \upsilon^p \tau^q$. Similarly, let $p' \in \{0,1,2\}, q' \in \{0,1\}$ such that $\sigma' \equiv p' \text{ (mod 3)}$ and $M_{k-1}+N_{k-1} \equiv q' \text{ (mod 2)}$, and denote $\omega' := \upsilon^{p'} \tau^{q'}$. 
\end{Def}

The color of all edges and $\cT$-diagonals of the polygon can be identified uniquely (see Figure~\ref{fig:coord} for an example). Refer to Definition~\ref{def:standard seq_flips} again, we start from the last level, and continue following the standard sequence of flips recursively.
\begin{enumerate}
	\item[(I)] For \(n_k > 0\), we introduce extra notations $V_{m_1,n_1,m_2,n_2,...,m_k,n_k+1} := B$ and $S_{r} := (m_1,n_1,m_2,n_2,...,$ $m_{r},n_{r})$ for all $r \in \mathbb{N}$. When we have reached the diagonal $d_{M_{k-1}+N_k}$, then the color of all edges $V_{S_{k-1},m_k,j}V_{S_{k-1},m_k,j+1}$ for $j=1,2,...,n_k$ are the same. That common color and the color of the diagonal $VV_{S_{k-1},m_k,1}$ is identified uniquely by $p$ and $q$. Hence, we consider the 6 following cases, where the figure shows how the vertices are being labeled after previous flips.

\begin{figure}[H]
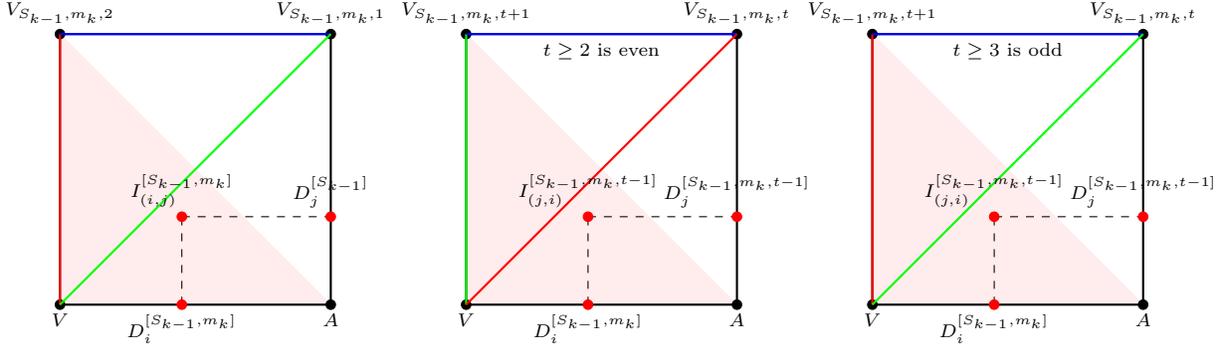

\centering

\caption{\textbf{Case 1.1}: $(p,q) = (0,0)$}
\label{fig:case1-1}
\end{figure}

\begin{figure}[H]
\centering
%
\caption{
\textbf{Case 1.2}: $(p,q) = (0,1)$}
\label{fig:case1-2}
\end{figure}
\begin{figure}[H]
\centering
%
\caption{
\textbf{Case 1.3}: $(p,q) = (1,0)$}
\label{fig:case1-3}
\end{figure}

\begin{figure}[H]
\centering
%
\caption{
\textbf{Case 1.4}: $(p,q) = (1,1)$}
\label{fig:case1-4}
\end{figure}

\begin{figure}[H]
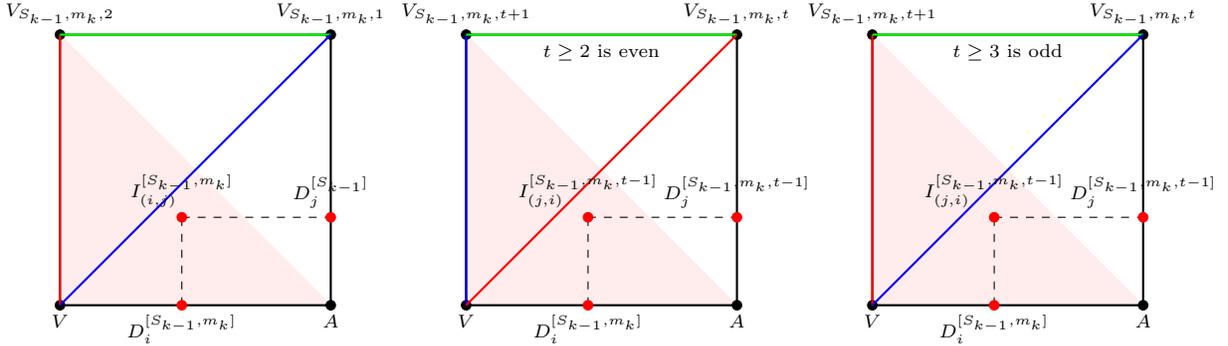

\centering
%
\caption{
\textbf{Case 1.6}: $(p,q) = (2,1)$}
\label{fig:case1-6}
\end{figure}
	
	\item[(II)] For \(n_k = 0\), redefine the notation $V_{m_1,n_1,m_2,n_2,...,n_{k-1},m_k+1} := B$ and keep the notation $S_{r} := (m_1,n_1,m_2,n_2,...,$ $m_{r},n_{r}$) for all $r \in \mathbb{N}$. When we have reached the diagonal $d_{M_{k-1}+N_{k-1}}$, then the color of all edges $V_{S_{k-1},j}V_{S_{k-1},j+1}$ for $j=1,2,...,m_k$ are the same. That common color and the color of the diagonal $VV_{S_{k-1},1}$ is identified uniquely by $p'$ and $q'$. Hence, we consider the 6 following cases, again the figure shows how the vertices are being labeled after previous flips.

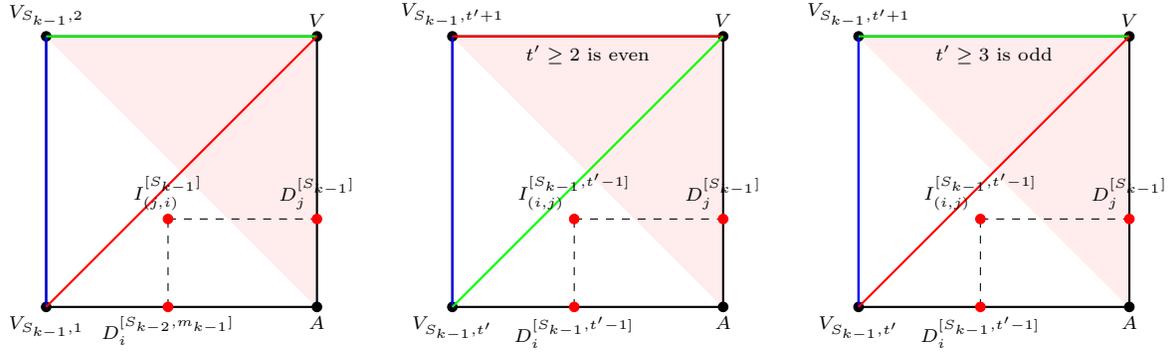
\begin{figure}[H]
\centering
\begin{tikzpicture}[scale=0.9,
    mid arrow/.style={
        postaction={decorate},
        decoration={
            markings,
            mark=at position 0.5 with {\arrow{>}}
        }
    }
]
  \filldraw[pink!30] (4,4) -- (4,0) -- (0,4) -- cycle;
  \filldraw[pink!30] (10,4) -- (10,0) -- (6,4) -- cycle;
  \filldraw[pink!30] (16,4) -- (16,0) -- (12,4) -- cycle;
  
  \draw[thick] (0,0) -- (4,0) -- (4,4) -- (0,4) -- cycle;
  \draw[thick] (6,0) -- (10,0) -- (10,4) -- (6,4) -- cycle;
  \draw[thick] (12,0) -- (16,0) -- (16,4) -- (12,4) -- cycle;
  
  \foreach \i in {0,1,2} {
  		\filldraw[black] (6*\i+0,0) circle (2pt);
  		\filldraw[black] (6*\i+4,0) circle (2pt);
  		\filldraw[black] (6*\i+4,4) circle (2pt);
  		\filldraw[black] (6*\i+0,4) circle (2pt);
  }
  
  \node[above, font=\scriptsize] at (4,4) {$V$};
  \node[above, font=\scriptsize] at (10,4) {$V$};
  \node[above, font=\scriptsize] at (16,4) {$V$};
  
  \node[below, font=\scriptsize] at (4,0) {$A$};
  \node[below, font=\scriptsize] at (10,0) {$A$};
  \node[below, font=\scriptsize] at (16,0) {$A$};
  
  \node[above, font=\scriptsize] at (0,4) {$V_{S_{k-1},2}$};
  \node[above, font=\scriptsize] at (6,4) {$V_{S_{k-1},t'+1}$};
  \node[above, font=\scriptsize] at (12,4) {$V_{S_{k-1},t'+1}$};
  
  \node[below, font=\scriptsize] at (0,0) {$V_{S_{k-1},1}$};
  \node[below, font=\scriptsize] at (6,0) {$V_{S_{k-1},t'}$};
  \node[below, font=\scriptsize] at (12,0) {$V_{S_{k-1},t'}$};
  
  \node[below, font=\scriptsize] at (8,4) {$t' \geq 2 \text{ is even}$};
  \node[below, font=\scriptsize] at (14,4) {$t' \geq 3 \text{ is odd}$};
  
  \draw[thin,dashed] (1.8,0) -- (1.8,1.3);
  \draw[thin,dashed] (1.8,1.3) -- (4,1.3);
  \draw[thin,dashed] (7.8,0) -- (7.8,1.3);
  \draw[thin,dashed] (7.8,1.3) -- (10,1.3);
  \draw[thin,dashed] (13.8,0) -- (13.8,1.3);
  \draw[thin,dashed] (13.8,1.3) -- (16,1.3);
  
  \draw[thick,blue] (0,0) -- (0,4);
  \draw[thick,green] (0,4) -- (4,4);
  \draw[thick,red] (4,4) -- (0,0);
  \draw[thick,blue] (6,0) -- (6,4);
  \draw[thick,red] (6,4) -- (10,4);
  \draw[thick,green] (10,4) -- (6,0);
  \draw[thick,blue] (12,0) -- (12,4);
  \draw[thick,green] (12,4) -- (16,4);
  \draw[thick,red] (16,4) -- (12,0);
  
  \filldraw[red] (1.8,0) circle (2pt);
  \node[below, font=\scriptsize] at (1.8,0) {$D^{[S_{k-2},m_{k-1}]}_i$};
  \filldraw[red] (7.8,0) circle (2pt);
  \node[below, font=\scriptsize] at (7.8,0) {$D^{[S_{k-1},t'-1]}_i$};
  \filldraw[red] (13.8,0) circle (2pt);
  \node[below, font=\scriptsize] at (13.8,0) {$D^{[S_{k-1},t'-1]}_i$};
  
  \filldraw[red] (1.8,1.3) circle (2pt);
  \node[above, font=\scriptsize] at (1.8,1.3) {$I^{[S_{k-1}]}_{(j,i)}$};
  \filldraw[red] (7.8,1.3) circle (2pt);
  \node[above, font=\scriptsize] at (7.8,1.3) {$I^{[S_{k-1},t'-1]}_{(i,j)}$};
  \filldraw[red] (13.8,1.3) circle (2pt);
  \node[above, font=\scriptsize] at (13.8,1.3) {$I^{[S_{k-1},t'-1]}_{(i,j)}$};
  
  \filldraw[red] (4,1.3) circle (2pt);
  \node[above, font=\scriptsize] at (4,1.3) {$D^{[S_{k-1}]}_j$};
  \filldraw[red] (10,1.3) circle (2pt);
  \node[above, font=\scriptsize] at (10,1.3) {$D^{[S_{k-1}]}_j$};
  \filldraw[red] (16,1.3) circle (2pt);
  \node[above, font=\scriptsize] at (16,1.3) {$D^{[S_{k-1}]}_j$};
\end{tikzpicture}
\caption{
\textbf{Case 2.1}: $(p',q') = (0,0)$}
\label{fig:case2-1}
\end{figure}

\begin{figure}[H]
\centering
\begin{tikzpicture}[scale=0.9,
    mid arrow/.style={
        postaction={decorate},
        decoration={
            markings,
            mark=at position 0.5 with {\arrow{>}}
        }
    }
]
  \filldraw[pink!30] (4,4) -- (4,0) -- (0,4) -- cycle;
  \filldraw[pink!30] (10,4) -- (10,0) -- (6,4) -- cycle;
  \filldraw[pink!30] (16,4) -- (16,0) -- (12,4) -- cycle;
  
  \draw[thick] (0,0) -- (4,0) -- (4,4) -- (0,4) -- cycle;
  \draw[thick] (6,0) -- (10,0) -- (10,4) -- (6,4) -- cycle;
  \draw[thick] (12,0) -- (16,0) -- (16,4) -- (12,4) -- cycle;
  
  \foreach \i in {0,1,2} {
  		\filldraw[black] (6*\i+0,0) circle (2pt);
  		\filldraw[black] (6*\i+4,0) circle (2pt);
  		\filldraw[black] (6*\i+4,4) circle (2pt);
  		\filldraw[black] (6*\i+0,4) circle (2pt);
  }
  
  \node[above, font=\scriptsize] at (4,4) {$V$};
  \node[above, font=\scriptsize] at (10,4) {$V$};
  \node[above, font=\scriptsize] at (16,4) {$V$};
  
  \node[below, font=\scriptsize] at (4,0) {$A$};
  \node[below, font=\scriptsize] at (10,0) {$A$};
  \node[below, font=\scriptsize] at (16,0) {$A$};
  
  \node[above, font=\scriptsize] at (0,4) {$V_{S_{k-1},2}$};
  \node[above, font=\scriptsize] at (6,4) {$V_{S_{k-1},t'+1}$};
  \node[above, font=\scriptsize] at (12,4) {$V_{S_{k-1},t'+1}$};
  
  \node[below, font=\scriptsize] at (0,0) {$V_{S_{k-1},1}$};
  \node[below, font=\scriptsize] at (6,0) {$V_{S_{k-1},t'}$};
  \node[below, font=\scriptsize] at (12,0) {$V_{S_{k-1},t'}$};
  
  \node[below, font=\scriptsize] at (8,4) {$t' \geq 2 \text{ is even}$};
  \node[below, font=\scriptsize] at (14,4) {$t' \geq 3 \text{ is odd}$};
  
  \draw[thin,dashed] (1.8,0) -- (1.8,1.3);
  \draw[thin,dashed] (1.8,1.3) -- (4,1.3);
  \draw[thin,dashed] (7.8,0) -- (7.8,1.3);
  \draw[thin,dashed] (7.8,1.3) -- (10,1.3);
  \draw[thin,dashed] (13.8,0) -- (13.8,1.3);
  \draw[thin,dashed] (13.8,1.3) -- (16,1.3);
  
  \draw[thick,blue] (0,0) -- (0,4);
  \draw[thick,red] (0,4) -- (4,4);
  \draw[thick,green] (4,4) -- (0,0);
  \draw[thick,blue] (6,0) -- (6,4);
  \draw[thick,green] (6,4) -- (10,4);
  \draw[thick,red] (10,4) -- (6,0);
  \draw[thick,blue] (12,0) -- (12,4);
  \draw[thick,red] (12,4) -- (16,4);
  \draw[thick,green] (16,4) -- (12,0);
  
  \filldraw[red] (1.8,0) circle (2pt);
  \node[below, font=\scriptsize] at (1.8,0) {$D^{[S_{k-2},m_{k-1}]}_i$};
  \filldraw[red] (7.8,0) circle (2pt);
  \node[below, font=\scriptsize] at (7.8,0) {$D^{[S_{k-1},t'-1]}_i$};
  \filldraw[red] (13.8,0) circle (2pt);
  \node[below, font=\scriptsize] at (13.8,0) {$D^{[S_{k-1},t'-1]}_i$};
  
  \filldraw[red] (1.8,1.3) circle (2pt);
  \node[above, font=\scriptsize] at (1.8,1.3) {$I^{[S_{k-1}]}_{(j,i)}$};
  \filldraw[red] (7.8,1.3) circle (2pt);
  \node[above, font=\scriptsize] at (7.8,1.3) {$I^{[S_{k-1},t'-1]}_{(i,j)}$};
  \filldraw[red] (13.8,1.3) circle (2pt);
  \node[above, font=\scriptsize] at (13.8,1.3) {$I^{[S_{k-1},t'-1]}_{(i,j)}$};
  
  \filldraw[red] (4,1.3) circle (2pt);
  \node[above, font=\scriptsize] at (4,1.3) {$D^{[S_{k-1}]}_j$};
  \filldraw[red] (10,1.3) circle (2pt);
  \node[above, font=\scriptsize] at (10,1.3) {$D^{[S_{k-1}]}_j$};
  \filldraw[red] (16,1.3) circle (2pt);
  \node[above, font=\scriptsize] at (16,1.3) {$D^{[S_{k-1}]}_j$};
\end{tikzpicture}
\caption{
\textbf{Case 2.2}: $(p',q') = (0,1)$}
\label{fig:case2-2}
\end{figure}

\begin{figure}[H]
\centering
\begin{tikzpicture}[scale=0.9,
    mid arrow/.style={
        postaction={decorate},
        decoration={
            markings,
            mark=at position 0.5 with {\arrow{>}}
        }
    }
]
  \filldraw[pink!30] (4,4) -- (4,0) -- (0,4) -- cycle;
  \filldraw[pink!30] (10,4) -- (10,0) -- (6,4) -- cycle;
  \filldraw[pink!30] (16,4) -- (16,0) -- (12,4) -- cycle;
  
  \draw[thick] (0,0) -- (4,0) -- (4,4) -- (0,4) -- cycle;
  \draw[thick] (6,0) -- (10,0) -- (10,4) -- (6,4) -- cycle;
  \draw[thick] (12,0) -- (16,0) -- (16,4) -- (12,4) -- cycle;
  
  \foreach \i in {0,1,2} {
  		\filldraw[black] (6*\i+0,0) circle (2pt);
  		\filldraw[black] (6*\i+4,0) circle (2pt);
  		\filldraw[black] (6*\i+4,4) circle (2pt);
  		\filldraw[black] (6*\i+0,4) circle (2pt);
  }
  
  \node[above, font=\scriptsize] at (4,4) {$V$};
  \node[above, font=\scriptsize] at (10,4) {$V$};
  \node[above, font=\scriptsize] at (16,4) {$V$};
  
  \node[below, font=\scriptsize] at (4,0) {$A$};
  \node[below, font=\scriptsize] at (10,0) {$A$};
  \node[below, font=\scriptsize] at (16,0) {$A$};
  
  \node[above, font=\scriptsize] at (0,4) {$V_{S_{k-1},2}$};
  \node[above, font=\scriptsize] at (6,4) {$V_{S_{k-1},t'+1}$};
  \node[above, font=\scriptsize] at (12,4) {$V_{S_{k-1},t'+1}$};
  
  \node[below, font=\scriptsize] at (0,0) {$V_{S_{k-1},1}$};
  \node[below, font=\scriptsize] at (6,0) {$V_{S_{k-1},t'}$};
  \node[below, font=\scriptsize] at (12,0) {$V_{S_{k-1},t'}$};
  
  \node[below, font=\scriptsize] at (8,4) {$t' \geq 2 \text{ is even}$};
  \node[below, font=\scriptsize] at (14,4) {$t' \geq 3 \text{ is odd}$};
  
  \draw[thin,dashed] (1.8,0) -- (1.8,1.3);
  \draw[thin,dashed] (1.8,1.3) -- (4,1.3);
  \draw[thin,dashed] (7.8,0) -- (7.8,1.3);
  \draw[thin,dashed] (7.8,1.3) -- (10,1.3);
  \draw[thin,dashed] (13.8,0) -- (13.8,1.3);
  \draw[thin,dashed] (13.8,1.3) -- (16,1.3);
  
  \draw[thick,red] (0,0) -- (0,4);
  \draw[thick,blue] (0,4) -- (4,4);
  \draw[thick,green] (4,4) -- (0,0);
  \draw[thick,red] (6,0) -- (6,4);
  \draw[thick,green] (6,4) -- (10,4);
  \draw[thick,blue] (10,4) -- (6,0);
  \draw[thick,red] (12,0) -- (12,4);
  \draw[thick,blue] (12,4) -- (16,4);
  \draw[thick,green] (16,4) -- (12,0);
  
  \filldraw[red] (1.8,0) circle (2pt);
  \node[below, font=\scriptsize] at (1.8,0) {$D^{[S_{k-2},m_{k-1}]}_i$};
  \filldraw[red] (7.8,0) circle (2pt);
  \node[below, font=\scriptsize] at (7.8,0) {$D^{[S_{k-1},t'-1]}_i$};
  \filldraw[red] (13.8,0) circle (2pt);
  \node[below, font=\scriptsize] at (13.8,0) {$D^{[S_{k-1},t'-1]}_i$};
  
  \filldraw[red] (1.8,1.3) circle (2pt);
  \node[above, font=\scriptsize] at (1.8,1.3) {$I^{[S_{k-1}]}_{(j,i)}$};
  \filldraw[red] (7.8,1.3) circle (2pt);
  \node[above, font=\scriptsize] at (7.8,1.3) {$I^{[S_{k-1},t'-1]}_{(i,j)}$};
  \filldraw[red] (13.8,1.3) circle (2pt);
  \node[above, font=\scriptsize] at (13.8,1.3) {$I^{[S_{k-1},t'-1]}_{(i,j)}$};
  
  \filldraw[red] (4,1.3) circle (2pt);
  \node[above, font=\scriptsize] at (4,1.3) {$D^{[S_{k-1}]}_j$};
  \filldraw[red] (10,1.3) circle (2pt);
  \node[above, font=\scriptsize] at (10,1.3) {$D^{[S_{k-1}]}_j$};
  \filldraw[red] (16,1.3) circle (2pt);
  \node[above, font=\scriptsize] at (16,1.3) {$D^{[S_{k-1}]}_j$};
\end{tikzpicture}
\caption{
\textbf{Case 2.3}: $(p',q') = (1,0)$}
\label{fig:case2-3}
\end{figure}

\begin{figure}[H]
\centering
\begin{tikzpicture}[scale=0.9,
    mid arrow/.style={
        postaction={decorate},
        decoration={
            markings,
            mark=at position 0.5 with {\arrow{>}}
        }
    }
]
  \filldraw[pink!30] (4,4) -- (4,0) -- (0,4) -- cycle;
  \filldraw[pink!30] (10,4) -- (10,0) -- (6,4) -- cycle;
  \filldraw[pink!30] (16,4) -- (16,0) -- (12,4) -- cycle;
  
  \draw[thick] (0,0) -- (4,0) -- (4,4) -- (0,4) -- cycle;
  \draw[thick] (6,0) -- (10,0) -- (10,4) -- (6,4) -- cycle;
  \draw[thick] (12,0) -- (16,0) -- (16,4) -- (12,4) -- cycle;
  
  \foreach \i in {0,1,2} {
  		\filldraw[black] (6*\i+0,0) circle (2pt);
  		\filldraw[black] (6*\i+4,0) circle (2pt);
  		\filldraw[black] (6*\i+4,4) circle (2pt);
  		\filldraw[black] (6*\i+0,4) circle (2pt);
  }
  
  \node[above, font=\scriptsize] at (4,4) {$V$};
  \node[above, font=\scriptsize] at (10,4) {$V$};
  \node[above, font=\scriptsize] at (16,4) {$V$};
  
  \node[below, font=\scriptsize] at (4,0) {$A$};
  \node[below, font=\scriptsize] at (10,0) {$A$};
  \node[below, font=\scriptsize] at (16,0) {$A$};
  
  \node[above, font=\scriptsize] at (0,4) {$V_{S_{k-1},2}$};
  \node[above, font=\scriptsize] at (6,4) {$V_{S_{k-1},t'+1}$};
  \node[above, font=\scriptsize] at (12,4) {$V_{S_{k-1},t'+1}$};
  
  \node[below, font=\scriptsize] at (0,0) {$V_{S_{k-1},1}$};
  \node[below, font=\scriptsize] at (6,0) {$V_{S_{k-1},t'}$};
  \node[below, font=\scriptsize] at (12,0) {$V_{S_{k-1},t'}$};
  
  \node[below, font=\scriptsize] at (8,4) {$t' \geq 2 \text{ is even}$};
  \node[below, font=\scriptsize] at (14,4) {$t' \geq 3 \text{ is odd}$};
  
  \draw[thin,dashed] (1.8,0) -- (1.8,1.3);
  \draw[thin,dashed] (1.8,1.3) -- (4,1.3);
  \draw[thin,dashed] (7.8,0) -- (7.8,1.3);
  \draw[thin,dashed] (7.8,1.3) -- (10,1.3);
  \draw[thin,dashed] (13.8,0) -- (13.8,1.3);
  \draw[thin,dashed] (13.8,1.3) -- (16,1.3);
  
  \draw[thick,red] (0,0) -- (0,4);
  \draw[thick,green] (0,4) -- (4,4);
  \draw[thick,blue] (4,4) -- (0,0);
  \draw[thick,red] (6,0) -- (6,4);
  \draw[thick,blue] (6,4) -- (10,4);
  \draw[thick,green] (10,4) -- (6,0);
  \draw[thick,red] (12,0) -- (12,4);
  \draw[thick,green] (12,4) -- (16,4);
  \draw[thick,blue] (16,4) -- (12,0);
  
  \filldraw[red] (1.8,0) circle (2pt);
  \node[below, font=\scriptsize] at (1.8,0) {$D^{[S_{k-2},m_{k-1}]}_i$};
  \filldraw[red] (7.8,0) circle (2pt);
  \node[below, font=\scriptsize] at (7.8,0) {$D^{[S_{k-1},t'-1]}_i$};
  \filldraw[red] (13.8,0) circle (2pt);
  \node[below, font=\scriptsize] at (13.8,0) {$D^{[S_{k-1},t'-1]}_i$};
  
  \filldraw[red] (1.8,1.3) circle (2pt);
  \node[above, font=\scriptsize] at (1.8,1.3) {$I^{[S_{k-1}]}_{(j,i)}$};
  \filldraw[red] (7.8,1.3) circle (2pt);
  \node[above, font=\scriptsize] at (7.8,1.3) {$I^{[S_{k-1},t'-1]}_{(i,j)}$};
  \filldraw[red] (13.8,1.3) circle (2pt);
  \node[above, font=\scriptsize] at (13.8,1.3) {$I^{[S_{k-1},t'-1]}_{(i,j)}$};
  
  \filldraw[red] (4,1.3) circle (2pt);
  \node[above, font=\scriptsize] at (4,1.3) {$D^{[S_{k-1}]}_j$};
  \filldraw[red] (10,1.3) circle (2pt);
  \node[above, font=\scriptsize] at (10,1.3) {$D^{[S_{k-1}]}_j$};
  \filldraw[red] (16,1.3) circle (2pt);
  \node[above, font=\scriptsize] at (16,1.3) {$D^{[S_{k-1}]}_j$};
\end{tikzpicture}
\caption{
\textbf{Case 2.4}: $(p',q') = (1,1)$}
\label{fig:case2-4}
\end{figure}

\begin{figure}[H]
\centering
\begin{tikzpicture}[scale=0.9,
    mid arrow/.style={
        postaction={decorate},
        decoration={
            markings,
            mark=at position 0.5 with {\arrow{>}}
        }
    }
]
  \filldraw[pink!30] (4,4) -- (4,0) -- (0,4) -- cycle;
  \filldraw[pink!30] (10,4) -- (10,0) -- (6,4) -- cycle;
  \filldraw[pink!30] (16,4) -- (16,0) -- (12,4) -- cycle;
  
  \draw[thick] (0,0) -- (4,0) -- (4,4) -- (0,4) -- cycle;
  \draw[thick] (6,0) -- (10,0) -- (10,4) -- (6,4) -- cycle;
  \draw[thick] (12,0) -- (16,0) -- (16,4) -- (12,4) -- cycle;
  
  \foreach \i in {0,1,2} {
  		\filldraw[black] (6*\i+0,0) circle (2pt);
  		\filldraw[black] (6*\i+4,0) circle (2pt);
  		\filldraw[black] (6*\i+4,4) circle (2pt);
  		\filldraw[black] (6*\i+0,4) circle (2pt);
  }
  
  \node[above, font=\scriptsize] at (4,4) {$V$};
  \node[above, font=\scriptsize] at (10,4) {$V$};
  \node[above, font=\scriptsize] at (16,4) {$V$};
  
  \node[below, font=\scriptsize] at (4,0) {$A$};
  \node[below, font=\scriptsize] at (10,0) {$A$};
  \node[below, font=\scriptsize] at (16,0) {$A$};
  
  \node[above, font=\scriptsize] at (0,4) {$V_{S_{k-1},2}$};
  \node[above, font=\scriptsize] at (6,4) {$V_{S_{k-1},t'+1}$};
  \node[above, font=\scriptsize] at (12,4) {$V_{S_{k-1},t'+1}$};
  
  \node[below, font=\scriptsize] at (0,0) {$V_{S_{k-1},1}$};
  \node[below, font=\scriptsize] at (6,0) {$V_{S_{k-1},t'}$};
  \node[below, font=\scriptsize] at (12,0) {$V_{S_{k-1},t'}$};
  
  \node[below, font=\scriptsize] at (8,4) {$t' \geq 2 \text{ is even}$};
  \node[below, font=\scriptsize] at (14,4) {$t' \geq 3 \text{ is odd}$};
  
  \draw[thin,dashed] (1.8,0) -- (1.8,1.3);
  \draw[thin,dashed] (1.8,1.3) -- (4,1.3);
  \draw[thin,dashed] (7.8,0) -- (7.8,1.3);
  \draw[thin,dashed] (7.8,1.3) -- (10,1.3);
  \draw[thin,dashed] (13.8,0) -- (13.8,1.3);
  \draw[thin,dashed] (13.8,1.3) -- (16,1.3);
  
  \draw[thick,green] (0,0) -- (0,4);
  \draw[thick,red] (0,4) -- (4,4);
  \draw[thick,blue] (4,4) -- (0,0);
  \draw[thick,green] (6,0) -- (6,4);
  \draw[thick,blue] (6,4) -- (10,4);
  \draw[thick,red] (10,4) -- (6,0);
  \draw[thick,green] (12,0) -- (12,4);
  \draw[thick,red] (12,4) -- (16,4);
  \draw[thick,blue] (16,4) -- (12,0);
  
  \filldraw[red] (1.8,0) circle (2pt);
  \node[below, font=\scriptsize] at (1.8,0) {$D^{[S_{k-2},m_{k-1}]}_i$};
  \filldraw[red] (7.8,0) circle (2pt);
  \node[below, font=\scriptsize] at (7.8,0) {$D^{[S_{k-1},t'-1]}_i$};
  \filldraw[red] (13.8,0) circle (2pt);
  \node[below, font=\scriptsize] at (13.8,0) {$D^{[S_{k-1},t'-1]}_i$};
  
  \filldraw[red] (1.8,1.3) circle (2pt);
  \node[above, font=\scriptsize] at (1.8,1.3) {$I^{[S_{k-1}]}_{(j,i)}$};
  \filldraw[red] (7.8,1.3) circle (2pt);
  \node[above, font=\scriptsize] at (7.8,1.3) {$I^{[S_{k-1},t'-1]}_{(i,j)}$};
  \filldraw[red] (13.8,1.3) circle (2pt);
  \node[above, font=\scriptsize] at (13.8,1.3) {$I^{[S_{k-1},t'-1]}_{(i,j)}$};
  
  \filldraw[red] (4,1.3) circle (2pt);
  \node[above, font=\scriptsize] at (4,1.3) {$D^{[S_{k-1}]}_j$};
  \filldraw[red] (10,1.3) circle (2pt);
  \node[above, font=\scriptsize] at (10,1.3) {$D^{[S_{k-1}]}_j$};
  \filldraw[red] (16,1.3) circle (2pt);
  \node[above, font=\scriptsize] at (16,1.3) {$D^{[S_{k-1}]}_j$};
\end{tikzpicture}
\caption{
\textbf{Case 2.5}: $(p',q') = (2,0)$}
\label{fig:case2-5}
\end{figure}

\begin{figure}[H]
\centering
\begin{tikzpicture}[scale=0.9,
    mid arrow/.style={
        postaction={decorate},
        decoration={
            markings,
            mark=at position 0.5 with {\arrow{>}}
        }
    }
]
  \filldraw[pink!30] (4,4) -- (4,0) -- (0,4) -- cycle;
  \filldraw[pink!30] (10,4) -- (10,0) -- (6,4) -- cycle;
  \filldraw[pink!30] (16,4) -- (16,0) -- (12,4) -- cycle;
  
  \draw[thick] (0,0) -- (4,0) -- (4,4) -- (0,4) -- cycle;
  \draw[thick] (6,0) -- (10,0) -- (10,4) -- (6,4) -- cycle;
  \draw[thick] (12,0) -- (16,0) -- (16,4) -- (12,4) -- cycle;
  
  \foreach \i in {0,1,2} {
  		\filldraw[black] (6*\i+0,0) circle (2pt);
  		\filldraw[black] (6*\i+4,0) circle (2pt);
  		\filldraw[black] (6*\i+4,4) circle (2pt);
  		\filldraw[black] (6*\i+0,4) circle (2pt);
  }
  
  \node[above, font=\scriptsize] at (4,4) {$V$};
  \node[above, font=\scriptsize] at (10,4) {$V$};
  \node[above, font=\scriptsize] at (16,4) {$V$};
  
  \node[below, font=\scriptsize] at (4,0) {$A$};
  \node[below, font=\scriptsize] at (10,0) {$A$};
  \node[below, font=\scriptsize] at (16,0) {$A$};
  
  \node[above, font=\scriptsize] at (0,4) {$V_{S_{k-1},2}$};
  \node[above, font=\scriptsize] at (6,4) {$V_{S_{k-1},t'+1}$};
  \node[above, font=\scriptsize] at (12,4) {$V_{S_{k-1},t'+1}$};
  
  \node[below, font=\scriptsize] at (0,0) {$V_{S_{k-1},1}$};
  \node[below, font=\scriptsize] at (6,0) {$V_{S_{k-1},t'}$};
  \node[below, font=\scriptsize] at (12,0) {$V_{S_{k-1},t'}$};
  
  \node[below, font=\scriptsize] at (8,4) {$t' \geq 2 \text{ is even}$};
  \node[below, font=\scriptsize] at (14,4) {$t' \geq 3 \text{ is odd}$};
  
  \draw[thin,dashed] (1.8,0) -- (1.8,1.3);
  \draw[thin,dashed] (1.8,1.3) -- (4,1.3);
  \draw[thin,dashed] (7.8,0) -- (7.8,1.3);
  \draw[thin,dashed] (7.8,1.3) -- (10,1.3);
  \draw[thin,dashed] (13.8,0) -- (13.8,1.3);
  \draw[thin,dashed] (13.8,1.3) -- (16,1.3);
  
  \draw[thick,green] (0,0) -- (0,4);
  \draw[thick,blue] (0,4) -- (4,4);
  \draw[thick,red] (4,4) -- (0,0);
  \draw[thick,green] (6,0) -- (6,4);
  \draw[thick,red] (6,4) -- (10,4);
  \draw[thick,blue] (10,4) -- (6,0);
  \draw[thick,green] (12,0) -- (12,4);
  \draw[thick,blue] (12,4) -- (16,4);
  \draw[thick,red] (16,4) -- (12,0);
  
  \filldraw[red] (1.8,0) circle (2pt);
  \node[below, font=\scriptsize] at (1.8,0) {$D^{[S_{k-2},m_{k-1}]}_i$};
  \filldraw[red] (7.8,0) circle (2pt);
  \node[below, font=\scriptsize] at (7.8,0) {$D^{[S_{k-1},t'-1]}_i$};
  \filldraw[red] (13.8,0) circle (2pt);
  \node[below, font=\scriptsize] at (13.8,0) {$D^{[S_{k-1},t'-1]}_i$};
  
  \filldraw[red] (1.8,1.3) circle (2pt);
  \node[above, font=\scriptsize] at (1.8,1.3) {$I^{[S_{k-1}]}_{(j,i)}$};
  \filldraw[red] (7.8,1.3) circle (2pt);
  \node[above, font=\scriptsize] at (7.8,1.3) {$I^{[S_{k-1},t'-1]}_{(i,j)}$};
  \filldraw[red] (13.8,1.3) circle (2pt);
  \node[above, font=\scriptsize] at (13.8,1.3) {$I^{[S_{k-1},t'-1]}_{(i,j)}$};
  
  \filldraw[red] (4,1.3) circle (2pt);
  \node[above, font=\scriptsize] at (4,1.3) {$D^{[S_{k-1}]}_j$};
  \filldraw[red] (10,1.3) circle (2pt);
  \node[above, font=\scriptsize] at (10,1.3) {$D^{[S_{k-1}]}_j$};
  \filldraw[red] (16,1.3) circle (2pt);
  \node[above, font=\scriptsize] at (16,1.3) {$D^{[S_{k-1}]}_j$};
\end{tikzpicture}
\caption{
\textbf{Case 2.6}: $(p',q') = (2,1)$}
\label{fig:case2-6}
\end{figure}
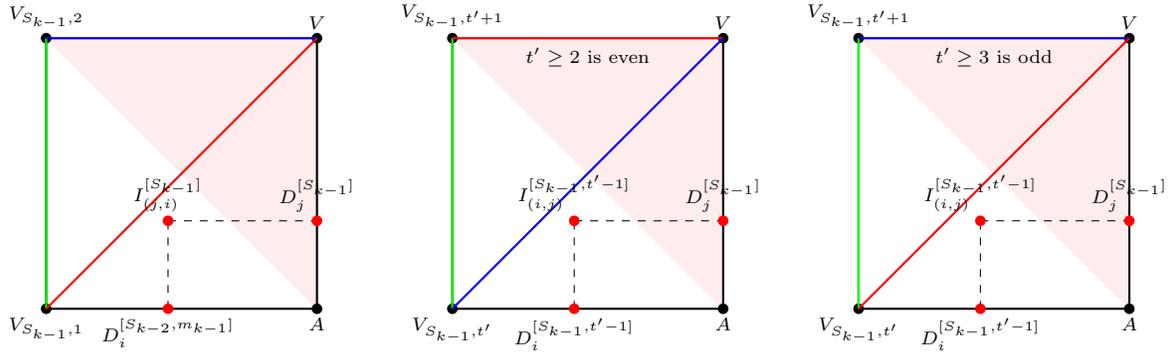
\end{enumerate}
Definition~\ref{Def:ref_poly} simplifies calculating expansion formulas by reducing the number of coordinates to check.

\subsection*{The case of $1$-triangulated $m$-gon}
After the previous reduction, our problem becomes just solving the general formula $D^{[m_1,n_1,m_2,n_2,...,m_k,n_k]}$. Actually, this is just an application of perfect matchings on snake graph \cite{MSW08, Sch07}. This part will elaborate the general formula in the recurrence formula format which is different from the previous combinatorial method, but maintain the same idea about the standard sequence of flips. By the calculation from each figure, we can rewrite all recurrences in the statement below:

\begin{Thm}[Recurrence formula for a general $1$-triangulated polygon]
\label{thm:1tri}
Consider a general $1$-triangulated $m$-gon with $\cT$-triangulation of type $\mathcal{P}(m_1,n_1,\ldots,m_k,n_k)$. Let $D^{[m_1,n_1,m_2,n_2,\ldots,m_k,n_k]}$ be the expansion formula with respect to the non-$\cT$ diagonal $\gamma = AB$. Then the functions satisfy the following recurrence relations:

    \emph{I.} (Type $\mathcal{P}(m_1)$.) For any $k \in \mathbb{N}$,
{\small\begin{align*}
D^{[2k-1]} 
&= x^{(1)}_{(1,1,0)} \cdot x^{(2k)(2k+1)}_{(1,1,0)}\cdot \sum_{j=1}^{k} \Biggl( 
\frac{x^{(2j)}_{(0,1,1)}}{x^{(2j)(2j+1)}_{(1,1,0)} x^{(2j-1)(2j)}_{(1,0,1)}} 
+ \frac{x^{(2j-1)}_{(0,1,1)}}{x^{(2j-1)(2j)}_{(1,0,1)} x^{(2j-2)(2j-1)}_{(1,1,0)}} 
\Biggr);\\
D^{[2k]} 
&= x^{(1)}_{(1,1,0)} \cdot x^{(2k+1)(2k+2)}_{(1,0,1)} \\
&\quad \cdot \Biggl[ 
\frac{x^{(2k+1)}_{(0,1,1)}}{x^{(2k+1)(2k+2)}_{(1,0,1)} x^{(2k)(2k+1)}_{(1,1,0)}} 
+ \sum_{j=1}^{k} \Biggl( 
\frac{x^{(2j)}_{(0,1,1)}}{x^{(2j)(2j+1)}_{(1,1,0)} x^{(2j-1)(2j)}_{(1,0,1)}} 
+ \frac{x^{(2j-1)}_{(0,1,1)}}{x^{(2j-1)(2j)}_{(1,0,1)} x^{(2j-2)(2j-1)}_{(1,1,0)}} 
\Biggr) \Biggr].
\end{align*}
    }
    \emph{II.} (Type $\mathcal{P}(m_1,n_1)$.) For any $k,l \in \mathbb{N}$, then:
{\small
\begin{align*}
D^{[2k-1,2l-1]} 
&= \frac{x^{((2k+2l-1)(2k+2l))}_{(1,0,1)}}{x^{((2k)(2k+1))}_{(0,1,1)}} \cdot x^{(1)}_{(1,1,0)} + x^{((2k+2l-1)(2k+2l))}_{(1,0,1)} \cdot \Biggl[ 
\frac{x^{(2k+2l-1)}_{(1,1,0)}}{x^{((2k+2l-1)(2k+2l))}_{(1,0,1)} x^{((2k+2l-2)(2k+2l-1))}_{(0,1,1)}} \\
&\quad+ \sum_{j=1}^{l-1} \Biggl( 
\frac{x^{(2k+2j)}_{(1,1,0)}}{x^{((2k+2j)(2k+2j+1))}_{(0,1,1)} x^{((2k+2j-1)(2k+2j))}_{(1,0,1)}} 
+ \frac{x^{(2k+2j-1)}_{(1,1,0)}}{x^{((2k+2j-1)(2k+2j))}_{(1,0,1)} x^{((2k+2j-2)(2k+2j-1))}_{(0,1,1)}} 
\Biggr) \Biggr]\\
&\quad  \cdot \Biggl[ 
x^{(1)}_{(1,1,0)} \cdot x^{(2k)(2k+1)}_{(1,1,0)} 
\cdot \sum_{j=1}^{k} \Biggl( 
\frac{x^{(2j)}_{(0,1,1)}}{x^{(2j)(2j+1)}_{(1,1,0)} x^{(2j-1)(2j)}_{(1,0,1)}} 
+ \frac{x^{(2j-1)}_{(0,1,1)}}{x^{(2j-1)(2j)}_{(1,0,1)} x^{(2j-2)(2j-1)}_{(1,1,0)}} 
\Biggr) \Biggr];\\
D^{[2k-1,2l]} 
&= \frac{x^{((2k+2l)(2k+2l+1))}_{(0,1,1)}}{x^{((2k)(2k+1))}_{(0,1,1)}} \cdot x^{(1)}_{(1,1,0)} \\
&\quad + x^{((2k+2l)(2k+2l+1))}_{(0,1,1)} \cdot x^{(1)}_{(1,1,0)} \\
&\quad \cdot \Biggl[ \sum_{j=1}^{l} \Biggl( 
\frac{x^{(2k+2j)}_{(1,1,0)}}{x^{((2k+2j)(2k+2j+1))}_{(0,1,1)} x^{((2k+2j-1)(2k+2j))}_{(1,0,1)}} 
+ \frac{x^{(2k+2j-1)}_{(1,1,0)}}{x^{((2k+2j-1)(2k+2j))}_{(1,0,1)} x^{((2k+2j-2)(2k+2j-1))}_{(0,1,1)}} 
\Biggr) \Biggr] \\
&\quad \cdot \Biggl[ 
x^{(2k)(2k+1)}_{(1,1,0)} 
\cdot \sum_{j=1}^{k} \Biggl( 
\frac{x^{(2j)}_{(0,1,1)}}{x^{(2j)(2j+1)}_{(1,1,0)} x^{(2j-1)(2j)}_{(1,0,1)}} 
+ \frac{x^{(2j-1)}_{(0,1,1)}}{x^{(2j-1)(2j)}_{(1,0,1)} x^{(2j-2)(2j-1)}_{(1,1,0)}} 
\Biggr) \Biggr];\\
D^{[2k,2l-1]} 
&= \frac{x^{((2k+2l)(2k+2l+1))}_{(1,1,0)}}{x^{((2k+1)(2k+2))}_{(0,1,1)}} \cdot x^{(1)}_{(1,1,0)} + x^{((2k+2l)(2k+2l+1))}_{(1,1,0)}\cdot \Biggl[ 
\frac{x^{(2k+2l)}_{(1,0,1)}}{x^{((2k+2l)(2k+2l+1))}_{(1,1,0)} x^{((2k+2l-1)(2k+2l))}_{(0,1,1)}}\\
&\quad + \sum_{j=1}^{l-1} \Biggl( 
\frac{x^{(2k+2j+1)}_{(1,0,1)}}{x^{((2k+2j+1)(2k+2j+2))}_{(0,1,1)} x^{((2k+2j)(2k+2j+1))}_{(1,1,0)}} + \frac{x^{(2k+2j)}_{(1,0,1)}}{x^{((2k+2j)(2k+2j+1))}_{(1,1,0)} x^{((2k+2j-1)(2k+2j))}_{(0,1,1)}} 
\Biggr) \Biggr] \\
&\quad \cdot \Biggl[ 
x^{(1)}_{(1,1,0)} \cdot x^{(2k+1)(2k+2)}_{(1,0,1)} 
\cdot \Biggl[ 
\frac{x^{(2k+1)}_{(0,1,1)}}{x^{(2k+1)(2k+2)}_{(1,0,1)} x^{(2k)(2k+1)}_{(1,1,0)}} + \sum_{j=1}^{k} \Biggl( 
\frac{x^{(2j)}_{(0,1,1)}}{x^{(2j)(2j+1)}_{(1,1,0)} x^{(2j-1)(2j)}_{(1,0,1)}} 
+ \frac{x^{(2j-1)}_{(0,1,1)}}{x^{(2j-1)(2j)}_{(1,0,1)} x^{(2j-2)(2j-1)}_{(1,1,0)}} 
\Biggr) \Biggr] \Biggr];\\
D^{[2k,2l]} 
&= \frac{x^{((2k+2l+1)(2k+2l+2))}_{(0,1,1)}}{x^{((2k+1)(2k+2))}_{(0,1,1)}} \cdot x^{(1)}_{(1,1,0)} \\
&\quad + x^{((2k+2l+1)(2k+2l+2))}_{(0,1,1)} \\
&\quad \cdot \Biggl[ \sum_{j=1}^{l} \Biggl( 
\frac{x^{(2k+2j+1)}_{(1,0,1)}}{x^{((2k+2j+1)(2k+2j+2))}_{(0,1,1)} x^{((2k+2j)(2k+2j+1))}_{(1,1,0)}} 
+ \frac{x^{(2k+2j)}_{(1,0,1)}}{x^{((2k+2j)(2k+2j+1))}_{(1,0,1)} x^{((2k+2j-1)(2k+2j))}_{(0,1,1)}} 
\Biggr) \Biggr] \\
&\quad \cdot \Biggl[ 
x^{(1)}_{(1,1,0)} \cdot x^{(2k+1)(2k+2)}_{(1,0,1)} 
\cdot \Biggl[ 
\frac{x^{(2k+1)}_{(0,1,1)}}{x^{(2k+1)(2k+2)}_{(1,0,1)} x^{(2k)(2k+1)}_{(1,1,0)}} + \sum_{j=1}^{k} \Biggl( 
\frac{x^{(2j)}_{(0,1,1)}}{x^{(2j)(2j+1)}_{(1,1,0)} x^{(2j-1)(2j)}_{(1,0,1)}} 
+ \frac{x^{(2j-1)}_{(0,1,1)}}{x^{(2j-1)(2j)}_{(1,0,1)} x^{(2j-2)(2j-1)}_{(1,1,0)}} 
\Biggr) \Biggr] \Biggr].
\end{align*}
}

    \emph{III-1.} (For $n_k \geq 1$) For any $k, m_1, n_1, m_2, n_2, \ldots, m_k, (n_k-1) \in \mathbb{N}$, the function satisfies the following relations: for any $n_k = 2l \geq 2$ even:
{\small
\begin{align*}
    D^{[m_1,n_1,\ldots,m_k,n_k]} 
    &= \frac{x^{((M_k+N_{k-1}+2l+1)(M_k+N_{k-1}+2l+2))}_{\omega \cdot (1,1,0)}}{x^{((M_k+N_{k-1}+1)(M_k+N_{k-1}+2))}_{\omega \cdot (1,1,0)}} D^{[m_1,n_1,\ldots,n_{k-1}]}+ x^{((M_k+N_{k-1}+2l+1)(M_k+N_{k-1}+2l+2))}_{\omega \cdot (1,1,0)}\cdot \\
    &\quad \cdot \Biggl[ \sum_{j=1}^{l} \Biggl( 
    \frac{x^{(M_k+N_{k-1}+2j)}_{\omega \cdot (0,1,1)}}{x^{((M_k+N_{k-1}+2j)(M_k+N_{k-1}+2j+1))}_{\omega \cdot (1,0,1)} x^{((M_k+N_{k-1}+2j-1)(M_k+N_{k-1}+2j))}_{\omega \cdot (1,1,0)}} 
    \\
    &\quad + \frac{x^{(M_k+N_{k-1}+2j+1)}_{\omega \cdot (0,1,1)}}{x^{((M_k+N_{k-1}+2j+1)(M_k+N_{k-1}+2j+2))}_{\omega \cdot (1,1,0)} x^{((M_k+N_{k-1}+2j)(M_k+N_{k-1}+2j+1))}_{\omega \cdot (1,0,1)}} 
    \Biggr) \Biggr] \cdot D^{[m_1,n_1,\ldots,m_k]}.
\end{align*}
For any $n_k = 2l+1\geq 1$ odd:
\begin{align*}
    D^{[m_1,n_1,\ldots,m_k,n_k]} 
    &= \frac{x^{((M_k+N_{k-1}+2l+2)(M_k+N_{k-1}+2l+3))}_{\omega \cdot (1,0,1)}}{x^{((M_k+N_{k-1}+1)(M_k+N_{k-1}+2))}_{\omega \cdot (1,1,0)}} D^{[m_1,n_1,\ldots,n_{k-1}]} + x^{((M_k+N_{k-1}+2l+2)(M_k+N_{k-1}+2l+3))}_{\omega \cdot (1,0,1)}\\
    &\quad \cdot \Biggl[ 
    \frac{x^{(M_k+N_{k-1}+2l+2)}_{\omega \cdot (0,1,1)}}{x^{((M_k+N_{k-1}+2l+2)(M_k+N_{k-1}+2l+3))}_{\omega \cdot (1,0,1)} x^{((M_k+N_{k-1}+2l+1)(M_k+N_{k-1}+2l+2))}_{\omega \cdot (1,1,0)}} 
    \\
    &\quad + \sum_{j=1}^{l} \Biggl( 
    \frac{x^{(M_k+N_{k-1}+2j)}_{\omega \cdot (0,1,1)}}{x^{((M_k+N_{k-1}+2j)(M_k+N_{k-1}+2j+1))}_{\omega \cdot (1,0,1)} x^{((M_k+N_{k-1}+2j-1)(M_k+N_{k-1}+2j))}_{\omega \cdot (1,1,0)}} 
    \\
    &\quad + \frac{x^{(M_k+N_{k-1}+2j+1)}_{\omega \cdot (0,1,1)}}{x^{((M_k+N_{k-1}+2j+1)(M_k+N_{k-1}+2j+2))}_{\omega \cdot (1,1,0)} x^{((M_k+N_{k-1}+2j)(M_k+N_{k-1}+2j+1))}_{\omega \cdot (1,0,1)}} 
    \Biggr) \Biggr]\cdot D^{[m_1,n_1,\ldots,m_k]}.
\end{align*}
}

    \emph{III-2.} (For $n_k=0$) For any $(k-1), m_1, n_1, m_2, n_2, \ldots, m_{k-1}, n_{k-1}, (m_k-1) \in \mathbb{N}$, the function satisfies the following relations: For any $m_k = 2l\geq 2$ even:
{\small\begin{align*}
    D^{[m_1,n_1,\ldots,m_k]} 
    &= \frac{x^{((M_{k-1}+N_{k-1}+2l+1)(M_{k-1}+N_{k-1}+2l+2))}_{\omega' \cdot (1,0,1)}}{x^{((M_{k-1}+N_{k-1}+1)(M_{k-1}+N_{k-1}+2))}_{\omega' \cdot (1,0,1)}} D^{[m_1,n_1,\ldots,m_{k-1}]} + x^{((M_{k-1}+N_{k-1}+2l+1)(M_{k-1}+N_{k-1}+2l+2))}_{\omega' \cdot (1,0,1)} \\
    &\quad \cdot \Biggl[ \sum_{j=1}^{l} \Biggl( 
    \frac{x^{(M_{k-1}+N_{k-1}+2j)}_{\omega' \cdot (0,1,1)}}{x^{((M_{k-1}+N_{k-1}+2j)(M_{k-1}+N_{k-1}+2j+1))}_{\omega' \cdot (1,1,0)} x^{((M_{k-1}+N_{k-1}+2j-1)(M_{k-1}+N_{k-1}+2j))}_{\omega' \cdot (1,0,1)}} 
    \\
    &\quad + \frac{x^{(M_{k-1}+N_{k-1}+2j+1)}_{\omega' \cdot (0,1,1)}}{x^{((M_{k-1}+N_{k-1}+2j+1)(M_{k-1}+N_{k-1}+2j+2))}_{\omega' \cdot (1,0,1)} x^{((M_{k-1}+N_{k-1}+2j)(M_{k-1}+N_{k-1}+2j+1))}_{\omega' \cdot (1,1,0)}} 
    \Biggr) \Biggr] \cdot D^{[m_1,n_1,\ldots,n_{k-1}]}.
  \end{align*}
For any $m_k = 2l+1\geq 1$ odd:
    \begin{align*}
    D^{[m_1,n_1,\ldots,m_k]} 
    &= \frac{x^{((M_{k-1}+N_{k-1}+2l+2)(M_{k-1}+N_{k-1}+2l+3))}_{\omega' \cdot (1,1,0)}}{x^{((M_{k-1}+N_{k-1}+1)(M_{k-1}+N_{k-1}+2))}_{\omega' \cdot (1,0,1)}} D^{[m_1,n_1,\ldots,m_{k-1}]} + x^{((M_{k-1}+N_{k-1}+2l+2)(M_{k-1}+N_{k-1}+2l+3))}_{\omega' \cdot (1,1,0)} \\
    &\quad \cdot \Biggl[ 
    \frac{x^{(M_{k-1}+N_{k-1}+2l+2)}_{\omega' \cdot (0,1,1)}}{x^{((M_{k-1}+N_{k-1}+2l+2)(M_{k-1}+N_{k-1}+2l+3))}_{\omega' \cdot (1,1,0)} x^{((M_{k-1}+N_{k-1}+2l+1)(M_{k-1}+N_{k-1}+2l+2))}_{\omega' \cdot (1,0,1)}} 
    \\
    &\quad + \sum_{j=1}^{l} \Biggl( 
    \frac{x^{(M_{k-1}+N_{k-1}+2j)}_{\omega' \cdot (0,1,1)}}{x^{((M_{k-1}+N_{k-1}+2j)(M_{k-1}+N_{k-1}+2j+1))}_{\omega' \cdot (1,1,0)} x^{((M_{k-1}+N_{k-1}+2j-1)(M_{k-1}+N_{k-1}+2j))}_{\omega' \cdot (1,0,1)}} 
    \\
    &\quad + \frac{x^{(M_{k-1}+N_{k-1}+2j+1)}_{\omega' \cdot (0,1,1)}}{x^{((M_{k-1}+N_{k-1}+2j+1)(M_{k-1}+N_{k-1}+2j+2))}_{\omega' \cdot (1,0,1)} x^{((M_{k-1}+N_{k-1}+2j)(M_{k-1}+N_{k-1}+2j+1))}_{\omega' \cdot (1,1,0)}} 
    \Biggr) \Biggr] \cdot D^{[m_1,n_1,\ldots,n_{k-1}]}.
\end{align*}}
\end{Thm}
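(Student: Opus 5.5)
The plan is to prove Theorem~\ref{thm:1tri} by following the standard sequence of flips of Definition~\ref{def:standard seq_flips} one flip at a time. Since $n=1$, each flip is a single mutation with exchange relation $x\,x' = ac+bd$, where $a,c$ and $b,d$ are the pairs of opposite sides of the quadrilateral being flipped. Along the fan of flips at level $u$, all diagonals share the endpoint $A$ and sweep through consecutive boundary vertices. Set $d_r$ for the $r$-th produced diagonal and divide its exchange relation by $x_{d_{r-1}}\cdot(\text{old diagonal})$. The relation then becomes a first-order telescoping recurrence of the form
\[
\frac{x_{d_r}}{x_{e_{r}}} = \frac{x_{d_{r-1}}}{x_{e_{r-1}}} + \frac{x_{\mathrm{bd}_r}}{x_{e_r}\,x_{e_{r-1}}},
\]
where $e_r$ are the initial $\cT$-diagonals crossed and $\mathrm{bd}_r$ is the boundary edge opposite $A$. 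This is exactly the continued-fraction/$T$-path structure of \cite{CS}, and the constant term coming from $d_0$ supplies the first summand.

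First I will treat type $\mathcal{P}(m_1)$. There is a single fan of $\cT$-diagonals $V_1V_{m_1,1},\dots$, all meeting at the apex. Iterating the telescoping identity $m_1$ times and multiplying back by the last crossed diagonal gives the sums over $j$ in part~I. The parity split $2k-1$ versus $2k$ appears because the colours (equivalently, the $\Gamma_2$-coordinates $(1,1,0)$, $(1,0,1)$, $(0,1,1)$) of consecutive diagonals alternate. In an odd-length fan the last term pairs up; in an even-length fan one unpaired term $x^{(2k+1)}_{(0,1,1)}/(\cdots)$ is left over. Next, for $\mathcal{P}(m_1,n_1)$, I will run the second fan starting from the diagonal $AV_{m_1,n_1,1}$ already obtained in part~I. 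Its initial value $D^{[m_1]}$ enters linearly, and the same telescoping gives the bracketed sum times $D^{[m_1]}$. The first summand, the ratio of the two extreme diagonals times $x^{(1)}_{(1,1,0)}$, is the contribution of the term $d_{r-1}$ at the start of the fan. The four parity cases again come from the alternation of coordinate labels.

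For the general cases III-1 and III-2, I will apply the same one-fan lemma to the last level, using induction on $k$. The last fan at level $k$ has initial data given by the two previously computed diagonals, $D^{[m_1,\dots,n_{k-1}]}$ and $D^{[m_1,\dots,m_k]}$ (respectively $D^{[\dots,m_{k-1}]}$ and $D^{[\dots,n_{k-1}]}$). The exchange relations are identical to those of part~II after relabelling coordinates. That relabelling is the permutation $\omega=\upsilon^p\tau^q$ (or $\omega'$): the colour of the side edges and of $VV_{S_{k-1},m_k,1}$ is determined by the number of colour rotations accumulated, which is $\sigma \bmod 3$, together with the orientation parity $M_k+N_{k-1}\bmod 2$, as displayed in Figures~\ref{fig:case1-1}--\ref{fig:case2-6}.

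The main obstacle is this bookkeeping step. I need to verify that $\omega$ correctly encodes the colour and parity of the edges of the last fan, which means showing that each fan of length $m_i$ or $n_i$ changes the colour rotation by $(-1)^{m_i}$ or $(-1)^{n_i}$ modulo $3$ and changes the odd/even triangle parity by the fan length. I would prove this by a small induction on a single fan, checking that crossing one diagonal toggles odd/even and that the colour pattern rotates by $\upsilon^{\pm1}$ according to the side. Once this is established, the twelve cases reduce to the single telescoping computation, and the displayed formulas follow by substituting $\omega\cdot(1,1,0)$, $\omega\cdot(1,0,1)$, $\omega\cdot(0,1,1)$ for the generic labels.
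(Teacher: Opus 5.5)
Your route is the paper's route. The paper likewise follows the standard sequence of flips and reads each recurrence off the case figure indexed by $(p,q)$ or $(p',q')$. Your telescoped exchange relations are a more explicit version of that argument, and they do reproduce every superscript in I, II, III-1 and III-2. Two things need correcting.

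\textbf{The telescoping identity is missing a factor.} Let $W$ be the apex of the current fan. With your indexing, the flip removes $e_{r-1}$ and the exchange relation is
\[
x_{d_r}x_{e_{r-1}}=x_{d_{r-1}}x_{e_r}+x_{AW}\,x_{\mathrm{bd}_r}.
\]
You must divide by $x_{e_{r-1}}x_{e_r}$, not by $x_{d_{r-1}}$ times the old diagonal. The factor $x_{AW}$, constant along the fan, multiplies the whole sum. This is exactly where $x^{(1)}_{(1,1,0)}$ enters in part I, $D^{[m_1]}$ in part II, and $D^{[m_1,\ldots,m_k]}$ (resp.\ $D^{[m_1,\ldots,n_{k-1}]}$) in III-1 (resp.\ III-2). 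The other previously computed diagonal enters only through the starting term $x_{d_0}/x_{e_0}$. Your part II and III descriptions already use this implicitly, but the displayed identity as written does not produce them.

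\textbf{The colour bookkeeping is unnecessary for $n=1$, and the mechanism you propose for it is wrong.} For $n=1$ the superscripts already determine every variable. $x^{(r)(r+1)}$ is the edge shared by the $r$-th and $(r+1)$-st triangles, and $x^{(r)}$ is the unique boundary edge of a middle triangle. Only the two end triangles need a convention, and there the labels are fixed, not $\omega$-dependent. So the $\omega$-subscripts are notation, and once the missing factor is restored the telescoping already proves the recurrences.

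If you do try the induction you sketch, it fails at its first step. The opposite-parity rule forces every boundary edge of a single fan to carry the same colour, with the spokes alternating between the other two; the paper's case figures show this too. So a crossing inside a fan swaps two colours rather than rotating by $\upsilon^{\pm1}$. The only cyclic shift occurs when the path turns from one fan to the next. The per-fan contributions are then non-commuting elements of $S_3$, so the additive rule for $\sigma$ is not obtained by composing one-step rotations. It would have to be checked directly against the paper's (informal) labelling convention, which the paper itself supports only by the figures.
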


\subsection*{The case of $2$-triangulated $m$-gon}
In this case our problem becomes solving the general formula $D^{[m_1,n_1,m_2,n_2,...,m_k,n_k]}_i$ (for $i = 1,2$) and the inner vertex $I^{[m_1,n_1,m_2,n_2,...,m_k,n_k]} = I^{[m_1,n_1,m_2,n_2,...,m_k,n_k]}_{(1,1)}$. By doing part-by-part after each flip, we can draw quadrilaterals to calculate the desired expansion formula.
\begin{Thm}[Recurrence formula for a general $2$-triangulated polygon]
\label{thm:2tri}
    In the general $2$-triangulated $m$-gon with $\cT$-triangulation in type $\mathcal{P}(m_1,n_1,...,m_k,n_k)$, denote the following functions:
\begin{enumerate}
	\item[(i)] $(D^{[m_1,n_1,m_2,n_2,...,m_k,n_k]}_1,D^{[m_1,n_1,m_2,n_2,...,m_k,n_k]}_2)$ be the expansion formula of the non $T$-diagonal $d=AB$;
	\item[(ii)] $I^{[m_1,n_1,...,m_k,n_k]}$ be the expansion formula of the inner vertex of $\triangle AVB$ formed by taking the flips in the right-to-left order (i.e. from $A$ to $B$).
\end{enumerate}
Then the functions satisfy the following recurrence relations:

    \emph{I.} ($\mathcal{P}(m_1)$) For any $k \in \mathbb{N}$, then:
{\small
\begin{align*}
    D^{[1]}_2 &= \frac{x^{(1)}_{(0,2,1)}x^{(2)}_{(2,1,0)}}{x^{(12)}_{(2,0,1)}} + \frac{x^{(1)}_{(1,2,0)}x^{(2)}_{(0,1,2)}}{x^{(12)}_{(1,0,2)}} + \frac{x^{(1)}_{(1,1,1)}x^{(2)}_{(0,1,2)}x^{(2)}_{(1,2,0)}}{x^{(12)}_{(1,0,2)}x^{(2)}_{(1,1,1)}} + \frac{x^{(1)}_{(1,1,1)}x^{(2)}_{(0,2,1)}x^{(2)}_{(2,1,0)}}{x^{(12)}_{(2,0,1)}x^{(2)}_{(1,1,1)}};\\
    D^{[2k]}_2 &= \frac{x^{(2k+1)}_{(0,1,2)}x^{(1)}_{(2,1,0)}}{x^{((2k)(2k+1))}_{(2,1,0)}} 
    + \Biggl[ x^{(1)}_{(2,1,0)} \cdot \sum_{j=0}^{k-1} 
    \Biggl( \frac{x^{(2k)}_{(2,1,0)} x^{(2j+2)}_{(1,1,1)}}{x^{((2j+2)(2j+3))}_{(2,1,0)} x^{((2j+1)(2j+2))}_{(2,0,1)}} 
    + \frac{x^{(2k)}_{(2,1,0)} x^{(2j+1)}_{(1,1,1)}}{x^{((2j+1)(2j+2))}_{(2,0,1)} x^{((2j)(2j+1))}_{(2,1,0)}} \Biggr) \Biggr] \\
    &\quad \cdot \Biggl( \frac{x^{(2k+1)}_{(0,1,2)}}{x^{(2k+1)}_{(1,1,1)}} \cdot \frac{x^{(2k+1)}_{(2,0,1)}}{x^{((2k)(2k+1))}_{(2,1,0)}} 
    + \frac{x^{(2k+1)}_{(1,0,2)}}{x^{(2k+1)}_{(1,1,1)}} \cdot \frac{x^{(2k+1)}_{(0,2,1)}}{x^{((2k)(2k+1))}_{(1,2,0)}} \Biggr) 
    \frac{x^{(2k+1)}_{(1,0,2)}}{x^{((2k)(2k+1))}_{(1,2,0)}} \cdot D^{[2k-1]}_2 \\
    &= \frac{x^{(2k+1)}_{(1,0,2)}x^{(2k)}_{(0,2,1)}x^{(1)}_{(2,1,0)}}{x^{((2k)(2k+1))}_{(1,2,0)} x^{((2k-1)(2k))}_{(2,0,1)}} 
    + \frac{x^{(2k+1)}_{(0,1,2)}x^{(1)}_{(2,1,0)}}{x^{((2k)(2k+1))}_{(2,1,0)}} 
    + \frac{x^{(2k+1)}_{(1,0,2)}x^{(1)}_{(2,1,0)}x^{(2k-1)}_{(1,1,1)}}{x^{((2k)(2k+1))}_{(1,2,0)} x^{((2k-2)(2k-1))}_{(2,1,0)}} \\
    &\quad \cdot \Biggl( \frac{x^{(2k)}_{(0,2,1)}}{x^{(2k)}_{(1,1,1)}} \cdot \frac{x^{(2k)}_{(2,1,0)}}{x^{((2k-1)(2k))}_{(2,0,1)}} 
    + \frac{x^{(2k)}_{(1,2,0)}}{x^{(2k)}_{(1,1,1)}} \cdot \frac{x^{(2k)}_{(0,1,2)}}{x^{((2k-1)(2k))}_{(1,0,2)}} \Biggr) \\
    &\quad + \Biggl[ x^{(1)}_{(2,1,0)} \cdot \sum_{j=0}^{k-1} 
    \Biggl( \frac{x^{(2k)}_{(2,1,0)} x^{(2j+2)}_{(1,1,1)}}{x^{((2j+2)(2j+3))}_{(2,1,0)} x^{((2j+1)(2j+2))}_{(2,0,1)}} 
    + \frac{x^{(2k)}_{(2,1,0)} x^{(2j+1)}_{(1,1,1)}}{x^{((2j+1)(2j+2))}_{(2,0,1)} x^{((2j)(2j+1))}_{(2,1,0)}} \Biggr) \Biggr] \\
    &\quad \cdot \Biggl( \frac{x^{(2k+1)}_{(0,1,2)}}{x^{(2k+1)}_{(1,1,1)}} \cdot \frac{x^{(2k+1)}_{(2,0,1)}}{x^{((2k)(2k+1))}_{(2,1,0)}} 
    + \frac{x^{(2k+1)}_{(1,0,2)}}{x^{(2k+1)}_{(1,1,1)}} \cdot \frac{x^{(2k+1)}_{(0,2,1)}}{x^{((2k)(2k+1))}_{(1,2,0)}} \Biggr) \\
    &\quad + \frac{x^{(2k+1)}_{(1,0,2)}}{x^{((2k)(2k+1))}_{(1,2,0)}} \cdot \Biggl[ x^{(1)}_{(2,1,0)} \cdot  \sum_{j=0}^{k-2} 
    \Biggl( \frac{x^{(2k-1)}_{(2,0,1)} x^{(2j+2)}_{(1,1,1)}}{x^{((2j+2)(2j+3))}_{(2,1,0)} x^{((2j+1)(2j+2))}_{(2,0,1)}} 
    + \frac{x^{(2k-1)}_{(2,0,1)} x^{(2j+1)}_{(1,1,1)}}{x^{((2j+1)(2j+2))}_{(2,0,1)} x^{((2j)(2j+1))}_{(2,1,0)}} \Biggr) \Biggr]  \\
    &\quad \cdot \Biggl( \frac{x^{(2k)}_{(0,2,1)}}{x^{(2k)}_{(1,1,1)}} \cdot \frac{x^{(2k)}_{(2,1,0)}}{x^{((2k-1)(2k))}_{(2,0,1)}} 
    + \frac{x^{(2k)}_{(1,2,0)}}{x^{(2k)}_{(1,1,1)}} \cdot \frac{x^{(2k)}_{(0,1,2)}}{x^{((2k-1)(2k))}_{(1,0,2)}} \Biggr) + \frac{x^{(2k+1)}_{(1,0,2)}}{x^{((2k-1)(2k))}_{(1,0,2)}} \cdot D^{[2k-2]}_2;
\end{align*}

\begin{align*}
    D^{[2k+1]}_2 &= \frac{x^{(2k+2)}_{(0,2,1)}x^{(1)}_{(2,1,0)}}{x^{((2k+1)(2k+2))}_{(2,0,1)}} + \Biggl[ \frac{x^{(1)}_{(2,1,0)}x^{(2k+1)}_{(1,1,1)}}{x^{((2k)(2k+1))}_{(2,1,0)}} + x^{(1)}_{(2,1,0)} \cdot \sum_{j=0}^{k-1} \Biggl( \frac{x^{(2k+1)}_{(2,0,1)} x^{(2j+2)}_{(1,1,1)}}{x^{((2j+2)(2j+3))}_{(2,1,0)} x^{((2j+1)(2j+2))}_{(2,0,1)}} + \frac{x^{(2k+1)}_{(2,0,1)} x^{(2j+1)}_{(1,1,1)}}{x^{((2j+1)(2j+2))}_{(2,0,1)} x^{((2j)(2j+1))}_{(2,1,0)}} \Biggr) \Biggr]\\
    &\quad \cdot \Biggl( \frac{x^{(2k+2)}_{(0,2,1)}}{x^{(2k+2)}_{(1,1,1)}} \cdot \frac{x^{(2k+2)}_{(2,1,0)}}{x^{((2k+1)(2k+2))}_{(2,0,1)}}+  \frac{x^{(2k+2)}_{(1,2,0)}}{x^{(2k+2)}_{(1,1,1)}} \cdot \frac{x^{(2k+2)}_{(0,1,2)}}{x^{((2k+1)(2k+2))}_{(1,0,2)}} \Biggr) + \frac{x^{(2k+2)}_{(1,2,0)}}{x^{((2k+1)(2k+2))}_{(1,0,2)}} \cdot D^{[2k]}_2 \\
    &= \frac{x^{(2k+2)}_{(1,2,0)}x^{(2k+1)}_{(0,1,2)}x^{(1)}_{(2,1,0)}}{x^{((2k+1)(2k+2))}_{(1,0,2)} x^{((2k)(2k+1))}_{(2,1,0)}} + \frac{x^{(2k+2)}_{(0,2,1)}x^{(1)}_{(2,1,0)}}{x^{((2k+1)(2k+2))}_{(2,0,1)}} + \frac{x^{(1)}_{(2,1,0)}x^{(2k+1)}_{(1,1,1)}}{x^{((2k)(2k+1))}_{(2,1,0)}} \\
    &\quad \cdot \Biggl( \frac{x^{(2k+2)}_{(0,2,1)}}{x^{(2k+2)}_{(1,1,1)}} \cdot \frac{x^{(2k+2)}_{(2,1,0)}}{x^{((2k+1)(2k+2))}_{(2,0,1)}}  + \frac{x^{(2k+2)}_{(1,2,0)}}{x^{(2k+2)}_{(1,1,1)}} \cdot \frac{x^{(2k+2)}_{(0,1,2)}}{x^{((2k+1)(2k+2))}_{(1,0,2)}} \Biggr) \\
    &\quad+ \Biggl[ x^{(1)}_{(2,1,0)} \cdot  \sum_{j=0}^{k-1} \Biggl( \frac{x^{(2k+1)}_{(2,0,1)}  x^{(2j+2)}_{(1,1,1)}}{x^{((2j+2)(2j+3))}_{(2,1,0)} x^{((2j+1)(2j+2))}_{(2,0,1)}} +  \frac{x^{(2k+1)}_{(2,0,1)} x^{(2j+1)}_{(1,1,1)}}{x^{((2j+1)(2j+2))}_{(2,0,1)} x^{((2j)(2j+1))}_{(2,1,0)}} \Biggr) \Biggr]\\
    &\quad \cdot \Biggl( \frac{x^{(2k+2)}_{(0,2,1)}}{x^{(2k+2)}_{(1,1,1)}} \cdot \frac{x^{(2k+2)}_{(2,1,0)}}{x^{((2k+1)(2k+2))}_{(2,0,1)}} + \frac{x^{(2k+2)}_{(1,2,0)}}{x^{(2k+2)}_{(1,1,1)}} \cdot \frac{x^{(2k+2)}_{(0,1,2)}}{x^{((2k+1)(2k+2))}_{(1,0,2)}}\Biggr)  \\
    &\quad + \frac{x^{(2k+2)}_{(1,2,0)}}{x^{((2k+1)(2k+2))}_{(1,0,2)}} \cdot \Biggl[ x^{(1)}_{(2,1,0)} \cdot \sum_{j=0}^{k-1} \Biggl( \frac{x^{(2k)}_{(2,1,0)} x^{(2j+2)}_{(1,1,1)}}{x^{((2j+2)(2j+3))}_{(2,1,0)} x^{((2j+1)(2j+2))}_{(2,0,1)}} + \frac{x^{(2k)}_{(2,1,0)} x^{(2j+1)}_{(1,1,1)}}{x^{((2j+1)(2j+2))}_{(2,0,1)} x^{((2j)(2j+1))}_{(2,1,0)}} \Biggr) \Biggr] \\
    &\quad \cdot \Biggl( \frac{x^{(2k+1)}_{(0,1,2)}}{x^{(2k+1)}_{(1,1,1)}} \cdot \frac{x^{(2k+1)}_{(2,0,1)}}{x^{((2k)(2k+1))}_{(2,1,0)}} + \frac{x^{(2k+1)}_{(1,0,2)}}{x^{(2k+1)}_{(1,1,1)}} \cdot \frac{x^{(2k+1)}_{(0,2,1)}}{x^{((2k)(2k+1))}_{(1,2,0)}} \Biggr) + \frac{x^{(2k+2)}_{(1,2,0)}}{x^{((2k)(2k+1))}_{(1,2,0)}} \cdot D^{[2k-1]}_2;
\end{align*}

\begin{align*}
    I^{[1]} &= \frac{x^{(1)}_{(1,1,1)}x^{(2)}_{(2,1,0)}}{x^{(12)}_{(2,0,1)}} + \frac{x^{(1)}_{(2,1,0)}x^{(2)}_{(1,1,1)}}{x^{(12)}_{(2,0,1)}}; \\
    I^{[2]} &= \frac{x^{(1)}_{(2,1,0)}x^{(3)}_{(1,1,1)}}{x^{(23)}_{(2,1,0)}} + \frac{x^{(1)}_{(1,1,1)}x^{(3)}_{(2,0,1)}}{x^{(12)}_{(2,0,1)}} + \frac{x^{(1)}_{(2,1,0)}x^{(2)}_{(1,1,1)}x^{(3)}_{(2,0,1)}}{x^{(12)}_{(2,0,1)}x^{(23)}_{(2,1,0)}}; \\
    I^{[2k-1]} &= x^{(1)}_{(2,1,0)} \cdot \sum_{j=0}^{k-1} \Biggl( \frac{x^{(2k)}_{(2,1,0)}  x^{(2j+2)}_{(1,1,1)}}{x^{((2j+2)(2j+3))}_{(2,1,0)} x^{((2j+1)(2j+2))}_{(2,0,1)}} + \frac{x^{(2k)}_{(2,1,0)} x^{(2j+1)}_{(1,1,1)}}{x^{((2j+1)(2j+2))}_{(2,0,1)} x^{((2j)(2j+1))}_{(2,1,0)}} \Biggr); \\
    I^{[2k]} &= \frac{x^{(1)}_{(2,1,0)}x^{(2k+1)}_{(1,1,1)}}{x^{((2k)(2k+1))}_{(2,1,0)}} + x^{(1)}_{(2,1,0)} \cdot \sum_{j=0}^{k-1} \Biggl( \frac{x^{(2k+1)}_{(2,0,1)} x^{(2j+2)}_{(1,1,1)}}{x^{((2j+2)(2j+3))}_{(2,1,0)} x^{((2j+1)(2j+2))}_{(2,0,1)}} + \frac{x^{(2k+1)}_{(2,0,1)} x^{(2j+1)}_{(1,1,1)}}{x^{((2j+1)(2j+2))}_{(2,0,1)} x^{((2j)(2j+1))}_{(2,1,0)}} \Biggr).
\end{align*}
}
	\emph{II.} ($\mathcal{P}(m_1,n_1)$) For any $k,(l+1) \in \mathbb{N}$, then:
{	\small
\begin{align*}
D^{[2k-1,2l]}_2 
&= \frac{x^{((2k+2l)(2k+2l+1))}_{(0,2,1)} x^{(1)}_{(2,1,0)}}{x^{((2k)(2k+1))}_{(0,2,1)}}+ x^{((2k+2l)(2k+2l+1))}_{(0,2,1)} \cdot\\
&\quad\cdot \left[ \sum_{j=1}^l \left( 
\frac{x^{(2k+2j)}_{(1,2,0)}}{x^{((2k+2j)(2k+2j+1))}_{(0,2,1)} x^{((2k+2j-1)(2k+2j))}_{(1,0,2)}} 
+ \frac{x^{(2k+2j-1)}_{(2,1,0)}}{x^{((2k+2j-1)(2k+2j))}_{(2,0,1)} x^{((2k+2j-2)(2k+2j-1))}_{(0,1,2)}} \right) \right] \cdot D^{[2k-1]}_2 \\
&\quad + x^{((2k+2l)(2k+2l+1))}_{(0,2,1)} \\
&\quad \cdot \sum_{j=1}^l \Biggl[ 
\frac{1}{x^{(2k+2j)}_{(1,1,1)}} \cdot \Biggl( 
\frac{x^{(2k+2j)}_{(1,2,0)} x^{((2k+2j)(2k+2j+1))}_{(0,1,2)}}{x^{((2k+2j)(2k+2j+1))}_{(0,2,1)} x^{((2k+2j-1)(2k+2j))}_{(1,0,2)}} + \frac{x^{(2k+2j)}_{(2,1,0)}}{x^{((2k+2j-1)(2k+2j))}_{(2,0,1)}} \Biggr) 
\cdot I^{[2k-1,2j-1]} \\
&\quad + \frac{1}{x^{(2k+2j-1)}_{(1,1,1)}} \cdot \Biggl( 
\frac{x^{(2k+2j-1)}_{(2,1,0)} x^{((2k+2j-1)(2k+2j))}_{(1,0,2)}}{x^{((2k+2j-1)(2k+2j))}_{(2,0,1)} x^{((2k+2j-2)(2k+2j-1))}_{(0,1,2)}}+ \frac{x^{(2k+2j-1)}_{(1,2,0)}}{x^{((2k+2j-2)(2k+2j-1))}_{(0,2,1)}} \Biggr) \cdot I^{[2k-1,2j-2]} \Biggr];
\end{align*}

\begin{align*}
D^{[2k-1,2l+1]}_2 
&= \frac{x^{((2k+2l+1)(2k+2l+2))}_{(2,0,1)} x^{(1)}_{(2,1,0)}}{x^{((2k)(2k+1))}_{(0,2,1)}} 
+ x^{((2k+2l+1)(2k+2l+2))}_{(2,0,1)} \cdot \Biggl[ 
\frac{x^{(2k+2l+1)}_{(2,1,0)}}{x^{((2k+2l+1)(2k+2l+2))}_{(2,0,1)} x^{((2k+2l)(2k+2l+1))}_{(0,1,2)}} \\
&\quad + \sum_{j=1}^l \Biggl( \frac{x^{(2k+2j)}_{(1,2,0)}}{x^{((2k+2j)(2k+2j+1))}_{(0,2,1)} x^{((2k+2j-1)(2k+2j))}_{(1,0,2)}} 
+ \frac{x^{(2k+2j-1)}_{(2,1,0)}}{x^{((2k+2j-1)(2k+2j))}_{(2,0,1)} x^{((2k+2j-2)(2k+2j-1))}_{(0,1,2)}} \Biggr) \Biggr] \cdot D^{[2k-1]}_2 \\
&\quad + x^{((2k+2l+1)(2k+2l+2))}_{(2,0,1)} \\
&\quad \cdot \Biggl[ 
\frac{1}{x^{(2k+2l+1)}_{(1,1,1)}} \cdot \Biggl( 
\frac{x^{(2k+2l+1)}_{(2,1,0)} x^{((2k+2l+1)(2k+2l+2))}_{(1,0,2)}}{x^{((2k+2l+1)(2k+2l+2))}_{(2,0,1)} x^{((2k+2l)(2k+2l+1))}_{(0,1,2)}} + \frac{x^{(2k+2l+1)}_{(1,2,0)}}{x^{((2k+2l)(2k+2l+1))}_{(0,2,1)}} \Biggr) 
\cdot I^{[2k-1,2l]} \\
&\quad + \sum_{j=1}^l \Biggl[ \frac{1}{x^{(2k+2j)}_{(1,1,1)}} \cdot \Biggl( 
\frac{x^{(2k+2j)}_{(1,2,0)} x^{((2k+2j)(2k+2j+1))}_{(0,1,2)}}{x^{((2k+2j)(2k+2j+1))}_{(0,2,1)} x^{((2k+2j-1)(2k+2j))}_{(1,0,2)}} 
+ \frac{x^{(2k+2j)}_{(2,1,0)}}{x^{((2k+2j-1)(2k+2j))}_{(2,0,1)}} \Biggr) 
\cdot I^{[2k-1,2j-1]} \\
&\quad + \frac{1}{x^{(2k+2j-1)}_{(1,1,1)}} \cdot \Biggl( 
\frac{x^{(2k+2j-1)}_{(2,1,0)} x^{((2k+2j-1)(2k+2j))}_{(1,0,2)}}{x^{((2k+2j-1)(2k+2j))}_{(2,0,1)} x^{((2k+2j-2)(2k+2j-1))}_{(0,1,2)}}  + \frac{x^{(2k+2j-1)}_{(1,2,0)}}{x^{((2k+2j-2)(2k+2j-1))}_{(0,2,1)}} \Biggr) 
\cdot I^{[2k-1,2j-2]} \Biggr] \Biggr];
\end{align*}

\begin{align*}
D^{[2k,2l]}_2 
&= \frac{x^{((2k+2l+1)(2k+2l+2))}_{(0,1,2)} x^{(1)}_{(2,1,0)}}{x^{((2k+1)(2k+2))}_{(0,1,2)}} 
+ x^{((2k+2l+1)(2k+2l+2))}_{(0,1,2)}\\
&\quad \cdot \Biggl[ \sum_{j=1}^l \Biggl( 
\frac{x^{(2k+2j+1)}_{(1,0,2)}}{x^{((2k+2j+1)(2k+2j+2))}_{(0,1,2)} x^{((2k+2j)(2k+2j+1))}_{(1,2,0)}} + \frac{x^{(2k+2j)}_{(2,0,1)}}{x^{((2k+2j)(2k+2j+1))}_{(2,1,0)} x^{((2k+2j-1)(2k+2j))}_{(0,2,1)}} \Biggr) \Biggr] 
\cdot D^{[2k]}_2 \\
&\quad + x^{((2k+2l+1)(2k+2l+2))}_{(0,1,2)} \\
&\quad \cdot \sum_{j=1}^l \Biggl[ 
\frac{1}{x^{(2k+2j+1)}_{(1,1,1)}} \cdot \Biggl( 
\frac{x^{(2k+2j+1)}_{(1,0,2)} x^{((2k+2j+1)(2k+2j+2))}_{(0,2,1)}}{x^{((2k+2j+1)(2k+2j+2))}_{(0,1,2)} x^{((2k+2j)(2k+2j+1))}_{(1,2,0)}} + \frac{x^{(2k+2j+1)}_{(2,0,1)}}{x^{((2k+2j)(2k+2j+1))}_{(2,1,0)}} \Biggr) 
\cdot I^{[2k,2j-1]} \\
&\quad + \frac{1}{x^{(2k+2j)}_{(1,1,1)}} \cdot \Biggl( 
\frac{x^{(2k+2j)}_{(2,0,1)} x^{((2k+2j)(2k+2j+1))}_{(1,2,0)}}{x^{((2k+2j)(2k+2j+1))}_{(2,1,0)} x^{((2k+2j-1)(2k+2j))}_{(0,2,1)}} + \frac{x^{(2k+2j)}_{(1,0,2)}}{x^{((2k+2j-1)(2k+2j))}_{(0,1,2)}} \Biggr) 
\cdot I^{[2k,2j-2]} \Biggr];
\end{align*}

\begin{align*}
D^{[2k,2l+1]}_2 
&= \frac{x^{((2k+2l+2)(2k+2l+3))}_{(2,1,0)} x^{(1)}_{(2,1,0)}}{x^{((2k+1)(2k+2))}_{(0,1,2)}} 
+ x^{((2k+2l+2)(2k+2l+3))}_{(2,1,0)} \cdot \Biggl[ 
\frac{x^{(2k+2l+2)}_{(2,0,1)}}{x^{((2k+2l+2)(2k+2l+3))}_{(2,1,0)} x^{((2k+2l+1)(2k+2l+2))}_{(0,2,1)}} \\
&\quad + \sum_{j=1}^l \Biggl( \frac{x^{(2k+2j+1)}_{(1,0,2)}}{x^{((2k+2j+1)(2k+2j+2))}_{(0,1,2)} x^{((2k+2j)(2k+2j+1))}_{(1,2,0)}} 
+ \frac{x^{(2k+2j)}_{(2,0,1)}}{x^{((2k+2j)(2k+2j+1))}_{(2,1,0)} x^{((2k+2j-1)(2k+2j))}_{(0,2,1)}} \Biggr) \Biggr] \cdot D^{[2k]}_2 \\
&\quad + x^{((2k+2l+2)(2k+2l+3))}_{(2,1,0)} \\
&\quad \cdot \Biggl[ 
\frac{1}{x^{(2k+2l+2)}_{(1,1,1)}} \cdot \Biggl( 
\frac{x^{(2k+2l+2)}_{(2,0,1)} x^{((2k+2l+2)(2k+2l+3))}_{(1,2,0)}}{x^{((2k+2l+2)(2k+2l+3))}_{(2,1,0)} x^{((2k+2l+1)(2k+2l+2))}_{(0,2,1)}}+ \frac{x^{(2k+2l+2)}_{(1,0,2)}}{x^{((2k+2l+1)(2k+2l+2))}_{(0,1,2)}} \Biggr) 
\cdot I^{[2k,2l]} \\
&\quad + \sum_{j=1}^l \Biggl[ \frac{1}{x^{(2k+2j+1)}_{(1,1,1)}} \cdot \Biggl( 
\frac{x^{(2k+2j+1)}_{(1,0,2)} x^{((2k+2j+1)(2k+2j+2))}_{(0,2,1)}}{x^{((2k+2j+1)(2k+2j+2))}_{(0,1,2)} x^{((2k+2j)(2k+2j+1))}_{(1,2,0)}} 
+ \frac{x^{(2k+2j+1)}_{(2,0,1)}}{x^{((2k+2j)(2k+2j+1))}_{(2,1,0)}} \Biggr) 
\cdot I^{[2k,2j-1]} \\
&\quad + \frac{1}{x^{(2k+2j)}_{(1,1,1)}} \cdot \Biggl( 
\frac{x^{(2k+2j)}_{(2,0,1)} x^{((2k+2j)(2k+2j+1))}_{(1,2,0)}}{x^{((2k+2j)(2k+2j+1))}_{(2,1,0)} x^{((2k+2j-1)(2k+2j))}_{(0,2,1)}} + \frac{x^{(2k+2j)}_{(1,0,2)}}{x^{((2k+2j-1)(2k+2j))}_{(0,1,2)}} \Biggr) 
\cdot I^{[2k,2j-2]} \Biggr] \Biggr];
\end{align*}

\begin{align*}
I^{[2k-1,2l]} &= \frac{x^{((2k+2l)(2k+2l+1))}_{(0,1,2)}}{x^{((2k)(2k+1))}_{(0,1,2)}} \cdot I^{[2k-1]} 
+ x^{((2k+2l)(2k+2l+1))}_{(0,1,2)} \cdot \Biggl[ \sum_{j=1}^l \Biggl( \frac{x^{(2k+2j)}_{(1,1,1)}}{x^{((2k+2j)(2k+2j+1))}_{(0,1,2)} x^{((2k+2j-1)(2k+2j))}_{(1,0,2)}} \\
&\quad + \frac{x^{(2k+2j-1)}_{(1,1,1)}}{x^{((2k+2j-1)(2k+2j))}_{(1,0,2)} x^{((2k+2j-2)(2k+2j-1))}_{(0,1,2)}} \Biggr) \Biggr] \cdot D^{[2k-1]}_2; \\
I^{[2k-1,2l+1]} &= \frac{x^{((2k+2l+1)(2k+2l+2))}_{(1,0,2)}}{x^{((2k)(2k+1))}_{(0,1,2)}} \cdot I^{[2k-1]} 
+ x^{((2k+2l+1)(2k+2l+2))}_{(1,0,2)} \cdot \Biggl[ \frac{x^{(2k+2l+1)}_{(1,1,1)}}{x^{((2k+2l+1)(2k+2l+2))}_{(1,0,2)} x^{((2k+2l)(2k+2l+1))}_{(0,1,2)}} \\
&\quad + \sum_{j=1}^l \Biggl( \frac{x^{(2k+2j)}_{(1,1,1)}}{x^{((2k+2j)(2k+2j+1))}_{(0,1,2)} x^{((2k+2j-1)(2k+2j))}_{(1,0,2)}} 
+ \frac{x^{(2k+2j-1)}_{(1,1,1)}}{x^{((2k+2j-1)(2k+2j))}_{(1,0,2)} x^{((2k+2j-2)(2k+2j-1))}_{(0,1,2)}} \Biggr) \Biggr] \cdot D^{[2k-1]}_2; \\
I^{[2k,2l]} &= \frac{x^{((2k+2l+1)(2k+2l+2))}_{(0,2,1)}}{x^{((2k+1)(2k+2))}_{(0,2,1)}} \cdot I^{[2k]} 
+ x^{((2k+2l+1)(2k+2l+2))}_{(0,2,1)} \cdot \Biggl[ \sum_{j=1}^l \Biggl( \frac{x^{(2k+2j+1)}_{(1,1,1)}}{x^{((2k+2j+1)(2k+2j+2))}_{(0,2,1)} x^{((2k+2j)(2k+2j+1))}_{(1,2,0)}} \\
&\quad + \frac{x^{(2k+2j)}_{(1,1,1)}}{x^{((2k+2j)(2k+2j+1))}_{(1,2,0)} x^{((2k+2j-1)(2k+2j))}_{(0,2,1)}} \Biggr) \Biggr] \cdot D^{[2k]}_2; \\
I^{[2k,2l+1]} &= \frac{x^{((2k+2l+2)(2k+2l+3))}_{(1,2,0)}}{x^{((2k+1)(2k+2))}_{(0,2,1)}} \cdot I^{[2k]} 
+ x^{((2k+2l+2)(2k+2l+3))}_{(1,2,0)} \cdot \Biggl[ \frac{x^{(2k+2l+2)}_{(1,1,1)}}{x^{((2k+2l+2)(2k+2l+3))}_{(1,2,0)} x^{((2k+2l+1)(2k+2l+2))}_{(0,2,1)}} \\
&\quad + \sum_{j=1}^l \Biggl( \frac{x^{(2k+2j+1)}_{(1,1,1)}}{x^{((2k+2j+1)(2k+2j+2))}_{(0,2,1)} x^{((2k+2j)(2k+2j+1))}_{(1,2,0)}} 
+ \frac{x^{(2k+2j)}_{(1,1,1)}}{x^{((2k+2j)(2k+2j+1))}_{(1,2,0)} x^{((2k+2j-1)(2k+2j))}_{(0,2,1)}} \Biggr) \Biggr] \cdot D^{[2k]}_2.
\end{align*}
}
    \emph{III-1.} (For $n_k \geq 1$) For any $k,m_1,n_1,m_2,n_2,...,m_k,(n_k-1) \in \mathbb{N}$, then the functions satisfy the following relations:  
    
    If $n_k = 2l \geq 2$ is even:
  {\small  \begin{align*}
    &I^{[m_1,n_1,\ldots,m_{k-1},n_{k-1},m_k,n_k]} \\
    &= \frac{x^{((M_k+N_k+1)(M_k+N_k+2))}_{\omega \cdot (2,1,0)}}{x^{((M_k+N_{k-1}+1)(M_k+N_{k-1}+2))}_{\omega \cdot (2,1,0)}} \cdot I^{[m_1,n_1,\ldots,m_{k-1},n_{k-1},m_k]}  + x^{((M_k+N_k+1)(M_k+N_k+2))}_{\omega \cdot (2,1,0)} \\
    &\quad \cdot \Biggl[ \sum_{j=1}^l \Biggl( \frac{x^{(M_k+N_{k-1}+2j+1)}_{(1,1,1)}}{x^{((M_k+N_{k-1}+2j+1)(M_k+N_{k-1}+2j+2))}_{\omega \cdot (2,1,0)} x^{((M_k+N_{k-1}+2j)(M_k+N_{k-1}+2j+1))}_{\omega \cdot (2,0,1)}} \\
    &\quad + \frac{x^{(M_k+N_{k-1}+2j)}_{(1,1,1)}}{x^{((M_k+N_{k-1}+2j)(M_k+N_{k-1}+2j+1))}_{\omega \cdot (2,0,1)} x^{((M_k+N_{k-1}+2j-1)(M_k+N_{k-1}+2j))}_{\omega \cdot (2,1,0)}} \Biggr) \Biggr] \cdot D^{[m_1,n_1,\ldots,m_{k-1},n_{k-1},m_k]}_{2};
\end{align*}

\begin{align*}
&D^{[m_1,n_1,\ldots,m_{k-1},n_{k-1},m_k,n_k]}_{2} \\
    &= \frac{x^{((M_k+N_k+1)(M_k+N_k+2))}_{\omega \cdot (1,2,0)}}{x^{((M_k+N_{k-1}+1)(M_k+N_{k-1}+2))}_{\omega \cdot (1,2,0)}} \cdot D^{[m_1,n_1,\ldots,m_{k-1},n_{k-1}]}_{2} \\
    &\quad + x^{((M_k+N_k+1)(M_k+N_k+2))}_{\omega \cdot (1,2,0)} \cdot \Biggl[ \sum_{j=1}^l \Biggl( \frac{x^{(M_k+N_{k-1}+2j+1)}_{\omega \cdot (0,2,1)}}{x^{((M_k+N_{k-1}+2j+1)(M_k+N_{k-1}+2j+2))}_{\omega \cdot (1,2,0)} x^{((M_k+N_{k-1}+2j)(M_k+N_{k-1}+2j+1))}_{\omega \cdot (2,0,1)}} \\
    &\quad + \frac{x^{(M_k+N_{k-1}+2j)}_{\omega \cdot (0,1,2)}}{x^{((M_k+N_{k-1}+2j)(M_k+N_{k-1}+2j+1))}_{\omega \cdot (1,0,2)} x^{((M_k+N_{k-1}+2j-1)(M_k+N_{k-1}+2j))}_{\omega \cdot (2,1,0)}} \Biggr) \Biggr] \cdot D^{[m_1,n_1,\ldots,m_{k-1},n_{k-1},m_k]}_{2} \\
    &\quad + x^{((M_k+N_k+1)(M_k+N_k+2))}_{\omega \cdot (1,2,0)} \\
    &\quad \cdot \sum_{j=1}^l \Biggl[ \frac{1}{x^{(M_k+N_{k-1}+2j+1)}_{(1,1,1)}} \cdot \Biggl( \frac{x^{(M_k+N_{k-1}+2j+1)}_{\omega \cdot (0,2,1)} x^{((M_k+N_{k-1}+2j+1)(M_k+N_{k-1}+2j+2))}_{\omega \cdot (2,1,0)}}{x^{((M_k+N_{k-1}+2j+1)(M_k+N_{k-1}+2j+2))}_{\omega \cdot (1,2,0)} x^{((M_k+N_{k-1}+2j)(M_k+N_{k-1}+2j+1))}_{\omega \cdot (2,0,1)}} \\
    &\quad +  \frac{x^{(M_k+N_{k-1}+2j+1)}_{\omega \cdot (0,1,2)}}{x^{((M_k+N_{k-1}+2j)(M_k+N_{k-1}+2j+1))}_{\omega \cdot (1,0,2)}} \Biggr) \cdot I^{[m_1,n_1,\ldots,m_{k-1},n_{k-1},m_k,2j-1]} \\
    &\quad + \frac{1}{x^{(M_k+N_{k-1}+2j)}_{(1,1,1)}} \cdot \Biggl( \frac{x^{(M_k+N_{k-1}+2j)}_{\omega \cdot (0,1,2)} x^{((M_k+N_{k-1}+2j)(M_k+N_{k-1}+2j+1))}_{\omega \cdot (2,0,1)}}{x^{((M_k+N_{k-1}+2j)(M_k+N_{k-1}+2j+1))}_{\omega \cdot (1,0,2)} x^{((M_k+N_{k-1}+2j-1)(M_k+N_{k-1}+2j))}_{\omega \cdot (2,1,0)}} \\
    &\quad + \frac{x^{(M_k+N_{k-1}+2j)}_{\omega \cdot (0,2,1)}}{x^{((M_k+N_{k-1}+2j-1)(M_k+N_{k-1}+2j))}_{\omega \cdot (1,2,0)}} \Biggr) \cdot I^{[m_1,n_1,\ldots,m_{k-1},n_{k-1},m_k,2j-2]} \Biggr].
\end{align*}

If $n_k = 2l+1 \geq 1$ is odd:
\begin{align*}
    &I^{[m_1,n_1,\ldots,m_{k-1},n_{k-1},m_k,n_k]} \\
    &= \frac{x^{((M_k+N_k+1)(M_k+N_k+2))}_{\omega \cdot (2,0,1)}}{x^{((M_k+N_{k-1}+1)(M_k+N_{k-1}+2))}_{\omega \cdot (2,1,0)}} \cdot I^{[m_1,n_1,\ldots,m_{k-1},n_{k-1},m_k]} \\
    &\quad + x^{((M_k+N_k+1)(M_k+N_k+2))}_{\omega \cdot (2,0,1)} \cdot \Biggl[ \frac{x^{(M_k+N_k+1)}_{(1,1,1)}}{x^{((M_k+N_k+1)(M_k+N_k+2))}_{\omega \cdot (2,0,1)} x^{((M_k+N_k)(M_k+N_k+1))}_{\omega \cdot (2,1,0)}} \\
    &\quad + \sum_{j=1}^l \Biggl( \frac{x^{(M_k+N_{k-1}+2j+1)}_{(1,1,1)}}{x^{((M_k+N_{k-1}+2j+1)(M_k+N_{k-1}+2j+2))}_{\omega \cdot (2,1,0)} x^{((M_k+N_{k-1}+2j)(M_k+N_{k-1}+2j+1))}_{\omega \cdot (2,0,1)}} \\
    &\quad + \frac{x^{(M_k+N_{k-1}+2j)}_{(1,1,1)}}{x^{((M_k+N_{k-1}+2j)(M_k+N_{k-1}+2j+1))}_{\omega \cdot (2,0,1)} x^{((M_k+N_{k-1}+2j-1)(M_k+N_{k-1}+2j))}_{\omega \cdot (2,1,0)}} \Biggr) \Biggr] \cdot D^{[m_1,n_1,\ldots,m_{k-1},n_{k-1},m_k]}_{2}; 
\end{align*}

\begin{align*}
    &D^{[m_1,n_1,\ldots,m_{k-1},n_{k-1},m_k,n_k]}_{2} \\
    &= \frac{x^{((M_k+N_k+1)(M_k+N_k+2))}_{\omega \cdot (1,0,2)}}{x^{((M_k+N_{k-1}+1)(M_k+N_{k-1}+2))}_{\omega \cdot (1,2,0)}} \cdot D^{[m_1,n_1,\ldots,m_{k-1},n_{k-1}]}_{2} \\
    &\quad + x^{((M_k+N_k+1)(M_k+N_k+2))}_{\omega \cdot (1,0,2)} \cdot \Biggl[ \frac{x^{(M_k+N_k+1)}_{\omega \cdot (0,1,2)}}{x^{((M_k+N_k+1)(M_k+N_k+2))}_{\omega \cdot (1,0,2)} x^{((M_k+N_k)(M_k+N_k+1))}_{\omega \cdot (2,1,0)}} \\
    &\quad + \sum_{j=1}^l \Biggl( \frac{x^{(M_k+N_{k-1}+2j+1)}_{\omega \cdot (0,2,1)}}{x^{((M_k+N_{k-1}+2j+1)(M_k+N_{k-1}+2j+2))}_{\omega \cdot (1,2,0)} x^{((M_k+N_{k-1}+2j)(M_k+N_{k-1}+2j+1))}_{\omega \cdot (2,0,1)}} \\
    &\quad + \frac{x^{(M_k+N_{k-1}+2j)}_{\omega \cdot (0,1,2)}}{x^{((M_k+N_{k-1}+2j)(M_k+N_{k-1}+2j+1))}_{\omega \cdot (1,0,2)} x^{((M_k+N_{k-1}+2j-1)(M_k+N_{k-1}+2j))}_{\omega \cdot (2,1,0)}} \Biggr) \Biggr] \cdot D^{[m_1,n_1,\ldots,m_{k-1},n_{k-1},m_k]}_{2} \\
    &\quad + x^{((M_k+N_k+1)(M_k+N_k+2))}_{\omega \cdot (1,0,2)} \cdot \Biggl[ \frac{1}{x^{(M_k+N_k+1)}_{(1,1,1)}} \cdot \Biggl( \frac{x^{(M_k+N_k+1)}_{\omega \cdot (0,1,2)} x^{((M_k+N_k+1)(M_k+N_k+2))}_{\omega \cdot (2,0,1)}}{x^{((M_k+N_k+1)(M_k+N_k+2))}_{\omega \cdot (1,0,2)} x^{((M_k+N_k)(M_k+N_k+1))}_{\omega \cdot (2,1,0)}} \\
    &\quad + \frac{x^{(M_k+N_k+1)}_{\omega \cdot (0,2,1)}}{x^{((M_k+N_k)(M_k+N_k+1))}_{\omega \cdot (1,2,0)}} \Biggr) \cdot I^{[m_1,n_1,\ldots,m_{k-1},n_{k-1},m_k,2l]} \\
    &\quad + \sum_{j=1}^l \Biggl[ \frac{1}{x^{(M_k+N_{k-1}+2j+1)}_{(1,1,1)}} \cdot \Biggl( \frac{x^{(M_k+N_{k-1}+2j+1)}_{\omega \cdot (0,2,1)} x^{((M_k+N_{k-1}+2j+1)(M_k+N_{k-1}+2j+2))}_{\omega \cdot (2,1,0)}}{x^{((M_k+N_{k-1}+2j+1)(M_k+N_{k-1}+2j+2))}_{\omega \cdot (1,2,0)} x^{((M_k+N_{k-1}+2j)(M_k+N_{k-1}+2j+1))}_{\omega \cdot (2,0,1)}} \\
    &\quad + \frac{x^{(M_k+N_{k-1}+2j+1)}_{\omega \cdot (0,1,2)}}{x^{((M_k+N_{k-1}+2j)(M_k+N_{k-1}+2j+1))}_{\omega \cdot (1,0,2)}} \Biggr) \cdot I^{[m_1,n_1,\ldots,m_{k-1},n_{k-1},m_k,2j-1]} \\
    &\quad + \frac{1}{x^{(M_k+N_{k-1}+2j)}_{(1,1,1)}} \cdot \Biggl( \frac{x^{(M_k+N_{k-1}+2j)}_{\omega \cdot (0,1,2)} x^{((M_k+N_{k-1}+2j)(M_k+N_{k-1}+2j+1))}_{\omega \cdot (2,0,1)}}{x^{((M_k+N_{k-1}+2j)(M_k+N_{k-1}+2j+1))}_{\omega \cdot (1,0,2)} x^{((M_k+N_{k-1}+2j-1)(M_k+N_{k-1}+2j))}_{\omega \cdot (2,1,0)}} \\
    &\quad + \frac{x^{(M_k+N_{k-1}+2j)}_{\omega \cdot (0,2,1)}}{x^{((M_k+N_{k-1}+2j-1)(M_k+N_{k-1}+2j))}_{\omega \cdot (1,2,0)}} \Biggr) \cdot I^{[m_1,n_1,\ldots,m_{k-1},n_{k-1},m_k,2j-2]} \Biggr] \Biggr].
\end{align*}
}
    \emph{III-2.} (For $n_k=0$) For any $(k-1),m_1,n_1,m_2,n_2,...,m_{k-1},n_{k-1},(m_k-1) \in \mathbb{N}$, then the functions satisfy the following relations: 
    
    If $m_k=2l\geq 2$ is even:
{\small\begin{align*}
&I^{[m_1,n_1,m_2,n_2,...,m_{k-1},n_{k-1},m_k]} \\
&= \frac{x^{((M_k+N_{k-1}+1)(M_k+N_{k-1}+2))}_{\omega' \cdot (2,0,1)}}{x^{((M_{k-1}+N_{k-1}+1)(M_{k-1}+N_{k-1}+2))}_{\omega' \cdot (2,0,1)}} \cdot I^{[m_1,n_1,m_2,n_2,...,m_{k-1},n_{k-1}]} + x^{((M_k+N_{k-1}+1)(M_k+N_{k-1}+2))}_{\omega' \cdot (2,0,1)} \\
&\quad \cdot \Biggl[ \sum_{j=1}^l \Biggl( \frac{x^{(M_{k-1}+N_{k-1}+2j+1)}_{(1,1,1)}}{x^{((M_{k-1}+N_{k-1}+2j+1)(M_{k-1}+N_{k-1}+2j+2))}_{\omega' \cdot (2,0,1)} x^{((M_{k-1}+N_{k-1}+2j)(M_{k-1}+N_{k-1}+2j+1))}_{\omega' \cdot (2,1,0)}} \\
&\quad + \frac{x^{(M_{k-1}+N_{k-1}+2j)}_{(1,1,1)}}{x^{((M_{k-1}+N_{k-1}+2j)(M_{k-1}+N_{k-1}+2j+1))}_{\omega' \cdot (2,1,0)} x^{((M_{k-1}+N_{k-1}+2j-1)(M_{k-1}+N_{k-1}+2j))}_{\omega' \cdot (2,0,1)}} \Biggr) \Biggr] \cdot D^{[m_1,n_1,...,m_{k-1},n_{k-1}]}_{2};
\end{align*}

\begin{align*}
&D^{[m_1,n_1,...,m_{k-1},n_{k-1},m_k]}_{2} \\
&= \frac{x^{((M_k+N_{k-1}+1)(M_k+N_{k-1}+2))}_{\omega' \cdot (1,0,2)}}{x^{((M_{k-1}+N_{k-1}+1)(M_{k-1}+N_{k-1}+2))}_{\omega' \cdot (1,0,2)}} \cdot D^{[m_1,n_1,...,m_{k-2},n_{k-2},m_{k-1}]}_{2} \\
&\quad + x^{((M_k+N_{k-1}+1)(M_k+N_{k-1}+2))}_{\omega' \cdot (1,0,2)} \cdot \Biggl[ \sum_{j=1}^l \Biggl( \frac{x^{(M_{k-1}+N_{k-1}+2j+1)}_{\omega' \cdot (0,1,2)}}{x^{((M_{k-1}+N_{k-1}+2j+1)(M_{k-1}+N_{k-1}+2j+2))}_{\omega' \cdot (1,0,2)} x^{((M_{k-1}+N_{k-1}+2j)(M_{k-1}+N_{k-1}+2j+1))}_{\omega' \cdot (2,1,0)}} \\
&\quad + \frac{x^{(M_{k-1}+N_{k-1}+2j)}_{\omega' \cdot (0,2,1)}}{x^{((M_{k-1}+N_{k-1}+2j)(M_{k-1}+N_{k-1}+2j+1))}_{\omega' \cdot (1,2,0)} x^{((M_{k-1}+N_{k-1}+2j-1)(M_{k-1}+N_{k-1}+2j))}_{\omega' \cdot (2,0,1)}} \Biggr) \Biggr] \cdot D^{[m_1,n_1,...,m_{k-1},n_{k-1}]}_{2} \\
&\quad + x^{((M_k+N_{k-1}+1)(M_k+N_{k-1}+2))}_{\omega' \cdot (1,0,2)} \cdot \sum_{j=1}^l \Biggl[ \frac{1}{x^{(M_{k-1}+N_{k-1}+2j+1)}_{(1,1,1)}} \\
&\quad \cdot \Biggl( \frac{x^{(M_{k-1}+N_{k-1}+2j+1)}_{\omega' \cdot (0,1,2)} x^{((M_{k-1}+N_{k-1}+2j+1)(M_{k-1}+N_{k-1}+2j+2))}_{\omega' \cdot (2,0,1)}}{x^{((M_{k-1}+N_{k-1}+2j+1)(M_{k-1}+N_{k-1}+2j+2))}_{\omega' \cdot (1,0,2)} x^{((M_{k-1}+N_{k-1}+2j)(M_{k-1}+N_{k-1}+2j+1))}_{\omega' \cdot (2,1,0)}} \\
&\quad + \frac{x^{(M_{k-1}+N_{k-1}+2j+1)}_{\omega' \cdot (0,2,1)}}{x^{((M_{k-1}+N_{k-1}+2j)(M_{k-1}+N_{k-1}+2j+1))}_{\omega' \cdot (1,2,0)}} \Biggr) \cdot I^{[m_1,n_1,...,m_{k-1},n_{k-1},2j-1]} \\
&\quad + \frac{1}{x^{(M_{k-1}+N_{k-1}+2j)}_{(1,1,1)}} \cdot \Biggl( \frac{x^{(M_{k-1}+N_{k-1}+2j)}_{\omega' \cdot (0,2,1)} x^{((M_{k-1}+N_{k-1}+2j)(M_{k-1}+N_{k-1}+2j+1))}_{\omega' \cdot (2,1,0)}}{x^{((M_{k-1}+N_{k-1}+2j)(M_{k-1}+N_{k-1}+2j+1))}_{\omega' \cdot (1,2,0)} x^{((M_{k-1}+N_{k-1}+2j-1)(M_{k-1}+N_{k-1}+2j))}_{\omega' \cdot (2,0,1)}} \\
&\quad + \frac{x^{(M_{k-1}+N_{k-1}+2j)}_{\omega' \cdot (0,1,2)}}{x^{((M_{k-1}+N_{k-1}+2j-1)(M_{k-1}+N_{k-1}+2j))}_{\omega' \cdot (1,0,2)}} \Biggr) \cdot I^{[m_1,n_1,...,m_{k-1},n_{k-1},2j-2]} \Biggr].
\end{align*}

If $m_k=2l+1\geq 1$ is odd:
\begin{align*}
&I^{[m_1,n_1,m_2,n_2,...,m_{k-1},n_{k-1},m_k]} \\
&= \frac{x^{((M_k+N_{k-1}+1)(M_k+N_{k-1}+2))}_{\omega' \cdot (2,1,0)}}{x^{((M_{k-1}+N_{k-1}+1)(M_{k-1}+N_{k-1}+2))}_{\omega' \cdot (2,0,1)}} \cdot I^{[m_1,n_1,m_2,n_2,...,m_{k-1},n_{k-1}]} \\
&\quad + x^{((M_k+N_{k-1}+1)(M_k+N_{k-1}+2))}_{\omega' \cdot (2,1,0)}\cdot \Biggl[ \frac{x^{(M_k+N_{k-1}+1)}_{(1,1,1)}}{x^{((M_k+N_{k-1}+1)(M_k+N_{k-1}+2))}_{\omega' \cdot (2,1,0)} x^{((M_k+N_{k-1})(M_k+N_{k-1}+1))}_{\omega' \cdot (2,0,1)}} \\
&\quad + \sum_{j=1}^l \Biggl( \frac{x^{(M_{k-1}+N_{k-1}+2j+1)}_{(1,1,1)}}{x^{((M_{k-1}+N_{k-1}+2j+1)(M_{k-1}+N_{k-1}+2j+2))}_{\omega' \cdot (2,0,1)} x^{((M_{k-1}+N_{k-1}+2j)(M_{k-1}+N_{k-1}+2j+1))}_{\omega' \cdot (2,1,0)}} \\
&\quad + \frac{x^{(M_{k-1}+N_{k-1}+2j)}_{(1,1,1)}}{x^{((M_{k-1}+N_{k-1}+2j)(M_{k-1}+N_{k-1}+2j+1))}_{\omega' \cdot (2,1,0)} x^{((M_{k-1}+N_{k-1}+2j-1)(M_{k-1}+N_{k-1}+2j))}_{\omega' \cdot (2,0,1)}} \Biggr) \Biggr] \cdot D^{[m_1,n_1,...,m_{k-1},n_{k-1}]}_{2};
\end{align*}

\begin{align*}
&D^{[m_1,n_1,...,m_{k-1},n_{k-1},m_k]}_{2} \\
&= \frac{x^{((M_k+N_{k-1}+1)(M_k+N_{k-1}+2))}_{\omega' \cdot (1,2,0)}}{x^{((M_{k-1}+N_{k-1}+1)(M_{k-1}+N_{k-1}+2))}_{\omega' \cdot (1,0,2)}} \cdot D^{[m_1,n_1,...,m_{k-2},n_{k-2},m_{k-1}]}_{2} \\
&\quad + x^{((M_k+N_{k-1}+1)(M_k+N_{k-1}+2))}_{\omega' \cdot (1,2,0)} \cdot \Biggl[ \frac{x^{(M_k+N_{k-1}+1)}_{\omega' \cdot (0,2,1)}}{x^{((M_k+N_{k-1}+1)(M_k+N_{k-1}+2))}_{\omega' \cdot (1,2,0)} x^{((M_k+N_{k-1})(M_k+N_{k-1}+1))}_{\omega' \cdot (2,0,1)}} \\
&\quad + \sum_{j=1}^l \Biggl( \frac{x^{(M_{k-1}+N_{k-1}+2j+1)}_{\omega' \cdot (0,1,2)}}{x^{((M_{k-1}+N_{k-1}+2j+1)(M_{k-1}+N_{k-1}+2j+2))}_{\omega' \cdot (1,0,2)} x^{((M_{k-1}+N_{k-1}+2j)(M_{k-1}+N_{k-1}+2j+1))}_{\omega' \cdot (2,1,0)}} \\
&\quad + \frac{x^{(M_{k-1}+N_{k-1}+2j)}_{\omega' \cdot (0,2,1)}}{x^{((M_{k-1}+N_{k-1}+2j)(M_{k-1}+N_{k-1}+2j+1))}_{\omega' \cdot (1,2,0)} x^{((M_{k-1}+N_{k-1}+2j-1)(M_{k-1}+N_{k-1}+2j))}_{\omega' \cdot (2,0,1)}} \Biggr) \Biggr] \cdot D^{[m_1,n_1,...,m_{k-1},n_{k-1}]}_{2} \\
&\quad + x^{((M_k+N_{k-1}+1)(M_k+N_{k-1}+2))}_{\omega' \cdot (1,2,0)} \cdot \Biggl[ \frac{1}{x^{(M_k+N_{k-1}+1)}_{(1,1,1)}} \cdot \Biggl( \frac{x^{(M_k+N_{k-1}+1)}_{\omega' \cdot (0,2,1)} x^{((M_k+N_{k-1}+1)(M_k+N_{k-1}+2))}_{\omega' \cdot (2,1,0)}}{x^{((M_k+N_{k-1}+1)(M_k+N_{k-1}+2))}_{\omega' \cdot (1,2,0)} x^{((M_k+N_{k-1})(M_k+N_{k-1}+1))}_{\omega' \cdot (2,0,1)}} \\
&\quad + \frac{x^{(M_k+N_{k-1}+1)}_{\omega' \cdot (0,1,2)}}{x^{((M_k+N_{k-1})(M_k+N_{k-1}+1))}_{\omega' \cdot (1,0,2)}} \Biggr) \cdot I^{[m_1,n_1,...,m_{k-1},n_{k-1},2l]} \\
&\quad + \sum_{j=1}^l \Biggl[ \frac{1}{x^{(M_{k-1}+N_{k-1}+2j+1)}_{(1,1,1)}} \cdot \Biggl( \frac{x^{(M_{k-1}+N_{k-1}+2j+1)}_{\omega' \cdot (0,1,2)} x^{((M_{k-1}+N_{k-1}+2j+1)(M_{k-1}+N_{k-1}+2j+2))}_{\omega' \cdot (2,0,1)}}{x^{((M_{k-1}+N_{k-1}+2j+1)(M_{k-1}+N_{k-1}+2j+2))}_{\omega' \cdot (1,0,2)} x^{((M_{k-1}+N_{k-1}+2j)(M_{k-1}+N_{k-1}+2j+1))}_{\omega' \cdot (2,1,0)}} \\
&\quad + \frac{x^{(M_{k-1}+N_{k-1}+2j+1)}_{\omega' \cdot (0,2,1)}}{x^{((M_{k-1}+N_{k-1}+2j)(M_{k-1}+N_{k-1}+2j+1))}_{\omega' \cdot (1,2,0)}} \Biggr) \cdot I^{[m_1,n_1,...,m_{k-1},n_{k-1},2j-1]} \\
&\quad + \frac{1}{x^{(M_{k-1}+N_{k-1}+2j)}_{(1,1,1)}} \cdot \Biggl( \frac{x^{(M_{k-1}+N_{k-1}+2j)}_{\omega' \cdot (0,2,1)} x^{((M_{k-1}+N_{k-1}+2j)(M_{k-1}+N_{k-1}+2j+1))}_{\omega' \cdot (2,1,0)}}{x^{((M_{k-1}+N_{k-1}+2j)(M_{k-1}+N_{k-1}+2j+1))}_{\omega' \cdot (1,2,0)} x^{((M_{k-1}+N_{k-1}+2j-1)(M_{k-1}+N_{k-1}+2j))}_{\omega' \cdot (2,0,1)}} \\
&\quad + \frac{x^{(M_{k-1}+N_{k-1}+2j)}_{\omega' \cdot (0,1,2)}}{x^{((M_{k-1}+N_{k-1}+2j-1)(M_{k-1}+N_{k-1}+2j))}_{\omega' \cdot (1,0,2)}} \Biggr) \cdot I^{[m_1,n_1,...,m_{k-1},n_{k-1},2j-2]} \Biggr] \Biggr].
\end{align*}
}
\end{Thm}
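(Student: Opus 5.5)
The plan is to prove the recurrences by induction along the standard sequence of flips of Definition~\ref{def:standard seq_flips}, using as the only algebraic input the explicit $n=2$ quadrilateral flip. For a $2$-triangulated quadrilateral, Proposition~\ref{prop:flip_properties}(a) together with the first two mutations of the flip gives every new variable explicitly. The two vertices mutated once, $x'_1$ and $x'_2$, are the inner vertices of the two new triangles. Each is a single exchange relation of the form $(\text{frozen}\cdot\text{diag}+\text{frozen}\cdot\text{diag})/x$. The two new diagonal vertices are $x'_3=(y_4x'_2+y_5x'_1)/x_3$ and $x'_4=(y_8x'_1+y_1x'_2)/x_4$; they are linear in the once-mutated inner vertices. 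At each step of the standard sequence, I would apply these formulas to the quadrilateral formed by the current triangle $\triangle A\,V_{\ast}\,V_{\ast\ast}$ (containing the previously computed $D_1,D_2,I$) and the next triangle of $\cT$. In that quadrilateral, the $y_i$ and $x_i$ are identified with the $\Gamma$-labelled variables $x^{(r)}_{(a,b,c)}$ and $x^{((r)(r+1))}_{(a,b,c)}$.

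\textbf{Reductions.} By Definition~\ref{Def:ref_poly} it is enough to treat $D_2$ and $I$: reflecting the polygon exchanges the two ends of $AB$ and turns $D_1$ into $D_2$ of $\overline{\mathcal P}$. The dictionary between the local quadrilateral labels and the $\Gamma$-labels depends only on the colours of the current diagonal and of the shared edge. By the colouring convention these are determined by the parity data $(p,q)$ or $(p',q')$, i.e.\ by $\omega$ or $\omega'$. So the twelve figures (Cases 1.1--1.6 and 2.1--2.6) give the twelve possible substitutions, and all of them are recorded uniformly by letting $\omega$ (resp.\ $\omega'$) act on the subscripts.

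\textbf{Base cases.} For $\mathcal P(m_1)$ and $\mathcal P(m_1,n_1)$ (parts I and II), all flips share the fixed vertex $A$ and the colours simply alternate. A single flip gives a one-step recurrence for $I$ of the form $I^{[t]}=\alpha_t I^{[t-1]}+\beta_t D_2^{[t-1]}$. Feeding this into the $x'_4$-type formula gives
\[
D_2^{[t]}=\gamma_t D_2^{[t-1]}+(\text{terms in } I^{[t-1]},I^{[t]}),
\]
or, after substituting once more, a two-step recurrence in $D_2^{[t-1]},D_2^{[t-2]}$; this is why the stated $D_2^{[2k]}$ formula exhibits both forms. Iterating within a run of same-parity flips telescopes the prefactors into the displayed sums over $j$. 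These are ratios of products along the fan at $A$, exactly as in the $n=1$ snake-graph computation of Theorem~\ref{thm:1tri}, split by the parity of $j$. The cases $D_2^{[1]}$, $I^{[1]}$ and $I^{[2]}$ are checked directly against the pentagon example.

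\textbf{Inductive step.} For part III, I would proceed by induction on the number of levels. Entering level $k$, the diagonal reached, $d_{M_k+N_{k-1}}$ (resp.\ $d_{M_{k-1}+N_{k-1}}$), has known values $D^{[\dots,m_k]}_2$, $I^{[\dots,m_k]}$ by the induction hypothesis. The remaining $n_k$ (resp.\ $m_k$) flips all share the vertex $V$ and have constant boundary colour. So the same telescoping as in part II applies with every subscript twisted by $\omega$ (resp.\ $\omega'$). The term carrying $D^{[\dots,n_{k-1}]}_2$ with ratio prefactor comes from the last flip at the previous pivot.

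\textbf{Main obstacle.} I expect the hardest part to be the bookkeeping, not the algebra: showing that the colour of the pivot diagonal at the start of each level is exactly $\omega\cdot$(reference colouring), with $p\equiv\sigma \pmod 3$ and $q\equiv M_k+N_{k-1}\pmod 2$. I would first prove a lemma by induction on the flip count. Each flip that keeps the pivot fixed toggles $\tau$. Each change of pivot (from the left fan to the right fan) composes with $\upsilon$ or $\upsilon^{-1}$ according to the parity of the run length just completed. This is what makes $\sigma$, a signed count of run parities, the right exponent of $\upsilon$. Once this lemma is in place, the twelve figures reduce to one computation.
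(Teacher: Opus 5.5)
Your plan follows the paper's own route. The paper gives no argument beyond doing the flips ``part-by-part'' along the standard sequence with the $n=2$ quadrilateral flip, using the twelve case figures to record the colourings through $\omega,\omega'$; your colour lemma would add rigour the paper lacks. However, the one-flip recurrence you predict for the inner vertex, $I^{[t]}=\alpha_tI^{[t-1]}+\beta_tD_2^{[t-1]}$, is wrong, and telescoping it would not give the stated formulas.

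Look at the once-mutated vertex that becomes the inner vertex of the new triangle at the pivot, e.g.\ $x_2'=(y_7x_3+y_6x_4)/x_2$ in the $n=2$ example when the pivot is the corner next to $y_6,y_7$. It involves only the two old inner vertices, $I^{[t-1]}$ and the inner vertex $x_{\mathrm{next}}$ of the next triangle of $\mathcal{T}$. Its frozen coefficients lie on the two edges at the pivot: the edge from $A$ to the pivot and the next $\mathcal{T}$-diagonal. The current diagonal $AV_t$ never enters, so $I^{[t]}=\alpha_tI^{[t-1]}+\beta_t\,x_{\mathrm{next}}$.

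In the first run the edge from $A$ to the pivot is a boundary edge, so $I$ contains no $D$ at all. This is why $I^{[2k-1]}$ and $I^{[2k]}$ in part I are plain sums over inner vertices, which your recurrence would contradict. After a pivot change, the edge from $A$ to the new pivot is the diagonal just produced. Its value ($D^{[2k-1]}_2$ in part II, $D^{[\dots,m_k]}_2$ or $D^{[\dots,n_{k-1}]}_2$ in part III) is fixed for the whole run and sits in every $\beta_t$. It therefore factors out of the sum, and that is exactly the $D_2$-term in $I^{[2k-1,2l]}$ and in the $I$-formulas of part III.

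Correspondingly, the diagonal obeys $D_2^{[t]}=\gamma_tD_2^{[t-1]}+\delta_tI^{[t-1]}+\varepsilon_t$, where $\varepsilon_t$ carries the same $A$--pivot value. In III-1 the ratio term is the telescoping product $\prod_t\gamma_t$ applied to $D^{[\dots,n_{k-1}]}_2$. That value lives on $AV_{S_{k-1},m_k,1}$, the edge from $A$ to the previous pivot, which is the starting diagonal of the $n_k$-run. By contrast, $D^{[\dots,m_k]}_2$ enters through the $\varepsilon_t$, and the intermediate $I^{[\dots,m_k,j]}$ enter through the $\delta_t$.

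What lets you discard $D_1$ is not the reflection; the recurrences never involve $D_1$ at all. The reason is that every exchange relation above uses only the vertex of the current diagonal, or of the pivot edge, at its end far from $A$. So $(D_2,I)$ is a closed system.

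For your colour lemma, keep in mind that $\tau\upsilon\tau=\upsilon^{-1}$. Adding the signs $(-1)^{\text{run length}}$ directly reproduces $\sigma$ only if the per-flip $\tau$'s and the per-pivot-change $\upsilon^{\pm1}$'s are composed on opposite sides. If they are composed on the same side, each sign is multiplied by the parity of the flips already performed, and your rule must account for that.
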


\subsection{General number of terms property and examples}
\label{subsec:general-number-terms}Consider the $n$-triangulated $\cT$-triangulation of polygon of type $\mathcal{P}(p_1,p_2,\ldots,p_N)$ for $p_i>0$. Let $a_l(p_1,p_2,\ldots,p_N)$ be the value of $D^{[p_1,p_2,\ldots,p_N]}_l$ when all variables $x^{(r)}_{(w_1,w_2,w_3)} = 1$. In other words $a_l(p_1,p_2,\ldots,p_N)$ is also the number of terms (counting multiplicities) in the Laurent expansion of $D^{[p_1,p_2,\ldots,p_N]}_l$. 

For $n = 1$, we simply write $a(p_1,p_2,\ldots,p_N) := a_1(p_1,p_2,\ldots,p_N)$. Let $a(\emptyset) := 1$ (i.e.\ in case $N = 0$). Based on all possible cases above, we deduce the following recurrence:
\begin{align*}
    a(p_1) &= p_1+1; \\
    a(p_1,p_2) &= p_1p_2 + p_2 + 1; \\
    a(p_1,p_2,\ldots,p_{N+2}) &= p_{N+2} \cdot a(p_1,p_2,\ldots,p_{N+1}) + a(p_1,p_2,\ldots,p_N)
\end{align*}
for all $N,p_1,p_2,\ldots,p_{N+2} \in \mathbb{N}$. By Proposition~\ref{prop:fun_con}, we conclude that $a(p_1,p_2,\ldots,p_N)$ is precisely the numerator of the lowest form of the continued fraction $[1; p_1, p_2, \ldots, p_N]$. For general $n$, apply Theorem~\ref{thm:monomial_count}, then thinking of $a(p_1,p_2,\ldots,p_N)$ as the numerator of the lowest form of the continued fraction $[1; p_1, p_2, \ldots, p_N]$, we can deduce that $a_l(p_1,p_2,\ldots,p_N) = a(p_1,p_2,\ldots,p_N)^{l(n+1-l)}$. Therefore, counting multiplicities when counting terms, we obtain the following corollary:
\begin{Cor}
\label{cor:diagonal-terms}
    For any $n$-triangulated polygon $\mathcal{P}(p_1,p_2,\ldots,p_N)$, the general formula $D^{[p_1,p_2,\ldots,p_N]}_l$ ($l = 1,2,\ldots,n$) of the diagonal has precisely $a(p_1,p_2,\ldots,p_N)^{l(n+1-l)}$ terms, where $a(p_1,p_2,\ldots,p_N)$ is the numerator of the lowest form of the continued fraction $[1; p_1, p_2, \ldots, p_N]$.
\end{Cor}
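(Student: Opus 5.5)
The plan is to combine two ingredients already available: the $n=1$ count identified with a continued fraction numerator, and Theorem~\ref{thm:monomial_count}, which lifts any $n=1$ count $K$ to $K^{l(n+1-l)}$ in the $l$-th coordinate. Since all expansions have positive coefficients (Theorem~\ref{thm:pos_thm}), the number of terms counted with multiplicity equals the value at all variables equal to $1$. So it suffices to show two things: $a(p_1,\ldots,p_N)$ is the numerator of $[1;p_1,\ldots,p_N]$, and $a_l(p_1,\ldots,p_N)=a(p_1,\ldots,p_N)^{l(n+1-l)}$.

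\textbf{Step 1: the case $n=1$.} Specialize all variables to $1$ in Theorem~\ref{thm:1tri}. In each recurrence the bracketed sums collapse to a count of summands, namely $p_{N+2}$ terms for the last block. For example, $D^{[2k-1]}$ gives $2k=p_1+1$, and $D^{[2k]}$ gives $2k+1$. The general III-1/III-2 relations then reduce to
\[
a(p_1,\ldots,p_{N+2})=p_{N+2}\,a(p_1,\ldots,p_{N+1})+a(p_1,\ldots,p_N).
\]
The base values are $a(p_1)=p_1+1$ and $a(p_1,p_2)=p_1p_2+p_2+1$, the latter read off from the type $\mathcal P(m_1,n_1)$ formulas. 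These match the numerators $A_0=1$, $A_1=p_1+1$, $A_2=p_2A_1+A_0$ of $[1;p_1,p_2,\ldots]$ given by Proposition~\ref{prop:fun_con}, with all $a_i=1$. Induction on $N$ then shows that $a$ equals the numerator $A_N$.

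Two small points need checking here. First, the recurrence must hold uniformly over the twelve parity and colour cases, where $\omega,\omega'$ only permute labels and so do not affect values at $1$. Second, the numerator is automatically in lowest terms because $A_NB_{N-1}-A_{N-1}B_N=\pm1$.

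\textbf{Step 2: lifting to general $n$.} The diagonal $\gamma=AB$ is a non $\cT$-diagonal, by Definition~\ref{def:set-up-poly}, and is reached by the standard sequence of flips. Theorem~\ref{thm:monomial_count} therefore applies with $K=a(p_1,\ldots,p_N)$, and the $l$-th coordinate $D_l$ has $K^{l(n+1-l)}$ terms.

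Behind that theorem is Theorem~\ref{thm:well_triangulated_preservation}. With all initial variables equal to $1$, every triangle is well-triangulated with parameters $x=y=z=1$. Each flip replaces the diagonal parameter by $l=(ac+bd)/x$, which is exactly the $n=1$ exchange relation. By the formulas at the end of that proof, the value of the vertex $(a,b,c)\in\Gamma_{n+1}$ on a side is the side parameter raised to $a\cdot c$. For the $l$-th diagonal vertex this exponent is $l(n+1-l)$.

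\textbf{Main obstacle.} The delicate part is Step 1: confirming that the specialization of the lengthy recurrences in Theorem~\ref{thm:1tri} genuinely yields the three-term continued-fraction recurrence in every case, including the edge cases $n_k=0$ and trailing zeros. If that bookkeeping becomes unwieldy, I would fall back on the snake-graph description \cite{MSW08,CS}. There the number of perfect matchings of a snake graph whose zigzag/straight segments have lengths $p_i$ is classically the continued fraction numerator.
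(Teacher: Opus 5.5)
Your proposal is correct and follows essentially the paper's own argument. Specialize the $n=1$ recurrences of Theorem~\ref{thm:1tri} at unity to get $a(p_1)=p_1+1$, $a(p_1,p_2)=p_1p_2+p_2+1$ and the three-term recurrence, identify $a$ with the continued-fraction numerator via Proposition~\ref{prop:fun_con}, and lift to general $n$ by Theorem~\ref{thm:monomial_count}, which rests on Theorem~\ref{thm:well_triangulated_preservation}. Your remarks that the value at $1$ counts terms with multiplicity, and that $A_NB_{N-1}-A_{N-1}B_N=\pm1$ puts the numerator in lowest terms, make explicit two points the paper leaves implicit.
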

Furthermore, applying Theorem~\ref{thm:well_triangulated_preservation}, as any triangle in Figure~\ref{fig:inner3} is well-triangulated, we get the following consequence:

\begin{Cor}
\label{cor:inner-terms}
    For any $n$-triangulated polygon $\mathcal{P}(p_1,p_2,\ldots,p_N)$, the general formula $I^{[p_1,p_2,\ldots,p_N]}_{(i,j)}$ ($i, j \in \mathbb{N}$, $i+j \leq n$) has precisely $a(p_1,p_2,\ldots,p_N)^{i(n+1-i-j)}a(p_1,p_2,\ldots,p_{N-1})^{j(n+1-i-j)}$ terms, where $a(p_1,p_2,\ldots,p_N)$ is the numerator of the lowest form of the continued fraction $[1; p_1, p_2, \ldots, p_N]$.
\end{Cor}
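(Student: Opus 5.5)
We plan to evaluate $I^{[p_1,\ldots,p_N]}_{(i,j)}$ at the point where every initial cluster variable $x^{(r)}_{(w_1,w_2,w_3)}$ equals $1$. By the Positivity Theorem (Theorem~\ref{thm:pos_thm}) all coefficients of the Laurent expansion are positive integers, so this value is exactly the number of terms counted with multiplicity. At this specialization the initial $n$-triangulated polygon is well-triangulated, with every triangle having parameters $x=y=z=1$, since $1^{bc}1^{ca}1^{ab}=1$. Theorem~\ref{thm:well_triangulated_preservation} then says that after the standard sequence of flips of Definition~\ref{def:standard seq_flips} every triangle of the resulting triangulation is again well-triangulated. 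In particular this holds for $\triangle AVB$ of Figure~\ref{fig:inner3}, so there are parameters $\alpha,\beta,\delta$ such that the vertex with coordinate $(a,b,c)\in\Gamma_{n+1}$ has value $\alpha^{bc}\beta^{ca}\delta^{ab}$.

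Next we identify the three parameters from the edge values. On an edge one coordinate vanishes, say $a=0$, so the vertices there take the values $\alpha^{bc}$ with $b+c=n+1$. Thus each parameter equals the value at the \emph{unique} ($n=1$) diagonal vertex of that edge, and the $l$-th vertex carries that parameter raised to $l(n+1-l)$.
\begin{itemize}
\item For the edge $AB=\gamma$, Corollary~\ref{cor:diagonal-terms} gives the values $a(p_1,\ldots,p_N)^{l(n+1-l)}$, so its parameter is $a(p_1,\ldots,p_N)$.
\item The edge through $V$ other than the boundary edge is the diagonal $d_{M_k+N_k-1}$ flipped just before $\gamma$. Following the level structure of Definition~\ref{def:standard seq_flips}, this diagonal is precisely the non-$\cT$ diagonal for the truncated polygon $\mathcal{P}(p_1,\ldots,p_{N-1})$, with the conventions $a(\emptyset)=1$ and, where needed, Definition~\ref{Def:ref_poly}. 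Applying Corollary~\ref{cor:diagonal-terms} to it gives parameter $a(p_1,\ldots,p_{N-1})$.
\item The remaining edge of $\triangle AVB$ is a boundary edge of the polygon. Its frozen values stay $1$, so its parameter is $1$.
\end{itemize}

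Finally we translate the index $(i,j)$ of $I_{(i,j)}$ in Figure~\ref{fig:inner3} into $\Gamma_{n+1}$-coordinates $(a,b,c)$ adapted to the three edges. We expect the labelling to put the vertex at distance $n+1-i-j$ (in lattice steps) from the boundary edge, with $i$ and $j$ measured from the two diagonal edges. The monomial $\alpha^{bc}\beta^{ca}\delta^{ab}$, with the boundary parameter set to $1$, then becomes
\[
a(p_1,\ldots,p_N)^{i(n+1-i-j)}\,a(p_1,\ldots,p_{N-1})^{j(n+1-i-j)},
\]
which is the claimed count.

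The main obstacle is this bookkeeping, together with the identification of the second edge. We must check that the coordinate convention of $I_{(i,j)}$ in Figure~\ref{fig:inner3} gives exactly the exponents $i(n+1-i-j)$ and $j(n+1-i-j)$, rather than the roles of $i$ and $j$ swapped. We must also check that the edge $AV$ (respectively $VB$) really carries the expansion associated to $\mathcal{P}(p_1,\ldots,p_{N-1})$ in both the $n_k>0$ and the $n_k=0$ cases. We would settle both points by tracking the last flip in the case figures (Cases 1.x and 2.x), where $D^{[X]}$ and $D^{[X,**]}$ are displayed on the two diagonal sides of $\triangle AVB$, and by checking against the explicit $n=2$ expressions $I^{[1]}$ and $I^{[2]}$ in Theorem~\ref{thm:2tri}.
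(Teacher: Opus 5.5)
Your route is the paper's. Its whole justification is to set every variable to $1$, apply Theorem~\ref{thm:well_triangulated_preservation} to $\triangle AVB$ of Figure~\ref{fig:inner3}, and read the three edge parameters off $D^{[X,**]}$, $D^{[X]}$ and the boundary edge $VB$.

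\textbf{One step is wrong: $AV$ is not $d_{M_k+N_k-1}$.}
\begin{itemize}
\item By the definition of the standard sequence of flips, $AV$ is produced at the \emph{start} of the last block: it is $d_{M_k+N_{k-1}}$ if $n_k>0$ and $d_{M_{k-1}+N_{k-1}}$ if $n_k=0$. It is followed by $p_N-1$ further flips before $\gamma$.
\item The diagonal produced just before $\gamma$ is the third side of the triangle on the \emph{other} side of $\gamma$. When $p_N\ge 2$ it does not pass through $V$.
\item That diagonal is the non-$\cT$ diagonal of $\mathcal{P}(p_1,\ldots,p_{N-1},p_N-1)$, so using it would give the wrong count. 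In $\mathcal{P}(2,3,3,1,3)$ it is $AV_{2,3,3,1,3}=d_{11}$, while $AV=d_9$.
\end{itemize}
The correct reason $AV$ carries $D^{[p_1,\ldots,p_{N-1}]}$ is twofold. First, $AV$ crosses exactly the $p_1+\cdots+p_{N-1}$ diagonals of the first $N-1$ blocks. Second, its vertices are never mutated afterwards, since every later flip has $AV$ as a side and a flip only mutates vertices interior to its quadrilateral. This is what the label $D^{[X]}$ in Figure~\ref{fig:inner3} encodes.

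\textbf{The bookkeeping you flagged comes out as you hoped.} In Figure~\ref{fig:inner3}, $I_{(i,j)}$ lies at lattice distance $i$ from $AV$, $j$ from $AB$, and $n+1-i-j$ from $VB$. For $n=3$, $I_{(2,1)}$ and $I_{(1,2)}$ are at distances $(2,1,1)$ and $(1,2,1)$ from $(AV,AB,VB)$. In $x^{bc}y^{ca}z^{ab}$, each edge parameter is raised to the product of the distances to the other two edges. This gives exactly $a(p_1,\ldots,p_N)^{i(n+1-i-j)}\,a(p_1,\ldots,p_{N-1})^{j(n+1-i-j)}$.

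The $n=2$ checks you propose cannot detect an $i\leftrightarrow j$ swap, since there $i=j=1$. However, the paper's worked count $6966=162\cdot 43=a(2,3,3,1,3)\,a(2,3,3,1)$ does confirm that the second edge belongs to $\mathcal{P}(2,3,3,1)$. The alternative $\mathcal{P}(2,3,3,1,2)$ would give $162\cdot 119$.
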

Returning to Theorem~\ref{thm:1tri} and Theorem~\ref{thm:2tri}, we see that the functions can be computed inductively by the linear combination of the functions that are computed for smaller cases.
\begin{Ex}We shall return to the example in Figure~\ref{fig:coord}. Recall the polygon $\mathcal{P}(2,3,3,1,3,0)$, or equivalently, $\mathcal{P}(2,3,3,1,3)$, the target is to calculate the specific values $D^{[2,3,3,1,3]}_i$ with $i = 1,2,\ldots,n$ for both cases $n = 1$ and $n = 2$. We shall apply Theorem~\ref{thm:1tri} and Theorem~\ref{thm:2tri} above. Indeed:
\begin{enumerate}[label=(\arabic*)]
    \item For $n = 1$, we only need to compute $D^{[2,3,3,1,3]}$. Applying Theorem~\ref{thm:1tri}, we get:
{\small    \begin{align*}
        D^{[2]} &= \frac{x^{(1)}_{(1,1,0)}x^{(3)}_{(0,1,1)}}{x^{(2)(3)}_{(1,1,0)}} + \frac{x^{(1)}_{(1,1,0)}x^{(2)}_{(0,1,1)}x^{(3)(4)}_{(1,0,1)}}{x^{(1)(2)}_{(1,0,1)}x^{(2)(3)}_{(1,1,0)}} + \frac{x^{(1)}_{(0,1,1)}x^{(3)(4)}_{(1,0,1)}}{x^{(1)(2)}_{(1,0,1)}}; \\
        D^{[2,3]} &= \frac{x^{(1)}_{(1,1,0)}x^{(6)(7)}_{(1,1,0)}}{x^{(3)(4)}_{(0,1,1)}} + \left[\frac{x^{(6)}_{(1,0,1)}}{x^{(5)(6)}_{(1,1,0)}} + \frac{x^{(5)}_{(1,0,1)}x^{(6)(7)}_{(1,1,0)}}{x^{(4)(5)}_{(1,1,0)}x^{(5)(6)}_{(0,1,1)}} + \frac{x^{(4)}_{(1,0,1)}x^{(6)(7)}_{(1,1,0)}}{x^{(3)(4)}_{(0,1,1)}x^{(4)(5)}_{(1,1,0)}}\right] \cdot D^{[2]}; \\
        D^{[2,3,3]} &= \frac{x^{(9)(10)}_{(1,0,1)}}{x^{(6)(7)}_{(1,1,0)}} \cdot D^{[2]} + x^{(9)(10)}_{(1,0,1)} \cdot \left[\frac{x^{(9)}_{(0,1,1)}}{x^{(9)(10)}_{(1,0,1)}x^{(8)(9)}_{(1,1,0)}} + \frac{x^{(8)}_{(0,1,1)}}{x^{(8)(9)}_{(1,1,0)}x^{(7)(8)}_{(1,0,1)}} + \frac{x^{(7)}_{(0,1,1)}}{x^{(7)(8)}_{(1,0,1)}x^{(6)(7)}_{(1,1,0)}}\right] \cdot D^{[2,3]}; \\
        D^{[2,3,3,1]} &= \frac{x^{(10)(11)}_{(0,1,1)}}{x^{(9)(10)}_{(1,0,1)}} \cdot D^{[2,3]} + \frac{x^{(10)}_{(1,1,0)}}{x^{(9)(10)}_{(1,0,1)}} \cdot D^{[2,3,3]}; \\
        D^{[2,3,3,1,3]} &= \frac{x^{(13)(14)}_{(1,1,0)}}{x^{(10)(11)}_{(0,1,1)}} \cdot D^{[2,3,3]} + x^{(13)(14)}_{(1,1,0)} \cdot \left[\frac{x^{(13)}_{(1,0,1)}}{x^{(13)(14)}_{(1,1,0)}x^{(12)(13)}_{(0,1,1)}} + \frac{x^{(12)}_{(1,0,1)}}{x^{(12)(13)}_{(0,1,1)}x^{(11)(12)}_{(1,1,0)}} + \frac{x^{(11)}_{(1,0,1)}}{x^{(11)(12)}_{(1,1,0)}x^{(10)(11)}_{(0,1,1)}}\right] \cdot D^{[2,3,3,1]}
    \end{align*}
    }
whence we can directly imply the formula. As the formula is long, we shall omit the detail result. For example $D^{[2,3,3,1,3]}$ consists of $162$ Laurent monomials (up to multiplicities) with every coefficient being $1$.

    \item For $n = 2$, we need to compute $D^{[2,3,3,1,3]}_1$, $D^{[2,3,3,1,3]}_2$, and $I^{[2,3,3,1,3]}$. We shall only discuss how to apply Theorem~\ref{thm:2tri} since the directed computation is very large and inappropriate to present in details. The theorem expresses $D^{[2,3,3,1,3]}_2$ and $I^{[2,3,3,1,3]}$ as linear combinations of $D^{[2,3,3]}_2$, $D^{[2,3,3,1]}_2$, and all $I^{[2,3,3,1,t]}$ for $t = 0,1,2$ (note $I^{[2,3,3,1,0]} = I^{[2,3,3,1]}$). This application is applied by the congruences $(-1)^2 + (-1)^3 + (-1)^3 + (-1)^1 \equiv 1 \pmod{3}$ and $2+3+3+1 = 9 \equiv 1 \pmod{2}$.

    Iterating this process, we express these quantities in terms of $D^{[2,3]}_2$, $D^{[2,3,3]}_2$, $I^{[2,3,3]}$, and $I^{[2,3,3,1]}$. Continuing recursively (noting that the applicable cases must be verified at each step) eventually yields closed forms for $D^{[2,3,3,1,3]}_2$ and $I^{[2,3,3,1,3]}$.

    For $D^{[2,3,3,1,3]}_1$, we consider the reflected polygon $\overline{\mathcal{P}}(3,1,3,3,2) = \mathcal{P}(2,3,3,1,3)$. After re-enumerating vertices for the reflected polygon, we apply Theorem~\ref{thm:2tri} to $D^{[3,1,3,3,2]}_2$, and recover the original variable assignments to obtain the closed form for $D^{[2,3,3,1,3]}_1$. Each $D^{[2,3,3,1,3]}_1$ and $D^{[2,3,3,1,3]}_2$ consists of $162^2 = 26,244$ Laurent monomials (up to multiplicities) with every coefficient being $1$, while the same holds for $I^{[2,3,3,1,3]}$ which consists of $6,966$ Laurent monomials (up to multiplicities).
\end{enumerate}
\end{Ex}

\begin{Ex}
Consider the following $3$-triangulated $5$-gon:
\begin{figure}[H]
\centering
\scalebox{0.43}{%
\definecolor{ffxfqq}{rgb}{1,0.4980392156862745,0}
\definecolor{qqqqff}{rgb}{0,0,1}
\definecolor{wwwwww}{rgb}{0.4,0.4,0.4}
\begin{tikzpicture}[line cap=round,line join=round,>=triangle 45,x=1cm,y=1cm]
\clip(-15.530702428433898,-1.426937934016519) rectangle (3.6906060060912185,13.686436789352616);
\draw [line width=2pt] (-6.75164,12.705785607532832)-- (-0.07181320220448306,7.85260735922452);
\draw [line width=2pt] (-0.07181320220448306,7.85260735922452)-- (-2.62328,0);
\draw [line width=2pt] (-2.62328,0)-- (-10.88,0);
\draw [line width=2pt] (-10.88,0)-- (-13.431466797795515,7.852607359224523);
\draw [line width=2pt] (-13.431466797795515,7.852607359224523)-- (-6.75164,12.705785607532832);
\draw [line width=2pt] (-6.75164,12.705785607532832)-- (-10.88,0);
\draw [line width=2pt] (-6.75164,12.705785607532832)-- (-2.62328,0);
\draw [line width=2pt,dash pattern=on 1pt off 1pt on 1pt off 4pt] (-13.431466797795515,7.852607359224523)-- (-0.07181320220448306,7.85260735922452);
\draw [line width=2pt,dash pattern=on 1pt off 1pt on 1pt off 4pt,color=ffxfqq] (-13.431466797795515,7.852607359224523)-- (-2.62328,0);
\draw [line width=2pt,dash pattern=on 1pt off 1pt on 1pt off 4pt,color=ffxfqq] (-0.07181320220448306,7.85260735922452)-- (-10.88,0);
\begin{scriptsize}
\draw [fill=wwwwww] (-10.88,0) circle (2pt);
\draw[color=wwwwww] (-11.497955354070253,0.23472220041804344) node {\huge $D$}; 
\draw [fill=wwwwww] (-2.62328,0) circle (2pt);
\draw[color=wwwwww] (-2.366207884310831,0.29204220568998706) node {\huge $C$}; 
\draw [fill=wwwwww] (-0.07181320220448306,7.85260735922452) circle (2pt);
\draw[color=wwwwww] (0.14123233711080379,8.450589622729963) node {\huge $B$};
\draw [fill=wwwwww] (-6.75164,12.705785607532832) circle (2pt);
\draw[color=wwwwww] (-6.760741621826513,13.322790070845172) node {\huge $A$};
\draw [fill=wwwwww] (-13.431466797795515,7.852607359224523) circle (2pt);
\draw[color=wwwwww] (-13.872888933427625,8.374162949034038) node {\huge $E$};
\draw [fill=qqqqff] (-1.7417699016533619,9.065901921301599) circle (3.5pt);
\draw[color=qqqqff] (-1.525514473655657,9.530736407136704) node {\Large $y_3$};
\draw [fill=qqqqff] (-5.08168330055112,11.492491045455754) circle (3.5pt);
\draw[color=qqqqff] (-4.869181447852371,11.957283296982318) node {\Large $y_1$};
\draw [fill=qqqqff] (-3.4117266011022407,10.279196483378676) circle (3.5pt);
\draw[color=qqqqff] (-3.206901294966005,10.753563186271501) node {\Large $y_2$};
\draw [fill=qqqqff] (-3.65537,3.176446401883208) circle (3.5pt);
\draw[color=qqqqff] (-3.4935013213257236,3.6267758641265107) node {\Large $x_{6}$};
\draw [fill=qqqqff] (-5.71955,9.529339205649624) circle (3.5pt);
\draw[color=qqqqff] (-5.5188081742677335,9.989296449312253) node {\Large $x_{4}$};
\draw [fill=qqqqff] (-4.68746,6.352892803766416) circle (3.5pt);
\draw[color=qqqqff] (-4.52526141622071,6.77937615408341) node {\Large $x_{5}$};
\draw [fill=qqqqff] (-4.04959330055112,8.316044643572546) circle (3.5pt);
\draw[color=qqqqff] (-3.7801013476854424,8.785576338601437) node {\Large $x_{11}$};
\draw [fill=qqqqff] (-7.78373,9.529339205649624) circle (3.5pt);
\draw[color=qqqqff] (-8.059995074657238,10.027509786160216) node {\Large $x_{1}$};
\draw [fill=qqqqff] (-8.81582,6.352892803766416) circle (3.5pt);
\draw[color=qqqqff] (-9.091755169552224,6.8558028277793355) node {\Large $x_{2}$};
\draw [fill=qqqqff] (-9.84791,3.1764464018832075) circle (3.5pt);
\draw[color=qqqqff] (-10.14262193287119,3.664989200974473) node {\Large $x_{3}$};
\draw [fill=qqqqff] (-10.091553398897757,10.279196483378676) circle (3.5pt);
\draw[color=qqqqff] (-10.295475280263041,10.868203196815388) node {\Large $y_{14}$};
\draw [fill=qqqqff] (-11.761510098346637,9.065901921301599) circle (3.5pt);
\draw[color=qqqqff] (-11.976862101573388,9.62626974925661) node {\Large $y_{13}$};
\draw [fill=qqqqff] (-9.453686699448877,8.316044643572546) circle (3.5pt);
\draw[color=qqqqff] (-9.244608516944073,8.785576338601437) node {\Large $x_{9}$};
\draw [fill=qqqqff] (-8.421596699448878,11.492491045455754) circle (3.5pt);
\draw[color=qqqqff] (-8.594981790528712,12.071923307526205) node {\Large $y_{15}$};
\draw [fill=qqqqff] (-3.0175033005511196,5.139598241689338) circle (3.5pt);
\draw[color=qqqqff] (-2.8056612580623996,5.594762711796576) node {\Large $x_{12}$};
\draw [fill=qqqqff] (-2.3796366011022396,7.1027500814954685) circle (3.5pt);
\draw[color=qqqqff] (-2.1178211947990753,7.562749559466639) node {\Large $x_{10}$};
\draw [fill=qqqqff] (-1.3475466011022394,3.9263036796122606) circle (3.5pt);
\draw[color=qqqqff] (-0.8669544314801076,4.076402590541871) node {\Large $y_{5}$}; 
\draw [fill=qqqqff] (-0.7096799016533595,5.889455519418391) circle (3.5pt);
\draw[color=qqqqff] (-0.2173277050647457,5.9870694329399925) node {\Large $y_{4}$}; 
\draw [fill=qqqqff] (-1.9854133005511194,1.9631518398061303) circle (3.5pt);
\draw[color=qqqqff] (-1.4592611526235257,2.0510957375998637) node {\Large $y_{6}$}; 
\draw [fill=qqqqff] (-11.123643398897755,7.1027500814954685) circle (3.5pt);
\draw[color=qqqqff] (-10.90688866983044,7.562749559466639) node {\Large $x_{7}$};
\draw [fill=qqqqff] (-12.793600098346634,5.889455519418391) circle (3.5pt);
\draw[color=qqqqff] (-13.361475543860226,5.996002748700181) node {\Large $y_{12}$};
\draw [fill=qqqqff] (-10.485776699448873,5.139598241689338) circle (3.5pt);
\draw[color=qqqqff] (-10.811355327710533,5.594762711796576) node {\Large $x_{8}$};
\draw [fill=qqqqff] (-12.155733398897748,3.9263036796122606) circle (3.5pt);
\draw[color=qqqqff] (-12.711848817444863,4.047122569454097) node {\Large $y_{11}$};
\draw [fill=qqqqff] (-11.517866699448863,1.9631518398061303) circle (3.5pt);
\draw[color=qqqqff] (-12.119542096301445,2.117349058631995) node {\Large $y_{10}$};
\draw [fill=qqqqff] (-6.75164,6.352892803766416) circle (3.5pt);
\draw[color=qqqqff] (-6.493248263890776,6.817589490931373) node {\Large $x_{13}$};
\draw [fill=qqqqff] (-7.78373,3.176446401883208) circle (3.5pt);
\draw[color=qqqqff] (-8.155528416777143,3.664989200974473) node {\Large $x_{14}$};
\draw [fill=qqqqff] (-8.81582,0) circle (3.5pt);
\draw[color=qqqqff] (-8.862475148464448,-0.4417044732778814) node {\Large $y_{9}$}; 
\draw [fill=qqqqff] (-5.71955,3.1764464018832084) circle (3.5pt);
\draw[color=qqqqff] (-5.5188081742677335,3.6267758641265107) node {\Large $x_{15}$};
\draw [fill=qqqqff] (-4.68746,0) circle (3.5pt);
\draw[color=qqqqff] (-4.67811476361256,-0.4225978048539002) node {\Large $y_{7}$}; 
\draw [fill=qqqqff] (-6.75164,0) circle (3.5pt);
\draw[color=qqqqff] (-6.798954958674476,-0.4608111417018626) node {\Large $y_{8}$}; 
\end{scriptsize}
\end{tikzpicture}%
}
\caption{\label{pic:poy_3_5}$m = 5$, $n = 3$}
\end{figure}
In order to reach the diagonal $BE$, we can use any flip sequence from Figure~\ref{fig:both_flip}. Then we can compute:
\begin{gather*}
	V_2V_4=(x'_{10}, x''_5, x'_{14}); \quad V_3V_5=(x'_{15}, x''_2, x'_7); \quad V_2V_5=(X_{2,5}^1, X_{2,5}^2, X_{2,5}^3);
\end{gather*}
{\small
\begin{align*}
	x'_{10} =& \frac{y_6y_{10}}{x_6}+\frac{y_7x_1}{x_4}+\frac{y_4y_7x_{13}}{x_4x_{11}}+\frac{y_6y_9x_{15}}{x_6x_{12}}+\frac{y_5y_7x_{15}}{x_5x_{10}}+\frac{y_6y_8x_{13}}{x_5x_{10}}+\frac{y_6y_8x_{11}x_{15}}{x_5x_{10}x_{12}}\ +\frac{y_5y_7x_{12}x_{13}}{x_5x_{10}x_{11}};\\
	x''_5 =& \frac{y_5y_{11}}{x_5}+\frac{y_8x_2}{x_5}+\frac{y_5y_9x_{14}}{x_6x_{11}}+\frac{y_4y_8x_{14}}{x_4x_{12}}+\frac{y_{11}x_1x_{10}}{x_4x_{15}}+\frac{y_{10}x_2x_{10}}{x_6x_{13}}+\frac{y_5y_{11}x_{12}x_{13}}{x_5x_{11}x_{15}}+\frac{y_4y_{11}x_{10}x_{13}}{x_4x_{11}x_{15}}+\frac{y_5y_{10}x_{12}x_{14}}{x_6x_{11}x_{15}}\\
	&+\frac{y_8x_2x_{11}x_{15}}{x_5x_{12}x_{13}}+\frac{y_8x_1x_{11}x_{14}}{x_4x_{12}x_{13}}+\frac{y_9x_2x_{10}x_{15}}{x_6x_{12}x_{13}}\ +\frac{y_4y_{10}x_5x_{10}x_{14}}{x_4x_6x_{11}x_{15}}+\frac{y_4y_9x_5x_{10}x_{14}}{x_4x_6x_{11}x_{12}}+\frac{y_{10}x_1x_5x_{10}x_{14}}{x_4x_6x_{13}x_{15}}+\frac{y_9x_1x_5x_{10}x_{14}}{x_4x_6x_{12}x_{13}};\\
	x'_{14} =& \frac{y_4y_{12}}{x_4}+\frac{y_9x_3}{x_6}+\frac{y_{12}x_1x_{11}}{x_4x_{13}}+\frac{y_{10}x_3x_{12}}{x_6x_{15}}+\frac{y_{11}x_3x_{11}}{x_5x_{14}}+\frac{y_{12}x_2x_{12}}{x_5x_{14}}+\frac{y_{11}x_3x_{12}x_{13}}{x_5x_{14}x_{15}}+\frac{y_{12}x_2x_{11}x_{15}}{x_5x_{13}x_{14}};
\end{align*}

\begin{align*}
	x'_{15} =&\frac{y_3y_{10}}{x_1}+\frac{y_{13}x_6}{x_3}+\frac{y_{10}x_4x_9}{x_1x_{13}}+\frac{y_{12}x_6x_8}{x_3x_{14}}+\frac{y_{10}x_5x_8}{x_2x_{15}}+\frac{y_{11}x_6x_9}{x_2x_{15}}+\frac{y_{11}x_6x_8x_{13}}{x_2x_{14}x_{15}}+\frac{y_{10}x_5x_9x_{14}}{x_2x_{13}x_{15}};\\
	x''_2 =&\frac{y_2y_{11}}{x_2}+\frac{y_{14}x_5}{x_2}+\frac{y_3y_{14}x_{15}}{x_1x_8}+\frac{y_2y_{13}x_{15}}{x_3x_9}+\frac{y_{12}x_5x_7}{x_3x_{13}}+\frac{y_{11}x_4x_7}{x_1x_{14}}+\frac{y_2y_{11}x_8x_{13}}{x_2x_9x_{14}}+\frac{y_3y_{11}x_7x_{13}}{x_1x_9x_{14}}+\frac{y_2y_{12}x_8x_{15}}{x_3x_9x_{14}}\\
	&+\frac{y_{14}x_5x_9x_{14}}{x_2x_8x_{13}}+\frac{y_{14}x_4x_9x_{15}}{x_1x_8x_{13}}+\frac{y_{13}x_5x_7x_{14}}{x_3x_8x_{13}}+\frac{y_3y_{13}x_2x_7x_{15}}{x_1x_3x_8x_9}+\frac{y_3y_{12}x_2x_7x_{15}}{x_1x_3x_9x_{14}}+\frac{y_{13}x_2x_4x_7x_{15}}{x_1x_3x_8x_{13}}+\frac{y_{12}x_2x_4x_7x_{15}}{x_1x_3x_{13}x_{14}};\\
	x'_7 =&\frac{y_1y_{12}}{x_3}+\frac{y_{15}x_4}{x_1}+\frac{y_3y_{15}x_{13}}{x_1x_9}+\frac{y_1y_{13}x_{14}}{x_3x_8}+\frac{y_1y_{14}x_{13}}{x_2x_7}+\frac{y_2y_{15}x_{14}}{x_2x_7}+\frac{y_1y_{14}x_9x_{14}}{x_2x_7x_8}+\frac{y_2y_{15}x_8x_{13}}{x_2x_7x_9};
\end{align*}

\begin{align*}
X_{2,5}^1=&  \frac{y_3y_7}{x_4} + \frac{y_6y_{13}}{x_3} + \frac{y_3y_6y_{10}}{x_1x_6} + \frac{y_6y_{12}x_8}{x_3x_{14}} + \frac{y_6y_{11}x_9}{x_2x_{15}}+ \frac{y_4y_7x_9}{x_1x_{11}} + \frac{y_5y_7x_8}{x_2x_{10}} + \frac{y_3y_4y_7x_{13}}{x_1x_4x_{11}} + \frac{y_3y_6y_9x_{15}}{x_1x_6x_{12}}\\
            & + \frac{y_3y_6y_8x_{13}}{x_1x_5x_{10}} + \frac{y_6y_{10}x_4x_9}{x_1x_6x_{13}} + \frac{y_6y_{10}x_5x_8}{x_2x_6x_{15}}+ \frac{y_6y_{11}x_8x_{13}}{x_2x_{14}x_{15}} + \frac{y_6y_9x_5x_8}{x_2x_6x_{12}} + \frac{y_5y_7x_9x_{14}}{x_2x_{10}x_{13}} + \frac{y_6y_8x_4x_9}{x_1x_5x_{10}} \\
            &+ \frac{y_3y_5y_7x_{15}}{x_1x_5x_{10}} + \frac{y_6y_8x_8x_{11}}{x_2x_{10}x_{12}} + \frac{y_3y_6y_8x_{11}x_{15}}{x_1x_5x_{10}x_{12}} + \frac{y_3y_5y_7x_{12}x_{13}}{x_1x_5x_{10}x_{11}} + \frac{y_6y_{10}x_5x_9x_{14}}{x_2x_6x_{13}x_{15}} + \frac{y_6y_9x_5x_9x_{14}}{x_2x_6x_{12}x_{13}}\\
            & + \frac{y_6y_9x_4x_9x_{15}}{x_1x_6x_{12}x_{13}} + \frac{y_5y_7x_4x_9x_{15}}{x_1x_5x_{10}x_{13}} + \frac{y_6y_8x_9x_{11}x_{14}}{x_2x_{10}x_{12}x_{13}} + \frac{y_5y_7x_4x_9x_{12}}{x_1x_5x_{10}x_{11}} + \frac{y_6y_8x_4x_9x_{11}x_{15}}{x_1x_5x_{10}x_{12}x_{13}};
\end{align*}

\begin{align*}
X_{2,5}^2 =& \frac{y_5y_{14}}{x_2} + \frac{y_2y_8}{x_5} + \frac{y_2y_5y_{11}}{x_2x_5} + \frac{y_5y_{12}x_7}{x_3x_{13}} + \frac{y_2y_{10}x_{10}}{x_6x_{13}} + \frac{y_3y_{14}x_{10}}{x_4x_8} + \frac{y_4y_8x_7}{x_1x_{12}} + \frac{y_2y_5y_9x_{14}}{x_2x_6x_{11}} + \frac{y_2y_5y_{13}x_{15}}{x_3x_5x_9} \\
            & + \frac{y_2y_4y_8x_{14}}{x_2x_4x_{12}} + \frac{y_3y_5y_{14}x_{15}}{x_1x_5x_8} + \frac{y_2y_{11}x_1x_{10}}{x_2x_4x_{15}} + \frac{y_5y_{11}x_4x_7}{x_1x_5x_{14}} + \frac{y_2y_{13}x_1x_{10}}{x_3x_4x_9} + \frac{y_5y_9x_4x_7}{x_1x_6x_{11}} + \frac{y_2y_9x_{10}x_{15}}{x_6x_{12}x_{13}} \\
            & + \frac{y_5y_{13}x_7x_{14}}{x_3x_8x_{13}} + \frac{y_2y_8x_{11}x_{15}}{x_5x_{12}x_{13}} + \frac{y_5y_{14}x_9x_{14}}{x_2x_8x_{13}} + \frac{y_3y_8x_7x_{11}}{x_4x_9x_{12}} + \frac{y_4y_{14}x_9x_{10}}{x_1x_8x_{11}} + \frac{y_2y_5y_{10}x_{12}x_{14}}{x_2x_6x_{11}x_{15}} + \frac{y_2y_5y_{12}x_8x_{15}}{x_3x_5x_9x_{14}} \\
            & + \frac{y_2y_5y_{11}x_8x_{13}}{x_2x_5x_9x_{14}} + \frac{y_2y_5y_{11}x_{12}x_{13}}{x_2x_5x_{11}x_{15}} + \frac{y_2y_5y_9x_8x_{13}}{x_2x_6x_9x_{11}} + \frac{y_2y_5y_{13}x_{12}x_{13}}{x_3x_5x_9x_{11}} + \frac{y_2y_4y_8x_8x_{13}}{x_2x_4x_9x_{12}} + \frac{y_3y_5y_{14}x_{12}x_{13}}{x_1x_5x_8x_{11}} \\
            & + \frac{y_2y_4y_{11}x_{10}x_{13}}{x_2x_4x_{11}x_{15}} + \frac{y_3y_5y_{11}x_7x_{13}}{x_1x_5x_9x_{14}} + \frac{y_2y_4y_{13}x_{10}x_{13}}{x_3x_4x_9x_{11}} + \frac{y_3y_5y_9x_7x_{13}}{x_1x_6x_9x_{11}} + \frac{y_3y_4y_8x_7x_{13}}{x_1x_4x_9x_{12}} + \frac{y_3y_4y_{14}x_{10}x_{13}}{x_1x_4x_8x_{11}} \\
            & + \frac{y_2y_8x_1x_8x_{11}}{x_2x_4x_9x_{12}} + \frac{y_5y_{14}x_4x_9x_{12}}{x_1x_5x_8x_{11}} + \frac{y_2y_{12}x_1x_8x_{10}}{x_3x_4x_9x_{14}} + \frac{y_5y_{10}x_4x_7x_{12}}{x_1x_6x_{11}x_{15}} + \frac{y_2y_8x_1x_{11}x_{14}}{x_2x_4x_{12}x_{13}} + \frac{y_5y_{14}x_4x_9x_{15}}{x_1x_5x_8x_{13}} \\
            & + \frac{y_3y_9x_5x_7x_{10}}{x_4x_6x_9x_{12}} + \frac{y_4y_{13}x_2x_7x_{10}}{x_1x_3x_8x_{11}} + \frac{y_3y_{10}x_5x_7x_{10}}{x_4x_6x_9x_{15}} + \frac{y_4y_{12}x_2x_7x_{10}}{x_1x_3x_{11}x_{14}} + \frac{y_3y_{11}x_7x_{10}x_{13}}{x_4x_9x_{14}x_{15}} + \frac{y_4y_{11}x_7x_{10}x_{13}}{x_1x_{11}x_{14}x_{15}} \\
            & + \frac{y_3y_{12}x_2x_7x_{10}}{x_3x_4x_9x_{14}} + \frac{y_4y_{10}x_5x_7x_{10}}{x_1x_6x_{11}x_{15}} + \frac{y_3y_{13}x_2x_7x_{10}}{x_3x_4x_8x_9} + \frac{y_4y_9x_5x_7x_{10}}{x_1x_6x_{11}x_{12}} + \frac{y_2y_5y_{10}x_8x_{12}x_{13}}{x_2x_6x_9x_{11}x_{15}} + \frac{y_2y_5y_{12}x_8x_{12}x_{13}}{x_3x_5x_9x_{11}x_{14}} \\
            & + \frac{y_2y_4y_9x_5x_{10}x_{14}}{x_2x_4x_6x_{11}x_{12}} + \frac{y_3y_5y_{13}x_2x_7x_{15}}{x_1x_3x_5x_8x_9} + \frac{y_2y_4y_{12}x_8x_{10}x_{13}}{x_3x_4x_9x_{11}x_{14}} + \frac{y_3y_5y_{10}x_7x_{12}x_{13}}{x_1x_6x_9x_{11}x_{15}} + \frac{y_2y_4y_{10}x_5x_{10}x_{14}}{x_2x_4x_6x_{11}x_{15}} \\
            &+ \frac{y_3y_5y_{12}x_2x_7x_{15}}{x_1x_3x_5x_9x_{14}} + \frac{y_2y_9x_1x_5x_8x_{10}}{x_2x_4x_6x_9x_{12}} + \frac{y_5y_{13}x_2x_4x_7x_{12}}{x_1x_3x_5x_8x_{11}} + \frac{y_2y_9x_1x_5x_{10}x_{14}}{x_2x_4x_6x_{12}x_{13}} + \frac{y_5y_{13}x_2x_4x_7x_{15}}{x_1x_3x_5x_8x_{13}} \\
            &+ \frac{y_2y_{10}x_1x_5x_8x_{10}}{x_2x_4x_6x_9x_{15}} + \frac{y_5y_{12}x_2x_4x_7x_{12}}{x_1x_3x_5x_{11}x_{14}} + \frac{y_2y_{10}x_1x_5x_{10}x_{14}}{x_2x_4x_6x_{13}x_{15}} + \frac{y_5y_{12}x_2x_4x_7x_{15}}{x_1x_3x_5x_{13}x_{14}} + \frac{y_2y_{11}x_1x_8x_{10}x_{13}}{x_2x_4x_9x_{14}x_{15}} \\
            &+ \frac{y_5y_{11}x_4x_7x_{12}x_{13}}{x_1x_5x_{11}x_{14}x_{15}} + \frac{y_2y_4y_9x_5x_8x_{10}x_{13}}{x_2x_4x_6x_9x_{11}x_{12}} + \frac{y_3y_5y_{13}x_2x_7x_{12}x_{13}}{x_1x_3x_5x_8x_9x_{11}} + \frac{y_2y_4y_{10}x_5x_8x_{10}x_{13}}{x_2x_4x_6x_9x_{11}x_{15}}+ \frac{y_3y_5y_{12}x_2x_7x_{12}x_{13}}{x_1x_3x_5x_9x_{11}x_{14}} \\
            & + \frac{y_3y_4y_9x_5x_7x_{10}x_{13}}{x_1x_4x_6x_9x_{11}x_{12}} + \frac{y_3y_4y_{13}x_2x_7x_{10}x_{13}}{x_1x_3x_4x_8x_9x_{11}} + \frac{y_3y_4y_{10}x_5x_7x_{10}x_{13}}{x_1x_4x_6x_9x_{11}x_{15}} + \frac{y_3y_4y_{12}x_2x_7x_{10}x_{13}}{x_1x_3x_4x_9x_{11}x_{14}} \\
            & + \frac{y_2y_5y_{11}x_8x_{12}x_{13}^2}{x_2x_5x_9x_{11}x_{14}x_{15}} + \frac{y_2y_4y_{11}x_8x_{10}x_{13}^2}{x_2x_4x_9x_{11}x_{14}x_{15}} + \frac{y_3y_5y_{11}x_7x_{12}x_{13}^2}{x_1x_5x_9x_{11}x_{14}x_{15}} + \frac{y_3y_4y_{11}x_7x_{10}x_{13}^2}{x_1x_4x_9x_{11}x_{14}x_{15}};
\end{align*}

\begin{align*}
	X_{2,5}^3 =&\frac{y_4y_{15}}{x_1}+\frac{y_1y_9}{x_6}+\frac{y_1y_4y_{12}}{x_3x_4}+\frac{y_1y_{10}x_{12}}{x_6x_{15}}+\frac{y_1y_{11}x_{11}}{x_5x_{14}}+\frac{y_3y_{15}x_{11}}{x_4x_9}+\frac{y_2y_{15}x_{12}}{x_5x_7}+\frac{y_3y_4y_{15}x_{13}}{x_1x_4x_9}+\frac{y_1y_4y_{13}x_{14}}{x_3x_4x_8}\\
	&+\frac{y_1y_4y_{14}x_{13}}{x_2x_4x_7}+\frac{y_2y_4y_{15}x_{14}}{x_2x_4x_7}+\frac{y_1y_{12}x_1x_{11}}{x_3x_4x_{13}}+\frac{y_1y_{12}x_2x_{12}}{x_3x_5x_{14}}+\frac{y_1y_{11}x_{12}x_{13}}{x_5x_{14}x_{15}}+\frac{y_1y_{13}x_2x_{12}}{x_3x_5x_8}+\frac{y_2y_{15}x_{11}x_{15}}{x_5x_7x_{13}}\\
	&+\frac{y_1y_{14}x_1x_{11}}{x_2x_4x_7}+\frac{y_1y_{14}x_9x_{12}}{x_5x_7x_8}+\frac{y_1y_4y_{14}x_9x_{14}}{x_2x_4x_7x_8}+\frac{y_2y_4y_{15}x_8x_{13}}{x_2x_4x_7x_9}+\frac{y_1y_{12}x_2x_{11}x_{15}}{x_3x_5x_{13}x_{14}}+\frac{y_1y_{13}x_2x_{11}x_{15}}{x_3x_5x_8x_{13}}\\
	&+\frac{y_1y_{13}x_1x_{11}x_{14}}{x_3x_4x_8x_{13}}+\frac{y_2y_{15}x_1x_{11}x_{14}}{x_2x_4x_7x_{13}}+\frac{y_1y_{14}x_9x_{11}x_{15}}{x_5x_7x_8x_{13}}+\frac{y_2y_{15}x_1x_8x_{11}}{x_2x_4x_7x_9}+\frac{y_1y_{14}x_1x_9x_{11}x_{14}}{x_2x_4x_7x_8x_{13}}.
\end{align*}
}
This result verifies that the number of monomials in the Laurent polynomial in each coordinate of the expansion formula of diagonal $V_2V_5$ is $(27,81,27) = (3^3, 3^4, 3^3)$, which satisfies the Number of Monomials Theorem~\ref{thm:monomial_count} since in the base case (cluster realization of type $A_1$), the number of monomials (counting multiplicities) of diagonal $BE$ is $3$. Clearly, each coefficient of every component is $1$.
\end{Ex}
We shall end the section with a conjecture and the hope about finding general recursive formulas for general $n$. Based on the observation above, we have that the conjecture holds for $n = 1,2$.
\begin{Con}[Multiplicity free] Fix a non-$\cT$ diagonal $\gamma = AB$ in a general $n$-triangulated $m$-gon with $\cT$-triangulation of type $\mathcal{P}(m_1,n_1,\ldots,m_k,n_k)$. Then for $l = 1,2,...,n$ and $i,j \in \mathbb{N}$ such that $i+j \leq n$, every function $D^{[m_1,n_1,\ldots,m_k,n_k]}_l$ and $I^{[m_1,n_1,\ldots,m_k,n_k]}_{(i,j)}$ is a Laurent polynomial of variables $x^{(r)}_{(a,b,c)}$ where $r = 1,2,...,(m-2)$ and $(a,b,c) \in \Gamma_{n+1}$ with each coefficient being $1$.
\end{Con}

\section{General expansion formulas of cluster realization of type $G_2$ in $4$-gon}
\label{sec:g2-cluster-realization}
 In this section, we are going to illustrate how the basic quiver is associated with a surface in the cluster realization where $G$ is of type $G_2$~\cite{Ip}, and compute the cluster expansion formula related to the flip of the diagonal over a quadrilateral.
 \subsection{Cluster realization of type $G_2$ local systems}
\begin{Def}
\label{def:g2-quiver}
For any triangle of a triangulation of a surface $\bbS$, the \emph{type $G_2$ basic quiver} can be drawn as either one of the two ways below (Figure~\ref{fig:g2-quiver-triangle-21} and Figure~\ref{fig:g2-quiver-triangle-12}), corresponding to the longest reduced expression being $w_0=s_2s_1s_2s_1s_2s_1$ or $w_0=s_1s_2s_1s_2s_1s_2$ respectively, where the index $1$ belongs to the long root while $2$ belongs to the short root.
\begin{figure}[H]
\centering
\begin{tikzpicture}[scale=1.5,
    mid arrow/.style={
        postaction={decorate},
        decoration={
            markings,
            mark=at position 0.5 with {\arrow{>}}
        }
    }
]
  \coordinate (S1) at (0,1);
  \coordinate (S2) at (1,1);
  \coordinate (S3) at (2,1);
  \coordinate (S4) at (3,1);
  \coordinate (S5) at (1,0);
  
  \coordinate (L1) at (0,2);
  \coordinate (L2) at (1,2);
  \coordinate (L3) at (2,2);
  \coordinate (L4) at (3,2);
  \coordinate (L5) at (2,3);
  
  \draw[mid arrow, line width=2pt, dash pattern=on 8pt off 4pt, red] (L1) -- (S1);
  \draw[mid arrow, line width=2pt, dash pattern=on 8pt off 4pt, red] (L4) -- (S4);
  \draw[mid arrow, line width=2pt, dash pattern=on 8pt off 4pt, red] (S5) -- (L5);
  
  \draw[mid arrow, line width=2pt, black] (L1) -- (L2);
  \draw[mid arrow, line width=2pt, black] (L2) -- (L3);
  \draw[mid arrow, line width=2pt, black] (L3) -- (L4);
  \draw[mid arrow, line width=2pt, black] (L4) -- (L5);
  \draw[mid arrow, line width=2pt, black] (L5) -- (L3);
  \draw[mid arrow, line width=2pt, black] (S4) -- (L3);
  \draw[mid arrow, line width=2pt, black] (L3) -- (S3);
  \draw[mid arrow, line width=2pt, black] (S3) -- (L2);
  \draw[mid arrow, line width=2pt, black] (L2) -- (S2);
  \draw[mid arrow, line width=2pt, black] (S2) -- (L1);
  
  \draw[mid arrow, black] (S2) -- (S5);
  \draw[mid arrow, black] (S5) -- (S1);
  \draw[mid arrow, black] (S1) -- (S2);
  \draw[mid arrow, black] (S2) -- (S3);
  \draw[mid arrow, black] (S3) -- (S4);

  \filldraw[black] (L1) circle (2pt);
  \filldraw[black] (L2) circle (2pt);
  \filldraw[black] (L3) circle (2pt);
  \filldraw[black] (L4) circle (2pt);
  \filldraw[black] (L5) circle (2pt);
  
  \filldraw[blue] (S1) circle (2pt);
  \filldraw[blue] (S2) circle (2pt);
  \filldraw[blue] (S3) circle (2pt);
  \filldraw[blue] (S4) circle (2pt);
  \filldraw[blue] (S5) circle (2pt);
  
  \node[left, font=\scriptsize] at (L1) {$L_1$};
  \node[above, font=\scriptsize] at (L2) {$L_2$};
  \node[above right, font=\scriptsize] at (L3) {$L_3$};
  \node[right, font=\scriptsize] at (L4) {$L_4$};
  \node[right, font=\scriptsize] at (L5) {$L_5$};
  \node[left, font=\scriptsize] at (S1) {$S_1$};
  \node[below left, font=\scriptsize] at (S2) {$S_2$};
  \node[below, font=\scriptsize] at (S3) {$S_3$};
  \node[below, font=\scriptsize] at (S4) {$S_4$};
  \node[right, font=\scriptsize] at (S5) {$S_5$};
  
  \draw[thick] (4,0) -- (6,3) -- (8,0) -- cycle;
  
  \coordinate (X1) at (6,1.6);
  \coordinate (X4) at (5.5,1.2);
  \coordinate (X2) at (6.5,1.2);
  \coordinate (X3) at (6,0.8);
  
  \draw[mid arrow, line width=2pt, dash pattern=on 8pt off 4pt, red] (6.667,2) -- (7.333,1);
  \draw[mid arrow, line width=2pt, dash pattern=on 8pt off 4pt, red] (5.333,0) -- (6.667,0);
  \draw[mid arrow, line width=2pt, dash pattern=on 8pt off 4pt, red] (4.667,1) -- (5.333,2);
  
  \draw[mid arrow, line width=2pt, black] (5.333,2) -- (X1);
  \draw[mid arrow, line width=2pt, black] (X1) -- (6.667,2);
  \draw[mid arrow, line width=2pt, black] (X1) -- (X2);
  \draw[mid arrow, line width=2pt, black] (6.667,2) -- (6.333-1,2);
  \draw[mid arrow, line width=2pt, black] (8.333-1,1) -- (X1);
  \draw[mid arrow, line width=2pt, black] (X4) -- (X1);
  \draw[mid arrow, line width=2pt, black] (X2) -- (X4);
  \draw[mid arrow, line width=2pt, black] (X4) -- (X3);
  \draw[mid arrow, line width=2pt, black] (6.333-1,0) -- (X4);
  \draw[mid arrow, line width=2pt, black] (X3) -- (6.333-1,0);
  
  \draw[mid arrow, black] (X2) -- (8.333-1,1);
  \draw[mid arrow, black] (5.667-1,1) -- (7.667-1,0);
  \draw[mid arrow, black] (7.667-1,0) -- (X3);
  \draw[mid arrow, black] (X3) -- (5.667-1,1);
  \draw[mid arrow, black] (X3) -- (X2);
  
  \filldraw[blue] (5.667-1,1) circle (2pt);
  \filldraw[black] (6.333-1,2) circle (2pt);
  \filldraw[black] (6.333-1,0) circle (2pt);
  \filldraw[blue] (7.667-1,0) circle (2pt);
  \filldraw[black] (7.667-1,2) circle (2pt);
  \filldraw[blue] (8.333-1,1) circle (2pt);
  
  \filldraw[black] (X1) circle (2pt);
  \filldraw[blue] (X2) circle (2pt);
  \filldraw[blue] (X3) circle (2pt);
  \filldraw[black] (X4) circle (2pt);
  
  \node[left, font=\scriptsize] at (5.667-1,1) {$S_5$};
  \node[left, font=\scriptsize] at (6.333-1,2) {$L_5$};
  \node[above right, font=\scriptsize] at (7.667-1,0) {$S_1$};
  \node[right, font=\scriptsize] at (X3) {$S_2$};
  \node[below, font=\scriptsize] at (X2) {$S_3$};
  \node[right, font=\scriptsize] at (8.333-1,1) {$S_4$};
  \node[above left, font=\scriptsize] at (6.333-1,0) {$L_1$};
  \node[left, font=\scriptsize] at (X4) {$L_2$};
  \node[above, font=\scriptsize] at (X1) {$L_3$};
  \node[right, font=\scriptsize] at (7.667-1,2) {$L_4$};
\end{tikzpicture}
\caption{The type $G_2$ quiver on each triangle for reduced word $w_0 = s_2s_1s_2s_1s_2s_1$}
\label{fig:g2-quiver-triangle-21}
\end{figure}
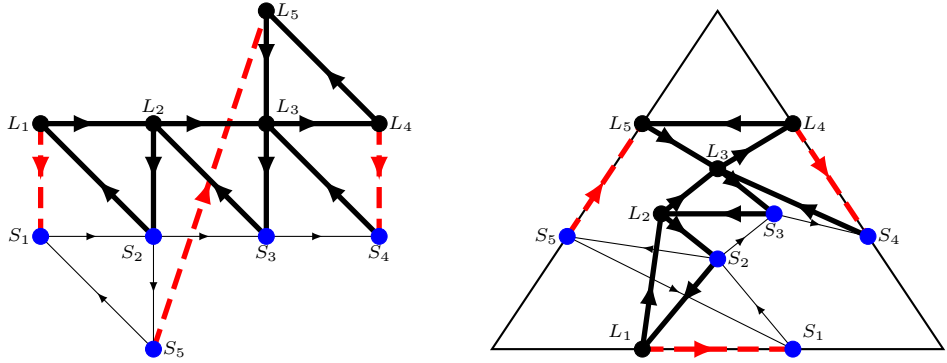

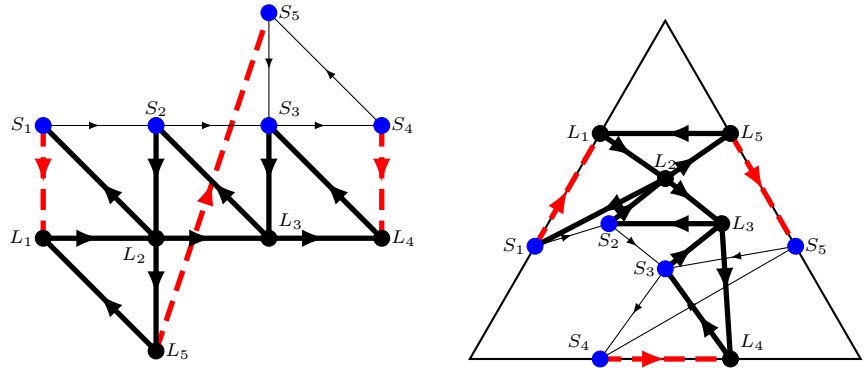
\begin{figure}[H]
\centering
\begin{tikzpicture}[scale=1.5,
    mid arrow/.style={
        postaction={decorate},
        decoration={
            markings,
            mark=at position 0.5 with {\arrow{>}}
        }
    }
]
  \coordinate (L1) at (0,0+1);
  \coordinate (L2) at (1,0+1);
  \coordinate (L3) at (2,0+1);
  \coordinate (L4) at (3,0+1);
  \coordinate (L5) at (1,-1+1);
  
  \coordinate (S1) at (0,1+1);
  \coordinate (S2) at (1,1+1);
  \coordinate (S3) at (2,1+1);
  \coordinate (S4) at (3,1+1);
  \coordinate (S5) at (2,2+1);
  
  \draw[mid arrow, line width=2pt, dash pattern=on 8pt off 4pt, red] (S1) -- (L1);
  \draw[mid arrow, line width=2pt, dash pattern=on 8pt off 4pt, red] (S4) -- (L4);
  \draw[mid arrow, line width=2pt, dash pattern=on 8pt off 4pt, red] (L5) -- (S5);
  
  \draw[mid arrow, line width=2pt, black] (L2) -- (L5);
  \draw[mid arrow, line width=2pt, black] (L5) -- (L1);
  \draw[mid arrow, line width=2pt, black] (L1) -- (L2);
  \draw[mid arrow, line width=2pt, black] (L2) -- (L3);
  \draw[mid arrow, line width=2pt, black] (L3) -- (L4);
  \draw[mid arrow, line width=2pt, black] (L4) -- (S3);
  \draw[mid arrow, line width=2pt, black] (S3) -- (L3);
  \draw[mid arrow, line width=2pt, black] (L3) -- (S2);
  \draw[mid arrow, line width=2pt, black] (S2) -- (L2);
  \draw[mid arrow, line width=2pt, black] (L2) -- (S1);
  
  \draw[mid arrow, black] (S1) -- (S2);
  \draw[mid arrow, black] (S2) -- (S3);
  \draw[mid arrow, black] (S3) -- (S4);
  \draw[mid arrow, black] (S4) -- (S5);
  \draw[mid arrow, black] (S5) -- (S3);

  \filldraw[black] (L1) circle (2pt);
  \filldraw[black] (L2) circle (2pt);
  \filldraw[black] (L3) circle (2pt);
  \filldraw[black] (L4) circle (2pt);
  \filldraw[black] (L5) circle (2pt);
  
  \filldraw[blue] (S1) circle (2pt);
  \filldraw[blue] (S2) circle (2pt);
  \filldraw[blue] (S3) circle (2pt);
  \filldraw[blue] (S4) circle (2pt);
  \filldraw[blue] (S5) circle (2pt);
  
  \node[left, font=\scriptsize] at (L1) {$L_1$};
  \node[below left, font=\scriptsize] at (L2) {$L_2$};
  \node[above right, font=\scriptsize] at (L3) {$L_3$};
  \node[right, font=\scriptsize] at (L4) {$L_4$};
  \node[right, font=\scriptsize] at (L5) {$L_5$};
  \node[left, font=\scriptsize] at (S1) {$S_1$};
  \node[above, font=\scriptsize] at (S2) {$S_2$};
  \node[above right, font=\scriptsize] at (S3) {$S_3$};
  \node[right, font=\scriptsize] at (S4) {$S_4$};
  \node[right, font=\scriptsize] at (S5) {$S_5$};
  \end{tikzpicture}\tab
 \begin{tikzpicture}[scale=1.5,
    mid arrow/.style={
        postaction={decorate},
        decoration={
            markings,
            mark=at position 0.5 with {\arrow{>}}
        }
    }
]
  \pgfmathsetmacro{\sqrtThree}{sqrt(3)}
  
  \draw[thick] ({6-\sqrtThree},0) -- (6,3) -- ({6+\sqrtThree},0) -- cycle;
  
  \coordinate (C) at ({6-\sqrtThree},0);
  \coordinate (A) at (6,3);
  \coordinate (B) at ({6+\sqrtThree},0);  
  
  \coordinate (x2) at ({6-2*\sqrtThree/3},1);
  \coordinate (x1) at ({6-\sqrtThree/3},2);
  
  \coordinate (y3) at ({6+\sqrtThree/3},2);
  \coordinate (y4) at ({6+2*\sqrtThree/3},1);
  \coordinate (y5) at ({6+\sqrtThree/3},0);
  \coordinate (y6) at ({6-\sqrtThree/3},0);
  
  \coordinate (x6) at (6,1.6);
  \coordinate (x5) at (5.5,1.2);
  \coordinate (x3) at (6.5,1.2);
  \coordinate (x4) at (6,0.8);
  
  \draw[mid arrow, line width=2pt, dash pattern=on 8pt off 4pt, red] (y3) -- (y4);
  \draw[mid arrow, line width=2pt, dash pattern=on 8pt off 4pt, red] (y6) -- (y5);
  \draw[mid arrow, line width=2pt, dash pattern=on 8pt off 4pt, red] (x2) -- (x1);
  
  \draw[mid arrow, line width=2pt, black] (x1) -- (x6);
  \draw[mid arrow, line width=2pt, black] (x6) -- (y3);
  \draw[mid arrow, line width=2pt, black] (y3) -- (x1);
  \draw[mid arrow, line width=2pt, black] (x3) -- (y5);
  \draw[mid arrow, line width=2pt, black] (y5) -- (x4);
  \draw[mid arrow, line width=2pt, black] (x4) -- (x3);
  \draw[mid arrow, line width=2pt, black] (x3) -- (x5);
  \draw[mid arrow, line width=2pt, black] (x5) -- (x6);
  \draw[mid arrow, line width=2pt, black] (x6) -- (x3);
  \draw[mid arrow, line width=2pt, black] (x6) -- (x2);
  
  \draw[mid arrow, black] (y6) -- (y4);
  \draw[mid arrow, black] (y4) -- (x4);
  \draw[mid arrow, black] (x4) -- (y6);
  \draw[mid arrow, black] (x2) -- (x5);
  \draw[mid arrow, black] (x5) -- (x4);
  
  \filldraw[black] (x1) circle (2pt);
  \filldraw[blue] (x2) circle (2pt);
  
  \filldraw[black] (x3) circle (2pt);
  \filldraw[blue] (x4) circle (2pt);
  \filldraw[blue] (x5) circle (2pt);
  \filldraw[black] (x6) circle (2pt);
  
  \filldraw[black] (y3) circle (2pt);
  \filldraw[blue] (y4) circle (2pt);
  \filldraw[black] (y5) circle (2pt);
  \filldraw[blue] (y6) circle (2pt);
  
  \node[left, font=\scriptsize] at (x2) {$S_1$};
  \node[left, font=\scriptsize] at (x1) {$L_1$};
  \node[above right, font=\scriptsize] at (y5) {$L_4$};
  \node[left, font=\scriptsize] at (x4) {$S_3$};
  \node[right, font=\scriptsize] at (x3) {$L_3$};
  \node[right, font=\scriptsize] at (y4) {$S_5$};
  \node[above left, font=\scriptsize] at (y6) {$S_4$};
  \node[below, font=\scriptsize] at (x5) {$S_2$};
  \node[above, font=\scriptsize] at (x6) {$L_2$};
  \node[right, font=\scriptsize] at (y3) {$L_5$};
\end{tikzpicture}
\caption{The type $G_2$ quiver on each triangle for reduced word $w_0 = s_1s_2s_1s_2s_1s_2$}
\label{fig:g2-quiver-triangle-12}
\end{figure}
Here for the arrows:
\begin{enumerate}
	\item[(i)] Red dotted arrow (dashed, thick) refers to half of an undashed, thick arrow;
	\item[(ii)] Undashed, thin arrow refers to $\frac{1}{3}$ of an undashed, thick arrow.
\end{enumerate}
Moreover, for the vertices (ignoring the color on the figure above):
\begin{enumerate}
	\item[(i)] Black vertex refers to the vertex corresponding to the long root;
	\item[(ii)] Blue vertex refers to the vertex corresponding to the short root.
\end{enumerate}
\end{Def}

\begin{Def}
\label{def:g2-flip-sequences}
The \emph{flip sequence of type $G_2$} (i.e. the sequence of mutations to get the quiver after the flip) can be applied in two ways, where the sequence can be applied in the following order (below, by $... \rightarrow a \rightarrow b \rightarrow ...$ we mean mutating at vertex $a$ first, then $b$, and so on):
\begin{enumerate}[label=(\arabic*)]
	\item The flips sequence $\mu_1$:
		\begin{gather*}
			\mu_1 = \{ x_1 \rightarrow x_7 \rightarrow x_8 \rightarrow x_9 \rightarrow x_7 \rightarrow x_2 \rightarrow x_9 \rightarrow x_7 \rightarrow x_2 \rightarrow x_8 \rightarrow x_7 \rightarrow x_{10} 
			\\ \rightarrow x_6 \rightarrow x_7 \rightarrow x_8 \rightarrow x_2 \rightarrow x_7 \rightarrow x_5 \rightarrow x_2 \rightarrow x_7 \rightarrow x_5 \rightarrow x_8 \rightarrow x_7 \rightarrow x_9 
			\\ \rightarrow x_3 \rightarrow x_7 \rightarrow x_8 \rightarrow x_5 \rightarrow x_7 \rightarrow x_4 \rightarrow x_5 \rightarrow x_7 \rightarrow x_4 \rightarrow x_8 \rightarrow x_7 \rightarrow x_2 \}
		\end{gather*}
		where the long roots (satisfying $d_i = 1$) are $\{x_1, x_3, x_6, x_7, x_8, y_2, y_3, y_5, y_8\}$ and short roots (satisfying $d_i = \frac{1}{3}$) are $\{x_2, x_4, x_5, x_9, x_{10}, y_1, y_4, y_6, y_7\}$.
\begin{figure}[H]
\centering
\begin{tikzpicture}[scale=1.35,
    mid arrow/.style={
        postaction={decorate},
        decoration={
            markings,
            mark=at position 0.5 with {\arrow{>}}
        }
    }
]
  \pgfmathsetmacro{\sqrtThree}{sqrt(3)}

  \draw[-{Triangle[blue,scale=2,line width=1.5pt]}] (8.1,1.5) -- (9.1,1.5);
  \node[above] at (8.6,1.5) {$\mu_1$};
  
  \begin{scope}[shift={(\sqrtThree, 3)}]  
    \begin{scope}[shift={(-2*\sqrtThree, 0)}]  
      \begin{scope}[rotate around={-60:({6-\sqrtThree},0)}]  
      
        \draw[thick] ({6-\sqrtThree},0) -- (6,3) -- ({6+\sqrtThree},0) -- cycle;
        
        \coordinate (D) at ({6-\sqrtThree},0);
        \coordinate (A) at (6,3);
        \coordinate (C) at ({6+\sqrtThree},0);  
        \coordinate (x2) at ({6+2*\sqrtThree/3},1);
        \coordinate (x1) at ({6+\sqrtThree/3},2);
        \coordinate (y1) at ({6-2*\sqrtThree/3},1);
        \coordinate (y2) at ({6-\sqrtThree/3},2);
        \coordinate (y7) at ({6+\sqrtThree/3},0);
        \coordinate (y8) at ({6-\sqrtThree/3},0); 
        \coordinate (x8) at (6,1.6);
        \coordinate (x7) at (5.5,1.2);
        \coordinate (x9) at (6.5,1.2);
        \coordinate (x10) at (6,0.8);  
        
        \draw[mid arrow, line width=2pt, dash pattern=on 8pt off 4pt, red] (y8) -- (y7);
        \draw[mid arrow, line width=2pt, dash pattern=on 8pt off 4pt, red] (y1) -- (y2);
        
        \draw[mid arrow, line width=2pt, black] (x1) -- (y2);
        \draw[mid arrow, line width=2pt, black] (y2) -- (x8);
        \draw[mid arrow, line width=2pt, black] (x8) -- (x1);
        \draw[mid arrow, line width=2pt, black] (x8) -- (x9);
        \draw[mid arrow, line width=2pt, black] (x9) -- (x7);
        \draw[mid arrow, line width=2pt, black] (x7) -- (x8);
        \draw[mid arrow, line width=2pt, black] (x10) -- (y8);
        \draw[mid arrow, line width=2pt, black] (y8) -- (x7);
        \draw[mid arrow, line width=2pt, black] (x7) -- (x10);
        \draw[mid arrow, line width=2pt, black] (x2) -- (x8);
        
        \draw[mid arrow, black] (y1) -- (y7);
        \draw[mid arrow, black] (y7) -- (x10);
        \draw[mid arrow, black] (x10) -- (y1);
        \draw[mid arrow, black] (x10) -- (x9);
        \draw[mid arrow, black] (x9) -- (x2);  
        
        \filldraw[black] (x1) circle (2pt);
        \filldraw[blue] (x2) circle (2pt);
        \filldraw[black] (x7) circle (2pt);
        \filldraw[black] (x8) circle (2pt);
        \filldraw[blue] (x9) circle (2pt);
        \filldraw[blue] (x10) circle (2pt);
        
        \filldraw[blue] (y1) circle (2pt);
        \filldraw[black] (y2) circle (2pt);
        \filldraw[blue] (y7) circle (2pt);
        \filldraw[black] (y8) circle (2pt);
        
        \node[above, font=\scriptsize] at (y1) {$y_1$};
        \node[above, font=\scriptsize] at (y2) {$y_2$};
        \node[left, font=\scriptsize] at (y7) {$y_7$};
        \node[below, font=\scriptsize] at (x10) {$x_{10}$};
        \node[below left, font=\scriptsize] at (x9) {$x_9$};
        \node[left, font=\scriptsize] at (x2) {$x_2$};
        \node[left, font=\scriptsize] at (y8) {$y_8$};
        \node[above, font=\scriptsize] at (x7) {$x_7$};
        \node[above right, font=\scriptsize] at (x8) {$x_8$};
        \node[above right, font=\scriptsize] at (x1) {$x_1$};
      
      \end{scope}
    \end{scope}
  \end{scope}
  
  \draw[thick] (6,3) -- ({6+\sqrtThree},0) -- ({6-\sqrtThree},0);
  
  \coordinate (C) at ({6-\sqrtThree},0);
  \coordinate (A) at (6,3);
  \coordinate (B) at ({6+\sqrtThree},0);  
  
  \coordinate (y3) at ({6+\sqrtThree/3},2);
  \coordinate (y4) at ({6+2*\sqrtThree/3},1);
  \coordinate (y5) at ({6+\sqrtThree/3},0);
  \coordinate (y6) at ({6-\sqrtThree/3},0);
  
  \coordinate (x6) at (6,1.6);
  \coordinate (x5) at (5.5,1.2);
  \coordinate (x3) at (6.5,1.2);
  \coordinate (x4) at (6,0.8);
  
  \draw[mid arrow, line width=2pt, dash pattern=on 8pt off 4pt, red] (y3) -- (y4);
  \draw[mid arrow, line width=2pt, dash pattern=on 8pt off 4pt, red] (y6) -- (y5);
  
  \draw[mid arrow, line width=2pt, black] (x1) -- (x6);
  \draw[mid arrow, line width=2pt, black] (x6) -- (y3);
  \draw[mid arrow, line width=2pt, black] (y3) -- (x1);
  \draw[mid arrow, line width=2pt, black] (x3) -- (y5);
  \draw[mid arrow, line width=2pt, black] (y5) -- (x4);
  \draw[mid arrow, line width=2pt, black] (x4) -- (x3);
  \draw[mid arrow, line width=2pt, black] (x3) -- (x5);
  \draw[mid arrow, line width=2pt, black] (x5) -- (x6);
  \draw[mid arrow, line width=2pt, black] (x6) -- (x3);
  \draw[mid arrow, line width=2pt, black] (x6) -- (x2);
  
  \draw[mid arrow, black] (y6) -- (y4);
  \draw[mid arrow, black] (y4) -- (x4);
  \draw[mid arrow, black] (x4) -- (y6);
  \draw[mid arrow, black] (x2) -- (x5);
  \draw[mid arrow, black] (x5) -- (x4);
  
  
  \filldraw[black] (x3) circle (2pt);
  \filldraw[blue] (x4) circle (2pt);
  \filldraw[blue] (x5) circle (2pt);
  \filldraw[black] (x6) circle (2pt);
  
  \filldraw[black] (y3) circle (2pt);
  \filldraw[blue] (y4) circle (2pt);
  \filldraw[black] (y5) circle (2pt);
  \filldraw[blue] (y6) circle (2pt);

  \node[above right, font=\scriptsize] at (y5) {$y_5$};
  \node[left, font=\scriptsize] at (x4) {$x_4$};
  \node[right, font=\scriptsize] at (x3) {$x_3$};
  \node[right, font=\scriptsize] at (y4) {$y_4$};
  \node[above left, font=\scriptsize] at (y6) {$y_6$};
  \node[below, font=\scriptsize] at (x5) {$x_5$};
  \node[above, font=\scriptsize] at (x6) {$x_6$};
  \node[right, font=\scriptsize] at (y3) {$y_3$};
  
  \node[above right] at (A) {$A$};
  \node[below right] at (B) {$B$};
  \node[below left] at (C) {$C$};
  \node[above left] at (D) {$D$};

  \begin{scope}[shift={(+8.5, 0)}]
  \begin{scope}[shift={(\sqrtThree, 3)}]  
    \begin{scope}[shift={(-2*\sqrtThree, 0)}]  
      \begin{scope}[rotate around={-60:({6-\sqrtThree},0)}]  
      
        \draw[thick] ({6-\sqrtThree},0) -- (6,3) -- ({6+\sqrtThree},0) -- cycle;
        
        \coordinate (D') at ({6-\sqrtThree},0);
        \coordinate (A') at (6,3);
        \coordinate (B') at ({6+\sqrtThree},0);  
        \coordinate (y'4) at ({6+2*\sqrtThree/3},1);
        \coordinate (y'3) at ({6+\sqrtThree/3},2);
        \coordinate (y'1) at ({6-2*\sqrtThree/3},1);
        \coordinate (y'2) at ({6-\sqrtThree/3},2);
        \coordinate (x'3) at ({6+\sqrtThree/3},0);
        \coordinate (x'2) at ({6-\sqrtThree/3},0); 
        \coordinate (x'1) at (6,1.6);
        \coordinate (x'10) at (5.5,1.2);
        \coordinate (x'6) at (6.5,1.2);
        \coordinate (x'9) at (6,0.8);  
        
        \draw[mid arrow, line width=2pt, dash pattern=on 8pt off 4pt, red] (y'1) -- (y'2);
        \draw[mid arrow, line width=2pt, dash pattern=on 8pt off 4pt, red] (y'3) -- (y'4);
        
        \draw[mid arrow, line width=2pt, black] (x'2) -- (x'3);
        \draw[mid arrow, line width=2pt, black] (y'2) -- (x'1);
        \draw[mid arrow, line width=2pt, black] (x'1) -- (y'3);
        \draw[mid arrow, line width=2pt, black] (y'3) -- (y'2);
        \draw[mid arrow, line width=2pt, black] (x'1) -- (x'6);
        \draw[mid arrow, line width=2pt, black] (x'6) -- (x'10);
        \draw[mid arrow, line width=2pt, black] (x'10) -- (x'1);
        \draw[mid arrow, line width=2pt, black] (x'3) -- (x'9);
        \draw[mid arrow, line width=2pt, black] (x'9) -- (x'6);
        \draw[mid arrow, line width=2pt, black] (x'6) -- (x'3);
        \draw[mid arrow, line width=2pt, black] (x'1) -- (y'1);
        
        \draw[mid arrow, black] (x'9) -- (x'2);
        \draw[mid arrow, black] (x'2) -- (y'4);
        \draw[mid arrow, black] (y'4) -- (x'9);
        \draw[mid arrow, black] (y'1) -- (x'10);
        \draw[mid arrow, black] (x'10) -- (x'9);
        
        \filldraw[black] (x'1) circle (2pt);
        \filldraw[blue] (x'2) circle (2pt);
        \filldraw[black] (x'3) circle (2pt);
        \filldraw[black] (x'6) circle (2pt);
        \filldraw[blue] (x'9) circle (2pt);
        \filldraw[blue] (x'10) circle (2pt);
  
        \filldraw[blue] (y'1) circle (2pt);
        \filldraw[black] (y'2) circle (2pt);
        \filldraw[black] (y'3) circle (2pt);
        \filldraw[blue] (y'4) circle (2pt);
        
        \node[above, font=\scriptsize] at (y'1) {$y_1$};
        \node[above, font=\scriptsize] at (y'2) {$y_2$};
        \node[above right, font=\scriptsize] at (x'2) {$x_2^{(5)}$};
        \node[above left, font=\scriptsize] at (x'9) {$x_9^{(3)}$};
        \node[left, font=\scriptsize] at (x'10) {$x_{10}^{(1)}$};
        \node[right, font=\scriptsize] at (y'4) {$y_4$};
        \node[right, font=\scriptsize] at (x'3) {$x_3^{(1)}$};
        \node[right, font=\scriptsize] at (x'6) {$x_{6}^{(1)}$};
        \node[above right, font=\scriptsize] at (x'1) {$x_1^{(1)}$};
        \node[right, font=\scriptsize] at (y'3) {$y_3$};
      
      \end{scope}
    \end{scope}
  \end{scope}
  
  \begin{scope}[shift={(-2*\sqrtThree, 0)}]
\begin{scope}[rotate around={-120:(6,1)}]
  \draw[thick] ({6-\sqrtThree},0) -- (6,3) -- ({6+\sqrtThree},0) -- cycle;
  
  \coordinate (D') at ({6-\sqrtThree},0);
  \coordinate (B') at (6,3);
  \coordinate (C') at ({6+\sqrtThree},0);

  \coordinate (y'6) at ({6+2*\sqrtThree/3},1);
  \coordinate (y'5) at ({6+\sqrtThree/3},2);
  
  \coordinate (x'2) at ({6-2*\sqrtThree/3},1);
  \coordinate (x'3) at ({6-\sqrtThree/3},2);
  \coordinate (y'7) at ({6+\sqrtThree/3},0);
  \coordinate (y'8) at ({6-\sqrtThree/3},0); 
  
  \coordinate (x'8) at (6,1.6);
  \coordinate (x'7) at (5.5,1.2);
  \coordinate (x'4) at (6.5,1.2);
  \coordinate (x'5) at (6,0.8);  
  
  \draw[mid arrow, line width=2pt, dash pattern=on 8pt off 4pt, red] (y'5) -- (y'6);
  \draw[mid arrow, line width=2pt, dash pattern=on 8pt off 4pt, red] (y'8) -- (y'7);

  \draw[mid arrow, line width=2pt, black] (y'8) -- (x'7);
  \draw[mid arrow, line width=2pt, black] (x'7) -- (x'5);
  \draw[mid arrow, line width=2pt, black] (x'5) -- (y'8);
  \draw[mid arrow, line width=2pt, black] (y'5) -- (x'3);
  \draw[mid arrow, line width=2pt, black] (x'3) -- (x'8);
  \draw[mid arrow, line width=2pt, black] (x'8) -- (y'5);
  \draw[mid arrow, line width=2pt, black] (y'6) -- (x'8);
  \draw[mid arrow, line width=2pt, black] (x'8) -- (x'4);
  \draw[mid arrow, line width=2pt, black] (x'4) -- (x'7);
  \draw[mid arrow, line width=2pt, black] (x'7) -- (x'8);
  
  \draw[mid arrow, black] (x'2) -- (y'7);
  \draw[mid arrow, black] (y'7) -- (x'5);
  \draw[mid arrow, black] (x'5) -- (x'2);
  \draw[mid arrow, black] (x'5) -- (x'4);
  \draw[mid arrow, black] (x'4) -- (y'6);
  
  \filldraw[blue] (x'4) circle (2pt);
  \filldraw[blue] (x'5) circle (2pt);
  \filldraw[black] (x'7) circle (2pt);
  \filldraw[black] (x'8) circle (2pt);
  
  \filldraw[black] (y'5) circle (2pt);
  \filldraw[blue] (y'6) circle (2pt);
  \filldraw[blue] (y'7) circle (2pt);
  \filldraw[black] (y'8) circle (2pt);
  
  \node[left, font=\scriptsize] at (y'7) {$y_7$};
  \node[left, font=\scriptsize] at (y'8) {$y_8$};
  \node[below, font=\scriptsize] at (y'5) {$y_5$};
  \node[below left, font=\scriptsize] at (x'5) {$x_5^{(4)}$};
  \node[above, font=\scriptsize] at (x'7) {$x_7^{(12)}$};
  \node[below, font=\scriptsize] at (y'6) {$y_6$};
  \node[left, font=\scriptsize] at (x'4) {$x_4^{(2)}$};
  \node[right, font=\scriptsize] at (x'8) {$x_8^{(6)}$};
  
  \end{scope}
  \end{scope}
  \node[above right] at (A') {$A$};
  \node[below right] at (B') {$B$};
  \node[below left] at (C') {$C$};
  \node[above left] at (D') {$D$}; 
  \end{scope}
\end{tikzpicture}
\caption{Mutation sequence $\mu_1$ for type $G_2$}
\label{fig:g2-mutation-sequence-1}
\end{figure}
			
	\item The flips sequence $\mu_2$:
		\begin{gather*}
			\mu_2 = \{ x_1 \rightarrow x_2 \rightarrow x_7 \rightarrow x_8 \rightarrow x_{10} \rightarrow x_7 \rightarrow x_9 \rightarrow x_{10} \rightarrow x_7 \rightarrow x_9 \rightarrow x_8 \rightarrow x_7
			\\ \rightarrow x_6 \rightarrow x_5 \rightarrow x_7 \rightarrow x_8 \rightarrow x_9 \rightarrow x_7 \rightarrow x_2 \rightarrow x_9 \rightarrow x_7 \rightarrow x_2 \rightarrow x_8 \rightarrow x_7
			\\ \rightarrow x_3 \rightarrow x_4 \rightarrow x_7 \rightarrow x_8 \rightarrow x_2 \rightarrow x_7 \rightarrow x_5 \rightarrow x_2 \rightarrow x_7 \rightarrow x_5 \rightarrow x_8 \rightarrow x_7 \}
		\end{gather*}
		where the long roots (satisfying $d_i = 1$) are $\{x_1, x_3, x_6, x_7, x_8, y_2, y_3, y_5, y_7\}$ and short roots (satisfying $d_i = \frac{1}{3}$) are $\{x_2, x_4, x_5, x_9, x_{10}, y_1, y_4, y_6, y_8\}$.
\begin{figure}[H]
\centering
\begin{tikzpicture}[scale=1.35,
    mid arrow/.style={
        postaction={decorate},
        decoration={
            markings,
            mark=at position 0.5 with {\arrow{>}}
        }
    }
]
  \pgfmathsetmacro{\sqrtThree}{sqrt(3)}

  \draw[-{Triangle[blue,scale=2,line width=1.5pt]}] (8.1,1.5) -- (9.1,1.5);
  \node[above] at (8.6,1.5) {$\mu_2$};
  
  \begin{scope}[shift={(\sqrtThree, 3)}]  
    \begin{scope}[shift={(-2*\sqrtThree, 0)}]  
      \begin{scope}[rotate around={-60:({6-\sqrtThree},0)}]  
      
        \draw[thick] ({6-\sqrtThree},0) -- (6,3) -- ({6+\sqrtThree},0) -- cycle;
        
        \coordinate (D) at ({6-\sqrtThree},0);
        \coordinate (A) at (6,3);
        \coordinate (C) at ({6+\sqrtThree},0);  
        \coordinate (x2) at ({6+2*\sqrtThree/3},1);
        \coordinate (x1) at ({6+\sqrtThree/3},2);
        \coordinate (y1) at ({6-2*\sqrtThree/3},1);
        \coordinate (y2) at ({6-\sqrtThree/3},2);
        \coordinate (y7) at ({6+\sqrtThree/3},0);
        \coordinate (y8) at ({6-\sqrtThree/3},0); 
        \coordinate (x7) at (6,1.6);
        \coordinate (x10) at (5.5,1.2);
        \coordinate (x8) at (6.5,1.2);
        \coordinate (x9) at (6,0.8);  
        
        \draw[mid arrow, line width=2pt, dash pattern=on 8pt off 4pt, red] (y1) -- (y2);
        \draw[mid arrow, line width=2pt, dash pattern=on 8pt off 4pt, red] (y8) -- (y7);
        
        
        \draw[mid arrow, line width=2pt, black] (x1) -- (y2);
        \draw[mid arrow, line width=2pt, black] (y2) -- (x7);
        \draw[mid arrow, line width=2pt, black] (x7) -- (x1);
        \draw[mid arrow, line width=2pt, black] (x9) -- (x8);
        \draw[mid arrow, line width=2pt, black] (x8) -- (x10);
        \draw[mid arrow, line width=2pt, black] (x7) -- (x8);
        \draw[mid arrow, line width=2pt, black] (x7) -- (y1);
        \draw[mid arrow, line width=2pt, black] (y7) -- (x9);
        \draw[mid arrow, line width=2pt, black] (x10) -- (x7);
        \draw[mid arrow, line width=2pt, black] (x8) -- (y7);
        
        \draw[mid arrow, black] (y1) -- (x10);
        \draw[mid arrow, black] (x10) -- (x9);
        \draw[mid arrow, black] (y8) -- (x2);
        \draw[mid arrow, black] (x9) -- (y8);
        \draw[mid arrow, black] (x2) -- (x9);
        
        \filldraw[black] (x1) circle (2pt);
        \filldraw[blue] (x2) circle (2pt);
        \filldraw[black] (x7) circle (2pt);
        \filldraw[black] (x8) circle (2pt);
        \filldraw[blue] (x9) circle (2pt);
        \filldraw[blue] (x10) circle (2pt);
  
        \filldraw[blue] (y1) circle (2pt);
        \filldraw[black] (y2) circle (2pt);
        \filldraw[black] (y7) circle (2pt);
        \filldraw[blue] (y8) circle (2pt);
        
        \node[above, font=\scriptsize] at (y1) {$y_1$};
        \node[above, font=\scriptsize] at (y2) {$y_2$};
        \node[left, font=\scriptsize] at (y7) {$y_7$};
        \node[left, font=\scriptsize] at (x10) {$x_{10}$};
        \node[above left, font=\scriptsize] at (x9) {$x_9$};
        \node[left, font=\scriptsize] at (x2) {$x_2$};
        \node[left, font=\scriptsize] at (y8) {$y_8$};
        \node[above right, font=\scriptsize] at (x7) {$x_7$};
        \node[right, font=\scriptsize] at (x8) {$x_8$};
        \node[above right, font=\scriptsize] at (x1) {$x_1$};
      \end{scope}
    \end{scope}
  \end{scope}
  
  \draw[thick] (6,3) -- ({6+\sqrtThree},0) -- ({6-\sqrtThree},0);
  
  \coordinate (C) at ({6-\sqrtThree},0);
  \coordinate (A) at (6,3);
  \coordinate (B) at ({6+\sqrtThree},0);  
  
  \coordinate (y3) at ({6+\sqrtThree/3},2);
  \coordinate (y4) at ({6+2*\sqrtThree/3},1);
  \coordinate (y5) at ({6+\sqrtThree/3},0);
  \coordinate (y6) at ({6-\sqrtThree/3},0);
  
  \coordinate (x6) at (6,1.6);
  \coordinate (x5) at (5.5,1.2);
  \coordinate (x3) at (6.5,1.2);
  \coordinate (x4) at (6,0.8);
  
  \draw[mid arrow, line width=2pt, dash pattern=on 8pt off 4pt, red] (y3) -- (y4);
  \draw[mid arrow, line width=2pt, dash pattern=on 8pt off 4pt, red] (y6) -- (y5);
  
  \draw[mid arrow, line width=2pt, black] (x1) -- (x6);
  \draw[mid arrow, line width=2pt, black] (x6) -- (y3);
  \draw[mid arrow, line width=2pt, black] (y3) -- (x1);
  \draw[mid arrow, line width=2pt, black] (x3) -- (y5);
  \draw[mid arrow, line width=2pt, black] (y5) -- (x4);
  \draw[mid arrow, line width=2pt, black] (x4) -- (x3);
  \draw[mid arrow, line width=2pt, black] (x3) -- (x5);
  \draw[mid arrow, line width=2pt, black] (x5) -- (x6);
  \draw[mid arrow, line width=2pt, black] (x6) -- (x3);
  \draw[mid arrow, line width=2pt, black] (x6) -- (x2);
  
  \draw[mid arrow, black] (y6) -- (y4);
  \draw[mid arrow, black] (y4) -- (x4);
  \draw[mid arrow, black] (x4) -- (y6);
  \draw[mid arrow, black] (x2) -- (x5);
  \draw[mid arrow, black] (x5) -- (x4);
  
  
  \filldraw[black] (x3) circle (2pt);
  \filldraw[blue] (x4) circle (2pt);
  \filldraw[blue] (x5) circle (2pt);
  \filldraw[black] (x6) circle (2pt);
  
  \filldraw[black] (y3) circle (2pt);
  \filldraw[blue] (y4) circle (2pt);
  \filldraw[black] (y5) circle (2pt);
  \filldraw[blue] (y6) circle (2pt);

  \node[above right, font=\scriptsize] at (y5) {$y_5$};
  \node[left, font=\scriptsize] at (x4) {$x_4$};
  \node[right, font=\scriptsize] at (x3) {$x_3$};
  \node[right, font=\scriptsize] at (y4) {$y_4$};
  \node[above left, font=\scriptsize] at (y6) {$y_6$};
  \node[below, font=\scriptsize] at (x5) {$x_5$};
  \node[above, font=\scriptsize] at (x6) {$x_6$};
  \node[right, font=\scriptsize] at (y3) {$y_3$};
  
  \node[above right] at (A) {$A$};
  \node[below right] at (B) {$B$};
  \node[below left] at (C) {$C$};
  \node[above left] at (D) {$D$};

  \begin{scope}[shift={(+8.5, 0)}]
  \begin{scope}[shift={(\sqrtThree, 3)}]  
    \begin{scope}[shift={(-2*\sqrtThree, 0)}]  
      \begin{scope}[rotate around={-60:({6-\sqrtThree},0)}]  
      
        \draw[thick] ({6-\sqrtThree},0) -- (6,3) -- ({6+\sqrtThree},0) -- cycle;
        
        \coordinate (D') at ({6-\sqrtThree},0);
        \coordinate (A') at (6,3);
        \coordinate (B') at ({6+\sqrtThree},0);  
        \coordinate (y'4) at ({6+2*\sqrtThree/3},1);
        \coordinate (y'3) at ({6+\sqrtThree/3},2);
        \coordinate (y'1) at ({6-2*\sqrtThree/3},1);
        \coordinate (y'2) at ({6-\sqrtThree/3},2);
        \coordinate (x'3) at ({6+\sqrtThree/3},0);
        \coordinate (x'2) at ({6-\sqrtThree/3},0); 
        \coordinate (x'1) at (6,1.6);
        \coordinate (x'10) at (5.5,1.2);
        \coordinate (x'6) at (6.5,1.2);
        \coordinate (x'9) at (6,0.8);  
        
        \draw[mid arrow, line width=2pt, dash pattern=on 8pt off 4pt, red] (y'1) -- (y'2);
        \draw[mid arrow, line width=2pt, dash pattern=on 8pt off 4pt, red] (y'3) -- (y'4);
        
        \draw[mid arrow, line width=2pt, black] (x'2) -- (x'3);
        \draw[mid arrow, line width=2pt, black] (y'2) -- (x'1);
        \draw[mid arrow, line width=2pt, black] (x'1) -- (y'3);
        \draw[mid arrow, line width=2pt, black] (y'3) -- (y'2);
        \draw[mid arrow, line width=2pt, black] (x'1) -- (x'6);
        \draw[mid arrow, line width=2pt, black] (x'6) -- (x'10);
        \draw[mid arrow, line width=2pt, black] (x'10) -- (x'1);
        \draw[mid arrow, line width=2pt, black] (x'3) -- (x'9);
        \draw[mid arrow, line width=2pt, black] (x'9) -- (x'6);
        \draw[mid arrow, line width=2pt, black] (x'6) -- (x'3);
        \draw[mid arrow, line width=2pt, black] (x'1) -- (y'1);
        
        \draw[mid arrow, black] (x'9) -- (x'2);
        \draw[mid arrow, black] (x'2) -- (y'4);
        \draw[mid arrow, black] (y'4) -- (x'9);
        \draw[mid arrow, black] (y'1) -- (x'10);
        \draw[mid arrow, black] (x'10) -- (x'9);
        
        \filldraw[black] (x'1) circle (2pt);
        \filldraw[blue] (x'2) circle (2pt);
        \filldraw[black] (x'3) circle (2pt);
        \filldraw[black] (x'6) circle (2pt);
        \filldraw[blue] (x'9) circle (2pt);
        \filldraw[blue] (x'10) circle (2pt);
  
        \filldraw[blue] (y'1) circle (2pt);
        \filldraw[black] (y'2) circle (2pt);
        \filldraw[black] (y'3) circle (2pt);
        \filldraw[blue] (y'4) circle (2pt);
        
        \node[above, font=\scriptsize] at (y'1) {$y_1$};
        \node[above, font=\scriptsize] at (y'2) {$y_2$};
        \node[above right, font=\scriptsize] at (x'2) {$x_2^{(5)}$};
        \node[above left, font=\scriptsize] at (x'9) {$x_9^{(4)}$};
        \node[left, font=\scriptsize] at (x'10) {$x_{10}^{(1)}$};
        \node[right, font=\scriptsize] at (y'4) {$y_4$};
        \node[right, font=\scriptsize] at (x'3) {$x_3^{(1)}$};
        \node[right, font=\scriptsize] at (x'6) {$x_{6}^{(1)}$};
        \node[above right, font=\scriptsize] at (x'1) {$x_1^{(1)}$};
        \node[right, font=\scriptsize] at (y'3) {$y_3$};
      
      \end{scope}
    \end{scope}
  \end{scope}
  
  \begin{scope}[shift={(-2*\sqrtThree, 0)}]
  \begin{scope}[rotate around={-120:(6,1)}]
  \draw[thick] ({6-\sqrtThree},0) -- (6,3) -- ({6+\sqrtThree},0) -- cycle;
  
  \coordinate (D') at ({6-\sqrtThree},0);
  \coordinate (B') at (6,3);
  \coordinate (C') at ({6+\sqrtThree},0); 
  
  \coordinate (x'2) at ({6-2*\sqrtThree/3},1);
  \coordinate (x'3) at ({6-\sqrtThree/3},2);
  
  \coordinate (y'5) at ({6+\sqrtThree/3},2);
  \coordinate (y'6) at ({6+2*\sqrtThree/3},1);
  \coordinate (y'7) at ({6+\sqrtThree/3},0);
  \coordinate (y'8) at ({6-\sqrtThree/3},0);
  
  \coordinate (x'7) at (6,1.6);
  \coordinate (x'5) at (5.5,1.2);
  \coordinate (x'8) at (6.5,1.2);
  \coordinate (x'4) at (6,0.8);
  
  \draw[mid arrow, line width=2pt, dash pattern=on 8pt off 4pt, red] (y'5) -- (y'6);
  \draw[mid arrow, line width=2pt, dash pattern=on 8pt off 4pt, red] (y'8) -- (y'7);

  \draw[mid arrow, line width=2pt, black] (y'7) -- (x'4);
  \draw[mid arrow, line width=2pt, black] (x'5) -- (x'7);
  \draw[mid arrow, line width=2pt, black] (x'8) -- (x'5);
  \draw[mid arrow, line width=2pt, black] (y'5) -- (x'3);
  \draw[mid arrow, line width=2pt, black] (x'3) -- (x'7);
  \draw[mid arrow, line width=2pt, black] (x'7) -- (y'5);
  \draw[mid arrow, line width=2pt, black] (x'8) -- (y'7);
  \draw[mid arrow, line width=2pt, black] (x'4) -- (x'8);
  \draw[mid arrow, line width=2pt, black] (x'7) -- (x'2);
  \draw[mid arrow, line width=2pt, black] (x'7) -- (x'8);
  
  \draw[mid arrow, black] (x'2) -- (x'5);
  \draw[mid arrow, black] (x'5) -- (x'4);
  \draw[mid arrow, black] (y'8) -- (y'6);
  \draw[mid arrow, black] (x'4) -- (y'8);
  \draw[mid arrow, black] (y'6) -- (x'4);
  
  \filldraw[blue] (x'4) circle (2pt);
  \filldraw[blue] (x'5) circle (2pt);
  \filldraw[black] (x'7) circle (2pt);
  \filldraw[black] (x'8) circle (2pt);
  
  \filldraw[black] (y'5) circle (2pt);
  \filldraw[blue] (y'6) circle (2pt);
  \filldraw[black] (y'7) circle (2pt);
  \filldraw[blue] (y'8) circle (2pt);
  
  \node[left, font=\scriptsize] at (y'7) {$y_7$};
  \node[left, font=\scriptsize] at (y'8) {$y_8$};
  \node[below, font=\scriptsize] at (y'5) {$y_5$};
  \node[above left, font=\scriptsize] at (x'5) {$x_5^{(3)}$};
  \node[right, font=\scriptsize] at (x'7) {$x_7^{(12)}$};
  \node[below, font=\scriptsize] at (y'6) {$y_6$};
  \node[above, font=\scriptsize] at (x'4) {$x_4^{(2)}$};
  \node[below, font=\scriptsize] at (x'8) {$x_8^{(6)}$};
  \end{scope}
  \end{scope}
  \node[above right] at (A') {$A$};
  \node[below right] at (B') {$B$};
  \node[below left] at (C') {$C$};
  \node[above left] at (D') {$D$}; 
  \end{scope}
\end{tikzpicture}
\caption{Mutation sequence $\mu_2$ for type $G_2$}
\label{fig:g2-mutation-sequence-2}
\end{figure}
\end{enumerate} 
\end{Def}
By drawing the triangulation using equilateral triangles as above, an important observation from \cite{Ip} is that after applying the mutation sequence, the resulting quiver over each triangle is still the same as one of the two basic ones from Figure \ref{fig:g2-quiver-triangle-21} and \ref{fig:g2-quiver-triangle-12}, but up to a rotation.

\subsection{Detailed calculations}
\begin{Prop}
\label{prop:useful-calculations} 
In the quiver for type $G_2$ local system, we have the following properties:
	\begin{enumerate}
		\item[(i)] For any arrow of weight $w_{ij}$ connecting vertices $i$ and $j$ (of course $w_{ij} \geq 0$), then the coefficients satisfy $b_{ij} = \frac{w_{ij}}{d_i}$, $b_{ji} = -\frac{w_{ij}}{d_j}$.
		\item[(ii)] If the arrow is undashed and thick, then:
			\begin{gather*}
				b_{ij} = \begin{cases}
    1 & \text{if the vertex } i \text{ is black}\\
    3 & \text{if the vertex } i \text{ is blue}
\end{cases}; \quad b_{ji} = \begin{cases}
    -1 & \text{if the vertex } j \text{ is black}\\
    -3 & \text{if the vertex } j \text{ is blue}
\end{cases}.
			\end{gather*}
	\end{enumerate}

\end{Prop}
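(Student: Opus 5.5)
The plan is to unwind the conventions that define the weighted quiver for non-simply-laced type, namely the Fock--Goncharov convention for cluster ensembles with multipliers. In that convention one has a skew-symmetric weight matrix $\hat\varepsilon_{ij}$ and multipliers $d_i$. Here $d_i=1$ for long roots and $d_i=\frac13$ for short roots, as listed in Definition~\ref{def:g2-flip-sequences}. The exchange matrix is then
\[
b_{ij}=\hat\varepsilon_{ij}\,d_j^{-1}\quad\text{or}\quad b_{ij}=\hat\varepsilon_{ij}\,d_i^{-1},
\]
depending on the chosen normalization. I would fix the normalization so that it reproduces the stated rule: an arrow $i\to j$ drawn with weight $w_{ij}\ge 0$ encodes $\hat\varepsilon_{ij}=w_{ij}=-\hat\varepsilon_{ji}$. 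Setting $b_{ij}=\hat\varepsilon_{ij}/d_i$ then gives part (i) immediately:
\[
b_{ij}=\frac{w_{ij}}{d_i},\qquad b_{ji}=\frac{\hat\varepsilon_{ji}}{d_j}=-\frac{w_{ij}}{d_j}.
\]
This matrix is skew-symmetrizable, since $d_ib_{ij}=-d_jb_{ji}$. So (i) is essentially a reading of the definition of the weights attached to arrows in Definition~\ref{def:g2-quiver}. The one thing to verify is consistency with the weight conventions there, where a red dashed arrow has weight $\frac12$ and a thin arrow has weight $\frac13$.

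For (ii), I would substitute $w_{ij}=1$ for an undashed thick arrow. This gives $b_{ij}=1/d_i$, which equals $1$ when $i$ is black ($d_i=1$) and $3$ when $i$ is blue ($d_i=\frac13$). Likewise $b_{ji}=-1/d_j$ gives $-1$ or $-3$ according to the colour of $j$.

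As a sanity check, I would confirm that thin arrows, of weight $\frac13$, yield integer entries. Such arrows only join blue vertices in Figures~\ref{fig:g2-quiver-triangle-21} and \ref{fig:g2-quiver-triangle-12}, so $b_{ij}=\frac{1/3}{1/3}=1$. I would also check that amalgamating two red half-arrows gives weight $1$, so that after gluing all mutable entries are integers.

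The main obstacle is not computational. It is pinning down which side the multiplier divides on, i.e.\ matching the transposition convention of \cite{Ip} with the exchange relation used earlier in the paper. I would settle this by testing one mutation from $\mu_1$ against the known $G_2$ Cartan matrix entries $-1,-3$ between adjacent long and short vertices.
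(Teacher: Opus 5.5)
Your proposal is correct and matches the paper, which gives no separate proof: it treats the proposition as a direct reading of the multiplier convention $b_{ij}=w_{ij}/d_i$, with $d_i=1$ for long (black) and $d_i=\frac13$ for short (blue) vertices, and part (ii) is the case $w_{ij}=1$. One correction to your side remark: in the initial quivers for $\mu_1,\mu_2$, the two red half-arrows between $x_1$ and $x_2$ on the glued diagonal $AC$ cancel (there is no arrow between $x_1$ and $x_2$) rather than adding to weight $1$, though this does not affect the statement or your integrality conclusion.
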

For the flip sequence $\mu_1$, we get the new expansion formulas of the vertices of the form:
\begin{gather*}
	(y_1,y_2,...,y_8,x_1,x_2,...,x_{10}) \rightarrow (y_1,y_2,...,y_8,x_1^{(1)},x_{2}^{(5)},x_3^{(1)},x_4^{(2)},x_{5}^{(4)},x_6^{(1)},x_{7}^{(12)},x_8^{(6)},x_9^{(3)},x_{10}^{(1)}).
\end{gather*}
We shall omit the results of remaining vertices whereas the formulas are too long. In the expressions below, each fraction is written in a canonical form with all \(y\)-variables (in increasing index) preceding all \(x\)-variables (also increasing index) in the numerator, and the terms are ordered by the number of distinct variables followed by their exponents.

We finally get the diagonal expansion formula after flip:
\begin{gather*}
	BD = (x_3^{(1)}, x_2^{(5)})
\end{gather*}
where:
{\footnotesize
\begin{align*}
    x_{2}^{(5)} &= \frac{y_{1} y_{6}}{x_{2}} + \frac{y_{4} y_{7}}{x_{2}} + \frac{2 y_{1} y_{4} x_{4} x_{10}}{x_{5} x_{9}} + \frac{y_{1} y_{4} y_{8} x_{5}}{x_{6} x_{10}} + \frac{y_{1} y_{4} y_{5} x_{9}}{x_{4} x_{8}} + \frac{y_{1} y_{4} x_{3} x_{10}^{2}}{x_{5}^{2} x_{7}} + \frac{y_{1} y_{4} x_{4}^{2} x_{7}}{x_{3} x_{9}^{2}} + \frac{y_{4} y_{7} x_{5} x_{7}}{x_{6} x_{9} x_{10}} + \frac{2 y_{4} y_{7} x_{4} x_{9}}{x_{2} x_{5} x_{10}} + \frac{y_{4} y_{7} x_{5} x_{8}}{x_{2} x_{6} x_{9}} \\
    &\quad + \frac{y_{1} y_{6} x_{6} x_{9}}{x_{2} x_{5} x_{8}} + \frac{2 y_{1} y_{6} x_{5} x_{10}}{x_{2} x_{4} x_{9}} + \frac{y_{1} y_{6} x_{3} x_{9}}{x_{4} x_{5} x_{8}} + \frac{y_{2} y_{6} y_{7} x_{6}}{x_{1} x_{5} x_{10}} + \frac{y_{3} y_{6} y_{7} x_{8}}{x_{1} x_{5} x_{10}} + \frac{y_{2} y_{6} y_{7} x_{6}}{x_{1} x_{4} x_{9}} + \frac{y_{3} y_{6} y_{7} x_{8}}{x_{1} x_{4} x_{9}} + \frac{2 y_{6} y_{7} x_{5} x_{9}}{x_{2}^{2} x_{4} x_{10}} + \frac{y_{1} y_{6} x_{5}^{2} x_{7}}{x_{4} x_{6} x_{9}^{2}} \\
    &\quad + \frac{y_{1} y_{6} x_{6} x_{10}^{2}}{x_{2}^{2} x_{4} x_{7}} + \frac{y_{4} y_{7} x_{4}^{2} x_{8}}{x_{2}^{2} x_{3} x_{10}} + \frac{y_{4} y_{7} x_{3} x_{9}^{2}}{x_{5}^{2} x_{8} x_{10}} + \frac{y_{3} y_{6} y_{7} x_{2} x_{7}}{x_{1} x_{4} x_{9} x_{10}} + \frac{y_{1} y_{3} y_{6} y_{8} x_{2}}{x_{1} x_{4} x_{10}} + \frac{y_{2} y_{4} y_{5} y_{7} x_{2}}{x_{1} x_{4} x_{10}} + \frac{y_{6} y_{7} x_{6} x_{9}^{2}}{x_{2}^{2} x_{4} x_{8} x_{10}} + \frac{y_{6} y_{7} x_{5}^{2} x_{8}}{x_{2}^{2} x_{4} x_{6} x_{10}} \\
    &\quad + \frac{2 y_{1} y_{4} x_{4} x_{6} x_{10}^{2}}{x_{2} x_{5}^{2} x_{7}} + \frac{2 y_{1} y_{4} x_{4}^{2} x_{8} x_{10}}{x_{2} x_{3} x_{9}^{2}} + \frac{y_{2} y_{4} y_{7} x_{2}^{2} x_{3}}{x_{1} x_{5}^{2} x_{9}} + \frac{y_{1} y_{3} y_{6} x_{2}^{2} x_{7}}{x_{1} x_{5} x_{9}^{2}} + \frac{y_{2} y_{6} y_{7} x_{2} x_{3}}{x_{1} x_{4} x_{5} x_{10}} + \frac{2 y_{4} y_{7} x_{4} x_{6} x_{9}^{2}}{x_{2} x_{5}^{2} x_{8} x_{10}} + \frac{2 y_{1} y_{6} x_{5}^{2} x_{8} x_{10}}{x_{2} x_{4} x_{6} x_{9}^{2}} \\
    &\quad + \frac{2 y_{1} y_{6} x_{5} x_{8} x_{10}^{2}}{x_{2}^{2} x_{4} x_{7} x_{9}} + \frac{2 y_{4} y_{7} x_{4}^{2} x_{6} x_{9}}{x_{2}^{2} x_{3} x_{5} x_{10}} + \frac{2 y_{1} y_{4} x_{4} x_{8} x_{10}^{2}}{x_{2} x_{5} x_{7} x_{9}} + \frac{2 y_{1} y_{4} x_{4}^{2} x_{6} x_{10}}{x_{2} x_{3} x_{5} x_{9}} + \frac{y_{1} y_{4} x_{4}^{2} x_{8}^{2} x_{10}^{2}}{x_{2}^{2} x_{3} x_{7} x_{9}^{2}} + \frac{y_{1} y_{4} x_{4}^{2} x_{6}^{2} x_{10}^{2}}{x_{2}^{2} x_{3} x_{5}^{2} x_{7}} + \frac{2 y_{2} y_{4} y_{7} x_{2} x_{4} x_{6}}{x_{1} x_{5}^{2} x_{9}} \\
    &\quad + \frac{2 y_{3} y_{4} y_{7} x_{2} x_{4} x_{8}}{x_{1} x_{5}^{2} x_{9}} + \frac{2 y_{1} y_{2} y_{6} x_{2} x_{6} x_{10}}{x_{1} x_{5} x_{9}^{2}} + \frac{2 y_{1} y_{3} y_{6} x_{2} x_{8} x_{10}}{x_{1} x_{5} x_{9}^{2}} + \frac{y_{1} y_{3} y_{6} x_{8}^{2} x_{10}^{2}}{x_{1} x_{5} x_{7} x_{9}^{2}} + \frac{y_{2} y_{4} y_{7} x_{4}^{2} x_{6}^{2}}{x_{1} x_{3} x_{5}^{2} x_{9}} + \frac{2 y_{1} y_{6} y_{8} x_{5} x_{9}^{2}}{x_{2}^{2} x_{4} x_{7} x_{10}} + \frac{2 y_{4} y_{5} y_{7} x_{5}^{2} x_{9}}{x_{2}^{2} x_{3} x_{4} x_{10}} \\
    &\quad + \frac{2 y_{1} y_{4} y_{8} x_{4} x_{9}^{2}}{x_{2} x_{5} x_{7} x_{10}} + \frac{2 y_{1} y_{4} y_{5} x_{5}^{2} x_{10}}{x_{2} x_{3} x_{4} x_{9}} + \frac{2 y_{1} y_{3} y_{4} y_{8} x_{2}^{2} x_{4}}{x_{1} x_{5}^{2} x_{10}} + \frac{2 y_{1} y_{2} y_{4} y_{5} x_{2}^{2} x_{10}}{x_{1} x_{4} x_{9}^{2}} + \frac{y_{1} y_{4} y_{8} x_{3} x_{9}^{3}}{x_{5}^{2} x_{7} x_{8} x_{10}} + \frac{y_{1} y_{4} y_{5} x_{5}^{3} x_{7}}{x_{3} x_{4} x_{6} x_{9}^{2}} + \frac{y_{3} y_{4} y_{5} y_{7} x_{5} x_{8}}{x_{1} x_{3} x_{4} x_{9}} \\
    &\quad + \frac{y_{1} y_{2} y_{6} y_{8} x_{2} x_{6}}{x_{1} x_{4} x_{8} x_{10}} + \frac{y_{3} y_{4} y_{5} y_{7} x_{2} x_{8}}{x_{1} x_{4} x_{6} x_{10}} + \frac{y_{1} y_{2} y_{6} y_{8} x_{6} x_{9}}{x_{1} x_{5} x_{7} x_{10}} + \frac{y_{1} y_{3} y_{6} y_{8} x_{8} x_{9}}{x_{1} x_{5} x_{7} x_{10}} + \frac{y_{2} y_{4} y_{5} y_{7} x_{5} x_{6}}{x_{1} x_{3} x_{4} x_{9}} + \frac{y_{1} y_{6} x_{5}^{2} x_{8}^{2} x_{10}^{2}}{x_{2}^{2} x_{4} x_{6} x_{7} x_{9}^{2}} + \frac{y_{4} y_{7} x_{4}^{2} x_{6}^{2} x_{9}^{2}}{x_{2}^{2} x_{3} x_{5}^{2} x_{8} x_{10}} \\
    &\quad + \frac{y_{1} y_{2} y_{6} x_{6} x_{8} x_{10}^{2}}{x_{1} x_{5} x_{7} x_{9}^{2}} + \frac{y_{3} y_{4} y_{7} x_{4}^{2} x_{6} x_{8}}{x_{1} x_{3} x_{5}^{2} x_{9}} + \frac{y_{1} y_{4} y_{5} x_{5} x_{6} x_{10}^{2}}{x_{2}^{2} x_{3} x_{4} x_{7}} + \frac{y_{3} y_{4} y_{7} x_{2}^{2} x_{3} x_{8}}{x_{1} x_{5}^{2} x_{6} x_{9}} + \frac{2 y_{1} y_{2} y_{6} x_{2}^{2} x_{3} x_{10}}{x_{1} x_{4} x_{5} x_{9}^{2}} + \frac{2 y_{3} y_{4} y_{7} x_{2}^{2} x_{4} x_{7}}{x_{1} x_{5}^{2} x_{9} x_{10}} \\
    &\quad + \frac{y_{1} y_{2} y_{6} x_{2}^{2} x_{6} x_{7}}{x_{1} x_{5} x_{8} x_{9}^{2}} + \frac{y_{1} y_{4} y_{8} x_{4}^{2} x_{8} x_{9}}{x_{2}^{2} x_{3} x_{7} x_{10}} + \frac{y_{1} y_{6} y_{8} x_{6} x_{9}^{3}}{x_{2}^{2} x_{4} x_{7} x_{8} x_{10}} + \frac{y_{4} y_{5} y_{7} x_{5}^{3} x_{8}}{x_{2}^{2} x_{3} x_{4} x_{6} x_{10}} + \frac{y_{1} y_{2} y_{4} y_{8} x_{2}^{3} x_{3}}{x_{1} x_{5}^{2} x_{8} x_{10}} + \frac{y_{1} y_{3} y_{4} y_{8} x_{2}^{3} x_{3}}{x_{1} x_{5}^{2} x_{6} x_{10}} + \frac{y_{1} y_{2} y_{4} y_{5} x_{2}^{3} x_{7}}{x_{1} x_{4} x_{8} x_{9}^{2}} \\
    &\quad + \frac{y_{1} y_{3} y_{4} y_{5} x_{2}^{3} x_{7}}{x_{1} x_{4} x_{6} x_{9}^{2}} + \frac{y_{3} y_{6} y_{7} x_{2} x_{3} x_{8}}{x_{1} x_{4} x_{5} x_{6} x_{10}} + \frac{y_{2} y_{6} y_{7} x_{2} x_{6} x_{7}}{x_{1} x_{4} x_{8} x_{9} x_{10}} + \frac{y_{1} y_{2} y_{4} y_{5} y_{8} x_{2} x_{9}}{x_{1} x_{4} x_{7} x_{10}} + \frac{y_{1} y_{3} y_{4} y_{5} y_{8} x_{2} x_{5}}{x_{1} x_{3} x_{4} x_{10}} + \frac{2 y_{1} y_{4} x_{4}^{2} x_{6} x_{8} x_{10}^{2}}{x_{2}^{2} x_{3} x_{5} x_{7} x_{9}} \\
    &\quad + \frac{y_{1} y_{6} y_{8} x_{5}^{2} x_{8} x_{9}}{x_{2}^{2} x_{4} x_{6} x_{7} x_{10}} + \frac{y_{4} y_{5} y_{7} x_{5} x_{6} x_{9}^{2}}{x_{2}^{2} x_{3} x_{4} x_{8} x_{10}} + \frac{2 y_{1} y_{4} y_{5} x_{5}^{2} x_{8} x_{10}^{2}}{x_{2}^{2} x_{3} x_{4} x_{7} x_{9}} + \frac{2 y_{1} y_{4} y_{8} x_{4}^{2} x_{6} x_{9}^{2}}{x_{2}^{2} x_{3} x_{5} x_{7} x_{10}} + \frac{2 y_{1} y_{2} y_{4} y_{8} x_{2}^{2} x_{4} x_{6}}{x_{1} x_{5}^{2} x_{8} x_{10}} + \frac{2 y_{1} y_{3} y_{4} y_{5} x_{2}^{2} x_{8} x_{10}}{x_{1} x_{4} x_{6} x_{9}^{2}}\\
    &\quad + \frac{y_{1} y_{2} y_{4} y_{5} x_{2} x_{8} x_{10}^{2}}{x_{1} x_{4} x_{7} x_{9}^{2}} + \frac{y_{1} y_{3} y_{4} y_{8} x_{2} x_{4}^{2} x_{6}}{x_{1} x_{3} x_{5}^{2} x_{10}} + \frac{2 y_{1} y_{4} y_{5} y_{8} x_{5}^{2} x_{9}^{2}}{x_{2}^{2} x_{3} x_{4} x_{7} x_{10}} + \frac{y_{2} y_{4} y_{7} x_{2}^{3} x_{3} x_{7}}{x_{1} x_{5}^{2} x_{8} x_{9} x_{10}} + \frac{y_{3} y_{4} y_{7} x_{2}^{3} x_{3} x_{7}}{x_{1} x_{5}^{2} x_{6} x_{9} x_{10}} + \frac{y_{1} y_{2} y_{6} x_{2}^{3} x_{3} x_{7}}{x_{1} x_{4} x_{5} x_{8} x_{9}^{2}} \\
    &\quad + \frac{y_{1} y_{3} y_{6} x_{2}^{3} x_{3} x_{7}}{x_{1} x_{4} x_{5} x_{6} x_{9}^{2}} + \frac{2 y_{1} y_{4} y_{5} x_{5}^{3} x_{8} x_{10}}{x_{2} x_{3} x_{4} x_{6} x_{9}^{2}} + \frac{2 y_{1} y_{4} y_{8} x_{4} x_{6} x_{9}^{3}}{x_{2} x_{5}^{2} x_{7} x_{8} x_{10}} + \frac{y_{1} y_{2} y_{6} y_{8} x_{2} x_{3} x_{9}}{x_{1} x_{4} x_{5} x_{7} x_{10}} + \frac{y_{3} y_{4} y_{5} y_{7} x_{2} x_{5} x_{7}}{x_{1} x_{3} x_{4} x_{9} x_{10}} + \frac{y_{3} y_{4} y_{7} x_{2} x_{4}^{2} x_{6} x_{7}}{x_{1} x_{3} x_{5}^{2} x_{9} x_{10}} \\
    &\quad + \frac{2 y_{1} y_{3} y_{6} x_{2}^{2} x_{3} x_{8} x_{10}}{x_{1} x_{4} x_{5} x_{6} x_{9}^{2}} + \frac{2 y_{2} y_{4} y_{7} x_{2}^{2} x_{4} x_{6} x_{7}}{x_{1} x_{5}^{2} x_{8} x_{9} x_{10}} + \frac{y_{1} y_{2} y_{6} x_{2} x_{3} x_{8} x_{10}^{2}}{x_{1} x_{4} x_{5} x_{7} x_{9}^{2}} + \frac{y_{1} y_{2} y_{4} y_{8} x_{2} x_{4}^{2} x_{6}^{2}}{x_{1} x_{3} x_{5}^{2} x_{8} x_{10}} + \frac{y_{1} y_{3} y_{4} y_{5} x_{2} x_{8}^{2} x_{10}^{2}}{x_{1} x_{4} x_{6} x_{7} x_{9}^{2}} \\
    &\quad + \frac{y_{1} y_{4} y_{8} x_{4}^{2} x_{6}^{2} x_{9}^{3}}{x_{2}^{2} x_{3} x_{5}^{2} x_{7} x_{8} x_{10}} + \frac{y_{1} y_{4} y_{5} x_{5}^{3} x_{8}^{2} x_{10}^{2}}{x_{2}^{2} x_{3} x_{4} x_{6} x_{7} x_{9}^{2}} + \frac{y_{1} y_{3} y_{4} y_{5} y_{8} x_{2} x_{8} x_{9}}{x_{1} x_{4} x_{6} x_{7} x_{10}} + \frac{y_{1} y_{2} y_{4} y_{5} y_{8} x_{2} x_{5} x_{6}}{x_{1} x_{3} x_{4} x_{8} x_{10}} + \frac{y_{2} y_{4} y_{7} x_{2} x_{4}^{2} x_{6}^{2} x_{7}}{x_{1} x_{3} x_{5}^{2} x_{8} x_{9} x_{10}} \\
    &\quad + \frac{y_{1} y_{3} y_{6} x_{2} x_{3} x_{8}^{2} x_{10}^{2}}{x_{1} x_{4} x_{5} x_{6} x_{7} x_{9}^{2}} + \frac{y_{1} y_{4} y_{5} y_{8} x_{5} x_{6} x_{9}^{3}}{x_{2}^{2} x_{3} x_{4} x_{7} x_{8} x_{10}} + \frac{y_{1} y_{4} y_{5} y_{8} x_{5}^{3} x_{8} x_{9}}{x_{2}^{2} x_{3} x_{4} x_{6} x_{7} x_{10}} + \frac{y_{1} y_{3} y_{6} y_{8} x_{2} x_{3} x_{8} x_{9}}{x_{1} x_{4} x_{5} x_{6} x_{7} x_{10}} + \frac{y_{2} y_{4} y_{5} y_{7} x_{2} x_{5} x_{6} x_{7}}{x_{1} x_{3} x_{4} x_{8} x_{9} x_{10}};
\end{align*}
\allowdisplaybreaks
\begin{align*}
x_3^{(1)} &= \frac{y_2 y_5}{x_1} + \frac{y_3 y_8}{x_1} + \frac{6 x_4 x_{10}}{x_2^2} + \frac{2 x_3 x_7}{x_6 x_8} + \frac{3 x_3 x_{10}}{x_2 x_6} + \frac{3 x_4 x_7}{x_2 x_8} + \frac{3 x_4^2 x_9}{x_2^2 x_5} + \frac{3 x_5 x_{10}^2}{x_2^2 x_9} + \frac{x_3^2 x_9^3}{x_5^3 x_8^2} + \frac{x_5^3 x_7^2}{x_6^2 x_9^3} + \frac{x_6 x_{10}^3}{x_2^3 x_7} + \frac{x_4^3 x_8}{x_2^3 x_3} + \frac{y_2 y_8 x_6}{x_1 x_8}  \\
&\quad + \frac{y_3 y_5 x_8}{x_1 x_6} + \frac{3 y_5 x_5^2 x_9}{x_2^3 x_3} + \frac{3 y_8 x_5 x_9^2}{x_2^3 x_7} + \frac{3 x_3 x_9 x_{10}}{x_2 x_5 x_8} + \frac{3 x_4 x_5 x_7}{x_2 x_6 x_9} + \frac{3 x_3 x_4 x_9^2}{x_2 x_5^2 x_8} + \frac{3 x_5^2 x_7 x_{10}}{x_2 x_6 x_9^2} + \frac{6 x_4^2 x_6 x_9^2}{x_2^2 x_5^2 x_8} + \frac{6 x_5^2 x_8 x_{10}^2}{x_2^2 x_6 x_9^2} + \frac{3 x_4^2 x_6^2 x_9^3}{x_2^2 x_5^3 x_8^2} \\
&\quad + \frac{3 x_5^3 x_8^2 x_{10}^2}{x_2^2 x_6^2 x_9^3} + \frac{3 x_5 x_8 x_{10}^3}{x_2^3 x_7 x_9} + \frac{3 x_4^3 x_6 x_9}{x_2^3 x_3 x_5} + \frac{y_5 x_5^3 x_8}{x_2^3 x_3 x_6} + \frac{y_5 x_6^2 x_9^3}{x_2^3 x_3 x_8^2} + \frac{y_8 x_5^3 x_8^2}{x_2^3 x_6^2 x_7} + \frac{y_8 x_6 x_9^3}{x_2^3 x_7 x_8} + \frac{9 y_3 x_4^2 x_6 x_{10}}{x_1 x_5^3} + \frac{9 y_2 x_4 x_8 x_{10}^2}{x_1 x_9^3} \\
&\quad + \frac{3 y_3 y_5 x_6 x_{10}}{x_1 x_2 x_3} + \frac{2 y_2 y_8 x_3 x_8}{x_1 x_6 x_7} + \frac{3 y_2 y_8 x_3 x_9}{x_1 x_5 x_7} + \frac{3 y_3 y_5 x_5 x_7}{x_1 x_3 x_9} + \frac{2 y_3 y_5 x_6 x_7}{x_1 x_3 x_8} + \frac{3 y_2 y_8 x_4 x_8}{x_1 x_2 x_7} + \frac{3 x_4 x_5 x_8 x_{10}}{x_2^2 x_6 x_9} + \frac{3 x_4 x_6 x_9 x_{10}}{x_2^2 x_5 x_8} \\
&\quad + \frac{6 y_3 x_2 x_4^2 x_7}{x_1 x_5^2 x_9} + \frac{6 y_2 x_2 x_3 x_{10}^2}{x_1 x_5 x_9^2} + \frac{6 y_2 x_4^2 x_6 x_{10}}{x_1 x_5^2 x_9} + \frac{6 y_3 x_4^2 x_8 x_{10}}{x_1 x_5^2 x_9} + \frac{6 y_2 x_4 x_6 x_{10}^2}{x_1 x_5 x_9^2} + \frac{6 y_3 x_4 x_8 x_{10}^2}{x_1 x_5 x_9^2} + \frac{2 y_3 y_8 x_3 x_8^2}{x_1 x_6^2 x_7} + \frac{2 y_2 y_5 x_6^2 x_7}{x_1 x_3 x_8^2} \\
&\quad + \frac{y_2^2 y_5 y_8 x_6^2}{x_1^2 x_3 x_7} + \frac{y_3^2 y_5 y_8 x_8^2}{x_1^2 x_3 x_7} + \frac{3 x_4^3 x_6^2 x_9^2}{x_2^3 x_3 x_5^2 x_8} + \frac{x_4^3 x_6^3 x_9^3}{x_2^3 x_3 x_5^3 x_8^2} + \frac{3 x_3 x_4 x_6 x_9^3}{x_2 x_5^3 x_8^2} + \frac{3 x_5^3 x_7 x_8 x_{10}}{x_2 x_6^2 x_9^3} + \frac{x_5^3 x_8^3 x_{10}^3}{x_2^3 x_6^2 x_7 x_9^3} + \frac{3 x_5^2 x_8^2 x_{10}^3}{x_2^3 x_6 x_7 x_9^2} \\
&\quad + \frac{3 y_5 x_5 x_6 x_9^2}{x_2^3 x_3 x_8} + \frac{2 y_3 x_2^3 x_3 x_7^2}{x_1 x_6^2 x_9^3} + \frac{3 y_8 x_5^2 x_8 x_9}{x_2^3 x_6 x_7} + \frac{2 y_2 x_2^3 x_3^2 x_7}{x_1 x_5^3 x_8^2} + \frac{3 y_2 x_2^2 x_3^2 x_{10}}{x_1 x_5^3 x_8} + \frac{3 y_3 x_2^2 x_3^2 x_{10}}{x_1 x_5^3 x_6} + \frac{3 y_2 x_2^2 x_4 x_7^2}{x_1 x_8 x_9^3} + \frac{3 y_3 x_2^2 x_4 x_7^2}{x_1 x_6 x_9^3} \\
&\quad + \frac{9 y_3 x_2 x_3 x_4 x_{10}}{x_1 x_5^3} + \frac{9 y_2 x_2 x_4 x_7 x_{10}}{x_1 x_9^3} + \frac{9 y_2 x_4^2 x_6^2 x_{10}}{x_1 x_5^3 x_8} + \frac{9 y_3 x_4 x_8^2 x_{10}^2}{x_1 x_6 x_9^3} + \frac{y_2^2 y_8 x_2^3 x_3^2}{x_1^2 x_5^3 x_7} + \frac{y_3^2 y_5 x_2^3 x_7^2}{x_1^2 x_3 x_9^3} + \frac{3 y_2^2 x_2^4 x_3^2 x_{10}^2}{x_1^2 x_5^3 x_9^3} + \frac{3 y_3^2 x_2^4 x_4^2 x_7^2}{x_1^2 x_5^3 x_9^3} \\
&\quad + \frac{3 y_2 x_2 x_3 x_4 x_{10}}{x_1 x_5^2 x_9} + \frac{3 y_3 x_2 x_4 x_7 x_{10}}{x_1 x_5 x_9^2} + \frac{3 y_2 y_5 x_6^2 x_{10}}{x_1 x_2 x_3 x_8} + \frac{3 y_3 y_8 x_4 x_8^2}{x_1 x_2 x_6 x_7} + \frac{3 y_3 x_4^3 x_6^2 x_{10}}{x_1 x_2 x_3 x_5^3} + \frac{2 y_2 x_2^3 x_3 x_7^2}{x_1 x_6 x_8 x_9^3} + \frac{2 y_3 x_2^3 x_3^2 x_7}{x_1 x_5^3 x_6 x_8} \\
&\quad + \frac{6 y_2 x_2 x_4^2 x_6^2 x_7}{x_1 x_5^3 x_8^2} + \frac{6 y_3 x_2 x_4^2 x_6 x_7}{x_1 x_5^3 x_8} + \frac{2 y_2 x_3 x_8^2 x_{10}^3}{x_1 x_6 x_7 x_9^3} + \frac{2 y_3 x_3 x_8^3 x_{10}^3}{x_1 x_6^2 x_7 x_9^3} + \frac{3 y_2 x_3 x_8 x_{10}^3}{x_1 x_5 x_7 x_9^2} + \frac{3 y_3 x_4^3 x_6 x_7}{x_1 x_3 x_5^2 x_9} + \frac{2 y_2 x_4^3 x_6^3 x_7}{x_1 x_3 x_5^3 x_8^2} + \frac{2 y_3 x_4^3 x_6^2 x_7}{x_1 x_3 x_5^3 x_8} \\
&\quad + \frac{6 y_3 x_2^2 x_3 x_4 x_7}{x_1 x_5^3 x_8} + \frac{6 y_2 x_2^2 x_3 x_7 x_{10}}{x_1 x_6 x_9^3} + \frac{6 y_2 x_2 x_3 x_8 x_{10}^2}{x_1 x_6 x_9^3} + \frac{6 y_3 x_2 x_3 x_8^2 x_{10}^2}{x_1 x_6^2 x_9^3} + \frac{3 y_2 x_4 x_8^2 x_{10}^3}{x_1 x_2 x_7 x_9^3} + \frac{9 y_2^2 x_2^2 x_4^2 x_6^2 x_{10}^2}{x_1^2 x_5^3 x_9^3} + \frac{9 y_3^2 x_2^2 x_4^2 x_8^2 x_{10}^2}{x_1^2 x_5^3 x_9^3} \\
&\quad + \frac{3 y_2^2 y_8 x_2 x_4^2 x_6^2}{x_1^2 x_5^3 x_7} + \frac{3 y_3^2 y_8 x_2 x_4^2 x_8^2}{x_1^2 x_5^3 x_7} + \frac{3 y_2^2 y_5 x_2 x_6^2 x_{10}^2}{x_1^2 x_3 x_9^3} + \frac{3 y_3^2 y_5 x_2 x_8^2 x_{10}^2}{x_1^2 x_3 x_9^3} + \frac{y_2^2 y_8 x_4^3 x_6^3}{x_1^2 x_3 x_5^3 x_7} + \frac{y_3^2 y_5 x_8^3 x_{10}^3}{x_1^2 x_3 x_7 x_9^3} + \frac{y_2^2 x_2^6 x_3^2 x_7^2}{x_1^2 x_5^3 x_8^2 x_9^3} + \frac{y_3^2 x_2^6 x_3^2 x_7^2}{x_1^2 x_5^3 x_6^2 x_9^3} \\
&\quad + \frac{3 y_3 y_8 x_4 x_8 x_9}{x_1 x_2 x_5 x_7} + \frac{3 y_2 y_5 x_5 x_6 x_{10}}{x_1 x_2 x_3 x_9} + \frac{3 y_3 y_5 x_5 x_8 x_{10}}{x_1 x_2 x_3 x_9} + \frac{3 y_3 y_8 x_3 x_8 x_9}{x_1 x_5 x_6 x_7} + \frac{3 y_2 y_5 x_5 x_6 x_7}{x_1 x_3 x_8 x_9} + \frac{3 y_2 y_8 x_4 x_6 x_9}{x_1 x_2 x_5 x_7} + \frac{6 y_2 x_2 x_4^2 x_6 x_7}{x_1 x_5^2 x_8 x_9} \\
&\quad + \frac{3 y_2 x_2^2 x_3 x_4 x_7}{x_1 x_5^2 x_8 x_9} + \frac{3 y_3 x_2^2 x_3 x_4 x_7}{x_1 x_5^2 x_6 x_9} + \frac{3 y_2 x_2^2 x_3 x_7 x_{10}}{x_1 x_5 x_8 x_9^2} + \frac{3 y_3 x_2^2 x_3 x_7 x_{10}}{x_1 x_5 x_6 x_9^2} + \frac{6 y_3 x_2 x_3 x_8 x_{10}^2}{x_1 x_5 x_6 x_9^2} + \frac{2 y_2 y_3 y_5 y_8 x_6 x_8}{x_1^2 x_3 x_7} + \frac{3 y_3 x_4 x_8^2 x_{10}^3}{x_1 x_2 x_5 x_7 x_9^2} \\
&\quad + \frac{3 y_2 x_4^3 x_6^3 x_{10}}{x_1 x_2 x_3 x_5^3 x_8} + \frac{3 y_2 x_4^3 x_6^2 x_{10}}{x_1 x_2 x_3 x_5^2 x_9} + \frac{3 y_3 x_3 x_8^2 x_{10}^3}{x_1 x_5 x_6 x_7 x_9^2} + \frac{3 y_2 x_4^3 x_6^2 x_7}{x_1 x_3 x_5^2 x_8 x_9} + \frac{6 y_2 x_2^2 x_3 x_4 x_6 x_7}{x_1 x_5^3 x_8^2} + \frac{6 y_3 x_2^2 x_3 x_7 x_8 x_{10}}{x_1 x_6^2 x_9^3} + \frac{9 y_2 x_2 x_3 x_4 x_6 x_{10}}{x_1 x_5^3 x_8} \\
&\quad + \frac{9 y_3 x_2 x_4 x_7 x_8 x_{10}}{x_1 x_6 x_9^3} + \frac{3 y_3 x_4 x_8^3 x_{10}^3}{x_1 x_2 x_6 x_7 x_9^3} + \frac{y_2^2 x_2^3 x_3^2 x_8 x_{10}^3}{x_1^2 x_5^3 x_7 x_9^3} + \frac{y_3^2 x_2^3 x_4^3 x_6 x_7^2}{x_1^2 x_3 x_5^3 x_9^3} + \frac{3 y_3^2 x_2 x_4^2 x_8^3 x_{10}^3}{x_1^2 x_5^3 x_7 x_9^3} + \frac{3 y_2^2 x_2 x_4^3 x_6^3 x_{10}^2}{x_1^2 x_3 x_5^3 x_9^3} \\
&\quad + \frac{9 y_2^2 x_2^3 x_3 x_4 x_6 x_{10}^2}{x_1^2 x_5^3 x_9^3} + \frac{9 y_3^2 x_2^3 x_4^2 x_7 x_8 x_{10}}{x_1^2 x_5^3 x_9^3} + \frac{y_3^2 y_8 x_2^3 x_3^2 x_8^2}{x_1^2 x_5^3 x_6^2 x_7} + \frac{y_2^2 y_5 x_2^3 x_6^2 x_7^2}{x_1^2 x_3 x_8^2 x_9^3} + \frac{3 y_3^2 y_5 x_2^2 x_7 x_8 x_{10}}{x_1^2 x_3 x_9^3} + \frac{y_3^2 y_8 x_4^3 x_6 x_8^2}{x_1^2 x_3 x_5^3 x_7} \\
&\quad + \frac{y_2^2 y_5 x_6^2 x_8 x_{10}^3}{x_1^2 x_3 x_7 x_9^3} + \frac{3 y_2^2 y_8 x_2^2 x_3 x_4 x_6}{x_1^2 x_5^3 x_7} + \frac{3 y_3^2 x_2^4 x_3^2 x_8^2 x_{10}^2}{x_1^2 x_5^3 x_6^2 x_9^3} + \frac{3 y_2^2 x_2^4 x_4^2 x_6^2 x_7^2}{x_1^2 x_5^3 x_8^2 x_9^3} + \frac{3 y_2^2 x_2^5 x_3^2 x_7 x_{10}}{x_1^2 x_5^3 x_8 x_9^3} + \frac{3 y_3^2 x_2^5 x_3 x_4 x_7^2}{x_1^2 x_5^3 x_6 x_9^3} \\
&\quad + \frac{3 y_3 x_2 x_3 x_4 x_8 x_{10}}{x_1 x_5^2 x_6 x_9} + \frac{3 y_2 x_2 x_4 x_6 x_7 x_{10}}{x_1 x_5 x_8 x_9^2} + \frac{3 y_3 x_4^3 x_6 x_8 x_{10}}{x_1 x_2 x_3 x_5^2 x_9} + \frac{3 y_2 x_4 x_6 x_8 x_{10}^3}{x_1 x_2 x_5 x_7 x_9^2} + \frac{y_3^2 x_2^3 x_8^3 x_3^2 x_{10}^3}{x_1^2 x_5^3 x_6^2 x_7 x_9^3} + \frac{y_2^2 x_2^3 x_6^3 x_4^3 x_7^2}{x_1^2 x_3 x_5^3 x_8^2 x_9^3} \\
&\quad + \frac{3 y_2^2 x_2 x_4^2 x_6^2 x_8 x_{10}^3}{x_1^2 x_5^3 x_7 x_9^3} + \frac{3 y_3^2 x_2 x_4^3 x_6 x_8^2 x_{10}^2}{x_1^2 x_3 x_5^3 x_9^3} + \frac{y_2^2 x_4^3 x_6^3 x_8 x_{10}^3}{x_1^2 x_3 x_5^3 x_7 x_9^3} + \frac{y_3^2 x_4^3 x_6 x_8^3 x_{10}^3}{x_1^2 x_3 x_5^3 x_7 x_9^3} + \frac{9 y_3^2 x_2^3 x_3 x_4 x_8^2 x_{10}^2}{x_1^2 x_5^3 x_6 x_9^3} + \frac{9 y_2^2 x_2^3 x_4^2 x_6^2 x_7 x_{10}}{x_1^2 x_5^3 x_8 x_9^3} \\
&\quad + \frac{3 y_3^2 x_2^2 x_3 x_4 x_8^3 x_{10}^3}{x_1^2 x_5^3 x_6 x_7 x_9^3} + \frac{18 y_2 y_3 x_2^3 x_3 x_4 x_8 x_{10}^2}{x_1^2 x_5^3 x_9^3} + \frac{18 y_2 y_3 x_2^3 x_4^2 x_6 x_7 x_{10}}{x_1^2 x_5^3 x_9^3} + \frac{18 y_2 y_3 x_2^2 x_4^2 x_6 x_8 x_{10}^2}{x_1^2 x_5^3 x_9^3} + \frac{3 y_2^2 y_5 x_2^2 x_6^2 x_7 x_{10}}{x_1^2 x_3 x_8 x_9^3} \\
&\quad + \frac{3 y_3^2 y_8 x_2^2 x_3 x_4 x_8^2}{x_1^2 x_5^3 x_6 x_7} + \frac{6 y_2 y_3 y_8 x_2^2 x_3 x_4 x_8}{x_1^2 x_5^3 x_7} + \frac{6 y_2 y_3 y_5 x_2^2 x_6 x_7 x_{10}}{x_1^2 x_3 x_9^3} + \frac{6 y_2 y_3 y_8 x_2 x_4^2 x_6 x_8}{x_1^2 x_5^3 x_7} + \frac{6 y_2 y_3 y_5 x_2 x_6 x_8 x_{10}^2}{x_1^2 x_3 x_9^3} \\
&\quad + \frac{2 y_2 y_3 y_8 x_4^3 x_6^2 x_8}{x_1^2 x_3 x_5^3 x_7} + \frac{2 y_2 y_3 y_5 x_6 x_8^2 x_{10}^3}{x_1^2 x_3 x_7 x_9^3} + \frac{2 y_2 y_3 y_8 x_2^3 x_3^2 x_8}{x_1^2 x_5^3 x_6 x_7} + \frac{2 y_2 y_3 y_5 x_2^3 x_6 x_7^2}{x_1^2 x_3 x_8 x_9^3} + \frac{18 y_2 y_3 x_2^4 x_3 x_4 x_7 x_{10}}{x_1^2 x_5^3 x_9^3} + \frac{6 y_2 y_3 x_2^4 x_3^2 x_8 x_{10}^2}{x_1^2 x_5^3 x_6 x_9^3} \\
&\quad + \frac{6 y_2 y_3 x_2^4 x_4^2 x_6 x_7^2}{x_1^2 x_5^3 x_8 x_9^3} + \frac{3 y_3^2 x_2^5 x_3^2 x_7 x_8 x_{10}}{x_1^2 x_5^3 x_6^2 x_9^3} + \frac{3 y_2^2 x_2^5 x_3 x_4 x_6 x_7^2}{x_1^2 x_5^3 x_8^2 x_9^3} + \frac{6 y_2 y_3 x_2^5 x_3^2 x_7 x_{10}}{x_1^2 x_5^3 x_6 x_9^3} + \frac{6 y_2 y_3 x_2^5 x_3 x_4 x_7^2}{x_1^2 x_5^3 x_8 x_9^3} + \frac{2 y_2 y_3 x_2^6 x_3^2 x_7^2}{x_1^2 x_5^3 x_6 x_8 x_9^3} \\
&\quad + \frac{3 y_2^2 x_2^2 x_4^3 x_6^3 x_7 x_{10}}{x_1^2 x_3 x_5^3 x_8 x_9^3} + \frac{3 y_3^2 x_2^2 x_4^3 x_6 x_7 x_8 x_{10}}{x_1^2 x_3 x_5^3 x_9^3} + \frac{3 y_2^2 x_2^2 x_3 x_4 x_6 x_8 x_{10}^3}{x_1^2 x_5^3 x_7 x_9^3} + \frac{6 y_2 y_3 x_2^2 x_3 x_4 x_8^2 x_{10}^3}{x_1^2 x_5^3 x_6 x_7 x_9^3} + \frac{6 y_2 y_3 x_2^2 x_4^3 x_6^2 x_7 x_{10}}{x_1^2 x_3 x_5^3 x_9^3} \\
&\quad + \frac{6 y_2 y_3 x_2 x_4^2 x_6 x_8^2 x_{10}^3}{x_1^2 x_5^3 x_7 x_9^3} + \frac{6 y_2 y_3 x_2 x_4^3 x_6^2 x_8 x_{10}^2}{x_1^2 x_3 x_5^3 x_9^3} + \frac{2 y_2 y_3 x_4^3 x_6^2 x_8^2 x_{10}^3}{x_1^2 x_3 x_5^3 x_7 x_9^3} + \frac{2 y_2 y_3 x_2^3 x_3^2 x_8^2 x_{10}^3}{x_1^2 x_5^3 x_6 x_7 x_9^3} + \frac{2 y_2 y_3 x_2^3 x_4^3 x_6^2 x_7^2}{x_1^2 x_3 x_5^3 x_8 x_9^3} \\
&\quad + \frac{9 y_2^2 x_2^4 x_3 x_4 x_6 x_7 x_{10}}{x_1^2 x_5^3 x_8 x_9^3} + \frac{9 y_3^2 x_2^4 x_3 x_4 x_7 x_8 x_{10}}{x_1^2 x_5^3 x_6 x_9^3}.\\
\end{align*}
}

For letting all variables $x_i = x$ and $y_j = y$, we can calculate the desired expansion formula for diagonal $\gamma' = (x_3^{(1)}, x_2^{(5)})$:
\[
\begin{aligned}
x_3^{(1)} = &\ 80 + 256y + 256y^2 + \frac{16y + 48y^2 + 64y^3}{x} + \frac{4y^4}{x^2};\\
x_2^{(5)} = &\ 16y^2 + \frac{24y^2 + 52y^3 + 16y^4}{x} + \frac{4y^2 + 16y^3 + 16y^4 + 4y^5}{x^2}.
\end{aligned}
\]
In general, for all vertices:
{\small\[
\begin{aligned}
x_1^{(1)} = &\ 2y;\\
x_2^{(5)} = &\ 16y^2 + \frac{24y^2 + 52y^3 + 16y^4}{x} + \frac{4y^2 + 16y^3 + 16y^4 + 4y^5}{x^2};\\
x_3^{(1)} = &\ 80 + 256y + 256y^2 + \frac{16y + 48y^2 + 64y^3}{x} + \frac{4y^4}{x^2};\\
x_4^{(2)} = &\ 96y^2 + 248y^3 + 96y^4 + 64xy^2 + \frac{56y^2 + 272y^3 + 432y^4 + 220y^5 + 32y^6}{x}+ \frac{8y^3 + 36y^4 + 56y^5 + 28y^6 + 4y^7}{x^2};\\
x_5^{(4)} = &\ 12y + 36y^2 + 16y^3 + 16xy + \frac{4y^2 + 10y^3 + 4y^4}{x};\\
x_6^{(1)} = &\ 8 + 24y + 32y^2 + \frac{4y^3}{x};\\
x_7^{(12)} = &\ 9216y^3 + 23552y^4 + 8192y^5 + 4096xy^3 + \frac{6912y^3 + 36288y^4 + 58784y^5 + 30208y^6 + 4608y^7}{x} \\
&+ \frac{1728y^3 + 14688y^4 + 44352y^5 + 58112y^6 + 32256y^7 + 7296y^8 + 512y^9}{x^2} \\
&+ \frac{640y^4 + 4576y^5 + 12128y^6 + 14528y^7 + 7792y^8 + 1768y^9 + 128y^{10}}{x^3} \\
&+ \frac{64y^5 + 416y^6 + 1008y^7 + 1112y^8 + 560y^9 + 120y^{10} + 8y^{11}}{x^4};\\
x_8^{(6)} = &\ 64y^4 + \frac{144y^4 + 312y^5 + 96y^6}{x} + \frac{80y^3 + 424y^4 + 904y^5 + 808y^6 + 284y^7 + 32y^8}{x^2} \\
&+ \frac{8y^4 + 48y^5 + 116y^6 + 104y^7 + 36y^8 + 4y^9}{x^3};\\
x_9^{(3)} = &\ 12y + 16y^2 + \frac{4y + 12y^2 + 6y^3}{x};\\
x_{10}^{(1)} = &\ 2y + 8y^2 + \frac{2y^2 + 2y^3}{x}.
\end{aligned}
\]
}
For the flip sequence $\mu_2$, we get the new expansion formulas of the vertices of the form:
\begin{gather*}
	(y_1,y_2,...,y_8,x_1,x_2,...,x_{10}) \rightarrow (y_1,y_2,...,y_8,x_1^{(1)},x_{2}^{(5)},x_3^{(1)},x_4^{(1)},x_{5}^{(3)},x_6^{(1)},x_{7}^{(12)},x_8^{(6)},x_9^{(4)},x_{10}^{(2)}).
\end{gather*}
We shall omit the results of the remaining vertices as the formulas are too long. We finally get the diagonal expansion formula after the flip:
\begin{gather*}
	BD = (x_3^{(1)}, x_2^{(5)})
\end{gather*}
where:
{\footnotesize
\begin{align*}
x_{2}^{(5)} &= \frac{y_{1} y_{6}}{x_{2}} + \frac{y_{4} y_{8}}{x_{2}} + \frac{y_{4} x_{5} x_{9}}{x_{2} x_{6}} + \frac{y_{3} y_{6} x_{10}}{x_{1} x_{5}} + \frac{y_{3} y_{6} x_{9}}{x_{1} x_{4}} + \frac{y_{4} x_{4}^{2} x_{10}}{x_{2}^{2} x_{3}} + \frac{2 y_{1} y_{4} x_{4} x_{9}}{x_{5} x_{10}} + \frac{3 y_{1} y_{6} y_{7} x_{6}}{x_{4} x_{8}} + \frac{y_{1} y_{4} y_{5} y_{8}}{x_{4} x_{9}} + \frac{y_{6} x_{5}^{2} x_{10}}{x_{2}^{2} x_{4} x_{6}} \\
&\quad + \frac{y_{4} y_{8}^{2} x_{3} x_{10}}{x_{5}^{2} x_{9}^{2}} + \frac{2 y_{1}^{2} y_{6} x_{6} x_{9}}{x_{5} x_{10}^{2}} + \frac{3 y_{1}^{2} y_{6} x_{6} x_{9}^{2}}{x_{4} x_{10}^{3}} + \frac{2 y_{4} y_{8} x_{4} x_{10}}{x_{2} x_{5} x_{9}} + \frac{2 y_{1} y_{6} x_{5} x_{9}}{x_{2} x_{4} x_{10}} + \frac{y_{2} y_{6} x_{6} x_{10}}{x_{1} x_{5} x_{7}} + \frac{y_{2} y_{6} x_{6} x_{9}}{x_{1} x_{4} x_{7}} + \frac{y_{1} y_{6} y_{8} x_{3}}{x_{4} x_{5} x_{9}} \\
&\quad + \frac{y_{1} y_{6} y_{8} x_{6}}{x_{2} x_{5} x_{9}} + \frac{2 y_{1} y_{6} y_{8} x_{6}}{x_{2} x_{4} x_{10}} + \frac{2 y_{4} x_{4} x_{7} x_{9}^{2}}{x_{5} x_{8} x_{10}} + \frac{2 y_{3} y_{4} x_{2} x_{4} x_{9}}{x_{1} x_{5}^{2}} + \frac{2 y_{4} y_{7} x_{4} x_{10}^{2}}{x_{5} x_{8} x_{9}} + \frac{2 y_{6} y_{7} x_{6} x_{7} x_{9}}{x_{4} x_{8}^{2}} + \frac{2 y_{6} y_{8} x_{5} x_{10}}{x_{2}^{2} x_{4} x_{9}} + \frac{y_{6} y_{8}^{2} x_{6} x_{10}}{x_{2}^{2} x_{4} x_{9}^{2}} \\
&\quad + \frac{3 y_{1} y_{4} y_{7} x_{2}^{2} x_{3}}{x_{5}^{2} x_{8}} + \frac{2 y_{1} y_{4} y_{8} x_{2} x_{3}}{x_{5}^{2} x_{10}} + \frac{4 y_{1} y_{4} y_{8} x_{4} x_{6}}{x_{5}^{2} x_{10}} + \frac{2 y_{1}^{2} y_{4} y_{5} x_{2} x_{9}}{x_{4} x_{10}^{2}} + \frac{3 y_{1}^{2} y_{4} x_{2}^{2} x_{3} x_{9}^{2}}{x_{5}^{2} x_{10}^{3}} + \frac{y_{6} y_{7}^{2} x_{6} x_{10}^{3}}{x_{4} x_{8}^{2} x_{9}^{2}} + \frac{y_{1}^{3} y_{6} x_{6} x_{8}}{x_{5} x_{7} x_{10}^{2}} + \frac{y_{6} x_{6} x_{7}^{2} x_{9}^{4}}{x_{4} x_{8}^{2} x_{10}^{3}} \\
&\quad + \frac{y_{1} y_{6} y_{7} x_{6} x_{10}}{x_{5} x_{8} x_{9}} + \frac{y_{2} y_{4} y_{5} x_{2} x_{10}}{x_{1} x_{4} x_{7}} + \frac{y_{3} y_{4} y_{5} x_{2} x_{10}}{x_{1} x_{4} x_{6}} + \frac{y_{3} y_{4} y_{5} x_{5} x_{9}}{x_{1} x_{3} x_{4}} + \frac{2 y_{6} x_{5} x_{7} x_{9}^{2}}{x_{2} x_{4} x_{8} x_{10}} + \frac{y_{2} y_{4} x_{2}^{2} x_{3} x_{9}}{x_{1} x_{5}^{2} x_{7}} + \frac{y_{3} y_{4} x_{2}^{2} x_{3} x_{9}}{x_{1} x_{5}^{2} x_{6}} \\
&\quad + \frac{y_{3} y_{4} x_{4}^{2} x_{6} x_{9}}{x_{1} x_{3} x_{5}^{2}} + \frac{2 y_{6} y_{7} x_{5} x_{10}^{2}}{x_{2} x_{4} x_{8} x_{9}} + \frac{2 y_{4} y_{7} x_{2}^{2} x_{3} x_{7} x_{9}}{x_{5}^{2} x_{8}^{2}} + \frac{y_{1} y_{6} x_{6} x_{7} x_{9}^{2}}{x_{5} x_{8} x_{10}^{2}} + \frac{2 y_{4} y_{8}^{2} x_{4} x_{6} x_{10}}{x_{2} x_{5}^{2} x_{9}^{2}} + \frac{2 y_{1}^{2} y_{6} x_{2} x_{3} x_{9}}{x_{4} x_{5} x_{10}^{2}} + \frac{6 y_{1} y_{4} y_{7} x_{2} x_{4} x_{6}}{x_{5}^{2} x_{8}} \\
&\quad + \frac{3 y_{1} y_{4} y_{7} x_{4}^{2} x_{6}^{2}}{x_{3} x_{5}^{2} x_{8}} + \frac{3 y_{1} y_{6} x_{6} x_{7} x_{9}^{3}}{x_{4} x_{8} x_{10}^{3}} + \frac{y_{4} y_{5} x_{5}^{3} x_{10}}{x_{2}^{2} x_{3} x_{4} x_{6}} + \frac{y_{4} y_{7}^{2} x_{2}^{2} x_{3} x_{10}^{3}}{x_{5}^{2} x_{8}^{2} x_{9}^{2}} + \frac{6 y_{1}^{2} y_{4} x_{2} x_{4} x_{6} x_{9}^{2}}{x_{5}^{2} x_{10}^{3}} + \frac{3 y_{1}^{2} y_{4} x_{4}^{2} x_{6}^{2} x_{9}^{2}}{x_{3} x_{5}^{2} x_{10}^{3}} + \frac{y_{1}^{3} y_{6} x_{6} x_{8} x_{9}}{x_{4} x_{7} x_{10}^{3}} \\
&\quad + \frac{y_{1}^{3} y_{4} y_{5} x_{2} x_{8}}{x_{4} x_{7} x_{10}^{2}} + \frac{y_{4} x_{2}^{2} x_{3} x_{7}^{2} x_{9}^{4}}{x_{5}^{2} x_{8}^{2} x_{10}^{3}} + \frac{2 y_{6} y_{8} x_{6} x_{7} x_{9}}{x_{2} x_{4} x_{8} x_{10}} + \frac{y_{2} y_{6} x_{2} x_{3} x_{10}}{x_{1} x_{4} x_{5} x_{7}} + \frac{y_{3} y_{6} x_{2} x_{3} x_{10}}{x_{1} x_{4} x_{5} x_{6}} + \frac{y_{1} y_{4} y_{5} y_{7} x_{2} x_{10}}{x_{4} x_{8} x_{9}} + \frac{3 y_{1} y_{4} y_{5} y_{7} x_{5} x_{6}}{x_{3} x_{4} x_{8}} \\
&\quad + \frac{2 y_{1} y_{4} x_{4}^{2} x_{6} x_{9}}{x_{2} x_{3} x_{5} x_{10}} + \frac{2 y_{2} y_{4} x_{2} x_{4} x_{6} x_{9}}{x_{1} x_{5}^{2} x_{7}} + \frac{y_{2} y_{4} x_{4}^{2} x_{6}^{2} x_{9}}{x_{1} x_{3} x_{5}^{2} x_{7}} + \frac{2 y_{4} y_{8} x_{4}^{2} x_{6} x_{10}}{x_{2}^{2} x_{3} x_{5} x_{9}} + \frac{4 y_{4} y_{7} x_{2} x_{4} x_{6} x_{7} x_{9}}{x_{5}^{2} x_{8}^{2}} + \frac{2 y_{4} y_{7} x_{4}^{2} x_{6}^{2} x_{7} x_{9}}{x_{3} x_{5}^{2} x_{8}^{2}} \\
&\quad + \frac{2 y_{4} y_{8} x_{2} x_{3} x_{7} x_{9}}{x_{5}^{2} x_{8} x_{10}} + \frac{4 y_{4} y_{8} x_{4} x_{6} x_{7} x_{9}}{x_{5}^{2} x_{8} x_{10}} + \frac{y_{4} y_{8}^{2} x_{4}^{2} x_{6}^{2} x_{10}}{x_{2}^{2} x_{3} x_{5}^{2} x_{9}^{2}} + \frac{y_{1} y_{4} y_{5} x_{2} x_{7} x_{9}^{2}}{x_{4} x_{8} x_{10}^{2}} + \frac{2 y_{6} y_{7} y_{8} x_{6} x_{10}^{2}}{x_{2} x_{4} x_{8} x_{9}^{2}} + \frac{2 y_{4} y_{5} y_{8} x_{5}^{2} x_{10}}{x_{2}^{2} x_{3} x_{4} x_{9}} \\
&\quad + \frac{2 y_{4} y_{7} y_{8} x_{2} x_{3} x_{10}^{2}}{x_{5}^{2} x_{8} x_{9}^{2}} + \frac{4 y_{4} y_{7} y_{8} x_{4} x_{6} x_{10}^{2}}{x_{5}^{2} x_{8} x_{9}^{2}} + \frac{2 y_{1} y_{4} y_{8} x_{4}^{2} x_{6}^{2}}{x_{2} x_{3} x_{5}^{2} x_{10}} + \frac{2 y_{1} y_{4} y_{5} x_{5}^{2} x_{9}}{x_{2} x_{3} x_{4} x_{10}} + \frac{3 y_{1} y_{4} x_{2}^{2} x_{3} x_{7} x_{9}^{3}}{x_{5}^{2} x_{8} x_{10}^{3}} + \frac{3 y_{1} y_{4} x_{4}^{2} x_{6}^{2} x_{7} x_{9}^{3}}{x_{3} x_{5}^{2} x_{8} x_{10}^{3}} \\
&\quad + \frac{2 y_{4} y_{7}^{2} x_{2} x_{4} x_{6} x_{10}^{3}}{x_{5}^{2} x_{8}^{2} x_{9}^{2}} + \frac{y_{4} y_{7}^{2} x_{4}^{2} x_{6}^{2} x_{10}^{3}}{x_{3} x_{5}^{2} x_{8}^{2} x_{9}^{2}} + \frac{y_{1}^{3} y_{4} x_{2}^{2} x_{3} x_{8} x_{9}}{x_{5}^{2} x_{7} x_{10}^{3}} + \frac{y_{1}^{3} y_{6} x_{2} x_{3} x_{8}}{x_{4} x_{5} x_{7} x_{10}^{2}} + \frac{3 y_{1}^{2} y_{4} y_{5} x_{5} x_{6} x_{9}^{2}}{x_{3} x_{4} x_{10}^{3}} + \frac{2 y_{4} x_{2} x_{4} x_{6} x_{7}^{2} x_{9}^{4}}{x_{5}^{2} x_{8}^{2} x_{10}^{3}} \\
&\quad + \frac{y_{4} x_{4}^{2} x_{6}^{2} x_{7}^{2} x_{9}^{4}}{x_{3} x_{5}^{2} x_{8}^{2} x_{10}^{3}} + \frac{y_{1} y_{6} y_{7} x_{2} x_{3} x_{10}}{x_{4} x_{5} x_{8} x_{9}} + \frac{y_{2} y_{4} y_{5} x_{5} x_{6} x_{9}}{x_{1} x_{3} x_{4} x_{7}} + \frac{2 y_{1} y_{4} y_{5} y_{8} x_{5} x_{6}}{x_{2} x_{3} x_{4} x_{10}} + \frac{2 y_{4} x_{4}^{2} x_{6} x_{7} x_{9}^{2}}{x_{2} x_{3} x_{5} x_{8} x_{10}} + \frac{2 y_{4} y_{7} x_{4}^{2} x_{6} x_{10}^{2}}{x_{2} x_{3} x_{5} x_{8} x_{9}} + \frac{2 y_{4} y_{5} x_{5}^{2} x_{7} x_{9}^{2}}{x_{2} x_{3} x_{4} x_{8} x_{10}} \\
&\quad + \frac{y_{1} y_{6} x_{2} x_{3} x_{7} x_{9}^{2}}{x_{4} x_{5} x_{8} x_{10}^{2}} + \frac{2 y_{4} y_{5} y_{7} x_{5} x_{6} x_{7} x_{9}}{x_{3} x_{4} x_{8}^{2}} + \frac{2 y_{4} y_{5} y_{7} x_{5}^{2} x_{10}^{2}}{x_{2} x_{3} x_{4} x_{8} x_{9}} + \frac{y_{4} y_{5} y_{8}^{2} x_{5} x_{6} x_{10}}{x_{2}^{2} x_{3} x_{4} x_{9}^{2}} + \frac{6 y_{1} y_{4} x_{2} x_{4} x_{6} x_{7} x_{9}^{3}}{x_{5}^{2} x_{8} x_{10}^{3}} + \frac{2 y_{1}^{3} y_{4} x_{2} x_{4} x_{6} x_{8} x_{9}}{x_{5}^{2} x_{7} x_{10}^{3}} \\
&\quad + \frac{y_{1}^{3} y_{4} x_{4}^{2} x_{6}^{2} x_{8} x_{9}}{x_{3} x_{5}^{2} x_{7} x_{10}^{3}} + \frac{y_{4} y_{5} y_{7}^{2} x_{5} x_{6} x_{10}^{3}}{x_{3} x_{4} x_{8}^{2} x_{9}^{2}} + \frac{y_{4} y_{5} x_{5} x_{6} x_{7}^{2} x_{9}^{4}}{x_{3} x_{4} x_{8}^{2} x_{10}^{3}} + \frac{2 y_{4} y_{8} x_{4}^{2} x_{6}^{2} x_{7} x_{9}}{x_{2} x_{3} x_{5}^{2} x_{8} x_{10}} + \frac{2 y_{4} y_{7} y_{8} x_{4}^{2} x_{6}^{2} x_{10}^{2}}{x_{2} x_{3} x_{5}^{2} x_{8} x_{9}^{2}} + \frac{3 y_{1} y_{4} y_{5} x_{5} x_{6} x_{7} x_{9}^{3}}{x_{3} x_{4} x_{8} x_{10}^{3}} \\
&\quad + \frac{y_{1}^{3} y_{4} y_{5} x_{5} x_{6} x_{8} x_{9}}{x_{3} x_{4} x_{7} x_{10}^{3}} + \frac{2 y_{4} y_{5} y_{8} x_{5} x_{6} x_{7} x_{9}}{x_{2} x_{3} x_{4} x_{8} x_{10}} + \frac{2 y_{4} y_{5} y_{7} y_{8} x_{5} x_{6} x_{10}^{2}}{x_{2} x_{3} x_{4} x_{8} x_{9}^{2}};\\
\end{align*}
\allowdisplaybreaks

}
For letting all variables $x_i = x$ and $y_j = y$, we can calculate the desired expansion formula for diagonal $\gamma' = (x_3^{(1)}, x_2^{(5)})$:
{\small\[
\begin{aligned}
x_3^{(1)} = &\ 4 + 11x + 12x^2 + 8x^3 + 98y + 440y^2 + 390y^3 + 132xy + 249xy^2 + 73x^2y \\
&+ \frac{1 + 32y + 300y^2 + 572y^3 + 285y^4}{x} + \frac{3y + 48y^2 + 225y^3 + 264y^4 + 102y^5}{x^2} \\
&+ \frac{3y^2 + 20y^3 + 51y^4 + 49y^5 + 17y^6}{x^3} + \frac{y^3 + y^4 + 3y^5 + 3y^6 + y^7}{x^4};\\
x_2^{(5)} = &\ 4y + 33y^2 + 34y^3 + 5xy + 21xy^2 + 4x^2y + \frac{y + 19y^2 + 39y^3 + 14y^4}{x} + \frac{2y^2 + 12y^3 + 8y^4 + 2y^5}{x^2} + \frac{y^3 + y^4}{x^3}.
\end{aligned}
\]
}
In general, for all vertices:
{\small\[
\begin{aligned}
x_1^{(1)} = &\ 2y;\\
x_2^{(5)} = &\ 4y + 33y^2 + 34y^3 + 5xy + 21xy^2 + 4x^2y + \frac{y + 19y^2 + 39y^3 + 14y^4}{x}  \frac{2y^2 + 12y^3 + 8y^4 + 2y^5}{x^2} + \frac{y^3 + y^4}{x^3};\\
x_3^{(1)} = &\ 4 + 11x + 12x^2 + 8x^3 + 98y + 440y^2 + 390y^3 + 132xy + 249xy^2 + 73x^2y \\
&+ \frac{1 + 32y + 300y^2 + 572y^3 + 285y^4}{x} + \frac{3y + 48y^2 + 225y^3 + 264y^4 + 102y^5}{x^2} \\
&+ \frac{3y^2 + 20y^3 + 51y^4 + 49y^5 + 17y^6}{x^3} + \frac{y^3 + y^4 + 3y^5 + 3y^6 + y^7}{x^4};\\
x_4^{(1)} = &\ y + \frac{2y^2 + y^3}{x};\\
x_5^{(3)} = &\ 4y^2 + 23y^3 + 20y^4 + 5y^5 + 4xy^2 + 4xy^3 + xy^4 + \frac{2y^2 + 26y^3 + 51y^4 + 32y^5 + 7y^6}{x} \\
&+ \frac{5y^3 + 20y^4 + 21y^5 + 8y^6 + y^7}{x^2} + \frac{4y^4 + 4y^5 + y^6}{x^3};\\
x_6^{(1)} = &\ 1 + 3x + 3x^2 + x^3 + 23y + 67y^2 + 45y^3 + 25xy + 30xy^2 + 9x^2y + \frac{3y + 40y^2 + 69y^3 + 30y^4}{x} \\
&+ \frac{3y^2 + 21y^3 + 25y^4 + 9y^5}{x^2} + \frac{y^3 + 3y^4 + 3y^5 + y^6}{x^3};\\
x_7^{(12)} = &\ 12y^3 + 90y^4 + 131y^5 + 99y^6 + 9y^7 + 8xy^3 + 12xy^4 + 14xy^5 + xy^6 \\
&+ \frac{11y^3 + 466y^5 + 557y^6 + 275y^7 + 30y^8}{x} + \frac{4y^3 + 104y^4 + 594y^5 + 1201y^6 + 1038y^7 + 373y^8 + 45y^9}{x^2} \\
&+ \frac{34y^4 + 355y^5 + 1085y^6 + 1420y^7 + 872y^8 + 258y^9 + 30y^{10}}{x^3} \\
&+ \frac{3y^4 + 54y^5 + 305y^6 + 629y^7 + 600y^8 + 80y^{10} + 9y^{11}}{x^4} \\
&+ \frac{3y^5 + 26y^6 + 110y^8 + 81y^9 + 36y^{10} + 9y^{11} + y^{12}}{x^5}+ \frac{y^6 + 3y^7 + 3y^8 + y^9}{x^6};
\end{aligned}
\]
}
{\small\[
\begin{aligned}
x_8^{(6)} = &\ 8y^4 + 12y^5 + 6y^6 + y^7 + \frac{12y^4 + 52y^5 + 64y^6 + 30y^7 + 5y^8}{x} + \frac{y^3 + 7y^4 + 70y^5 + 143y^6 + 121y^7 + 47y^8 + 7y^9}{x^2}\\
& + \frac{6y^4 + 27y^5 + 72y^6 + 86y^7 + 48y^8 + 12y^9 + y^{10}}{x^3}+ \frac{12y^5 + 36y^6 + 39y^7 + 18y^8 + 3y^9}{x^4} + \frac{8y^6 + 12y^7 + 6y^8 + y^9}{x^5};\\
x_9^{(4)} = &\ 1 + 2x + x^2 + 13y + 13y^2 + 7xy + \frac{2y + 6y^2 + 3y^3}{x} + \frac{y^2}{x^2};\\
x_{10}^{(2)} = &\ 3y + 3y^2 + xy + \frac{y^2 + y^3}{x}.
\end{aligned}
\]
}
Clearly, two given flips are different, so in case we do a general triangulation on a surface, we need to clarify which type of flips from each quadrilateral.

\appendix\section{Recursion formula via good lattice}\label{good_lattice}
The following section demonstrates further observation of the recurrence relation discussed in Section \ref{subsec:solverecur}, partially solving~Proposition \ref{prop:g_func}.

Note that from Propositions~\ref{prop:stairs_combinatorial} and \ref{prop:reversed_stairs}, we can rewrite the following values in case either $a=1$ or $b=1$:
\begin{align*}
	f(1,b,i,j) &= \sum_{\substack{(l_1+1),l_i,u_i,(u_k+1),k \in \mathbb{N}: \\ \sum_{i=1}^{k}(l_i+u_i) = b}} \left(x_{i-\sum_{s=1}^{k}l_s,j+\sum_{t=1}^{k}u_t} \prod_{v=1}^{k} \frac{x_{i+1-\sum_{s=1}^{v-1}l_s,j+\sum_{t=1}^{v-1}u_t}}{x_{i+1-\sum_{s=1}^{v}l_s,j+\sum_{t=1}^{v-1}u_t}} \frac{x_{i-\sum_{s=1}^{v}l_s,j-1+\sum_{t=1}^{v-1}u_t}}{x_{i-\sum_{s=1}^{v}l_s,j-1+\sum_{t=1}^{v}u_t}}\right);\\
	f(a,1,i,j) &= \sum_{\substack{(r_1+1),r_i,d_i,(d_k+1),k \in \mathbb{N}: \\ \sum_{i=1}^{k}(r_i+d_i) = a}} \left(x_{i+\sum_{s=1}^{k}r_s,j-\sum_{t=1}^{k}d_t} \prod_{v=1}^{k} \frac{x_{i-1+\sum_{s=1}^{v-1}r_s,j-\sum_{t=1}^{v-1}d_t}}{x_{i-1+\sum_{s=1}^{v}r_s,j-\sum_{t=1}^{v-1}d_t}} \frac{x_{i+\sum_{s=1}^{v}r_s,j+1-\sum_{t=1}^{v-1}d_t}}{x_{i+\sum_{s=1}^{v}r_s,j+1-\sum_{t=1}^{v}d_t}}\right)
\end{align*}
for all $a,b \geq 1$. 

We shall introduce the notion of \emph{good lattice} and all its related features.
\begin{Def}
For any $(a,b) \in \mathbb{N}^2$, denote $$\mathcal{L}(a,b) := \{(p,q) \in \mathbb{Z}^2| -a \leq \min \{-p,q,-p+q \} \leq \max \{-p,q,-p+q \} \leq b \} \setminus \{(a,b),(-b,-a) \}\subset\Z^2$$ called the \emph{good lattice of type $(a,b)$}. Additionally for $a,b\geq 3$ we also define the degenerate cases:
$$\mathcal{L}(0,0)= \mathcal{L}(0,0) = \mathcal{L}(1,0) = \mathcal{L}(2,0) = \mathcal{L}(0,1) = \mathcal{L}(0,2) :=\{(0,0) \},$$
$$\mathcal{L}(0,b):= \{(-p,q) \in \mathbb{Z}^2|p,q \in \mathbb{N}, p+q \leq b-1 \} \cup \{(0,0) \}$$ and $$\mathcal{L}(a,0):= \{(p,-q) \in \mathbb{Z}^2|p,q \in \mathbb{N}, p+q \leq a-1  \} \cup \{(0,0) \}.$$  Furthermore, for any $(a,b) \in \mathbb{N}^2$, we shall call the convex hull of a good lattice of type $(a,b)$ its \emph{boundary} and denote it by $\partial \mathcal{L}(a,b)$. See Figure~\ref{fig:lattice_example_35} for examples.
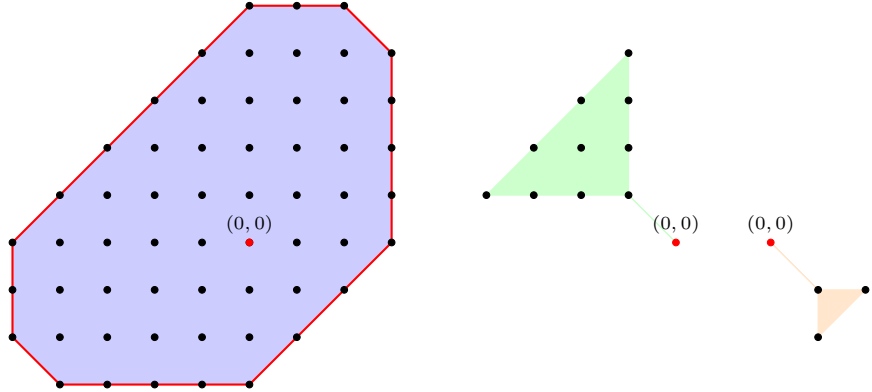
\begin{figure}[H]
\centering
\begin{tikzpicture}[scale=0.63,
    mid arrow/.style={
        postaction={decorate},
        decoration={
            markings,
            mark=at position 0.5 with {\arrow{>}}
        }
    }
]
	\filldraw[blue!20] (0,-3)--(3,0)--(3,4)--(2,5)--(0,5)--(-5,0)--(-5,-2)--(-4,-3)--cycle;
	\draw[thick,red] (0,-3)--(3,0)--(3,4)--(2,5)--(0,5)--(-5,0)--(-5,-2)--(-4,-3)--cycle;
	
	\foreach \i in {0,...,4} {
		\filldraw[black] (3,\i) circle (2pt);
		}
		\foreach \i in {0,...,6} {
		\filldraw[black] (2,\i-1) circle (2pt);
		}
	
		\foreach \i in {0,...,7} {
		\filldraw[black] (1,\i-2) circle (2pt);
		}
	
		\foreach \i in {0,...,8} {
		\filldraw[black] (0,\i-3) circle (2pt);
		}
	
		\foreach \i in {0,...,7} {
		\filldraw[black] (-1,\i-3) circle (2pt);
		}
	
		\foreach \i in {0,...,6} {
		\filldraw[black] (-2,\i-3) circle (2pt);
		}
	
		\foreach \i in {0,...,5} {
		\filldraw[black] (-3,\i-3) circle (2pt);
		}
	
		\foreach \i in {0,...,4} {
		\filldraw[black] (-4,\i-3) circle (2pt);
		}
	
		\foreach \i in {0,...,2} {
		\filldraw[black] (-5,\i-2) circle (2pt);
		}
	\filldraw[red] (0,0) circle (2pt);
	\node[above, font=\scriptsize\bfseries] at (0,0) {$(0,0)$};
	
	\filldraw[green!20] (9,0)--(8,1)--(5,1)--(8,4)--(8,1)--cycle;
	\filldraw[black] (5,1) circle (2pt);
	\filldraw[black] (6,1) circle (2pt);
	\filldraw[black] (6,2) circle (2pt);
	\filldraw[black] (7,1) circle (2pt);
	\filldraw[black] (7,2) circle (2pt);
	\filldraw[black] (7,3) circle (2pt);
	\filldraw[black] (8,1) circle (2pt);
	\filldraw[black] (8,2) circle (2pt);
	\filldraw[black] (8,3) circle (2pt);
	\filldraw[black] (8,4) circle (2pt);
	\filldraw[red] (9,0) circle (2pt);
	\node[above, font=\scriptsize\bfseries] at (9,0) {$(0,0)$};
	
	\filldraw[orange!20] (11,0)--(12,-1)--(13,-1)--(12,-2)--(12,-1)--cycle;
	\filldraw[black] (12,-1) circle (2pt);
	\filldraw[black] (12,-2) circle (2pt);
	\filldraw[black] (13,-1) circle (2pt);
	\filldraw[red] (11,0) circle (2pt);
	\node[above, font=\scriptsize\bfseries] at (11,0) {$(0,0)$};	
\end{tikzpicture}
\caption{All points in $\mathcal{L}(3,5)$ (left), $\mathcal{L}(0,6)$ (middle), $\mathcal{L}(4,0)$ (right) and the corresponding boundary of $\mathcal{L}(3,5)$ (red), using the grid labeling of Definition \ref{gridlabeling}.}
\label{fig:lattice_example_35}
\end{figure}
For each $(a,b) \in \mathbb{N}^2$, we further define subsets $\partial^{\Box} \mathcal{L}(a,b)$ with $\Box \in \{\leftarrow, \rightarrow, \downarrow, \uparrow \}$ of the boundary $\partial \mathcal{L}(a,b)$, where $\partial^{\leftarrow} \mathcal{L}(a,b)$ (resp. $\rightarrow, \downarrow, \uparrow$) is defined to be the set of all boundary points that are strictly to the left (resp. right, below, above) the point $(0,0)$, see Figure~\ref{fig:lattice_example_35_boundary} for examples.
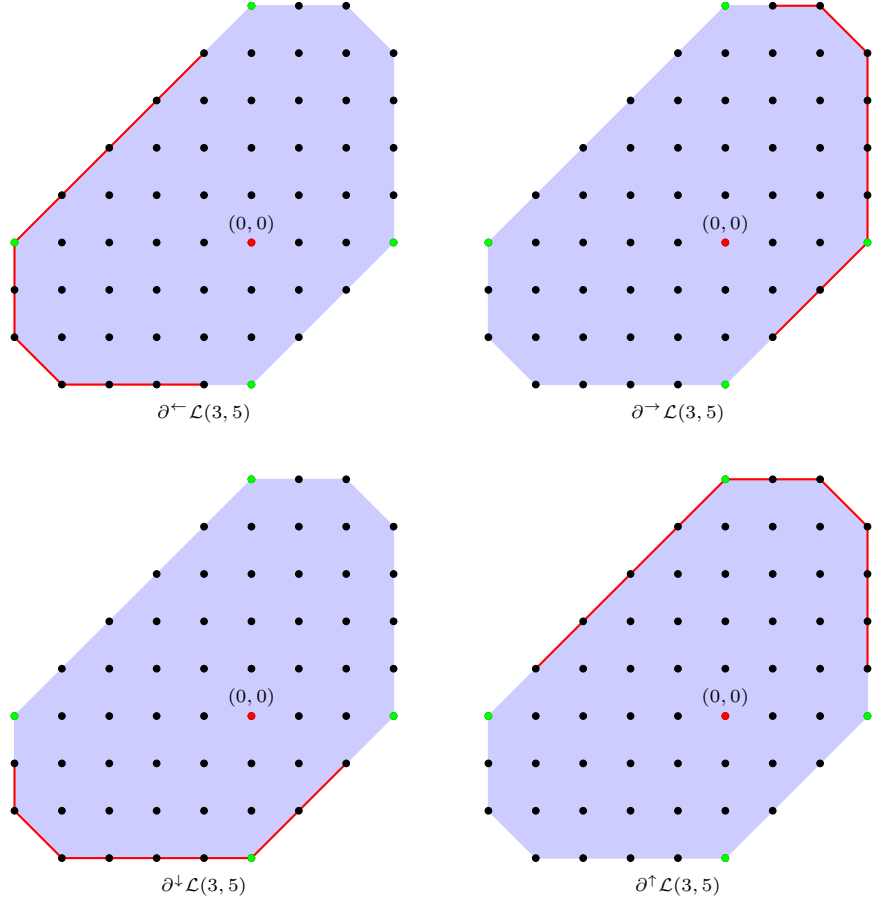
\begin{figure}[H]
\centering
\begin{tikzpicture}[scale=0.63,
    mid arrow/.style={
        postaction={decorate},
        decoration={
            markings,
            mark=at position 0.5 with {\arrow{>}}
        }
    }
]
	\filldraw[blue!20] (0,-3)--(3,0)--(3,4)--(2,5)--(0,5)--(-5,0)--(-5,-2)--(-4,-3)--cycle;
	\draw[thick,red] (-1,-3)--(-4,-3)--(-5,-2)--(-5,0)--(-1,4);
	
	\foreach \i in {0,...,4} {
		\filldraw[black] (3,\i) circle (2pt);
		}
		\foreach \i in {0,...,6} {
		\filldraw[black] (2,\i-1) circle (2pt);
		}
	
		\foreach \i in {0,...,7} {
		\filldraw[black] (1,\i-2) circle (2pt);
		}
	
		\foreach \i in {0,...,8} {
		\filldraw[black] (0,\i-3) circle (2pt);
		}
	
		\foreach \i in {0,...,7} {
		\filldraw[black] (-1,\i-3) circle (2pt);
		}
	
		\foreach \i in {0,...,6} {
		\filldraw[black] (-2,\i-3) circle (2pt);
		}
	
		\foreach \i in {0,...,5} {
		\filldraw[black] (-3,\i-3) circle (2pt);
		}
	
		\foreach \i in {0,...,4} {
		\filldraw[black] (-4,\i-3) circle (2pt);
		}
	
		\foreach \i in {0,...,2} {
		\filldraw[black] (-5,\i-2) circle (2pt);
		}
	\filldraw[red] (0,0) circle (2pt);
	\filldraw[green] (3,0) circle (2pt);
	\filldraw[green] (0,5) circle (2pt);
	\filldraw[green] (-5,0) circle (2pt);
	\filldraw[green] (0,-3) circle (2pt);
	\node[above, font=\scriptsize] at (0,0) {$(0,0)$};
	\node[above, font=\scriptsize] at (-1,-4) {$\partial^{\leftarrow} \mathcal{L}(3,5)$};
	
	\filldraw[blue!20] (0,-3-10)--(3,0-10)--(3,4-10)--(2,5-10)--(0,5-10)--(-5,0-10)--(-5,-2-10)--(-4,-3-10)--cycle;
	\draw[thick,red] (2,-1-10)--(0,-3-10)--(-4,-3-10)--(-5,-2-10)--(-5,-1-10);
	
	\foreach \i in {0,...,4} {
		\filldraw[black] (3,\i-10) circle (2pt);
		}
		\foreach \i in {0,...,6} {
		\filldraw[black] (2,\i-1-10) circle (2pt);
		}
	
		\foreach \i in {0,...,7} {
		\filldraw[black] (1,\i-2-10) circle (2pt);
		}
	
		\foreach \i in {0,...,8} {
		\filldraw[black] (0,\i-3-10) circle (2pt);
		}
	
		\foreach \i in {0,...,7} {
		\filldraw[black] (-1,\i-3-10) circle (2pt);
		}
	
		\foreach \i in {0,...,6} {
		\filldraw[black] (-2,\i-3-10) circle (2pt);
		}
	
		\foreach \i in {0,...,5} {
		\filldraw[black] (-3,\i-3-10) circle (2pt);
		}
	
		\foreach \i in {0,...,4} {
		\filldraw[black] (-4,\i-3-10) circle (2pt);
		}
	
		\foreach \i in {0,...,2} {
		\filldraw[black] (-5,\i-2-10) circle (2pt);
		}
	\filldraw[red] (0,0-10) circle (2pt);
	\filldraw[green] (3,0-10) circle (2pt);
	\filldraw[green] (0,5-10) circle (2pt);
	\filldraw[green] (-5,0-10) circle (2pt);
	\filldraw[green] (0,-3-10) circle (2pt);
	\node[above, font=\scriptsize] at (0,0-10) {$(0,0)$};
	\node[above, font=\scriptsize] at (-1,-4-10) {$\partial^{\downarrow} \mathcal{L}(3,5)$};	

	\filldraw[blue!20] (0+10,-3)--(3+10,0)--(3+10,4)--(2+10,5)--(0+10,5)--(-5+10,0)--(-5+10,-2)--(-4+10,-3)--cycle;
	\draw[thick,red] (1+10,5)--(2+10,5)--(3+10,4)--(3+10,0)--(1+10,-2);
	
	\foreach \i in {0,...,4} {
		\filldraw[black] (3+10,\i) circle (2pt);
		}
		\foreach \i in {0,...,6} {
		\filldraw[black] (2+10,\i-1) circle (2pt);
		}
	
		\foreach \i in {0,...,7} {
		\filldraw[black] (1+10,\i-2) circle (2pt);
		}
	
		\foreach \i in {0,...,8} {
		\filldraw[black] (0+10,\i-3) circle (2pt);
		}
	
		\foreach \i in {0,...,7} {
		\filldraw[black] (-1+10,\i-3) circle (2pt);
		}
	
		\foreach \i in {0,...,6} {
		\filldraw[black] (-2+10,\i-3) circle (2pt);
		}
	
		\foreach \i in {0,...,5} {
		\filldraw[black] (-3+10,\i-3) circle (2pt);
		}
	
		\foreach \i in {0,...,4} {
		\filldraw[black] (-4+10,\i-3) circle (2pt);
		}
	
		\foreach \i in {0,...,2} {
		\filldraw[black] (-5+10,\i-2) circle (2pt);
		}
	\filldraw[red] (0+10,0) circle (2pt);
	\filldraw[green] (3+10,0) circle (2pt);
	\filldraw[green] (0+10,5) circle (2pt);
	\filldraw[green] (-5+10,0) circle (2pt);
	\filldraw[green] (0+10,-3) circle (2pt);
	\node[above, font=\scriptsize] at (0+10,0) {$(0,0)$};
	\node[above, font=\scriptsize] at (-1+10,-4) {$\partial^{\rightarrow} \mathcal{L}(3,5)$};
	
	\filldraw[blue!20] (0+10,-3-10)--(3+10,0-10)--(3+10,4-10)--(2+10,5-10)--(0+10,5-10)--(-5+10,0-10)--(-5+10,-2-10)--(-4+10,-3-10)--cycle;
	\draw[thick,red] (-4+10,1-10)--(0+10,5-10)--(2+10,5-10)--(3+10,4-10)--(3+10,1-10);
	
	\foreach \i in {0,...,4} {
		\filldraw[black] (3+10,\i-10) circle (2pt);
		}
		\foreach \i in {0,...,6} {
		\filldraw[black] (2+10,\i-1-10) circle (2pt);
		}
	
		\foreach \i in {0,...,7} {
		\filldraw[black] (1+10,\i-2-10) circle (2pt);
		}
	
		\foreach \i in {0,...,8} {
		\filldraw[black] (0+10,\i-3-10) circle (2pt);
		}
	
		\foreach \i in {0,...,7} {
		\filldraw[black] (-1+10,\i-3-10) circle (2pt);
		}
	
		\foreach \i in {0,...,6} {
		\filldraw[black] (-2+10,\i-3-10) circle (2pt);
		}
	
		\foreach \i in {0,...,5} {
		\filldraw[black] (-3+10,\i-3-10) circle (2pt);
		}
	
		\foreach \i in {0,...,4} {
		\filldraw[black] (-4+10,\i-3-10) circle (2pt);
		}
	
		\foreach \i in {0,...,2} {
		\filldraw[black] (-5+10,\i-2-10) circle (2pt);
		}
	\filldraw[red] (0+10,0-10) circle (2pt);
	\filldraw[green] (3+10,0-10) circle (2pt);
	\filldraw[green] (0+10,5-10) circle (2pt);
	\filldraw[green] (-5+10,0-10) circle (2pt);
	\filldraw[green] (0+10,-3-10) circle (2pt);
	\node[above, font=\scriptsize] at (0+10,0-10) {$(0,0)$};
	\node[above, font=\scriptsize] at (-1+10,-4-10) {$\partial^{\uparrow} \mathcal{L}(3,5)$};	
\end{tikzpicture}
\caption{All 4 sets $\partial^{\Box} \mathcal{L}(3,5)$ (red) for $\Box \in \{\leftarrow, \rightarrow, \downarrow, \uparrow \}$}
\label{fig:lattice_example_35_boundary}
\end{figure}
\end{Def}
We can directly compute the following values for any $a,b \geq 1$:
\begin{align*}
	|\mathcal{L}(a,b)| &= ab+\frac{(a+b+1)(a+b+2)}{2}-2; \quad |\partial \mathcal{L}(a,b)| = 3(a+b)-2;\\
	|\mathcal{L}(0,b)| &= 1+\frac{(b-1)(b-2)}{2}; \quad |\mathcal{L}(a,0)|=1+\frac{(a-1)(a-2)}{2};\\
	|\partial^{\leftarrow} \mathcal{L}(a,b)| &= a+2b-2; \quad |\partial^{\rightarrow} \mathcal{L}(a,b)| = 2a+b-2; \quad |\partial^{\downarrow} \mathcal{L}(a,b)| = 2a+b-2; \quad |\partial^{\uparrow} \mathcal{L}(a,b)| = a+2b-2.
\end{align*}

For $m\in\N$, we consider the standard lexicographical ordering on $\Z^m$: for any $(a_1,a_2,...,a_m), (b_1,b_2,...,b_m)$ $\in \mathbb{Z}^m$, we have $(a_1,a_2,...,a_m) > (b_1,b_2,...,b_m)$ if and only if there exists $1\leq k\leq m$ such that $a_i = b_i$ for $i = 1,2,...,k-1$ and $a_k > b_k$. Now we can write each value $g(a,b,i,j)$ (for $a,b \in \mathbb{Z}_{\geq 0}$ and $i,j \in \mathbb{Z}$) of form:
\begin{gather*}
	g(a,b,i,j) = \sum_{N=1}^{2^{ab}} \prod_{(k,l) \in \mathcal{L}(a,b)} x_{i+k,j+l}^{c_{a,b}^{N,k,l}}
\end{gather*}
where we have a total of $2^{ab}$ \emph{exponential vectors} $\textbf{c}^{N,a,b} = (c_{a,b}^{N,a,b-1}, c_{a,b}^{N,a,b-2},...,c_{a,b}^{N,-b,-a+1})$ that are arranged in the lexicographical order of $\mathbb{Z}^{ab+\frac{(a+b+1)(a+b+2)}{2}-2}$, with all terms $c_{a,b}^{N,k,l}$ arranged in the lexicographical order of $\mathbb{Z}^2$ for pairs $(k,l)$ in the good lattice of $(a,b)$ for each $N = 1,2,...,2^{ab}$. We also define $$\textbf{x}_{i,j}^{N,a,b} := \prod_{(k,l) \in \mathcal{L}(a,b)} x_{i+k,j+l}^{c_{a,b}^{N,k,l}}$$ for all $(a,b,i,j) \in \mathbb{Z}_{\geq 0}^2 \times \mathbb{Z}^2$ and $N = 1,2,...,2^{ab}$. We have the following particular values:
\begin{align*}
	\textbf{c}^{1,0,b} &= (1,1,...,1); \quad \textbf{c}^{1,a,0} = (1,1,...,1);\\
	\textbf{c}^{1,1,1} &= (1,0,0,0,1); \quad \textbf{c}^{2,1,1} = (0,1,0,1,0);\\
	\textbf{c}^{1,1,2} &= (1, 0, 0, 0, 0, 1, 1, 0, 1, 0); \quad \textbf{c}^{2,1,2} = (0, 1, 0, 1, 0, 0, 1, 0, 1, 0);\\
	\textbf{c}^{3,1,2} &= (0, 0, 1, 1, 1, 0, 0, 0, 0, 1); \quad \textbf{c}^{4,1,2} = (0, 0, 1, 0, 1, 1, 0, 1, 0, 0);\\
	\textbf{c}^{1,2,1} &= (1, 0, 0, 0, 0, 1, 1, 1, 0, 0); \quad \textbf{c}^{2,2,1} = (0, 1, 0, 1, 1, 0, 0, 0, 0, 1);\\
	\textbf{c}^{3,2,1} &= (0, 1, 0, 1, 0, 0, 1, 0, 1, 0); \quad \textbf{c}^{4,2,1} = (0, 0, 1, 0, 1, 1, 0, 1, 0, 0);
\end{align*}
For all $a,b \geq 1$, we can compute:
\begin{align*}
    g(a+1,b+1,i,j) &= \sum_{P=1}^{2^{(a+1)(b+1)}} \prod_{(k,l) \in \mathcal{L}(a+1,b+1)} x_{i+k,j+l}^{c_{a+1,b+1}^{P,k,l}} \\
    &= \sum_{P=1}^{2^{(a+1)(b+1)}} x_{i+a,j+b}^{c_{a+1,b+1}^{P,a,b}} x_{i-b,j-a}^{c_{a+1,b+1}^{P,-b,-a}} 
    \cdot \left( \prod_{(k,l) \in \mathcal{L}(a,b)} x_{i+k,j+l}^{c_{a+1,b+1}^{P,k,l}} \right) 
    \cdot \left( \prod_{(k,l) \in \partial \mathcal{L}(a+1,b+1)} x_{i+k,j+l}^{c_{a+1,b+1}^{P,k,l}} \right);
\end{align*}

\begin{align*}
    &g(a+1,b,i-1,j)g(a,b+1,i+1,j) = \left(\sum_{M=1}^{2^{(a+1)b}} \prod_{(k,l) \in \mathcal{L}(a+1,b)} x_{i-1+k,j+l}^{c_{a+1,b}^{M,k,l}}\right) \cdot \left(\sum_{N=1}^{2^{a(b+1)}} \prod_{(k,l) \in \mathcal{L}(a,b+1)} x_{i+1+k,j+l}^{c_{a,b+1}^{N,k,l}}\right) \\
    &= \left(\sum_{M=1}^{2^{(a+1)b}} x_{i-b,j-a}^{c_{a+1,b}^{M,-b,-a}} \cdot \left(\prod_{(k,l) \in \mathcal{L}(a,b)} x_{i+k,j+l}^{c_{a+1,b}^{M,k,l}}\right) \cdot \left(\prod_{(k,l) \in \partial^{\leftarrow} \mathcal{L}(a+1,b+1)} x_{i+k,j+l}^{c_{a+1,b}^{M,k,l}}\right)\right) \\
    &\quad \cdot \left(\sum_{N=1}^{2^{a(b+1)}} x_{i+a,j+b}^{c_{a,b+1}^{N,a,b}} \cdot \left(\prod_{(k,l) \in \mathcal{L}(a,b)} x_{i+k,j+l}^{c_{a,b+1}^{N,k,l}}\right) \cdot \left(\prod_{(k,l) \in \partial^{\rightarrow} \mathcal{L}(a+1,b+1)} x_{i+k,j+l}^{c_{a,b+1}^{N,k,l}}\right)\right);
\end{align*}

\begin{align*}
    &g(a,b+1,i,j-1)g(a+1,b,i,j+1) = \left(\sum_{N=1}^{2^{a(b+1)}} \prod_{(k,l) \in \mathcal{L}(a,b+1)} x_{i+k,j-1+l}^{c_{a,b+1}^{N,k,l}}\right) \cdot \left(\sum_{M=1}^{2^{(a+1)b}} \prod_{(k,l) \in \mathcal{L}(a+1,b)} x_{i+k,j+1+l}^{c_{a+1,b}^{M,k,l}}\right) \\
    &= \left(\sum_{N=1}^{2^{a(b+1)}} x_{i-b,j-a}^{c_{a,b+1}^{N,-b,-a}} \cdot \left(\prod_{(k,l) \in \mathcal{L}(a,b)} x_{i+k,j+l}^{c_{a,b+1}^{N,k,l}}\right) \cdot \left(\prod_{(k,l) \in \partial^{\downarrow} \mathcal{L}(a+1,b+1)} x_{i+k,j+l}^{c_{a,b+1}^{N,k,l}}\right)\right) \\
    &\quad \cdot \left(\sum_{M=1}^{2^{(a+1)b}} x_{i+a,j+b}^{c_{a+1,b}^{M,a,b}} \cdot \left(\prod_{(k,l) \in \mathcal{L}(a,b)} x_{i+k,j+l}^{c_{a+1,b}^{M,k,l}}\right) \cdot \left(\prod_{(k,l) \in \partial^{\uparrow} \mathcal{L}(a+1,b+1)} x_{i+k,j+l}^{c_{a+1,b}^{M,k,l}}\right)\right).
\end{align*}
Then by the Proposition~\ref{prop:g_func}, we have the following computations:
\begin{align*}
	\text{LHS} &= g(a+1,b+1,i,j)g(a,b,i,j) \\
	&= \left(\sum_{Q=1}^{2^{ab}} \prod_{(k,l) \in \mathcal{L}(a,b)} x_{i+k,j+l}^{c_{a,b}^{Q,k,l}}\right) \\
	&\cdot \left(\sum_{P=1}^{2^{(a+1)(b+1)}} x_{i+a,j+b}^{c_{a+1,b+1}^{P,a,b}} x_{i-b,j-a}^{c_{a+1,b+1}^{P,-b,-a}} 
    \cdot \left( \prod_{(k,l) \in \mathcal{L}(a,b)} x_{i+k,j+l}^{c_{a+1,b+1}^{P,k,l}} \right) 
    \cdot \left( \prod_{(k,l) \in \partial \mathcal{L}(a+1,b+1)} x_{i+k,j+l}^{c_{a+1,b+1}^{P,k,l}} \right)\right) \\
    &= \sum_{Q=1}^{2^{ab}} \sum_{P=1}^{2^{(a+1)(b+1)}}x_{i+a,j+b}^{c_{a+1,b+1}^{P,a,b}} x_{i-b,j-a}^{c_{a+1,b+1}^{P,-b,-a}} 
    \cdot \left( \prod_{(k,l) \in \mathcal{L}(a,b)} x_{i+k,j+l}^{c_{a+1,b+1}^{P,k,l}+c_{a,b}^{Q,k,l}} \right) 
    \cdot \left( \prod_{(k,l) \in \partial \mathcal{L}(a+1,b+1)} x_{i+k,j+l}^{c_{a+1,b+1}^{P,k,l}} \right);
\end{align*}

\begin{align*}
   \text{RHS} &= x_{i,j-a}x_{i,j+b} \cdot g(a+1,b,i-1,j)g(a,b+1,i+1,j) + x_{i+a,j}x_{i-b,j} \cdot g(a,b+1,i,j-1)g(a+1,b,i,j+1) \\ 
   &= x_{i,j-a}x_{i,j+b} \cdot \Bigg(\sum_{M=1}^{2^{(a+1)b}} x_{i-b,j-a}^{c_{a+1,b}^{M,-b,-a}} \cdot \Bigg(\prod_{(k,l) \in \mathcal{L}(a,b)} x_{i+k,j+l}^{c_{a+1,b}^{M,k,l}}\Bigg) \cdot \Bigg(\prod_{(k,l) \in \partial^{\leftarrow} \mathcal{L}(a+1,b+1)} x_{i+k,j+l}^{c_{a+1,b}^{M,k,l}}\Bigg)\Bigg) \\
   &\quad \cdot \Bigg(\sum_{N=1}^{2^{a(b+1)}} x_{i+a,j+b}^{c_{a,b+1}^{N,a,b}} \cdot \Bigg(\prod_{(k,l) \in \mathcal{L}(a,b)} x_{i+k,j+l}^{c_{a,b+1}^{N,k,l}}\Bigg) \cdot \Bigg(\prod_{(k,l) \in \partial^{\rightarrow} \mathcal{L}(a+1,b+1)} x_{i+k,j+l}^{c_{a,b+1}^{N,k,l}}\Bigg)\Bigg) \\
   &\quad+ x_{i+a,j}x_{i-b,j} \cdot \Bigg(\sum_{N=1}^{2^{a(b+1)}} x_{i-b,j-a}^{c_{a,b+1}^{N,-b,-a}} \cdot \Bigg(\prod_{(k,l) \in \mathcal{L}(a,b)} x_{i+k,j+l}^{c_{a,b+1}^{N,k,l}}\Bigg) \cdot \Bigg(\prod_{(k,l) \in \partial^{\downarrow} \mathcal{L}(a+1,b+1)} x_{i+k,j+l}^{c_{a,b+1}^{N,k,l}}\Bigg)\Bigg) \\
   &\quad \cdot \Bigg(\sum_{M=1}^{2^{(a+1)b}} x_{i+a,j+b}^{c_{a+1,b}^{M,a,b}} \cdot \Bigg(\prod_{(k,l) \in \mathcal{L}(a,b)} x_{i+k,j+l}^{c_{a+1,b}^{M,k,l}}\Bigg) \cdot \Bigg(\prod_{(k,l) \in \partial^{\uparrow} \mathcal{L}(a+1,b+1)} x_{i+k,j+l}^{c_{a+1,b}^{M,k,l}}\Bigg)\Bigg) \\
   &= x_{i,j-a}x_{i,j+b} \cdot \Bigg(\sum_{M=1}^{2^{(a+1)b}} \sum_{N=1}^{2^{a(b+1)}} x_{i+a,j+b}^{c_{a,b+1}^{N,a,b}} x_{i-b,j-a}^{c_{a+1,b}^{M,-b,-a}}  \cdot \Bigg(\prod_{(k,l) \in \mathcal{L}(a,b)} x_{i+k,j+l}^{c_{a+1,b}^{M,k,l}+c_{a,b+1}^{N,k,l}}\Bigg) \\
   &\cdot \Bigg(\prod_{(k,l) \in \partial^{\leftarrow} \mathcal{L}(a+1,b+1)} x_{i+k,j+l}^{c_{a+1,b}^{M,k,l}}\Bigg) \cdot \Bigg(\prod_{(k,l) \in \partial^{\rightarrow} \mathcal{L}(a+1,b+1)} x_{i+k,j+l}^{c_{a,b+1}^{N,k,l}}\Bigg) \Bigg) \\
   &+ x_{i+a,j}x_{i-b,j} \cdot \Bigg(\sum_{M=1}^{2^{(a+1)b}} \sum_{N=1}^{2^{a(b+1)}} x_{i+a,j+b}^{c_{a+1,b}^{M,a,b}} x_{i-b,j-a}^{c_{a,b+1}^{N,-b,-a}}  \cdot \Bigg(\prod_{(k,l) \in \mathcal{L}(a,b)} x_{i+k,j+l}^{c_{a+1,b}^{M,k,l}+c_{a,b+1}^{N,k,l}}\Bigg) \\
   &\cdot \Bigg(\prod_{(k,l) \in \partial^{\downarrow} \mathcal{L}(a+1,b+1)} x_{i+k,j+l}^{c_{a,b+1}^{N,k,l}}\Bigg) \cdot \Bigg(\prod_{(k,l) \in \partial^{\uparrow} \mathcal{L}(a+1,b+1)} x_{i+k,j+l}^{c_{a+1,b}^{M,k,l}}\Bigg) \Bigg).
\end{align*}

\section{Maple code for computation of recurrence relation}\label{maple}
The following Maple code computes the recurrence relation discussed in Section \ref{subsec:solverecur}.
\begin{verbatim}
restart;

# Function f: discrete 4D Hirota-Miwa equation
f := proc(a, b, i, j) 
    local res;
    option remember;
    if a = 0 or b = 0 then
        res := x[i, j];
    elif a = 1 and b = 1 then
        res := (x[i - 1, j]*x[i + 1, j] + x[i, j - 1]*x[i, j + 1])/x[i, j];
    else
        res := expand(factor(
            (f(a, b - 1, i - 1, j)*f(a - 1, b, i + 1, j) 
            + f(a - 1, b, i, j - 1)*f(a, b - 1, i, j + 1))
            / f(a - 1, b - 1, i, j)
        ));
    end if;
    return res;
end proc:

# Function X_{i,j}^{a,b}: product over the specified range
X := proc(a, b, i, j)
    local p, q, product_result, min_val, max_val;
    option remember;
    
    # Handle the (0,0) case
    if a = 0 and b = 0 then
        return x[i, j];
    end if;
    
    product_result := 1;
    
    # For (a,b) \neq (0,0), compute the product over integers p,q 
    # with 1-a \leq min{p,q,p+q} \leq max{p,q,p+q} \leq b-1
    for p from 1-a to b-1 do
        for q from 1-a to b-1 do
            min_val := min(p, q, p+q);
            max_val := max(p, q, p+q);
            
            if min_val >= 1-a and max_val <= b-1 then
                product_result := product_result * x[i - p, j + q];
            end if;
        end do;
    end do;
    
    return product_result;
end proc:

# Function g(a,b,i,j) = X_{i,j}^{a,b} * f(a,b,i,j)
g := proc(a, b, i, j)
    local x_product, f_value;
    option remember;
    
    x_product := X(a, b, i, j);
    f_value := f(a, b, i, j);
    
    return expand(x_product * f_value);
end proc:

# Function to verify the recurrence relation for g
verify_g_recurrence := proc(a, b, i, j)
    local LHS, RHS, term1, term2;
    
    # Left-hand side of the recurrence
    LHS := g(a + 1, b + 1, i, j) * g(a, b, i, j);
    
    # Right-hand side: two terms
    term1 := x[i, j - a] * x[i, j + b] * g(a + 1, b, i - 1, j) * g(a, b + 1, i + 1, j);
    term2 := x[i + a, j] * x[i - b, j] * g(a, b + 1, i, j - 1) * g(a + 1, b, i, j + 1);
    
    RHS := expand(term1 + term2);
    
    # Check if LHS equals RHS
    if simplify(LHS - RHS) = 0 then
        return true;
    else
        return false, LHS, RHS;
    end if;
end proc:
\end{verbatim}
From the code above, using Proposition~\ref{prop:g_func} yields the validity of the recurrence relation for $g$, i.e. we always get ``\verb|true|'' as output from the call \verb|verify_g_recurrence|.
\begin{Thm}
	For every $(a, b, i, j) \in \mathbb{Z}_{\geq 0}^2 \times \mathbb{Z}^2$, the function \verb|verify_g_recurrence| returns \verb|true|.
\end{Thm}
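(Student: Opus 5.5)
The plan is to separate the claim into a statement about the Hirota--Miwa function $f$, which is already available, and a purely monomial identity for the prefactors $X^{a,b}_{i,j}$. First I will check that the Maple procedure \verb|f| implements exactly the recurrence of Proposition~\ref{4DHM}: the call $f(a,b,i,j)$ with $a,b\geq 1$ and $(a,b)\neq(1,1)$ is the shifted instance $(a,b)\mapsto(a-1,b-1)$ of that recurrence. The branch $a=b=1$ is precisely $f(1,1,i,j)$, and $a=0$ or $b=0$ gives $x_{i,j}$. Hence \verb|f| computes the function $f$, which satisfies
\[
f(a+1,b+1,i,j)f(a,b,i,j)=f(a+1,b,i-1,j)f(a,b+1,i+1,j)+f(a,b+1,i,j-1)f(a+1,b,i,j+1).
\]
Likewise \verb|X| and \verb|g| compute $X^{a,b}_{i,j}$ and $g=X\cdot f$ as defined before Proposition~\ref{prop:g_func}. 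Since every object is an honest rational function and \verb|simplify| of an identically zero rational expression returns $0$, it suffices to prove the recurrence of Proposition~\ref{prop:g_func} symbolically for all $(a,b,i,j)$.

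Multiplying the Hirota--Miwa identity by $X^{a+1,b+1}_{i,j}X^{a,b}_{i,j}$ reduces everything to two monomial identities:
\[
X^{a+1,b+1}_{i,j}X^{a,b}_{i,j}=x_{i,j-a}x_{i,j+b}\,X^{a+1,b}_{i-1,j}X^{a,b+1}_{i+1,j}=x_{i+a,j}x_{i-b,j}\,X^{a,b+1}_{i,j-1}X^{a+1,b}_{i,j+1}.
\]
For $(a,b)\neq(0,0)$, write $X^{a,b}_{i,j}=\prod_{(p,q)\in H(a,b)}x_{i-p,j+q}$, where $H(a,b)$ is the lattice hexagon $\{1-a\le p,q,p+q\le b-1\}$. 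The first identity then becomes an equality of multisets of lattice points: the two hexagons $H(a+1,b+1)$ and $H(a,b)$ must match $H(a+1,b)$ shifted by $p\mapsto p-1$, together with $H(a,b+1)$ shifted by $p\mapsto p+1$, plus the two extra points. I will prove this by comparing indicator functions row by row, one row for each fixed $q$. Each row of $H(a,b)$ is an integer interval whose endpoints are explicit piecewise-linear functions of $q$. The identity then reduces to an additive relation among the row lengths and endpoints. This relation holds automatically in the interior of the hexagon and fails by exactly one point on the two rows containing the corners $(0,-a)$ and $(0,b)$ in the $(p,q)$ coordinates. Those two leftover points are exactly $x_{i,j-a}$ and $x_{i,j+b}$. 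The second identity should follow from the first by the $p\leftrightarrow q$ symmetry of $H(a,b)$, which interchanges horizontal and vertical shifts; alternatively, I can repeat the same column-by-column count.

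The step I expect to be hardest is the degenerate range, where $a\le 1$ or $b\le 1$ in the smaller factors. There the hexagon $H(a,b)$ can be empty, so that $X=1$, or can collapse to a triangle, and $X^{0,0}_{i,j}=x_{i,j}$ is defined separately rather than by the product. I would treat $a=0$ and $b=0$ by hand, using the explicit triangular descriptions (the same as the good lattices $\mathcal{L}(0,b)$ and $\mathcal{L}(a,0)$ of Appendix~\ref{good_lattice}). I would also verify the cases $a,b\le 1$ directly, for instance $X^{1,1}X^{0,0}$ against $x_{i,j}x_{i,j+1}X^{1,0}X^{0,1}$. Once the generic and boundary cases are settled, the recurrence of Proposition~\ref{prop:g_func} holds identically, and therefore \verb|verify_g_recurrence| returns \verb|true|.
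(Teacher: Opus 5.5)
Your proof is correct, and it takes a genuinely different route from the paper. The paper's proof of this statement is a one-line appeal to Proposition~\ref{prop:g_func}. That proposition is itself stated without proof; the surrounding text presents it as an observation checked with this same Maple code. So the paper never explains why the $g$-recurrence holds.

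You instead factor $g=X\cdot f$. You observe that the procedure \texttt{f} satisfies the Hirota--Miwa recurrence of Proposition~\ref{4DHM} by construction; the instance $(a,b)=(0,0)$ is exactly the explicit branch for $f(1,1,i,j)$. This reduces everything to the lattice-point identity
\[
H(a+1,b+1)\uplus H(a,b)=\{(0,-a),(0,b)\}\uplus\bigl(H(a+1,b)+(1,0)\bigr)\uplus\bigl(H(a,b+1)-(1,0)\bigr)
\]
in $(p,q)$-coordinates, with $H(0,0)$ replaced by $\{(0,0)\}$. The second identity is its image under $p\leftrightarrow q$, which works because each $H(a,b)$ is symmetric.

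Your row-by-row plan goes through:
\begin{itemize}
\item For $-a<q<b$, each row is the interval identity $\mathbf{1}_{[x,y]}+\mathbf{1}_{[x+1,y-1]}=\mathbf{1}_{[x,y-1]}+\mathbf{1}_{[x+1,y]}$ with $y>x$.
\item On the rows $q=b$ and $q=-a$, the smaller hexagon and one shifted hexagon are empty, and the surviving shifted hexagon misses exactly $p=0$.
\item The triangular or empty cases $a=0$ or $b=0$ obey the same formulas. The only place the special value $X^{0,0}_{i,j}=x_{i,j}$ is needed is $(a,b)=(0,0)$.
\end{itemize}

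What your route buys is an actual proof of Proposition~\ref{prop:g_func} for all $(a,b,i,j)$. It also explains the factors $x_{i,j-a}x_{i,j+b}$ and $x_{i+a,j}x_{i-b,j}$ as the two corner defects of the hexagon identity. The paper's route buys only brevity.

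One slip to fix: at $(a,b)=(0,0)$ the prefactor is $x_{i,j}x_{i,j}$, not $x_{i,j}x_{i,j+1}$. The check should read $X^{1,1}_{i,j}X^{0,0}_{i,j}=x_{i,j}^2=x_{i,j}^2\,X^{1,0}_{i-1,j}X^{0,1}_{i+1,j}$, since $X^{1,0}$ and $X^{0,1}$ are both empty products.
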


\end{document}